\documentclass[a4paper,10pt]{book}

\usepackage[french]{babel}
\usepackage[applemac]{inputenc}
\usepackage[T1]{fontenc}

\usepackage{
graphics,
latexsym,
textcomp,
amsfonts,
amssymb,
amsbsy,
amsmath,
amsthm,
geometry
}

\usepackage[all,2cell,cmtip]{xy} \UseAllTwocells 

\usepackage{index}
\makeindex
\newindex{not}{nox}{nod}{Notations}

\SilentMatrices

\makeatletter

\renewcommand\tableofcontents{%
   \if@twocolumn
     \@restonecoltrue\onecolumn
   \else
     \@restonecolfalse
   \fi
   \chapter*{\contentsname
       \@mkboth{%
          \MakeUppercase\contentsname}{\MakeUppercase\contentsname}}%
   \addvspace{-0.3em}%
   \@starttoc{toc}%
   \if@restonecol\twocolumn\fi
 }


\makeatother

\newcommand\EquiQuillen{W_{\infty}^{\Delta}}
\newcommand\EnsSimp{\widehat{\Delta}}
\newcommand\Intervalles{\Delta_{[\kern 1.5pt]}}

\newcommand\Chaines[3]{F_{{#1},{#2}}({#3})}
\newcommand\FoncteurChaines[2]{F_{{#1},{#2}}}

\newcommand\DeuxFoncteurStrict[0]{\deux{}foncteur strict}
\newcommand\DeuxFoncteursStricts[0]{\deux{}foncteurs stricts}
\newcommand\DeuxFoncteurLax[0]{\deux{}foncteur lax}
\newcommand\DeuxFoncteursLax[0]{\deux{}foncteurs lax}
\newcommand\DeuxFoncteurCoLax[0]{\deux{}foncteur colax}
\newcommand\DeuxFoncteursCoLax[0]{\deux{}foncteurs colax}
\newcommand\DeuxTransformationLax[0]{transformation}
\newcommand\DeuxTransformationsLax[0]{transformations}
\newcommand\DeuxTransformationCoLax[0]{optransformation}
\newcommand\DeuxTransformationsCoLax[0]{optransformations}
\newcommand\DeuxTransformationStricte[0]{transformation stricte}
\newcommand\DeuxTransformationsStrictes[0]{transformations strictes}

\newcommand\TrancheLax[3]{{#1}/\negmedspace/_{\negmedspace {\rm{l}}}^{#2}{#3}}

\newcommand\TrancheCoLax[3]{{#1}/\negmedspace/_{\negmedspace {\rm{c}}}^{#2}{#3}}

\newcommand\OpTrancheCoLax[3]{{#3}{\backslash\mspace{-5mu}\backslash}_{\mspace{-.3mu}{\rm{c}}}^{\mspace{-6.mu}#2}{#1}}

\newcommand\OpTrancheLax[3]{{#3}{\backslash\mspace{-5mu}\backslash}_{\mspace{-.3mu}{\rm{l}}}^{\mspace{-6.mu}#2}{#1}}

\newcommand\DeuxFoncTrancheLax[2]{{#1}/\negmedspace/_{\negmedspace {\rm{l}}}{#2}}

\newcommand\DeuxFoncTrancheCoLax[2]{{#1}/\negmedspace/_{\negmedspace {\rm{c}}}{#2}}

\newcommand\DeuxFoncOpTrancheLax[2]{{#2}{\backslash\mspace{-5mu}\backslash}_{\mspace{-.3mu}{\rm{l}}}^{\mspace{-6.mu}}{#1}}

\newcommand\DeuxFoncOpTrancheCoLax[2]{{#2}{\backslash\mspace{-5mu}\backslash}_{\mspace{-.3mu}{\rm{c}}}^{\mspace{-6.mu}}{#1}}

\newcommand\DeuxFoncTrancheLaxCoq[3]{{#1}/\negmedspace/_{\negmedspace{\rm{l}}}^{#2}{#3}}

\newcommand\DeuxFoncTrancheCoLaxCoq[3]{{#1}/\negmedspace/_{\negmedspace{\rm{c}}}^{#2}{#3}}

\newcommand\DeuxFoncOpTrancheLaxCoq[3]{{#3}{\backslash\mspace{-5mu}\backslash}_{\mspace{-.3mu}{\rm{l}}}^{\mspace{-6.mu}#2}{#1}}

\newcommand\DeuxFoncOpTrancheCoLaxCoq[3]{{#3}{\backslash\mspace{-5mu}\backslash}_{\mspace{-.3mu}{\rm{c}}}^{\mspace{-6.mu}#2}{#1}}

\newcommand\Fibre[3]{{#1}^{#2}_{#3}}

\newcommand\FonctComp[4]{c^{#1}_{{#4},{#3},{#2}}}

\newcommand\CompDeuxUn[0]{}

\newcommand\CompDeuxZero[0]{\circ}



\newcommand\TransNatUnit[2]{{#1}_{#2}}

\newcommand\DeuxCellStructComp[3]{{#1}_{{#2},{#3}}}

\newcommand\DeuxCellStructId[2]{{#1}_{{#2}}}

\newcommand\Objets[1]{Ob({#1})}

\newcommand\UnCell[1]{Fl_{1}({#1})}


\newcommand\IdObjet[1]{1_{#1}}


\newcommand\TildeLax[1]{\widetilde{#1}}
\newcommand\BarreLax[1]{\overline{#1}}

\newcommand\FoncBenabou[0]{B}

\newcommand\LaxCanonique[1]{\eta_{{#1}}}
\newcommand\StrictCanonique[1]{\epsilon_{{#1}}}
\newcommand\TransLaxCanonique[0]{\eta}
\newcommand\TransStrictCanonique[0]{\epsilon}

\newcommand\TildeColax[1]{\widetilde{#1}^{c}}
\newcommand\BarreColax[1]{\overline{#1}^{c}}


\newcommand\ColaxCanonique[1]{\eta^{c}_{{#1}}}
\newcommand\StrictColaxCanonique[1]{\epsilon^{c}_{{#1}}}

\newcommand\DeuxInt[1]{\int_{#1}}
\newcommand\DeuxIntOp[1]{\int_{#1}^{op}}
\newcommand\DeuxIntCo[1]{\int_{#1}^{co}}
\newcommand\DeuxIntCoOp[1]{\int_{#1}^{coop}}

\newcommand\DeuxIntUnOp[1]{\int_{#1}^{1}}
\newcommand\DeuxIntOpUnOp[1]{\int_{#1}^{op,1}}
\newcommand\DeuxIntCoUnOp[1]{\int_{#1}^{co,1}}
\newcommand\DeuxIntCoOpUnOp[1]{\int_{#1}^{coop,1}}

\newcommand\DeuxIntDeuxOp[1]{\int_{#1}^{2}}
\newcommand\DeuxIntOpDeuxOp[1]{\int_{#1}^{op,2}}
\newcommand\DeuxIntCoDeuxOp[1]{\int_{#1}^{co, 2}}
\newcommand\DeuxIntCoOpDeuxOp[1]{\int_{#1}^{coop, 2}}

\newcommand\DeuxIntToutOp[1]{\int_{#1}^{1,2}}
\newcommand\DeuxIntOpToutOp[1]{\int_{#1}^{op,1,2}}
\newcommand\DeuxIntCoToutOp[1]{\int_{#1}^{co,1,2}}
\newcommand\DeuxIntCoOpToutOp[1]{\int_{#1}^{coop,1,2}}

\newcommand\trois{$3$\nobreakdash-}
\newcommand\deux{$2$\nobreakdash-}
\newcommand\un{$1$\nobreakdash-}

\newcommand\CatHom[3]{\underline{Hom}_{#1}(#2, #3)}
\newcommand\EnsHom[3]{{Hom}_{#1}(#2, #3)}


\newcommand\Ens{{\mathcal{E}\mspace{-2.mu}\it{ns}}}

\newcommand\Cat{{\mathcal{C}\mspace{-2.mu}\it{at}}}
\newcommand\Top{{\mathcal{T}\mspace{-2.mu}\it{op}}}
\newcommand\DeuxCat{\text{$2$-$\Cat$}}
\newcommand\CAT{{C\mspace{-2.mu}\it{AT}}}
\newcommand\DeuxCatLax{2\hbox{\protect\nobreakdash-}\kern1pt\Cat_{lax}}

\newcommand\DeuxCatDeuxCat{\underline{\DeuxCat}}

\newcommand\TronqueBete[0]{\tau_{\beta}}
\newcommand\TronqueInt[0]{\tau_{\iota}}


\newcommand\ClasseDeuxLocFond{localisateur fondamental de $\DeuxCat$}
\newcommand\ClasseDeuxLocFondLax{localisateur fondamental de $\DeuxCatLax$}
\newcommand\ClassesDeuxLocFond{localisateurs fondamentaux de $\DeuxCat$}
\newcommand\ClassesDeuxLocFondLax{localisateurs fondamentaux de $\DeuxCatLax$}

\newcommand\ClasseUnLocFond{localisateur fondamental de $\Cat$}
\newcommand\ClassesUnLocFond{localisateurs fondamentaux de $\Cat$}

\newcommand\UnLocFond[1]{{#1}}
\newcommand\DeuxLocFond[1]{\mathcal{#1}}
\newcommand\DeuxLocFondLax[1]{\mathcal{#1}}
\newcommand\DeuxLocFondLaxInduit[1]{\mathcal{#1}_{lax}}

\newcommand\UnLocFondMin{\UnLocFond{W}_{\infty}^{1}}
\newcommand\DeuxLocFondMin{\DeuxLocFond{W}_{\infty}^{2}}
\newcommand\DeuxLocFondLaxMin{\DeuxLocFond{W}_{\infty,lax}^{2}}

\newcommand\Localisation[2]{{#2}^{-1}{#1}}


\newcommand\UnNerf{N}
\newcommand\NerfLaxNor{N_{\rm{l,n}}}
\newcommand\NerfLax{N_{\rm{l}}}
\newcommand\NerfCatLaxNor{\underline{N}_{\rm{l,n}}}
\newcommand\NerfCatLax{\underline{N}_{\rm{l}}}
\newcommand\NerfHom{N_{2}}





\newcommand\FonLaxNorCat[2]{\underline{FonLaxNor}({#1}, {#2})}
\newcommand\FonStrictCat[2]{\underline{Fon}({#1}, {#2})}

\newcommand\SupUn{sup}
\newcommand\SupUnObjet[1]{sup_{#1}}


\newcommand\SupLaxNorObjet[1]{sup^{\rm{l,n}}_{#1}}

\newcommand\SupLaxObjet[1]{sup^{\rm{l}}_{#1}}


\newcommand\SupCatLaxNorObjet[1]{\underline{sup}^{\rm{l,n}}_{#1}}

\newcommand\SupHom{\underline{sup}}
\newcommand\SupHomObjet[1]{\underline{sup}_{#1}}



\newcommand\UnCatOp[1]{{#1}^{op}}

\newcommand\DeuxCatUnOp[1]{{#1}^{op}}
\newcommand\DeuxCatDeuxOp[1]{{#1}^{co}}
\newcommand\DeuxCatToutOp[1]{{#1}^{coop}}

\newcommand\DeuxFoncUnOp[1]{{#1}^{op}}
\newcommand\DeuxFoncDeuxOp[1]{{#1}^{co}}
\newcommand\DeuxFoncToutOp[1]{{#1}^{coop}}

\newcommand\DeuxTransUnOp[1]{{#1}^{op}}

\newcommand\DeuxTransDeuxOp[1]{{#1}^{co}}

\newcommand\DeuxTransToutOp[1]{{#1}^{coop}}

\newcommand\DeuxFoncteurTranche[1]{T^{#1}}
\newcommand\DeuxFoncteurTrancheArg[2]{T^{#1}_{#2}}

\newcommand\mathdeuxcat[1]{\mathcal{#1}}
\newcommand\DeuxCatPonct{e}

\newcommand\UnCatPonct{e}





\theoremstyle{plain}
\newtheorem{theo}{Théorème}[section]
\newtheorem{prop}[theo]{Proposition}
\newtheorem{lemme}[theo]{Lemme}
\newtheorem{corollaire}[theo]{Corollaire}
\newtheorem{thmp}{Théorème}[]

\theoremstyle{remark}
\newtheorem{exemple}[theo]{Exemple}
\newtheorem{rem}[theo]{Remarque}

\theoremstyle{definition}
\newtheorem{df}[theo]{Définition}

\newtheorem{paragr}[theo]{} 
\newtheorem{dfp}{Définition}[]

\numberwithin{equation}{theo}

\allowdisplaybreaks

\makeatletter 
\def\s@btitle{\relax} 
\def\subtitle#1{\gdef\s@btitle{#1}} 
\def\@maketitle{%
  \newpage 
  \null 
  \vskip 2em%
  \begin{center}%
  \let \footnote \thanks 
    {\LARGE \@title \par}%
                \if\s@btitle\relax 
                \else\typeout{[subtitle]}%
                        \vskip 1pc 
                        \begin{large}%
                                \textsl{\s@btitle}%
                                \par 
                        \end{large}%
                \fi 
    \vskip 1.5em%
    {\large 
      \lineskip .5em%
      \begin{tabular}[t]{c}%
        \@author 
      \end{tabular}\par}%
    \vskip 1em%
    {\large \@date}%
  \end{center}%
  \par 
  \vskip 1.5em} 
\makeatother

\title{La théorie de l'homotopie des \deux{}catégories}

\author{Jonathan Chiche}

\date{}

\begin{document}
\xyoption{v2}


\thispagestyle{empty}

\begin{center}
\vspace{\stretch{1}}
{\LARGE \textbf{Université Paris Diderot - Paris 7}}\\
{\Large \textbf{École doctorale de sciences mathématiques de Paris Centre}}

\vspace{\stretch{2}}

{\Huge \textsc{Thèse de doctorat}}

\vspace{\stretch{1}}

{\LARGE Discipline : Mathématiques}

\vspace{\stretch{3}}

{\large présentée par}
\vspace{\stretch{1}}

{\LARGE Jonathan CHICHE}

\vspace{\stretch{2}}
\hrule
\vspace{\stretch{1}}
{\LARGE \textbf{La théorie de l'homotopie des \deux{}catégories}}
\vspace{\stretch{1}}
\hrule
\vspace{\stretch{1}}

{\Large 
Directeur de thèse : Georges \textsc{Maltsiniotis}}

\vspace{\stretch{5}}

{\Large Soutenue le 26 septembre 2014 devant le jury composé de :}

\vspace{\stretch{1}}
{\Large
\begin{tabular}{l}
M. Clemens \textsc{Berger} \\
M. Lawrence \textsc{Breen} \\
M. Denis-Charles \textsc{Cisinski} \\
M. Georges \textsc{Maltsiniotis} \\
M. Paul-André \textsc{Melliès} \\
M. Ieke \textsc{Moerdijk} \\
\end{tabular}
}

\vspace{\stretch{2}}

{\Large Rapportée par :}

\vspace{\stretch{1}}
{\Large
\begin{tabular}{l}
M. Clemens \textsc{Berger} \\
M. Steve \textsc{Lack} \\
\end{tabular}
}
\vspace{\stretch{-4}}
\end{center}

\newpage

\tableofcontents

\chapter*{Introduction}

\addcontentsline{toc}{chapter}{Introduction} \markboth{Introduction}{}

\section*{Présentation générale}

Ce travail s'inscrit dans une entreprise de généralisation de la théorie de l'homotopie « de Grothendieck » aux catégories supérieures. 

Les objets de base de la théorie de l'homotopie sont, classiquement, les espaces topologiques ou les CW\nobreakdash-complexes. Les ensembles simpliciaux permettent une approche plus combinatoire. L'équivalence des deux points de vue se précise au moyen de la théorie des catégories de modèles de Quillen : il existe une équivalence de Quillen entre la catégorie des espaces topologiques $\Top$ et celle des ensembles simpliciaux $\EnsSimp$, ces deux catégories se trouvant munies des structures de catégories de modèles dégagées par Quillen dans \cite{QuillenHomotopical}. On appellera les équivalences faibles de ces structures de catégories de modèles les \emph{équivalences faibles topologiques} et les \emph{équivalences faibles simpliciales} respectivement. Une équivalence de Quillen entre ces deux structures est fournie par le couple formé des foncteurs de réalisation géométrique et « ensemble simplicial singulier ». Ce couple de foncteurs adjoints s'obtient, par une construction classique de Kan, à partir du foncteur canonique $\Delta \to \Top$ de la catégorie des simplexes vers celle des espaces topologiques. 

On dispose d'un foncteur canonique de $\Delta$ vers la catégorie $\Cat$ des petites catégories. Le même procédé de Kan fournit une adjonction entre $\EnsSimp$ et $\Cat$, adjonction formée du \emph{foncteur de réalisation catégorique} $c : \EnsSimp \to \Cat$, adjoint à gauche, et du \emph{foncteur nerf} $\UnNerf : \Cat \to \EnsSimp$, adjoint à droite. En définissant les équivalences faibles de $\Cat$ comme les foncteurs dont le nerf est une équivalence faible simpliciale, on pourrait espérer que, de manière analogue à ce qui se produit dans le cas des catégories $\EnsSimp$ et $\Top$, les foncteurs $c$ et $\UnNerf$ induisent des équivalences de catégories entre les catégories localisées de $\EnsSimp$ et $\Cat$ par leurs équivalences faibles, catégories localisées que nous appellerons \emph{catégories homotopiques}. L'examen des propriétés homotopiques du foncteur $c$ montre le caractère illusoire d'un tel espoir, ce foncteur se comportant si mal de ce point de vue qu'il envoie des sphères simpliciales sur des petites catégories contractiles. Les auteurs de \cite{FritschLatch} indiquent que cette observation ainsi que le fait, dégagé par Gabriel et Zisman, que la réalisation catégorique d'un ensemble simplicial ne dépend que de son \deux{}squelette, ont longtemps pu laisser penser que les petites catégories, d'un point de vue homotopique, étaient presque triviales. Des résultats de Quillen, Lee et Latch montrent qu'il n'en est rien. Plus précisément, le foncteur nerf induit une équivalence de catégories entre les catégories homotopiques de $\Cat$ et de $\EnsSimp$. En revanche, comme on l'a déjà mentionné, l'adjoint à gauche $c$ du foncteur nerf se comporte trop mal pour induire un inverse au niveau de ces catégories homotopiques. Un tel inverse est toutefois donné par le foncteur induit par le foncteur \emph{catégorie des simplexes}
$$
\begin{aligned}
i_{\Delta} : \EnsSimp &\to \Cat
\\
X &\mapsto \Delta / X,
\end{aligned}
$$
qui peut se voir comme un cas particulier d'une construction connue sous le nom de \emph{construction de Grothendieck}, associant à tout foncteur $\DeuxCatUnOp{A} \to \Cat$ une catégorie fibrée sur $A$, pour toute petite catégorie $A$. Pour montrer que les foncteurs $i_{\Delta}$ et $\UnNerf$ induisent des équivalences quasi-inverses l'une de l'autre entre les catégories homotopiques de $\EnsSimp$ et de $\Cat$, on utilise entre autres résultats le fait que, pour toute petite catégorie $A$, il existe une équivalence faible 
$$
\SupUn_{A} : \Delta / \UnNerf A \to A
$$
naturelle en $A$. Une démonstration particulièrement simple de ce résultat s'appuie sur le Théorème A de Quillen \cite[p. 93, Théorème A]{QuillenK}. 

Si le couple $(c, \UnNerf)$ n'induit pas un couple d'équivalences quasi-inverses au niveau des ca\-té\-go\-ries homotopiques de $\EnsSimp$ et de $\Cat$, le couple d'endofoncteurs adjoints $(Sd, Ex)$ de $\EnsSimp$ permet de contourner l'obstruction se présentant, comme l'ont démontré Thomason et Fritsch \cite{FritschLatch}, $Sd$ et $Ex$ désignant respectivement les foncteurs de subdivision et d'extension \cite{Kan}. De façon remarquable, le couple $(c Sd^{2}, Ex^{2} \UnNerf)$ forme une équivalence de Quillen entre des structures de catégories de modèles sur $\Cat$ et sur $\EnsSimp$, structures dont les équivalences faibles sont celles que nous considérons \cite{Thomason}, la structure de catégorie de modèles sur $\EnsSimp$ étant celle déjà évoquée. 

Dans \emph{Pursuing Stacks} \cite{Poursuite}, Grothendieck développe une théorie de l'homotopie fondée non pas sur la catégorie des espaces topologiques, non plus que sur celle des ensembles simpliciaux, mais sur la catégorie $\Cat$, « vue avec un œil de géomètre par l'ensemble d'intuitions, étonnamment riche, provenant des topos » \cite{LettreGrothendieckThomason}. Son travail l'amène à dégager la notion de \emph{catégorie test}, petite catégorie dont le topos des préfaisceaux modélise canoniquement les types d'homotopie, notion dont la catégorie des simplexes $\Delta$ constitue le paradigme historique. S'apercevant qu'il n'a, pour étudier la théorie des catégories test, utilisé qu'un petit nombre de propriétés formelles des équivalences faibles de $\Cat$, il définit la notion de \emph{localisateur fondamental} comme une classe de morphismes de $\Cat$ vérifiant ces propriétés, dont la plus importante est le Théorème A de Quillen. À tout localisateur fondamental sont associées diverses notions, non seulement celle de catégorie test et ses variantes, mais également, par exemple, celles de foncteur propre et de foncteur lisse, définies par des propriétés de changement de base analogues à celles des théorèmes de changement de base propre ou lisse en géométrie algébrique. La théorie de l'homotopie de Grothendieck généralise une part fondamentale de la théorie « classique » de l'homotopie simpliciale, dont elle propose une approche conceptuelle remarquablement fructueuse. Elle est dégagée et développée par Grothendieck dans \cite{Poursuite}, présentée de façon plus « bourbachique » par Maltsiniotis dans \cite{THG} et développée plus avant par Cisinski dans sa thèse \cite{TheseCisinski} puis dans \cite{PMTH}. C'est à Cisinski que l'on doit notamment la démonstration de deux conjectures fondamentales de Grothendieck en ce domaine : la \emph{minimalité} du localisateur fondamental « classique » de $\Cat$, c'est-à-dire de la classe des morphismes de $\Cat$ dont le nerf est une équivalence faible simpliciale, et l'existence, \emph{pour essentiellement tout localisateur fondamental}\footnote{La seule hypothèse, anodine, est de nature ensembliste.}, d'une structure de catégorie de modèles sur $\Cat$ dont le localisateur fondamental considéré constitue précisément la classe des équivalences faibles, généralisant le résultat de Thomason. Ces structures sont obtenues par « transfert » à partir de structures de catégories de modèles sur la catégorie des ensembles simpliciaux. On reviendra sur ces deux résultats. Nous démontrons notamment un analogue \deux{}catégorique du premier dans cette thèse. Les résultats de \cite{AraMaltsi} permettent d'en déduire un analogue du second. 

La théorie de l’homotopie des catégories supérieures a fait l'objet d'importants travaux ces dernières années. Dans la catégorie $\DeuxCat$, dont les objets sont les petites \deux{}catégories (strictes) et les morphismes les \DeuxFoncteursStricts{}, se distinguent trois notions possibles d’é\-qui\-va\-lence faible. La plus forte est celle d'équivalence de \deux{}catégories. Une notion d’équivalence faible moins restrictive est celle d'équivalence « de Dwyer-Kan », \deux{}foncteur induisant des équivalences faibles entre les nerfs des catégories de morphismes et une équivalence des catégories obtenues des \deux{}catégories en remplaçant les catégories de morphismes par leur $\pi_{0}$. La catégorie homotopique obtenue en inversant ces équivalences faibles est, au moins conjecturalement, celle des $(\infty, 1)$\nobreakdash-catégories. La définition du foncteur nerf se généralisant aux catégories supérieures \cite{Duskin, StreetAOS}, on peut enfin s’intéresser, et c'est ce que nous ferons, aux équivalences faibles « de Thomason », morphismes de $\DeuxCat$ dont le nerf est une équivalence faible simpliciale.

Dans \cite{BC}, Bullejos et Cegarra démontrent une généralisation aux \DeuxFoncteursStricts{} du Théo\-rème A de Quillen. C'est le point de départ de la généralisation \deux{}catégorique de la notion de localisateur fondamental, et donc d'une théorie de l'homotopie \deux{}catégorique « à la Grothendieck », dont les bases et les résultats fondamentaux se trouvent exposés dans cette thèse une fois le formalisme et les résultats \deux{}catégoriques pertinents dégagés. L'adoption du point de vue de Grothendieck permet notamment de démontrer l'équivalence en un sens remarquablement fort des théories homotopiques de $\Cat$ et de $\DeuxCat$. 

Avant de donner davantage de détails, terminons ce bref aperçu par un retour à la théorie des catégories de modèles de Quillen, bien qu'elle ne se trouve pas abordée directement dans cette étude. Worytkiewicz, Hess, Parent et Tonks affirment construire dans \cite{WHPT} une structure de catégorie de modèles sur $\DeuxCat$ avec les équivalences faibles de Thomason comme équivalences faibles et Quillen\nobreakdash-équivalente à la structure de catégorie de modèles classique sur les ensembles simpliciaux. Maltsiniotis ayant noté que les auteurs ne donnaient aucun argument démontrant que l'adjonction de Quillen considérée constituait une équivalence de Quillen, l'un des objectifs de cette thèse était d'établir cet argument manquant. On en trouvera la démonstration dans le présent travail. Mais, en fait, comme l'ont remarqué plus tard Ara et Maltsiniotis, plusieurs passages cruciaux de la démonstration de l'existence de cette structure de catégorie de modèles sont incorrects dans \cite{WHPT}. Dans \cite{AraMaltsi}, Ara et Maltsiniotis démontrent l'existence de cette adjonction de Quillen et déduisent de nos résultats qu'il s'agit bien d'une équivalence de Quillen. De plus, Ara, dans \cite{Ara}, explique comment les résultats de \cite{AraMaltsi} se combinent aux nôtres, exposés dans \cite{ArticleLocFondMoi}, pour établir, \emph{pour essentiellement tout localisateur fondamental de $\DeuxCat$}, l'existence d'une structure de catégorie de modèles sur $\DeuxCat$ dont le localisateur fondamental en question constitue la classe des équivalences faibles. Le résultat se précise de façon remarquable pour donner une équivalence de Quillen entre cette structure de catégorie de modèles et celle associée sur $\Cat$, l'une étant de plus propre si et seulement si l'autre l'est. Les résultats de Cisinski permettent en outre de montrer que ces structures sont Quillen-équivalentes aux structures de catégories de modèles associées sur la catégorie des ensembles simpliciaux. 

\section*{Présentation des résultats}

Nous présentons dans cette section trois de nos résultats : une version \deux{}catégorique très générale du Théorème A de Quillen, l'équivalence des catégories homotopiques de $\Cat$ et de $\DeuxCat$ et une correspondance bijective entre localisateurs fondamentaux de $\Cat$ et de $\DeuxCat$. L'énoncé de ces résultats s'appuie sur le formalisme introduit dans le premier chapitre, qui contient lui-même des résultats originaux que nous ne mentionnons pas ici.

Pour tout morphisme $u : A \to B$ de $\Cat$ et tout objet $b$ de $B$, nous noterons $A/b$ la catégorie dont les objets sont les couples $(a, p : u(a) \to b)$, avec $a$ un objet de $A$ et $p$ un morphisme de $B$, et dont les morphismes de $(a,p)$ vers $(a',p')$ sont les morphismes $f : a \to a'$ de $A$ tels que $p' u(f) = p$. 

Pour tout diagramme commutatif
$$
\xymatrix{
A 
\ar[rr]^{u}
\ar[dr]_{w}
&&
B
\ar[dl]^{v}
\\
&
C
}
$$
dans $\Cat$ et tout objet $c$ de $C$, on notera $u/c$ le foncteur défini par
$$
\begin{aligned}
A/c &\to B/c
\\
(a, p) &\mapsto (u(a), p)
\\
f &\mapsto u(f).
\end{aligned}
$$

Cette définition permet d'énoncer le cas relatif du Théorème A de Quillen. Nous noterons $\UnLocFondMin$ la classe des morphismes de $\Cat$ dont le nerf est une équivalence faible simpliciale.

\begin{thmp}[Quillen]
Soit
$$
\xymatrix{
A 
\ar[rr]^{u}
\ar[dr]_{w}
&&
B
\ar[dl]^{v}
\\
&
C
}
$$
un diagramme commutatif dans $\Cat$. Supposons que, pour tout objet $c$ de $C$, le foncteur $u/c$ soit dans $\UnLocFondMin$. Alors $u$ est dans $\UnLocFondMin$. 
\end{thmp}


Rappelons maintenant la notion de localisateur fondamental de $\Cat$, due à Grothendieck\footnote{Nous appelons « localisateur fondamental de $\Cat$ » ce que Maltsiniotis appelle « localisateur fondamental » dans \cite{THG}. La terminologie de Grothendieck varie dans ses écrits.} ; c'est une classe de foncteurs entre petites catégories permettant de « faire de l'homotopie » dans $\Cat$. Nous noterons $\UnCatPonct$ la catégorie ponctuelle, n'ayant qu'un seul objet et qu'un seul morphisme. 

\begin{dfp}[Grothendieck]\label{DefUnLocFondIntro}
On appelle \emph{\ClasseUnLocFond{}} une classe $\UnLocFond{W}$ de morphismes de $\Cat$ vérifiant les conditions suivantes.
\begin{itemize}
\item[LA] La classe $\UnLocFond{W}$ est faiblement saturée. Autrement dit, les propriétés suivantes sont vérifiées. 
\begin{itemize}
\item[FS1] Les identités des objets de $\Cat$ sont dans $\UnLocFond{W}$.
\item[FS2] Si deux des trois flèches d'un triangle commutatif de morphismes de $\Cat$ sont dans $\UnLocFond{W}$, alors la troisième l'est aussi.
\item[FS3] Si $i : X \to Y$ et $r : Y \to X$ sont des morphismes de $\Cat$ vérifiant $ri = 1_{X}$ et si $ir$ est dans $\UnLocFond{W}$, alors il en est de même de $r$ (et donc aussi de $i$ en vertu de ce qui précède). 
\end{itemize}
\item[LB] Si $A$ est une petite catégorie admettant un objet final, alors le morphisme canonique $A \to \UnCatPonct$ est dans $\UnLocFond{W} $.
\item[LC] Pour tout diagramme commutatif 
$$
\xymatrix{
A 
\ar[rr]^{u}
\ar[dr]_{w}
&&
B
\ar[dl]^{v}
\\
&
C
}
$$
dans $\Cat$, si, pour tout objet $c$ de $C$, le foncteur $u/c$ est dans $\UnLocFond{W}$, alors $u$ est dans $\UnLocFond{W}$. 
\end{itemize}
\end{dfp} 

On remarque que l'intersection de localisateurs fondamentaux de $\Cat$ en est un. On définit le \emph{localisateur fondamental minimal de $\Cat$} comme l'intersection de tous les localisateurs fondamentaux de $\Cat$. C'est donc un localisateur fondamental de $\Cat$, contenu dans tous les autres. Le théorème \ref{CisinskiGrothendieckIntro} a été conjecturé par Grothendieck et démontré par Cisinski. 

\begin{thmp}[Cisinski]\label{CisinskiGrothendieckIntro}
Le localisateur fondamental minimal de $\Cat$ est $\UnLocFondMin$.
\end{thmp}

Pour tout \DeuxFoncteurCoLax{} (c'est-à-dire ce que d'autres appellent « \deux{}foncteur oplax ») \mbox{$u : \mathdeuxcat{A} \to \mathdeuxcat{B}$} entre petites \deux{}catégories et tout objet $b$ de $\mathdeuxcat{B}$, l'on définit une \deux{}catégorie $\TrancheCoLax{\mathdeuxcat{A}}{u}{b}$ comme suit. Ses objets sont les couples $(a, p : u(a) \to b)$, avec $a$ un objet de $\mathdeuxcat{A}$ et $p$ une \un{}cellule de $\mathdeuxcat{B}$. Les \un{}cellules de $(a, p)$ vers $(a', p')$ sont les couples $(f : a \to a', \alpha : p' u(f) \Rightarrow p)$, avec $f$ une \un{}cellule de $\mathdeuxcat{A}$ et $\alpha$ une \deux{}cellule de $\mathdeuxcat{B}$. Les \deux{}cellules de $(f, \alpha)$ vers $(f', \alpha')$ sont les \deux{}cellules $\beta : f \Rightarrow f'$ dans $\mathdeuxcat{A}$ telles que $\alpha' \CompDeuxUn (p' \CompDeuxZero u(\beta)) = \alpha$. Les diverses unités et compositions sont définies de façon « évidente ». Pour tout diagramme commutatif
$$
\xymatrix{
\mathdeuxcat{A} 
\ar[rr]^{u}
\ar[dr]_{w}
&&\mathdeuxcat{B}
\ar[dl]^{v}
\\
&\mathdeuxcat{C}
}
$$
de \deux{}foncteurs colax entre petites \deux{}catégories, on définit un \DeuxFoncteurCoLax{}
$$
\begin{aligned}
\DeuxFoncTrancheCoLax{u}{c} : \TrancheCoLax{\mathdeuxcat{A}}{w}{c} &\to \TrancheCoLax{\mathdeuxcat{B}}{v}{c}
\\
(a,p)&\mapsto (u(a),p)
\\
(f,\alpha) &\mapsto (u(f), \alpha)
\\
\beta &\mapsto u(\beta). 
\end{aligned}
$$

La catégorie $\DeuxCat$ est une sous-catégorie de la catégorie $\DeuxCatLax$, dont les objets sont les petites \deux{}catégories et dont les morphismes sont les \emph{\DeuxFoncteursLax}. De façon analogue à celle exposée ci-dessus, pour tout morphisme $u : \mathdeuxcat{A} \to \mathdeuxcat{B}$ de $\DeuxCatLax$ et tout objet $b$ de $\mathdeuxcat{B}$, l'on définit une \deux{}catégorie $\TrancheLax{\mathdeuxcat{A}}{u}{b}$ comme suit. Ses objets sont les couples $(a, p : u(a) \to b)$ avec $a$ un objet de $\mathdeuxcat{A}$ et $p$ une \un{}cellule de $\mathdeuxcat{B}$. Les \un{}cellules de $(a,p)$ vers $(a',p')$ sont les couples $(f : a \to a', \alpha : p \Rightarrow p' u(f))$ avec $f$ une \un{}cellule de $\mathdeuxcat{A}$ et $\alpha$ une \deux{}cellule de $\mathdeuxcat{B}$. Les \deux{}cellules de $(f, \alpha)$ vers $(f', \alpha')$ sont les \deux{}cellules $\beta : f \Rightarrow f'$ de $\mathdeuxcat{A}$ telles que $(p' \CompDeuxZero u(\beta)) \alpha = \alpha'$. Les diverses unités et compositions sont définies de façon « évidente ». De plus, pour tout diagramme commutatif
$$
\xymatrix{
\mathdeuxcat{A} 
\ar[rr]^{u}
\ar[dr]_{w}
&&\mathdeuxcat{B}
\ar[dl]^{v}
\\
&\mathdeuxcat{C}
}
$$
dans $\DeuxCatLax$, on définit un \DeuxFoncteurLax{} 
$$
\begin{aligned}
\DeuxFoncTrancheLax{u}{c} : \TrancheLax{\mathdeuxcat{A}}{w}{c} &\to \TrancheLax{\mathdeuxcat{B}}{v}{c}
\\
(a,p)&\mapsto (u(a),p)
\\
(f,\alpha) &\mapsto (u(f), \alpha)
\\
\beta &\mapsto u(\beta). 
\end{aligned}
$$
Rappelons de plus que, de même qu'il existe une notion de transformation naturelle (ou « morphisme de foncteurs ») dans $\Cat$, il existe des notions, duales l'une de l'autre, de \emph{\DeuxTransformationLax} et d'\emph{\DeuxTransformationCoLax} entre \DeuxFoncteursLax{} (de même qu'entre \DeuxFoncteursCoLax{}). Dans la littérature, ces notions se trouvent parfois désignées par les termes « transformation lax » et « transformation oplax ». Nous établirons toutes nos conventions terminologiques dans le premier chapitre. Pour tout diagramme de \DeuxFoncteursLax{}
$$
\xymatrix{
\mathdeuxcat{A} 
\ar[rr]^{u}
\ar[dr]_{w}
&&\mathdeuxcat{B}
\dtwocell<\omit>{<7.3>\sigma}
\ar[dl]^{v}
\\
&\mathdeuxcat{C}
&{}
}
$$
commutatif à l'\DeuxTransformationCoLax{} $\sigma : vu \Rightarrow w$ près seulement, on peut, généralisant la construction ci-dessus, définir un \DeuxFoncteurLax{} 
$$
\DeuxFoncTrancheLaxCoq{u}{\sigma}{c} : \TrancheLax{\mathdeuxcat{A}}{w}{c} \to \TrancheLax{\mathdeuxcat{B}}{v}{c}. 
$$ 

Rappelons enfin que le foncteur nerf se généralise aux \deux{}catégories. Les diverses définitions « raisonnables » possibles sont équivalentes du point de vue homotopique. Le nerf que nous noterons $\NerfLaxNor$ est fonctoriel sur les morphismes de $\DeuxCat$ et définit donc un foncteur \mbox{$\NerfLaxNor : \DeuxCat \to \EnsSimp$} ; celui que nous noterons $\NerfLax$ l'est sur les morphismes de $\DeuxCatLax$ et définit donc un foncteur $\NerfLax : \DeuxCatLax \to \EnsSimp$. Nous noterons $\DeuxLocFondMin$ la classe des morphismes de $\DeuxCat$ dont l'image par le foncteur $\NerfLaxNor$ (ou, de façon équivalente, par le foncteur $\NerfLax$) est une équivalence faible simpliciale. Nous appellerons ces morphismes les \emph{équivalences faibles} de $\DeuxCat$. Nous appellerons de plus \emph{équivalences faibles} les morphismes de $\DeuxCatLax$ dont l'image par le foncteur $\NerfLax$ est une équivalence faible simpliciale. 

\subsection*{Théorèmes A 2-catégoriques}

Il est naturel de se demander si les catégories localisées de $\Cat$ et de $\DeuxCat$ par leurs équivalences faibles sont équivalentes. Nous répondons par l'affirmative à cette question.  

Pour ce faire, on pourrait naïvement souhaiter trouver un remplacement fonctoriel \un{}ca\-té\-go\-rique de toute petite \deux{}catégorie qui lui serait faiblement équivalent, mais un argument simple montre le caractère illusoire d'un tel espoir : certaines petites \deux{}catégories ne sont la source (\emph{resp.} le but) d'\emph{aucune} équivalence faible de $\DeuxCat$ dont le but (\emph{resp.} la source) est une petite \un{}catégorie (proposition \ref{ObstructionGeorges}). Toutefois, l'obstruction disparaît si l'on autorise les zigzags d'équivalences faibles ou si l'on élargit les possibilités aux morphismes qui ne respectent les unités et la composition qu'à \deux{}cellule près — morphismes définis par Bénabou dans son article fondateur de la théorie des bicatégories \cite{BenabouIntro}. Cette dernière remarque illustre l'importance de ces morphismes dans l'étude homotopique des catégories supérieures, importance que l'on peut déjà percevoir dans \cite{BC}, \cite{Cegarra} ou \cite{WHPT}, et c'est l'une des motivations pour une étude homotopique des morphismes lax entre \deux{}catégories, même indépendamment de leur rôle dans la théorie des localisateurs fondamentaux, sur lequel on reviendra.  

Nous avons déjà rappelé que, pour établir l'équivalence des catégories homotopiques de $\EnsSimp$ et $\Cat$, une stratégie possible consiste à utiliser le Théorème A de Quillen pour démontrer que le foncteur $\SupUn_{A} : \Delta / \UnNerf A \to A$ est une équivalence faible pour toute petite catégorie $A$. On a rappelé que le foncteur nerf se généralisait aux \deux{}catégories. On constate toutefois que les divers analogues \deux{}catégoriques possibles du foncteur $\SupUn_{A}$ ci-dessus ne sont pas stricts en général, ce qui confirme l'importance des morphismes lax en théorie de l'homotopie. Par analogie avec le cas de $\Cat$, dans le but d'établir l'équivalence des catégories homotopiques de $\EnsSimp$ et de $\DeuxCat$, il apparaît donc particulièrement pertinent d'essayer de dégager une généralisation du Théorème A de Quillen aux morphismes lax entre \deux{}catégories. Lorsque nous nous sommes renseigné sur l'état de cette question dans la littérature, Cegarra nous a communiqué la thèse de Matias del Hoyo \cite{TheseDelHoyo} et ce dernier, dont certains résultats se trouvent exposés dans l'article \cite{ArticleDelHoyo}, nous a transmis ses notes \cite{NotesDelHoyo}. Les deux premiers textes établissent un analogue du Théorème A de Quillen pour les \deux{}foncteurs lax \emph{normalisés}. Del Hoyo étend ce résultat aux \deux{}foncteurs lax généraux dans \cite{NotesDelHoyo}, mais il ne traite pas le cas relatif. Il déduit son résultat d'une observation portant sur une propriété homotopique d'un adjoint à gauche, construit par Bénabou, de l'inclusion canonique $\DeuxCat \hookrightarrow \DeuxCatLax$. 

Le passage au cas relatif du Théorème A pour les \DeuxFoncteursLax{} ne relève pas de la routine. Nous l'avons dégagé à partir d'une autre propriété de l'adjonction de Bénabou. En prenant connaissance de ce cas relatif, Maltsiniotis a conjecturé une version \deux{}catégorique plus générale encore (et, peut-être, la plus générale possible), sous l'hypothèse d'un triangle non pas commutatif en général, mais commutatif \emph{à \deux{}cellule près} seulement. Les développements conceptuels et les résultats généraux du premier chapitre nous ont permis de la démontrer.  

\begin{thmp}\label{TheoremeALaxTrancheLaxCoqIntro}
Soit
$$
\xymatrix{
\mathdeuxcat{A} 
\ar[rr]^{u}
\ar[dr]_{w}
&&\mathdeuxcat{B}
\dtwocell<\omit>{<7.3>\sigma}
\ar[dl]^{v}
\\
&\mathdeuxcat{C}
&{}
}
$$
un diagramme de \DeuxFoncteursLax{} commutatif à l'\DeuxTransformationCoLax{} $\sigma : vu \Rightarrow w$ près seulement. Supposons que, pour tout objet $c$ de $\mathdeuxcat{C}$, le \DeuxFoncteurLax{} 
$$
\DeuxFoncTrancheLaxCoq{u}{\sigma}{c} : \TrancheLax{\mathdeuxcat{A}}{w}{c} \to \TrancheLax{\mathdeuxcat{B}}{v}{c}
$$ 
soit une équivalence faible. Alors $u$ est une équivalence faible.
\end{thmp}


Ce résultat admet bien entendu des variantes duales. 

Les généralisations du Théorème A permettent de contourner l'obstruction mentionnée au début de cette sous-section. On peut même le faire de deux façons, comme il était possible de le conjecturer : soit par l'intermédiaire d'un zigzag de morphismes stricts, que l'on peut rendre de longueur minimale, c'est-à-dire de longueur $2$ (voir la démonstration du théorème \ref{EqCatLocCatDeuxCat}), soit par l'intermédiaire d'un remplacement fonctoriel \un{}catégorique de toute petite \deux{}catégorie, faiblement équivalent à la \deux{}catégorie de départ dans $\DeuxCatLax$ (proposition \ref{SupLaxW}). Les propriétés de l'adjonction de Bénabou entre $\DeuxCat$ et $\DeuxCatLax$ permettent alors de repasser dans $\DeuxCat$ (voir le théorème \ref{EqCatLocDeuxCatDeuxCatLax}). Ces résultats se généralisent en fait, de façon naturelle, dans le cas d'un localisateur fondamental arbitraire, ce que nous expliquons dans la sous-section suivante.  

\subsection*{Équivalence des catégories homotopiques de $\Cat$ et de $\DeuxCat$}


Pour généraliser la définition de \ClasseUnLocFond{} au contexte \deux{}catégorique, il nous reste à dégager l'analogue pertinent, de ce point de vue — c'est-à-dire du point de vue homotopique —, de la notion d'objet final d'une petite catégorie. C'est la raison d'être de la définition \ref{DefOF2Intro}.

\begin{dfp}\label{DefOF2Intro}
On dira qu'un objet $z$ d'une \deux{}catégorie $\mathdeuxcat{A}$ \emph{admet un objet final} si, pour tout objet $a$ de $\mathdeuxcat{A}$, la catégorie $\CatHom{\mathdeuxcat{A}}{a}{z}$ admet un objet final.
\end{dfp}

À ce stade, la définition \ref{DefDeuxLocFondIntro} ne devrait pas provoquer la surprise. 

\begin{dfp}\label{DefDeuxLocFondIntro}
Un \emph{localisateur fondamental de $\DeuxCat$} est une classe $\DeuxLocFond{W}$ de morphismes de $\DeuxCat$ vérifiant les propriétés suivantes.
\begin{itemize}
\item[LF1] La classe $\DeuxLocFond{W}$ est faiblement saturée.

\item[LF2] Si une petite \deux{}catégorie $\mathdeuxcat{A}$ admet un objet admettant un objet final, alors le morphisme canonique $\mathdeuxcat{A} \to e$ est dans $\DeuxLocFond{W}$.

\item[LF3] Si 
$$
\xymatrix{
\mathdeuxcat{A} 
\ar[rr]^{u}
\ar[dr]_{w}
&&\mathdeuxcat{B}
\ar[dl]^{v}
\\
&\mathdeuxcat{C}
}
$$
désigne un diagramme commutatif dans $\DeuxCat$ et si, pour tout objet $c$ de $\mathcal{C}$, le \DeuxFoncteurStrict{}
$$
\DeuxFoncTrancheCoLax{u}{c} : \TrancheCoLax{\mathdeuxcat{A}}{w}{c} \to \TrancheCoLax{\mathdeuxcat{B}}{v}{c} 
$$
est dans $\DeuxLocFond{W}$, alors $u$ est dans $\DeuxLocFond{W}$.
\end{itemize}
\end{dfp}


Les résultats relatifs à la théorie de l'homotopie de $\DeuxCat$ ne dépendent, conformément à ce que le point de vue de Grothendieck permettait de prédire, que des axiomes des localisateurs fondamentaux de $\DeuxCat$. C'est en particulier le cas de l'équivalence homotopique de $\Cat$ et de $\DeuxCat$. Pour toute catégorie $A$, nous noterons $\UnCell{A}$ la classe des morphismes de $A$.

\begin{thmp}\label{EqCatLocCatDeuxCatIntro}
Pour tout localisateur fondamental $\DeuxLocFond{W}$ de $\DeuxCat$, l'inclusion canonique $\Cat \hookrightarrow \DeuxCat$ induit des équivalences de catégories entre les catégories localisées
$$
\Localisation{\DeuxCat}{\DeuxLocFond{W}} \simeq \Localisation{\Cat}{(\DeuxLocFond{W} \cap \UnCell{\Cat})}.
$$
\end{thmp}

\subsection*{Correspondance entre localisateurs fondamentaux de $\Cat$ et de $\DeuxCat$}

Au début de cette thèse, la définition des localisateurs fondamentaux de $\DeuxCat$ n'était pas encore dégagée. La théorie des localisateurs fondamentaux, pour conjecturale qu'elle fût encore, n'en contenait pas moins quelques principes directeurs devant permettre de vérifier la correction des concepts. L'un d'entre eux requérait l'invariance de l'axiomatique par dualité, question que nous résolvons dans la section \ref{SectionDeuxLocFond}. Un autre stipulait une hypothétique correspondance entre localisateurs fondamentaux de $\Cat$ et de $\DeuxCat$. Autrement dit, les classes de morphismes « permettant de faire de l'homotopie dans $\Cat$ » devaient correspondre à celles « permettant de faire de l'homotopie dans $\DeuxCat$ ». Nous établissons cette correspondance fondamentale dans le troisième chapitre du présent travail. De façon surprenante, la démonstration que nous avons trouvée de cette correspondance passe une fois encore par l'étude des propriétés des morphismes non-stricts, en définissant la notion de \emph{\ClasseDeuxLocFondLax{}} (définition \ref{DefDeuxLocFondLax}). Cette dernière permet d'expliciter la correspondance cherchée, au moyen du nerf $\NerfLaxNor : \DeuxCat \to \EnsSimp$ et du foncteur « catégorie des simplexes » $i_{\Delta} : \EnsSimp \to \Cat$. 

\begin{thmp}\label{IsoUnLocFondDeuxLocFondIntro}
Les applications
$$
\begin{aligned}
\mathcal{P}(\UnCell{\Cat}) &\to \mathcal{P}(\UnCell{\DeuxCat})
\\
\UnLocFond{W} &\mapsto {\NerfLaxNor}^{-1} (i_{\Delta}^{-1} (\UnLocFond{W}))
\end{aligned}
$$
et
$$
\begin{aligned}
\mathcal{P}(\UnCell{\DeuxCat}) &\to \mathcal{P}(\UnCell{\Cat})
\\
\DeuxLocFond{W} &\mapsto \DeuxLocFond{W} \cap \UnCell{\Cat}
\end{aligned}
$$
induisent des isomorphismes inverses l'un de l'autre entre la classe ordonnée par inclusion des \ClassesUnLocFond{} et la classe ordonnée par inclusion des \ClassesDeuxLocFond{}. De plus, ces isomorphismes induisent des équivalences de catégories au niveau des catégories localisées.
\end{thmp}

Pour montrer ce résultat, le plus important de notre travail, nous établissons d'abord une correspondance analogue entre localisateurs fondamentaux de $\DeuxCat$ et ceux de $\DeuxCatLax$ (théorème \ref{IsoDeuxLocFondDeuxLocFondLax}), puis entre ceux de $\Cat$ et ceux de $\DeuxCatLax$ (théorème \ref{IsoUnLocFondDeuxLocFondLax}). Une étape fondamentale consiste à démontrer que la catégorie des simplexes du nerf d'une petite \deux{}catégorie quelconque est faiblement équivalente à cette dernière, généralisant le résultat analogue portant sur le foncteur $\SupUn_{A} : \Delta / \UnNerf A \to A$ pour toute petite catégorie $A$. 

Le théorème \ref{IsoUnLocFondDeuxLocFondIntro} permet de plus de démontrer le caractère minimal du localisateur fondamental $\DeuxLocFondMin$ (théorème \ref{TheoDeuxLocFondMin}), analogue \deux{}catégorique du théorème \ref{CisinskiGrothendieckIntro} de Cisinski. 

La correspondance que nous avons démontrée laisse penser qu'il ferait sens de parler de « localisateur fondamental », indépendamment de toute catégorie « de base », qu'il s'agisse de $\Cat$, $\DeuxCat$, $\DeuxCatLax$ ou toute autre catégorie de structures supérieures à laquelle des travaux ultérieurs pourraient s'intéresser.

\section*{Plan de la thèse}

Après la présente introduction, ce travail s'organise en trois chapitres. 

Le premier présente le formalisme \deux{}catégorique nécessaire à l'établissement de nos principaux résultats. Contrairement à ce que son titre pourrait laisser penser, il contient des résultats originaux tout en comblant par ailleurs certaines carences de la littérature.  

Après un rappel des définitions fondamentales de théorie des \deux{}catégories dans la section \ref{SectionDefFond}, nous démontrons en détail, dans la section \ref{SectionTransHomotopie}, qu'une \deux{}cellule entre morphismes de $\DeuxCatLax$ constitue un cas particulier d'« homotopie lax ». La section \ref{SectionAdjonctionsCatDeuxCat} rappelle l'existence d'adjoints, à gauche comme à droite, de l'inclusion canonique $\Cat \hookrightarrow \DeuxCat$. Les sections \ref{SectionDeuxCategoriesTranches} et \ref{SectionMorphismesInduits} présentent, de façon méthodique, les généralisations au cadre \deux{}catégorique des notions catégoriques bien connues de catégories tranches ou « commas » et des morphismes induits entre icelles. Ces sections sont incluses ici par souci de cohérence et volonté de fournir un traitement systématique de notions qui ne se trouvaient pas encore organisées de la sorte dans la littérature. 

Dans la section \ref{SectionPreadjoints}, nous introduisons, suivant une suggestion de Maltsiniotis, une notion de \emph{préadjoint}, morphisme de $\DeuxCat$ défini par une propriété généralisant une définition possible des foncteurs adjoints dans $\Cat$. Cela permet toutefois d'observer que la simplicité des définitions dans $\Cat$ semble cacher quelques subtilités que seul un travail dans le cadre des catégories supérieures pourrait permettre de comprendre. Cette première définition \deux{}catégorique, bien qu'elle suffise aux besoins de notre recherche, souffre en effet de certaines lacunes. Si nous avons démontré qu'à tout préadjoint se trouvait associé un morphisme « en sens inverse », le couple obtenu ne jouit pas de toutes les propriétés souhaitables. Par conséquent, motivé par les résultats que nous présenterons plus loin, nous introduisons dans la section \ref{SectionAdjonctions}, toujours sur une suggestion de Maltsiniotis, la notion d'\emph{adjonction lax-colax}, formée d'un foncteur lax et d'un foncteur colax ; nous dirons de chacun d'eux qu'il est \emph{adjoint}\footnote{Après le dépôt de ce travail, Steve Lack a suggéré d'examiner les relations qu'entretenaient les notions que nous étudions dans les sections 1.6 et 1.7 avec celle d'\emph{adjonction locale} de Betti-Power \cite{BettiPower} ainsi que la variante de cette dernière étudiée par Verity dans sa thèse \cite{Verity}. Cela nous a permis de constater que ces notions, que nous croyions originales, figuraient déjà dans la littérature, de même que les résultats que nous présentons dans ces deux sections. Le lecteur trouvera des détails dans les sections concernées.}. Nous vérifions que tout adjoint est un préadjoint. C'est toutefois cette dernière notion que nous utilisons pour, dans la section \ref{SectionPrefibrations}, proposer une notion de \emph{préfibration} dans $\DeuxCat$. Précisons dès à présent que ces notions, introduites dans le présent travail, pourraient se trouver amenées à changer de nom lorsqu'elles auront fait l'objet d'études plus poussées. 

Après les développements techniques de la section \ref{SectionMorphismesTranches}, la section \ref{SectionIntegration} développe, dans le cadre \deux{}catégorique, une théorie similaire à celle, que nous nommerons d'\emph{intégration}, de la « construction de Grothendieck » que nous avons déjà mentionnée. L'idée d'une telle généralisation n'est, en soi, pas originale, et nous n'abordons pas la question dans un contexte aussi général que d'autres l'ont fait avant nous. Notre traitement se distingue toutefois, entre autres points, par l'étude systématique des questions de dualité. On explicite notamment qu'à ces dualités près, dualités d'ailleurs assez subtiles, il n'existe qu'une seule procédure d'intégration. De plus, et peut-être surtout, nous dégageons certaines propriétés homotopiques de cette construction. On vérifie notamment qu'elle fournit une adjonction lax-colax et donc, en particulier, une préfibration, la base de cette dernière étant la \deux{}catégorie source du morphisme que l'on intègre. 

Dans la section \ref{SectionCylindres}, nous introduisons deux constructions, parmi d'autres, analogues aux « cylindres » déjà considérés par Bénabou dans son article fondateur \cite{BenabouIntro}. Cela nous permettra, plus loin, d'établir l'invariance par dualité de la notion de localisateur fondamental de $\DeuxCat$. 

Nous consacrons la section \ref{SectionBenabou}, dernière du premier chapitre, à la présentation d'un adjoint à gauche de l'inclusion canonique $\DeuxCat \hookrightarrow \DeuxCatLax$. Cette construction, dégagée par Bénabou dans un cadre plus général, semble malheureusement n'avoir toujours pas fait l'objet d'une publication. Elle nous semble pourtant tenir une place suffisamment importante dans la théorie des catégories supérieures pour qu'il soit nécessaire de ne pas se contenter de faire appel au « folklore » quand le besoin de l'utiliser se fait sentir.

Dans le deuxième chapitre, on rappelle la notion de localisateur fondamental, au sens de Grothendieck, c'est-à-dire de ce que nous appelons \emph{localisateur fondamental de $\Cat$}, et l'on présente une façon d'obtenir, à partir d'une telle classe de morphismes de $\Cat$, une classe de morphismes de $\DeuxCat$ qui vérifiera les axiomes de ce que nous appelons les \emph{localisateurs fondamentaux de $\DeuxCat$}. 

Dans la section \ref{SectionUnLocFond}, on passe en revue quelques éléments de la théorie des localisateurs fondamentaux de $\Cat$. Si nous ne reprenons pas la démonstration de Cisinski du résultat de minimalité déjà évoqué, nous montrons le cas relatif du Théorème A de Quillen ainsi que l'équivalence des catégories homotopiques de $\EnsSimp$ et de $\Cat$. Les détails diffèrent légèrement de ce que le lecteur pourra trouver par ailleurs mais nous ne prétendons bien sûr nullement avoir fait preuve ici d'originalité. 

La section \ref{SectionNerfs} définit différents candidats possibles pour généraliser le foncteur nerf dans un contexte \deux{}catégorique. Si le choix n'est pas unique, il l'est à homotopie près. Pour cette section de rappels, la référence principale est \cite{CCG}. 

La section \ref{SectionSup} définit les analogues du foncteur $\SupUn_{A} : \Delta / \UnNerf A \to A$ pour les divers nerfs introduits dans la section précédente. Aucun de ces morphismes n'est strict en général. 

Dans la section \ref{SectionUnDeux}, nous expliquons une façon d'obtenir un localisateur fondamental de $\DeuxCat$ à partir d'un localisateur fondamental de $\Cat$. Pour cela, nous avons besoin du cas relatif du Théorème A \deux{}catégorique obtenu dans le cas absolu par Bullejos et Cegarra dans \cite{BC}. 

Les définitions et résultats du troisième chapitre sont tous originaux. 

Dans la section \ref{SectionDeuxLocFond}, nous introduisons la notion de localisateur fondamental de $\DeuxCat$ et démontrons les premiers résultats de la théorie. Les développements du premier chapitre permettent notamment une démonstration conceptuelle de l'invariance par dualité de l'axiomatique que nous proposons. 

Dans la section \ref{SectionHomotopieMorphismesLax}, les morphismes lax font leur apparition dans la théorie. Nous étudions notamment des propriétés homotopiques de l'adjonction de Bénabou : son unité et sa coünité sont des équivalences faibles terme à terme, pour tout localisateur fondamental de $\DeuxCat$. Cela généralise un résultat important de del Hoyo, par une méthode différente. 

Nous utilisons ces propriétés de l'adjonction de Bénabou pour démontrer, dans la section \ref{SectionTheoremeA}, une généralisation relative du Théorème A de Quillen aux \DeuxFoncteursLax{}. 

Ce qui précède nous permet de démontrer, dans la section \ref{SectionHoCatHoDeuxCat}, l'équivalence homotopique de $\Cat$ et de $\DeuxCat$, non seulement pour les équivalences faibles de Thomason, mais plus généralement pour tout localisateur fondamental. 

On utilise ensuite le cas relatif du Théorème A pour les \DeuxFoncteursLax{} pour dégager, dans la section \ref{SectionDeuxLocFondLax}, la notion de \emph{localisateur fondamental de $\DeuxCatLax$}. Comme le suggère la terminologie, il s'agit d'une classe de morphismes de $\DeuxCatLax$ vérifiant des propriétés analogues à celles des localisateurs fondamentaux de $\Cat$ ou de $\DeuxCat$. Cette notion nous permettra de « faire le lien » entre celle de localisateur fondamental de $\Cat$ et celle de localisateur fondamental de $\DeuxCat$ dans la section suivante. 

Comme annoncé, nous démontrons, dans la section \ref{SectionCorrespondances}, une correspondance remarquable entre localisateurs fondamentaux de $\Cat$, de $\DeuxCatLax$ et de $\DeuxCat$, cette correspondance étant de plus compatible à l'opération de localisation. Nous en déduisons notamment la minimalité du localisateur fondamental de $\DeuxCat$ formé des équivalences faibles de Thomason, analogue du résultat conjecturé par Grothendieck et démontré par Cisinski en dimension 1. 

La section \ref{SectionTheoremeAGeneral} présente le Théorème A relatif « non-com\-mu\-ta\-tif » pour les \DeuxFoncteursLax{}. 

Nous concluons, dans la section \ref{SectionCritereLocal}, par une autre caractérisation, à l'aide du Théorème B de Quillen celle-là, du localisateur fondamental minimal de $\DeuxCat$. Cela résulte à nouveau de la correspondance déjà mentionnée et de l'analogue pour $\Cat$, démontré par Cisinski, de cette caractérisation locale.

\chapter{Formalisme 2-catégorique}

\section{Définitions fondamentales}\label{SectionDefFond}

Nous rappelons dans cette section quelques définitions classiques. Le lecteur pourra consulter l'article fondateur de Bénabou \cite{BenabouIntro} ainsi que le plus récent fascicule de Leinster \cite{LeinsterBasic}, ces deux références traitant le cas général des bicatégories. Nous ne nous attarderons pas sur ces notions, le lecteur les ayant sans doute déjà rencontrées. Leur définition figure ici pour établir les conventions terminologiques que nous suivrons tout au long de cette étude et rendre la lecture de cette dernière plus aisée. Signalons toutefois dès maintenant que nous appelons « \DeuxTransformationCoLax{} » ce que Leinster appelle « transformation », et qu'il ne mentionne pas ce que nous appelons « \DeuxTransformationLax{} ». Voir aussi la remarque terminologique \ref{TerminologieTrans}.

On notera $\UnCatPonct$\index[not]{e@$\UnCatPonct$ (objet de $\Cat$)} la \emph{catégorie ponctuelle}\index{catégorie ponctuelle}, n'ayant qu'un seul objet, que l'on notera $*$, et qu'un seul morphisme (l'identité de $*$). 

\begin{df}\label{DefDeuxCat}
Une \deux{}catégorie\index{2categorie@\deux{}catégorie} $\mathdeuxcat{A}$ est définie par les données et conditions suivantes : 
\begin{itemize}
\item Une collection d'\emph{objets} $\Objets{\mathdeuxcat{A}}$\index[not]{ObA@$\Objets{\mathdeuxcat{A}}$}.
\item Pour tout couple d'objets $a$ et $a'$ de $\mathdeuxcat{A}$, une catégorie $\CatHom{\mathdeuxcat{A}}{a}{a'}$\index[not]{HomAaa'Souligne@$\CatHom{\mathdeuxcat{A}}{a}{a'}$}, dont les objets sont appelés \emph{\un{}cellules}\index{1cellule@\un{}cellule} de $a$ vers $a'$ et les morphismes \emph{\deux{}cellules}\index{2cellule@\deux{}cellule}. La composition de ces \deux{}cellules sera notée par la simple juxtaposition. 
\item Pour tout objet $a$ de $\mathdeuxcat{A}$, un \emph{foncteur d'unité}\index{foncteur d'unité}
$$
\IdObjet{a}\index[not]{1a@$\IdObjet{a}$} : \DeuxCatPonct \to \CatHom{\mathdeuxcat{A}}{a}{a}.
$$
\item Pour tout triplet $(a,a',a'')$ d'objets de $\mathdeuxcat{A}$, un \emph{foncteur de composition}\index{foncteur de composition}
$$
\begin{aligned}
\FonctComp{\mathdeuxcat{A}}{a}{a'}{a''}\index[not]{c0Aaa'a''@$\FonctComp{\mathdeuxcat{A}}{a}{a'}{a''}$} : \CatHom{\mathdeuxcat{A}}{a'}{a''} \times \CatHom{\mathdeuxcat{A}}{a}{a'} &\to \CatHom{\mathdeuxcat{A}}{a}{a''}
\\
(g,f) &\mapsto gf
\\
(\beta, \alpha) &\mapsto \beta \CompDeuxZero \alpha\index[not]{0betaalpha@$\beta \CompDeuxZero \alpha$}.
\end{aligned}
$$
\end{itemize}
On requiert que soient vérifiées les conditions de cohérence suivantes.
\begin{itemize}
\item 
Pour tout quadruplet $(a,a',a'',a''')$ d'objets de $\mathdeuxcat{A}$, le diagramme
$$
\xymatrix{
\CatHom{\mathdeuxcat{A}}{a''}{a'''} \times \CatHom{\mathdeuxcat{A}}{a'}{a''} \times \CatHom{\mathdeuxcat{A}}{a}{a'}  
\ar[rrr]^{1 \times \FonctComp{\mathdeuxcat{A}}{a}{a'}{a''}} 
\ar[dd]_{\FonctComp{\mathdeuxcat{A}}{a'}{a''}{a'''} \times 1}
&&& \CatHom{\mathdeuxcat{A}}{a''}{a'''} \times \CatHom{\mathdeuxcat{A}}{a}{a''}
\ar[dd]^{\FonctComp{\mathdeuxcat{A}}{a}{a''}{a'''}}
\\
\\
\CatHom{\mathdeuxcat{A}}{a'}{a'''} \times \CatHom{\mathdeuxcat{A}}{a}{a'}
\ar[rrr]_{\FonctComp{\mathdeuxcat{A}}{a}{a'}{a'''}}
&&& \CatHom{\mathdeuxcat{A}}{a}{a'''}
}
$$
est commutatif.

\item Pour tout couple $(a,a')$ d'objets de $\mathdeuxcat{A}$, les diagrammes
$$
\xymatrix{
\CatHom{\mathdeuxcat{A}}{a}{a'} \times \DeuxCatPonct
\ar[d]_{1 \times 1_{a}}
\ar[drr]^{\simeq}
\\
\CatHom{\mathdeuxcat{A}}{a}{a'} \times \CatHom{\mathdeuxcat{A}}{a}{a}
\ar[rr]_{\FonctComp{\mathdeuxcat{A}}{a}{a}{a'}}
&& \CatHom{\mathdeuxcat{A}}{a}{a'}
}
$$
et
$$
\xymatrix{
\DeuxCatPonct \times \CatHom{\mathdeuxcat{A}}{a}{a'}
\ar[d]_{1_{a'} \times 1}
\ar[drr]^{\simeq}
\\
\CatHom{\mathdeuxcat{A}}{a'}{a'} \times \CatHom{\mathdeuxcat{A}}{a}{a'}
\ar[rr]_{\FonctComp{\mathdeuxcat{A}}{a}{a'}{a'}}
&& \CatHom{\mathdeuxcat{A}}{a}{a'}
}
$$
sont commutatifs.
\end{itemize}
\end{df}

\begin{rem}
Une \deux{}catégorie n'est donc rien d'autre qu'une catégorie enrichie en catégories.
\end{rem}

\begin{rem}
On appellera souvent la propriété de fonctorialité des $\FonctComp{\mathdeuxcat{A}}{a}{a'}{a''}$ la « loi d'échange ». 
\end{rem}

\begin{paragr}
On notera par « $\to$ » les \un{}cellules et par « $\Rightarrow$ » les \deux{}cellules. Si $\alpha$ et $\beta$ sont des \deux{}cellules telles que la composée $\beta \CompDeuxUn \alpha$ (\emph{resp.} la composée $\beta \CompDeuxZero \alpha$) fasse sens, on pourra appeler cette composée leur \emph{composée verticale}\index{composée verticale} (\emph{resp.} leur \emph{composée horizontale}\index{composée horizontale}). On confondra souvent, dans les notations, les \un{}cellules avec leur identité. Ainsi, pour toute \un{}cellule $f$ et toute \deux{}cellule $\alpha$ telles que la composée $1_{f} \CompDeuxZero \alpha$ (\emph{resp.} $\alpha \CompDeuxZero 1_{f}$) fasse sens, on pourra noter cette composée $f \CompDeuxZero \alpha$ (\emph{resp.} $\alpha \CompDeuxZero f$).
\end{paragr}

\begin{exemple}
On notera $\DeuxCatPonct{}$\index[not]{ebis@$\DeuxCatPonct$ (objet de $\DeuxCat$)} la \deux{}catégorie n'ayant qu'un seul objet, que l'on notera $*$, une seule \un{}cellule (l'identité $1_{*}$) et une seule \deux{}cellule (l'identité $1_{1_{*}}$). On l'appellera la \deux{}\emph{catégorie ponctuelle}\index{2categorieponctuelle@\deux{}catégorie ponctuelle}. On commettra donc l'abus inoffensif de la noter de la même façon que la catégorie ponctuelle. 
\end{exemple}

\begin{df}\label{dfdeuxfoncteurlax}
Soient $\mathdeuxcat{A}$ et $\mathdeuxcat{B}$ des \deux{}catégories. Un \emph{\DeuxFoncteurLax}\index{2foncteurlax@\DeuxFoncteurLax}
$$u : \mathdeuxcat{A} \to \mathdeuxcat{B}$$
correspond aux données et conditions suivantes.
\begin{itemize}
\item 
Pour tout objet $a$ de $\mathdeuxcat{A}$, un objet $u(a)$ de $\mathdeuxcat{B}$. 

\item
Pour toute \un{}cellule $f : a \to a'$ de $\mathdeuxcat{A}$, une \un{}cellule $u(f) : u(a) \to u(a')$ de $\mathdeuxcat{B}$. 

\item
Pour toute \deux{}cellule $\alpha : f \Rightarrow g$ de $\mathdeuxcat{A}$, une \deux{}cellule $u(\alpha) : u(f) \Rightarrow u(g)$ de $\mathdeuxcat{B}$. 

\item Pour tout couple de \un{}cellules $f$ et $f'$ de $\mathdeuxcat{A}$ telles que la composée $f'f$ fasse sens, une \emph{\deux{}cellule structurale de composition}
$$
u_{f',f}\index[not]{uf'f@$u_{f',f}$} : u(f')u(f) \Rightarrow u(f'f).
$$

\item
Pour tout objet $a$ de $\mathdeuxcat{A}$, une \emph{\deux{}cellule structurale d'unité}
$$
u_{a}\index[not]{ua@$u_{a}$} : 1_{u(a)} \Rightarrow u(1_{a}).
$$
\end{itemize}

On requiert de ces données qu'elles vérifient les propriétés de cohérence suivantes.

\begin{itemize}
\item
Pour toute \un{}cellule $f$ de $\mathdeuxcat{A}$, 
$$
u(1_{f}) = 1_{u(f)}.
$$

\item 
Pour tout couple de \deux{}cellules $\alpha$ et $\alpha'$ de $\mathdeuxcat{A}$ telles que la composée $\alpha' \CompDeuxUn \alpha$ fasse sens, 
$$
u(\alpha' \CompDeuxUn \alpha) = u(\alpha') \CompDeuxUn u(\alpha).
$$

Ces deux dernières conditions expriment la \emph{fonctorialité de $u$ par rapport aux \deux{}cellules}.

\item Pour tout triplet de \un{}cellules $f''$, $f'$ et $f$ de $\mathdeuxcat{A}$ telles que la composée $f'' f' f$ fasse sens, le diagramme 
$$
\xymatrix{
u(f'')u(f')u(f) 
\ar@{=>}[rr]^{u_{f'',f'} \CompDeuxZero u(f)}
\ar@{=>}[dd]_{u(f'') \CompDeuxZero u_{f',f}}
&& u(f''f') u(f) 
\ar@{=>}[dd]^{u_{f''f', f}}
\\
\\
u(f'')u(f'f)
\ar@{=>}[rr]_{u_{f'',f'f}}
&& u(f''f'f)
}
$$
est commutatif. On appellera cette condition la \emph{condition de cocycle}\index{condition de cocycle}. 

\item 
Pour tout couple de \deux{}cellules $\alpha : f \Rightarrow g$ et $\alpha' : f' \Rightarrow g'$ de $\mathdeuxcat{A}$ telles que la composée $\alpha' \CompDeuxZero \alpha$ fasse sens, le diagramme
$$
\xymatrix{
u(f') u(f) 
\ar@{=>}[rr]^{u_{f',f}}
\ar@{=>}[dd]_{u(\alpha') \CompDeuxZero u(\alpha)}
&&
u(f'f)
\ar@{=>}[dd]^{u(\alpha' \CompDeuxZero \alpha)}
\\
\\
u(g')u(g)
\ar@{=>}[rr]_{u_{g',g}}
&&
u(g'g)
}
$$
est commutatif. Cette condition exprime la \emph{naturalité des \deux{}cellules structurales de composition de $u$}. 

\item
Pour toute \un{}cellule $f : a \to a'$ de $\mathdeuxcat{A}$, les diagrammes
$$
\xymatrix{
u(f) \IdObjet{u(a)}
\ar @{=} [rrrrd]
\ar@{=>}[rr]^{u(f) \CompDeuxZero u_{a}}
&& 
u(f) u(\IdObjet{a})
\ar@{=>}[rr]^{u_{f,1_{a}}}
&&
u(f \IdObjet{a})
\ar @{=} [d]
\\
&&&&u(f)
}
$$

et 

$$
\xymatrix{
\IdObjet{u(a')} u(f)
\ar @{=} [rrrrd]
\ar@{=>}[rr]^{u_{a'} \CompDeuxZero u(f)}
&&
u(\IdObjet{a'}) u(f)
\ar@{=>}[rr]^{u_{1_{a'},f}}
&&
u(\IdObjet{a'} f)
\ar @{=} [d]
\\
&&&&
u(f)
}
$$
sont commutatifs. Ces conditions expriment la \emph{naturalité des \deux{}cellules structurales d'unité de $u$}. On les appellera parfois les \emph{contraintes d'unité}. 
\end{itemize}
\end{df}

\begin{paragr}
Si, pour tout couple de \un{}cellules $f$ et $f'$ de $\mathdeuxcat{A}$ telles que la composée $f'f$ fasse sens, la \deux{}cellule $u_{f',f} : u(f') u(f) \Rightarrow u(f'f)$ est un isomorphisme (\emph{resp.} une identité) et si, pour tout objet $a$ de $\mathdeuxcat{A}$, la \deux{}cellule $u_{a} : 1_{u(a)} \Rightarrow u(1_{a})$ est un isomorphisme (\emph{resp.} une identité), on dira que $u$ est un \emph{pseudofoncteur}\index{pseudofoncteur} (\emph{resp.} un \deux{}\emph{foncteur strict}\index{2foncteurstrict@\DeuxFoncteurStrict}). Les pseudofoncteurs non stricts ne présentent qu'un intérêt médiocre du point de vue homotopique\footnote{Ils jouent en revanche un rôle de premier plan en géométrie algébrique ainsi qu'en théorie de la réécriture.}. Dans le cas d'un \DeuxFoncteurStrict{}, la condition de cocycle et les contraintes d'unité sont automatiques. Si, pour tout objet $a$ de $\mathdeuxcat{A}$, la \deux{}cellule $u_{a}$ est une identité (en particulier, $u(1_{a}) = 1_{u(a)}$) et si, pour toute \un{}cellule $f : a \to a'$ de $\mathdeuxcat{A}$, les \deux{}cellules $u_{1_{a'}, f}$ et $u_{f, 1_{a}}$ sont des identités, on dira que le \DeuxFoncteurLax{} $u$ est \emph{normalisé}\index{normalise2foncteurlax@normalisé (\DeuxFoncteurLax{})}. 
\end{paragr}

\begin{df}
Soient $\mathdeuxcat{A}$, $\mathdeuxcat{B}$ et $\mathdeuxcat{C}$ des \deux{}catégories et $u : \mathdeuxcat{A} \to \mathdeuxcat{B}$ et $v : \mathdeuxcat{B} \to \mathdeuxcat{C}$ des \DeuxFoncteursLax. On définit leur composée, notée $vu$, par les données suivantes.
\begin{itemize}
\item Pour tout objet $a$ de $\mathdeuxcat{A}$, 
$$
(vu)(a) = v(u(a)).
$$
\item Pour toute \un{}cellule $f$ de $\mathdeuxcat{A}$, 
$$
(vu)(f) = v(u(f)).
$$
\item Pour toute \deux{}cellule $\alpha$ de $\mathdeuxcat{A}$,  
$$
(vu)(\alpha) = v(u(\alpha)).
$$
\item Pour tout objet $a$ de $\mathdeuxcat{A}$, 
$$
(vu)_{a} = v(u_{a}) \CompDeuxUn{} v_{u(a)}.
$$
\item Pour tout couple de \un{}cellules $f$ et $f'$ de $\mathdeuxcat{A}$ telles que la composée $f'f$ fasse sens, 
$$
(vu)_{f',f} = v(u_{f',f}) \CompDeuxUn{} v_{u(f'), u(f)}.
$$
\end{itemize}
\end{df}

On vérifie sans aucune difficulté l'associativité de cette composition ainsi que l'existence des unités assurant que la définition \ref{DefTralala} fait sens.

De même que le choix d'un univers s'accompagne d'une notion de \emph{petite catégorie}, cela permet évidemment de disposer de la notion analogue de \emph{petite \deux{}catégorie}. De façon générale, nous nous abstiendrons d'évoquer les questions ensemblistes lorsque cela ne sera pas indispensable, en employant néanmoins parfois le terme « petite » pour rappeler que nous considérons surtout des petites \deux{}catégories. En l'absence de ce terme, le contexte devrait toujours rendre clair si les catégories ou \deux{}catégories considérées sont petites ou non. 

\begin{df}\label{DefTralala}
On notera $\DeuxCat{}$\index[not]{2Cat@$\DeuxCat$} (\emph{resp.} $\DeuxCatLax$\index[not]{2CatLax@$\DeuxCatLax$}) la catégorie dont les objets sont les petites \deux{}ca\-té\-go\-ries et dont les morphismes sont les \DeuxFoncteursStricts{} (\emph{resp.} les \DeuxFoncteursLax).
\end{df}

\begin{df}
\label{dfunetdeuxop}
On appellera \emph{\deux{}catégorie \un{}opposée}\index{2categorie1opposee@\deux{}catégorie \un{}opposée} d'une \deux{}catégorie $\mathdeuxcat{A}$ la \deux{}ca\-té\-go\-rie, notée $\DeuxCatUnOp{\mathdeuxcat{A}}$\index[not]{AZop@$\DeuxCatUnOp{\mathdeuxcat{A}}$}, obtenue à partir de $\mathdeuxcat{A}$ en inversant le sens des \un{}cellules. Plus précisément, on pose 
$$
\Objets{\DeuxCatUnOp{\mathdeuxcat{A}}} = \Objets{\mathdeuxcat{A}}
$$ 
et, pour tout couple d'objets $a$ et $a'$ de $\mathdeuxcat{A}$, 
$$
\CatHom{\DeuxCatUnOp{\mathdeuxcat{A}}}{a}{a'} = \CatHom{\mathdeuxcat{A}}{a'}{a},
$$
les diverses compositions et unités s'obtenant à partir de celles de $\mathdeuxcat{A}$ de façon « évidente ».

On appellera \emph{\deux{}catégorie \deux{}opposée}\index{2categorie2opposee@\deux{}catégorie \deux{}opposée} de $\mathdeuxcat{A}$ la \deux{}catégorie, notée $\DeuxCatDeuxOp{\mathdeuxcat{A}}$\index[not]{AZco@$\DeuxCatDeuxOp{\mathdeuxcat{A}}$}, obtenue à partir de $\mathdeuxcat{A}$ en inversant le sens des \deux{}cellules. Plus précisément, on pose 
$$
\Objets{\DeuxCatDeuxOp{\mathdeuxcat{A}}} = \Objets{\mathdeuxcat{A}}
$$
et, pour tout couple d'objets $a$ et $a'$ de $\mathdeuxcat{A}$, 
$$
\CatHom{\DeuxCatDeuxOp{\mathdeuxcat{A}}}{a}{a'} = (\CatHom{\mathdeuxcat{A}}{a}{a'})^{op},
$$
les diverses compositions et unités s'obtenant à partir de celles de $\mathdeuxcat{A}$ de façon « évidente ».

On appellera \emph{\deux{}catégorie opposée}\index{2categorieopposee@\deux{}catégorie opposée} de $\mathdeuxcat{A}$ la \deux{}catégorie, notée $\DeuxCatToutOp{\mathdeuxcat{A}}$\index[not]{AZcoop@$\DeuxCatToutOp{\mathdeuxcat{A}}$}, obtenue à partir de $\mathdeuxcat{A}$ en inversant le sens des \un{}cellules et le sens des \deux{}cellules. Plus précisément, on pose 
$$
\DeuxCatToutOp{\mathdeuxcat{A}} = \DeuxCatUnOp{(\DeuxCatDeuxOp{\mathdeuxcat{A}})} = \DeuxCatDeuxOp{(\DeuxCatUnOp{\mathdeuxcat{A}})}.
$$
On a donc en particulier 
$$
\Objets{\DeuxCatToutOp{\mathdeuxcat{A}}} = \Objets{\mathdeuxcat{A}}
$$
et, pour tout couple d'objets $a$ et $a'$ de $\mathdeuxcat{A}$,
$$
\CatHom{\DeuxCatToutOp{\mathdeuxcat{A}}}{a}{a'} = (\CatHom{\mathdeuxcat{A}}{a'}{a})^{op}.
$$
\end{df}

\begin{df}
Soient $\mathdeuxcat{A}$ et $\mathdeuxcat{B}$ des \deux{}catégories et $u$ un \DeuxFoncteurLax{} de $\mathdeuxcat{A}$ vers $\mathdeuxcat{B}$. On définit le \emph{\DeuxFoncteurLax{} \un{}opposé de} $u$\index{2foncteurlax1oppose@\DeuxFoncteurLax{} \un{}opposé}, noté $\DeuxFoncUnOp{u}$\index[not]{uop@$\DeuxFoncUnOp{u}$}, de source $\DeuxCatUnOp{\mathdeuxcat{A}}$ et de but $\DeuxCatUnOp{\mathdeuxcat{B}}$, par les données suivantes. Pour tout objet $a$ de $\mathdeuxcat{A}$, $\DeuxFoncUnOp{u} (a) = u(a)$. Pour toute \un{}cellule $f$ de $\mathdeuxcat{A}$, $\DeuxFoncUnOp{u} (f) = u(f)$. Pour toute \deux{}cellule $\alpha$ de $\mathdeuxcat{A}$, $\DeuxFoncUnOp{u} (\alpha) = u(\alpha)$. Pour tout couple de \un{}cellules $f$ et $f'$ de $\DeuxCatUnOp{\mathdeuxcat{A}}$ telles que la composée $f'f$ fasse sens (c'est-à-dire telles que la composée $ff'$ fasse sens dans $\mathdeuxcat{A}$), $(\DeuxFoncUnOp{u})_{f',f} = u_{f,f'}$. Pour tout objet $a$ de $\mathdeuxcat{A}$, $(\DeuxFoncUnOp{u})_{a} = u_{a}$.
\end{df}

On vérifie immédiatement que cela définit bien un \DeuxFoncteurLax{} $\DeuxFoncUnOp{u} :\DeuxCatUnOp{\mathdeuxcat{A}} \to \DeuxCatUnOp{\mathdeuxcat{B}}$. 

\begin{rem}
Étant donné des \deux{}catégories $\mathdeuxcat{A}$ et $\mathdeuxcat{B}$, il existe une bijection canonique entre l'ensemble des \DeuxFoncteursLax{} de $\mathdeuxcat{A}$ vers $\mathdeuxcat{B}$ et l'ensemble des \DeuxFoncteursLax{} de $\DeuxCatUnOp{\mathdeuxcat{A}}$ vers $\DeuxCatUnOp{\mathdeuxcat{B}}$, donnée par l'application $u \mapsto \DeuxFoncUnOp{u}$.
\end{rem}

Le lecteur aura peut-être déjà remarqué que la donnée d'un \DeuxFoncteurLax{} $u : \mathdeuxcat{A} \to \mathdeuxcat{B}$ ne permet pas de définir en général un \DeuxFoncteurLax{} de $\DeuxCatDeuxOp{\mathdeuxcat{A}}$ vers $\DeuxCatDeuxOp{\mathdeuxcat{B}}$. La définition \ref{DefFoncteurColax} dégage la notion dont le besoin se fait sentir.

\begin{df}\label{DefFoncteurColax}
Un \deux{}\emph{foncteur colax}\index{2foncteurcolax@\DeuxFoncteurCoLax} de $\mathdeuxcat{A}$ vers $\mathdeuxcat{B}$ est un \DeuxFoncteurLax{} de $\DeuxCatDeuxOp{\mathdeuxcat{A}}$ vers $\DeuxCatDeuxOp{\mathdeuxcat{B}}$. De façon plus explicite, la donnée d'un \DeuxFoncteurCoLax{} $u : \mathdeuxcat{A} \to \mathdeuxcat{B}$ correspond à celle d'un objet $u(a)$ de $\mathdeuxcat{B}$ pour tout objet $a$ de $\mathdeuxcat{A}$, d'une \un{}cellule $u(f) : u(a) \to u(a')$ de $\mathdeuxcat{B}$ pour toute \un{}cellule $f : a \to a'$ dans $\mathdeuxcat{A}$, d'une \deux{}cellule $u(\alpha) : u(f) \Rightarrow u(f')$ pour toute \deux{}cellule $\alpha : f \Rightarrow f'$ dans $\mathdeuxcat{A}$, d'une \deux{}cellule $u_{a} : u(1_{a}) \Rightarrow 1_{u(a)}$ pour tout objet $a$ de $\mathdeuxcat{A}$, d'une \deux{}cellule $u_{f',f} : u(f'f) \Rightarrow u(f') u(f)$ pour tout couple de \un{}cellules $f$ et $f'$ de $\mathdeuxcat{A}$ telles que la composée $f'f$ fasse sens, les conditions de cohérence suivantes étant vérifiées. 
\begin{itemize}
\item
Pour toute \un{}cellule $f$ de $\mathdeuxcat{A}$, 
$$
u(1_{f}) = 1_{u(f)}.
$$ 
\item
Pour tout couple de \deux{}cellules $\alpha$ et $\alpha'$ telles que la composée $\alpha' \CompDeuxUn \alpha$ fasse sens, 
$$
u(\alpha' \CompDeuxUn \alpha) = u(\alpha') \CompDeuxUn u(\alpha).
$$
\item
Pour tout triplet de \un{} cellules $f$, $f'$ et $f''$ de $\mathdeuxcat{A}$ telles que la composée $f'' f' f$ fasse sens, 
$$
(u(f'') \CompDeuxZero u_{f',f}) \CompDeuxUn u_{f'', f'f} = (u_{f'',f'} \CompDeuxZero u(f)) \CompDeuxUn u_{f'' f', f}.
$$
\item
Pour tout couple de \deux{}cellules $\alpha : f \Rightarrow g$ et $\alpha' : f' \Rightarrow g'$ de $\mathdeuxcat{A}$ telles que la composée $\alpha' \CompDeuxZero \alpha$ fasse sens,  
$$
u_{g',g} \CompDeuxUn u(\alpha' \CompDeuxZero \alpha) = (u(\alpha') \CompDeuxZero u(\alpha)) \CompDeuxUn u_{f',f}.
$$
\item
Pour toute \un{}cellule $f : a \to a'$ de $\mathdeuxcat{A}$, 
$$
(u(f) \CompDeuxZero u_{a}) \CompDeuxUn u_{f, 1_{a}} = 1_{u(f)}
$$
et
$$
(u_{a'} \CompDeuxZero u(f)) \CompDeuxUn u_{1_{a'}, f} = 1_{u(f)}.
$$
\end{itemize}
\end{df}

\begin{df}
Soient $\mathdeuxcat{A}$ et $\mathdeuxcat{B}$ des \deux{}catégories et $u$ un \DeuxFoncteurLax{} de $\mathdeuxcat{A}$ vers $\mathdeuxcat{B}$. On définit le \deux{}\emph{foncteur colax} \deux{}\emph{opposé à} $u$\index{2foncteurcolax2oppose@\DeuxFoncteurCoLax{} \deux{}opposé}, noté $\DeuxFoncDeuxOp{u}$\index[not]{uco@$\DeuxFoncDeuxOp{u}$}, de source $\DeuxCatDeuxOp{\mathdeuxcat{A}}$ et de but $\DeuxCatDeuxOp{\mathdeuxcat{B}}$, par les données suivantes. Pour tout objet $a$ de $\mathdeuxcat{A}$, $\DeuxFoncDeuxOp{u} (a) = u(a)$. Pour toute \un{}cellule $f$ de $\mathdeuxcat{A}$, $\DeuxFoncDeuxOp{u} (f) = u(f)$. Pour toute \deux{}cellule $\alpha$ de $\mathdeuxcat{A}$, $\DeuxFoncDeuxOp{u} (\alpha) = u(\alpha)$. Pour tout couple de \un{}cellules $f$ et $f'$ de $\DeuxCatDeuxOp{\mathdeuxcat{A}}$ telles que la composée $f'f$ fasse sens (c'est-à-dire telles que la composée $f'f$ fasse sens dans $\mathdeuxcat{A}$), $(\DeuxFoncDeuxOp{u})_{f',f} = u_{f',f}$. Pour tout objet $a$ de $\mathdeuxcat{A}$, $(\DeuxFoncDeuxOp{u})_{a} = u_{a}$.
\end{df}

\begin{df}
Pour tout \DeuxFoncteurLax{} (\emph{resp.} \DeuxFoncteurCoLax) $u$, on définit le \emph{\deux{}foncteur colax opposé à}\index{2foncteurcolaxoppose@\DeuxFoncteurCoLax{} opposé} $u$ (\emph{resp.} le \emph{\deux{}foncteur lax opposé à}\index{2foncteurlaxoppose@\DeuxFoncteurLax{} opposé} $u$) par 
$$
\DeuxFoncToutOp{u} = \DeuxFoncDeuxOp{(\DeuxFoncUnOp{u})} = \DeuxFoncUnOp{(\DeuxFoncDeuxOp{u})}\index[not]{ucoop@$\DeuxFoncToutOp{u}$}.
$$
\end{df} 

\begin{rem}
De même que l'on pourrait s'abstenir de parler de foncteurs contravariants entre catégories, l'on pourrait ne jamais mentionner le terme « colax » dans l'étude des morphismes entre \deux{}catégories. Nous avons malgré tout décidé de les mentionner pour rendre le propos plus clair. 
\end{rem}

\begin{df}\label{DefDeuxTrans}
Soient $\mathdeuxcat{A}$ et $\mathdeuxcat{B}$ deux \deux{}catégories et $u$ et $v$ deux \DeuxFoncteursLax{} de $\mathdeuxcat{A}$ vers $\mathdeuxcat{B}$. 

Une \emph{\DeuxTransformationLax{}} \index{transformation} $\sigma$ de $u$ vers $v$, notée $\sigma : u \Rightarrow v$\index[not]{0sigma@$\sigma : u \Rightarrow v$}, correspond aux données et conditions suivantes.
\begin{itemize}
\item Pour tout objet $a$ de $\mathdeuxcat{A}$, une \un{}cellule 
$$
\sigma_{a} : u(a) \to v(a)\index[not]{0sigmaa@$\sigma_{a}$}
$$
de $\mathdeuxcat{B}$.
\item Pour toute \un{}cellule $f : a \to a'$ de $\mathdeuxcat{A}$, une \deux{}cellule 
$$
\sigma_{f} : \sigma_{a'} u(f) \Rightarrow v(f) \sigma_{a}\index[not]{0sigmaf@$\sigma_{f}$}
$$
de $\mathdeuxcat{B}$.
\end{itemize}
On requiert de ces données qu'elles satisfassent les conditions de cohérence suivantes.
\begin{itemize}
\item
Pour tout couple de \un{}cellules $f$ et $g$ de $a$ vers $a'$ dans $\mathdeuxcat{A}$, pour toute \deux{}cellule $\alpha : f \Rightarrow g$ de $\mathdeuxcat{A}$, le diagramme
$$
\xymatrix{
\sigma_{a'} u(f)
\ar@{=>}[r]^{\sigma_{f}}
\ar@{=>}[d]_{\sigma_{a'} \CompDeuxZero u(\alpha)}
&
v(f) \sigma_{a}
\ar@{=>}[d]^{v(\alpha) \CompDeuxZero \sigma_{a}}
\\
\sigma_{a'} u(g)
\ar@{=>}[r]_{\sigma_{g}}
&
v(g) \sigma_{a}
}
$$
est commutatif.

\item 
Pour tout triplet d'objets $a$, $a'$ et $a''$ de $\mathdeuxcat{A}$, pour tout couple de \un{}cellules $f : a \to a'$ et $f' : a' \to a''$ de $\mathdeuxcat{A}$, le diagramme
$$
\xymatrix{
\sigma_{a''} u(f') u(f)
\ar @{=>} [rr]^{\sigma_{f'} \CompDeuxZero u(f)}
\ar @{=>} [dd]_{\sigma_{a''} \CompDeuxZero u_{f',f}}
&& v(f') \sigma_{a'} u(f) 
\ar @{=>} [rr]^{v(f') \CompDeuxZero \sigma_{f}}
&& v(f') v(f) \sigma_{a}
\ar @{=>} [dd]^{v_{f',f} \CompDeuxZero \sigma_{a}}
\\
\\
\sigma_{a''} u(f'f)
\ar @{=>} [rrrr]_{\sigma_{f'f}}
&&&& v(f'f) \sigma_{a}
}
$$
est commutatif.

\item Pour tout objet $a$ de $\mathdeuxcat{A}$, le diagramme

$$
\xymatrix{
\sigma_{a} 1_{u(a)}
\ar @{=} [r]
\ar @{=>} [d]_{\sigma_{a} \CompDeuxZero u_{a}}
& 1_{v(a)} \sigma_{a}
\ar @{=>} [d]^{v_{a} \CompDeuxZero \sigma_{a}}
\\
\sigma_{a} u(1_{a})
\ar @{=>} [r]_{\sigma_{1_{a}}}
& v(1_{a}) \sigma_{a}
}
$$
est commutatif. 
\end{itemize}

Une \emph{\DeuxTransformationCoLax{}}\index{optransformation} $\tau$ de $u$ vers $v$, notée $\tau : u \Rightarrow v$, correspond aux données et conditions suivantes.
\begin{itemize}
\item Pour tout objet $a$ de $\mathdeuxcat{A}$, une \un{}cellule 
$$
\tau_{a} : u(a) \to v(a)
$$
de $\mathdeuxcat{B}$.
\item Pour toute \un{}cellule $f : a \to a'$ de $\mathdeuxcat{A}$, une \deux{}cellule 
$$
\tau_{f} : v(f) \tau_{a} \Rightarrow \tau_{a'} u(f) 
$$
de $\mathdeuxcat{B}$.
\end{itemize} 
On requiert de ces données qu'elles satisfassent les conditions de cohérence suivantes.
\begin{itemize}
\item
Pour tout couple de \un{}cellules $f$ et $g$ de $a$ vers $a'$ dans $\mathdeuxcat{A}$, pour toute \deux{}cellule $\alpha : f \Rightarrow g$ de $\mathdeuxcat{A}$, le diagramme
$$
\xymatrix{
v(f) \tau_{a}
\ar@{=>}[r]^{\tau_{f}}
\ar@{=>}[d]_{v(\alpha) \CompDeuxZero \tau_{a}}
&
\tau_{a'} u(f)
\ar@{=>}[d]^{\tau_{a'} \CompDeuxZero u(\alpha)} 
\\
v(g) \tau_{a} 
\ar@{=>}[r]_{\tau_{g}}
&
\tau_{a'} u(g)
}
$$
est commutatif.

\item 
Pour tout triplet d'objets $a$, $a'$ et $a''$ de $\mathdeuxcat{A}$, pour tout couple de \un{}cellules $f : a \to a'$ et $f' : a' \to a''$ de $\mathdeuxcat{A}$, le diagramme
$$
\xymatrix{
v(f') v(f) \tau_{a}
\ar @{=>} [rr]^{v(f') \CompDeuxZero \tau_{f}}
\ar @{=>} [dd]_{\DeuxCellStructComp{v}{f'}{f} \CompDeuxZero \tau_{a}}
&& v(f') \tau_{a'} u(f) 
\ar @{=>} [rr]^{\tau_{f'} \CompDeuxZero u(f)}
&& \tau_{a''} u(f') u(f)
\ar @{=>} [dd]^{\tau_{a''} \CompDeuxZero \DeuxCellStructComp{u}{f'}{f}}
\\
\\
v(f'f) \tau_{a}
\ar @{=>} [rrrr]_{\tau_{f'f}}
&&&& \tau_{a''} u(f'f)
}
$$
est commutatif.

\item Pour tout objet $a$ de $\mathdeuxcat{A}$, le diagramme
$$
\xymatrix{
\tau_{a} 1_{u(a)}
\ar @{=} [r]
\ar @{=>} [d]_{\tau_{a} \CompDeuxZero \DeuxCellStructId{u}{a}}
& 
1_{v(a)} \tau_{a}
\ar @{=>} [d]^{\DeuxCellStructId{v}{a} \CompDeuxZero \tau_{a}}
\\
\tau_{a} u(1_{a})
& 
v(1_{a}) \tau_{a}
\ar @{=>} [l]^{\tau_{1_{a}}}
}
$$
est commutatif. 
\end{itemize}
\end{df}

\begin{rem}\label{RemDeuxTransFoncNor}
Soient $u$ et $v$ deux \DeuxFoncteursLax{} normalisés de source $\mathdeuxcat{A}$ et $\sigma : u \Rightarrow v$ une \DeuxTransformationLax{}. En vertu des axiomes, pour tout objet $a$ de $\mathdeuxcat{A}$, $\sigma_{1_{a}} = 1_{\sigma_{a}}$. 
\end{rem}

\begin{rem}\label{DualDeuxTrans}
Soient $\mathdeuxcat{A}$ et $\mathdeuxcat{B}$ deux \deux{}catégories et $u$ et $v$ deux \DeuxFoncteursLax{} de $\mathdeuxcat{A}$ vers $\mathdeuxcat{B}$. Alors, toute \DeuxTransformationLax{} $\sigma$ de $u$ vers $v$ induit une \DeuxTransformationCoLax{} $\DeuxTransUnOp{\sigma}$ de $\DeuxFoncUnOp{v}$ vers $\DeuxFoncUnOp{u}$, définie par $(\DeuxTransUnOp{\sigma})_{a} = \sigma_{a}$ pour tout objet $a$ de $\mathdeuxcat{A}$ et $(\DeuxTransUnOp{\sigma})_{f} = \sigma_{f}$ pour toute \un{}cellule $f$ de $\mathdeuxcat{A}$, et ce procédé permet d'identifier les \DeuxTransformationsLax{} de $u$ vers $v$ et les \DeuxTransformationsCoLax{} de $\DeuxFoncUnOp{v}$ vers $\DeuxFoncUnOp{u}$.
\end{rem}

\begin{df}\label{DefAbus}
Soient $u$ et $v$ deux \DeuxFoncteursCoLax{} de $\mathdeuxcat{A}$ vers $\mathdeuxcat{B}$. Une \DeuxTransformationLax{} de $u$ vers $v$ est une \DeuxTransformationCoLax{} de $\DeuxFoncDeuxOp{u}$ vers $\DeuxFoncDeuxOp{v}$. Une \DeuxTransformationCoLax{} de $u$ vers $v$ est une \DeuxTransformationLax{} de $\DeuxFoncDeuxOp{u}$ vers $\DeuxFoncDeuxOp{v}$. 
\end{df}

\begin{rem}
La définition \ref{DefAbus} est légèrement abusive ; il faudrait en toute rigueur énoncer des définitions analogues à celles figurant dans l'énoncé de la définition \ref{DefDeuxTrans} avant de procéder à des vérifications permettant de conclure par les identifications que l'on a prises comme définitions. Le lecteur pourra rectifier de lui-même s'il le souhaite. Il est de plus et bien entendu invité à expliciter la définition \ref{DefAbus}.
\end{rem}

\begin{df}
Soient $u, v : \mathdeuxcat{A} \to \mathdeuxcat{B}$ deux \deux{} foncteurs qui sont tous deux lax ou tous deux colax. On dira qu'une \DeuxTransformationLax{} (\emph{resp.} \DeuxTransformationCoLax{}) $\sigma : u \Rightarrow v$ est \emph{relative aux objets}\index{relative aux objets (\DeuxTransformationLax{} ou \DeuxTransformationCoLax{})}
si, pour tout objet $a$ de $\mathdeuxcat{A}$, on a l'égalité $\sigma_{a} = 1_{u(a)} = 1_{v(a)}$. 
\end{df}

\begin{rem}
Pour qu'existe une \DeuxTransformationLax{} relative aux objets ou une \DeuxTransformationCoLax{} relative aux objets entre deux \DeuxFoncteursLax{} ou \DeuxFoncteursCoLax{} parallèles, il faut donc qu'ils coïncident sur les objets.
\end{rem}

\begin{rem}
Étant donné deux \DeuxFoncteursLax{} (\emph{resp.} \DeuxFoncteursCoLax{}) $u$ et $v$, la donnée d'une \DeuxTransformationLax{} relative aux objets de $u$ vers $v$ équivaut à celle d'une \DeuxTransformationCoLax{} relative aux objets de $v$ vers $u$.
\end{rem}

\begin{df}
Étant donné des \deux{}catégories $\mathdeuxcat{A}$ et $\mathdeuxcat{B}$, des \DeuxFoncteursLax{} $u$ et $v$ de $\mathdeuxcat{A}$ vers $\mathdeuxcat{B}$ et une \DeuxTransformationLax{} $\sigma$ de $u$ vers $v$, on dira que $\sigma$ est une \emph{transformation stricte}\index{transformation stricte} si, pour toute \un{}cellule $f$ de $\mathdeuxcat{A}$, la \deux{}cellule $\sigma_{f}$ est une identité.
\end{df}

\begin{rem}
Une \DeuxTransformationStricte{} de $u$ vers $v$ peut donc être considérée comme une \DeuxTransformationLax{} de $u$ vers $v$ aussi bien que comme une \DeuxTransformationCoLax{} de $u$ vers $v$. 
\end{rem}

On va maintenant définir une \deux{}catégorie dont les objets sont les \deux{}catégories. Comme il ne semble pas possible d'obtenir quelque chose de pertinent (du moins dans le cadre de notre étude) en choisissant les \DeuxFoncteursLax{} comme \un{}cellules, on considère le seul cas des \DeuxFoncteursStricts{}, ce qui explique que nous ne nous plaçons pas dans le cas le plus général — que le lecteur pourra rétablir — dans les définitions suivantes. 

\begin{df}\label{IdentiteDeuxFoncteur}
Soit $u : \mathdeuxcat{A} \to \mathdeuxcat{B}$ un \DeuxFoncteurStrict{}. On définit une \DeuxTransformationStricte{} $1_{u} : u \Rightarrow u$ par
$$
(1_{u})_{a} = 1_{u(a)}
$$
pour tout objet $a$ de $\mathdeuxcat{A}$. 
\end{df}

\begin{df}\label{CompositionVerticaleTransformations}
Soient $\mathdeuxcat{A}$ et $\mathdeuxcat{B}$ deux \deux{}catégories, $u$, $v$ et $w$ trois \DeuxFoncteursStricts{} de $\mathdeuxcat{A}$ vers $\mathdeuxcat{B}$, $\sigma : u \Rightarrow v$ et $\tau : v \Rightarrow w$ deux \DeuxTransformationsStrictes{}. On définit la composée $\tau \CompDeuxUn \sigma$ par
$$
(\tau \CompDeuxUn \sigma)_{a} = \tau_{a} \sigma_{a}
$$
pour tout objet $a$ de $\mathdeuxcat{A}$. 
\end{df}

\begin{df}\label{CompositionHorizontaleTransformations}
Soient $\mathdeuxcat{A}$, $\mathdeuxcat{B}$ et $\mathdeuxcat{C}$ trois \deux{}catégories, $u$ et $v$ deux \DeuxFoncteursStricts{} de $\mathdeuxcat{A}$ vers $\mathdeuxcat{B}$, $u'$ et $v'$ deux \DeuxFoncteursStricts{} de $\mathdeuxcat{B}$ vers $\mathdeuxcat{C}$ et $\sigma : u \Rightarrow v$ et $\sigma' : u' \Rightarrow v'$ deux \DeuxTransformationsStrictes{}. En vertu de la naturalité de $\sigma'$, le diagramme
$$
\xymatrix{
u'(u(a))
\ar[r]^{u'(\sigma_{a})}
\ar[d]_{\sigma'_{u(a)}}
&
u'(v(a))
\ar[d]^{\sigma'_{v(a)}}
\\
v'(u(a))
\ar[r]_{v'(\sigma_{a})}
&
v'(v(a))
}
$$
est commutatif pour tout objet $a$ de $\mathdeuxcat{A}$. On définit la composée $\sigma' \CompDeuxZero \sigma$ par
$$
(\sigma' \CompDeuxZero \sigma)_{a} = \sigma'_{v(a)} u'(\sigma_{a}) = v'(\sigma_{a}) \sigma'_{u(a)}
$$
pour tout objet $a$ de $\mathdeuxcat{A}$. 
\end{df}

On vérifie que ces données permettent de définir une \deux{}catégorie dont les objets sont les \deux{}catégories, les \un{}cellules les \DeuxFoncteursStricts{} et les \deux{}cellules les \DeuxTransformationsStrictes{}. 

\begin{df}\label{DefDeuxCatDeuxCat}
On notera $\DeuxCatDeuxCat$\index[not]{2CatSouligne@$\DeuxCatDeuxCat$} la \deux{}catégorie dont les objets sont les \deux{}catégories, les \un{}cellules les \DeuxFoncteursStricts{} et les \deux{}cellules les \DeuxTransformationsStrictes{}. L'identité d'une \un{}cellule se définit comme dans la définition \ref{IdentiteDeuxFoncteur}, la composée verticale et la composée horizontale des \deux{}cellules étant données par les définitions \ref{CompositionVerticaleTransformations} et \ref{CompositionHorizontaleTransformations} respectivement. 
\end{df}

\begin{paragr}\label{DefFoncteursOpCoCoop}
L'assignation $\mathdeuxcat{A} \to \DeuxCatUnOp{\mathdeuxcat{A}}$ permet de définir un \DeuxFoncteurStrict{}
$$
\begin{aligned}
?^{op}\index[not]{?op@$?^{op}$} : \DeuxCatDeuxCat &\to\DeuxCatDeuxOp{\DeuxCatDeuxCat} 
\\
\mathdeuxcat{A} &\mapsto \DeuxCatUnOp{\mathdeuxcat{A}}
\\
(u : \mathdeuxcat{A} \to \mathdeuxcat{B}) &\mapsto (\DeuxFoncUnOp{u} : \DeuxCatUnOp{\mathdeuxcat{A}} \to \DeuxCatUnOp{\mathdeuxcat{B}})
\\
(\alpha : u \Rightarrow v) &\mapsto (\DeuxTransUnOp{\alpha} : \DeuxFoncUnOp{v} \Rightarrow \DeuxFoncUnOp{u}).
\end{aligned}
$$
\end{paragr}

\begin{paragr}\label{DefFoncteursOpCoCoop}
L'assignation $\mathdeuxcat{A} \to \DeuxCatDeuxOp{\mathdeuxcat{A}}$ permet de définir un \DeuxFoncteurStrict{}
$$
\begin{aligned}
?^{co}\index[not]{?co@$?^{co}$} : \DeuxCatDeuxCat &\to \DeuxCatDeuxCat 
\\
\mathdeuxcat{A} &\mapsto \DeuxCatDeuxOp{\mathdeuxcat{A}}
\\
(u : \mathdeuxcat{A} \to \mathdeuxcat{B}) &\mapsto (\DeuxFoncDeuxOp{u} : \DeuxCatDeuxOp{\mathdeuxcat{A}} \to \DeuxCatDeuxOp{\mathdeuxcat{B}})
\\
(\alpha : u \Rightarrow v) &\mapsto (\DeuxTransDeuxOp{\alpha} : \DeuxFoncDeuxOp{u} \Rightarrow \DeuxFoncDeuxOp{v}).
\end{aligned}
$$
\end{paragr}

\begin{paragr}\label{DefFoncteursOpCoCoop}
L'assignation $\mathdeuxcat{A} \to \DeuxCatToutOp{\mathdeuxcat{A}}$ permet de définir un \DeuxFoncteurStrict{}
$$
\begin{aligned}
?^{coop}\index[not]{?coop@$?^{coop}$} : \DeuxCatDeuxCat &\to\DeuxCatDeuxOp{\DeuxCatDeuxCat} 
\\
\mathdeuxcat{A} &\mapsto \DeuxCatToutOp{\mathdeuxcat{A}}
\\
(u : \mathdeuxcat{A} \to \mathdeuxcat{B}) &\mapsto (\DeuxFoncToutOp{u} : \DeuxCatToutOp{\mathdeuxcat{A}} \to \DeuxCatToutOp{\mathdeuxcat{B}})
\\
(\alpha : u \Rightarrow v) &\mapsto (\DeuxTransToutOp{\alpha} : \DeuxFoncToutOp{v} \Rightarrow \DeuxFoncToutOp{u}).
\end{aligned}
$$
\end{paragr}

\begin{df}\label{DefModification}
Soient $\mathdeuxcat{A}$ et $\mathdeuxcat{B}$ des \deux{}catégories, $u$ et $v$ des \DeuxFoncteursLax{} de $\mathdeuxcat{A}$ vers $\mathdeuxcat{B}$ et $\sigma$ et $\tau$ des \DeuxTransformationsLax{} de $u$ vers $v$. Une \emph{modification}\index{modification} $\Gamma : \sigma \Rrightarrow \tau$\index[not]{Gammasigmatau@$\Gamma : \sigma \Rrightarrow \tau$} de $\sigma$ vers $\tau$ correspond à la donnée, pour tout objet $a$ de $\mathdeuxcat{A}$, d'une \deux{}cellule $\Gamma_{a}\index[not]{Gammaa@$\Gamma_{a}$} : \sigma_{a} \Rightarrow \tau_{a}$ dans $\mathdeuxcat{B}$, telle que, pour toute \un{}cellule $f : a \to a'$ de $\mathdeuxcat{A}$, l'égalité
$$
\tau_{f} \CompDeuxUn (\Gamma_{a'} \CompDeuxZero u(f)) = (v(f) \CompDeuxZero \Gamma_{a}) \CompDeuxUn \sigma_{f}
$$
soit vérifiée.
\end{df}

\begin{rem}\label{DeuxCatTroisCat}
Il existe une \emph{\trois{}catégorie} dont les objets sont les \deux{}catégories, les \un{}cellules les \DeuxFoncteursStricts{} entre icelles, les \deux{}cellules les \DeuxTransformationsStrictes{} entre iceux et les \trois{}cellules les modifications entre icelles. Nous n'entrons pas dans les détails. Soulignons toutefois que la notion de modification peut bien sûr se définir entre \DeuxTransformationsCoLax{} et que, dans les deux cas, il est possible de considérer la source et le but de la source et du but d'une modification comme n'étant pas des \DeuxFoncteursLax{} mais des \DeuxFoncteursCoLax{}. Cela permet par exemple, étant donné des \deux{}catégories $\mathdeuxcat{A}$ et $\mathdeuxcat{B}$, de définir une \deux{}catégorie $Colax(\mathdeuxcat{A},\mathdeuxcat{B})$\index[not]{CZolaxAB@$Colax(\mathdeuxcat{A},\mathdeuxcat{B})$} dont les objets sont les \DeuxFoncteursCoLax{} de $\mathdeuxcat{A}$ vers $\mathdeuxcat{B}$, les \un{}cellules les \DeuxTransformationsCoLax{} entre iceux et les \deux{}cellules les modifications entre icelles. Nous la retrouverons plus tard.  
\end{rem}

\section{Transformations et homotopie}\label{SectionTransHomotopie}

Nous consacrons cette section à l'énoncé et à la démonstration détaillée d'un résultat é\-lé\-men\-taire mais fondamental. Il se trouve explicitement utilisé au cours de la démonstration de \cite[proposition 7.1, (ii)]{CCG} et nous ne prétendons faire preuve ici d'aucune originalité.

\begin{df}
Pour tout entier $n \geq 0$, nous noterons $n$\index[not]{[n]@$[n]$} la catégorie associée à l'ensemble $\{ 0, \dots, n \}$ ordonné par l'ordre naturel. 
\end{df}

\begin{lemme}\label{DeuxTransFoncLax}
Soient $u$ et $v$ deux \DeuxFoncteursLax{} de $\mathdeuxcat{A}$ vers $\mathdeuxcat{B}$. S'il existe une \DeuxTransformationLax{} (\emph{resp.} \DeuxTransformationCoLax{}) de $u$ vers $v$, alors il existe un \DeuxFoncteurLax{} $h : [1] \times \mathdeuxcat{A} \to \mathdeuxcat{B}$ tel que le diagramme\footnote{Dans lequel on commet l'abus d'identifier $\mathdeuxcat{A}$ à $\DeuxCatPonct{} \times \mathdeuxcat{A}$.} 
$$
\xymatrix{
&[1] \times \mathdeuxcat{A}
\ar[dd]^{h}
\\
\mathdeuxcat{A}
\ar[ur]^{0 \times 1_{\mathdeuxcat{A}}}
\ar[dr]_{u}
&&\mathdeuxcat{A}
\ar[ul]_{1 \times 1_{\mathdeuxcat{A}}}
\ar[dl]^{v}
\\
&\mathdeuxcat{B}
}
$$
soit commutatif. 
\end{lemme}

\begin{proof}
Supposons qu'il existe une \DeuxTransformationCoLax{} $\alpha : u \Rightarrow v$. Construisons un \deux{}fonc\-teur lax $h : [1] \times \mathdeuxcat{A} \to  \mathdeuxcat{B}$ rendant commutatif le diagramme de l'énoncé. 

Pour tout objet $a$ de $\mathdeuxcat{A}$, on pose $h(0,a) = u(a)$ et $h(1,a) = v(a)$. 

Pour toute \un{}cellule $f : a \to a'$ de $\mathdeuxcat{A}$, on pose $h(0 \to 0, f) = u(f)$, $h(1 \to 1, f) = v(f)$ et $h(0 \to 1, f) = \alpha_{a'} u(f)$.

Pour tout couple de \un{}cellules $f$ et $g$ de $a$ vers $a'$ dans $\mathdeuxcat{A}$, pour toute \deux{}cellule $\gamma : f \Rightarrow g$ de $\mathdeuxcat{A}$, on pose $h(1_{0 \to 0}, \gamma) = u(\gamma)$, $h(1_{1 \to 1}, \gamma) = v(\gamma)$ et $h(1_{0 \to 1}, \gamma) = \alpha_{a'} \CompDeuxZero u(\gamma)$. 

Pour tout objet $a$ de $\mathdeuxcat{A}$, on pose $\DeuxCellStructId{h}{(0,a)} = \DeuxCellStructId{u}{a}$ et $\DeuxCellStructId{h}{(1,a)} = \DeuxCellStructId{v}{a}$. 

Pour tout couple $(f : a \to a', f' : a' \to a'')$ de \un{}cellules de $\mathdeuxcat{A}$, on pose 
$$
\DeuxCellStructComp{h}{(0 \to 0, f')}{(0 \to 0, f)} = \DeuxCellStructComp{u}{f'}{f},
$$ 
$$
\DeuxCellStructComp{h}{(1 \to 1, f')}{(1 \to 1, f)} = \DeuxCellStructComp{v}{f'}{f},
$$
$$
\DeuxCellStructComp{h}{(1 \to 1, f')}{(0 \to 1, f)} = (\alpha_{a''} \CompDeuxZero \DeuxCellStructComp{u}{f'}{f}) \CompDeuxUn (\alpha_{f'} \CompDeuxZero u(f))
$$ 
et 
$$
\DeuxCellStructComp{h}{(0 \to 1, f')}{(0 \to 0, f)} = \alpha_{a''} \CompDeuxZero \DeuxCellStructComp{u}{f'}{f}.
$$ 

Ces données définissent un \DeuxFoncteurLax{} $h : [1] \times \mathdeuxcat{A} \to  \mathdeuxcat{B}$ rendant commutatif le diagramme de l'énoncé. Nous allons vérifier toutes les conditions de cohérence qui ne sont pas évidentes, c'est-à-dire celles qui font intervenir les deux copies $\{0\} \times \mathdeuxcat{A}$ et $\{1\} \times \mathdeuxcat{A}$ de $\mathdeuxcat{A}$ dans $[1] \times \mathdeuxcat{A}$. 

Pour toute \un{}cellule $f : a \to a'$ de $\mathdeuxcat{A}$,
$$
\begin{aligned}
h(1_{(0 \to 1, f)}) &= h(1_{0 \to 1}, 1_{f})
\\
&= \alpha_{a'} \CompDeuxZero u(1_{f})
\\
&= \alpha_{a'} \CompDeuxZero 1_{u(f)}
\\
&= 1_{\alpha_{a'} u(f)}
\\
&= 1_{h(0 \to 1, f)}.
\end{aligned}
$$

Pour tout triplet de \un{}cellules $f$, $f'$ et $f''$ de $a$ vers $a'$ dans $\mathdeuxcat{A}$, pour tout couple de \deux{}cellules $\gamma : f \Rightarrow f'$ et $\delta : f' \Rightarrow f''$ de $\mathdeuxcat{A}$,
$$
\begin{aligned}
h((1_{0 \to 1}, \delta) \CompDeuxUn (1_{0 \to 1}, \gamma)) &= h(1_{0 \to 1}, \delta \CompDeuxUn \gamma)
\\
&= \alpha_{a'} \CompDeuxZero u(\delta \CompDeuxUn \gamma)
\\
&= \alpha_{a'} \CompDeuxZero (u(\delta) \CompDeuxUn u(\gamma))
\\
&= (\alpha_{a'} \CompDeuxZero u(\delta)) \CompDeuxUn (\alpha_{a'} \CompDeuxZero u(\gamma))
\\
&= h(1_{0 \to 1}, \delta) \CompDeuxUn h(1_{0 \to 1}, \gamma).
\end{aligned}
$$

Soient $f, g : a \to a'$ et $f', g' : a' \to a''$ quatre \un{}cellules et $\gamma : f \Rightarrow g$ et $\gamma' : f' \Rightarrow g'$ deux \deux{}cellules de $\mathdeuxcat{A}$. On souhaite vérifier la commutativité du diagramme
$$
\xymatrix{
h(0 \to 1, f') h(0 \to 0, f)
\ar@{=>}[rrr]^{h(1_{0 \to 1}, \gamma') \CompDeuxZero h(1_{0 \to 0}, \gamma)}
\ar@{=>}[dd]_{\DeuxCellStructComp{h}{(0 \to 1, f')}{(0 \to 0, f)}}
&&& h(0 \to 1, g') h(0 \to 0, g)
\ar@{=>}[dd]^{\DeuxCellStructComp{h}{(0 \to 1, g')}{(0 \to 0, g)}}
\\
\\
h(0 \to 1, f'f)
\ar@{=>}[rrr]_{h(1_{0 \to 1}, \gamma' \CompDeuxZero \gamma)}
&&& h(0 \to 1, g'g) 
&.
}
$$
Ce diagramme se récrit
$$
\xymatrix{
\alpha_{a''} u(f') u(f)
\ar@{=>}[rrr]^{\alpha_{a''} \CompDeuxZero u(\gamma') \CompDeuxZero u(\gamma)} 
\ar@{=>}[dd]_{\alpha_{a''} \CompDeuxZero \DeuxCellStructComp{u}{f'}{f}}
&&& \alpha_{a''} u(g') u(g)
\ar@{=>}[dd]^{\alpha_{a''} \CompDeuxZero \DeuxCellStructComp{u}{g'}{g}}
\\
\\
\alpha_{a''} u(f'f)
\ar@{=>}[rrr]_{\alpha_{a''} \CompDeuxZero u(\gamma' \CompDeuxZero \gamma)}
&&& \alpha_{a''} u(g'g)
&.
}
$$
Il est bien commutatif, puisque ce n'est autre que la « post-composition par $\alpha_{a''}$ » d'un diagramme commutatif en vertu de la naturalité des \deux{}cellules structurales de composition de $u$.

Considérons le diagramme
$$
\xymatrix{
h(1 \to 1, f') h(0 \to 1, f)
\ar@{=>}[rrr]^{h(1_{1 \to 1}, \gamma') \CompDeuxZero h(1_{0 \to 1}, \gamma)}
\ar@{=>}[dd]_{\DeuxCellStructComp{h}{(1 \to 1, f')}{(0 \to 1, f)}}
&&& h(1 \to 1, g') h(0 \to 1, g)
\ar@{=>}[dd]^{\DeuxCellStructComp{h}{(1 \to 1, g')}{(0 \to 1, g)}}
\\
\\
h(0 \to 1, f'f)
\ar@{=>}[rrr]_{h(1_{0 \to 1}, \gamma' \CompDeuxZero \gamma)}
&&& h(0 \to 1, g'g) 
&.
}
$$
Il se récrit
$$
\xymatrix{
v(f') \alpha_{a'} u(f)
\ar@{=>}[rrr]^{v(\gamma') \CompDeuxZero \alpha_{a'} \CompDeuxZero u(\gamma)}
\ar@{=>}[dd]_{(\alpha_{a''} \CompDeuxZero \DeuxCellStructComp{u}{f'}{f}) \CompDeuxUn (\alpha_{f'} \CompDeuxZero u(f))}
&&& v(g') \alpha_{a'} u(g)
\ar@{=>}[dd]^{(\alpha_{a''} \CompDeuxZero \DeuxCellStructComp{u}{g'}{g}) \CompDeuxUn (\alpha_{g'} \CompDeuxZero u(g))}
\\
\\
\alpha_{a''} u(f'f)
\ar@{=>}[rrr]_{\alpha_{a''} \CompDeuxZero u(\gamma' \CompDeuxZero \gamma)}
&&& \alpha_{a''} u(g'g) 
&.
}
$$

On a alors
$$
\begin{aligned}
(\alpha_{a''} \CompDeuxZero \DeuxCellStructComp{u}{g'}{g}) \CompDeuxUn (\alpha_{g'} \CompDeuxZero u(g)) \CompDeuxUn (v(\gamma') \CompDeuxZero \alpha_{a'} \CompDeuxZero u(\gamma))
&= (\alpha_{a''} \CompDeuxZero \DeuxCellStructComp{u}{g'}{g}) \CompDeuxUn ((\alpha_{g'} \CompDeuxUn (v(\gamma') \CompDeuxZero \alpha_{a'})) \CompDeuxZero u(\gamma))
\\
&\phantom{=1} \text{(loi d'échange)}
\\
&= (\alpha_{a''} \CompDeuxZero \DeuxCellStructComp{u}{g'}{g}) \CompDeuxUn (((\alpha_{a''} \CompDeuxZero u(\gamma')) \CompDeuxUn \alpha_{f'}) \CompDeuxZero u(\gamma))
\\
&\phantom{=1} \text{(car $\alpha$ est une \DeuxTransformationCoLax{})}
\\
&= (\alpha_{a''} \CompDeuxZero (\DeuxCellStructComp{u}{g'}{g} \CompDeuxUn (u(\gamma') \CompDeuxZero u(\gamma)))) \CompDeuxUn (\alpha_{f'} \CompDeuxZero u(f))
\\
&\phantom{=1} \text{(loi d'échange).}
\end{aligned}
$$

D'autre part,
$$
\begin{aligned}
(\alpha_{a''} \CompDeuxZero u(\gamma' \CompDeuxZero \gamma)) \CompDeuxUn (\alpha_{a''} \CompDeuxZero \DeuxCellStructComp{u}{f'}{f}) \CompDeuxUn (\alpha_{f'} \CompDeuxZero u(f)) &= (\alpha_{a''} \CompDeuxZero (u(\gamma' \CompDeuxZero \gamma) \CompDeuxUn \DeuxCellStructComp{u}{f'}{f})) \CompDeuxUn (\alpha_{f'} \CompDeuxZero u(f))
\\
&\phantom{=1} \text{(loi d'échange)}
\\
&= (\alpha_{a''} \CompDeuxZero (\DeuxCellStructComp{u}{g'}{g} \CompDeuxUn (u(\gamma') \CompDeuxZero u(\gamma)))) \CompDeuxUn (\alpha_{f'} \CompDeuxZero u(f))
\\
&\phantom{=1} \text{(naturalité des \deux{}cellules de composition de $u$).}
\end{aligned}
$$

Le diagramme considéré est donc bien commutatif.

Considérons $f : a \to a'$, $f' : a' \to a''$ et $f'' : a'' \to a'''$ trois \un{}cellules de $\mathdeuxcat{A}$. Considérons le diagramme
$$
\xymatrix{
h(0 \to 1, f'') h(0 \to 0, f') h(0 \to 0, f)
\ar@{=>}[rrr]^{\DeuxCellStructComp{h}{(0 \to 1, f'')}{(0 \to 0, f')} \CompDeuxZero h(0 \to 0, f)}
\ar@{=>}[dd]_{h(0 \to 1, f'') \CompDeuxZero \DeuxCellStructComp{h}{(0 \to 0, f')}{(0 \to 0, f)}}
&&& h(0 \to 1, f''f') h(0 \to 0, f)
\ar@{=>}[dd]^{\DeuxCellStructComp{h}{(0 \to 1, f''f')}{(0 \to 0, f)}}
\\
\\
h(0 \to 1, f'') h(0 \to 0, f'f)
\ar@{=>}[rrr]_{\DeuxCellStructComp{h}{(0 \to 1, f'')}{(0 \to 0, f'f)}}
&&& h(0 \to 1, f''f'f) 
&.
}
$$
Cela se récrit 
$$
\xymatrix{
\alpha_{a'''} u(f'') u(f') u(f)
\ar@{=>}[rrr]^{\alpha_{a'''} \CompDeuxZero \DeuxCellStructComp{u}{f''}{f'} \CompDeuxZero u(f)}
\ar@{=>}[dd]_{\alpha_{a'''} \CompDeuxZero u(f'') \CompDeuxZero \DeuxCellStructComp{u}{f'}{f}}
&&& \alpha_{a'''} u(f''f') u(f)
\ar@{=>}[dd]^{\alpha_{a'''} \CompDeuxZero \DeuxCellStructComp{u}{f''f'}{f}}
\\
\\
\alpha_{a'''} u(f'') u(f'f)
\ar@{=>}[rrr]_{\alpha_{a'''} \CompDeuxZero \DeuxCellStructComp{u}{f''}{f'f}}
&&& \alpha_{a'''} u(f''f'f)
&.
}
$$
Ce diagramme est bien commutatif, puisqu'il s'obtient par « post-composition par $\alpha_{a'''}$ » d'un diagramme qui est commutatif en vertu de la condition de cocycle pour $u$. 

Considérons le diagramme
$$
\xymatrix{
h(1 \to 1, f'') h(0 \to 1, f') h(0 \to 0, f)
\ar@{=>}[rrr]^{\DeuxCellStructComp{h}{(1 \to 1, f'')}{(0 \to 1, f')} \CompDeuxZero h(0 \to 0, f)}
\ar@{=>}[dd]_{h(1 \to 1, f'') \CompDeuxZero \DeuxCellStructComp{h}{(0 \to 1, f')}{(0 \to 0, f)}}
&&& h(0 \to 1, f''f') h(0 \to 0, f)
\ar@{=>}[dd]^{\DeuxCellStructComp{h}{(0 \to 1, f''f')}{(0 \to 0, f)}}
\\
\\
h(1 \to 1, f'') h(0 \to 1, f'f)
\ar@{=>}[rrr]_{\DeuxCellStructComp{h}{(1 \to 1, f'')}{(0 \to 1, f'f)}}
&&& h(0 \to 1, f''f'f) 
&.
}
$$
Cela se récrit
$$
\xymatrix{
v(f'') \alpha_{a''} u(f') u(f)
\ar@{=>}[rrrrr]^{((\alpha_{a'''} \CompDeuxZero \DeuxCellStructComp{u}{f''}{f'}) \CompDeuxUn (\alpha_{f''} \CompDeuxZero u(f'))) \CompDeuxZero u(f)}
\ar@{=>}[dd]_{v(f'') \alpha_{a''} \CompDeuxZero \DeuxCellStructComp{u}{f'}{f}}
&&&&& \alpha_{a'''} u(f''f') u(f)
\ar@{=>}[dd]^{\alpha_{a'''} \CompDeuxZero \DeuxCellStructComp{u}{f''f'}{f}}
\\
\\
v(f'') \alpha_{a''} u(f'f)
\ar@{=>}[rrrrr]_{((\alpha_{a'''} \CompDeuxZero \DeuxCellStructComp{u}{f''}{f'f}) \CompDeuxUn (\alpha_{f''} \CompDeuxZero u(f'f)))}
&&&&& \alpha_{a'''} u(f''f'f) 
&.
}
$$
La loi d'échange permet d'écrire les égalités
$$
\begin{aligned}
(\alpha_{a'''} \CompDeuxZero \DeuxCellStructComp{u}{f''f'}{f}) \CompDeuxUn (((\alpha_{a'''} \CompDeuxZero \DeuxCellStructComp{u}{f''}{f'}) \CompDeuxUn (\alpha_{f''} \CompDeuxZero u(f'))) \CompDeuxZero u(f)) = &(\alpha_{a'''} \CompDeuxZero \DeuxCellStructComp{u}{f''f'}{f}) 
\\
&\CompDeuxUn (\alpha_{a'''} \CompDeuxZero \DeuxCellStructComp{u}{f''}{f'} \CompDeuxZero u(f)) 
\\
&\CompDeuxUn (\alpha_{f''} \CompDeuxZero u(f') u(f))
\end{aligned}
$$
et
$$
\begin{aligned}
((\alpha_{a'''} \CompDeuxZero \DeuxCellStructComp{u}{f''}{f'f}) \CompDeuxUn (\alpha_{f''} \CompDeuxZero u(f'f))) \CompDeuxUn (v(f'') \alpha_{a''} \CompDeuxZero \DeuxCellStructComp{u}{f'}{f}) = &(\alpha_{a'''} \CompDeuxZero \DeuxCellStructComp{u}{f''}{f'f})
\\
&\CompDeuxUn (\alpha_{a'''} u(f'') \CompDeuxZero \DeuxCellStructComp{u}{f'}{f})
\\
&\CompDeuxUn (\alpha_{f''} \CompDeuxZero u(f') u(f)).
\end{aligned}
$$
La commutativité du rectangle ci-dessus résulte donc de l'égalité
$$
\DeuxCellStructComp{u}{f''}{f'f} \CompDeuxUn (u(f'') \CompDeuxZero \DeuxCellStructComp{u}{f'}{f}) = \DeuxCellStructComp{u}{f''f'}{f} \CompDeuxUn (\DeuxCellStructComp{u}{f''}{f'} \CompDeuxZero u(f)),
$$
conséquence de la condition de cocycle pour $u$.

Considérons le diagramme
$$
\xymatrix{
h(1 \to 1, f'') h(1 \to 1, f') h(0 \to 1, f)
\ar@{=>}[rrr]^{\DeuxCellStructComp{h}{(1 \to 1, f'')}{(1 \to 1, f')} \CompDeuxZero h(0 \to 1, f)}
\ar@{=>}[dd]_{h(1 \to 1, f'') \CompDeuxZero \DeuxCellStructComp{h}{(1 \to 1, f')}{(0 \to 1, f)}}
&&& h(1 \to 1, f''f') h(0 \to 1, f)
\ar@{=>}[dd]^{\DeuxCellStructComp{h}{(1 \to 1, f''f')}{(0 \to 1, f)}}
\\
\\
h(1 \to 1, f'') h(0 \to 1, f'f)
\ar@{=>}[rrr]_{\DeuxCellStructComp{h}{(1 \to 1, f'')}{(0 \to 1, f'f)}}
&&& h(0 \to 1, f''f'f) 
&.
}
$$
Cela se récrit
$$
\xymatrix{
v(f'') v(f') \alpha_{a'} u(f)
\ar@{=>}[rrrrr]^{\DeuxCellStructComp{v}{f''}{f'} \CompDeuxZero \alpha_{a'} u(f)}
\ar@{=>}[dd]_{v(f'') \CompDeuxZero ((\alpha_{a''} \CompDeuxZero \DeuxCellStructComp{u}{f'}{f}) \CompDeuxUn (\alpha_{f'} \CompDeuxZero u(f)))}
&&&&& v(f''f') \alpha_{a'} u(f)
\ar@{=>}[dd]^{(\alpha_{a'''} \CompDeuxZero \DeuxCellStructComp{u}{f''f'}{f}) \CompDeuxUn (\alpha_{f''f'} \CompDeuxZero u(f))}
\\
\\
v(f'') \alpha_{a''} u(f'f)
\ar@{=>}[rrrrr]_{(\alpha_{a'''} \CompDeuxZero \DeuxCellStructComp{u}{f''}{f'f}) \CompDeuxUn (\alpha_{f''} \CompDeuxZero u(f'f))}
&&&&& \alpha_{a'''} u(f''f'f) 
&.
}
$$
La loi d'échange permet d'écrire 
$$
(\alpha_{a'''} \CompDeuxZero \DeuxCellStructComp{u}{f''f'}{f}) \CompDeuxUn (\alpha_{f''f'} \CompDeuxZero u(f)) \CompDeuxUn (\DeuxCellStructComp{v}{f''}{f'} \CompDeuxZero \alpha_{a'} u(f)) = (\alpha_{a'''} \CompDeuxZero \DeuxCellStructComp{u}{f''f'}{f}) \CompDeuxUn (\alpha_{f''f'} \CompDeuxZero u(f)) \CompDeuxUn (\DeuxCellStructComp{v}{f''}{f'} \CompDeuxZero \alpha_{a'} u(f)).
$$
D'autre part, on a les égalités suivantes :
$$
\begin{aligned}
&((\alpha_{a'''} \CompDeuxZero \DeuxCellStructComp{u}{f''}{f'f}) \CompDeuxUn (\alpha_{f''} \CompDeuxZero u(f'f))) \CompDeuxUn (v(f'') \CompDeuxZero ((\alpha_{a''} \CompDeuxZero \DeuxCellStructComp{u}{f'}{f}) \CompDeuxUn (\alpha_{f'} \CompDeuxZero u(f)))) 
\\
&= (\alpha_{a'''} \CompDeuxZero \DeuxCellStructComp{u}{f''}{f'f}) 
\CompDeuxUn (\alpha_{a'''} \CompDeuxZero u(f'') \CompDeuxZero \DeuxCellStructComp{u}{f'}{f}) 
\CompDeuxUn (\alpha_{f''} \CompDeuxZero u(f') u(f))
\CompDeuxUn (v(f'') \CompDeuxZero \alpha_{f'} \CompDeuxZero u(f))
\\
&\phantom{=1} \text{(loi d'échange)}
\\
&= (\alpha_{a'''} \CompDeuxZero \DeuxCellStructComp{u}{f''f'}{f}) 
\CompDeuxUn (\alpha_{a'''} \CompDeuxZero \DeuxCellStructComp{u}{f''}{f'} \CompDeuxZero u(f))
\CompDeuxUn (\alpha_{f''} \CompDeuxZero u(f') u(f))
\CompDeuxUn (v(f'') \CompDeuxZero \alpha_{f'} \CompDeuxZero u(f))
\\
&\phantom{=1} \text{(condition de cocycle pour $u$)}
\\
&= (\alpha_{a'''} \CompDeuxZero \DeuxCellStructComp{u}{f''f'}{f}) \CompDeuxUn (\alpha_{f''f'} \CompDeuxZero u(f)) \CompDeuxUn (\DeuxCellStructComp{v}{f''}{f'} \CompDeuxZero \alpha_{a'} u(f))
\\
&\phantom{=1} \text{(car $\alpha$ est une \DeuxTransformationCoLax{}).}
\end{aligned}
$$

Le rectangle ci-dessus est donc bien commutatif.

Soit $f : a \to a'$ une \un{}cellule de $\mathdeuxcat{A}$. Considérons le diagramme
$$
\xymatrix{
h(0 \to 1, f) 1_{h(0,a)}
\ar@{=>}[rrr]^{h(0 \to 1, f) \CompDeuxZero \DeuxCellStructId{h}{(0,a)}}
\ar@{=}[d]
&&&h(0 \to 1, f) h(0 \to 0, 1_{a})
\ar@{=>}[d]^{\DeuxCellStructComp{h}{(0 \to 1, f)}{(0 \to 0, 1_{a})}}
\\
h(0 \to 1, f)
\ar@{=}[rrr]
&&&h((0 \to 1, f) (0 \to 0, 1_{a})) 
&.
}
$$
Cela se récrit 
$$
\xymatrix{
\alpha_{a'} u(f) 1_{u(a)}
\ar@{=>}[rrr]^{\alpha_{a'} u(f) \CompDeuxZero \DeuxCellStructId{u}{a}}
\ar@{=}[d]
&&&\alpha_{a'} u(f) u(1_{a})
\ar@{=>}[d]^{\alpha_{a'} \CompDeuxZero \DeuxCellStructComp{u}{f}{1_{a}}}
\\
\alpha_{a'} u(f)
\ar@{=}[rrr]
&&&\alpha_{a'} u(f 1_{a}) 
&.
}
$$
Ce diagramme est commutatif, puisqu'il s'obtient par « post-composition par $\alpha_{a'}$ » d'un diagramme commutatif du fait des contraintes d'unité pour $u$.

Considérons le diagramme
$$
\xymatrix{
1_{h(1,a')} h(0 \to 1, f)
\ar@{=>}[rrr]^{\DeuxCellStructId{h}{(1,a')} \CompDeuxZero h(0 \to 1, f)}
\ar@{=}[d]
&&&h(1 \to 1, 1_{a'}) h(0 \to 1, f)
\ar@{=>}[d]^{\DeuxCellStructComp{h}{(1 \to 1, 1_{a'})}{(0 \to 1, f)}}
\\
h(0 \to 1, f)
\ar@{=}[rrr]
&&&h((1 \to 1, 1_{a'}) (0 \to 1, f)) 
&.
}
$$
Cela se récrit
$$
\xymatrix{
1_{v(a')} \alpha_{a'} u(f)
\ar@{=>}[rrr]^{\DeuxCellStructId{v}{a'} \CompDeuxZero \alpha_{a'} u(f)}
\ar@{=}[d]
&&&v(1_{a'}) \alpha_{a'} u(f)
\ar@{=>}[d]^{(\alpha_{a'} \CompDeuxZero \DeuxCellStructComp{u}{1_{a'}}{f}) \CompDeuxUn (\alpha_{1_{a'}} \CompDeuxZero u(f))}
\\
\alpha_{a'} u(f)
\ar@{=}[rrr]
&&&\alpha_{a'} u(f) 
&.
}
$$
Les axiomes assurent l'égalité $\alpha_{1_{a'}} \CompDeuxUn (\DeuxCellStructId{v}{a'} \CompDeuxZero \alpha_{a'}) = \alpha_{a'} \CompDeuxZero \DeuxCellStructId{u}{a'}$. La loi d'échange, cette égalité et l'axiomatique des \DeuxFoncteursLax{} permettent alors d'écrire
$$
\begin{aligned}
((\alpha_{a'} \CompDeuxZero \DeuxCellStructComp{u}{1_{a'}}{f}) \CompDeuxUn (\alpha_{1_{a'}} \CompDeuxZero u(f))) \CompDeuxUn (\DeuxCellStructId{v}{a'} \CompDeuxZero \alpha_{a'} \CompDeuxZero u(f)) &= (\alpha_{a'} \CompDeuxZero \DeuxCellStructComp{u}{f}{1_{a'}}) \CompDeuxUn ((\alpha_{1_{a'}} \CompDeuxUn (\DeuxCellStructId{v}{a'} \CompDeuxZero \alpha_{a'})) \CompDeuxZero u(f))
\\
&= (\alpha_{a'} \CompDeuxZero \DeuxCellStructComp{u}{f}{1_{a'}}) \CompDeuxUn (\alpha_{a'} \CompDeuxZero \DeuxCellStructId{u}{a'} \CompDeuxZero u(f))
\\
&= \alpha_{a'} \CompDeuxZero (\DeuxCellStructComp{u}{f}{1_{a'}} \CompDeuxUn (\DeuxCellStructId{u}{a'} \CompDeuxZero u(f)))
\\
&= \alpha_{a'} \CompDeuxZero 1_{u(f)}
\\
&= 1_{\alpha_{a'} \CompDeuxZero u(f)}.
\end{aligned}
$$
Le rectangle ci-dessus est donc bien commutatif. Les autres conditions de cohérence résultent immédiatement du fait que les restrictions de $h$ à $\{0\} \times \mathdeuxcat{A}$ et $\{1\} \times \mathdeuxcat{A}$ sont données par les \DeuxFoncteursLax{} $u$ et $v$ respectivement. On a donc bien défini un \DeuxFoncteurLax{} $h : [1] \times \mathdeuxcat{A} \to \mathdeuxcat{B}$ rendant commutatif le diagramme figurant dans l'énoncé. 

Le cas d'une \DeuxTransformationLax{} de $u$ vers $v$ est conséquence de ce qui précède en vertu d'un argument de dualité. En effet, la donnée d'une telle \DeuxTransformationLax{} correspond à la donnée d'une \DeuxTransformationCoLax{} de $\DeuxFoncUnOp{v}$ vers $\DeuxFoncUnOp{u}$ (voir la remarque \ref{DualDeuxTrans}). Il existe donc un \DeuxFoncteurLax{} $h : [1] \times \DeuxCatUnOp{\mathdeuxcat{A}} \to \DeuxCatUnOp{\mathdeuxcat{B}}$ rendant le diagramme
$$
\xymatrix{
&[1] \times \DeuxCatUnOp{\mathdeuxcat{A}}
\ar[dd]^{h}
\\
\DeuxCatUnOp{\mathdeuxcat{A}}
\ar[ur]^{(\{0\}, 1_{\DeuxCatUnOp{\mathdeuxcat{A}}})}
\ar[dr]_{\DeuxFoncUnOp{v}}
&&\DeuxCatUnOp{\mathdeuxcat{A}}
\ar[ul]_{(\{1\}, 1_{\DeuxCatUnOp{\mathdeuxcat{A}}})}
\ar[dl]^{\DeuxFoncUnOp{u}}
\\
&\DeuxCatUnOp{\mathdeuxcat{B}}
}
$$
commutatif. On en déduit la commutativité du diagramme
$$
\xymatrix{
&\DeuxCatUnOp{[1]} \times \mathdeuxcat{A}
\ar[dd]^{\DeuxFoncUnOp{h}}
\\
\mathdeuxcat{A}
\ar[ur]^{(\{0\}, 1_{\mathdeuxcat{A}})}
\ar[dr]_{v}
&&\mathdeuxcat{A}
\ar[ul]_{(\{1\}, 1_{\mathdeuxcat{A}})}
\ar[dl]^{u}
\\
&\mathdeuxcat{B}
&.
}
$$
On conclut en remarquant que l'application qui envoie $0$ sur $1$ et $1$ sur $0$ induit un isomorphisme de $\DeuxCatUnOp{[1]}$ vers $[1]$.

Plus explicitement, le \DeuxFoncteurLax{} $h : [1] \times \mathdeuxcat{A} \to \mathdeuxcat{B}$ associé à une \DeuxTransformationLax{} $\alpha : u \Rightarrow v$ se caractérise comme suit. La restriction de $h$ à $\{ 0 \} \times \mathdeuxcat{A}$ (\emph{resp.} $\{ 0 \} \times \mathdeuxcat{B}$) est donnée par $u$ (\emph{resp.} $v$). Pour toute \un{}cellule $f : a \to a'$ de $\mathdeuxcat{A}$, 
$$
h (0 \to 1, f) = v(f) \alpha_{a}.
$$ 
Pour toute \deux{}cellule $\gamma$ de $\mathdeuxcat{A}$ dont la source de la source (et donc la source du but) est $a$, 
$$
h(1_{0 \to 1}, \gamma) = v(\gamma) \CompDeuxZero \alpha_{a}.
$$
Pour tout couple de \un{}cellules composables $f : a \to a'$ et $f' : a' \to a''$ dans $\mathdeuxcat{A}$,
$$
h_{(1 \to 1, f'), (0 \to 1, f)} = v_{f',f} \alpha_{a}
$$
et
$$
h_{(0 \to 1, f'), (0 \to 0, f)} = (v_{f',f} \CompDeuxZero \alpha_{a}) \CompDeuxUn (v(f') \CompDeuxZero \alpha_{f}).
$$
\end{proof}

\begin{rem}
On notera que, même si les morphismes $u$ et $v$ de l'énoncé sont stricts, les \DeuxFoncteursLax{} construits selon la méthode exposée au cours de la démonstration ne sont pas stricts en général. En revanche, ils sont normalisés si $u$ et $v$ le sont.
\end{rem}

\begin{rem}\label{TerminologieTrans}
C'est ce résultat qui nous a incité à renoncer aux termes « transformations lax » et « transformations colax » (ou « oplax »), dont l'usage semble toutefois prévaloir, terminologie qui nous semble devoir faire croire — sans que cela soit très clair\footnote{Les propriétés de fonctorialité de l'intégration, que nous mentionnerons plus loin — voir la remarque \ref{FonctorialiteIntegration} —, pourraient toutefois justifier en partie le choix des termes « transformations lax » et « transformations colax ».} — que les « transformations lax » ont plus à voir avec les \DeuxFoncteursLax{} que les « transformations colax », et réciproquement quant à leurs relations aux \DeuxFoncteursCoLax{}, sans compter que les usagers de ce vocabulaire n'ont toujours pas su se mettre d'accord quant à ce qu'il convenait d'appeler « transformation lax » et « transformation oplax » respectivement. Nous tenterons de justifier notre choix en attirant l'attention sur le fait que les \un{}cellules \emph{et} les \deux{}cellules de ce que nous appelons « transformation de $u$ vers $v$ » vont \emph{des} expressions contenant les images par $u$ \emph{vers} les expressions contenant les images par $v$.  
\end{rem}

\section{Deux adjonctions entre $\Cat$ et $\DeuxCat$}\label{SectionAdjonctionsCatDeuxCat}
La catégorie $\Cat$\index[not]{CYat@$\Cat$} des petites catégories s'envoie de façon pleinement fidèle dans la catégorie $\DeuxCat$ par l'inclusion consistant à voir une catégorie comme une \deux{}catégorie dont toutes les \deux{}cellules sont des identités et un foncteur comme un \deux{}foncteur strict dont la source et le but sont les \deux{}catégories obtenues de cette façon à partir de la source et du but du foncteur de départ. On exhibe ici un adjoint à gauche $\TronqueInt\index[not]{0taui@$\TronqueInt$} : \DeuxCat \to \Cat$ et un adjoint à droite $\TronqueBete\index[not]{0taub@$\TronqueBete$} : \DeuxCat \to \Cat$ de cette inclusion $\Cat \hookrightarrow \DeuxCat$.

Pour toute \deux{}catégorie $\mathdeuxcat{A}$, on pose 
$$
\Objets{\TronqueInt{\mathdeuxcat{A}}} = \Objets{\mathdeuxcat{A}}
$$ 
et, pour tout couple d'objets $a$ et $a'$ de $\mathdeuxcat{A}$, 
$$
\CatHom{\TronqueInt{\mathdeuxcat{A}}}{a}{a'} = \pi_{0}(\CatHom{\mathdeuxcat{A}}{a}{a'}).
$$
Autrement dit, on passe de $\mathdeuxcat{A}$ à $\TronqueInt{\mathdeuxcat{A}}$ en identifiant les \un{}cellules reliées par un zigzag de \deux{}cellules dans $\mathdeuxcat{A}$.

Pour toute \deux{}catégorie $\mathdeuxcat{A}$, on pose 
$$
\Objets{\TronqueBete{\mathdeuxcat{A}}} = \Objets{\mathdeuxcat{A}}
$$ 
et, pour tout couple d'objets $a$ et $a'$ de $\mathdeuxcat{A}$, 
$$
\CatHom{\TronqueBete{\mathdeuxcat{A}}}{a}{a'} = \EnsHom{\mathdeuxcat{A}}{a}{a'}\index[not]{HomAaa'@$\EnsHom{\mathdeuxcat{A}}{a}{a'}$},
$$
en notant $\EnsHom{\mathdeuxcat{A}}{a}{a'}$ l'ensemble sous-jacent à la catégorie $\CatHom{\mathdeuxcat{A}}{a}{a'}$. (On commet l'abus de considérer tout ensemble comme une catégorie discrète.) Autrement dit, on passe de $\mathdeuxcat{A}$ à $\TronqueBete{\mathdeuxcat{A}}$ en « oubliant » toutes les \deux{}cellules. 

Pour toute catégorie $A$, on a clairement $\TronqueBete{A} = \TronqueInt{A} = A$ et, pour toute \deux{}catégorie $\mathdeuxcat{A}$, il existe des \DeuxFoncteursStricts{} canoniques $\mathdeuxcat{A} \to \TronqueInt{\mathdeuxcat{A}}$ et $\TronqueBete{\mathdeuxcat{A}} \to \mathdeuxcat{A}$. On vérifie que ces données ainsi que l'identité de $\Cat$ définissent les unités et coünités d'adjonctions $(\TronqueInt, \Cat \hookrightarrow \DeuxCat)$ et $(\Cat \hookrightarrow \DeuxCat, \TronqueBete)$ dont l'inclusion $\Cat \hookrightarrow \DeuxCat$ constitue l'adjoint à droite et l'adjoint à gauche respectivement. En particulier (voir par exemple \cite[p. 83, théorème 2]{CWM}), si $\mathdeuxcat{A}$ est une \deux{}catégorie et $B$ est une catégorie, alors, pour tout \deux{}foncteur (nécessairement strict) $u : \mathdeuxcat{A} \to B$, il existe un diagramme commutatif
$$
\xymatrix{
\mathdeuxcat{A}
\ar[r]
\ar[dr]_{u}
&\TronqueInt{\mathdeuxcat{A}}
\ar[d]
\\
&B
}
$$
dans $\DeuxCat{}$ et, pour tout \DeuxFoncteurStrict{} $v : B \to \mathdeuxcat{A}$, il existe un diagramme commutatif
$$
\xymatrix{
B
\ar[r]
\ar[dr]_{v}
&\TronqueBete{\mathdeuxcat{A}}
\ar[d]
\\
&\mathdeuxcat{A}
}
$$
dans $\DeuxCat$.

\section{2-catégories tranches}\label{SectionDeuxCategoriesTranches}

\begin{paragr}\label{TranchesGray}
Dans \cite[I, 2, 5, p. 29]{Gray}, Gray définit, sous la donnée de \DeuxFoncteursStricts{} $u : \mathdeuxcat{A} \to \mathdeuxcat{C}$ et $v : \mathdeuxcat{B} \to \mathdeuxcat{C}$, la \emph{\deux{}catégorie comma de $u$ par $v$}\index{2categoriecomma@\deux{}catégorie comma}, notée $[u, v]$\index[not]{uv@$[u,v]$}. Un cas particulier de cette notion se trouve utilisé par Kelly dans \cite[section 4.1]{Kelly}. Il est possible d'étendre la construction présentée par Gray dans un cadre non strict en affaiblissant les morphismes $u$ et $v$. Il n'est toutefois pas possible de les prendre tous deux lax en général, une obstruction se présentant si l'on essaie alors de définir la composée des \un{}cellules. En revanche, il est possible de définir une telle \deux{}catégorie comma si $u$ est lax et $v$ colax. En particulier, l'on peut définir la \deux{}catégorie comma d'un \DeuxFoncteurLax{} par un \DeuxFoncteurStrict{}. La définition \ref{DefinitionTranche} et ses variantes duales fournissent des exemples de telles constructions, correspondant au cas dans lequel le morphisme strict a pour source la \deux{}catégorie ponctuelle. Sur une suggestion de Steve Lack, nous donnons la définition du cas général ainsi \hbox{que, plus loin, quelques propriétés de fonctorialité. Suivant la formule con-}\break{}sacrée\hbox{, tout cela se trouve sans doute} « bien connu des experts » ; nous n'en avons toutefois pas trouvé trace dans la littérature et nos demandes de références sont restées vaines.  
\end{paragr}

\begin{paragr}\label{DefComma}
Soient $\mathdeuxcat{A}$, $\mathdeuxcat{B}$ et $\mathdeuxcat{C}$ trois \deux{}catégories, $u : \mathdeuxcat{A} \to \mathdeuxcat{C}$ un \DeuxFoncteurLax{} et $v : \mathdeuxcat{B} \to \mathdeuxcat{C}$ un \DeuxFoncteurCoLax{}. On définit une \deux{}catégorie $[u,v]$ comme suit. 
\begin{itemize}
\item[(Objets)] Les objets de $[u,v]$ sont les triplets $(a, b, r : u(a) \to v(b))$, avec $a$ un objet de $\mathdeuxcat{A}$, $b$ un objet de $\mathdeuxcat{B}$ et $r$ une \un{}cellule de $\mathdeuxcat{C}$. 
\item[(\un{}cellules)] Les \un{}cellules de $(a, b, r)$ vers $(a', b', r')$ sont les triplets $(f : a \to a', g : b \to b', \alpha : v(g) r \Rightarrow r' u(f))$, avec $f$ une \un{}cellule de $\mathdeuxcat{A}$, $g$ une \un{}cellule de $\mathdeuxcat{B}$ et $\alpha$ une \deux{}cellule de $\mathdeuxcat{C}$. 
\item[(\deux{}cellules)] Étant donné deux \un{}cellules $(f, g, \alpha)$ et $(h, k, \beta)$ de $(a, b, r)$ vers $(a', b', r')$ dans $[u,v]$, les \deux{}cellules de $(f,g,\alpha)$ vers $(h,k,\beta)$ sont les couples $(\varphi : f \Rightarrow h, \psi : g \Rightarrow k)$, avec $\varphi$ une \deux{}cellule de $\mathdeuxcat{A}$ et $\psi$ une \deux{}cellule de $\mathdeuxcat{B}$, telles que 
$$
(r' \CompDeuxZero u(\varphi)) \CompDeuxUn \alpha = \beta \CompDeuxUn (v(\psi) \CompDeuxZero r).
$$
\item[(Identité des objets)] L'identité de l'objet $(a,b,r)$ est donnée par $(1_{a}, 1_{b}, (r \CompDeuxZero u_{a}) \CompDeuxUn (v_{b} \CompDeuxZero r))$. 
\item[(Identité des \un{}cellules)] L'identité de la \un{}cellule $(f,g,\alpha)$ est donnée par $(1_{f}, 1_{g})$.
\item[(Composition des \un{}cellules)] Si $(f,g,\alpha)$ est une \un{}cellule de $(a,b,r)$ vers $(a',b',r')$ et $(f',g',\alpha')$ est une \un{}cellule de $(a',b',r')$ vers $(a'',b'',r'')$, leur composée est donnée par la formule
$$
(f',g',\alpha') (f,g,\alpha) = (f'f, g'g, (r'' \CompDeuxZero u_{f',f}) \CompDeuxUn (\alpha' \CompDeuxZero u(f)) \CompDeuxUn (v(g') \CompDeuxZero \alpha) \CompDeuxUn (v_{g',g} \CompDeuxZero r)).
$$
\item[(Composition verticale des \deux{}cellules)] Soient $(f,g,\alpha)$, $(h,k,\beta)$ et $(p,q,\gamma)$ trois \un{}cellules de $(a,b,r)$ vers $(a',b',r')$, $(\varphi, \psi)$ une \deux{}cellule de $(f,g,\alpha)$ vers $(h,k,\beta)$ et $(\mu, \nu)$ une \deux{}cellule de $(h,k,\beta)$ vers $(p,q,\gamma)$. La composée de $(\mu, \nu)$ et $(\varphi, \psi)$ est donnée par la formule
$$
(\mu, \nu) \CompDeuxUn (\varphi, \psi) = (\mu \CompDeuxUn \varphi, \nu \CompDeuxUn \psi).
$$
Le fait que cela définit bien une \deux{}cellule de $(f,g,\alpha)$ vers $(p,q,\gamma)$ résulte des égalités suivantes. 
\begin{align*}
(r' \CompDeuxZero u(\mu \CompDeuxUn \varphi)) \CompDeuxUn \alpha &= (r' \CompDeuxZero (u(\mu) \CompDeuxUn u(\varphi))) \CompDeuxUn \alpha \phantom{blablablablablablablablablablablablablablablablablablablablablablablablablablablablablabla}
\\
&= (r' \CompDeuxZero u(\mu)) \CompDeuxUn (r' \CompDeuxZero u(\varphi)) \CompDeuxUn \alpha
\\
&= (r' \CompDeuxZero u(\mu)) \CompDeuxUn \alpha' \CompDeuxUn (v(\psi) \CompDeuxZero r)
\\
&= \alpha'' \CompDeuxUn (v(\nu) \CompDeuxZero r) \CompDeuxUn (v(\psi) \CompDeuxZero r)
\\
&= \alpha'' \CompDeuxUn ((v(\nu) \CompDeuxUn v(\psi)) \CompDeuxZero r)
\\
&= \alpha'' \CompDeuxUn (v(\nu \CompDeuxUn \psi) \CompDeuxZero r).
\end{align*}
\item[(Composition horizontale des \deux{}cellules)]
Soient $(f,g,\alpha)$ et $(h,k,\beta)$ deux \un{}cellules de $(a,b,r)$ vers $(a',b',r')$, $(f',g',\alpha')$ et $(h',k',\beta')$ deux \un{}cellules de $(a',b',r')$ vers $(a'',b'',r'')$, $(\varphi, \psi)$ une \deux{}cellule de $(f,g,\alpha)$ vers $(h,k,\beta)$ et $(\varphi', \psi')$ une \deux{}cellule de $(f',g',\alpha')$ vers $(h',k',\beta')$. La composée de $(\varphi', \psi')$ et $(\varphi, \psi)$ est donnée par la formule
$$
(\varphi', \psi') \CompDeuxZero (\varphi, \psi) = (\varphi' \CompDeuxZero \varphi, \psi' \CompDeuxZero \psi). 
$$
Le fait que cela définit bien une \deux{}cellule de $(f',g',\alpha') (f,g,\alpha)$ vers $(h',k',\beta') (h,k,\beta)$ résulte de la suite d'égalités suivantes. 
\begin{align*}
&(r'' \CompDeuxZero u(\varphi' \CompDeuxZero \varphi)) \CompDeuxUn (r'' \CompDeuxZero u_{f',f}) \CompDeuxUn (\alpha' \CompDeuxZero u(f)) \CompDeuxUn (v(g') \CompDeuxZero \alpha) \CompDeuxUn (v_{g',g} \CompDeuxZero r) \phantom{blablablablablablablablablablablablablablablablablablablabla}
\\
&= (r'' \CompDeuxZero (u(\varphi' \CompDeuxZero \varphi) \CompDeuxUn u_{f',f})) \CompDeuxUn (\alpha' \CompDeuxZero u(f)) \CompDeuxUn (v(g') \CompDeuxZero \alpha) \CompDeuxUn (v_{g',g} \CompDeuxZero r)
\\
&= (r'' \CompDeuxZero (u_{h',h} \CompDeuxUn (u(\varphi') \CompDeuxZero u(\varphi)))) \CompDeuxUn (\alpha' \CompDeuxZero u(f)) \CompDeuxUn (v(g') \CompDeuxZero \alpha) \CompDeuxUn (v_{g',g} \CompDeuxZero r)
\\
&= (r'' \CompDeuxZero u_{h',h}) \CompDeuxUn (r'' \CompDeuxZero u(\varphi') \CompDeuxZero u(\varphi)) \CompDeuxUn (\alpha' \CompDeuxZero u(f)) \CompDeuxUn (v(g') \CompDeuxZero \alpha) \CompDeuxUn (v_{g',g} \CompDeuxZero r)
\\
&= (r'' \CompDeuxZero u_{h',h}) \CompDeuxUn    (((r'' \CompDeuxZero u(\varphi')) \CompDeuxUn \alpha') \CompDeuxZero u(\varphi))      \CompDeuxUn (v(g') \CompDeuxZero \alpha) \CompDeuxUn (v_{g',g} \CompDeuxZero r)
\\
&= (r'' \CompDeuxZero u_{h',h}) \CompDeuxUn       ((\beta' \CompDeuxUn (v(\psi') \CompDeuxZero r')) \CompDeuxZero u(\varphi))              \CompDeuxUn (v(g') \CompDeuxZero \alpha) \CompDeuxUn (v_{g',g} \CompDeuxZero r)
\\
&= (r'' \CompDeuxZero u_{h',h}) \CompDeuxUn	              (\beta' \CompDeuxZero u(h)) \CompDeuxUn (v(\psi') \CompDeuxZero r' \CompDeuxZero u(\varphi))            \CompDeuxUn (v(g') \CompDeuxZero \alpha) \CompDeuxUn (v_{g',g} \CompDeuxZero r)
\\
&= (r'' \CompDeuxZero u_{h',h}) \CompDeuxUn	 (\beta' \CompDeuxZero u(h)) \CompDeuxUn      (v(k') \CompDeuxZero ((r' \CompDeuxZero u(\varphi)) \CompDeuxUn \alpha)) \CompDeuxUn (v(\psi') \CompDeuxZero v(g)r)                \CompDeuxUn (v_{g',g} \CompDeuxZero r)    
\\
&= (r'' \CompDeuxZero u_{h',h}) \CompDeuxUn	 (\beta' \CompDeuxZero u(h)) \CompDeuxUn (v(k') \CompDeuxZero (\beta \CompDeuxUn (v(\psi) \CompDeuxZero r)))   \CompDeuxUn (v(\psi') \CompDeuxZero v(g)r) \CompDeuxUn (v_{g',g} \CompDeuxZero r) 
\\
&= (r'' \CompDeuxZero u_{h',h}) \CompDeuxUn	 (\beta' \CompDeuxZero u(h)) \CompDeuxUn (v(k') \CompDeuxZero \beta) \CompDeuxUn (v(k') \CompDeuxZero v(\psi) \CompDeuxZero r)  \CompDeuxUn (v(\psi') \CompDeuxZero v(g)r) \CompDeuxUn (v_{g',g} \CompDeuxZero r)  
\\
&= (r'' \CompDeuxZero u_{h',h}) \CompDeuxUn	 (\beta' \CompDeuxZero u(h)) \CompDeuxUn (v(k') \CompDeuxZero \beta) \CompDeuxUn (((v(\psi') \CompDeuxZero v(\psi)) \CompDeuxUn v_{g',g}) \CompDeuxZero r)
\\
&= (r'' \CompDeuxZero u_{h',h}) \CompDeuxUn	 (\beta' \CompDeuxZero u(h)) \CompDeuxUn (v(k') \CompDeuxZero \beta) \CompDeuxUn ((v_{k',k} \CompDeuxUn (v(\psi' \CompDeuxZero \psi))) \CompDeuxZero r)
\\
&= (r'' \CompDeuxZero u_{h',h}) \CompDeuxUn	 (\beta' \CompDeuxZero u(h)) \CompDeuxUn (v(k') \CompDeuxZero \beta) \CompDeuxUn (v_{k',k} \CompDeuxZero r) \CompDeuxUn (v(\psi' \CompDeuxZero \psi) \CompDeuxZero r).
\end{align*}
\end{itemize}
Il reste à vérifier que cela définit bien une \deux{}catégorie $[u,v]$.
\begin{itemize}
\item Pour toute \deux{}cellule $(\varphi, \psi)$ de $(f,g,\alpha)$ vers $(h,k,\beta)$, les égalités 
$$
(\varphi, \psi) \CompDeuxUn (1_{f}, 1_{g}) = (1_{h}, 1_{k}) \CompDeuxUn (\varphi, \psi) = (\varphi, \psi)
$$ 
sont immédiates.  
\item La vérification de l'associativité de la composition verticale des \deux{}cellules est également immédiate. 
\item Pour tout couple de \un{}cellules $(f,g,\alpha)$ et $(f',g',\alpha')$ telles que la composée $(f',g',\alpha') (f,g,\alpha)$ fasse sens, l'égalité
$$
1_{(f',g',\alpha')} \CompDeuxZero 1_{(f,g,\alpha)} = 1_{(f',g',\alpha') (f,g,\alpha)}
$$
est immédiate. 
\item La loi d'échange dans $[u,v]$ est une conséquence immédiate de la loi d'échange dans $\mathdeuxcat{A}$ et $\mathdeuxcat{B}$. 
\item Soient $(f,g,\alpha)$ une \un{}cellule de $(a,b,r)$ vers $(a',b',r')$, $(f',g',\alpha')$ une \un{}cellule de $(a',b',r')$ vers $(a'',b'',r'')$ et $(f'',g'',\alpha'')$ une \un{}cellule de $(a'',b'',r'')$ vers $(a''',b''',r''')$. Alors, l'égalité 
$$
((f'',g'',\alpha'') (f',g',\alpha')) (f,g,\alpha) = (f'',g'',\alpha'') ((f',g',\alpha') (f,g,\alpha))
$$
résulte de la suite d'égalités suivantes. 
\begin{align*}
&(r''' \CompDeuxZero u_{f''f',f}) \CompDeuxUn (r''' \CompDeuxZero u_{f'',f'} \CompDeuxZero u(f)) \CompDeuxUn (\alpha'' \CompDeuxZero u(f') u(f)) \CompDeuxUn (v(g'') \CompDeuxZero \alpha' \CompDeuxZero u(f)) \CompDeuxUn (v_{g'',g'} \CompDeuxZero r'u(f)) \CompDeuxUn
\\
&\phantom{bla} (v(g''g') \CompDeuxZero \alpha) \CompDeuxUn (v_{g''g',g} \CompDeuxZero r) 
\\
&= (r''' \CompDeuxZero u_{f'',f'f}) \CompDeuxUn (r'''u(f'') \CompDeuxZero u_{f',f}) \CompDeuxUn (\alpha'' \CompDeuxZero u(f') u(f)) \CompDeuxUn (v(g'') \CompDeuxZero \alpha' \CompDeuxZero u(f)) \CompDeuxUn (v_{g'',g'} \CompDeuxZero r'u(f)) \CompDeuxUn
\\
&\phantom{bla} (v(g''g') \CompDeuxZero \alpha) \CompDeuxUn (v_{g''g',g} \CompDeuxZero r)
\\
&= (r''' \CompDeuxZero u_{f'',f'f}) \CompDeuxUn (\alpha'' \CompDeuxZero u(f'f)) \CompDeuxUn (v(g'') r'' \CompDeuxZero u_{f',f}) \CompDeuxUn (v(g'') \CompDeuxZero \alpha' \CompDeuxZero u(f)) \CompDeuxUn (v_{g'',g'} \CompDeuxZero r'u(f)) \CompDeuxUn
\\
&\phantom{bla} (v(g''g') \CompDeuxZero \alpha) \CompDeuxUn (v_{g''g',g} \CompDeuxZero r)
\\
&= (r''' \CompDeuxZero u_{f'',f'f}) \CompDeuxUn (\alpha'' \CompDeuxZero u(f'f)) \CompDeuxUn (v(g'') r'' \CompDeuxZero u_{f',f}) \CompDeuxUn (v(g'') \CompDeuxZero \alpha' \CompDeuxZero u(f)) \CompDeuxUn (v(g'') v(g') \CompDeuxZero \alpha) \CompDeuxUn
\\
&\phantom{bla}  (v_{g'',g'} \CompDeuxZero v(g)r) \CompDeuxUn (v_{g''g',g} \CompDeuxZero r) 
\\
&= (r''' \CompDeuxZero u_{f'',f'f}) \CompDeuxUn (\alpha'' \CompDeuxZero u(f'f)) \CompDeuxUn (v(g'') r'' \CompDeuxZero u_{f',f}) \CompDeuxUn (v(g'') \CompDeuxZero \alpha' \CompDeuxZero u(f)) \CompDeuxUn (v(g'') v(g') \CompDeuxZero \alpha) \CompDeuxUn
\\
&\phantom{bla}  (v(g'') \CompDeuxZero v_{g',g} \CompDeuxZero r) \CompDeuxUn (v_{g'',g'g} \CompDeuxZero r). 
\end{align*}
\item L'associativité de la composition horizontale des \deux{}cellules se vérifie immédiatement. 
\item Pour toute \un{}cellule $(f,g,\alpha)$ de $(a,b,r)$ vers $(a',b',r')$, 
\begin{align*}
(f,g,\alpha) 1_{(a,b,r)} &= (f,g,\alpha) (1_{a}, 1_{b}, (r \CompDeuxZero u_{a}) \CompDeuxUn (v_{b} \CompDeuxZero r)) 
\\
&= (f,g, (r' \CompDeuxZero u_{f, 1_{a}}) \CompDeuxUn (\alpha \CompDeuxZero u(1_{a})) \CompDeuxUn (v(g) r \CompDeuxZero u_{a}) \CompDeuxUn (v(g) \CompDeuxZero v_{b} \CompDeuxZero r) \CompDeuxUn (v_{g, 1_{b}} \CompDeuxZero r))
\\
&= (f,g, (r' \CompDeuxZero u_{f, 1_{a}}) \CompDeuxUn (\alpha \CompDeuxZero u(1_{a})) \CompDeuxUn (v(g) r \CompDeuxZero u_{a}))
\\
&= (f,g, (r' \CompDeuxZero u_{f, 1_{a}}) \CompDeuxUn (r'u(f) \CompDeuxZero u_{a}) \CompDeuxUn \alpha) 
\\
&= (f,g, (r' \CompDeuxZero (u_{f, 1_{a}} \CompDeuxUn (u(f) \CompDeuxZero u_{a}))) \CompDeuxUn \alpha)
\\
&= (f,g,\alpha). 
\end{align*}
\item Sous les mêmes données, l'égalité $1_{(a',b',r')} (f,g,\alpha) = (f,g,\alpha)$ se démontre de façon analogue. 
\item Soient $(f,g,\alpha)$ et $(h,k,\beta)$ deux \un{}cellules de $(a,b,r)$ vers $(a',b',r')$ et $(\varphi, \psi)$ une \deux{}cellule de $(f,g,\alpha)$ vers $(h,k,\beta)$. Les égalités 
$$
(\varphi, \psi) \CompDeuxZero (1_{1_{a}}, 1_{1_{b}}) = (\varphi, \psi)
$$
et
$$
(1_{1_{a'}}, 1_{1_{b'}}) \CompDeuxZero (\varphi, \psi) = (\varphi, \psi)
$$
se vérifient immédiatement. 
\end{itemize}
Les vérifications qui précèdent assurent que l'on a bien défini une \deux{}catégorie $[u,v]$. 
\end{paragr}

\begin{paragr}
Soient $\mathdeuxcat{A}$ et $\mathdeuxcat{B}$ des \deux{}catégories, $u : \mathdeuxcat{A} \to \mathdeuxcat{B}$ un \DeuxFoncteurLax{} et $v : \mathdeuxcat{A} \to \mathdeuxcat{B}$ un \DeuxFoncteurCoLax{}. Une \emph{optransformation}\index{optransformation (d'un \deux{}foncteur lax vers un \deux{}foncteur colax)} $\sigma$ de $u$ vers $v$ est donnée par une \un{}cellule $\sigma_{a} : u(a) \to v(a)$ de $\mathdeuxcat{B}$ pour tout objet $a$ de $\mathdeuxcat{A}$ et une \deux{}cellule $\sigma_{f} : v(f) \sigma_{a} \Rightarrow \sigma_{a'} u(f)$ de $\mathdeuxcat{B}$ pour toute \un{}cellule $f : a \to a'$ de $\mathdeuxcat{A}$, ces données vérifiant les conditions suivantes.
\begin{itemize}
\item Pour tout objet $a$ de $\mathdeuxcat{A}$,
$$
(\sigma_{a} \CompDeuxZero u_{a}) \CompDeuxUn (v_{a} \CompDeuxZero \sigma_{a}) = \sigma_{1_{a}}.
$$
\item Pour tout couple de \un{}cellules composables $f : a \to a'$ et $f' : a' \to a''$ de $\mathdeuxcat{A}$, 
$$
(\sigma_{a''} \CompDeuxZero u_{f',f}) \CompDeuxUn (\sigma_{f'} \CompDeuxZero u(f)) \CompDeuxUn (v(f') \CompDeuxZero \sigma_{f}) \CompDeuxUn (v_{f',f} \CompDeuxZero \sigma_{a}) = \sigma_{f'f}. 
$$
\item Pour tout couple de \un{}cellules $f$ et $g$ de $a$ vers $a'$ de $\mathdeuxcat{A}$, pour toute \deux{}cellule $\alpha : f \Rightarrow g$ de $\mathdeuxcat{A}$,
$$
(\sigma_{a'} \CompDeuxZero u(\alpha)) \CompDeuxUn \sigma_{f} = \sigma_{g} \CompDeuxUn (v(\alpha) \CompDeuxZero \sigma_{a}).
$$
\end{itemize}
Sous les données du paragraphe \ref{DefComma}, il existe un diagramme
$$
\xymatrix{
[u,v]
\ar[r]^{q}
\ar[d]_{p}
& \mathdeuxcat{B}
\ar[d]^{v}
\\
\mathdeuxcat{A}
\ar[r]_{u}
& \mathdeuxcat{C}
&,
}
$$
dans lequel $p$ et $q$ désignent les projections canoniques, commutatif à une \DeuxTransformationCoLax{} $\sigma : up \Rightarrow vq$ près, définie par $\sigma_{(a,b,r)} = r$ et $\sigma_{(f,g,\alpha)} = \alpha$ pour tout objet $(a,b,r)$ et toute \un{}cellule $(f,g,\alpha)$ de $[u,v]$.
\end{paragr}

\begin{df}\label{DefinitionTranche}
Soient $u : \mathdeuxcat{A} \to \mathdeuxcat{B}$ un \DeuxFoncteurLax{} et $b$ un objet de $\mathdeuxcat{B}$. La \emph{\deux{}catégorie tranche lax de $\mathdeuxcat{A}$ au-dessus de $b$ relativement à $u$}\index{tranche lax au-dessus d'un objet} est la \deux{}catégorie, que l'on notera $\TrancheLax{\mathdeuxcat{A}}{u}{b}$\index[not]{Aulb@$\TrancheLax{\mathdeuxcat{A}}{u}{b}$}, définie comme la \deux{}catégorie comma $[u,b]$, en considérant $b$ comme un \DeuxFoncteurStrict{} de $\DeuxCatPonct$ vers $\mathdeuxcat{B}$. De façon plus explicite, cette \deux{}catégorie se décrit comme suit.
\begin{itemize}
\item
Les objets de $\TrancheLax{\mathdeuxcat{A}}{u}{b}$ sont les couples $(a, p : u(a) \to b)$, où $a$ est un objet de $\mathdeuxcat{A}$ et $p$ une \un{}cellule de $\mathdeuxcat{B}$.
\item
Si $(a, p : u(a) \to b)$ et $(a', p' : u(a') \to b)$ sont deux objets de $\TrancheLax{\mathdeuxcat{A}}{u}{b}$, les \un{}cellules de $(a,p)$ vers $(a',p')$ dans $\TrancheLax{\mathdeuxcat{A}}{u}{b}$ sont les couples $(f : a \to a', \alpha : p \Rightarrow p' u(f))$, où $f$ est une \un{}cellule de $\mathdeuxcat{A}$ et $\alpha$ est une \deux{}cellule de $\mathdeuxcat{B}$. Le diagramme à conserver en tête est le suivant : 
$$
\xymatrix{
u(a) 
\ar[rr]^{u(f)}
\ar[dr]_{p}
&{}
&u(a')
\ar[dl]^{p'}
\\
&b
\utwocell<\omit>{\alpha}
&.
}
$$
\item 
Si $(f, \alpha)$ et $(f', \alpha')$ sont deux \un{}cellules de $(a,p)$ vers $(a',p')$ dans $\TrancheLax{\mathdeuxcat{A}}{u}{b}$, les \deux{}cellules de $(f, \alpha)$ vers $(f', \alpha')$ dans $\TrancheLax{\mathdeuxcat{A}}{u}{b}$ sont les \deux{}cellules $\beta : f \Rightarrow f'$ dans $\mathdeuxcat{A}$ telles que $(p' \CompDeuxZero u(\beta)) \alpha = \alpha'$.
\item
Si $(f, \alpha) : (a,p) \to (a',p')$ et $(f', \alpha') : (a',p') \to (a'',p'')$ sont deux \un{}cellules dans $\TrancheLax{\mathdeuxcat{A}}{u}{b}$, leur composée est définie par
$$
(f', \alpha') (f, \alpha) = (f'f, (p'' \CompDeuxZero \DeuxCellStructComp{u}{f'}{f}) (\alpha' \CompDeuxZero u(f)) \alpha).
$$
\item
L'identité d'un objet $(a, p : u(a) \to b)$ de $\TrancheLax{\mathdeuxcat{A}}{u}{b}$ est donnée par le couple $(1_{a}, p \CompDeuxZero \DeuxCellStructId{u}{a})$. 
\item
L'identité d'une \un{}cellule $(f, \alpha) : (a,p) \to (a',p')$ de $\TrancheLax{\mathdeuxcat{A}}{u}{b}$ n'est autre que $1_{f}$.
\item
Les \deux{}cellules se composent comme dans $\mathdeuxcat{A}$. 
\end{itemize}
\end{df}

%
%

\begin{df}
Soient $u : \mathdeuxcat{A} \to \mathdeuxcat{B}$ un \DeuxFoncteurLax{} et $b$ un objet de $\mathdeuxcat{B}$. La \emph{\deux{}catégorie optranche lax de $\mathdeuxcat{A}$ au-dessous de $b$ relativement à $u$}\index{optranche lax au-dessous d'un objet} est la \deux{}catégorie $\OpTrancheLax{\mathdeuxcat{A}}{u}{b}$\index[not]{bulA@$\OpTrancheLax{\mathdeuxcat{A}}{u}{b}$} définie par la formule 
$$
\OpTrancheLax{\mathdeuxcat{A}}{u}{b} = \DeuxCatUnOp{(\TrancheLax{\DeuxCatUnOp{\mathdeuxcat{A}}}{\DeuxFoncUnOp{u}}{b})}.
$$
En particulier, les objets, \un{}cellules et \deux{}cellules de $\OpTrancheLax{\mathdeuxcat{A}}{u}{b}$ se décrivent comme suit.
\begin{itemize}
\item
Les objets de $\OpTrancheLax{\mathdeuxcat{A}}{u}{b}$ sont les couples $(a, p : b \to u(a))$, où $a$ est un objet de $\mathdeuxcat{A}$ et $p$ une \un{}cellule de $\mathdeuxcat{B}$.
\item
Si $(a, p : b \to u(a))$ et $(a', p' : b \to u(a'))$ sont deux objets de $\OpTrancheLax{\mathdeuxcat{A}}{u}{b}$, les \un{}cellules de $(a,p)$ vers $(a',p')$ dans $\OpTrancheLax{\mathdeuxcat{A}}{u}{b}$ sont les couples $(f : a \to a', \alpha : p' \Rightarrow u(f) p)$, où $f$ est une \un{}cellule de $\mathdeuxcat{A}$ et $\alpha$ est une \deux{}cellule de $\mathdeuxcat{B}$. 
\item 
Si $(f, \alpha)$ et $(f', \alpha')$ sont deux \un{}cellules de $(a,p)$ vers $(a',p')$ dans $\OpTrancheLax{\mathdeuxcat{A}}{u}{b}$, les \deux{}cellules de $(f, \alpha)$ vers $(f', \alpha')$ dans $\OpTrancheLax{\mathdeuxcat{A}}{u}{b}$ sont les \deux{}cellules $\beta : f \Rightarrow f'$ dans $\mathdeuxcat{A}$ telles que $(u(\beta) \CompDeuxZero p) \CompDeuxUn \alpha = \alpha'$.
\end{itemize}
\end{df}

\begin{df}
Soient $u : \mathdeuxcat{A} \to \mathdeuxcat{B}$ un \DeuxFoncteurCoLax{} et $b$ un objet de $\mathdeuxcat{B}$. La \emph{\deux{}catégorie tranche colax de $\mathdeuxcat{A}$ au-dessus de $b$ relativement à $u$}\index{tranche colax au-dessus d'un objet} est la \deux{}catégorie $\TrancheCoLax{\mathdeuxcat{A}}{u}{b}$\index[not]{Aucb@$\TrancheCoLax{\mathdeuxcat{A}}{u}{b}$} définie par la formule 
$$
\TrancheCoLax{\mathdeuxcat{A}}{u}{b} = \DeuxCatDeuxOp{(\TrancheLax{\DeuxCatDeuxOp{\mathdeuxcat{A}}}{\DeuxFoncDeuxOp{u}}{b})}.
$$
En particulier, les objets, \un{}cellules et \deux{}cellules de $\TrancheCoLax{\mathdeuxcat{A}}{u}{b}$ se décrivent comme suit.

\begin{itemize}
\item
Les objets de $\TrancheCoLax{\mathdeuxcat{A}}{u}{b}$ sont les couples $(a, p : u(a) \to b)$, où $a$ est un objet de $\mathdeuxcat{A}$ et $p$ une \un{}cellule de $\mathdeuxcat{B}$.
\item
Si $(a, p : u(a) \to b)$ et $(a', p' : u(a') \to b)$ sont deux objets de $\TrancheCoLax{\mathdeuxcat{A}}{u}{b}$, les \un{}cellules de $(a,p)$ vers $(a',p')$ dans $\TrancheCoLax{\mathdeuxcat{A}}{u}{b}$ sont les couples $(f : a \to a', \alpha : p' u(f) \Rightarrow p)$, où $f$ est une \un{}cellule de $\mathdeuxcat{A}$ et $\alpha$ est une \deux{}cellule de $\mathdeuxcat{B}$. 
\item 
Si $(f, \alpha)$ et $(f', \alpha')$ sont deux \un{}cellules de $(a,p)$ vers $(a',p')$ dans $\TrancheCoLax{\mathdeuxcat{A}}{u}{b}$, les \deux{}cellules de $(f, \alpha)$ vers $(f', \alpha')$ dans $\TrancheCoLax{\mathdeuxcat{A}}{u}{b}$ sont les \deux{}cellules $\beta : f \Rightarrow f'$ dans $\mathdeuxcat{A}$ telles que 
$\alpha' \CompDeuxUn (p' \CompDeuxZero u(\beta)) = \alpha$. 
\end{itemize}
\end{df}

\begin{df}
Soient $u : \mathdeuxcat{A} \to \mathdeuxcat{B}$ un \DeuxFoncteurCoLax{} et $b$ un objet de $\mathdeuxcat{B}$. La \emph{\deux{}catégorie optranche colax de $\mathdeuxcat{A}$ au-dessous de $b$ relativement à $u$}\index{optranche colax au-dessous d'un objet} est la \deux{}catégorie $\OpTrancheCoLax{\mathdeuxcat{A}}{u}{b}$\index[not]{bucA@$\OpTrancheCoLax{\mathdeuxcat{A}}{u}{b}$} définie par la formule 
$$
\OpTrancheCoLax{\mathdeuxcat{A}}{u}{b} = \DeuxCatToutOp{(\TrancheLax{\DeuxCatToutOp{\mathdeuxcat{A}}}{\DeuxFoncToutOp{u}}{b})}.
$$
En particulier, les objets, \un{}cellules et \deux{}cellules de $\OpTrancheCoLax{\mathdeuxcat{A}}{u}{b}$ se décrivent comme suit.

\begin{itemize}
\item
Les objets de $\OpTrancheCoLax{\mathdeuxcat{A}}{u}{b}$ sont les couples $(a, p : b \to u(a))$, où $a$ est un objet de $\mathdeuxcat{A}$ et $p$ une \un{}cellule de $\mathdeuxcat{B}$.
\item
Si $(a, p : b \to u(a))$ et $(a', p' : b \to u(a'))$ sont deux objets de $\OpTrancheCoLax{\mathdeuxcat{A}}{u}{b}$, les \un{}cellules de $(a,p)$ vers $(a',p')$ dans $\OpTrancheCoLax{\mathdeuxcat{A}}{u}{b}$ sont les couples $(f : a \to a', \alpha : u(f) p \Rightarrow p')$, où $f$ est une \un{}cellule de $\mathdeuxcat{A}$ et $\alpha$ est une \deux{}cellule de $\mathdeuxcat{B}$. 
\item 
Si $(f, \alpha)$ et $(f', \alpha')$ sont deux \un{}cellules de $(a,p)$ vers $(a',p')$ dans $\OpTrancheCoLax{\mathdeuxcat{A}}{u}{b}$, les \deux{}cellules de $(f, \alpha)$ vers $(f', \alpha')$ dans $\OpTrancheCoLax{\mathdeuxcat{A}}{u}{b}$ sont les \deux{}cellules $\beta : f \Rightarrow f'$ dans $\mathdeuxcat{A}$ telles que $\alpha' \CompDeuxUn (u(\beta) \CompDeuxZero p) = \alpha$.
\end{itemize}
\end{df}

Si $a$ est un objet de la \deux{}catégorie $\mathdeuxcat{A}$, on notera $\TrancheLax{\mathdeuxcat{A}}{}{a}$\index[not]{Ala@$\TrancheLax{\mathdeuxcat{A}}{}{a}$} (\emph{resp.} $\OpTrancheLax{\mathdeuxcat{A}}{}{a}$\index[not]{AlaBis@$\OpTrancheLax{\mathdeuxcat{A}}{}{a}$}, \emph{resp.} $\TrancheCoLax{\mathdeuxcat{A}}{}{a}$\index[not]{Aca@$\TrancheCoLax{\mathdeuxcat{A}}{}{a}$}, \emph{resp.} $\OpTrancheCoLax{\mathdeuxcat{A}}{}{a}$\index[not]{AcaBis@$\OpTrancheCoLax{\mathdeuxcat{A}}{}{a}$}) la \deux{}catégorie $\TrancheLax{\mathdeuxcat{A}}{1_{\mathdeuxcat{A}}}{a}$ (\emph{resp.} $\OpTrancheLax{\mathdeuxcat{A}}{1_{\mathdeuxcat{A}}}{a}$, \emph{resp.} $\TrancheCoLax{\mathdeuxcat{A}}{1_{\mathdeuxcat{A}}}{a}$, \emph{resp.} $\OpTrancheCoLax{\mathdeuxcat{A}}{1_{\mathdeuxcat{A}}}{a}$).

\section{Morphismes induits par une transformation}\label{SectionMorphismesInduits}

\begin{paragr}\label{CommaFonctorielle1}
Sous les mêmes hypothèses que celles du paragraphe \ref{DefComma}, soient un \DeuxFoncteurLax{} \hbox{$u' : \mathdeuxcat{A} \to \mathdeuxcat{C}$} ainsi qu'une \DeuxTransformationCoLax{} $\sigma : u \Rightarrow u'$. Cela va nous permettre de construire un \DeuxFoncteurStrict{} $[\sigma, v]\index[not]{0sigmav@$[\sigma, v]$} : [u', v] \to [u, v]$ comme suit. 
\begin{itemize}
\item Pour tout objet $(a,b,r)$ de $[u',v]$, on pose
$$
[\sigma, v] (a,b,r) = (a,b,r\sigma_{a}).
$$
\item Pour toute \un{}cellule $(f,g,\alpha)$ de $(a,b,r)$ vers $(a',b',r')$ dans $[u',v]$, on pose
$$
[\sigma,v] (f,g,\alpha) = (f,g, (r' \CompDeuxZero \sigma_{f}) \CompDeuxUn (\alpha \CompDeuxZero \sigma_{a})). 
$$
\item Soient, dans $[u',v]$, $(f,g,\alpha)$ et $(h,k,\beta)$ des \un{}cellules de $(a,b,r)$ vers $(a',b',r')$ et $(\varphi, \psi)$ une \deux{}cellule de $(f,g,\alpha)$ vers $(h,k,\beta)$. On pose 
$$
[\sigma,v] (\varphi, \psi) = (\varphi, \psi). 
$$
Cela définit bien une \deux{}cellule de $[\sigma,v] (f,g,\alpha)$ vers $[\sigma,v] (h,k,\beta)$ en vertu des égalités suivantes. 
\begin{align*}
(r' \sigma_{a'} \CompDeuxZero u(\varphi)) \CompDeuxUn (r' \CompDeuxZero \sigma_{f}) \CompDeuxUn (\alpha \CompDeuxZero \sigma_{a}) &= (r' \CompDeuxZero ((\sigma_{a'} \CompDeuxZero u(\varphi)) \CompDeuxUn \sigma_{f})) \CompDeuxUn (\alpha \CompDeuxZero \sigma_{a}) 
\\
&= (r' \CompDeuxZero (\sigma_{h} \CompDeuxUn (u'(\varphi) \CompDeuxZero \sigma_{a}))) \CompDeuxUn (\alpha \CompDeuxZero \sigma_{a}) 
\\
&= (r' \CompDeuxZero \sigma_{h}) \CompDeuxUn (r' \CompDeuxZero u'(\varphi) \CompDeuxZero \sigma_{a}) \CompDeuxUn (\alpha \CompDeuxZero \sigma_{a})
\\
&= (r' \CompDeuxZero \sigma_{h}) \CompDeuxUn (((r' \CompDeuxZero u'(\varphi)) \CompDeuxUn \alpha) \CompDeuxZero \sigma_{a})
\\
&= (r' \CompDeuxZero \sigma_{h}) \CompDeuxUn ((\beta \CompDeuxUn (v(\psi) \CompDeuxZero r)) \CompDeuxZero \sigma_{a})
\\
&= (r' \CompDeuxZero \sigma_{h}) \CompDeuxUn (\beta \CompDeuxZero \sigma_{a}) \CompDeuxUn (v(\psi) \CompDeuxZero r \sigma_{a}). 
\end{align*}
\item Pour tout objet $(a,b,r)$ de $[u',v]$, il résulte de l'égalité $\sigma_{1_{a}} \CompDeuxUn (u'_{a} \CompDeuxZero \sigma_{a}) = \sigma_{a} \CompDeuxZero u_{a}$ que 
$$
[\sigma,v] (1_{(a,b,r)}) = 1_{[\sigma,v] (a,b,r)}.
$$
\item Soient $(f,g,\alpha)$ une \un{}cellule de $(a,b,r)$ vers $(a',b',r')$ et $(f',g',\alpha')$ une \un{}cellule de $(a',b',r')$ vers $(a'',b'',r'')$ dans $[u',v]$. L'égalité
$$
[\sigma,v] (f',g',\alpha') [\sigma,v] (f,g,\alpha) = [\sigma,v] ((f',g',\alpha') (f,g,\alpha))
$$ 
résulte des égalités
$$
\sigma_{f'f} (u'_{f',f} \CompDeuxZero \sigma_{a}) = (\sigma_{a''} \CompDeuxZero u_{f',f}) \CompDeuxUn (\sigma_{f'} \CompDeuxZero u(f)) \CompDeuxUn (u'(f') \CompDeuxZero \sigma_{f})
$$
(conséquence du fait que $\sigma$ est une \DeuxTransformationCoLax{}) et 
$$
(r'' u'(f') \CompDeuxZero \sigma_{f}) \CompDeuxUn (\alpha' \CompDeuxZero u'(f) \sigma_{a}) = (\alpha' \CompDeuxZero \sigma_{a'} u(f)) \CompDeuxUn (v(g')r' \CompDeuxZero \sigma_{f})
$$
(conséquence de la loi d'échange).
\item Pour tout couple de \deux{}cellules $(\varphi, \psi)$ et $(\varphi', \psi')$ de $[u',v]$ telles que la composée $(\varphi', \psi') \CompDeuxZero (\varphi, \psi)$ fasse sens, l'égalité
$$
[\sigma,v] ((\varphi', \psi') \CompDeuxZero (\varphi, \psi)) = ([\sigma,v] (\varphi', \psi')) \CompDeuxZero ([\sigma,v] (\varphi, \psi))
$$
est immédiate. 
\item Pour tout couple de \deux{}cellules $(\mu, \nu)$ et $(\varphi, \psi)$ de $[u',v]$ telles que la composée $(\mu,\nu) \CompDeuxUn (\varphi, \psi)$ fasse sens, l'égalité
$$
[\sigma,v] ((\mu, \nu) \CompDeuxUn (\varphi, \psi)) = ([\sigma,v] (\mu, \nu)) \CompDeuxUn ([\sigma,v] (\varphi, \psi))
$$
est immédiate. 
\item Il en est de même, pour toute \un{}cellule $(f,g,\alpha)$ de $[u',v]$, de l'égalité 
$$
[\sigma,v] (1_{(f,g,\alpha)}) = 1_{[\sigma,v] (f,g,\alpha)}.
$$
\end{itemize}
Cela termine les vérifications permettant d'affirmer que l'on a bien défini un \DeuxFoncteurStrict{} $[\sigma,v] : [u',v] \to [u,v]$. 
\end{paragr}

\begin{paragr}\label{CommaFonctorielle2}
Soient des \DeuxFoncteursLax{} composables $u : \mathdeuxcat{A} \to \mathdeuxcat{B}$ et $u' : \mathdeuxcat{B} \to \mathdeuxcat{C}$ et un \DeuxFoncteurCoLax{} $v : \mathdeuxcat{D} \to \mathdeuxcat{C}$. Cela va nous permettre de construire un \DeuxFoncteurLax{} $F : [u'u,v] \to [u',v]$ comme suit.
\begin{itemize}
\item Pour tout objet $(a,d,r)$ de $[u'u,v]$, posons
$$
F(a,d,r) = (u(a),d,r).
$$
\item Pour toute \un{}cellule $(f,g,\alpha)$ de $[u'u,v]$, posons
$$
F(f,g,\alpha) = (u(f),g,\alpha).
$$
\item Pour toute \deux{}cellule $(\varphi, \psi)$ de $[u'u,v]$, posons
$$
F(\varphi, \psi) = (u(\varphi), \psi). 
$$
\item Pour tout objet $(a,d,r)$ de $[u'u,v]$, posons
$$
F_{(a,d,r)} = (u_{a}, 1_{1_{d}}).
$$
\item Pour tout couple de \un{}cellules $(f',g',\alpha')$ et $(f,g,\alpha)$ de $[u'u,v]$ telles que la composée $(f',g',\alpha') (f,g,\alpha)$ fasse sens, posons
$$
F_{(f',g',\alpha'), (f,g,\alpha)} = (u_{f',f}, 1_{g'g}).
$$
\end{itemize}
Il est trivial que ces définitions font sens et que l'on a bien de la sorte construit un \DeuxFoncteurLax{} : les vérifications des conditions portant sur $F$ se ramènent toutes à des vérifications de conditions portant sur $u$ qui sont vraies par hypothèse. On remarquera que $F$ est strict si $u$ l'est, même si $u'$ ne l'est pas.
\end{paragr}

\begin{paragr}\label{DefDeuxFoncInduitFoncLaxOpTrans}
Supposons donné un diagramme de \DeuxFoncteursLax{}
$$
\xymatrix{
\mathdeuxcat{A} 
\ar[rr]^{u}
\ar[dr]_{w}
&&\mathdeuxcat{B}
\ar[dl]^{v}
\dtwocell<\omit>{<7.3>\sigma}
\\
&\mathdeuxcat{C}
&{}
}
$$
commutatif à l'\DeuxTransformationCoLax{} $\sigma : vu \Rightarrow w$ près seulement.

Il résulte des paragraphes \ref{CommaFonctorielle1} et \ref{CommaFonctorielle2} que ces données permettent de définir, pour tout objet $c$ de $\mathdeuxcat{C}$, un \DeuxFoncteurLax{}
$$
\DeuxFoncTrancheLaxCoq{u}{\sigma}{c}\index[not]{u0la@$\DeuxFoncTrancheLaxCoq{u}{\sigma}{c}$} : \TrancheLax{\mathdeuxcat{A}}{w}{c} \to \TrancheLax{\mathdeuxcat{B}}{v}{c}
$$
comme suit.
\begin{itemize}
\item Pour tout objet $(a, p : w(a) \to c)$ de $\TrancheLax{\mathdeuxcat{A}}{w}{c}$, 
$$
(\DeuxFoncTrancheLaxCoq{u}{\sigma}{c}) (a,p) = (u(a), p \sigma_{a} : v(u(a)) \to w(a) \to c).
$$
\item Pour toute \un{}cellule $(f : a \to a', \alpha : p \Rightarrow p'w(f))$ de $(a, p : w(a) \to c)$ vers $(a', p' : w(a') \to c)$ dans $\TrancheLax{\mathdeuxcat{A}}{w}{c}$, 
$$
(\DeuxFoncTrancheLaxCoq{u}{\sigma}{c}) (f, \alpha) = (u(f), (p' \CompDeuxZero \sigma_{f}) \CompDeuxUn (\alpha \CompDeuxZero \sigma_{a}) : p\sigma_{a} \Rightarrow p' w(f) \sigma_{a} \Rightarrow p' \sigma_{a'} v(u(f))).
$$
\item Pour tout couple d'objets $(a,p)$ et $(a',p')$, pour tout couple de \un{}cellules $(f, \alpha)$ et $(f', \alpha')$ de $(a,p)$ vers $(a',p')$ et toute \deux{}cellule $\beta : f \Rightarrow f'$ de $(f, \alpha)$ vers $(f', \alpha')$ dans $\TrancheLax{\mathdeuxcat{A}}{w}{c}$ (on a donc l'égalité $(p' \CompDeuxZero w(\beta)) \CompDeuxUn \alpha = \alpha'$), 
$$
(\DeuxFoncTrancheLaxCoq{u}{\sigma}{c}) (\beta) = u(\beta).
$$

\item
Soient $(f, \alpha)$ une \un{}cellule de $(a,p)$ vers $(a',p')$ et $(f', \alpha')$ une \un{}cellule de $(a',p')$ vers $(a'', p'')$ dans $ \TrancheLax{\mathdeuxcat{A}}{w}{c}$. On pose 
$$
(\DeuxFoncTrancheLaxCoq{u}{\sigma}{c})_{(f', \alpha'), (f, \alpha)} = u_{f', f}.
$$

\item
Pour tout objet $(a,p)$ de $\TrancheLax{\mathdeuxcat{A}}{w}{c}$, on pose
$$
(\DeuxFoncTrancheLaxCoq{u}{\sigma}{c})_{(a,p)} =u_{a}.
$$ 
\end{itemize}

Si le triangle ci-dessus est commutatif, c'est-à-dire que $vu = w$, et si $\sigma = 1_{w}$, on notera ce \DeuxFoncteurLax{} induit $\DeuxFoncTrancheLax{u}{c}$\index[not]{ulc@$\DeuxFoncTrancheLax{u}{c}$} plutôt que $\DeuxFoncTrancheLaxCoq{u}{1_{w}}{c}$. 
\end{paragr}

\begin{rem}
Si, dans la situation ci-dessus, $u$ est un \DeuxFoncteurStrict, alors $\DeuxFoncTrancheLaxCoq{u}{\sigma}{c}$ est un \DeuxFoncteurStrict, même si $v$ ne l'est pas.
\end{rem}

\begin{paragr}\label{DefDeuxFoncInduitFoncLaxTrans}
Soit un diagramme de \DeuxFoncteursLax{}
$$
\xymatrix{
\mathdeuxcat{A} 
\ar[rr]^{u}
\ar[dr]_{w}
&{}
&\mathdeuxcat{B}
\ar[dl]^{v}
\\
&\mathdeuxcat{C}
\utwocell<\omit>{\sigma}
}
$$  
commutatif à la \DeuxTransformationLax{} $\sigma : w \Rightarrow vu$ près seulement. Cela fournit un diagramme de \DeuxFoncteursLax{} 
$$
\xymatrix{
\DeuxCatUnOp{\mathdeuxcat{A}}
\ar[rr]^{\DeuxFoncUnOp{u}}
\ar[dr]_{\DeuxFoncUnOp{w}}
&&\DeuxFoncUnOp{\mathdeuxcat{B}}
\ar[dl]^{\DeuxFoncUnOp{v}}
\dtwocell<\omit>{<9>\DeuxTransUnOp{\sigma}}
\\
&\DeuxFoncUnOp{\mathdeuxcat{C}}
&{}
}
$$  
commutatif à l'\DeuxTransformationCoLax{} $\DeuxTransUnOp{\sigma} : \DeuxFoncUnOp{v} \DeuxFoncUnOp{u} \Rightarrow \DeuxFoncUnOp{w}$ près seulement. En vertu du paragraphe \ref{DefDeuxFoncInduitFoncLaxOpTrans}, on a donc un \DeuxFoncteurLax{} 
$$
\DeuxFoncTrancheLaxCoq{\DeuxFoncUnOp{u}}{\DeuxTransUnOp{\sigma}}{c} :
\TrancheLax{\DeuxCatUnOp{\mathdeuxcat{A}}}{\DeuxFoncUnOp{w}}{c} \to\TrancheLax{\DeuxCatUnOp{\mathdeuxcat{B}}}{\DeuxFoncUnOp{v}}{c},
$$
et donc un \DeuxFoncteurLax{}
$$
\DeuxFoncUnOp{(\DeuxFoncTrancheLaxCoq{\DeuxFoncUnOp{u}}{\DeuxTransUnOp{\sigma}}{c})} :
\DeuxCatUnOp{(\TrancheLax{\DeuxCatUnOp{\mathdeuxcat{A}}}{\DeuxFoncUnOp{w}}{c})} \to \DeuxCatUnOp{(\TrancheLax{\DeuxCatUnOp{\mathdeuxcat{B}}}{\DeuxFoncUnOp{v}}{c})},
$$
c'est-à-dire un \DeuxFoncteurLax{} de $\OpTrancheLax{\mathdeuxcat{A}}{w}{c}$ vers $\OpTrancheLax{\mathdeuxcat{B}}{v}{c}$, \DeuxFoncteurLax{} que l'on notera $\DeuxFoncOpTrancheLaxCoq{u}{\sigma}{c}$\index[not]{c0lu@$\DeuxFoncOpTrancheLaxCoq{u}{\sigma}{c}$}. Autrement dit, on a par définition
$$
\DeuxFoncOpTrancheLaxCoq{u}{\sigma}{c} = \DeuxFoncUnOp{(\DeuxFoncTrancheLaxCoq{\DeuxFoncUnOp{u}}{\DeuxTransUnOp{\sigma}}{c})}.
$$

Si $vu = w$ et $\sigma = 1_{w}$, on notera ce \DeuxFoncteurLax{} induit $\DeuxFoncOpTrancheLax{u}{c}$\index[not]{clu@$\DeuxFoncOpTrancheLax{u}{c}$} plutôt que $\DeuxFoncOpTrancheLaxCoq{u}{1_{w}}{c}$. 
\end{paragr}

\begin{rem}
Si, dans la situation ci-dessus, $u$ est un \DeuxFoncteurStrict, alors $\DeuxFoncOpTrancheLaxCoq{u}{\sigma}{c}$ est un \DeuxFoncteurStrict, même si $v$ ne l'est pas.
\end{rem}

\begin{paragr}\label{DefDeuxFoncInduitFoncCoLaxTrans}
Soit un diagramme de \DeuxFoncteursCoLax{}
$$
\xymatrix{
\mathdeuxcat{A} 
\ar[rr]^{u}
\ar[dr]_{w}
&&\mathdeuxcat{B}
\ar[dl]^{v}
\dtwocell<\omit>{<7.3>{\sigma}}
\\
&\mathdeuxcat{C}
&{}
}
$$  
commutatif à la \DeuxTransformationLax{} $\sigma : vu \Rightarrow w$ près seulement. Cela fournit un diagramme de \DeuxFoncteursLax{} 
$$
\xymatrix{
\DeuxCatDeuxOp{\mathdeuxcat{A}}
\ar[rr]^{\DeuxFoncDeuxOp{u}}
\ar[dr]_{\DeuxFoncDeuxOp{w}}
&&\DeuxFoncDeuxOp{\mathdeuxcat{B}}
\ar[dl]^{\DeuxFoncDeuxOp{v}}
\dtwocell<\omit>{<9>\DeuxTransDeuxOp{\sigma}}
\\
&\DeuxFoncDeuxOp{\mathdeuxcat{C}}
&{}
}
$$  
commutatif à l'\DeuxTransformationCoLax{} $\DeuxTransDeuxOp{\sigma} : \DeuxFoncDeuxOp{v} \DeuxFoncDeuxOp{u} \Rightarrow \DeuxFoncDeuxOp{w}$ près seulement. En vertu du paragraphe \ref{DefDeuxFoncInduitFoncLaxOpTrans}, on a donc un \DeuxFoncteurLax{} 
$$
\DeuxFoncTrancheLaxCoq{\DeuxFoncDeuxOp{u}}{\DeuxTransDeuxOp{\sigma}}{c} :
\TrancheLax{\DeuxCatDeuxOp{\mathdeuxcat{A}}}{\DeuxFoncDeuxOp{w}}{c} \to\TrancheLax{\DeuxCatDeuxOp{\mathdeuxcat{B}}}{\DeuxFoncDeuxOp{v}}{c},
$$
et donc un \DeuxFoncteurCoLax{}
$$
\DeuxFoncDeuxOp{(\DeuxFoncTrancheLaxCoq{\DeuxFoncDeuxOp{u}}{\DeuxTransDeuxOp{\sigma}}{c})} :
\DeuxCatDeuxOp{(\TrancheLax{\DeuxCatDeuxOp{\mathdeuxcat{A}}}{\DeuxFoncDeuxOp{w}}{c})} \to \DeuxCatDeuxOp{(\TrancheLax{\DeuxCatDeuxOp{\mathdeuxcat{B}}}{\DeuxFoncDeuxOp{v}}{c})},
$$
c'est-à-dire un \DeuxFoncteurCoLax{} de $\TrancheCoLax{\mathdeuxcat{A}}{w}{c}$ vers $\TrancheCoLax{\mathdeuxcat{B}}{v}{c}$, \DeuxFoncteurCoLax{} que l'on notera $\DeuxFoncTrancheCoLaxCoq{u}{\sigma}{c}$\index[not]{u0c@$\DeuxFoncTrancheCoLaxCoq{u}{\sigma}{c}$}. Autrement dit, on a par définition
$$
\DeuxFoncTrancheCoLaxCoq{u}{\sigma}{c} = \DeuxFoncDeuxOp{(\DeuxFoncTrancheLaxCoq{\DeuxFoncDeuxOp{u}}{\DeuxTransDeuxOp{\sigma}}{c})}.
$$

Si $vu = w$ et $\sigma = 1_{w}$, on notera ce \DeuxFoncteurCoLax{} induit $\DeuxFoncTrancheCoLax{u}{c}$\index[not]{ucc@$\DeuxFoncTrancheCoLax{u}{c}$} plutôt que $\DeuxFoncTrancheCoLaxCoq{u}{1_{w}}{c}$. 
\end{paragr}

\begin{rem}
Si, dans la situation ci-dessus, $u$ est un \DeuxFoncteurStrict, alors $\DeuxFoncTrancheCoLaxCoq{u}{\sigma}{c}$ est un \DeuxFoncteurStrict, même si $v$ ne l'est pas.
\end{rem}

\begin{paragr}\label{DefDeuxFoncInduitFoncCoLaxOpTrans}
Soit un diagramme de \DeuxFoncteursCoLax{}
$$
\xymatrix{
\mathdeuxcat{A} 
\ar[rr]^{u}
\ar[dr]_{w}
&{}
&\mathdeuxcat{B}
\ar[dl]^{v}
\\
&\mathdeuxcat{C}
\utwocell<\omit>{\sigma}
}
$$  
commutatif à l'\DeuxTransformationCoLax{} $\sigma : w \Rightarrow vu$ près seulement. Cela fournit un diagramme de \DeuxFoncteursLax{} 
$$
\xymatrix{
\DeuxCatToutOp{\mathdeuxcat{A}}
\ar[rr]^{\DeuxFoncToutOp{u}}
\ar[dr]_{\DeuxFoncToutOp{w}}
&&\DeuxFoncToutOp{\mathdeuxcat{B}}
\ar[dl]^{\DeuxFoncToutOp{v}}
\dtwocell<\omit>{<9>\DeuxTransToutOp{\sigma}}
\\
&\DeuxFoncToutOp{\mathdeuxcat{C}}
&{}
}
$$
commutatif à la \DeuxTransformationLax{} $\DeuxTransToutOp{\sigma} : \DeuxFoncToutOp{w} \Rightarrow\DeuxFoncToutOp{v} \DeuxFoncToutOp{u}$ près seulement. En vertu du paragraphe \ref{DefDeuxFoncInduitFoncLaxOpTrans}, on a donc un \DeuxFoncteurLax{} 
$$
\DeuxCatUnOp{(\DeuxFoncTrancheLaxCoq{\DeuxFoncToutOp{u}}{\DeuxTransToutOp{\sigma}}{c})} :
\DeuxCatUnOp{(\TrancheLax{\DeuxCatToutOp{\mathdeuxcat{A}}}{\DeuxFoncToutOp{w}}{c})} \to\DeuxCatUnOp{(\TrancheLax{\DeuxCatToutOp{\mathdeuxcat{B}}}{\DeuxFoncToutOp{v}}{c})},
$$
et donc un \DeuxFoncteurCoLax{}
$$
\DeuxFoncToutOp{(\DeuxFoncTrancheLaxCoq{\DeuxFoncToutOp{u}}{\DeuxTransToutOp{\sigma}}{c})} :
\DeuxCatToutOp{(\TrancheLax{\DeuxCatToutOp{\mathdeuxcat{A}}}{\DeuxFoncToutOp{w}}{c})} \to \DeuxCatToutOp{(\TrancheLax{\DeuxCatToutOp{\mathdeuxcat{B}}}{\DeuxFoncToutOp{v}}{c})},
$$
c'est-à-dire un \DeuxFoncteurCoLax{} de $\OpTrancheCoLax{\mathdeuxcat{A}}{w}{c}$ vers $\OpTrancheCoLax{\mathdeuxcat{B}}{v}{c}$, \DeuxFoncteurCoLax{} que l'on notera $\DeuxFoncOpTrancheCoLaxCoq{u}{\sigma}{c}$\index[not]{c0cu@$\DeuxFoncOpTrancheCoLaxCoq{u}{\sigma}{c}$}. On a par définition
$$
\DeuxFoncOpTrancheCoLaxCoq{u}{\sigma}{c} = \DeuxFoncToutOp{(\DeuxFoncTrancheLaxCoq{\DeuxFoncToutOp{u}}{\DeuxTransToutOp{\sigma}}{c})}.
$$

Si $vu = w$ et $\sigma = 1_{w}$, on notera ce \DeuxFoncteurCoLax{} induit $\DeuxFoncOpTrancheCoLax{u}{c}$\index[not]{ccu@$\DeuxFoncOpTrancheCoLax{u}{c}$} plutôt que $\DeuxFoncOpTrancheCoLaxCoq{u}{1_{w}}{c}$. 
\end{paragr}

\begin{rem}
Si, dans la situation ci-dessus, $u$ est un \DeuxFoncteurStrict, alors $\DeuxFoncOpTrancheCoLaxCoq{u}{\sigma}{c}$ est un \DeuxFoncteurStrict, même si $v$ ne l'est pas.
\end{rem}

\begin{paragr}\label{DefFoncInduits}
Résumons les propriétés de dualité des divers \deux{}foncteurs lax ou colax définis dans cette section. 

Si $u : \mathdeuxcat{A} \to \mathdeuxcat{B}$, $v : \mathdeuxcat{B} \to \mathdeuxcat{C}$ et $w : \mathdeuxcat{A} \to \mathdeuxcat{C}$ sont des \DeuxFoncteursLax{} et si $\sigma$ est une \DeuxTransformationCoLax{} de $vu$ vers $w$, on a défini un \DeuxFoncteurLax{}
$$
\DeuxFoncTrancheLaxCoq{u}{\sigma}{c} : \TrancheLax{\mathdeuxcat{A}}{w}{c} \to \TrancheLax{\mathdeuxcat{B}}{v}{c}
$$
vérifiant les propriétés de dualité suivantes : 
$$
\DeuxFoncTrancheLaxCoq{u}{\sigma}{c} =
\DeuxCatUnOp{(\DeuxFoncOpTrancheLaxCoq{\DeuxFoncUnOp{u}}{\DeuxTransUnOp{\sigma}}{c})} =
\DeuxCatDeuxOp{(\DeuxFoncTrancheCoLaxCoq{\DeuxFoncDeuxOp{u}}{\DeuxTransDeuxOp{\sigma}}{c})} =
 \DeuxCatToutOp{(\DeuxFoncOpTrancheCoLaxCoq{\DeuxFoncToutOp{u}}{\DeuxTransToutOp{\sigma}}{c})}.
$$

Si $u : \mathdeuxcat{A} \to \mathdeuxcat{B}$, $v : \mathdeuxcat{B} \to \mathdeuxcat{C}$ et $w : \mathdeuxcat{A} \to \mathdeuxcat{C}$ sont des \DeuxFoncteursLax{} et si $\sigma$ est une \DeuxTransformationLax{} de $w$ vers $vu$, on a défini un \DeuxFoncteurLax{}
$$
\DeuxFoncOpTrancheLaxCoq{u}{\sigma}{c} : \OpTrancheLax{\mathdeuxcat{A}}{w}{c} \to \OpTrancheLax{\mathdeuxcat{B}}{v}{c}
$$
vérifiant les propriétés de dualité suivantes :
$$
\DeuxFoncOpTrancheLaxCoq{u}{\sigma}{c} = 
\DeuxFoncUnOp{(\DeuxFoncTrancheLaxCoq{\DeuxFoncUnOp{u}}{\DeuxTransUnOp{\sigma}}{c})} = \DeuxFoncDeuxOp{(\DeuxFoncOpTrancheCoLaxCoq{\DeuxFoncDeuxOp{u}}{\DeuxTransDeuxOp{\sigma}}{c})} = \DeuxFoncToutOp{(\DeuxFoncTrancheCoLaxCoq{\DeuxFoncToutOp{u}}{\DeuxTransToutOp{\sigma}}{c})}.
$$

Si $u : \mathdeuxcat{A} \to \mathdeuxcat{B}$, $v : \mathdeuxcat{B} \to \mathdeuxcat{C}$ et $w : \mathdeuxcat{A} \to \mathdeuxcat{C}$ sont des \DeuxFoncteursCoLax{} et si $\sigma$ est une \DeuxTransformationLax{} de $vu$ vers $w$, on a défini un \DeuxFoncteurCoLax{}
$$
\DeuxFoncTrancheCoLaxCoq{u}{\sigma}{c} : \TrancheCoLax{\mathdeuxcat{A}}{w}{c} \to \TrancheCoLax{\mathdeuxcat{B}}{v}{c}
$$
vérifiant les propriétés de dualité suivantes :
$$
\DeuxFoncTrancheCoLaxCoq{u}{\sigma}{c} = 
\DeuxFoncUnOp{(\DeuxFoncOpTrancheCoLaxCoq{\DeuxFoncUnOp{u}}{\DeuxTransUnOp{\sigma}}{c})} =
\DeuxFoncDeuxOp{(\DeuxFoncTrancheLaxCoq{\DeuxFoncDeuxOp{u}}{\DeuxTransDeuxOp{\sigma}}{c})} = 
\DeuxFoncToutOp{(\DeuxFoncOpTrancheLaxCoq{\DeuxFoncToutOp{u}}{\DeuxTransToutOp{\sigma}}{c})}.
$$

Si $u : \mathdeuxcat{A} \to \mathdeuxcat{B}$, $v : \mathdeuxcat{B} \to \mathdeuxcat{C}$ et $w : \mathdeuxcat{A} \to \mathdeuxcat{C}$ sont des \DeuxFoncteursCoLax{} et si $\sigma$ est une \DeuxTransformationCoLax{} de $w$ vers $vu$, on a défini un \DeuxFoncteurCoLax{}
$$
\DeuxFoncOpTrancheCoLaxCoq{u}{\sigma}{c} : \OpTrancheCoLax{\mathdeuxcat{A}}{w}{c} \to \OpTrancheCoLax{\mathdeuxcat{B}}{v}{c}
$$
vérifiant les propriétés de dualité suivantes : 
$$
\DeuxFoncOpTrancheCoLaxCoq{u}{\sigma}{c} = 
\DeuxFoncUnOp{(\DeuxFoncTrancheCoLaxCoq{\DeuxFoncUnOp{u}}{\DeuxTransUnOp{\sigma}}{c})} =
\DeuxFoncDeuxOp{(\DeuxFoncOpTrancheLaxCoq{\DeuxFoncDeuxOp{u}}{\DeuxTransDeuxOp{\sigma}}{c})} =
\DeuxFoncToutOp{(\DeuxFoncTrancheLaxCoq{\DeuxFoncToutOp{u}}{\DeuxTransToutOp{\sigma}}{c})}.
$$
\end{paragr}

\section{Préadjoints}\label{SectionPreadjoints}

\begin{paragr}
Nous présentons dans cette section une généralisation \deux{}catégorique de la notion de foncteur adjoint. Après le dépôt de ce travail, sur une suggestion de Steve Lack, nous avons consulté l'article \cite{BettiPower}, ce qui nous a permis de constater que les notions et résultats de cette section, contrairement à ce que nous croyions, se trouvaient déjà dans la littérature, bien qu'aucune réponse aux requêtes d'ordre bibliographique que nous avons émises lors de la rédaction de ce travail de thèse n'ait attiré notre attention sur \cite{BettiPower}. En guise d'addenda relativement tardif, nous dirigeons à plusieurs reprises le lecteur sur ce dernier texte dans la présente section. Toujours suivant une remarque de Steve Lack, nous adopterons une attitude analogue dans la section \ref{SectionAdjonctions}, pour laquelle il faut citer la thèse de Verity \cite{Verity}, que nous ignorions jusqu'ici. 
\end{paragr}

\begin{paragr}

On rappelle une caractérisation classique des foncteurs adjoints : un morphisme $u : A \to B$ de $\Cat$ est un adjoint à gauche — autrement dit, il admet un adjoint à droite — si et seulement si, pour tout objet $b$ de $B$, la catégorie $A/b$ admet un objet final. De façon duale, le foncteur $u : A \to B$ est un adjoint à droite — autrement dit, il admet un adjoint à gauche —, si et seulement si, pour tout objet $b$ de $B$, la catégorie $b \backslash A$ admet un objet initial. Ces rappels catégoriques nous serviront de guide pour énoncer les définitions qui vont suivre. 
\end{paragr}

\begin{df}\label{DefOF2}
On dira qu'un objet $z$ d'une \deux{}catégorie $\mathdeuxcat{A}$ \emph{admet un objet final}\index{admet un objet final (d'un objet d'une \deux{}catégorie)} si, pour tout objet $a$ de $\mathdeuxcat{A}$, la catégorie $\CatHom{\mathdeuxcat{A}}{a}{z}$ admet un objet final. On dira qu'il \emph{admet un objet initial}\index{admet un objet initial (d'un objet d'une \deux{}catégorie)} s'il admet un objet final dans $\DeuxCatDeuxOp{\mathdeuxcat{A}}$, autrement dit si, pour tout objet $a$ de $\mathdeuxcat{A}$, la catégorie $\CatHom{\mathdeuxcat{A}}{a}{z}$ admet un objet initial. 
\end{df}

\begin{rem}\label{RemarqueJay}
Nous devons à la consultation de \cite{BettiPower} d'avoir appris que la notion d'objet admettant un objet final se trouvait déjà mise en valeur par Jay, dans \cite{Jay}, sous le nom d'\emph{objet final local}, dans un contexte bicatégorique. 
\end{rem}

\begin{exemple}
Un objet de la \deux{}catégorie $\Cat$ admet un objet final (\emph{resp.} initial) en ce sens si et seulement si c'est une catégorie admettant un objet final (\emph{resp.} initial) au sens usuel ; c'est la raison de l'adoption de cette terminologie, suggérée par Jean Bénabou. 
\end{exemple}

\begin{paragr}\label{DefOpAdmet}
Pour des raisons de commodité, l'on dira qu'une \deux{}catégorie $\mathdeuxcat{A}$ \emph{op\nobreakdash-admet}\index{op-admet} (\emph{resp.} \emph{co\nobreakdash-admet}\index{co-admet}, \emph{resp.} \emph{coop\nobreakdash-admet}\index{coop-admet}) une certaine propriété si $\DeuxCatUnOp{\mathdeuxcat{A}}$ (\emph{resp.} $\DeuxCatDeuxOp{\mathdeuxcat{A}}$, \emph{resp.} $\DeuxCatToutOp{\mathdeuxcat{A}}$) vérifie cette propriété. 
\end{paragr}

\begin{exemple}\label{ExemplesOF}
Pour toute \deux{}catégorie $\mathdeuxcat{A}$ et tout objet $a$ de $\mathdeuxcat{A}$, la \deux{}catégorie $\TrancheCoLax{\mathdeuxcat{A}}{}{a}$ admet un objet admettant un objet final. Plus précisément, l'objet $(a, 1_{a})$ admet un objet final : il est tel que, pour tout objet $(a', p : a' \to a)$, le couple $(p, 1_{p})$ définit un objet final de la catégorie $\CatHom{\TrancheCoLax{\mathdeuxcat{A}}{}{a}}{(a',p)}{(a,1_{a})}$. Dualement, la \deux{}catégorie $\TrancheLax{\mathdeuxcat{A}}{}{a}$ admet un objet admettant un objet initial, la \deux{}catégorie $\OpTrancheCoLax{\mathdeuxcat{A}}{}{a}$ op-admet un objet admettant un objet final et la \deux{}catégorie $\OpTrancheLax{\mathdeuxcat{A}}{}{a}$ op-admet un objet admettant un objet initial. 
\end{exemple}

\begin{df}\label{DefMAdjoints}
On dira qu'un \DeuxFoncteurCoLax{} $u : \mathdeuxcat{A} \to \mathdeuxcat{B}$ est un \emph{préadjoint à gauche colax}\index{préadjoint à gauche colax} si, pour tout objet $b$ de $\mathdeuxcat{B}$, la \deux{}catégorie $\TrancheCoLax{\mathdeuxcat{A}}{u}{b}$ admet un objet admettant un objet final.  

On dira qu'un \DeuxFoncteurLax{} $u : \mathdeuxcat{A} \to \mathdeuxcat{B}$ est un \emph{préadjoint à gauche lax}\index{préadjoint à gauche lax} si $\DeuxFoncDeuxOp{u}$ est un préadjoint à gauche colax. Cette condition équivaut à la suivante : pour tout objet $b$ de $\mathdeuxcat{B}$, la \deux{}catégorie $\TrancheLax{\mathdeuxcat{A}}{u}{b}$ admet un objet admettant un objet initial. 

On dira qu'un \DeuxFoncteurCoLax{} $u : \mathdeuxcat{A} \to \mathdeuxcat{B}$ est un \emph{préadjoint à droite colax}\index{préadjoint à droite colax} si $\DeuxFoncUnOp{u}$ est un préadjoint à gauche colax. Cette condition équivaut à la suivante : pour tout objet $b$ de $\mathdeuxcat{B}$, la \deux{}catégorie $\OpTrancheCoLax{\mathdeuxcat{A}}{u}{b}$ op-admet un objet admettant un objet final. 

On dira qu'un \DeuxFoncteurLax{} $u : \mathdeuxcat{A} \to \mathdeuxcat{B}$ est un \emph{préadjoint à droite lax}\index{préadjoint à droite lax} si $\DeuxFoncToutOp{u}$ est un préadjoint à gauche colax. Cette condition équivaut à la suivante : pour tout objet $b$ de $\mathdeuxcat{B}$, la \deux{}catégorie $\OpTrancheLax{\mathdeuxcat{A}}{u}{b}$ op-admet un objet admettant un objet initial.
\end{df}

\begin{rem}
On trouve, à la page 941 de \cite{BettiPower}, l'exacte définition de ce que nous appelons « préadjoint à droite lax », sous le nom de \DeuxFoncteurLax{}  \emph{induisant un adjoint à gauche local}. Comme, suivant les auteurs eux-mêmes, la définition que nous donnons équivaut à \cite[définition 4.1]{BettiPower}, cette dernière définition fournit une caractérisation des morphismes vérifiant la propriété universelle que nous privilégions. Toujours à la lecture de \cite{BettiPower}, nous avons appris que cette propriété universelle avait été dégagée par Bunge comme une généralisation de la notion d'extension de Kan \cite[p. 357]{Bunge}. Soulignons toutefois que \cite[définition 4.1]{BettiPower} n'est pas la définition de ce que les auteurs de \cite{BettiPower} appellent \emph{adjoint local}, notion dont la définition est \cite[définition 3.1]{BettiPower} et qui présente l'avantage d'être stable par composition. 
\end{rem} 

\begin{paragr}\label{RappelEqui1}
On rappelle qu'un foncteur $u : A \to B$ est une \emph{équivalence de catégories} s'il existe un foncteur $v : B \to A$ et des isomorphismes de foncteurs $1_{A} \simeq vu$ et $1_{B} \simeq uv$. En vertu de la théorie classique (voir par exemple \cite[IV. 4. théorème 1, p. 93]{CWM}), cette condition équivaut à celle, \emph{a priori} plus forte, que $u$ admet un adjoint à droite, que nous noterons également $v$, et qu'il existe des isomorphismes de foncteurs $\eta : 1_{A} \Rightarrow vu$ et $\epsilon : uv \Rightarrow 1_{B}$ qui sont respectivement unité et coünité pour le couple de foncteurs adjoints $(u, v)$, c'est-à-dire que $\eta$ et $\epsilon$ vérifient les identités triangulaires (voir ci-dessous). On rappelle de plus que cette notion d'équivalence fait sens dans toute \deux{}catégorie, le cas ci-dessus correspondant à celui de la \deux{}catégorie $\Cat$. Plus précisément, une \un{}cellule $u : a \to a'$ dans une \deux{}catégorie $\mathdeuxcat{A}$ est une \emph{équivalence}\index{equivalence (dans une \deux{}catégorie)@équivalence (dans une \deux{}catégorie)} s'il existe une \un{}cellule $v : a' \to a$ et des \deux{}cellules inversibles $\eta : 1_{a} \Rightarrow vu$ et $\epsilon : uv \Rightarrow 1_{a'}$. On peut vérifier que, comme ci-dessus, cela équivaut à demander l'existence de telles \deux{}cellules inversibles satisfaisant en plus les identités triangulaires, c'est-à-dire telles que les égalités
$$
(\epsilon \CompDeuxZero u) \CompDeuxUn (u \CompDeuxZero \eta) = 1_{u}
$$
et
$$
(v \CompDeuxZero \epsilon) \CompDeuxUn (\eta \CompDeuxZero v) = 1_{v}
$$
soient vérifiées. Une référence possible est \cite[théorème 1.9]{Gurski} (voir aussi \cite[remarque 1.10]{Gurski} pour un résultat plus fort).
\end{paragr}

\begin{paragr}\label{RappelEqui2}
On rappelle enfin qu'un \DeuxFoncteurStrict{} $u : \mathdeuxcat{A} \to \mathdeuxcat{B}$ est une \emph{équivalence de \deux{}catégories}\index{equivalence (de \deux{}catégories)@équivalence (de \deux{}catégories)} si, pour tout couple d'objets $a$ et $a'$ de $\mathdeuxcat{A}$, le foncteur $u_{a,a'} : \CatHom{\mathdeuxcat{A}}{a}{a'} \to \CatHom{\mathdeuxcat{B}}{u(a)}{u(a')}$ est une équivalence de catégories, dont on notera $\overline{u}_{a,a'}$ un quasi-inverse, et si, pour tout objet $b$ de $\mathdeuxcat{B}$, il existe un objet $a_{b}$ de $\mathdeuxcat{A}$ tel que $u(a_{b})$ et $b$ soient équivalents dans $\mathdeuxcat{B}$. De façon plus explicite, cela signifie qu'il existe des \un{}cellules $p_{b} : u(a_{b}) \to b$ et $q_{b} : b \to u(a_{b})$ telles que les \un{}cellules composées $q_{b} p_{b}$ et $p_{b} q_{b}$ soient isomorphes à $1_{u(a_{b})}$ et $1_{b}$ respectivement. Cette dernière condition signifie qu'il existe des \deux{}cellules inversibles $\eta_{b} : 1_{u(a_{b})} \Rightarrow q_{b} p_{b}$ et $\epsilon_{b} : p_{b} q_{b} \Rightarrow 1_{b}$. En vertu des résultats rappelés ci-dessus, on peut supposer en outre que ces \deux{}cellules vérifient les identités triangulaires, c'est-à-dire que les égalités
$$
(\epsilon_{b} \CompDeuxZero p_{b}) \CompDeuxUn (p_{b} \CompDeuxZero \eta_{b}) = 1_{p_{b}}
$$
et
$$
(q_{b} \CompDeuxZero \epsilon_{b}) \CompDeuxUn (\eta_{b} \CompDeuxZero q_{b}) = 1_{q_{b}}
$$
sont vérifiées. La composée horizontale de \deux{}cellules inversibles étant une \deux{}cellule inversible \footnote{On rappelle souvent confondre dans les notations les \un{}cellules avec leur identité. Les identités des \un{}cellules sont évidemment des \deux{}cellules inversibles.}, cela s'écrit aussi
$$
\epsilon_{b} \CompDeuxZero p_{b} = (p_{b} \CompDeuxZero \eta_{b})^{-1} = p_{b} \CompDeuxZero \eta_{b}^{-1}
$$
et
$$
\eta_{b} \CompDeuxZero q_{b} = (q_{b} \CompDeuxZero \epsilon_{b})^{-1} = q_{b} \CompDeuxZero \epsilon_{b}^{-1}.
$$
Dans la suite, nous adopterons des notations consistantes avec celles que nous venons d'utiliser, sans forcément en expliciter le sens, qui devrait être clair.  
\end{paragr}

\begin{prop}\label{EquiPreadjoint}
Une équivalence de \deux{}catégories est un préadjoint à gauche lax (\emph{resp.} un préadjoint à gauche colax, \emph{resp.} un préadjoint à droite lax, \emph{resp.} un préadjoint à droite colax).
\end{prop}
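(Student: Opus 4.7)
Le plan est de traiter le cas du ``pr\'eadjoint \`a gauche colax'', les trois autres s'en d\'eduisant imm\'ediatement en appliquant ce cas \`a $\DeuxFoncDeuxOp{u}$, $\DeuxFoncUnOp{u}$ et $\DeuxFoncToutOp{u}$ respectivement, conform\'ement aux identifications de la d\'efinition \ref{DefMAdjoints}, la propri\'et\'e d'\^etre une \'equivalence de \deux{}cat\'egories \'etant manifestement stable par chacune des trois dualit\'es. Soit donc $u : \mathdeuxcat{A} \to \mathdeuxcat{B}$ une \'equivalence de \deux{}cat\'egories. Pour tout objet $b$ de $\mathdeuxcat{B}$, les rappels des paragraphes \ref{RappelEqui1} et \ref{RappelEqui2} permettent de fixer un objet $a_{b}$ de $\mathdeuxcat{A}$ et une \'equivalence adjointe $(p_{b} : u(a_{b}) \to b,\ q_{b} : b \to u(a_{b}),\ \eta_{b},\ \epsilon_{b})$ satisfaisant les identit\'es triangulaires. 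Il s'agit d'\'etablir que l'objet $(a_{b}, p_{b})$ admet un objet final dans $\TrancheCoLax{\mathdeuxcat{A}}{u}{b}$.

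Soit $(a,p)$ un objet de $\TrancheCoLax{\mathdeuxcat{A}}{u}{b}$. Pour construire un objet final dans la cat\'egorie $\CatHom{\TrancheCoLax{\mathdeuxcat{A}}{u}{b}}{(a,p)}{(a_{b},p_{b})}$, nous utiliserons l'essentielle surjectivit\'e du foncteur $u_{a,a_{b}}$ pour choisir une \un{}cellule $f_{0} : a \to a_{b}$ et un isomorphisme $\theta_{0} : u(f_{0}) \Rightarrow q_{b} p$, puis nous poserons
$$
\alpha_{0} = (\epsilon_{b} \CompDeuxZero p) \CompDeuxUn (p_{b} \CompDeuxZero \theta_{0}) : p_{b} u(f_{0}) \Rightarrow p.
$$
La \deux{}cellule $\alpha_{0}$ est un isomorphisme par composition d'isomorphismes, et $(f_{0}, \alpha_{0})$ est bien une \un{}cellule de $(a,p)$ vers $(a_{b}, p_{b})$ dans $\TrancheCoLax{\mathdeuxcat{A}}{u}{b}$. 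Notre candidat pour objet final sera cette \un{}cellule.

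Pour en v\'erifier la finalit\'e, il faut \'etablir que, pour toute autre \un{}cellule $(f, \alpha) : (a,p) \to (a_{b}, p_{b})$ dans $\TrancheCoLax{\mathdeuxcat{A}}{u}{b}$, existe une unique \deux{}cellule $\beta : f \Rightarrow f_{0}$ de $\mathdeuxcat{A}$ satisfaisant $\alpha_{0} \CompDeuxUn (p_{b} \CompDeuxZero u(\beta)) = \alpha$. L'inversibilit\'e de $\alpha_{0}$ r\'eduit cette \'equation \`a $p_{b} \CompDeuxZero u(\beta) = \alpha_{0}^{-1} \CompDeuxUn \alpha$, et l'existence et l'unicit\'e de $\beta$ se r\'esoudront alors en deux probl\`emes de rel\`evement successifs. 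Le premier demande de trouver une unique \deux{}cellule $\delta : u(f) \Rightarrow u(f_{0})$ telle que $p_{b} \CompDeuxZero \delta = \alpha_{0}^{-1} \CompDeuxUn \alpha$ ; il sera r\'esolu en observant que, $p_{b}$ \'etant une \'equivalence dans $\mathdeuxcat{B}$, le foncteur de pr\'ecomposition
$$
p_{b} \CompDeuxZero (-) : \CatHom{\mathdeuxcat{B}}{u(a)}{u(a_{b})} \to \CatHom{\mathdeuxcat{B}}{u(a)}{b}
$$
est lui-m\^eme une \'equivalence de cat\'egories, de quasi-inverse $q_{b} \CompDeuxZero (-)$ (les isomorphismes naturels se d\'eduisant de $\eta_{b}$ et $\epsilon_{b}$), donc pleinement fid\`ele. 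Le second probl\`eme, qui consiste \`a trouver $\beta$ tel que $u(\beta) = \delta$, se r\'esout imm\'ediatement par pleine fid\'elit\'e de $u_{a,a_{b}}$, cons\'equence du fait que $u_{a,a_{b}}$ est une \'equivalence de cat\'egories.

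L'obstacle principal est la v\'erification de la pleine fid\'elit\'e de la pr\'ecomposition par $p_{b}$, qui repose crucialement sur le fait qu'une \'equivalence dans une \deux{}cat\'egorie peut toujours \^etre compl\'et\'ee en \'equivalence adjointe (rappel \ref{RappelEqui1}) : sans les identit\'es triangulaires, les \deux{}cellules $\eta_{b}$ et $\epsilon_{b}$ ne fourniraient pas les donn\'ees n\'ecessaires \`a l'adjonction induite au niveau des cat\'egories de morphismes. Ce point acquis, les trois variantes duales en d\'ecoulent selon le sch\'ema indiqu\'e au d\'ebut du plan.
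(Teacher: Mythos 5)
Votre d\'emonstration est correcte et suit pour l'essentiel la m\^eme voie que celle du texte : m\^eme objet distingu\'e $(a_{b}, p_{b})$, m\^eme \un{}cellule candidate obtenue en relevant $q_{b}p$ le long de l'\'equivalence $u_{a,a_{b}}$ puis en composant avec $\epsilon_{b}$, et m\^eme recours \`a la pleine fid\'elit\'e de $u_{a,a_{b}}$ pour relever la \deux{}cellule ; vous ne faites que regrouper la v\'erification d'existence et d'unicit\'e en invoquant la pleine fid\'elit\'e du foncteur $p_{b} \CompDeuxZero (-)$, l\`a o\`u le texte \'etablit la m\^eme \'egalit\'e par un calcul direct puis conclut \`a l'unicit\'e en composant avec $q_{b}$. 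Une seule nuance : pour que $p_{b} \CompDeuxZero (-)$ soit une \'equivalence de cat\'egories (donc pleinement fid\`ele), il suffit que $p_{b}$ soit une \'equivalence dans $\mathdeuxcat{B}$, les \deux{}cellules inversibles $\eta_{b}$ et $\epsilon_{b}$ fournissant d\'ej\`a les isomorphismes naturels requis, de sorte que les identit\'es triangulaires ne sont pas indispensables \`a cet endroit, contrairement \`a ce que sugg\`ere votre derni\`ere remarque (le texte ne les utilise que pour mener son calcul explicite).
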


\begin{proof}
Il suffit bien sûr de vérifier l'une des quatre assertions, duales l'une de l'autre. Montrons par exemple qu'une équivalence de \deux{}catégories $u : \mathdeuxcat{A} \to \mathdeuxcat{B}$ est un préadjoint à gauche colax. On utilisera sans explications supplémentaires les notations du paragraphe \ref{RappelEqui2}. Soit $b$ un objet de $\mathdeuxcat{B}$. Il s'agit de vérifier que la \deux{}catégorie $\TrancheCoLax{\mathdeuxcat{A}}{u}{b}$ admet un objet admettant un objet final. 

Le couple $(a_{b}, p_{b} : u(a_{b}) \to b)$ définit un objet de $\TrancheCoLax{\mathdeuxcat{A}}{u}{b}$. On va vérifier qu'il admet un objet final. 

Soit $(a, p : u(a) \to b)$ un objet quelconque de $\TrancheCoLax{\mathdeuxcat{A}}{u}{b}$. Le couple 
$$
(\overline{u}_{a, a_{b}} (q_{b} p), (\epsilon_{b} \CompDeuxZero p) \CompDeuxUn (p_{b} \CompDeuxZero (\epsilon_{a, a_{b}})_{q_{b} p}))
$$
définit une \un{}cellule de $(a,p)$ vers $(a_{b}, p_{b})$ dans $\TrancheCoLax{\mathdeuxcat{A}}{u}{b}$. 

Soit $(s : a \to a_{b}, \sigma : p_{b} u(s) \Rightarrow p)$ une \un{}cellule quelconque de $(a,p)$ vers $(a_{b}, p_{b})$ dans $\TrancheCoLax{\mathdeuxcat{A}}{u}{b}$. La \deux{}cellule 
$$
(\epsilon^{-1}_{a, a_{b}})_{q_{b} p} \CompDeuxUn (q_{b} \CompDeuxZero \sigma) \CompDeuxUn (\eta_{b} \CompDeuxZero u(s))
$$ 
est un morphisme de la catégorie 
$$
\CatHom{\mathdeuxcat{B}}{u(a)}{u(a_{b})}.
$$
Une équivalence de catégories étant pleinement fidèle, il existe une unique \deux{}cellule 
$$
\delta : s \Rightarrow \overline{u}_{a, a_{b}} (q_{b} p)
$$
dans $\mathdeuxcat{A}$ telle que 
$$
u(\delta) = (\epsilon^{-1}_{a, a_{b}})_{q_{b} p} \CompDeuxUn (q_{b} \CompDeuxZero \sigma) \CompDeuxUn (\eta_{b} \CompDeuxZero u(s)).
$$
On veut vérifier l'égalité
$$
(\epsilon_{b} \CompDeuxZero p) \CompDeuxUn (p_{b} \CompDeuxZero (\epsilon_{a, a_{b}})_{q_{b} p}) \CompDeuxUn (p_{b} \CompDeuxZero ((\epsilon^{-1}_{a, a_{b}})_{q_{b} p} \CompDeuxUn (q_{b} \CompDeuxZero \sigma) \CompDeuxUn (\eta_{b} \CompDeuxZero u(s)))) = \sigma.
$$
Cela résulte de la suite d'égalités 
$$
\begin{aligned}
&(\epsilon_{b} \CompDeuxZero p) \CompDeuxUn (p_{b} \CompDeuxZero (\epsilon_{a, a_{b}})_{q_{b} p}) \CompDeuxUn (p_{b} \CompDeuxZero ((\epsilon^{-1}_{a, a_{b}})_{q_{b} p} \CompDeuxUn (q_{b} \CompDeuxZero \sigma) \CompDeuxUn (\eta_{b} \CompDeuxZero u(s)))) 
\\
&= (\epsilon_{b} \CompDeuxZero p) \CompDeuxUn (p_{b} \CompDeuxZero (\epsilon_{a, a_{b}})_{q_{b} p}) \CompDeuxUn (p_{b} \CompDeuxZero (\epsilon^{-1}_{a, a_{b}})_{q_{b} p}) \CompDeuxUn (p_{b} q_{b} \CompDeuxZero \sigma) \CompDeuxUn (p_{b} \CompDeuxZero \eta_{b} \CompDeuxZero u(s))
\\
&= (\epsilon_{b} \CompDeuxZero p) \CompDeuxUn (p_{b} q_{b} \CompDeuxZero \sigma) \CompDeuxUn (p_{b} \CompDeuxZero \eta_{b}^{-1} \CompDeuxZero u(s))
\\
&= (\epsilon_{b} \CompDeuxZero \sigma) \CompDeuxUn (p_{b} \CompDeuxZero \eta_{b} \CompDeuxZero u(s))
\\
&= \sigma \CompDeuxUn (\epsilon_{b} \CompDeuxZero p_{b} \CompDeuxZero \eta_{b} \CompDeuxZero u(s))
\\
&= \sigma \CompDeuxUn (p_{b} \CompDeuxZero \eta_{b}^{-1} \CompDeuxZero \eta_{b} \CompDeuxZero u(s))
\\
&= \sigma \CompDeuxUn (p_{b} u(s))
\\
&= \sigma.
\end{aligned}
$$
Ainsi, $\delta : s \Rightarrow \overline{u}_{a, a_{b}} (q_{b} p)$ définit une \deux{}cellule de $(s, \sigma)$ vers $(\overline{u}_{a, a_{b}} (q_{b} p), (\epsilon_{b} \CompDeuxZero p) \CompDeuxUn (p_{b} \CompDeuxZero (\epsilon_{a, a_{b}})_{q_{b} p}))$. Il reste à vérifier que c'est la seule. Soit donc $\tau : s \Rightarrow \overline{u}_{a, a_{b}} (q_{b} p)$ définissant une \deux{}cellule de $(s, \sigma)$ vers $(\overline{u}_{a, a_{b}} (q_{b} p), (\epsilon_{b} \CompDeuxZero p) \CompDeuxUn (p_{b} \CompDeuxZero (\epsilon_{a, a_{b}})_{q_{b} p}))$, c'est-à-dire telle que
$$
(\epsilon_{b} \CompDeuxZero p) \CompDeuxUn (p_{b} \CompDeuxZero (\epsilon_{a, a_{b}})_{q_{b}p}) \CompDeuxUn (p_{b} \CompDeuxZero u(\tau)) = \sigma.
$$ 
Il s'agit de vérifier l'égalité
$$
u(\tau) = (\epsilon^{-1}_{a, a_{b}})_{q_{b} p} \CompDeuxUn (q_{b} \CompDeuxZero \sigma) \CompDeuxUn (\eta_{b} \CompDeuxZero u(s)),
$$
qui nous permettra de conclure $\tau = \delta$ par définition de $\delta$. Or,
$$
\begin{aligned}
(\epsilon_{b} \CompDeuxZero p) \CompDeuxUn (p_{b} \CompDeuxZero (\epsilon_{a, a_{b}})_{q_{b}p}) \CompDeuxUn (p_{b} \CompDeuxZero u(\tau)) &= \sigma
\\
&= (\epsilon_{b} \CompDeuxZero p) \CompDeuxUn (p_{b} \CompDeuxZero (\epsilon_{a, a_{b}})_{q_{b} p}) \CompDeuxUn (p_{b} \CompDeuxZero ((\epsilon^{-1}_{a, a_{b}})_{q_{b} p} \CompDeuxUn (q_{b} \CompDeuxZero \sigma) \CompDeuxUn (\eta_{b} \CompDeuxZero u(s)))). 
\end{aligned}
$$
Les \deux{}cellules $(\epsilon_{b} \CompDeuxZero p)$ et $(p_{b} \CompDeuxZero (\epsilon_{a, a_{b}})_{q_{b} p})$ sont inversibles puisqu'il s'agit de composées de \deux{}cellules inversibles (voir la fin du paragraphe \ref{RappelEqui2}). Il vient donc
$$
p_{b} \CompDeuxZero u(\tau) = p_{b} \CompDeuxZero ((\epsilon^{-1}_{a, a_{b}})_{q_{b} p} \CompDeuxUn (q_{b} \CompDeuxZero \sigma) \CompDeuxUn (\eta_{b} \CompDeuxZero u(s))),
$$
d'où
$$
q_{b} p_{b} \CompDeuxZero u(\tau) = q_{b} p_{b} \CompDeuxZero ((\epsilon^{-1}_{a, a_{b}})_{q_{b} p} \CompDeuxUn (q_{b} \CompDeuxZero \sigma) \CompDeuxUn (\eta_{b} \CompDeuxZero u(s))).
$$
Comme $q_{b} p_{b}$ est isomorphe à $1_{u(a_{b})}$, il vient
$$
u(\tau) = (\epsilon^{-1}_{a, a_{b}})_{q_{b} p} \CompDeuxUn (q_{b} \CompDeuxZero \sigma) \CompDeuxUn (\eta_{b} \CompDeuxZero u(s)),
$$
CQFD. 
\end{proof}

\begin{prop}\label{PreAdjointsCompo}
La composée de deux préadjoints à gauche colax (\emph{resp.} préadjoints à gauche lax, \emph{resp.} préadjoints à droite colax, \emph{resp.} préadjoints à droite lax) qui sont des \DeuxFoncteursStricts{} est un préadjoint à gauche colax (\emph{resp.} préadjoint à gauche lax, \emph{resp.} préadjoint à droite colax, \emph{resp.} préadjoint à droite lax). 
\end{prop}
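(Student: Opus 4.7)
Par des arguments de dualit� (voir le paragraphe \ref{DefFoncInduits} et le rappel portant sur la notion de pr�adjoint de chaque type), il suffit d'�tablir l'assertion pour les pr�adjoints � gauche colax : les trois autres cas s'en d�duiront en appliquant le r�sultat aux \deux{}foncteurs oppos�s ad�quats. Soient donc $u : \mathdeuxcat{A} \to \mathdeuxcat{B}$ et $v : \mathdeuxcat{B} \to \mathdeuxcat{C}$ deux \DeuxFoncteursStricts{} qui sont des pr�adjoints � gauche colax, et soit $c$ un objet de $\mathdeuxcat{C}$. Il s'agit de montrer que la \deux{}cat�gorie $\TrancheCoLax{\mathdeuxcat{A}}{vu}{c}$ admet un objet admettant un objet final.

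Le plan est d'appliquer les deux propri�t�s universelles l'une apr�s l'autre. La propri�t� de $v$ fournit un objet $(b_{0}, q_{0} : v(b_{0}) \to c)$ admettant un objet final dans $\TrancheCoLax{\mathdeuxcat{B}}{v}{c}$. La propri�t� de $u$ appliqu�e � $b_{0}$ fournit ensuite un objet $(a_{0}, p_{0} : u(a_{0}) \to b_{0})$ admettant un objet final dans $\TrancheCoLax{\mathdeuxcat{A}}{u}{b_{0}}$. Je proposerai alors comme candidat le couple $(a_{0}, q_{0} v(p_{0}))$ vu comme objet de $\TrancheCoLax{\mathdeuxcat{A}}{vu}{c}$, la stricte fonctorialit� de $v$ assurant que l'expression $q_{0} v(p_{0}) : v(u(a_{0})) \to c$ se comporte correctement vis-�-vis de la composition.

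Pour tout objet $(a, p)$ de $\TrancheCoLax{\mathdeuxcat{A}}{vu}{c}$, on obtiendra une \un{}cellule finale vers $(a_{0}, q_{0} v(p_{0}))$ en enchainant les deux propri�t�s universelles : appliquant la finalit� dans $\TrancheCoLax{\mathdeuxcat{B}}{v}{c}$ � l'objet $(u(a), p)$, on obtient une \un{}cellule finale $(g : u(a) \to b_{0}, \beta : q_{0} v(g) \Rightarrow p)$ ; puis, appliquant la finalit� dans $\TrancheCoLax{\mathdeuxcat{A}}{u}{b_{0}}$ � l'objet $(a, g)$, on obtient une \un{}cellule finale $(f : a \to a_{0}, \alpha : p_{0} u(f) \Rightarrow g)$. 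Le candidat sera $(f, \beta \CompDeuxUn (q_{0} \CompDeuxZero v(\alpha)))$, la \deux{}cellule �tant de type $q_{0} v(p_{0}) v(u(f)) \Rightarrow p$, bien d�finie gr�ce � la stricte fonctorialit� de $v$ qui identifie $v(p_{0}) v(u(f))$ et $v(p_{0} u(f))$.

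Pour �tablir l'unicit� dans la propri�t� de finalit�, je consid�rerai une \un{}cellule quelconque $(h, \gamma) : (a, p) \to (a_{0}, q_{0} v(p_{0}))$ et je remonterai les deux propri�t�s universelles successivement : le couple $(p_{0} u(h), \gamma)$ d�finit alors une \un{}cellule de $(u(a), p)$ vers $(b_{0}, q_{0})$ dans $\TrancheCoLax{\mathdeuxcat{B}}{v}{c}$ (la stricte fonctorialit� de $v$ �tant encore essentielle), et la finalit� de $(g, \beta)$ fournit une unique \deux{}cellule $\delta : p_{0} u(h) \Rightarrow g$ telle que $\beta \CompDeuxUn (q_{0} \CompDeuxZero v(\delta)) = \gamma$ ; puis $(h, \delta)$ d�finit une \un{}cellule de $(a, g)$ vers $(a_{0}, p_{0})$ dans $\TrancheCoLax{\mathdeuxcat{A}}{u}{b_{0}}$, et la finalit� de $(f, \alpha)$ fournit l'unique \deux{}cellule cherch�e $\mu : h \Rightarrow f$. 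Le c\oe ur technique consistera � v�rifier que $\mu$ d�finit bien une \deux{}cellule dans $\TrancheCoLax{\mathdeuxcat{A}}{vu}{c}$ entre $(h, \gamma)$ et $(f, \beta \CompDeuxUn (q_{0} \CompDeuxZero v(\alpha)))$, c'est-�-dire l'�galit�
$$
\beta \CompDeuxUn (q_{0} \CompDeuxZero v(\alpha)) \CompDeuxUn (q_{0} v(p_{0}) \CompDeuxZero vu(\mu)) = \gamma,
$$
qui r�sulte, via la loi d'�change et la stricte fonctorialit� de $v$ par rapport aux \deux{}cellules, de l'�galit� $\alpha \CompDeuxUn (p_{0} \CompDeuxZero u(\mu)) = \delta$ caract�risant $\mu$. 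L'unicit� de $\mu$ s'obtiendra ensuite en appliquant le raisonnement en sens inverse � toute autre candidate $\mu'$. C'est pr�cis�ment dans ces v�rifications que la stricte fonctorialit� intervient de mani�re essentielle : pour un \DeuxFoncteurCoLax{} non strict $v$, les \deux{}cellules structurales $v_{p_{0}, u(h)}$ viendraient contaminer l'identit� utile ci-dessus, ce qui semble constituer l'obstacle principal et expliquer que l'�nonc� se restreigne aux \DeuxFoncteursStricts{}.
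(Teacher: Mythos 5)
Votre d�monstration est correcte et suit essentiellement la m�me strat�gie que celle du texte : r�duction par dualit� au cas des pr�adjoints � gauche colax, construction du candidat $(a_{0}, q_{0}\,v(p_{0}))$ par composition des deux objets universels, factorisation de toute \un{}cellule $(h,\gamma)$ � travers les deux propri�t�s de finalit� successives, et v�rification-cl� par la loi d'�change et la stricte fonctorialit� de $v$ (y compris la remarque identifiant la \deux{}cellule structurale $v_{p_{0},u(h)}$ comme l'obstruction dans le cas colax non strict, qui figure aussi dans le texte). Le seul point laiss� implicite, l'unicit� de $\mu$ ��en sens inverse��, se d�plie exactement comme dans la preuve du texte.
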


\begin{proof} 
Soient $u : \mathdeuxcat{A} \to \mathdeuxcat{B}$ et $v : \mathdeuxcat{B} \to \mathdeuxcat{C}$ deux préadjoints à gauche colax qui sont des \DeuxFoncteursStricts{} et $c$ un objet de $\mathdeuxcat{C}$. Il s'agit de montrer que la \deux{}catégorie 
$
\TrancheCoLax{\mathdeuxcat{A}}{vu}{c}
$
admet un objet admettant un objet final. Par hypothèse, la \deux{}catégorie 
$
\TrancheCoLax{\mathdeuxcat{B}}{v}{c}
$
vérifie cette propriété, c'est-à-dire qu'elle admet un objet 
$$
(b_{F}, q_{b_{F}} : v(b_{F}) \to c) = (b_{F}, q_{b_{F}})
$$ 
tel que, pour tout objet 
$$
(b, s : v(b) \to c)
$$ 
de la \deux{}catégorie
$
\TrancheCoLax{\mathdeuxcat{B}}{v}{c}
$,
il existe un objet final dans la catégorie 
$$
\CatHom{\TrancheCoLax{\mathdeuxcat{B}}{v}{c}}{(b,s)}{(b_{F}, q_{b_{F}})}.
$$
La \deux{}catégorie 
$
\TrancheCoLax{\mathdeuxcat{A}}{u}{b_{F}}
$ 
admet de même un objet 
$$
(a_{F}, q_{a_{F}} : u(a_{F}) \to b_{F}) = (a_{F}, q_{a_{F}})
$$ 
tel que, pour tout objet 
$$
(a, r : u(a) \to b_{F})
$$ 
de 
$
\TrancheCoLax{\mathdeuxcat{A}}{u}{b_{F}}
$, 
il existe un objet final dans la catégorie 
$$
\CatHom{\TrancheCoLax{\mathdeuxcat{A}}{u}{b_{F}}}{(a,r)}{(a_{F}, q_{a_{F}})}.
$$
Le couple 
$$
(a_{F}, q_{b_{F}} v(q_{a_{F}}) : v(u(a_{F})) \to v(b_{F}) \to c) = (a_{F}, q_{b_{F}} v(q_{a_{F}}))
$$ 
définit alors un objet de $\TrancheCoLax{\mathdeuxcat{A}}{vu}{c}$. Soit maintenant 
$$
(a, q : v(u(a)) \to c) = (a, q)
$$ 
un objet de $\TrancheCoLax{\mathdeuxcat{A}}{vu}{c}$. On se propose d'exhiber un objet final de la catégorie 
$$
\CatHom{\TrancheCoLax{\mathdeuxcat{A}}{vu}{c}}{(a,q)}{(a_{F}, q_{b_{F}} v(q_{a_{F}}))}.
$$

Le couple 
$$
(u(a), q : v(u(a)) \to c) = (u(a), q)
$$ 
définit un objet de $\TrancheCoLax{\mathdeuxcat{B}}{v}{c}$. Il existe donc un morphisme 
$$
(g : u(a) \to b_{F}, \alpha : q_{b_{F}} v(g) \Rightarrow q) = (g, \alpha)
$$ 
de $(u(a), q)$ vers $(b_{F}, q_{b_{F}})$ dans $\TrancheCoLax{\mathdeuxcat{B}}{v}{c}$ qui est un objet final de la catégorie 
$$
\CatHom{\TrancheCoLax{\mathdeuxcat{B}}{v}{c}}{(u(a),q)}{(b_{F}, q_{b_{F}})}.
$$ 
Le couple 
$
(a, g : u(a) \to b_{F})
$ 
définit un objet de la \deux{}catégorie 
$
\TrancheCoLax{\mathdeuxcat{A}}{u}{b_{F}}
$. 
Il existe donc un morphisme 
$$
(f : a \to a_{F}, \beta : q_{a_{F}} u(f) \Rightarrow g)
$$ 
de $(a,g)$ vers 
$
(a_{F}, q_{a_{F}})
$ 
dans 
$
\TrancheCoLax{\mathdeuxcat{A}}{u}{b_{F}}
$ 
qui est un objet final de la catégorie 
$$
\CatHom{\TrancheCoLax{\mathdeuxcat{A}}{u}{b_{F}}}{(a,g)}{(a_{F}, q_{a_{F}})}.
$$
On en déduit une \deux{}cellule 
$$
\alpha (q_{b_{F}} \CompDeuxZero v(\beta))
$$ 
de 
$
q_{b_{F}} v(q_{a_{F}}) v(u(f))
$ 
vers $q$ dans $\mathdeuxcat{C}$, et donc une \un{}cellule 
$$
(f, \alpha (q_{b_{F}} \CompDeuxZero v(\beta)))
$$ 
de $(a, q)$ vers $(a_{F}, q_{b_{F}} v(q_{a_{F}}))$ dans $\TrancheCoLax{\mathdeuxcat{A}}{vu}{c}$. On va montrer qu'il s'agit d'un objet final de la catégorie 
$$
\CatHom{\TrancheCoLax{\mathdeuxcat{A}}{vu}{c}}{(a,q)}{(a_{F}, q_{b_{F}} v(q_{a_{F}}))}.
$$ 

Soit donc 
$$
(f' : a \to a_{F}, \gamma : q_{b_{F}} v(q_{a_{F}}) v(u(f')) \Rightarrow q)
$$ 
une \un{}cellule de $(a,q)$ vers $(a_{F}, q_{b_{F}} v(q_{a_{F}}))$ dans $\TrancheCoLax{\mathdeuxcat{A}}{vu}{c}$. Comme $v$ est un \DeuxFoncteurStrict{}\footnote{On pourrait définir une \deux{}cellule pour peu que $v$ soit colax ; c'est plus loin qu'une obstruction se présenterait dans le cas général d'un \DeuxFoncteurCoLax.}, $\gamma$ est une \deux{}cellule de $q_{b_{F}} v(q_{a_{F}} u(f'))$ vers $q$ dans $\mathdeuxcat{C}$. Ainsi, le couple $(q_{a_{F}} u(f'), \gamma)$ définit un morphisme de $(u(a), q)$ vers $(b_{F}, q_{b_{F}})$ dans $\TrancheCoLax{\mathdeuxcat{B}}{v}{c}$. Or, par définition, $(g, \alpha)$ est l'objet final de la catégorie 
$$
\CatHom{\TrancheCoLax{\mathdeuxcat{B}}{v}{c}}{(u(a),q)}{(b_{F}, q_{b_{F}})}.
$$
Par conséquent, il existe une unique \deux{}cellule $\rho$ de $q_{a_{F}} u(f')$ vers $g$ dans $\mathdeuxcat{B}$ telle que 
$$
\alpha (q_{b_{F}} \CompDeuxZero v(\rho)) = \gamma.
$$
Considérons le couple $(f' : a \to a_{F}, \rho : q_{a_{F}} u(f') \to g)$. C'est un objet de la catégorie 
$$
\CatHom{\TrancheCoLax{\mathdeuxcat{A}}{u}{b_{F}}}{(a,g)}{(a_{F}, q_{a_{F}})},
$$ 
dont $(f, \beta)$ est un objet final. Il existe donc une unique \deux{}cellule $\tau$ de $f'$ vers $f$ dans $\mathdeuxcat{A}$ telle que
$$
\beta (q_{a_{F}} \CompDeuxZero u(\tau)) = \rho.
$$
On a alors
$$
\begin{aligned}
\alpha \CompDeuxUn (q_{b_{F}} \CompDeuxZero v(\beta)) \CompDeuxUn (q_{b_{F}} v(q_{a_{F}}) \CompDeuxZero v(u(\tau))) &= \alpha \CompDeuxUn (q_{b_{F}} \CompDeuxZero (v(\beta) \CompDeuxUn (v(q_{a_{F}}) \CompDeuxZero v(u(\tau)))))
\\
&= \alpha \CompDeuxUn (q_{b_{F}} \CompDeuxZero v(\beta \CompDeuxUn (q_{a_{F}} \CompDeuxZero u(\tau))))
\\
&= \alpha \CompDeuxUn (q_{b_{F}} \CompDeuxZero v(\rho))
\\
&= \gamma.
\end{aligned}
$$
(C'est cette dernière égalité qui ne serait pas vérifiée dans le cas général où $v$ est seulement supposé colax, la \deux{}cellule structurale $v_{q_{a_{F}}, u(f')}$ n'étant pas forcément une identité. Le lecteur notera que la démonstration n'utilise pas toutes les hypothèses portant sur $u$ et $v$, que l'on peut affaiblir.) Ainsi, $\tau$ définit une \deux{}cellule de $(f', \gamma)$ vers $(f, \alpha \CompDeuxUn (q_{b_{F}} \CompDeuxZero v(\beta)))$ dans $\TrancheCoLax{\mathdeuxcat{A}}{vu}{c}$. Il ne reste plus qu'à montrer que c'est la seule. Soit donc $\epsilon : f' \Rightarrow f$ une \deux{}cellule de $(f', \gamma)$ vers $(f, \alpha \CompDeuxUn (q_{b_{F}} \CompDeuxZero v(\beta)))$ dans $\TrancheCoLax{\mathdeuxcat{A}}{vu}{c}$. On a donc 
$$
\alpha \CompDeuxUn (q_{b_{F}} \CompDeuxZero v(\beta)) \CompDeuxUn (q_{b_{F}} v(q_{a_{F}}) \CompDeuxZero v(u(\epsilon))) = \gamma,
$$
donc 
$$
\alpha \CompDeuxUn (q_{b_{F}} \CompDeuxZero (v(\beta) \CompDeuxUn (v(q_{a_{F}}) \CompDeuxZero v(u(\epsilon))))) = \gamma,
$$
donc 
$$
 \alpha \CompDeuxUn (q_{b_{F}} \CompDeuxZero v(\beta \CompDeuxUn (q_{a_{F}} \CompDeuxZero u(\epsilon)))) = \gamma.
$$
En vertu de la définition de $\rho$, on en déduit
$$
\beta \CompDeuxUn (q_{a_{F}} \CompDeuxZero u(\epsilon)) = \rho.
$$
En vertu de la définition de $\tau$, on en déduit 
$$
\epsilon = \tau.
$$

Ainsi, la \un{}cellule
$$
(f, \alpha (q_{b_{F}} \CompDeuxZero v(\beta)))
$$ 
est un objet final de la catégorie
$$
\CatHom{\TrancheCoLax{\mathdeuxcat{A}}{vu}{c}}{(a,q)}{(a_{F}, q_{b_{F}} v(q_{a_{F}}))},
$$ 
comme annoncé. Il en résulte que la \deux{}catégorie $\TrancheCoLax{\mathdeuxcat{A}}{vu}{c}$ admet un objet admettant un objet final. CQFD.

Les trois autres assertions s'en déduisent par des arguments de dualité évidents en vertu des définitions. 
\end{proof} 

\begin{rem}\label{CompositionBettiPower}
Le résultat mis en valeur par la proposition \ref{PreAdjointsCompo} se trouve en fait déjà mentionné dans le dernier paragraphe de \cite[page 941]{BettiPower}. 
\end{rem}

\begin{rem}
Une petite \deux{}catégorie $\mathdeuxcat{A}$ admet un objet admettant un objet final si et seulement si le morphisme canonique $\mathdeuxcat{A} \to \DeuxCatPonct$ est un préadjoint à gauche colax. Il résulte donc de la proposition \ref{PreAdjointsCompo} qu'étant donné un morphisme $u : \mathdeuxcat{A} \to \mathdeuxcat{B}$ de $\DeuxCat$, si $\mathdeuxcat{B}$ admet un objet admettant un objet final et si $u$ est un préadjoint à gauche colax, alors $\mathdeuxcat{A}$ admet un objet admettant un objet final. En vertu de la proposition \ref{EquiPreadjoint}, on en déduit également le cas particulier suivant : si $u : \mathdeuxcat{A} \to \mathdeuxcat{B}$ est une équivalence de \deux{}catégories, alors $\mathdeuxcat{A}$ admet un objet admettant un objet final si et seulement si $\mathdeuxcat{B}$ admet un objet admettant un objet final. Ces observations admettent bien entendu trois variantes duales. 
\end{rem}

\begin{paragr}
Soit $u : \mathdeuxcat{A} \to \mathdeuxcat{B}$ un préadjoint à gauche lax. Nous allons décrire un \DeuxFoncteurCoLax{} $v : \mathdeuxcat{B} \to \mathdeuxcat{A}$ ainsi que, si $u$ est strict, une \DeuxTransformationCoLax{} $uv \Rightarrow 1_{\mathdeuxcat{B}}$, tous deux canoniques, ce qui justifie peut-être un peu mieux l'adoption du terme « préadjoint », au-delà de l'analogie portant sur les tranches.

\begin{itemize}
\item[(Image des objets)]
Soit $b$ un objet de $\mathdeuxcat{B}$. Par hypothèse, la \deux{}catégorie $\TrancheLax{\mathdeuxcat{A}}{u}{b}$ admet un objet admettant un objet initial. On note $(v(b), p_{b} : u(v(b)) \to b)$ cet objet, ce qui définit $v(b)$. (On aura remarqué la nécessité de faire un choix, que nous ne signalerons plus au cours des prochaines étapes.) 

\item[(Image des \un{}cellules)]
Soit $g : b \to b'$ une \un{}cellule arbitraire de $\mathdeuxcat{B}$. Cela nous permet de considérer le diagramme
$$
\xymatrix{
u(v(b))
\ar[dr]_{p_{b}}
&&&u(v(b'))
\ar[ddl]^{p_{b'}}
\\
&b
\ar[dr]_{g}
\\
&&b'
&,
}
$$
correspondant à deux objets $(v(b), g p_{b})$ et $(v(b'), p_{b'})$ de $\TrancheLax{\mathdeuxcat{A}}{u}{b'}$. Par définition du couple $(v(b'), p_{b'})$, il existe un objet initial 
$$
(v(g) : v(b) \to v(b'), \alpha_{g} : gp_{b} \Rightarrow p_{b'}u(v(g)))
$$ 
dans la catégorie 
$$
\CatHom{\TrancheLax{\mathdeuxcat{A}}{u}{b'}}{(v(b), g p_{b})}{(v(b'), p_{b'})}.
$$
Cela définit $v(g)$. 

\item[(Image des \deux{}cellules)]
Soient $b$ et $b'$ deux objets de $\mathdeuxcat{B}$, $g$ et $g'$ deux \un{}cellules du premier vers le second, et $\beta : g \Rightarrow g'$ une \deux{}cellule arbitraire. Cela nous fournit une \deux{}cellule
$$
\alpha_{g'} \CompDeuxUn (\beta \CompDeuxZero p_{b}) : gp_{b} \Rightarrow p_{b'}u(v(g'))
$$
et définit donc une \un{}cellule $(v(g'), \alpha_{g'} \CompDeuxUn (\beta \CompDeuxZero p_{b}))$ de $(v(b), gp_{b})$ vers $(v(b'), p_{b'})$ dans la \deux{}ca\-té\-go\-rie $\TrancheLax{\mathdeuxcat{A}}{u}{b'}$. Par construction, il existe donc une et une seule \deux{}cellule $v(\beta) : v(g) \Rightarrow v(g')$ telle que 
\begin{equation}\label{Defvbeta}
(p_{b'} \CompDeuxZero u(v(\beta))) \CompDeuxUn \alpha_{g} = \alpha_{g'} \CompDeuxUn (\beta \CompDeuxZero p_{b}).
\end{equation}
Cela définit $v(\beta)$.

\item[(\deux{}cellules structurales des objets)]
Soit $b$ un objet de $\mathdeuxcat{B}$. On considère le diagramme
\\
\\
$$
\xymatrix{
u(v(b))
\ar[rr]^{u(1_{v(b)})}
\ar@/^2pc/[rr]^{u(v(1_{b}))}
\ar@/_2pc/[rr]^{1_{u(v(b))}}
\ar@/_1pc/[dr]_{p_{b}}
&&u(v(b))
\ar@/^1pc/[dl]^{p_{b}}
\\
&b
}
$$
et les \deux{}cellules 
$$
\alpha_{1_{b}} : p_{b} \Rightarrow p_{b} u(v(1_{b}))
$$ 
et 
$$
p_{b} \CompDeuxZero u_{v(b)} : p_{b} \Rightarrow p_{b} u(1_{v(b)}).
$$
Il existe alors, par définition, une et une seule \deux{}cellule $v_{b} : v(1_{b}) \Rightarrow 1_{v(b)}$ de $\mathdeuxcat{B}$ telle que
\begin{equation}\label{Defvb}
(p_{b} \CompDeuxZero u(v_{b})) \CompDeuxUn \alpha_{1_{b}} = p_{b} \CompDeuxZero u_{v(b)}.
\end{equation}
Cela définit la \deux{}cellule structurale $v_{b} : v(1_{b}) \Rightarrow 1_{v(b)}$ pour tout objet $b$ de $\mathdeuxcat{B}$. 

\item[(\deux{}cellules structurales de composition)]
Soient $g : b \to b'$ et $g' : b' \to b''$ deux \un{}cellules composables dans $\mathdeuxcat{B}$. On a les \un{}cellules $v(g'g)$ et $v(g') v(g)$ de $v(b)$ vers $v(b'')$ dans $\mathdeuxcat{A}$, le diagramme
\\
\\
$$
\xymatrix{
u(v(b))
\ar@/^2pc/[rrrr]^{u(v(g'g))}
\ar[rrrr]^{u(v(g')v(g))}
\ar@/_1pc/[rrd]^{u(v(g))}
\ar@/_1.5pc/[ddr]_{p_{b}}
&&&&u(v(b''))
\ar@/^1pc/[ddddl]^{p_{b''}}
\\
&&u(v(b'))
\ar[dd]^{p_{b'}}
\ar@/_1pc/[rru]^{u(v(g'))}
\\
&b
\ar@/_0.5pc/[rd]_{g}
\\
&&b'
\ar@/_0.5pc/[rd]_{g'}
\\
&&&b''
}
$$
et des \deux{}cellules 
$$
\alpha_{g} : g p_{b} \Rightarrow p_{b'} u(v(g)),
$$ 
$$
\alpha_{g'} : g' p_{b'} \Rightarrow p_{b''} u(v(g'))
$$ 
et
$$
\alpha_{g'g} : g'g p_{b} \Rightarrow p_{b''} u(v(g'g))
$$
dans $\mathdeuxcat{B}$.
On a donc une \deux{}cellule composée
$$
\xymatrix{
g' g p_{b} 
\ar@{=>}[rr]^{g' \CompDeuxZero \alpha_{g}}
&&
g' p_{b'} u(v(g))
\ar@{=>}[rr]^{\alpha_{g'} \CompDeuxZero u(v(g))}
&&
p_{b''} u(v(g')) u(v(g))
\ar@{=>}[rr]^{p_{b''} \CompDeuxZero u_{v(g'), v(g)}}
&&
p_{b''} u(v(g') v(g)).
}
$$
Par définition du couple $(v(g'g), \alpha_{g'g})$, il existe donc une et une seule \deux{}cellule 
$$
v_{g', g} : v(g'g) \Rightarrow v(g') v(g)
$$ 
telle que 
\begin{equation}\label{Defvg',g}
(p_{b''} \CompDeuxZero u(v_{g', g})) \CompDeuxUn \alpha_{g'g} = (p_{b''} \CompDeuxZero u_{v(g'), v(g)}) \CompDeuxUn (\alpha_{g'} \CompDeuxZero u(v(g))) \CompDeuxUn (g' \CompDeuxZero \alpha_{g}). 
\end{equation}
Cela définit la \deux{}cellule structurale $v_{g', g} : v(g'g) \Rightarrow v(g') v(g)$ pour tout couple de \un{}cellules $g$ et $g'$ de $\mathdeuxcat{B}$ telles que la composée $g'g$ fasse sens. 
\\
\end{itemize}

Vérifions maintenant que cette construction fournit bien un \DeuxFoncteurCoLax{} $v : \mathdeuxcat{B} \to \mathdeuxcat{A}$. 
\\

\begin{itemize}
\item[(Fonctorialité de $v$ relativement aux \deux{}cellules (1))]
Soit $g : b \to b'$ une \un{}cellule quelconque de $\mathdeuxcat{B}$. On veut vérifier l'égalité $v(1_{g}) = 1_{v(g)}$. Pour cela, considérons le diagramme
\\
\\
$$
\xymatrix{
u(v(b))
\ar@/^2pc/[rrr]^{u(v(g))}
\ar[rrr]^{u(v(g))}
\ar[dr]_{p_{b}}
&&&
u(v(b'))
\ar[ddl]^{p_{b'}}
\\
&b
\ar[rd]_{g}
\\
&&
b'
}
$$
et la \deux{}cellule $\alpha_{g} : g p_{b} \Rightarrow p_{b'} u(v(g))$. Pour montrer l'égalité 
$$
v(1_{g}) = 1_{v(g)}, 
$$
il suffit de montrer l'égalité
$$
(p_{b'} \CompDeuxZero u(1_{v(g)})) \CompDeuxUn \alpha_{g} = \alpha_{g},
$$
qui est vérifiée puisque, par fonctorialité de $u$ sur les \deux{}cellules, $u(1_{v(g)}) = 1_{u(v(g))}$.

\item[(Fonctorialité de $v$ relativement aux \deux{}cellules (2))]
Soient $b$ et $b'$ deux objets de $\mathdeuxcat{B}$, $g : b \to b'$, $g' : b \to b'$ et $g'' : b \to b'$ trois \un{}cellules de $\mathdeuxcat{B}$  et $\beta : g \Rightarrow g'$ et $\beta' : g' \Rightarrow g''$ deux \deux{}cellules de $\mathdeuxcat{B}$. On veut vérifier l'égalité $v(\beta' \CompDeuxUn \beta) = v(\beta') \CompDeuxUn v(\beta)$. Pour cela, considérons le diagramme
\\
\\
$$
\xymatrix{
u(v(b))
\ar@/^4pc/[rrr]^{u(v(g''))}^{u(v(g''))}
\ar@/^2pc/[rrr]^{u(v(g'))}^{u(v(g'))}
\ar@/^0pc/[rrr]^{u(v(g))}^{u(v(g))}
\ar@/_1pc/[rd]_{p_{b}}_{p_{b}}
&&&
u(v(b'))
\ar@/^1pc/[ddl]^{p_{b'}}^{p_{b'}}
\\
&
b
\ar@/^1pc/[rd]^{g}
\ar@/_0.5pc/[rd]^{g'}
\ar@/_2pc/[rd]_{g''}
\\
&&
b'
}
$$
et les \deux{}cellules 
$$
\alpha_{g} : g p_{b} \Rightarrow p_{b'} u(v(g)),
$$
$$
\alpha_{g'} : g' p_{b} \Rightarrow p_{b'} u(v(g'))
$$
et
$$
\alpha_{g''} : g'' p_{b} \Rightarrow p_{b'} u(v(g'')).
$$
On a, par définition de $v(\beta)$ et $v(\beta')$ respectivement, les égalités 
$$
\alpha_{g'} \CompDeuxUn (\beta \CompDeuxZero p_{b}) = (p_{b'} \CompDeuxZero u(v(\beta))) \CompDeuxUn \alpha_{g}
$$ 
et 
$$
\alpha_{g''} \CompDeuxUn (\beta' \CompDeuxZero p_{b}) = (p_{b'} \CompDeuxZero u(v(\beta'))) \CompDeuxUn \alpha_{g'}.
$$

Pour montrer l'égalité $v(\beta' \CompDeuxUn \beta) = v(\beta') \CompDeuxUn v(\beta)$, il suffit de vérifier l'égalité 
$$
(p_{b'} \CompDeuxZero u(v(\beta') \CompDeuxUn v(\beta))) \CompDeuxUn \alpha_{g} = \alpha_{g''} \CompDeuxUn ((\beta' \CompDeuxUn \beta) \CompDeuxZero p_{b}),
$$ 
qui résulte de la suite d'égalités suivante.  
$$
\begin{aligned}
(p_{b'} \CompDeuxZero u(v(\beta') v(\beta))) \CompDeuxUn \alpha_{g} &= (p_{b'} \CompDeuxZero (u(v(\beta')) u(v(\beta)))) \CompDeuxUn \alpha_{g}
\\
&= (p_{b'} \CompDeuxZero u(v(\beta'))) \CompDeuxUn (p_{b'} \CompDeuxZero u(v(\beta))) \CompDeuxUn \alpha_{g}
\\
&= (p_{b'} \CompDeuxZero u(v(\beta'))) \CompDeuxUn \alpha_{g'} \CompDeuxUn (\beta \CompDeuxZero p_{b})
\\
&= \alpha_{g''} \CompDeuxUn (\beta' \CompDeuxZero p_{b}) \CompDeuxUn (\beta \CompDeuxZero p_{b})
\\
&= \alpha_{g''} \CompDeuxUn ((\beta' \CompDeuxUn \beta) \CompDeuxZero p_{b}).
\end{aligned}
$$

\item[(Naturalité des \deux{}cellules de composition de $v$)]
Soient maintenant $b$, $b'$ et $b''$ trois objets de $\mathdeuxcat{B}$, $g$ et $h$ deux \un{}cellules de $b$ vers $b'$, $g'$ et $h'$ deux \un{}cellules de $b'$ vers $b''$, et $\beta : g \Rightarrow h$ et $\beta' : g' \Rightarrow h'$ deux \deux{}cellules. On veut vérifier la commutativité du diagramme
$$
\xymatrix{
v(g'g)
\ar@{=>}[rr]^{v_{g', g}}
\ar@{=>}[dd]_{v(\beta' \CompDeuxZero \beta)}
&&v(g') v(g)
\ar@{=>}[dd]^{v(\beta') \CompDeuxZero v(\beta)}
\\
\\
v(h'h)
\ar@{=>}[rr]_{v_{h', h}}
&&v(h') v(h)
&.
}
$$
Pour cela, considérons le diagramme
\\
\\
$$
\xymatrix{
u(v(b))
\ar@/^1pc/[rr]^{u(v(h))}
\ar@/_1pc/[rr]_{u(v(g))}
\ar@/_1pc/[dr]_{p_{b}}
\ar@/^3pc/[rrrr]^{u (v(g') v(g))}
\ar@/^5pc/[rrrr]^{u (v (g'g))}
\ar@/^7pc/[rrrr]^{u (v(h') v(h))}
\ar@/^9pc/[rrrr]^{u (v (h'h))}
&&
u(v(b'))
\ar@/^1pc/[rr]^{u(v(h'))}
\ar@/_1pc/[rr]_{u(v(g'))}
\ar[dd]^{p_{b'}}
&&
u(v(b''))
\ar@/^1pc/[dddl]^{p_{b''}}
\\
&
b
\ar@/^1pc/[dr]^{g}
\ar@/_1pc/[dr]_{h}
\\
&&
b'
\ar@/^1pc/[dr]^{g'}
\ar@/_1pc/[dr]_{h'}
\\
&&&
b''
&.
}
$$
On n'a pas fait figurer dans le diagramme ci-dessus les \deux{}cellules $\beta$, $\beta'$, $u(v(\beta))$, $u(v(\beta'))$, 
$$
\alpha_{g} : g p_{b} \Rightarrow p_{b'} u(v(g)),
$$
$$
\alpha_{g'} : g' p_{b'} \Rightarrow p_{b''} u(v(g')),
$$
$$
\alpha_{g'g} : g' g p_{b} \Rightarrow p_{b''} u(v(g'g)),
$$
$$
\alpha_{h} : h p_{b} \Rightarrow p_{b'} u(v(h)),
$$
$$
\alpha_{h'} : h' p_{b'} \Rightarrow p_{b''} u(v(h'))
$$
et
$$
\alpha_{h'h} : h' h p_{b} \Rightarrow p_{b''} u(v(h'h)).
$$
Par définition de $v_{g',g}$, $v_{h',h}$, $v(\beta)$, $v(\beta')$, $v(\beta' \CompDeuxZero \beta)$ et du fait de la naturalité des \deux{}cellules de composition de $u$ respectivement, les diagrammes suivants sont commutatifs : 

\begin{equation}\label{16111}
\xymatrix{
g' g p_{b}
\ar@{=>}[rrr]^{g' \CompDeuxZero \alpha_{g}}
\ar@{=>}[dd]_{\alpha_{g'g}}
&&&
g' p_{b'} u(v(g))
\ar@{=>}[rrr]^{\alpha_{g'} \CompDeuxZero u(v(g))}
&&&
p_{b''} u(v(g')) u(v(g))
\ar@{=>}[dd]^{p_{b''} \CompDeuxZero u_{v(g'), v(g)}}
\\
\\
p_{b''} u(v(g'g))
\ar@{=>}[rrrrrr]_{p_{b''} \CompDeuxZero u(v_{g',g})}
&&&&&&
p_{b''} u(v(g')v(g))
&,
}
\end{equation}

\begin{equation}\label{16112}
\xymatrix{
h' h p_{b}
\ar@{=>}[rrr]^{h' \CompDeuxZero \alpha_{h}}
\ar@{=>}[dd]_{\alpha_{h'h}}
&&&
h' p_{b'} u(v(h))
\ar@{=>}[rrr]^{\alpha_{h'} \CompDeuxZero u(v(h))}
&&&
p_{b''} u(v(h')) u(v(h))
\ar@{=>}[dd]^{p_{b''} \CompDeuxZero u_{v(h'), v(h)}}
\\
\\
p_{b''} u(v(h'h))
\ar@{=>}[rrrrrr]_{p_{b''} \CompDeuxZero u(v_{h',h})}
&&&&&&
p_{b''} u(v(h')v(h))
&,
}
\end{equation}

\begin{equation}\label{16113}
\xymatrix{
g p_{b}
\ar@{=>}[rr]^{\alpha_{g}}
\ar@{=>}[dd]_{\beta \CompDeuxZero p_{b}}
&&
p_{b'} u(v(g))
\ar@{=>}[dd]^{p_{b'} \CompDeuxZero u(v(\beta))}
\\
\\
h p_{b}
\ar@{=>}[rr]_{\alpha_{h}}
&&
p_{b'} u(v(h))
&,
}
\end{equation}

\begin{equation}\label{16114}
\xymatrix{
g' p_{b'}
\ar@{=>}[rr]^{\alpha_{g'}}
\ar@{=>}[dd]_{\beta' \CompDeuxZero p_{b'}}
&&
p_{b''} u(v(g'))
\ar@{=>}[dd]^{p_{b''} \CompDeuxZero u(v(\beta'))}
\\
\\
h' p_{b'}
\ar@{=>}[rr]_{\alpha_{h'}}
&&
p_{b''} u(v(h'))
&,
}
\end{equation}

\begin{equation}\label{16115}
\xymatrix{
g' g p_{b}
\ar@{=>}[rr]^{\alpha_{g'g}}
\ar@{=>}[dd]_{\beta' \CompDeuxZero \beta \CompDeuxZero p_{b}}
&&
p_{b''} u(v(g'g))
\ar@{=>}[dd]^{p_{b''} \CompDeuxZero u(v(\beta' \CompDeuxZero \beta))}
\\
\\
h' h p_{b}
\ar@{=>}[rr]_{\alpha_{h'h}}
&&
p_{b''} u(v(h'h))
&,
}
\end{equation}
et
\begin{equation}\label{16116}
\xymatrix{
u(v(g')) u(v(g))
\ar@{=>}[rr]^{u_{v(g'), v(g)}}
\ar@{=>}[dd]_{u(v(\beta')) \CompDeuxZero u(v(\beta))}
&&
u(v(g')v(g))
\ar@{=>}[dd]^{u(v(\beta') \CompDeuxZero v(\beta))}
\\
\\
u(v(h')) u(v(h))
\ar@{=>}[rr]_{u_{v(h'), v(h)}}
&&
u(v(h')v(h))
&.
}
\end{equation}

Pour montrer l'égalité souhaitée $(v(\beta') \CompDeuxZero v(\beta)) \CompDeuxUn v_{g', g} = v_{h', h} \CompDeuxUn v(\beta' \CompDeuxZero \beta)$, il suffit de vérifier l'égalité
$$
(p_{b''} \CompDeuxZero u ((v(\beta') \CompDeuxZero v(\beta)) \CompDeuxUn v_{g', g}) ) \CompDeuxUn \alpha_{g'g} = (p_{b''} \CompDeuxZero u (v_{h', h} \CompDeuxUn v(\beta' \CompDeuxZero \beta)) ) \CompDeuxUn \alpha_{g'g}.
$$
Cela résulte de la suite d'égalités
\begin{align*}
&(p_{b''} \CompDeuxZero u ((v(\beta') \CompDeuxZero v(\beta)) \CompDeuxUn v_{g', g}) ) \CompDeuxUn \alpha_{g'g} 
\\
&= (p_{b''} \CompDeuxZero u(v(\beta') \CompDeuxZero v(\beta))) \CompDeuxUn (p_{b''} \CompDeuxZero u(v_{g',g})) \CompDeuxUn \alpha_{g'g}
\\
&= (p_{b''} \CompDeuxZero u(v(\beta') \CompDeuxZero v(\beta))) \CompDeuxUn (p_{b''} \CompDeuxZero u_{v(g'), v(g)}) \CompDeuxUn (\alpha_{g'} \CompDeuxZero u(v(g))) \CompDeuxUn (g' \CompDeuxZero \alpha_{g}) \phantom{blabla} (cf. \ref{16111})
\\
&= (p_{b''} \CompDeuxZero (u (v (\beta') \CompDeuxZero v(\beta)) u_{v(g'), v(g)})) \CompDeuxUn (\alpha_{g'} \CompDeuxZero u(v(g))) \CompDeuxUn (g' \CompDeuxZero \alpha_{g})
\\
&= (p_{b''} \CompDeuxZero (u_{v(h'), v(h)} (u(v(\beta')) \CompDeuxZero u(v(\beta))))) \CompDeuxUn (\alpha_{g'} \CompDeuxZero u(v(g))) \CompDeuxUn (g' \CompDeuxZero \alpha_{g}) \phantom{blabla} (cf. \ref{16116})
\\
&= (p_{b''} \CompDeuxZero u_{v(h'), v(h)}) \CompDeuxUn (p_{b''} \CompDeuxZero u(v(\beta')) \CompDeuxZero u(v(\beta))) \CompDeuxUn (\alpha_{g'} \CompDeuxZero u(v(g))) \CompDeuxUn (g' \CompDeuxZero \alpha_{g}) 
\\
&= (p_{b''} \CompDeuxZero u_{v(h'), v(h)}) \CompDeuxUn (((p_{b''} \CompDeuxZero u(v(\beta'))) \alpha_{g'}) \CompDeuxZero u(v(\beta))) \CompDeuxUn (g' \CompDeuxZero \alpha_{g}) \phantom{blabla} \text{(loi d'échange)}
\\
&= (p_{b''} \CompDeuxZero u_{v(h'), v(h)}) \CompDeuxUn ((\alpha_{h'} (\beta' \CompDeuxZero p_{b'})) \CompDeuxZero u(v(\beta))) \CompDeuxUn (g' \CompDeuxZero \alpha_{g}) \phantom{blabla} (cf. \ref{16114})
\\
&= (p_{b''} \CompDeuxZero u_{v(h'), v(h)}) \CompDeuxUn (\alpha_{h'} \CompDeuxZero u(v(h))) \CompDeuxUn (\beta' \CompDeuxZero p_{b'} \CompDeuxZero u(v(\beta))) \CompDeuxUn (g' \CompDeuxZero \alpha_{g})  \phantom{blabla} \text{(loi d'échange)} 
\\
&= (p_{b''} \CompDeuxZero u_{v(h'), v(h)}) \CompDeuxUn (\alpha_{h'} \CompDeuxZero u(v(h))) \CompDeuxUn (\beta' \CompDeuxZero ((p_{b'} \CompDeuxZero u(v(\beta))) \alpha_{g}))  \phantom{blabla} \text{(loi d'échange)}
\\
&= (p_{b''} \CompDeuxZero u_{v(h'), v(h)}) \CompDeuxUn (\alpha_{h'} \CompDeuxZero u(v(h))) \CompDeuxUn (\beta' \CompDeuxZero (\alpha_{h} (\beta \CompDeuxZero p_{b}))) \phantom{blabla} (cf. \ref{16113})
\\
&= (p_{b''} \CompDeuxZero u_{v(h'), v(h)}) \CompDeuxUn (\alpha_{h'} \CompDeuxZero u(v(h))) \CompDeuxUn (h' \CompDeuxZero \alpha_{h}) \CompDeuxUn (\beta' \CompDeuxZero \beta \CompDeuxZero p_{b}) \phantom{blabla} \text{(loi d'échange)}
\\
&= (p_{b''} \CompDeuxZero u(v_{h',h})) \CompDeuxUn \alpha_{h'h} \CompDeuxUn (\beta' \CompDeuxZero \beta \CompDeuxZero p_{b}) \phantom{blabla} (cf. \ref{16112})
\\
&= (p_{b''} \CompDeuxZero u(v_{h',h})) \CompDeuxUn (p_{b''} \CompDeuxZero u(v(\beta' \CompDeuxZero \beta))) \CompDeuxUn \alpha_{g'g} \phantom{blabla} (cf. \ref{16115})
\\
&= (p_{b''} \CompDeuxZero (u(v_{h',h}) u(v(\beta' \CompDeuxZero \beta)))) \CompDeuxUn \alpha_{g'g}
\\
&= (p_{b''} \CompDeuxZero u(v_{h', h} v(\beta' \CompDeuxZero \beta))) \alpha_{g'g}.
\end{align*}

\item[(Condition de cocycle pour $v$)]
On se propose à présent de vérifier la condition de cocycle pour $v$. Soient donc $g : b \to b'$, $g' : b' \to b''$ et $g'' : b'' \to b'''$ trois \un{}cellules composables de $\mathdeuxcat{B}$. On souhaite vérifier la commutativité du diagramme
$$
\xymatrix{
v(g'' g' g) 
\ar@{=>}[rr]^{v_{g'', g'g}}
\ar@{=>}[dd]_{v_{g''g', g}}
&&
v(g'') v(g'g)
\ar@{=>}[dd]^{v(g'') \CompDeuxZero v_{g',g}} 
\\
\\
v(g''g') v(g) 
\ar@{=>}[rr]_{v_{g'',g'} \CompDeuxZero v(g)}
&&
v(g'') v(g') v(g) 
&.
}
$$
Pour cela, considérons le diagramme
$$
\xymatrix{
u(v(b))
\ar[rrr]_{u(v(g))}
\ar@/^2pc/[rrrrrr]^{u( v(g') v(g) )}
\ar@/^4pc/[rrrrrr]^{u(v(g'g))}
\ar@/^6pc/[rrrrrrrrr]^{u(v(g'') v(g') v(g))}
\ar@/^8pc/[rrrrrrrrr]^{u(v(g''g')v(g))}
\ar@/^10pc/[rrrrrrrrr]^{u(v(g'')v(g'g))}
\ar@/^12pc/[rrrrrrrrr]^{u(v(g''g'g))}
\ar[dd]_{p_{b}}
&&&
u(v(b'))
\ar[rrr]_{u(v(g'))}
\ar@/^2pc/[rrrrrr]^{u( v(g'') v(g') )}
\ar@/^4pc/[rrrrrr]^{u(v(g''g'))}
\ar[dd]^{p_{b'}}
&&&
u(v(b''))
\ar[rrr]_{u(v(g''))}
\ar[dd]^{p_{b''}}
&&&
u(v(b'''))
\ar[dd]^{p_{b'''}}
\\
\\
b
\ar[rrr]_{g}
&&&
b'
\ar[rrr]_{g'}
&&&
b''
\ar[rrr]_{g''}
&&&
b'''
&,
}
$$
dans lequel on n'a pas fait figurer les \deux{}cellules 
$$
\alpha_{g} : g p_{b} \Rightarrow p_{b'} u(v(g)),
$$
$$
\alpha_{g'} : g' p_{b'} \Rightarrow p_{b''} u(v(g')),
$$
$$
\alpha_{g''} : g'' p_{b''} \Rightarrow p_{b'''} u(v(g'')),
$$
$$
\alpha_{g'g} : g' g p_{b} \Rightarrow p_{b''} u(v(g'g)),
$$
$$
\alpha_{g''g'} : g'' g' p_{b'} \Rightarrow p_{b'''} u(v(g''g'))
$$
et
$$
\alpha_{g''g'g} : g'' g' g p_{b} \Rightarrow p_{b'''} u(v(g''g'g)).
$$
Par définition de $v_{g',g}$, $v_{g'',g'}$, $v_{g''g',g}$ et $v_{g'',g'g}$, du fait de la relation de cocycle pour $u$ et de la naturalité des \deux{}cellules structurales de composition de $u$ (deux fois) respectivement, les diagrammes suivants sont commutatifs :

\begin{equation}\label{vg',g}
\xymatrix{
g' g p_{b}
\ar@{=>}[rrr]^{g' \CompDeuxZero \alpha_{g}}
\ar@{=>}[dd]_{\alpha_{g'g}}
&&&
g' p_{b'} u(v(g))
\ar@{=>}[rrr]^{\alpha_{g'} \CompDeuxZero u(v(g))}
&&&
p_{b''} u(v(g')) u(v(g))
\ar@{=>}[dd]^{p_{b''} \CompDeuxZero u_{v(g'), v(g)}}
\\
\\
p_{b''} u(v(g'g))
\ar@{=>}[rrrrrr]_{p_{b''} \CompDeuxZero u(v_{g',g})}
&&&&&&
p_{b''} u(v(g') v(g))
&,
}
\end{equation}

\begin{equation}\label{vg'',g'}
\xymatrix{
g'' g' p_{b'}
\ar@{=>}[rrr]^{g'' \CompDeuxZero \alpha_{g'}}
\ar@{=>}[dd]_{\alpha_{g''g'}}
&&&
g'' p_{b''} u(v(g'))
\ar@{=>}[rrr]^{\alpha_{g''} \CompDeuxZero u(v(g'))}
&&&
p_{b'''} u(v(g'')) u(v(g'))
\ar@{=>}[dd]^{p_{b'''} \CompDeuxZero u_{v(g''), v(g')}}
\\
\\
p_{b'''} u(v(g''g'))
\ar@{=>}[rrrrrr]_{p_{b'''} \CompDeuxZero u(v_{g'',g'})}
&&&&&&
p_{b'''} u(v(g'') v(g'))
&,
}
\end{equation}

\begin{equation}\label{vg''g',g}
\xymatrix{
g'' g' g p_{b}
\ar@{=>}[rrr]^{g'' g' \CompDeuxZero \alpha_{g}}
\ar@{=>}[dd]_{\alpha_{g'' g' g}}
&&&
g'' g' p_{b'} u(v(g))
\ar@{=>}[rrr]^{\alpha_{g'' g'} \CompDeuxZero u(v(g))}
&&&
p_{b'''} u(v(g'' g')) u(v(g))
\ar@{=>}[dd]^{p_{b'''} \CompDeuxZero u_{v(g'' g'), v(g)}}
\\
\\
p_{b'''} u(v(g'' g' g))
\ar@{=>}[rrrrrr]_{p_{b'''} \CompDeuxZero u(v_{g'' g', g})}
&&&&&&
p_{b'''} u(v(g'' g') v(g))
&,
}
\end{equation}

\begin{equation}\label{vg'',g'g}
\xymatrix{
g'' g' g p_{b}
\ar@{=>}[rrr]^{g'' \CompDeuxZero \alpha_{g'g}}
\ar@{=>}[dd]_{\alpha_{g''g'g}}
&&&
g'' p_{b''} u(v(g' g))
\ar@{=>}[rrr]^{\alpha_{g''} \CompDeuxZero u(v(g'g))}
&&&
p_{b'''} u(v(g'')) u(v(g'g))
\ar@{=>}[dd]^{p_{b'''} \CompDeuxZero u_{v(g''), v(g'g)}}
\\
\\
p_{b'''} u(v(g''g'g))
\ar@{=>}[rrrrrr]_{p_{b'''} \CompDeuxZero u(v_{g'', g'g})}
&&&&&&
p_{b'''} u(v(g'') v(g'g))
&,
}
\end{equation}

\begin{equation}\label{UCocycle}
\xymatrix{
u(v(g'')) u(v(g')) u(v(g))
\ar@{=>}[rrr]^{u(v(g'')) \CompDeuxZero u_{v(g'), v(g)}}
\ar@{=>}[dd]_{u_{v(g''),v(g')} \CompDeuxZero u(v(g))}
&&&
u(v(g'')) u(v(g') v(g))
\ar@{=>}[dd]^{u_{v(g''), v(g')v(g)}}
\\
\\
u(v(g'') v(g')) u(v(g))
\ar@{=>}[rrr]_{u_{v(g'') v(g'), v(g)}}
&&&
u(v(g'') v(g') v(g))
&,
}
\end{equation}

\begin{equation}\label{UFonctoriel1}
\xymatrix{
u(v(g'')) u(v(g'g))
\ar@{=>}[rrr]^{u_{v(g''), v(g'g)}}
\ar@{=>}[dd]_{u(v(g'')) \CompDeuxZero u(v_{g',g})}
&&&
u(v(g'') v(g'g))
\ar@{=>}[dd]^{u(v(g'') \CompDeuxZero v_{g',g})}
\\
\\
u(v(g'')) u(v(g') v(g))
\ar@{=>}[rrr]_{u_{v(g''), v(g')v(g)}}
&&&
u(v(g'') v(g') v(g))
&,
}
\end{equation}
et
\begin{equation}\label{UFonctoriel2}
\xymatrix{
u(v(g''g'))u(v(g))
\ar@{=>}[rrr]^{u_{v(g''g'),v(g)}}
\ar@{=>}[dd]_{u(v_{g'',g'}) \CompDeuxZero u(v(g))}
&&&
u(v(g''g')v(g))
\ar@{=>}[dd]^{u(v_{g'',g'} \CompDeuxZero v(g))}
\\
\\
u(v(g'')v(g'))u(v(g))
\ar@{=>}[rrr]_{u_{v(g'')v(g'), v(g)}}
&&&
u(v(g'') v(g') v(g))
&.
}
\end{equation}


L'égalité souhaitée est 
$$
(v(g'') \CompDeuxZero v_{g',g}) \CompDeuxUn v_{g'', g'g} = (v_{g'', g'} \CompDeuxZero v(g)) \CompDeuxUn v_{g''g', g}.
$$
Pour la démontrer, il suffit de vérifier l'égalité
$$
(p_{b'''} \CompDeuxZero u ((v(g'') \CompDeuxZero v_{g',g}) v_{g'', g'g})) \alpha_{g''g'g} = (p_{b'''} \CompDeuxZero u ((v_{g'',g'} \CompDeuxZero v(g)) v_{g''g',g})) \alpha_{g''g'g}.
$$
Cette dernière résulte de la suite d'égalités suivantes. 
\begin{align*}
&(p_{b'''} \CompDeuxZero u ((v(g'') \CompDeuxZero v_{g',g}) v_{g'', g'g})) \alpha_{g''g'g} 
\\
&= (p_{b'''} \CompDeuxZero u(v(g'') \CompDeuxZero v_{g',g})) \CompDeuxUn (p_{b'''} \CompDeuxZero u(v_{g'', g'g})) \CompDeuxUn \alpha_{g''g'g} 
\\
&= (p_{b'''} \CompDeuxZero u(v(g'') \CompDeuxZero v_{g',g})) \CompDeuxUn (p_{b'''} \CompDeuxZero u_{v(g''), v(g'g)}) \CompDeuxUn (\alpha_{g''} \CompDeuxZero u(v(g'g))) \CompDeuxUn (g'' \CompDeuxZero \alpha_{g'g}) \phantom{bla} \text{($cf.$ \ref{vg'',g'g})}
\\
&= (p_{b'''} \CompDeuxZero (u(v(g'') \CompDeuxZero v_{g',g}) \CompDeuxUn u_{v(g''), v(g'g)})) \CompDeuxUn (\alpha_{g''} \CompDeuxZero u(v(g'g))) \CompDeuxUn (g'' \CompDeuxZero \alpha_{g'g})
\\
&= (p_{b'''} \CompDeuxZero (u_{v(g''), v(g')v(g)} \CompDeuxUn (u(v(g'')) \CompDeuxZero u(v_{g',g})))) \CompDeuxUn (\alpha_{g''} \CompDeuxZero u(v(g'g))) \CompDeuxUn (g'' \CompDeuxZero \alpha_{g'g}) \phantom{bla} \text{($cf.$ \ref{UFonctoriel1})}
\\
&= (p_{b'''} \CompDeuxZero u_{v(g''), v(g')v(g)}) \CompDeuxUn (p_{b'''} u(v(g'')) \CompDeuxZero u(v_{g',g})) \CompDeuxUn (\alpha_{g''} \CompDeuxZero u(v(g'g))) \CompDeuxUn (g'' \CompDeuxZero \alpha_{g'g})
\\
&= (p_{b'''} \CompDeuxZero u_{v(g''), v(g')v(g)}) \CompDeuxUn (\alpha_{g''} \CompDeuxZero u(v(g') v(g))) \CompDeuxUn (g'' p_{b''} \CompDeuxZero u(v_{g',g})) \CompDeuxUn (g'' \CompDeuxZero \alpha_{g'g}) \phantom{bla} \text{(loi d'échange)}
\\
&= (p_{b'''} \CompDeuxZero u_{v(g''), v(g')v(g)}) \CompDeuxUn (\alpha_{g''} \CompDeuxZero u(v(g') v(g))) \CompDeuxUn (g'' \CompDeuxZero ((p_{b''} \CompDeuxZero u(v_{g',g})) \CompDeuxUn \alpha_{g'g}))
\\
&= (p_{b'''} \CompDeuxZero u_{v(g''), v(g')v(g)}) \CompDeuxUn (\alpha_{g''} \CompDeuxZero u(v(g') v(g))) \CompDeuxUn (g'' \CompDeuxZero ((p_{b''} \CompDeuxZero u_{v(g'), v(g)}) \CompDeuxUn (\alpha_{g'} \CompDeuxZero u(v(g))) \CompDeuxUn (g' \CompDeuxZero \alpha_{g}))) 
\\
&\phantom{=1}\text{($cf.$ \ref{vg',g})}
\\
&= (p_{b'''} \CompDeuxZero u_{v(g''), v(g')v(g)}) \CompDeuxUn (\alpha_{g''} \CompDeuxZero u(v(g') v(g))) \CompDeuxUn (g'' p_{b''} \CompDeuxZero u_{v(g'), v(g)}) \CompDeuxUn (g'' \CompDeuxZero \alpha_{g'} \CompDeuxZero u(v(g))) \CompDeuxUn (g''g' \CompDeuxZero \alpha_{g})
\\
&= (p_{b'''} \CompDeuxZero u_{v(g''), v(g')v(g)}) \CompDeuxUn (p_{b'''} u(v(g'')) \CompDeuxZero u_{v(g'), v(g)}) \CompDeuxUn (\alpha_{g''} \CompDeuxZero u(v(g')) u(v(g))) \CompDeuxUn
\\
&\phantom{=1} (g'' \CompDeuxZero \alpha_{g'} \CompDeuxZero u(v(g))) \CompDeuxUn (g''g' \CompDeuxZero \alpha_{g}) \phantom{bla}  \text{(loi d'échange)}
\\
&= (p_{b'''} \CompDeuxZero (u_{v(g''), v(g')v(g)} \CompDeuxUn (u(v(g'')) \CompDeuxZero u_{v(g'),v(g)}))) \CompDeuxUn (\alpha_{g''} \CompDeuxZero u(v(g')) u(v(g))) \CompDeuxUn 
\\
&\phantom{=1} (g'' \CompDeuxZero \alpha_{g'} \CompDeuxZero u(v(g))) \CompDeuxUn (g''g' \CompDeuxZero \alpha_{g})
\\
&=  (p_{b'''} \CompDeuxZero (u_{v(g'')v(g'), v(g)} \CompDeuxUn (u_{v(g''),v(g')} \CompDeuxZero u(v(g))))) \CompDeuxUn (\alpha_{g''} \CompDeuxZero u(v(g')) u(v(g))) \CompDeuxUn
\\
&\phantom{=1} (g'' \CompDeuxZero \alpha_{g'} \CompDeuxZero u(v(g))) \CompDeuxUn (g''g' \CompDeuxZero \alpha_{g}) \phantom{blabla} \text{($cf.$ \ref{UCocycle})}
\\
&= (p_{b'''} \CompDeuxZero u_{v(g'')v(g'), v(g)}) \CompDeuxUn (p_{b'''} \CompDeuxZero u_{v(g''),v(g')} \CompDeuxZero u(v(g))) \CompDeuxUn (\alpha_{g''} \CompDeuxZero u(v(g')) u(v(g))) \CompDeuxUn 
\\
&\phantom{=1} (g'' \CompDeuxZero \alpha_{g'} \CompDeuxZero u(v(g))) \CompDeuxUn (g''g' \CompDeuxZero \alpha_{g})
\\
&= (p_{b'''} \CompDeuxZero u_{v(g'')v(g'), v(g)}) \CompDeuxUn (((p_{b'''} \CompDeuxZero u_{v(g''), v(g')}) \CompDeuxUn (\alpha_{g''} \CompDeuxZero u(v(g'))) \CompDeuxUn (g'' \CompDeuxZero \alpha_{g'})) \CompDeuxZero u(v(g))) \CompDeuxUn (g''g' \CompDeuxZero \alpha_{g})
\\
&= (p_{b'''} \CompDeuxZero u_{v(g'')v(g'), v(g)}) \CompDeuxUn (((p_{b'''} \CompDeuxZero u(v_{g'',g'})) \CompDeuxUn \alpha_{g''g'}) \CompDeuxZero u(v(g))) \CompDeuxUn (g''g' \CompDeuxZero \alpha_{g}) \phantom{bla} \text{($cf.$ \ref{vg'',g'})}
\\
&= (p_{b'''} \CompDeuxZero u_{v(g'')v(g'), v(g)}) \CompDeuxUn (p_{b'''} \CompDeuxZero u(v_{g'',g'}) \CompDeuxZero u(v(g))) \CompDeuxUn (\alpha_{g''g'} \CompDeuxZero u(v(g))) \CompDeuxUn (g''g' \CompDeuxZero \alpha_{g})
\\
&= (p_{b'''} \CompDeuxZero (u_{v(g'') v(g'), v(g)} \CompDeuxUn (u(v_{g'',g'}) \CompDeuxZero u(v(g))))) (\CompDeuxUn \alpha_{g''g'} \CompDeuxZero u(v(g))) \CompDeuxUn (g''g' \CompDeuxZero \alpha_{g})
\\
&= (p_{b'''} \CompDeuxZero ((u(v_{g'',g'} \CompDeuxZero v(g))) \CompDeuxUn u_{v(g''g'), v(g)})) \CompDeuxUn (\alpha_{g''g'} \CompDeuxZero u(v(g))) \CompDeuxUn (g''g' \CompDeuxZero \alpha_{g}) \phantom{bla} \text{($cf.$ \ref{UFonctoriel2})}
\\
&= (p_{b'''} \CompDeuxZero u(v_{g'',g'} \CompDeuxZero v(g))) \CompDeuxUn (p_{b'''} \CompDeuxZero u_{v(g''g'), v(g)}) \CompDeuxUn (\alpha_{g''g'} \CompDeuxZero u(v(g))) \CompDeuxUn (g''g' \CompDeuxZero \alpha_{g})
\\
&= (p_{b'''} \CompDeuxZero u(v_{g'',g'} \CompDeuxZero v(g))) \CompDeuxUn (p_{b'''} \CompDeuxZero u(v_{g''g',g})) \CompDeuxUn \alpha_{g''g'g} \phantom{bla} \text{($cf.$ \ref{vg''g',g})}
\\
&= (p_{b'''} \CompDeuxZero (u(v_{g'',g'} \CompDeuxZero v(g)) \CompDeuxUn u(v_{g''g',g}))) \CompDeuxUn \alpha_{g''g'g}
\\
&= (p_{b'''} \CompDeuxZero u ((v_{g'',g'} \CompDeuxZero v(g)) v_{g''g',g})) \alpha_{g''g'g}.
\end{align*}

Ainsi, $v$ vérifie la condition de cocycle. 
\\

\item[(Contraintes d'unité (1))]
Soit maintenant $g : b \to b'$ dans $\mathdeuxcat{B}$. On veut vérifier la commutativité du diagramme

$$
\xymatrix{
v(g)
\ar@{=}[rr]
\ar@{=>}[drr]_{v_{g, 1_{b}}}
&&
v(g) 1_{v(b)}
\\
&&
v(g) v(1_{b})
\ar@{=>}[u]_{v(g) \CompDeuxZero v_{b}}
}
$$

Pour l'établir, il suffit de vérifier l'égalité 
$$
(p_{b'} \CompDeuxZero u((v(g) \CompDeuxZero v_{b}) v_{g, 1_{b}})) \alpha_{g} = (p_{b'} \CompDeuxZero u(1_{v(g)})) \alpha_{g}, 
$$
ce qui revient à démontrer l'égalité
$$
(p_{b'} \CompDeuxZero u((v(g) \CompDeuxZero v_{b}) v_{g, 1_{b}})) \alpha_{g} = \alpha_{g}.
$$

Considérons le diagramme
$$
\xymatrix{
u(v(b))
\ar[rr]^{u(v(1_{b}))}
\ar@/^2pc/[rrrr]^{u(v(g)v(1_{b}))}
\ar@/^4pc/[rrrr]^{u(v(g1_{b})) = u(v(g))}
\ar@/_2pc/[rr]^{u(1_{v(b)})}
\ar[dd]_{p_{b}}
&&
u(v(b))
\ar[rr]^{u(v(g))}
\ar[dd]^{p_{b}}
&&
u(v(b'))
\ar[dd]^{p_{b'}}
\\
\\
b
\ar[rr]_{1_{b}}
&&
b
\ar[rr]_{g}
&&
b'
&,
}
$$
et les \deux{}cellules 
$$
\alpha_{1_{b}} : p_{b} \Rightarrow p_{b} u(v(1_{b})),
$$ 
$$
\alpha_{g} : g p_{b} \Rightarrow p_{b'} u(v(g))
$$
et
$$
u(v_{g,1_{b}}) : u(v(g)) \Rightarrow u(v(g)v(1_{b})).
$$
Par définition de $v_{g,1_{b}}$, le diagramme

\begin{equation}\label{vg1b}
\xymatrix{
g p_{b} = g 1_{b} p_{b}
\ar@{=>}[rrrr]^{\alpha_{g} = \alpha_{g1_{b}}}
\ar@{=>}[dd]_{g \CompDeuxZero \alpha_{1_{b}}}
&&&&
p_{b'} u(v(g1_{b})) = p_{b'} u(v(g))
\ar@{=>}[dd]^{p_{b'} \CompDeuxZero u(v_{g,1_{b}})}
\\
\\
g p_{b} u(v(1_{b}))
\ar@{=>}[rr]_{\alpha_{g} \CompDeuxZero u(v(1_{b}))}
&&
p_{b'} u(v(g)) u(v(1_{b}))
\ar@{=>}[rr]_{p_{b'} \CompDeuxZero u_{v(g), v(1_{b})}}
&&
p_{b'} u(v(g)v(1_{b}))
}
\end{equation}
est commutatif, et la définition de $v_{b}$ assure l'égalité 
\begin{equation}\label{vb}
(p_{b} \CompDeuxZero u(v_{b})) \alpha_{1_{b}} = p_{b} \CompDeuxZero u_{v(b)}.
\end{equation}

La naturalité des \deux{}cellules structurales de composition de $u$ implique la commutativité du diagramme

\begin{equation}\label{UFonctoriel3}
\xymatrix{
u(v(g)) u(v(1_{b}))
\ar@{=>}[rrr]^{u_{v(g), v(1_{b})}}
\ar@{=>}[dd]_{u(v(g)) \CompDeuxZero u(v_{b})}
&&&
u(v(g)v(1_{b}))
\ar@{=>}[dd]^{u(v(g) \CompDeuxZero v_{b})}
\\
\\
u(v(g)) u(1_{v(b)})
\ar@{=>}[rrr]_{u_{v(g), 1_{v(b)}}}
&&&
u(v(g)1_{v(b)}) = u(v(g))
&.
}
\end{equation}

Les contraintes d'unité vérifiées par $u$ impliquent la commutativité du diagramme

\begin{equation}\label{UniteU}
\xymatrix{
u(v(g)) = u(v(g)1_{v(b)})
\ar@{=}[rrr]^{1_{u(v(g))}}
&&&
u(v(g)) 1_{u(v(b))}
\ar@{=>}[dd]^{u(v(g)) \CompDeuxZero u_{v(b)}}
\\
\\
&&&
u(v(g)) u(1_{v(b)})
\ar@{=>}[uulll]^{u_{v(g), 1_{v(b)}}}
&.
}
\end{equation}

L'égalité cherchée résulte de la suite d'égalités

$$
\begin{aligned}
&(p_{b'} \CompDeuxZero u((v(g) \CompDeuxZero v_{b}) v_{g, 1_{b}})) \alpha_{g}
\\
&= (p_{b'} \CompDeuxZero u(v(g) \CompDeuxZero v_{b})) \CompDeuxUn (p_{b'} \CompDeuxZero u(v_{g, 1_{b}})) \CompDeuxUn \alpha_{g}
\\
&= (p_{b'} \CompDeuxZero u(v(g) \CompDeuxZero v_{b})) \CompDeuxUn (p_{b'} \CompDeuxZero u_{v(g), v(1_{b})}) \CompDeuxUn (\alpha_{g} \CompDeuxZero u(v(1_{b}))) \CompDeuxUn (g \CompDeuxZero \alpha_{1_{b}}) \phantom{bla} \text{($cf.$ \ref{vg1b})}
\\
&= (p_{b'} \CompDeuxZero ((u(v(g) \CompDeuxZero v_{b})) \CompDeuxUn (u_{v(g), v(1_{b})}))) \CompDeuxUn (\alpha_{g} \CompDeuxZero u(v(1_{b}))) \CompDeuxUn (g \CompDeuxZero \alpha_{1_{b}})
\\
&= (p_{b'} \CompDeuxZero (u_{v(g), 1_{v(b)}} \CompDeuxUn (u(v(g)) \CompDeuxZero u(v_{b})))) \CompDeuxUn (\alpha_{g} \CompDeuxZero u(v(1_{b}))) \CompDeuxUn (g \CompDeuxZero \alpha_{1_{b}}) \phantom{bla} \text{($cf.$ \ref{UFonctoriel3})}
\\
&= (p_{b'} \CompDeuxZero u_{v(g), 1_{v(b)}}) \CompDeuxUn (p_{b'} u(v(g)) \CompDeuxZero u(v_{b})) \CompDeuxUn (\alpha_{g} \CompDeuxZero u(v(1_{b}))) \CompDeuxUn (g \CompDeuxZero \alpha_{1_{b}})
\\
&= (p_{b'} \CompDeuxZero u_{v(g), 1_{v(b)}}) \CompDeuxUn (\alpha_{g} \CompDeuxZero u(1_{v(b)})) \CompDeuxUn (gp_{b} \CompDeuxZero u(v_{b})) \CompDeuxUn (g \CompDeuxZero \alpha_{1_{b}}) \phantom{bla} \text{(loi d'échange)}
\\
&= (p_{b'} \CompDeuxZero u_{v(g), 1_{v(b)}}) \CompDeuxUn (\alpha_{g} \CompDeuxZero u(1_{v(b)})) \CompDeuxUn (g \CompDeuxZero ((p_{b} \CompDeuxZero u(v_{b})) \CompDeuxUn \alpha_{1_{b}}))
\\
&= (p_{b'} \CompDeuxZero u_{v(g), 1_{v(b)}}) \CompDeuxUn (\alpha_{g} \CompDeuxZero u(1_{v(b)})) \CompDeuxUn (gp_{b} \CompDeuxZero u_{v(b)}) \phantom{bla} \text{($cf.$ \ref{vb})}
\\
&= (p_{b'} \CompDeuxZero u_{v(g), 1_{v(b)}}) \CompDeuxUn (p_{b'} u(v(g)) \CompDeuxZero u_{v(b)}) \CompDeuxUn \alpha_{g} \phantom{bla} \text{(loi d'échange)}
\\
&= (p_{b'} \CompDeuxZero (u_{v(g), 1_{v(b)}} \CompDeuxUn (u(v(g)) \CompDeuxZero u_{v(b)}))) \CompDeuxUn \alpha_{g}
\\
&= (p_{b'} \CompDeuxZero 1_{u(v(g))}) \CompDeuxUn \alpha_{g} \phantom{bla} \text{($cf.$ \ref{UniteU})}
\\
&= \alpha_{g}.
\end{aligned}
$$

On a donc bien 
$$
(v(g) \CompDeuxZero v_{b}) v_{g, 1_{b}} = 1_{v(g)}.
$$

\item[(Contraintes d'unité (2))]
La commutativité du diagramme 
$$
\xymatrix{
v(g)
\ar@{=}[rr]
\ar@{=>}[drr]_{v_{1_{b'}, g}}
&&
1_{v(b')} v(g)
\\
&&
v(1_{b'}) v(g)
\ar@{=>}[u]_{v_{b'} \CompDeuxZero v(g)}
}
$$
s'établit de façon similaire à celle que l'on vient de voir. 
\\
\end{itemize}

Ainsi, étant donné un préadjoint à gauche lax $u : \mathdeuxcat{A} \to \mathdeuxcat{B}$, on a bien construit un \DeuxFoncteurCoLax{} $v : \mathdeuxcat{B} \to \mathdeuxcat{A}$. 
\end{paragr}

\begin{rem}\label{LiftBettiPower}
Contrairement à ce que nous pensions, ce résultat se trouve déjà dégagé dans la littérature : à dualité près, c'est \cite[proposition 4.2]{BettiPower}, en tenant compte de \cite[page 941]{BettiPower}.
\end{rem}

\begin{paragr}
Le \DeuxFoncteurCoLax{} $v$ construit ci-dessus ne semble pas devoir être un préadjoint à droite colax en général. Cependant, si $u$ est un \DeuxFoncteurStrict{}, il est possible de définir une \DeuxTransformationCoLax{} $uv \Rightarrow 1_{\mathdeuxcat{B}}$ comme suit. Pour tout objet $b$ de $\mathdeuxcat{B}$, la composante de cette \DeuxTransformationCoLax{} en $b$ est $p_{b}$ et, pour toute \un{}cellule $g : b \to b'$ de $\mathdeuxcat{B}$, la composante de cette \DeuxTransformationCoLax{} en $g$ est $\alpha_{g}$. Il résulte immédiatement des formules \ref{Defvbeta}, \ref{Defvb} et \ref{Defvg',g} que cela définit bien une \DeuxTransformationCoLax{} $uv \Rightarrow 1_{\mathdeuxcat{B}}$. 

La proposition \ref{PropPreadjointsParDeux} résume la situation, en tenant compte des dualités. 
\end{paragr}

\begin{prop}\label{PropPreadjointsParDeux}
Soit $u : \mathdeuxcat{A} \to \mathdeuxcat{B}$ un préadjoint à gauche lax. Les considérations de cette section permettent de construire, de façon canonique, un \DeuxFoncteurCoLax{} $v : \mathdeuxcat{B} \to \mathdeuxcat{A}$. Si $u$ est un \DeuxFoncteurStrict, il existe une \DeuxTransformationCoLax{} canonique $uv \Rightarrow 1_{\mathdeuxcat{B}}$. 

Dualement, pour tout préadjoint à gauche colax $u : \mathdeuxcat{A} \to \mathdeuxcat{B}$, il existe un \DeuxFoncteurLax{} $v : \mathdeuxcat{B} \to \mathdeuxcat{A}$ canonique et, si $u$ est un \DeuxFoncteurStrict{}, il existe une \DeuxTransformationLax{} canonique $uv \Rightarrow 1_{\mathdeuxcat{B}}$. 

Dualement, pour tout préadjoint à droite lax $u : \mathdeuxcat{A} \to \mathdeuxcat{B}$, il existe un \DeuxFoncteurCoLax{} \mbox{$v : \mathdeuxcat{B} \to \mathdeuxcat{A}$} canonique et, si $u$ est un \DeuxFoncteurStrict{}, il existe une \DeuxTransformationLax{} canonique $1_{\mathdeuxcat{B}} \Rightarrow uv$.   

Dualement, pour tout préadjoint à droite colax $u : \mathdeuxcat{A} \to \mathdeuxcat{B}$, il existe un \DeuxFoncteurLax{} \mbox{$v : \mathdeuxcat{B} \to \mathdeuxcat{A}$} canonique et, si $u$ est un \DeuxFoncteurStrict{}, il existe une \DeuxTransformationCoLax{} canonique $1_{\mathdeuxcat{B}} \Rightarrow uv$. 
\end{prop}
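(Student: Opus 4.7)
L'�nonc� regroupe quatre assertions duales : il suffit de traiter le premier cas en d�tail, les trois autres s'en d�duisant en rempla�ant $u$ par $\DeuxFoncDeuxOp{u}$, $\DeuxFoncUnOp{u}$ ou $\DeuxFoncToutOp{u}$ selon la variante voulue, et en exploitant les relations $\TrancheLax{\mathdeuxcat{A}}{u}{b} = \DeuxCatDeuxOp{(\TrancheCoLax{\DeuxCatDeuxOp{\mathdeuxcat{A}}}{\DeuxFoncDeuxOp{u}}{b})}$, etc., de la section \ref{SectionDeuxCategoriesTranches}, ainsi que les propri�t�s de dualit� des \DeuxTransformationsLax{} et \DeuxTransformationsCoLax{} (remarque \ref{DualDeuxTrans} et d�finition \ref{DefAbus}). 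Je d�veloppe donc seulement le cas o� $u$ est un pr�adjoint � gauche lax.

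Par hypoth�se, pour chaque objet $b$ de $\mathdeuxcat{B}$, la \deux{}cat�gorie $\TrancheLax{\mathdeuxcat{A}}{u}{b}$ admet un objet admettant un objet initial ; je choisis un tel couple $(v(b), p_{b} : u(v(b)) \to b)$, ce qui d�finit $v$ sur les objets. Pour une \un{}cellule $g : b \to b'$, les deux couples $(v(b), g p_{b})$ et $(v(b'), p_{b'})$ sont des objets de $\TrancheLax{\mathdeuxcat{A}}{u}{b'}$, et l'initialit� locale fournit un unique objet initial dans la cat�gorie des \un{}cellules entre eux, ce qui d�finit $v(g)$ et une \deux{}cellule structurale $\alpha_{g} : g p_{b} \Rightarrow p_{b'} u(v(g))$. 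L'image d'une \deux{}cellule $\beta : g \Rightarrow g'$, la \deux{}cellule structurale d'unit� $v_{b} : v(1_{b}) \Rightarrow 1_{v(b)}$ et la \deux{}cellule structurale de composition $v_{g',g} : v(g'g) \Rightarrow v(g')v(g)$ se d�finissent alors toutes par la m�me recette : �crire la \deux{}cellule ��concurrente�� dans $\mathdeuxcat{B}$ et invoquer l'unicit� fournie par la propri�t� universelle de l'objet initial.

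Le point clef, et sans doute le plus fastidieux, est la v�rification que $v$ ainsi d�fini satisfait bien les axiomes d'un \DeuxFoncteurCoLax{} (d�finition \ref{DefFoncteurColax}) : fonctorialit� sur les \deux{}cellules, naturalit� des structurales de composition, condition de cocycle, contraintes d'unit�. Chacune de ces identit�s se d�montre selon un sch�ma uniforme : on compare deux \deux{}cellules parall�les en amont d'une $p_{b'}$ et on montre qu'elles co�ncident apr�s composition avec $p_{b'}$ via la structurale convenable ; on conclut alors par l'unicit� d�coulant de la propri�t� universelle. Les seuls outils techniques n�cessaires sont la loi d'�change, la fonctorialit� de $u$, la cocycle et les contraintes d'unit� pour $u$, qui sont toutes disponibles par hypoth�se.

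Reste la construction de la \DeuxTransformationCoLax{} $\epsilon : uv \Rightarrow 1_{\mathdeuxcat{B}}$ lorsque $u$ est strict. Je pose $\epsilon_{b} = p_{b}$ et $\epsilon_{g} = \alpha_{g}$. L'hypoth�se de strictitude garantit que les \deux{}cellules structurales de $u$ sont des identit�s, de sorte que les trois axiomes des \DeuxTransformationsCoLax{} (d�finition \ref{DefDeuxTrans}) se r�duisent exactement aux formules \eqref{Defvbeta}, \eqref{Defvb} et \eqref{Defvg',g} qui ont servi � d�finir $v(\beta)$, $v_{b}$ et $v_{g',g}$ respectivement ; l'obstacle principal, dans le cas non strict, proviendrait de la structurale $u_{v(g'),v(g)}$ apparaissant dans \eqref{Defvg',g}, qui emp�cherait une telle identification. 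Les trois variantes duales de l'�nonc� s'obtiennent alors en appliquant la construction pr�c�dente au \deux{}foncteur dualis� ad�quat et en transportant l'assertion par les foncteurs $?^{op}$, $?^{co}$, $?^{coop}$ de \ref{DefFoncteursOpCoCoop}.
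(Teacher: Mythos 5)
Votre plan suit essentiellement la d�monstration du texte : $v$ y est construit exactement de la m�me fa�on (choix, pour chaque objet $b$, d'un objet admettant un objet initial dans la tranche, puis d�finition de $v(g)$, $v(\beta)$, $v_{b}$ et $v_{g',g}$ par la propri�t� universelle, les axiomes de \deux{}foncteur colax se v�rifiant par le m�me sch�ma ��composer avec $p$ puis conclure par unicit頻), et l'\DeuxTransformationCoLax{} $uv \Rightarrow 1_{\mathdeuxcat{B}}$ est donn�e par les m�mes composantes $p_{b}$ et $\alpha_{g}$, la strictitude de $u$ jouant pr�cis�ment le r�le que vous indiquez via les formules \ref{Defvbeta}, \ref{Defvb} et \ref{Defvg',g}. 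Les cas duaux �tant aussi trait�s par dualit� dans le texte, votre proposition est correcte et ne diff�re pas de l'approche du papier.
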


\section{Adjonctions}\label{SectionAdjonctions}

\begin{df}\label{DefAdjonctionLaxCoLax}
Une \emph{adjonction lax-colax} correspond aux données et conditions suivantes. \\

\begin{itemize}
\item[(Données 1)]
Des \deux{}catégories $\mathdeuxcat{A}$ et $\mathdeuxcat{B}$, un \DeuxFoncteurLax{} $u : \mathdeuxcat{A} \to \mathdeuxcat{B}$ et un \DeuxFoncteurCoLax{} $v : \mathdeuxcat{B} \to \mathdeuxcat{A}$. \\
\end{itemize}

\begin{itemize}
\item[(Données 2)]
Pour tout objet $b$ de $\mathdeuxcat{B}$, une \un{}cellule $p_{b} : u(v(b)) \to b$ dans $\mathdeuxcat{B}$. \\

\item[(Données 3)]
Pour toute \un{}cellule $g : b \to b'$ de $\mathdeuxcat{B}$, une \deux{}cellule
$$
p_{g} : g p_{b} \Rightarrow p_{b'} u(v(g))
$$
dans $\mathdeuxcat{B}$. 
\end{itemize}

\begin{itemize}
\item[(ALC 1)]
Pour tout couple d'objets $b$ et $b'$ de $\mathdeuxcat{B}$, pour tout couple de \un{}cellules $g$ et $g'$ de $b$ vers $b'$ dans $\mathdeuxcat{B}$, pour toute \deux{}cellule $\beta : g \Rightarrow g'$ dans $\mathdeuxcat{B}$, le diagramme
$$
\xymatrix{
g p_{b}
\ar@{=>}[rr]^{p_{g}}
\ar@{=>}[d]_{\beta \CompDeuxZero p_{b}}
&&
p_{b'} u(v(g))
\ar@{=>}[d]^{p_{b'} \CompDeuxZero u(v(\beta))}
\\
g' p_{b}
\ar@{=>}[rr]_{p_{g'}}
&&
p_{b'} u(v(g'))
}
$$
est commutatif ; autrement dit, l'égalité
$$
(p_{b'} \CompDeuxZero u(v(\beta))) \CompDeuxUn p_{g} = p_{g'} \CompDeuxUn (\beta \CompDeuxZero p_{b})
$$
est vérifiée. \\

\item[(ALC 2)]
Pour tout couple de \un{}cellules composables $g : b \to b'$ et $g' : b' \to b''$ dans $\mathdeuxcat{B}$, le diagramme 
$$
\xymatrix{
g' g p_{b}
\ar@{=>}[rr]^{g' \CompDeuxZero p_{g}}
\ar@{=>}[d]_{p_{g'g}}
&&
g' p_{b'} u(v(g))
\ar@{=>}[rr]^{p_{g'} \CompDeuxZero u(v(g))}
&&
p_{b''} u(v(g')) u(v(g))
\ar@{=>}[d]^{p_{b''} \CompDeuxZero u_{v(g'), v(g)}}
\\
p_{b''} u(v(g'g))
\ar@{=>}[rrrr]_{p_{b''} \CompDeuxZero u(v_{g', g})}
&&&&
p_{b''} u(v(g') v(g))
}
$$
est commutatif ; autrement dit, l'égalité
$$
(p_{b''} \CompDeuxZero u_{v(g'), v(g)}) \CompDeuxUn (p_{g'} \CompDeuxZero u(v(g))) \CompDeuxUn (g' \CompDeuxZero p_{g}) = (p_{b''} \CompDeuxZero u(v_{g', g})) \CompDeuxUn p_{g'g}
$$
est vérifiée. \\

\item[(ALC 3)]
Pour tout objet $b$ de $\mathdeuxcat{B}$, le diagramme
$$
\xymatrix{
p_{b}
\ar@{=>}[rr]^{p_{1_{b}}}
\ar@{=>}[rrd]_{p_{b} \CompDeuxZero u_{v(b)}}
&&
p_{b} u(v(1_{b}))
\ar@{=>}[d]^{p_{b} \CompDeuxZero u(v_{b})}
\\
&&
p_{b} u(1_{v(b)})
}
$$
est commutatif ; autrement dit, l'égalité
$$
(p_{b} \CompDeuxZero u(v_{b})) \CompDeuxUn p_{1_{b}} = p_{b} \CompDeuxZero u_{v(b)}
$$
est vérifiée. \\
\end{itemize}

\begin{itemize}
\item[(Données 4)]
Pour tout objet $a$ de $\mathdeuxcat{A}$, une \un{}cellule $q_{a} : a \to v(u(a))$ dans $\mathdeuxcat{A}$. \\

\item[(Données 5)]
Pour toute \un{}cellule $f : a \to a'$ de $\mathdeuxcat{A}$, une \deux{}cellule
$$
q_{f} : v(u(f)) q_{a} \Rightarrow q_{a'} f
$$
dans $\mathdeuxcat{A}$.
\\
\end{itemize}

\begin{itemize}
\item[(ALC 4)]
Pour tout couple d'objets $a$ et $a'$ de $\mathdeuxcat{A}$, pour tout couple de \un{}cellules $f$ et $f'$ de $a$ vers $a'$ dans $\mathdeuxcat{A}$, pour toute \deux{}cellule $\alpha : f \Rightarrow f'$ dans $\mathdeuxcat{A}$, le diagramme
$$
\xymatrix{
v(u(f)) q_{a}
\ar@{=>}[rr]^{q_{f}}
\ar@{=>}[d]_{v(u(\alpha)) \CompDeuxZero q_{a}}
&&
q_{a'} f
\ar@{=>}[d]^{q_{a'} \CompDeuxZero \alpha}
\\
v(u(f')) q_{a}
\ar@{=>}[rr]_{q_{f'}}
&&
q_{a'} f'
}
$$
est commutatif ; autrement dit, l'égalité
$$
q_{f'} \CompDeuxUn (v(u(\alpha)) \CompDeuxZero q_{a}) = (q_{a'} \CompDeuxZero \alpha) \CompDeuxUn q_{f} 
$$
est vérifiée. \\

\item[(ALC 5)]
Pour tout couple de \un{}cellules composables $f : a \to a'$ et $f' : a' \to a''$ de $\mathdeuxcat{A}$, le diagramme 
$$
\xymatrix{
v (u(f') u(f)) q_{a}
\ar@{=>}[rr]^{v_{u(f'), u(f)} \CompDeuxZero q_{a}}
\ar@{=>}[dd]_{v(u_{f',f}) \CompDeuxZero q_{a}}
&&
v(u(f')) v(u(f)) q_{a}
\ar@{=>}[d]^{v(u(f')) \CompDeuxZero q_{f}}
\\
&&
v(u(f')) q_{a'} f
\ar@{=>}[d]^{q_{f'} \CompDeuxZero f}
\\
v(u(f'f)) q_{a}
\ar@{=>}[rr]_{q_{f'f}}
&&
q_{a''} f' f
}
$$
est commutatif ; autrement dit, l'égalité
$$
(q_{f'} \CompDeuxZero f) \CompDeuxUn (v(u(f')) \CompDeuxZero q_{f}) \CompDeuxUn (v_{u(f'), u(f)} \CompDeuxZero q_{a}) = q_{f'f} \CompDeuxUn (v(u_{f',f}) \CompDeuxZero q_{a})
$$
est vérifiée. \\

\item[(ALC 6)]
Pour tout objet $a$ de $\mathdeuxcat{A}$, le diagramme
$$
\xymatrix{
v(1_{u(a)}) q_{a}
\ar@{=>}[rr]^{v(u_{a}) \CompDeuxZero q_{a}}
\ar@{=>}[rrd]_{v_{u(a)} \CompDeuxZero q_{a}}
&&
v(u(1_{a})) q_{a}
\ar@{=>}[d]^{q_{1_{a}}}
\\
&&
q_{a}
}
$$
est commutatif ; autrement dit, l'égalité
$$
q_{1_{a}} \CompDeuxUn (v(u_{a}) \CompDeuxZero q_{a}) = v_{u(a)} \CompDeuxZero q_{a}
$$
est vérifiée. \\
\end{itemize}

\begin{itemize}
\item[(Données 6)]
Pour tout objet $a$ de $\mathdeuxcat{A}$, une \deux{}cellule
$$
\sigma_{a} : 1_{u(a)} \Rightarrow p_{u(a)} u(q_{a})
$$
dans $\mathdeuxcat{B}$.
\end{itemize}

\begin{itemize}
\item[(ALC 7)]
Pour toute \un{}cellule $f : a \to a'$ de $\mathdeuxcat{A}$, le diagramme
$$
\xymatrix{
u(f)
\ar@{=>}[rrr]^{\sigma_{a'} \CompDeuxZero u(f)}
\ar@{=>}[dd]_{u(f) \CompDeuxZero \sigma_{a}}
&&&
p_{u(a')} u(q_{a'}) u(f)
\ar@{=>}[rrr]^{p_{u(a')} \CompDeuxZero u_{q_{a'}, f}}
&&&
p_{u(a')} u(q_{a'}f)
\\
\\
u(f) p_{u(a)} u(q_{a})
\ar@{=>}[rrr]_{p_{u(f)} \CompDeuxZero u(q_{a})}
&&&
p_{u(a')}
u(v(u(f))) u(q_{a})
\ar@{=>}[rrr]_{p_{u(a')} \CompDeuxZero u_{v(u(f)), q_{a}}}
&&&
p_{u(a')} u(v(u(f)) q_{a})
\ar@{=>}[uu]_{p_{u(a')} \CompDeuxZero u(q_{f})}
}
$$
est commutatif ; autrement dit, l'égalité
$$
(p_{u(a')} \CompDeuxZero u(q_{f})) \CompDeuxUn (p_{u(a')} \CompDeuxZero u_{v(u(f)), q_{a}}) \CompDeuxUn (p_{u(f)} \CompDeuxZero u(q_{a})) \CompDeuxUn (u(f) \CompDeuxZero \sigma_{a}) = (p_{u(a')} \CompDeuxZero u_{q_{a'}, f}) \CompDeuxUn (\sigma_{a'} \CompDeuxZero u(f))
$$
est vérifiée. \\
\end{itemize}

\begin{itemize}
\item[(Données 7)]
Pour tout objet $b$ de $\mathdeuxcat{B}$, une \deux{}cellule
$$
\tau_{b} : v(p_{b}) q_{v(b)} \Rightarrow 1_{v(b)}. \\
$$
dans $\mathdeuxcat{A}$.
\end{itemize}

\begin{itemize}
\item[(ALC 8)]
Pour toute \un{}cellule $g : b \to b'$ de $\mathdeuxcat{B}$, le diagramme
$$
\xymatrix{
v(g p_{b}) q_{v(b)}
\ar@{=>}[rrr]^{v_{g,p_{b}} \CompDeuxZero q_{v(b)}}
\ar@{=>}[dd]_{v(p_{g}) \CompDeuxZero q_{v(b)}}
&&&
v(g) v(p_{b}) q_{v(b)}
\ar@{=>}[rrr]^{v(g) \CompDeuxZero \tau_{b}}
&&&
v(g)
\\
\\
v(p_{b'} u(v(g))) q_{v(b)}
\ar@{=>}[rrr]_{v_{p_{b'}, u(v(g))} \CompDeuxZero q_{v(b)}}
&&&
v(p_{b'}) v(u(v(g))) q_{v(b)}
\ar@{=>}[rrr]_{v(p_{b'}) \CompDeuxZero q_{v(g)}}
&&&
v(p_{b'}) q_{v(b')} v(g)
\ar@{=>}[uu]_{\tau_{b'} \CompDeuxZero v(g)}
}
$$
est commutatif ; autrement dit, l'égalité
$$
(\tau_{b'} \CompDeuxZero v(g)) \CompDeuxUn (v(p_{b'}) \CompDeuxZero q_{v(g)}) \CompDeuxUn (v_{p_{b'}, u(v(g))} \CompDeuxZero q_{v(b)}) \CompDeuxUn (v(p_{g}) \CompDeuxZero q_{v(b)}) = (v(g) \CompDeuxZero \tau_{b}) \CompDeuxUn (v_{g,p_{b}} \CompDeuxZero q_{v(b)})
$$
est vérifiée. \\
\end{itemize}

\begin{itemize}
\item[(ALC 9)]
Pour tout objet $b$ de $\mathdeuxcat{B}$, le diagramme
$$
\xymatrix{
p_{b}
\ar@{=>}[rrr]^{p_{b} \CompDeuxZero \sigma_{v(b)}}
\ar@{=>}[rrrrrrdd]_{p_{b} \CompDeuxZero u_{v(b)}}
&&&
p_{b} p_{u(v(b))} u(q_{v(b)})
\ar@{=>}[rrr]^{p_{p_{b}} \CompDeuxZero u(q_{v(b)})}
&&&
p_{b} u(v(p_{b})) u(q_{v(b)})
\ar@{=>}[d]^{p_{b} \CompDeuxZero u_{v(p_{b}), q_{v(b)}}}
\\
&&&&&&
p_{b} u(v(p_{b}) q_{v(b)})
\ar@{=>}[d]^{p_{b} \CompDeuxZero u(\tau_{b})}
\\
&&&&&&
p_{b} u(1_{v(b)})
}
$$
est commutatif ; autrement dit, l'égalité
$$
(p_{b} \CompDeuxZero u(\tau_{b})) \CompDeuxUn (p_{b} \CompDeuxZero u_{v(p_{b}), q_{v(b)}}) \CompDeuxUn (p_{p_{b}} \CompDeuxZero u(q_{v(b)})) \CompDeuxUn (p_{b} \CompDeuxZero \sigma_{v(b)}) = p_{b} \CompDeuxZero u_{v(b)}
$$
est vérifiée. \\
\end{itemize}

\begin{itemize}
\item[(ALC 10)]
Pour tout objet $a$ de $\mathdeuxcat{A}$, le diagramme
$$
\xymatrix{
v(1_{u(a)}) q_{a}
\ar@{=>}[rrr]^{v(\sigma_{a}) \CompDeuxZero q_{a}}
\ar@{=>}[rrrrrrdd]_{v_{u(a)} \CompDeuxZero q_{a}}
&&&
v(p_{u(a)} u(q_{a})) q_{a}
\ar@{=>}[rrr]^{v_{p_{u(a)}, u(q_{a})} \CompDeuxZero q_{a}}
&&&
v(p_{u(a)}) v(u(q_{a})) q_{a}
\ar@{=>}[d]^{v(p_{u(a)}) \CompDeuxZero q_{q_{a}}}
\\
&&&&&&
v(p_{u(a)}) q_{v(u(a))} q_{a}
\ar@{=>}[d]^{\tau_{u(a)} \CompDeuxZero q_{a}}
\\
&&&&&&
q_{a}
}
$$
est commutatif ; autrement dit, l'égalité
$$
(\tau_{u(a)} \CompDeuxZero q_{a}) \CompDeuxUn (v(p_{u(a)}) \CompDeuxZero q_{q_{a}}) \CompDeuxUn (v_{p_{u(a)}, u(q_{a})} \CompDeuxZero q_{a}) \CompDeuxUn (v(\sigma_{a}) \CompDeuxZero q_{a}) = v_{u(a)} \CompDeuxZero q_{a}
$$
est vérifiée. 
\end{itemize}

On dira que le couple $(u,v)$ constitue une adjonction lax-colax, $u$ étant l'\emph{adjoint à gauche lax de $v$} et $v$ l'\emph{adjoint à droite colax de $u$}. 
\end{df}

\begin{rem}\label{RemarqueVerity}
La notion que nous désignons sous le nom d'adjonction lax-colax, sur une suggestion de Maltsiniotis, se trouve en fait déjà dégagée par Verity dans sa thèse \cite{Verity}, ce que nous ignorions lors de la rédaction du présent travail ; c'est Steve Lack qui nous l'a suggéré. Plus précisément, en gardant à l'esprit la convention prise page 29 de \cite{Verity}, la condition $(ii)$ de \cite[théorème 1.1.6]{Verity} stipule, à dualité près, que le couple de morphismes considérés vérifie les axiomes de notre définition \ref{DefAdjonctionLaxCoLax}. En particulier, du fait de l'équivalence des conditions $(i)$ et $(ii)$ de \cite[théorème 1.1.6]{Verity}, la condition $(i)$ de ce dernier énoncé fournit une autre caractérisation des adjonctions lax-colax, dans l'esprit des définitions de \cite{BettiPower}, qu'elle renforce. 
\end{rem}

%

\begin{prop}\label{AdjointPreadjoint}
Un adjoint à gauche lax est un préadjoint à gauche lax. 
\end{prop}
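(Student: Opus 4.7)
Le plan consiste, sous les données d'une adjonction lax-colax $(u,v)$ telles que spécifiées dans la définition \ref{DefAdjonctionLaxCoLax}, à vérifier que, pour tout objet $b$ de $\mathdeuxcat{B}$, la \deux{}catégorie $\TrancheLax{\mathdeuxcat{A}}{u}{b}$ admet un objet admettant un objet initial. Le candidat naturel est l'objet $(v(b), p_{b} : u(v(b)) \to b)$, et il s'agit de montrer que, pour tout objet $(a, p : u(a) \to b)$ de $\TrancheLax{\mathdeuxcat{A}}{u}{b}$, la catégorie $\CatHom{\TrancheLax{\mathdeuxcat{A}}{u}{b}}{(a,p)}{(v(b), p_{b})}$ admet un objet initial. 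Cette approche reprend, en la lâchant convenablement aux \deux{}cellules, la caractérisation \un{}catégorique classique des foncteurs adjoints à gauche par la propriété que $v(b)$ muni de la coünité définit un objet terminal de la tranche $u/b$ (rappelée au début de la section \ref{SectionPreadjoints}).

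D'abord, je construirais explicitement le candidat objet initial $(f^{*}, \alpha^{*}) : (a,p) \to (v(b), p_{b})$ en posant $f^{*} = v(p) q_{a} : a \to v(u(a)) \to v(b)$ et
$$
\alpha^{*} = (p_{b} \CompDeuxZero u_{v(p), q_{a}}) \CompDeuxUn (p_{p} \CompDeuxZero u(q_{a})) \CompDeuxUn (p \CompDeuxZero \sigma_{a}),
$$
les données $\sigma_{a}$, $p_{p}$ et les \deux{}cellules structurales de $u$ assurant que cette composée fait sens comme \deux{}cellule $p \Rightarrow p_{b} u(f^{*})$. Ensuite, pour toute autre \un{}cellule $(f, \beta) : (a, p) \to (v(b), p_{b})$ dans $\TrancheLax{\mathdeuxcat{A}}{u}{b}$, je construirais le morphisme $\gamma : (f^{*}, \alpha^{*}) \Rightarrow (f, \beta)$ candidat à l'unicité comme la composée
$$
\gamma = (\tau_{b} \CompDeuxZero f) \CompDeuxUn (v(p_{b}) \CompDeuxZero q_{f}) \CompDeuxUn (v_{p_{b}, u(f)} \CompDeuxZero q_{a}) \CompDeuxUn (v(\beta) \CompDeuxZero q_{a}),
$$
qui utilise cette fois la \deux{}cellule structurale $q_{f}$, la coünité $\tau_{b}$ et les \deux{}cellules structurales de composition de $v$.

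Il reste alors deux vérifications : d'une part que $\gamma$ définit effectivement une \deux{}cellule de $(f^{*}, \alpha^{*})$ vers $(f, \beta)$ dans la tranche lax, c'est-à-dire que l'égalité $(p_{b} \CompDeuxZero u(\gamma)) \CompDeuxUn \alpha^{*} = \beta$ est satisfaite ; d'autre part que $\gamma$ est l'unique \deux{}cellule vérifiant cette égalité. Le premier point se démontre par une diagrammatique substantielle combinant l'axiome ALC 7 (qui contrôle l'interaction entre $\sigma$ et $q_{f}$), l'axiome ALC 9 (compatibilité de $\sigma$ et $\tau$ relativement à $p$), la naturalité des \deux{}cellules structurales de composition de $u$ et $v$, ainsi que la loi d'échange. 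Pour l'unicité, si $\gamma' : f^{*} \Rightarrow f$ vérifie la même égalité, on construit à partir de $\gamma'$ une expression canonique et l'on invoque cette fois l'axiome ALC 10 (compatibilité de $\sigma$ et $\tau$ relativement à $q$) pour la ramener à $\gamma$ ; en d'autres termes, les deux constructions $\beta \mapsto \gamma$ et $\gamma \mapsto \beta$ se correspondent comme les deux sens d'une bijection engendrée par les données d'adjonction, dont les axiomes ALC 7--10 assurent qu'il s'agit effectivement d'une bijection naturelle.

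Le principal obstacle sera la longueur et la technicité de ces vérifications, chacune consistant en un calcul de \deux{}cellules où interviennent la plupart des dix axiomes d'une adjonction lax-colax, mais aucune de ces étapes ne présente de difficulté conceptuelle : elles sont toutes de nature purement diagrammatique, et l'on peut s'inspirer des vérifications analogues menées dans la section \ref{SectionPreadjoints} lors de la construction du \DeuxFoncteurCoLax{} associé à un préadjoint à gauche lax (cf. proposition \ref{PropPreadjointsParDeux}), qui suivent un schéma très similaire à celui-ci.
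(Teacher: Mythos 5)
Votre proposition est correcte et suit essentiellement la même démarche que la démonstration du texte : même objet candidat $(v(b), p_{b})$, même \un{}cellule initiale $(v(p)q_{a}, (p_{b} \CompDeuxZero u_{v(p), q_{a}}) \CompDeuxUn (p_{p} \CompDeuxZero u(q_{a})) \CompDeuxUn (p \CompDeuxZero \sigma_{a}))$ et même formule pour l'unique \deux{}cellule, les vérifications reposant sur les mêmes axiomes ALC (notamment ALC~7 et ALC~9 pour l'existence, ALC~10 pour l'unicité). La seule différence est que vous ne faites qu'esquisser les deux longs calculs diagrammatiques que le texte mène en détail.
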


\begin{proof}
C'est immédiat en vertu de la remarque \ref{RemarqueVerity} ; cependant, sans avoir encore connaissance de \cite{Verity}, nous avons rédigé une démonstration détaillée de ce résultat. Ce dernier mettant en jeu deux notions importantes figurant dans le présent travail, nous conservons cette démonstration en espérant que le lecteur y trouvera quelque utilité. 

Soit $(u : \mathdeuxcat{A} \to \mathdeuxcat{B}, v : \mathdeuxcat{B} \to \mathdeuxcat{A})$ une adjonction lax-colax. Il s'agit de vérifier que, pour tout objet $b$ de $\mathdeuxcat{B}$, la \deux{}catégorie $\TrancheLax{\mathdeuxcat{A}}{u}{b}$ admet un objet admettant un objet initial. Fixons donc un objet $b$ de $\mathdeuxcat{B}$. 

Le couple
$$
(v(b), p_{b} : u(v(b)) \to b)
$$
est un objet de la \deux{}catégorie $\TrancheLax{\mathdeuxcat{A}}{u}{b}$. 

Pour tout objet $(a, r : u(a) \to b)$ de $\TrancheLax{\mathdeuxcat{A}}{u}{b}$, le couple
$$
(v(r) q_{a}, (p_{b} \CompDeuxZero u_{v(r), q_{a}}) \CompDeuxUn (p_{r} \CompDeuxZero u(q_{a})) \CompDeuxUn (r \CompDeuxZero \sigma_{a}))
$$
est une \un{}cellule de $(a,r)$ vers $(v(b), p_{b})$ dans $\TrancheLax{\mathdeuxcat{A}}{u}{b}$. 

Soit $(f : a \to v(b), \gamma : r \Rightarrow p_{b} u(f))$ une \un{}cellule arbitraire de $(a,r)$ vers $(v(b), p_{b})$ dans $\TrancheLax{\mathdeuxcat{A}}{u}{b}$. Montrons que
$$
(\tau_{b} \CompDeuxZero f) \CompDeuxUn (v(p_{b}) \CompDeuxZero q_{f}) \CompDeuxUn (v_{p_{b}, u(f)} \CompDeuxZero q_{a}) \CompDeuxUn (v(\gamma) \CompDeuxZero q_{a})
$$
est une \deux{}cellule de $(v(r) q_{a}, (p_{b} \CompDeuxZero u_{v(r), q_{a}}) \CompDeuxUn (p_{r} \CompDeuxZero u(q_{a})) \CompDeuxUn (r \CompDeuxZero \sigma_{a}))$ vers $(f, \gamma)$ dans $\TrancheLax{\mathdeuxcat{A}}{u}{b}$. Il s'agit de vérifier l'égalité
$$
(p_{b} \CompDeuxZero (u((\tau_{b} \CompDeuxZero f) \CompDeuxUn (v(p_{b}) \CompDeuxZero q_{f}) \CompDeuxUn (v_{p_{b}, u(f)} \CompDeuxZero q_{a}) \CompDeuxUn (v(\gamma) \CompDeuxZero q_{a})))) \CompDeuxUn (p_{b} \CompDeuxZero u_{v(r), q_{a}}) \CompDeuxUn (p_{r} \CompDeuxZero u(q_{a})) \CompDeuxUn (r \CompDeuxZero \sigma_{a}) = \gamma.
$$
Elle résulte des égalités suivantes. 
\begin{align*}
&(p_{b} \CompDeuxZero (u((\tau_{b} \CompDeuxZero f) \CompDeuxUn (v(p_{b}) \CompDeuxZero q_{f}) \CompDeuxUn (v_{p_{b}, u(f)} \CompDeuxZero q_{a}) \CompDeuxUn (v(\gamma) \CompDeuxZero q_{a})))) \CompDeuxUn (p_{b} \CompDeuxZero u_{v(r), q_{a}}) \CompDeuxUn (p_{r} \CompDeuxZero u(q_{a})) \CompDeuxUn (r \CompDeuxZero \sigma_{a}) 
\\
&=(p_{b} \CompDeuxZero u(\tau_{b} \CompDeuxZero f)) \CompDeuxUn (p_{b} \CompDeuxZero u(v(p_{b}) \CompDeuxZero q_{f})) \CompDeuxUn (p_{b} \CompDeuxZero u(v_{p_{b}, u(f)} \CompDeuxZero q_{a})) \CompDeuxUn (p_{b} \CompDeuxZero u(v(\gamma) \CompDeuxZero q_{a})) \CompDeuxUn (p_{b} \CompDeuxZero u_{v(r), q_{a}}) \CompDeuxUn 
\\
&\phantom{=1}(p_{r} \CompDeuxZero u(q_{a})) \CompDeuxUn (r \CompDeuxZero \sigma_{a}) 
\\
&=(p_{b} \CompDeuxZero u(\tau_{b} \CompDeuxZero f)) \CompDeuxUn (p_{b} \CompDeuxZero u(v(p_{b}) \CompDeuxZero q_{f})) \CompDeuxUn (p_{b} \CompDeuxZero u(v_{p_{b}, u(f)} \CompDeuxZero q_{a})) \CompDeuxUn (p_{b} \CompDeuxZero u_{v(p_{b} u(f)), q_{a}}) \CompDeuxUn (p_{b} \CompDeuxZero u(v(\gamma)) \CompDeuxZero u(q_{a})) \CompDeuxUn 
\\
&\phantom{=1}(p_{r} \CompDeuxZero u(q_{a})) \CompDeuxUn (r \CompDeuxZero \sigma_{a}) 
\\
&\phantom{=1}\text{(naturalité des \deux{}cellules structurales de composition de $u$)}
\\
&= (p_{b} \CompDeuxZero u(\tau_{b} \CompDeuxZero f)) \CompDeuxUn (p_{b} \CompDeuxZero u(v(p_{b}) \CompDeuxZero q_{f})) \CompDeuxUn (p_{b} \CompDeuxZero u_{v(p_{b})v(u(f)), q_{a}}) \CompDeuxUn (p_{b} \CompDeuxZero u(v_{p_{b}, u(f)}) \CompDeuxZero u(q_{a})) \CompDeuxUn 
\\
&\phantom{=1}(p_{b} \CompDeuxZero u(v(\gamma)) \CompDeuxZero u(q_{a})) \CompDeuxUn (p_{r} \CompDeuxZero u(q_{a})) \CompDeuxUn (r \CompDeuxZero \sigma_{a}) 
\\
&\phantom{=1}\text{(naturalité des \deux{}cellules structurales de composition de $u$)}
\\
&= (p_{b} \CompDeuxZero u(\tau_{b} \CompDeuxZero f)) \CompDeuxUn (p_{b} \CompDeuxZero u(v(p_{b}) \CompDeuxZero q_{f})) \CompDeuxUn (p_{b} \CompDeuxZero u_{v(p_{b})v(u(f)), q_{a}}) \CompDeuxUn (p_{b} \CompDeuxZero u(v_{p_{b}, u(f)}) \CompDeuxZero u(q_{a})) \CompDeuxUn
\\
&\phantom{=1}(p_{p_{b} u(f)} \CompDeuxZero u(q_{a})) \CompDeuxUn (\gamma \CompDeuxZero p_{u(a)} u(q_{a})) \CompDeuxUn (r \CompDeuxZero \sigma_{a}) 
\\
&\phantom{=1}\text{(ALC 1)}
\\
&= (p_{b} \CompDeuxZero u(\tau_{b} \CompDeuxZero f)) \CompDeuxUn (p_{b} \CompDeuxZero u(v(p_{b}) \CompDeuxZero q_{f})) \CompDeuxUn (p_{b} \CompDeuxZero u_{v(p_{b})v(u(f)), q_{a}}) \CompDeuxUn
(p_{b} \CompDeuxZero u_{v(p_{b}), v(u(f))} \CompDeuxZero u(q_{a})) \CompDeuxUn 
\\
&\phantom{=1}(p_{p_{b}} \CompDeuxZero u(v(u(f))) \CompDeuxZero u(q_{a})) \CompDeuxUn (p_{b} \CompDeuxZero p_{u(f)} \CompDeuxZero u(q_{a}))
\CompDeuxUn (\gamma \CompDeuxZero p_{u(a)} u(q_{a})) \CompDeuxUn (r \CompDeuxZero \sigma_{a})
\\
&\phantom{=1}\text{(ALC 2)}
\\
&= (p_{b} \CompDeuxZero u(\tau_{b} \CompDeuxZero f)) \CompDeuxUn (p_{b} \CompDeuxZero u(v(p_{b}) \CompDeuxZero q_{f})) \CompDeuxUn (p_{b} \CompDeuxZero u_{v(p_{b}), v(u(f))q_{a}}) \CompDeuxUn (p_{b} u(v(p_{b})) \CompDeuxZero u_{v(u(f)), q_{a}}) \CompDeuxUn  
\\
&\phantom{=1}(p_{p_{b}} \CompDeuxZero u(v(u(f))) u(q_{a})) \CompDeuxUn (p_{b} \CompDeuxZero p_{u(f)} \CompDeuxZero u(q_{a}))
\CompDeuxUn (\gamma \CompDeuxZero p_{u(a)} u(q_{a})) \CompDeuxUn (r \CompDeuxZero \sigma_{a})
\\
&\phantom{=1}\text{(condition de cocycle pour $u$)}
\\
&= (p_{b} \CompDeuxZero u(\tau_{b} \CompDeuxZero f)) \CompDeuxUn (p_{b} \CompDeuxZero u(v(p_{b}) \CompDeuxZero q_{f})) \CompDeuxUn (p_{b} \CompDeuxZero u_{v(p_{b}), v(u(f))q_{a}}) \CompDeuxUn (p_{b} u(v(p_{b})) \CompDeuxZero u_{v(u(f)), q_{a}}) \CompDeuxUn  
\\
&\phantom{=1}(p_{p_{b}} \CompDeuxZero u(v(u(f))) u(q_{a})) \CompDeuxUn (p_{b} \CompDeuxZero p_{u(f)} \CompDeuxZero u(q_{a})) \CompDeuxUn (p_{b} u(f) \CompDeuxZero \sigma_{a}) \CompDeuxUn \gamma 
\\ 
&\phantom{=1}\text{(loi d'échange)}
\\
&= (p_{b} \CompDeuxZero u(\tau_{b} \CompDeuxZero f)) \CompDeuxUn (p_{b} \CompDeuxZero u(v(p_{b}) \CompDeuxZero q_{f})) \CompDeuxUn (p_{b} \CompDeuxZero u_{v(p_{b}), v(u(f))q_{a}}) \CompDeuxUn (p_{p_{b}} \CompDeuxZero u_{v(u(f)), q_{a}}) \CompDeuxUn (p_{b} \CompDeuxZero p_{u(f)} \CompDeuxZero u(q_{a})) \CompDeuxUn
\\
&\phantom{=1}(p_{b} u(f) \CompDeuxZero \sigma_{a}) \CompDeuxUn \gamma 
\\
&\phantom{=1}\text{(loi d'échange)}
\\
&= (p_{b} \CompDeuxZero u(\tau_{b} \CompDeuxZero f)) \CompDeuxUn (p_{b} \CompDeuxZero u_{v(p_{b}), q_{v(b)}f}) \CompDeuxUn (p_{b} u(v(p_{b})) \CompDeuxZero u(q_{f})) \CompDeuxUn (p_{p_{b}} \CompDeuxZero u_{v(u(f)), q_{a}}) \CompDeuxUn (p_{b} \CompDeuxZero p_{u(f)} \CompDeuxZero u(q_{a})) \CompDeuxUn
\\
&\phantom{=1}(p_{b} u(f) \CompDeuxZero \sigma_{a}) \CompDeuxUn \gamma
\\
&\phantom{=1}\text{(naturalité des \deux{}cellules de composition de $u$)}
\\
&= (p_{b} \CompDeuxZero u(\tau_{b} \CompDeuxZero f)) \CompDeuxUn (p_{b} \CompDeuxZero u_{v(p_{b}), q_{v(b)}f}) \CompDeuxUn (p_{p_{b}} \CompDeuxZero u(q_{v(b)} f)) \CompDeuxUn (p_{b} p_{u(v(b))} \CompDeuxZero u(q_{f})) \CompDeuxUn (p_{b} p_{u(v(b))} \CompDeuxZero u_{v(u(f)), q_{a}}) \CompDeuxUn
\\
&\phantom{=1}(p_{b} \CompDeuxZero p_{u(f)} \CompDeuxZero u(q_{a})) \CompDeuxUn (p_{b} u(f) \CompDeuxZero \sigma_{a}) \CompDeuxUn \gamma 
\\
&\phantom{=1}\text{(loi d'échange)}
\\
&= (p_{b} \CompDeuxZero u(\tau_{b} \CompDeuxZero f)) \CompDeuxUn (p_{b} \CompDeuxZero u_{v(p_{b}), q_{v(b)}f}) \CompDeuxUn (p_{p_{b}} \CompDeuxZero u(q_{v(b)} f)) \CompDeuxUn (p_{b} p_{u(v(b))} \CompDeuxZero u_{q_{v(b)}, f}) \CompDeuxUn (p_{b} \CompDeuxZero \sigma_{v(b)} \CompDeuxZero u(f))\CompDeuxUn \gamma
\\
&\phantom{=1}\text{(ALC 7)}
\\
&= (p_{b} \CompDeuxZero u(\tau_{b} \CompDeuxZero f)) \CompDeuxUn (p_{b} \CompDeuxZero u_{v(p_{b}), q_{v(b)}f}) \CompDeuxUn (p_{b} u(v(p_{b})) \CompDeuxZero u_{q_{v(b)}, f}) \CompDeuxUn (p_{p_{b}} \CompDeuxZero u(q_{v(b)}) u(f)) \CompDeuxUn (p_{b} \CompDeuxZero \sigma_{v(b)} \CompDeuxZero u(f))\CompDeuxUn \gamma
\\
&\phantom{=1}\text{(loi d'échange)}
\\
&= (p_{b} \CompDeuxZero u(\tau_{b} \CompDeuxZero f)) \CompDeuxUn (p_{b} \CompDeuxZero u_{v(p_{b})q_{v(b)}, f}) \CompDeuxUn (p_{b} \CompDeuxZero u_{v(p_{b}), q_{v(b)}} \CompDeuxZero u(f)) \CompDeuxUn (p_{p_{b}} \CompDeuxZero u(q_{v(b)}) u(f)) \CompDeuxUn (p_{b} \CompDeuxZero \sigma_{v(b)} \CompDeuxZero u(f))\CompDeuxUn \gamma
\\
&\phantom{=1}\text{(condition de cocycle pour $u$)}
\\
&= (p_{b} \CompDeuxZero u_{1_{v(b)}, f}) \CompDeuxUn (p_{b} \CompDeuxZero u(\tau_{b}) \CompDeuxZero u(f)) \CompDeuxUn (p_{b} \CompDeuxZero u_{v(p_{b}), q_{v(b)}} \CompDeuxZero u(f)) \CompDeuxUn (p_{p_{b}} \CompDeuxZero u(q_{v(b)}) u(f)) \CompDeuxUn (p_{b} \CompDeuxZero \sigma_{v(b)} \CompDeuxZero u(f)) \CompDeuxUn \gamma
\\
&\phantom{=1}\text{(naturalité des \deux{}cellules de composition de $u$)}
\\
&=(p_{b} \CompDeuxZero u_{1_{v(b)}, f}) \CompDeuxUn (p_{b} \CompDeuxZero u_{v(b)} \CompDeuxZero u(f)) \CompDeuxUn \gamma
\\
&\phantom{=1}\text{(ALC 9)}
\\
&= (p_{b} \CompDeuxZero (u_{1_{v(b)}, f} \CompDeuxUn (u_{v(b)} \CompDeuxZero u(f)))) \CompDeuxUn \gamma
\\
&= (p_{b} \CompDeuxZero 1_{u(f)}) \CompDeuxUn \gamma 
\\
&\phantom{=1}\text{(axiome d'unité pour $u$)}
\\
&= 1_{p_{b} u(f)} \CompDeuxUn \gamma
\\
&= \gamma.
\end{align*}

Par conséquent, comme annoncé,
$
(\tau_{b} \CompDeuxZero f) \CompDeuxUn (v(p_{b}) \CompDeuxZero q_{f}) \CompDeuxUn (v_{p_{b}, u(f)} \CompDeuxZero q_{a}) \CompDeuxUn (v(\gamma) \CompDeuxZero q_{a})
$
est une \deux{}cellule de $(v(r) q_{a}, (p_{b} \CompDeuxZero u_{v(r), q_{a}}) \CompDeuxUn (p_{r} \CompDeuxZero u(q_{a})) \CompDeuxUn (r \CompDeuxZero \sigma_{a}))$ vers $(f, \gamma)$ dans $\TrancheLax{\mathdeuxcat{A}}{u}{b}$. Il reste à montrer que c'est la seule. Soit donc 
$
\alpha
$
une \deux{}cellule de $(v(r) q_{a}, (p_{b} \CompDeuxZero u_{v(r), q_{a}}) \CompDeuxUn (p_{r} \CompDeuxZero u(q_{a})) \CompDeuxUn (r \CompDeuxZero \sigma_{a}))$ vers $(f, \gamma)$ dans $\TrancheLax{\mathdeuxcat{A}}{u}{b}$. C'est donc une \deux{}cellule de $v(r) q_{a}$ vers $f$ dans $\mathdeuxcat{A}$ telle que l'égalité
$$
(p_{b} \CompDeuxZero u(\alpha)) \CompDeuxUn (p_{b} \CompDeuxZero u_{v(r), q_{a}}) \CompDeuxUn (p_{r} \CompDeuxZero u(q_{a})) \CompDeuxUn (r \CompDeuxZero \sigma_{a}) = \gamma
$$
soit vérifiée. Alors,
\begin{align*}
&(\tau_{b} \CompDeuxZero f) \CompDeuxUn (v(p_{b}) \CompDeuxZero q_{f}) \CompDeuxUn (v_{p_{b}, u(f)} \CompDeuxZero q_{a}) \CompDeuxUn (v(\gamma) \CompDeuxZero q_{a})
\\
&= (\tau_{b} \CompDeuxZero f) \CompDeuxUn (v(p_{b}) \CompDeuxZero q_{f}) \CompDeuxUn (v_{p_{b}, u(f)} \CompDeuxZero q_{a}) \CompDeuxUn (v((p_{b} \CompDeuxZero u(\alpha)) \CompDeuxUn (p_{b} \CompDeuxZero u_{v(r), q_{a}}) \CompDeuxUn (p_{r} \CompDeuxZero u(q_{a})) \CompDeuxUn (r \CompDeuxZero \sigma_{a})) \CompDeuxZero q_{a})
\\
&\phantom{=1}\text{(par hypothèse)}
\\
&= (\tau_{b} \CompDeuxZero f) \CompDeuxUn (v(p_{b}) \CompDeuxZero q_{f}) \CompDeuxUn (v_{p_{b}, u(f)} \CompDeuxZero q_{a}) \CompDeuxUn (v(p_{b} \CompDeuxZero u(\alpha)) \CompDeuxZero q_{a}) \CompDeuxUn (v(p_{b} \CompDeuxZero u_{v(r), q_{a}}) \CompDeuxZero q_{a}) \CompDeuxUn (v(p_{r} \CompDeuxZero u(q_{a})) \CompDeuxZero q_{a}) \CompDeuxUn
\\
&\phantom{=1}(v(r \CompDeuxZero \sigma_{a}) \CompDeuxZero q_{a}) 
\\
&= (\tau_{b} \CompDeuxZero f) \CompDeuxUn (v(p_{b}) \CompDeuxZero q_{f}) \CompDeuxUn (v(p_{b}) \CompDeuxZero v(u(\alpha)) \CompDeuxZero q_{a}) \CompDeuxUn (v_{p_{b}, u(v(r)q_{a})} \CompDeuxZero q_{a}) \CompDeuxUn (v(p_{b} \CompDeuxZero u_{v(r), q_{a}}) \CompDeuxZero q_{a}) \CompDeuxUn
\\
&\phantom{=1}(v(p_{r} \CompDeuxZero u(q_{a})) \CompDeuxZero q_{a}) \CompDeuxUn (v(r \CompDeuxZero \sigma_{a}) \CompDeuxZero q_{a}) 
\\
&\phantom{=1}\text{(naturalité des \deux{}cellules structurales de composition de $v$)}
\\
&= (\tau_{b} \CompDeuxZero f) \CompDeuxUn (v(p_{b}) q_{v(b)} \CompDeuxZero \alpha) \CompDeuxUn (v(p_{b}) \CompDeuxZero q_{v(r)q_{a}}) \CompDeuxUn (v_{p_{b}, u(v(r)q_{a})} \CompDeuxZero q_{a}) \CompDeuxUn (v(p_{b} \CompDeuxZero u_{v(r), q_{a}}) \CompDeuxZero q_{a}) \CompDeuxUn 
\\
&\phantom{=1}(v(p_{r} \CompDeuxZero u(q_{a})) \CompDeuxZero q_{a}) \CompDeuxUn (v(r \CompDeuxZero \sigma_{a}) \CompDeuxZero q_{a}) 
\\
&\phantom{=1}\text{(ALC 4)}
\\
&= (\tau_{b} \CompDeuxZero f) \CompDeuxUn (v(p_{b}) q_{v(b)} \CompDeuxZero \alpha) \CompDeuxUn (v(p_{b}) \CompDeuxZero q_{v(r)q_{a}}) \CompDeuxUn (v(p_{b}) \CompDeuxZero v(u_{v(r), q_{a}}) \CompDeuxZero q_{a}) \CompDeuxUn (v_{p_{b}, u(v(r))u(q_{a})} \CompDeuxZero q_{a}) 
\\
&\phantom{=1}\CompDeuxUn (v(p_{r} \CompDeuxZero u(q_{a})) \CompDeuxZero q_{a}) \CompDeuxUn (v(r \CompDeuxZero \sigma_{a}) \CompDeuxZero q_{a}) 
\\
&\phantom{=1}{\text{(naturalité des \deux{}cellules structurales de composition de $v$)}}
\\
&= (\tau_{b} \CompDeuxZero f) \CompDeuxUn (v(p_{b}) q_{v(b)} \CompDeuxZero \alpha) \CompDeuxUn (v(p_{b}) \CompDeuxZero q_{v(r)} \CompDeuxZero q_{a}) \CompDeuxUn (v(p_{b}) v(u(v(r))) \CompDeuxZero q_{q_{a}}) \CompDeuxUn (v(p_{b}) \CompDeuxZero v_{u(v(r)), u(q_{a})} \CompDeuxZero q_{a})
\\
&\phantom{=1}\CompDeuxUn (v_{p_{b}, u(v(r))u(q_{a})} \CompDeuxZero q_{a}) \CompDeuxUn (v(p_{r} \CompDeuxZero u(q_{a})) \CompDeuxZero q_{a}) \CompDeuxUn (v(r \CompDeuxZero \sigma_{a}) \CompDeuxZero q_{a}) 
\\ 
&\phantom{=1}\text{(ALC 5)}
\\
&= (\tau_{b} \CompDeuxZero f) \CompDeuxUn (v(p_{b}) q_{v(b)} \CompDeuxZero \alpha) \CompDeuxUn (v(p_{b}) \CompDeuxZero q_{v(r)} \CompDeuxZero q_{a}) \CompDeuxUn (v(p_{b}) v(u(v(r))) \CompDeuxZero q_{q_{a}}) \CompDeuxUn (v_{p_{b}, u(v(r))} \CompDeuxZero v(u(q_{a})) q_{a}) 
\\
&\phantom{=1}\CompDeuxUn (v_{p_{b} u(v(r)), u(q_{a})} \CompDeuxZero q_{a}) \CompDeuxUn (v(p_{r} \CompDeuxZero u(q_{a})) \CompDeuxZero q_{a}) \CompDeuxUn (v(r \CompDeuxZero \sigma_{a}) \CompDeuxZero q_{a}) 
\\ 
&\phantom{=1}\text{(condition de cocycle pour $v$)}
\\
&= (\tau_{b} \CompDeuxZero f) \CompDeuxUn (v(p_{b}) q_{v(b)} \CompDeuxZero \alpha) \CompDeuxUn (v(p_{b}) \CompDeuxZero q_{v(r)} \CompDeuxZero q_{a}) \CompDeuxUn (v(p_{b}) v(u(v(r))) \CompDeuxZero q_{q_{a}}) \CompDeuxUn (v_{p_{b}, u(v(r))} \CompDeuxZero v(u(q_{a})) q_{a}) 
\\
&\phantom{=1}\CompDeuxUn (v(p_{r}) \CompDeuxZero v(u(q_{a})) q_{a}) \CompDeuxUn (v_{rp_{u(a)}, u(q_{a})} \CompDeuxZero q_{a}) \CompDeuxUn (v(r \CompDeuxZero \sigma_{a}) \CompDeuxZero q_{a}) 
\\
&\phantom{=1}\text{(naturalité des \deux{}cellules structurales de composition de $v$)}
\\
&= (\tau_{b} \CompDeuxZero f) \CompDeuxUn (v(p_{b}) q_{v(b)} \CompDeuxZero \alpha) \CompDeuxUn (v(p_{b}) \CompDeuxZero q_{v(r)} \CompDeuxZero q_{a}) \CompDeuxUn (v_{p_{b}, u(v(r))} \CompDeuxZero q_{v(u(a))} q_{a}) \CompDeuxUn (v(p_{r}) \CompDeuxZero q_{q_{a}}) \CompDeuxUn (v_{rp_{u(a)}, u(q_{a})} \CompDeuxZero q_{a})
\\
&\phantom{=1}\CompDeuxUn (v(r \CompDeuxZero \sigma_{a}) \CompDeuxZero q_{a}) 
\\
&\phantom{=1}\text{(loi d'échange)}
\\
&= (\tau_{b} \CompDeuxZero f) \CompDeuxUn (v(p_{b}) q_{v(b)} \CompDeuxZero \alpha) \CompDeuxUn (v(p_{b}) \CompDeuxZero q_{v(r)} \CompDeuxZero q_{a}) \CompDeuxUn (v_{p_{b}, u(v(r))} \CompDeuxZero q_{v(u(a))} q_{a}) \CompDeuxUn (v(p_{r}) \CompDeuxZero q_{v(u(a))} q_{a}) 
\\
&\phantom{=1}\CompDeuxUn (v(rp_{u(a)}) \CompDeuxZero q_{q_{a}}) \CompDeuxUn (v_{rp_{u(a)}, u(q_{a})} \CompDeuxZero q_{a}) \CompDeuxUn (v(r \CompDeuxZero \sigma_{a}) \CompDeuxZero q_{a})
\\
&\phantom{=1}\text{(loi d'échange)}
\\
&= \alpha \CompDeuxUn (\tau_{b} \CompDeuxZero v(r) q_{a}) \CompDeuxUn (v(p_{b}) \CompDeuxZero q_{v(r)} \CompDeuxZero q_{a}) \CompDeuxUn (v_{p_{b}, u(v(r))} \CompDeuxZero q_{v(u(a))} q_{a}) \CompDeuxUn (v(p_{r}) \CompDeuxZero q_{v(u(a))} q_{a}) \CompDeuxUn (v(rp_{u(a)}) \CompDeuxZero q_{q_{a}})  
\\
&\phantom{=1}\CompDeuxUn (v_{rp_{u(a)}, u(q_{a})} \CompDeuxZero q_{a}) \CompDeuxUn (v(r \CompDeuxZero \sigma_{a}) \CompDeuxZero q_{a}) 
\\
&\phantom{=1}\text{(loi d'échange)}
\\
&= \alpha \CompDeuxUn (v(r) \CompDeuxZero \tau_{u(a)} \CompDeuxZero q_{a}) \CompDeuxUn (v_{r, p_{u(a)}} \CompDeuxZero q_{v(u(a))} q_{a}) \CompDeuxUn (v(rp_{u(a)}) \CompDeuxZero q_{q_{a}}) \CompDeuxUn (v_{rp_{u(a)}, u(q_{a})} \CompDeuxZero q_{a}) \CompDeuxUn (v(r \CompDeuxZero \sigma_{a}) \CompDeuxZero q_{a})
\\
&\phantom{=1}\text{(ALC 8)}
\\
&= \alpha \CompDeuxUn (v(r) \CompDeuxZero \tau_{u(a)} \CompDeuxZero q_{a}) \CompDeuxUn (v(r) v(p_{u(a)}) \CompDeuxZero q_{q_{a}}) \CompDeuxUn (v_{r, p_{u(a)}} \CompDeuxZero v(u(q_{a})) q_{a}) \CompDeuxUn (v_{rp_{u(a)}, u(q_{a})} \CompDeuxZero q_{a}) \CompDeuxUn (v(r \CompDeuxZero \sigma_{a}) \CompDeuxZero q_{a})
\\
&\phantom{=1}\text{(loi d'échange)}
\\
&= \alpha \CompDeuxUn (v(r) \CompDeuxZero \tau_{u(a)} \CompDeuxZero q_{a}) \CompDeuxUn (v(r) v(p_{u(a)}) \CompDeuxZero q_{q_{a}}) \CompDeuxUn (v(r) \CompDeuxZero v_{p_{u(a)}, u(q_{a})} \CompDeuxZero q_{a}) \CompDeuxUn (v_{r, p_{u(a)}u(q_{a})} \CompDeuxZero q_{a}) \CompDeuxUn (v(r \CompDeuxZero \sigma_{a}) \CompDeuxZero q_{a})
\\
&\phantom{=1}\text{(condition de cocycle pour $v$)}
\\
&= \alpha \CompDeuxUn (v(r) \CompDeuxZero \tau_{u(a)} \CompDeuxZero q_{a}) \CompDeuxUn (v(r) v(p_{u(a)}) \CompDeuxZero q_{q_{a}}) \CompDeuxUn (v(r) \CompDeuxZero v_{p_{u(a)}, u(q_{a})} \CompDeuxZero q_{a}) \CompDeuxUn (v(r) \CompDeuxZero v(\sigma_{a}) \CompDeuxZero q_{a}) \CompDeuxUn (v_{r, 1_{u(a)}} \CompDeuxZero q_{a})
\\
&\phantom{=1}\text{(naturalité des \deux{}cellules structurales de composition de $v$)}
\\
&= \alpha \CompDeuxUn (v(r) \CompDeuxZero v_{u(a)} \CompDeuxZero q_{a}) \CompDeuxUn (v_{r, 1_{u(a)}} \CompDeuxZero q_{a}) 
\\
&\phantom{=1}\text{(ALC 10)}
\\
&= \alpha \CompDeuxUn (((v(r) \CompDeuxZero v_{u(a)}) \CompDeuxUn v_{r, 1_{u(a)}}) \CompDeuxZero q_{a})
\\
&= \alpha \CompDeuxUn (1_{v(r)} \CompDeuxZero q_{a}) 
\\
&\phantom{=1}\text{(condition d'unité pour $v$)}
\\
&= \alpha \CompDeuxUn (1_{v(r)q_{a}}) 
\\
&= \alpha.
\end{align*}
Ainsi, 
$$
\alpha = (\tau_{b} \CompDeuxZero f) \CompDeuxUn (v(p_{b}) \CompDeuxZero q_{f}) \CompDeuxUn (v_{p_{b}, u(f)} \CompDeuxZero q_{a}) \CompDeuxUn (v(\gamma) \CompDeuxZero q_{a}).
$$
Cela termine la démonstration. 
\end{proof}

\begin{df}\label{DefAdjonctionCoLaxLax}
Une \emph{adjonction colax-lax} correspond aux données et conditions suivantes. \\

\begin{itemize}
\item[(Données 1)]
Des \deux{}catégories $\mathdeuxcat{A}$ et $\mathdeuxcat{B}$, un \DeuxFoncteurCoLax{} $u : \mathdeuxcat{A} \to \mathdeuxcat{B}$ et un \DeuxFoncteurLax{} $v : \mathdeuxcat{B} \to \mathdeuxcat{A}$. \\
\end{itemize}

\begin{itemize}
\item[(Données 2)]
Pour tout objet $b$ de $\mathdeuxcat{B}$, une \un{}cellule $p_{b} : u(v(b)) \to b$ dans $\mathdeuxcat{B}$. \\

\item[(Données 3)]
Pour toute \un{}cellule $g : b \to b'$ de $\mathdeuxcat{B}$, une \deux{}cellule
$$
p_{g} : p_{b'} u(v(g)) \Rightarrow g p_{b}
$$
dans $\mathdeuxcat{B}$.
\end{itemize}

\begin{itemize}
\item[(ACL 1)]
Pour tout couple d'objets $b$ et $b'$ de $\mathdeuxcat{B}$, pour tout couple de \un{}cellules $g$ et $g'$ de $b$ vers $b'$ dans $\mathdeuxcat{B}$, pour toute \deux{}cellule $\beta : g \Rightarrow g'$ dans $\mathdeuxcat{B}$, le diagramme
$$
\xymatrix{
g' p_{b}
&&
p_{b'} u(v(g'))
\ar@{=>}[ll]_{p_{g'}}
\\
g p_{b}
\ar@{=>}[u]^{\beta \CompDeuxZero p_{b}}
&&
p_{b'} u(v(g))
\ar@{=>}[ll]^{p_{g}}
\ar@{=>}[u]_{p_{b'} \CompDeuxZero u(v(\beta))}
}
$$
est commutatif. \\

\item[(ACL 2)]
Pour tout couple de \un{}cellules composables $g : b \to b'$ et $g' : b' \to b''$ dans $\mathdeuxcat{B}$, le diagramme 
$$
\xymatrix{
g' g p_{b}
&&
g' p_{b'} u(v(g))
\ar@{=>}[ll]_{g' \CompDeuxZero p_{g}}
&&
p_{b''} u(v(g')) u(v(g))
\ar@{=>}[ll]_{p_{g'} \CompDeuxZero u(v(g))}
\\
p_{b''} u(v(g'g))
\ar@{=>}[u]^{p_{g'g}}
&&&&
p_{b''} u(v(g') v(g))
\ar@{=>}[u]_{p_{b''} \CompDeuxZero u_{v(g'), v(g)}}
\ar@{=>}[llll]^{p_{b''} \CompDeuxZero u(v_{g', g})}
}
$$
est commutatif. \\

\item[(ACL 3)]
Pour tout objet $b$ de $\mathdeuxcat{B}$, le diagramme
$$
\xymatrix{
p_{b}
&&
p_{b} u(v(1_{b}))
\ar@{=>}[ll]_{p_{1_{b}}}
\\
&&
p_{b} u(1_{v(b)})
\ar@{=>}[llu]^{p_{b} \CompDeuxZero u_{v(b)}}
\ar@{=>}[u]_{p_{b} \CompDeuxZero u(v_{b})}
}
$$
est commutatif. \\
\end{itemize}

\begin{itemize}
\item[(Données 4)]
Pour tout objet $a$ de $\mathdeuxcat{A}$, une \un{}cellule $q_{a} : a \to v(u(a))$ dans $\mathdeuxcat{A}$. \\

\item[(Données 5)]
Pour toute \un{}cellule $f : a \to a'$ de $\mathdeuxcat{A}$, une \deux{}cellule
$$
q_{f} : q_{a'} f \Rightarrow v(u(f)) q_{a}
$$
dans $\mathdeuxcat{A}$.
\\
\end{itemize}

\begin{itemize}
\item[(ACL 4)]
Pour tout couple d'objets $a$ et $a'$ de $\mathdeuxcat{A}$, pour tout couple de \un{}cellules $f$ et $f'$ de $a$ vers $a'$ dans $\mathdeuxcat{A}$, pour toute \deux{}cellule $\alpha : f \Rightarrow f'$ dans $\mathdeuxcat{A}$, le diagramme
$$
\xymatrix{
v(u(f')) q_{a}
&&
q_{a'} f'
\ar@{=>}[ll]_{q_{f'}}
\\
v(u(f)) q_{a}
\ar@{=>}[u]^{v(u(\alpha)) \CompDeuxZero q_{a}}
&&
q_{a'} f
\ar@{=>}[u]_{q_{a'} \CompDeuxZero \alpha}
\ar@{=>}[ll]^{q_{f}}
}
$$
est commutatif. \\

\item[(ACL 5)]
Pour tout couple de \un{}cellules composables $f : a \to a'$ et $f' : a' \to a''$ de $\mathdeuxcat{A}$, le diagramme 
$$
\xymatrix{
v (u(f') u(f)) q_{a}
&&
v(u(f')) v(u(f)) q_{a}
\ar@{=>}[ll]_{v_{u(f'), u(f)} \CompDeuxZero q_{a}}
\\
&&
v(u(f')) q_{a'} f
\ar@{=>}[u]_{v(u(f')) \CompDeuxZero q_{f}}
\\
v(u(f'f)) q_{a}
\ar@{=>}[uu]^{v(u_{f',f}) \CompDeuxZero q_{a}}
&&
q_{a''} f' f
\ar@{=>}[u]_{q_{f'} \CompDeuxZero f}
\ar@{=>}[ll]^{q_{f'f}}
}
$$
est commutatif. \\

\item[(ACL 6)]
Pour tout objet $a$ de $\mathdeuxcat{A}$, le diagramme
$$
\xymatrix{
v(1_{u(a)}) q_{a}
&&
v(u(1_{a})) q_{a}
\ar@{=>}[ll]_{v(u_{a}) \CompDeuxZero q_{a}}
\\
&&
q_{a}
\ar@{=>}[u]_{q_{1_{a}}}
\ar@{=>}[llu]^{v_{u(a)} \CompDeuxZero q_{a}}
}
$$
est commutatif. \\
\end{itemize}

\begin{itemize}
\item[(Données 6)]
Pour tout objet $a$ de $\mathdeuxcat{A}$, une \deux{}cellule
$$
\sigma_{a} : p_{u(a)} u(q_{a}) \Rightarrow 1_{u(a)}
$$
dans $\mathdeuxcat{B}$. 
\end{itemize}

\begin{itemize}
\item[(ACL 7)]
Pour toute \un{}cellule $f : a \to a'$ de $\mathdeuxcat{A}$, le diagramme
$$
\xymatrix{
u(f)
&&&
p_{u(a')} u(q_{a'}) u(f)
\ar@{=>}[lll]_{\sigma_{a'} \CompDeuxZero u(f)}
&&&
p_{u(a')} u(q_{a'}f)
\ar@{=>}[lll]_{p_{u(a')} \CompDeuxZero u_{q_{a'}, f}}
\ar@{=>}[dd]^{p_{u(a')} \CompDeuxZero u(q_{f})}
\\
\\
u(f) p_{u(a)} u(q_{a})
\ar@{=>}[uu]^{u(f) \CompDeuxZero \sigma_{a}}
&&&
p_{u(a')} u(v(u(f))) u(q_{a})
\ar@{=>}[lll]^{p_{u(f)} \CompDeuxZero u(q_{a})}
&&&
p_{u(a')} u(v(u(f)) q_{a})
\ar@{=>}[lll]^{p_{u(a')} \CompDeuxZero u_{v(u(f)), q_{a}}}
}
$$
est commutatif. \\
\end{itemize}

\begin{itemize}
\item[(Données 7)]
Pour tout objet $b$ de $\mathdeuxcat{B}$, une \deux{}cellule
$$
\tau_{b} : 1_{v(b)} \Rightarrow  v(p_{b}) q_{v(b)}
$$
dans $\mathdeuxcat{A}$.
\end{itemize}

\begin{itemize}
\item[(ACL 8)]
Pour toute \un{}cellule $g : b \to b'$ de $\mathdeuxcat{B}$, le diagramme
$$
\xymatrix{
v(g p_{b}) q_{v(b)}
&&&
v(g) v(p_{b}) q_{v(b)}
\ar@{=>}[lll]_{v_{g,p_{b}} \CompDeuxZero q_{v(b)}}
&&&
v(g)
\ar@{=>}[lll]_{v(g) \CompDeuxZero \tau_{b}}
\ar@{=>}[dd]^{\tau_{b'} \CompDeuxZero v(g)}
\\
\\
v(p_{b'} u(v(g))) q_{v(b)}
\ar@{=>}[uu]^{v(p_{g}) \CompDeuxZero q_{v(b)}}
&&&
v(p_{b'}) v(u(v(g))) q_{v(b)}
\ar@{=>}[lll]^{v_{p_{b'}, u(v(g))} \CompDeuxZero q_{v(b)}}
&&&
v(p_{b'}) q_{v(b')} v(g)
\ar@{=>}[lll]^{v(p_{b'}) \CompDeuxZero q_{v(g)}}
}
$$
est commutatif. \\
\end{itemize}

\begin{itemize}
\item[(ACL 9)]
Pour tout objet $b$ de $\mathdeuxcat{B}$, le diagramme
$$
\xymatrix{
p_{b}
&&&
p_{b} p_{u(v(b))} u(q_{v(b)})
\ar@{=>}[lll]_{p_{b} \CompDeuxZero \sigma_{v(b)}}
&&&
p_{b} u(v(p_{b})) u(q_{v(b)})
\ar@{=>}[lll]_{p_{p_{b}} \CompDeuxZero u(q_{v(b)})}
\\
&&&&&&
p_{b} u(v(p_{b}) q_{v(b)})
\ar@{=>}[u]_{p_{b} \CompDeuxZero u_{v(p_{b}), q_{v(b)}}}
\\
&&&&&&
p_{b} u(1_{v(b)})
\ar@{=>}[u]_{p_{b} \CompDeuxZero u(\tau_{b})}
\ar@{=>}[lllllluu]^{p_{b} \CompDeuxZero u_{v(b)}}
}
$$
est commutatif. \\
\end{itemize}

\begin{itemize}
\item[(ACL 10)]
Pour tout objet $a$ de $\mathdeuxcat{A}$, le diagramme
$$
\xymatrix{
v(1_{u(a)}) q_{a}
&&&
v(p_{u(a)} u(q_{a})) q_{a}
\ar@{=>}[lll]_{v(\sigma_{a}) \CompDeuxZero q_{a}}
&&&
v(p_{u(a)}) v(u(q_{a})) q_{a}
\ar@{=>}[lll]_{v_{p_{u(a)}, u(q_{a})} \CompDeuxZero q_{a}}
\\
&&&&&&
v(p_{u(a)}) q_{v(u(a))} q_{a}
\ar@{=>}[u]_{v(p_{u(a)}) \CompDeuxZero q_{q_{a}}}
\\
&&&&&&
q_{a}
\ar@{=>}[u]_{\tau_{u(a)} \CompDeuxZero q_{a}}
\ar@{=>}[lllllluu]^{v_{u(a)} \CompDeuxZero q_{a}}
}
$$
est commutatif. 
\end{itemize}

On dira que le couple $(u,v)$ constitue une adjonction colax-lax, $u$ étant l'\emph{adjoint à gauche colax de $v$} et $v$ l'\emph{adjoint à droite lax de $u$}. 
\end{df}

\begin{rem}
Les propositions suivantes sont équivalentes.
\begin{itemize}
\item[$(i)$]
Le couple $(u,v)$ forme une adjonction lax-colax.
\item[$(ii)$]
Le couple $(\DeuxFoncUnOp{v}, \DeuxFoncUnOp{u})$ forme une adjonction colax-lax. 
\item[$(iii)$]
Le couple $(\DeuxFoncDeuxOp{u}, \DeuxFoncDeuxOp{v})$ forme une adjonction colax-lax. 
\item[$(iv)$]
Le couple $(\DeuxFoncToutOp{v}, \DeuxFoncToutOp{u})$ forme une adjonction lax-colax.
\end{itemize}

En particulier, un adjoint à gauche colax est un préadjoint à gauche colax, un adjoint à droite lax est un préadjoint à droite lax et un adjoint à droite colax est un préadjoint à droite colax, ces affirmations résultant immédiatement, par dualité, de la proposition \ref{AdjointPreadjoint}. 
\end{rem}

\section{Préfibrations}\label{SectionPrefibrations}

\begin{paragr}\label{RappelsPrefibrations}
On rappelle que la \emph{fibre} d'un foncteur $u : A \to B$ au-dessus d'un objet $b$ de $B$ est la catégorie $A_{b}$ dont les objets sont les objets $a$ de $A$ vérifiant $u(a) = b$ et dont les morphismes de $a$ vers $a'$ sont les morphismes $f : a \to a'$ de $A$ vérifiant $u(f) = 1_{b}$. 

Il est classique — mais ce n'est pas la définition généralement choisie — qu'un foncteur $u : A \to B$ est une \emph{préfibration} si et seulement si, pour tout objet $b$ de $B$, le foncteur canonique
$$
\begin{aligned}
j_{b}\index[not]{jb@$j_{b}$} : A_{b} &\to b \backslash A
\\
a &\mapsto (a, 1_{b})
\\
f &\mapsto f
\end{aligned}
$$
est un adjoint à gauche. Les \emph{préopfibrations} (parfois appelées « précofibrations ») se caractérisent de façon duale. On gardera ces rappels à l'esprit lors de la lecture des généralisations \deux{}catégoriques de ces notions que nous proposons dans la présente section.
\end{paragr}

\begin{df}\label{DefFibre}
Soient $u : \mathdeuxcat{A} \to \mathdeuxcat{B}$ un \DeuxFoncteurStrict{} et $b$ un objet de $\mathdeuxcat{B}$. On appelle \emph{fibre\index{fibre (au-dessus d'un objet) d'un morphisme de $\DeuxCat${}} de} $u$ \emph{au-dessus de} $b$ la \deux{}catégorie, que l'on notera $\Fibre{\mathdeuxcat{A}}{u}{b}$, dont les objets sont les objets $a$ de $\mathdeuxcat{A}$ tels que $u(a) = b$, dont les \un{}cellules de $a$ vers $a'$ sont les \un{}cellules $f $ de $a$ vers $a'$ dans $\mathdeuxcat{A}$ telles que $u(f) = 1_{b}$, et dont les \deux{}cellules de $f$ vers $f'$ sont les \deux{}cellules $\alpha$ de $f$ vers $f'$ dans $\mathdeuxcat{A}$ telles que $u(\alpha) = 1_{1_{b}}$, les diverses compositions et unités s'obtenant à partir de celles de $\mathdeuxcat{A}$ de façon « évidente ».
\end{df} 

\begin{rem}
Cette définition ne ferait pas sens telle quelle dans le cas d'un \DeuxFoncteurLax{}, qui n'envoie pas nécessairement les identités des objets sur l'identité de leur image. 
\end{rem}

\begin{lemme}\label{FibresCoOp}
Pour tout \DeuxFoncteurStrict{} $u : \mathdeuxcat{A} \to \mathdeuxcat{B}$ et tout objet $b$ de $\mathdeuxcat{B}$, la \deux{}catégorie $\Fibre{(\DeuxCatUnOp{\mathdeuxcat{A}})}{\DeuxFoncUnOp{u}}{b}$ (\emph{resp.} $\Fibre{(\DeuxCatDeuxOp{\mathdeuxcat{A}})}{\DeuxFoncDeuxOp{u}}{b}$, \emph{resp.} $\Fibre{(\DeuxCatToutOp{\mathdeuxcat{A}})}{\DeuxFoncToutOp{u}}{b}$) s'identifie canoniquement à $\DeuxCatUnOp{(\Fibre{\mathdeuxcat{A}}{u}{b})}$ (\emph{resp.} $\DeuxCatDeuxOp{(\Fibre{\mathdeuxcat{A}}{u}{b})}$, \emph{resp.} $\DeuxCatToutOp{(\Fibre{\mathdeuxcat{A}}{u}{b})}$).
\end{lemme}

\begin{proof}
La vérification est immédiate. 
\end{proof}

\begin{df}\label{DefPrefibration}
On dira qu'un \DeuxFoncteurStrict{} $u : \mathdeuxcat{A} \to \mathdeuxcat{B}$ est une \emph{préfibration}\index{préfibration (dans $\DeuxCat$)} si, pour tout objet $b$ de $\mathdeuxcat{B}$, le \DeuxFoncteurStrict{} canonique 
$$
\begin{aligned}
J_{b}\index[not]{Jb@$J_{b}$} : \Fibre{\mathdeuxcat{A}}{u}{b} &\to \OpTrancheCoLax{\mathdeuxcat{A}}{u}{b}
\\
a &\mapsto (a, 1_{b})
\\
f &\mapsto (f, 1_{1_{b}})
\\
\alpha &\mapsto \alpha
\end{aligned}
$$
est un préadjoint à gauche lax.

On dira qu'un  \DeuxFoncteurStrict{} $u : \mathdeuxcat{A} \to \mathdeuxcat{B}$ est une \emph{préopfibration}\index{préopfibration (dans $\DeuxCat$)} si $\DeuxFoncUnOp{u}$ est une préfibration, autrement dit si, pour tout objet $b$ de $\mathdeuxcat{B}$, le morphisme canonique $\Fibre{\mathdeuxcat{A}}{u}{b} \to \TrancheCoLax{\mathdeuxcat{A}}{u}{b}$ est un préadjoint à droite lax. 

On dira qu'un  \DeuxFoncteurStrict{} $u : \mathdeuxcat{A} \to \mathdeuxcat{B}$ est une \emph{précofibration}\index{précofibration (dans $\DeuxCat$)} si $\DeuxFoncDeuxOp{u}$ est une préfibration, autrement dit si, pour tout objet $b$ de $\mathdeuxcat{B}$, le morphisme canonique $\Fibre{\mathdeuxcat{A}}{u}{b} \to \OpTrancheLax{\mathdeuxcat{A}}{u}{b}$ est un préadjoint à gauche colax. 

On dira qu'un  \DeuxFoncteurStrict{} $u : \mathdeuxcat{A} \to \mathdeuxcat{B}$ est une \emph{précoopfibration}\index{précoopfibration (dans $\DeuxCat$)} si $\DeuxFoncToutOp{u}$ est une préfibration, autrement dit si, pour tout objet $b$ de $\mathdeuxcat{B}$, le morphisme canonique $\Fibre{\mathdeuxcat{A}}{u}{b} \to \TrancheLax{\mathdeuxcat{A}}{u}{b}$ est un préadjoint à droite colax.   
\end{df}

\begin{paragr}
On dira qu'une \deux{}catégorie $\mathdeuxcat{A}$ est \emph{préfibrée}\index{préfibrée (\deux{}catégorie)} (\emph{resp.} \emph{préopfibrée}\index{préopfibrée (\deux{}catégorie)}, \emph{resp.} \emph{précofibrée}\index{précofibrée (\deux{}catégorie)}, \emph{resp.} \emph{précoopfibrée}\index{précoopfibrée (\deux{}catégorie)}) sur $\mathdeuxcat{B}$ s'il existe une préfibration (\emph{resp.} une préopfibration, \emph{resp.} une précofibration, \emph{resp.} une précoopfibration) de $\mathdeuxcat{A}$ vers $\mathdeuxcat{B}$. 
\end{paragr}

\begin{exemple}\label{ProjectionPrefibration}
Toute projection est une préfibration, une préopfibration, une précofibration et une précoopfibration, comme un calcul sans aucune difficulté permet de le vérifier. C'est du reste un cas particulier de résultats que nous établirons dans la section \ref{SectionIntegration} (voir la remarque \ref{ProjectionPrefibrationPreuve}).
\end{exemple}

\begin{exemple}\label{ProjectionCommaPrefibration}
Soient $u : \mathdeuxcat{A} \to \mathdeuxcat{C}$ un \DeuxFoncteurLax{} normalisé et $v : \mathdeuxcat{B} \to \mathdeuxcat{C}$ un \DeuxFoncteurCoLax{}. Alors, la projection canonique $[u,v] \to \mathdeuxcat{A}$ est une préfibration ; nous n'utiliserons pas ce fait, que le lecteur pourra vérifier par lui-même. Pour un résultat du même genre, voir la proposition \ref{ProjectionIntegralePrefibration}. 
\end{exemple}

\begin{rem}\label{PrefibrationRelevement}
Soient $u : \mathdeuxcat{A} \to \mathdeuxcat{B}$ un \DeuxFoncteurStrict{}, $b$ un objet de $\mathdeuxcat{B}$, $J_{b} : \Fibre{\mathdeuxcat{A}}{u}{b} \to \OpTrancheCoLax{\mathdeuxcat{A}}{u}{b}$ le \DeuxFoncteurStrict{} canonique défini dans l'énoncé de la définition \ref{DefPrefibration} et $(a, p : b \to u(a))$ un objet de la \deux{}catégorie $\OpTrancheCoLax{\mathdeuxcat{A}}{u}{b}$. Décrivons partiellement la structure de la \deux{}catégorie $\TrancheLax{\Fibre{\mathdeuxcat{A}}{u}{b}}{J_{b}}{(a,p)}$. Ses objets sont les triplets 
$$
(a', (f' : a' \to a, \alpha' : u(f') \Rightarrow p))
$$
où $a'$ est un objet de $\mathdeuxcat{A}$ tel que $u(a') = b$, $f'$ une \un{}cellule de $\mathdeuxcat{A}$ et $\alpha'$ une \deux{}cellule de $\mathdeuxcat{B}$. Étant donné deux objets $(a', (f', \alpha'))$ et $(a'', (f'', \alpha''))$ de $\TrancheLax{\Fibre{\mathdeuxcat{A}}{u}{b}}{J_{b}}{(a,p)}$, les \un{}cellules du premier vers le second sont données par les couples
$$
(g : a' \to a'', \beta : f' \Rightarrow f''g)
$$
où $g$ est une \un{}cellule de $\mathdeuxcat{A}$ telle que $u(g) = 1_{b}$ et $\beta$ une \deux{}cellule de $\mathdeuxcat{A}$ telle que $\alpha'' \CompDeuxUn u(\beta) = \alpha'$. En gardant ces notations, si $(g' : a' \to a'', \beta' : f' \Rightarrow f''g')$ est une \un{}cellule de $(a', (f', \alpha'))$ vers $(a'', (f'', \alpha''))$ (on a donc $u(g') = 1_{b}$ et $\alpha'' \CompDeuxUn u(\beta') = \alpha'$), les \deux{}cellules de $(g, \beta)$ vers $(g', \beta')$ sont données par les \deux{}cellules
$$
\gamma : g \Rightarrow g'
$$
de $\mathdeuxcat{A}$ telles que $u(\gamma) = 1_{1_{b}}$ et $(f'' \CompDeuxZero \gamma) \CompDeuxUn \beta = \beta'$. 

Ainsi, $u : \mathdeuxcat{A} \to \mathdeuxcat{B}$ est une préfibration si et seulement si, pour tout objet $b$ de $\mathdeuxcat{B}$, pour tout objet $a$ de $\mathdeuxcat{A}$, pour toute \un{}cellule $p : b \to u(a)$ de $\mathdeuxcat{B}$, il existe un objet $a''$ de $\mathdeuxcat{A}$ au-dessus de $b$ (c'est-à-dire vérifiant $u(a'') = b$), une \un{}cellule $f'' : a'' \to a$ de $\mathdeuxcat{A}$ et une \deux{}cellule $\alpha'' : u(f'') \Rightarrow p$ de $\mathdeuxcat{B}$ tels que, pour tout tel triplet $(a', f' : a' \to a, \alpha' : u(f') \Rightarrow p)$ tel que $u(a') = b$, il existe un couple $(g : a' \to a'', \beta : f' \Rightarrow f''g)$ tel que $u(g) = 1_{b}$ et $\alpha'' \CompDeuxUn u(\beta) = \alpha'$ tel que, pour tout tel couple $(g' : a' \to a'', \beta' : f' \Rightarrow f''g')$ tel que $u(g') = 1_{b}$ et $\alpha'' \CompDeuxUn u(\beta') = \alpha'$, il existe une unique \deux{}cellule $\gamma : g \Rightarrow g'$ de $\mathdeuxcat{A}$ telle que $u(\gamma) = 1_{1_{b}}$ et $(f'' \CompDeuxZero \gamma) \beta = \beta'$. (En particulier, on a $u(\beta) = u(\beta')$.)
\end{rem}

\begin{rem}\label{RemarquesPrefibration}
Cette notion de préfibration, suggérée par Maltsiniotis, est évidemment compatible avec la notion classique de préfibration dans $\Cat$. La classe des préfibrations de $\Cat$ n'étant pas stable par composition, il n'y a pas lieu d'espérer que la classe des préfibrations de $\DeuxCat$ le soit. Il est probable que l'on puisse\footnote{Peut-être en utilisant la notion d'adjonction lax-colax.} renforcer cette notion pour définir une classe, stable par composition, de \emph{fibrations} dans $\DeuxCat$. Cependant, du point de vue homotopique, la notion de préfibration l'emporte en importance sur celle de fibration. Il est également possible que la théorie des foncteurs préfeuilletants de Bénabou, inédite mais remarquable, se généralise dans le cadre de $\DeuxCat$ pour fournir un cadre conceptuel satisfaisant dans lequel exprimer diverses notions apparentées à celles de préfibration et de fibration. En fait, une notion de fibration dans $\DeuxCat$ se trouve déjà définie depuis longtemps dans la littérature ; voir par exemple \cite[définition 2.3]{Hermida}. Récemment, Buckley a proposé dans \cite{Buckley} de renforcer cette définition. 
\end{rem}

\section{2-foncteurs en tranches}\label{SectionMorphismesTranches}

\begin{paragr}\label{CommaFonctorielle3}
Soient $u : \mathdeuxcat{A} \to \mathdeuxcat{C}$ un \DeuxFoncteurLax{} et $\mathdeuxcat{B}$ une \deux{}catégorie. On se propose de vérifier que ces données induisent\footnote{On néglige les questions de taille, ici comme à d'autres occasions, sans forcément le mentionner, puisque les calculs font sens indépendamment d'hypothèses de petitesse, que nécessitent évidemment certaines affirmations, comme le lecteur le remarquera de lui-même.} un \DeuxFoncteurStrict{}
$$
[u,-] : Colax({\mathdeuxcat{B},\mathdeuxcat{C}}) \to \DeuxCatDeuxCat.
$$

$\bullet$ Pour tout \DeuxFoncteurCoLax{} $v : \mathdeuxcat{B} \to \mathdeuxcat{C}$, on pose 
$$
[u,-] (v) = [u,v].
$$
$\bullet$ Pour tout couple de \DeuxFoncteursCoLax{} $v$ et $w$ de $\mathdeuxcat{B}$ vers $\mathdeuxcat{C}$ et toute \DeuxTransformationCoLax{} $\sigma : v \Rightarrow w$, le \DeuxFoncteurStrict{} $[u,-](\sigma)$, que nous noterons $[u,\sigma]$, de $[u,v]$ vers $[u,w]$, est défini, de façon duale à ce que nous avons déjà fait dans le paragraphe \ref{CommaFonctorielle1}, comme suit.
\begin{itemize}
\item Pour tout objet $(a,b,r)$ de $[u,v]$, 
$$
[u,\sigma] (a,b,r) = (a,b,\sigma_{b}r). 
$$
\item Pour toute \un{}cellule $(f,g,\alpha)$ de $(a,b,r)$ vers $(a',b',r')$ dans $[u,v]$,
$$
[u,\sigma] (f,g,\alpha) = (f,g,(\sigma_{b'} \CompDeuxZero \alpha) \CompDeuxUn (\sigma_{g} \CompDeuxZero r)).
$$
\item Pour toute \deux{}cellule $(\varphi, \psi)$ de $[u,v]$, 
$$
[u,\sigma] (\varphi,\psi) = (\varphi, \psi).
$$
\end{itemize}
$\bullet$ Soient de plus une \DeuxTransformationCoLax{} $\tau : v \Rightarrow w$ et une modification $\Gamma : \sigma \Rrightarrow \tau$. La \DeuxTransformationStricte{} $[u, \Gamma] : [u,\sigma] \Rightarrow [u,\tau]$ est définie par
$$
[u,\Gamma]_{(a,b,r)} = (1_{a}, 1_{b}, (\tau_{b} r \CompDeuxZero u_{a}) \CompDeuxUn (\Gamma_{b} \CompDeuxZero r) \CompDeuxUn (w_{b} \CompDeuxZero \sigma_{b} r))
$$
pour tout objet $(a,b,r)$ de $[u,v]$. Vérifions que cela définit bien une \DeuxTransformationStricte{}. 
\begin{itemize}
\item Soit $(f,g,\alpha) : (a,b,r) \to (a',b',r')$ une \un{}cellule de $[u,v]$. Les égalités
$$
[u,\tau](f,g,\alpha) [u,\Gamma]_{(a,b,r)} = [u,\Gamma]_{(a',b',r')} [u,\sigma](f,g,\alpha) = (f,g, (\tau_{b'} \CompDeuxZero \alpha) \CompDeuxUn (\Gamma_{b'} \CompDeuxZero v(g)r) \CompDeuxUn (\sigma_{g} \CompDeuxZero r))
$$
résultent de la naturalité des \deux{}cellules structurales d'unité de $u$ et $w$, de l'égalité
$$
\tau_{g} \CompDeuxUn (w(g) \CompDeuxZero \Gamma_{b}) = (\Gamma_{b'} \CompDeuxZero v(g)) \CompDeuxUn \sigma_{g}
$$
et de la loi d'échange. 
\item Soient de plus $(h,k,\beta)$ une \un{}cellule de $(a,b,r)$ vers $(a',b',r')$ et $(\varphi, \psi)$ une \deux{}cellule de $(f,g,\alpha)$ vers $(h,k,\beta)$ dans $[u,v]$. Les égalités
$$
[u,\tau](\varphi,\psi) \CompDeuxZero [u,\Gamma]_{(a,b,r)} = (\varphi,\psi) = [u,\Gamma]_{(a',b',r')} \CompDeuxZero [u,\sigma](\varphi,\psi)
$$
sont alors immédiates. 
\end{itemize}
Ce qui précède permet bien d'affirmer que $[u,\Gamma]$ est une \DeuxTransformationStricte{} de $[u,\sigma]$ vers $[u,\tau]$. 

$\bullet$ Soient $v$ et $w$ des \DeuxFoncteursCoLax{} de $\mathdeuxcat{B}$ vers $\mathdeuxcat{C}$ et $\sigma : v \Rightarrow w$ une \DeuxTransformationCoLax{}. La vérification de l'égalité
$$
[u,1_{\sigma}] = 1_{[u,\sigma]}
$$
ne pose aucune difficulté. 

$\bullet$ Soient, sous les mêmes données, $\tau$ et $\mu$ des \DeuxTransformationsCoLax{} de $v$ vers $w$, $\Gamma$ une modification de $\sigma$ vers $\tau$ et $\Lambda$ une modification de $\tau$ vers $\mu$. Alors, pour tout objet $(a,b,r)$ de $[u,v]$, les égalités
$$
[u,\Lambda]_{(a,b,r)} [u,\Gamma]_{(a,b,r)} = (1_{a}, 1_{b}, w_{b} \CompDeuxZero (\Lambda_{b} \CompDeuxUn \Gamma_{b}) \CompDeuxZero r \CompDeuxZero u_{a}) = [u,\Lambda \CompDeuxUn \Gamma]_{(a,b,r)}
$$
résultent de la loi d'échange, de la naturalité des \deux{}cellules structurales d'unité de $u$ et $w$ et de l'égalité $(\Lambda \CompDeuxUn \Gamma)_{b} = \Lambda_{b} \CompDeuxUn \Gamma_{b}$ — cette dernière étant tautologique par définition de la composition verticale des modifications, définition que le lecteur avait sans doute déjà explicitée. Par définition toujours, cela permet d'affirmer
$$
[u,\Lambda] \CompDeuxUn [u,\Gamma] = [u, \Lambda \CompDeuxUn \Gamma].
$$  

$\bullet$ Toujours sous la donnée d'un \DeuxFoncteurCoLax{} $v : \mathdeuxcat{B} \to \mathdeuxcat{C}$, l'égalité
$$
[u,1_{v}] = 1_{[u,v]}
$$
se vérifie facilement. 

$\bullet$ Soient $v$, $w$ et $z$ des \DeuxFoncteursCoLax{} de $\mathdeuxcat{B}$ vers $\mathdeuxcat{C}$ et $\sigma : v \Rightarrow w$ et $\rho : w \Rightarrow z$ des \DeuxTransformationsCoLax{}. L'égalité $[u,\rho] [u,\sigma] = [u,\rho \CompDeuxUn \sigma]$ résulte alors des vérifications suivantes. 
\begin{itemize}
\item Pour tout objet $(a,b,r)$ de $[u,v]$,
$$
[u,\rho] [u,\sigma] (a,b,r) = (a,b,\rho_{b} \sigma_{b} r) = [u,\rho \CompDeuxUn \sigma] (a,b,r).
$$
\item Pour toute \un{}cellule $(f,g,\alpha) : (a,b,r) \to (a',b',r')$ de $[u,v]$,
$$
[u,\rho] [u,\sigma] (f,g,\alpha) = (f,g,(\rho_{b'} \sigma_{b'} \CompDeuxZero \alpha) \CompDeuxUn (\rho_{b'} \CompDeuxZero \sigma_{g} \CompDeuxZero r) \CompDeuxUn (\rho_{g} \CompDeuxZero \sigma_{b}r)) = [u,\rho \CompDeuxUn \sigma] (f,g,\alpha).
$$
\item Pour toute \deux{}cellule $(\varphi, \psi)$ de $[u,v]$, 
$$
[u,\rho] [u,\sigma] (\varphi, \psi) = (\varphi, \psi) = [u,\rho \CompDeuxUn \sigma] (\varphi, \psi).
$$
\end{itemize}
$\bullet$ Soient $v$, $w$ et $z$ des \DeuxFoncteursCoLax{} de $\mathdeuxcat{B}$ vers $\mathdeuxcat{C}$, $\sigma$ et $\mu$ des \DeuxTransformationsCoLax{} de $v$ vers $w$, $\rho$ et $\nu$ des \DeuxTransformationsCoLax{} de $w$ vers $z$, $\Gamma$ une modification de $\sigma$ vers $\mu$ et $\Lambda$ une modification de $\rho$ vers $\nu$. Alors, 
$$
[u, \Lambda \CompDeuxZero \Gamma]_{(a,b,r)} = (1_{a}, 1_{b}, (\nu_{b} \mu_{b} r \CompDeuxZero u_{a}) \CompDeuxUn (\Lambda_{b} \CompDeuxZero \Gamma_{b} \CompDeuxZero r) \CompDeuxUn (z_{b} \CompDeuxZero \rho_{b} \sigma_{b} r)) = ([u,\Lambda] \CompDeuxZero [u,\Gamma])_{(a,b,r)}.
$$
Cela résulte de la définition de la composition horizontale des \deux{}cellules dans $Colax(\mathdeuxcat{B},\mathdeuxcat{C})$ et de quelques instructives et courtes manipulations sans aucune difficulté ; nous laissons au lecteur le soin d'expliciter les détails s'il en ressent le besoin. 

La construction ci-dessus définit donc bien un \DeuxFoncteurStrict{} $[u,-] : Colax({\mathdeuxcat{B},\mathdeuxcat{C}}) \to \DeuxCatDeuxCat$ sous la donnée d'un morphisme $u : \mathdeuxcat{A} \to \mathdeuxcat{C}$ de $\DeuxCatLax$ et d'une petite \deux{}catégorie $\mathdeuxcat{B}$. 
\end{paragr}

\begin{paragr}\label{CommaFonctorielle4}
Soient maintenant $\mathdeuxcat{A}$, $\mathdeuxcat{C}$ et $\mathdeuxcat{D}$ des petites \deux{}catégories, $v$ et $w$ des \DeuxFoncteursLax{} de $\mathdeuxcat{A}$ vers $\mathdeuxcat{C}$ et $\sigma : v \Rightarrow w$ une \DeuxTransformationCoLax{}. En vertu de ce qui précède, $v$ et $w$ permettent de définir des \DeuxFoncteursStricts{} $[v,-]$ et $[w,-]$ de $Colax(\mathdeuxcat{D},\mathdeuxcat{C})$ vers $\DeuxCatDeuxCat$. On peut faire mieux : $\sigma$ induit une \DeuxTransformationStricte{} de $[w,-]$ vers $[v,-]$, dont la composante en un objet $z$ de $Colax(\mathdeuxcat{D},\mathdeuxcat{C})$, c'est-à-dire en un \DeuxFoncteurCoLax{} $z : \mathdeuxcat{D} \to \mathdeuxcat{C}$, n'est autre que le \DeuxFoncteurStrict{} $[\sigma,z] : [w,z] \to [v,z]$ dont on a déjà défini un analogue sous des données similaires dans le paragraphe \ref{CommaFonctorielle1}. Plus précisément, pour tout objet $(a,d,r)$ de $[w,z]$, $[\sigma,z](a,d,r) = (a,d,r\sigma_{a})$, pour toute \un{}cellule $(f,g,\alpha) : (a,d,r) \to (a',d',r')$ de $[w,z]$, $[\sigma,z](f,g,\alpha) = (f,g,(r' \CompDeuxZero \sigma_{f}) \CompDeuxUn (\alpha \CompDeuxZero \sigma_{a}))$ et, pour toute \deux{}cellule $(\varphi,\psi)$ de $[w,z]$, $[\sigma,z] (\varphi,\psi) = (\varphi,\psi)$. Le fait que cela définit bien une \DeuxTransformationStricte{} de $[w,-]$ vers $[v,-]$ résulte des vérifications suivantes. 

$\bullet$ Soient $y$ et $z$ des \DeuxFoncteursCoLax{} de $\mathdeuxcat{D}$ vers $\mathdeuxcat{C}$ et $\tau : y \Rightarrow z$ une \DeuxTransformationCoLax{}. On veut vérifier l'égalité $[\sigma,z] [w,\tau] = [v,\tau] [\sigma,y]$. Cela découle des calculs simples suivants. 
\begin{itemize}
\item Pour tout objet $(a,d,r)$ de $[w,y]$,
$$
[\sigma,z] [w,\tau] (a,d,r) = (a,d,\tau_{d}r\sigma_{a}) = [v,\tau] [\sigma,y] (a,d,r).
$$
\item Pour toute \un{}cellule $(f,g,\alpha) : (a,d,r) \to (a',d',r')$ dans $[w,y]$, 
$$
[\sigma,z] [w,\tau] (f,g,\alpha) = (f,g,(\tau_{d'} r' \CompDeuxZero \sigma_{f}) \CompDeuxUn (\tau_{d'} \CompDeuxZero \alpha \CompDeuxZero \sigma_{a}) \CompDeuxUn (\tau_{g} \CompDeuxZero r \sigma_{a})) = [v,\tau] [\sigma,y] (f,g,\alpha).
$$
\item Pour toute \deux{}cellule $(\varphi,\psi)$ de $[w,y]$, 
$$
[\sigma,z] [w,\tau] (\varphi,\psi) = (\varphi,\psi) = [v,\tau] [\sigma,y] (\varphi,\psi).
$$
\end{itemize} 

$\bullet$ Soient de plus $\mu : y \Rightarrow z$ une \DeuxTransformationCoLax{} et $\Gamma : \tau \Rrightarrow \mu$ une modification. L'égalité $[\sigma,z] \CompDeuxZero [w,\Gamma] = [v,\Gamma] \CompDeuxZero [\sigma,y]$ résulte des égalités, pour tout objet $(a,d,r)$ de $[w,y]$, 
\begin{align*}
([\sigma,z] \CompDeuxZero [w,\Gamma])_{(a,d,r)} &= [\sigma,z] ([w,\Gamma]_{(a,d,r)}) 
\\
&= [\sigma,z] (1_{a}, 1_{d}, (\mu_{d} r \CompDeuxZero w_{a}) \CompDeuxUn (\Gamma_{d} \CompDeuxZero r) \CompDeuxUn (z_{d} \CompDeuxZero \tau_{d} r))
\\
&= (1_{a}, 1_{d}, (\mu_{d} r \CompDeuxZero \sigma_{1_{a}}) \CompDeuxUn (\mu_{d} r \CompDeuxZero w_{a} \CompDeuxZero \sigma_{a}) \CompDeuxUn (\Gamma_{d} \CompDeuxZero r \sigma_{a}) \CompDeuxUn (z_{d} \CompDeuxZero \tau_{d} r \sigma_{a})),
\end{align*}
\begin{align*}
([v,\Gamma] \CompDeuxZero [\sigma,y])_{(a,d,r)} &= [v,\Gamma]_{[\sigma,y](a,d,r)}
\\
&= [v,\Gamma]_{(a,d,r \sigma_{a})}
\\
&= (1_{a}, 1_{d}, (\mu_{d} r \sigma_{a} \CompDeuxZero v_{a}) \CompDeuxUn (\Gamma_{d} \CompDeuxZero r \sigma_{a}) \CompDeuxUn (z_{d} \CompDeuxZero \tau_{d} r \sigma_{a}))
\end{align*}
et
$$
\sigma_{1_{a}} \CompDeuxUn (w_{a} \CompDeuxZero \sigma_{a}) = \sigma_{a} \CompDeuxZero v_{a}.
$$
\end{paragr}

\begin{paragr}\label{CommaFonctorielle5}
Soient maintenant $u : \mathdeuxcat{A} \to \mathdeuxcat{B}$ et $u' : \mathdeuxcat{B} \to \mathdeuxcat{C}$ deux \DeuxFoncteursStricts{} et $\mathdeuxcat{D}$ une \deux{}catégorie. On va voir que ces données induisent une \DeuxTransformationStricte{} du \DeuxFoncteurStrict{} $[u'u,-] : Colax(\mathdeuxcat{D},\mathdeuxcat{C}) \to \DeuxCatDeuxCat$ vers $[u',-] : Colax(\mathdeuxcat{D},\mathdeuxcat{C}) \to \DeuxCatDeuxCat$ dont la composante en le \DeuxFoncteurCoLax{} $v : \mathdeuxcat{D} \to \mathdeuxcat{C}$ est l'analogue, ici strict puisque $u$ est strict, du \DeuxFoncteurLax{} $F : [u'u,v] \to [u',v]$ que nous avons défini dans le paragraphe \ref{CommaFonctorielle2}. Par abus de notation, nous noterons toutes ces composantes $F$ dans les vérifications qui suivent. 

$\bullet$ Soient $v$ et $w$ des \DeuxFoncteursCoLax{} de $\mathdeuxcat{D}$ vers $\mathdeuxcat{C}$ et $\sigma : v \Rightarrow w$ une \DeuxTransformationCoLax{}. L'égalité $[u',\sigma] F = F [u'u,\sigma]$ résulte des vérifications directes suivantes.
\begin{itemize}
\item Pour tout objet $(a,d,r)$ de $[u'u,v]$,
$$
[u',\sigma] F (a,d,r) = (u(a),d,\sigma_{d}r) = F [u'u,\sigma] (a,d,r).
$$
\item Pour toute \un{}cellule $(f,g,\alpha) : (a,d,r) \to (a',d',r')$ de $[u'u,v]$,
$$
[u',\sigma] F (f,g,\alpha) = (u(f),g,(\sigma_{d'} \CompDeuxZero \alpha) \CompDeuxUn (\sigma_{g} \CompDeuxZero r)) = F [u'u,\sigma] (f,g,\alpha).
$$
\item Pour toute \deux{}cellule $(\varphi,\psi)$ de $[u'u,v]$,
$$
[u',\sigma] F (\varphi,\psi) = (u(\varphi),\psi) = F [u'u,\sigma] (\varphi,\psi). 
$$
\end{itemize} 

$\bullet$ Soient, sous les mêmes hypothèses, une \DeuxTransformationCoLax{} $\tau : v \Rightarrow w$ et une modification $\Gamma : \sigma \Rrightarrow \tau$. L'égalité
$$
[u',\Gamma] \CompDeuxZero F = F \CompDeuxZero [u'u,\Gamma]
$$
résulte de ce que, pour tout objet $(a,d,r)$ de $[u'u,v]$,
$$
([u',\Gamma] \CompDeuxZero F)_{(a,d,r)} = (1_{u(a)}, 1_{d}, \Gamma_{d} \CompDeuxZero r) = (F \CompDeuxZero [u'u,\Gamma])_{(a,d,r)}.
$$
\end{paragr} 

\begin{paragr}\label{DefFoncteursTranches}
Soit $u : \mathdeuxcat{A} \to \mathdeuxcat{B}$ un morphisme de $\DeuxCatLax$. En identifiant $\mathdeuxcat{B}$ à la \deux{}catégorie dont les objets sont les \DeuxFoncteursStricts{} de $\DeuxCatPonct$ vers $\mathdeuxcat{B}$, les \un{}cellules les \DeuxTransformationsStrictes{} entre iceux et les \deux{}cellules les modifications entre icelles, ce qui précède permet d'associer à $u$ un \DeuxFoncteurStrict{} $\DeuxFoncteurTranche{u}\index[not]{Tu@$\DeuxFoncteurTranche{u}$} : \mathdeuxcat{B} \to \DeuxCatDeuxCat$ comme suit. Pour des raisons de lisibilité, on pourra noter $\DeuxFoncteurTrancheArg{u}{b}$\index[not]{Tub@$\DeuxFoncteurTrancheArg{u}{b}$}, $\DeuxFoncteurTrancheArg{u}{f}$\index[not]{Tuf@$\DeuxFoncteurTrancheArg{u}{f}$} et $\DeuxFoncteurTrancheArg{u}{\gamma}$\index[not]{Tug0@$\DeuxFoncteurTrancheArg{u}{\gamma}$} plutôt que $\DeuxFoncteurTranche{u}(b)$, $\DeuxFoncteurTranche{u} (f)$ et $\DeuxFoncteurTranche{u} (\gamma)$ respectivement. 

Pour tout objet $b$ de $\mathdeuxcat{B}$, 
$$
\DeuxFoncteurTrancheArg{u}{b} = \TrancheLax{\mathdeuxcat{A}}{u}{b}. 
$$

Pour toute \un{}cellule $f : b \to b'$ de $\mathdeuxcat{B}$, le \DeuxFoncteurStrict{} $\DeuxFoncteurTrancheArg{u}{f} : \DeuxFoncteurTrancheArg{u}{b} \to \DeuxFoncteurTrancheArg{u}{b'}$ est donné par les formules
$$
\begin{aligned}
\TrancheLax{\mathdeuxcat{A}}{u}{b} &\to \TrancheLax{\mathdeuxcat{A}}{u}{b'}
\\
(a, p : u(a) \to b) &\mapsto (a, fp : u(a) \to b')
\\
(g : a \to a', \alpha : p \Rightarrow p' u(g)) &\mapsto (g : a \to a', f \CompDeuxZero \alpha : fp \Rightarrow f p' u(g))
\\
\beta &\mapsto \beta.
\end{aligned}
$$

Soient $f$ et $f'$ deux \un{}cellules de $b$ vers $b'$ et $\gamma : f \Rightarrow f'$ une \deux{}cellule dans $\mathdeuxcat{B}$. La \DeuxTransformationStricte{} $\DeuxFoncteurTrancheArg{u}{\gamma} : \DeuxFoncteurTrancheArg{u}{f} \Rightarrow \DeuxFoncteurTrancheArg{u}{f'}$ est définie par
$$
(\DeuxFoncteurTrancheArg{u}{\gamma})_{(a,p)} = (1_{a}, \gamma \CompDeuxZero p \CompDeuxZero u_{a})
$$
pour tout objet $(a, p)$ de $\TrancheLax{\mathdeuxcat{A}}{u}{b}$. 

\end{paragr}

\begin{paragr}\label{IntegrationTransformation}
Soit
$$
\xymatrix{
\mathdeuxcat{A}  
\ar[rr]^{u} 
\ar[dr] _{w}
&&\mathdeuxcat{B}
\ar[dl]^{v}
\dtwocell<\omit>{<7.5>\sigma}
\\ 
& 
\mathdeuxcat{C}
& 
}
$$
un diagramme de \DeuxFoncteursStricts{} commutatif à l'\DeuxTransformationCoLax{} $\sigma : vu \Rightarrow w$ près seulement. Les considérations qui précèdent nous fournissent des \DeuxFoncteursStricts{} $\DeuxFoncteurTranche{w}$ et $\DeuxFoncteurTranche{v}$ de $\mathdeuxcat{C}$ vers $\DeuxCatDeuxCat$ ainsi qu'une \DeuxTransformationStricte{}, que l'on notera $\DeuxFoncteurTranche{\sigma}$\index[not]{TS0@$\DeuxFoncteurTranche{\sigma}$}, de $\DeuxFoncteurTranche{w}$ vers $\DeuxFoncteurTranche{v}$, définie par 
$$
(\DeuxFoncteurTranche{\sigma})_{c} = \DeuxFoncTrancheLaxCoq{u}{\sigma}{c}
$$
pour tout objet $c$ de $\mathdeuxcat{C}$. On notera $\DeuxFoncteurTrancheArg{\sigma}{c} = (\DeuxFoncteurTranche{\sigma})_{c}$. 

\end{paragr}

\section{Intégration\index{intégration} des 2-foncteurs}\label{SectionIntegration}

\begin{paragr}\label{RappelsIntegration}
Soient $A$ une petite catégorie et $F : A \to \Cat$ un foncteur. On rappelle — voir par exemple \cite[paragraphe 2.2.1]{THG}, dont nous reprenons la présentation — une construction « de Grothendieck »\footnote{Cette construction se trouve souvent appelée « construction de Grothendieck », bien que Grothendieck soit à l'origine d'un nombre assez considérable de constructions. En fait, s'il a notamment montré que cette construction fournissait une catégorie fibrée, la construction proprement dite remonte au moins aux travaux catégoriques de Yoneda.} d'une catégorie $\DeuxInt{A} F$ opfibrée sur $A$. Les objets en sont les couples $(a, x)$ avec $a$ un objet de $A$ et $x$ un objet de $F(a)$. Les morphismes de $(a, x)$ vers $(a', x')$ sont les couples $(f, r)$ avec $f : a \to a'$ un morphisme de $A$ et $r : F(f)(x) \to x'$ un morphisme de $F(a')$. La composition des morphismes est définie par la formule
$$
(f', r') (f, r) = (f'f, r' F(f')(r)).
$$
La projection
$$
\begin{aligned}
\DeuxInt{A} F &\to A
\\
(a,x) &\mapsto a
\\
(f,r) &\mapsto f
\end{aligned}
$$
est une opfibration (donc en particulier une préopfibration) dont la fibre au-dessus de l'objet $a$ de $A$ s'identifie canoniquement à $F(a)$. 

Dans \cite[2.2.6]{THG} — par exemple — se trouve également présentée la construction d'une catégorie fibrée — qui se trouve être, historiquement, celle étudiée par Grothendieck — sur une petite catégorie $A$ pour tout foncteur $F : \DeuxCatUnOp{A} \to \Cat$. Nous n'entrons pas dans les détails puisque nous allons, dans ce qui suit, étudier de façon méthodique les diverses dualités des constructions analogues dans le cadre plus général des \deux{}catégories. 
\end{paragr}

\begin{paragr}\label{ParagrapheIntegration}
Soit $F : \mathdeuxcat{A} \to \DeuxCatDeuxCat$ un \DeuxFoncteurStrict{}. Ainsi, $F(a)$ est une \deux{}catégorie pour tout objet $a$ de $\mathdeuxcat{A}$, $F(f)$ est un \DeuxFoncteurStrict{} pour toute \un{}cellule $f$ de $\mathdeuxcat{A}$, et $F(\gamma)$ est une \DeuxTransformationStricte{} pour toute \deux{}cellule $\gamma$ de $\mathdeuxcat{A}$. Ces données nous permettent de définir une \deux{}catégorie $\DeuxInt{\mathdeuxcat{A}}F$\index[not]{0zAF@$\DeuxInt{\mathdeuxcat{A}}F$} comme suit. 

Les objets de $\DeuxInt{\mathdeuxcat{A}}F$ sont les couples $(a, x)$, avec $a$ un objet de $\mathdeuxcat{A}$ et $x$ un objet de la \deux{}catégorie $F(a)$. 

Les \un{}cellules de $(a, x)$ vers $(a', x')$ dans $\DeuxInt{\mathdeuxcat{A}}F$ sont les couples
$$
(f : a \to a', r : F(f)(x) \to x')
$$
dans lesquels $f$ est une \un{}cellule de $a$ vers $a'$ dans $\mathdeuxcat{A}$ et $r$ une \un{}cellule de $F(f)(x)$ vers $x'$ dans $F(a')$. 

Les \deux{}cellules de $(f : a \to a', r : F(f)(x) \to x')$ vers $(g : a \to a', s : F(g)(x) \to x')$ dans $\DeuxInt{\mathdeuxcat{A}}F$ sont les
$$
(\gamma : f \Rightarrow g, \varphi : r \Rightarrow s (F(\gamma))_{x})
$$
dans lesquels $\gamma$ est une \deux{}cellule de $f$ vers $g$ dans $\mathdeuxcat{A}$ et $\varphi$ une \deux{}cellule de $r$ vers $s (F(\gamma))_{x}$ dans $F(a')$.

L'identité de l'objet $(a,x)$ est donnée par 
$$
1_{(a, x)} = (1_{a}, 1_{x}).
$$

Soient $(a, x)$, $(a', x')$ et $(a'', x'')$ trois objets de $\DeuxInt{\mathdeuxcat{A}}F$, $(f : a \to a', r : F(f)(x) \to x')$ une \un{}cellule de $(a, x)$ vers $(a', x')$, et $(f' : a' \to a'', r' : F(f')(x') \to x'')$ une \un{}cellule de $(a', x')$ vers $(a'', x'')$. La composée de ces \un{}cellules est donnée par
$$
(f', r') (f, r) = (f'f, r' F(f')(r)).
$$ 

Soit $(f : a \to a', r : F(f)(x) \to x')$ une \un{}cellule de $(a, x)$ vers $(a', x')$. Son identité est donnée par
$$
1_{(f, r)} = (1_{f}, 1_{r}).
$$

Soient $(f : a \to a', r : F(f)(x) \to x')$, $(g : a \to a', s : F(g)(x) \to x')$ et $(h : a \to a', t : F(h)(x) \to x')$ trois \un{}cellules de $(a,x)$ vers $(a', x')$, $(\gamma : f \Rightarrow g, \varphi : r \Rightarrow s (F(\gamma))_{x})$ une \deux{}cellule de $(f, r)$ vers $(g, s)$ et $(\delta : g \Rightarrow h, \psi : s \Rightarrow t (F(\delta))_{x})$ une \deux{}cellule de $(g, s)$ vers $(h, t)$. La composée de ces \deux{}cellules est donnée par
$$
(\delta, \psi) \CompDeuxUn (\gamma, \varphi) = (\delta \CompDeuxUn \gamma, (\psi \CompDeuxZero (F(\gamma))_{x}) \CompDeuxUn \varphi).
$$

Soient $(f : a \to a', r : F(f)(x) \to x')$ et $(g : a \to a', s : F(g)(x) \to x')$ deux \un{}cellules de $(a,x)$ vers $(a',x')$, $(f' : a' \to a'', r' : F(f')(x') \to x'')$ et $(g' : a' \to a'', s' : F(g')(x') \to x'')$ deux \un{}cellules de $(a',x')$ vers $(a'',x'')$,  $(\gamma : f \Rightarrow g, \varphi : r \Rightarrow s (F(\gamma))_{x})$ une \deux{}cellule de $(f, r)$ vers $(g, s)$ et  $(\gamma' : f' \Rightarrow g', \varphi' : r' \Rightarrow s' (F(\gamma'))_{x'})$ une \deux{}cellule de $(f', r')$ vers $(g', s')$. Leur composée est donnée par
$$
(\gamma', \varphi') \CompDeuxZero (\gamma, \varphi) = (\gamma' \CompDeuxZero \gamma, \varphi' \CompDeuxZero F(f') (\varphi)).
$$

On peut vérifier sans aucune difficulté que cela définit bien une \deux{}catégorie. La remarque \ref{RemarqueCommaGrothendieck} permet toutefois de faire l'économie des calculs dans ce nouveau cas particulier de « structure comma ».
\end{paragr}

\begin{rem}\label{RemarqueCommaGrothendieck}
La construction présentée en \ref{ParagrapheIntegration} peut se voir comme un cas particulier de \emph{\trois{}catégorie comma}\index{3CategorieComma@\trois{}catégorie comma}, généralisation de la notion de \deux{}catégorie comma. La construction qui nous intéresse n'est pas un cas particulier de \deux{}catégorie comma car elle utilise la structure \trois{}catégorique de $\DeuxCat$. La définition pertinente se trouve chez Gray \cite[I, 2, 7, p. 32]{Gray}. La construction de Grothendieck que nous utilisons est alors la \trois{}catégorie comma $[e,F]$ du morphisme constant de valeur $\DeuxCatPonct$ et du morphisme $F$. Les sources de ces morphismes étant en fait des \deux{}catégories (ou, de façon équivalente, des \trois{}catégories dont toutes les \trois{}cellules sont des identités), la \trois{}catégorie comma $[\DeuxCatPonct{},F] = \DeuxInt{\mathdeuxcat{A}}F$ est en fait une \deux{}catégorie. Nous avons appris ce point de vue par Steve Lack.
\end{rem}

\begin{rem}
Pour d'autres travaux portant sur la « construction de Grothendieck » dans un cadre bicatégorique, on pourra se reporter à \cite{Bakovic} ou \cite{CCH}. 
\end{rem}

\begin{exemple}\label{ProjectionExempleIntegrale}
Pour toutes \deux{}catégories $\mathdeuxcat{A}$ et $\mathdeuxcat{B}$, considérons le \DeuxFoncteurStrict{} constant
$$
\begin{aligned}
C_{\mathdeuxcat{B}} : \mathdeuxcat{A} &\to \DeuxCatDeuxCat{}
\\
a &\mapsto \mathdeuxcat{B}
\\
f &\mapsto 1_{\mathdeuxcat{B}}
\\
\varphi &\mapsto 1_{1_{\mathdeuxcat{B}}}.
\end{aligned}
$$
On vérifie sans aucune difficulté que la \deux{}catégorie $\DeuxInt{\mathdeuxcat{A}} C_{\mathdeuxcat{B}}$ s'identifie canoniquement à $\mathdeuxcat{A} \times \mathdeuxcat{B}$.
\end{exemple}


\begin{paragr}\label{AZERTYUIOP}
Soient $\mathdeuxcat{A}$ une \deux{}catégorie et $F : \mathdeuxcat{A} \to \DeuxCatDeuxCat$ un \DeuxFoncteurStrict{}. Les formules
$$
\begin{aligned}
\DeuxInt{\mathdeuxcat{A}}F &\to \mathdeuxcat{A}
\\
(a,x) &\mapsto a
\\
(f,r) &\mapsto f
\\
(\gamma, \varphi) &\mapsto \gamma
\end{aligned}
$$ 
définissent un \DeuxFoncteurStrict{} $P_{F}\index[not]{PF@$P_{F}$} : \DeuxInt{\mathdeuxcat{A}}F \to \mathdeuxcat{A}$, que l'on appellera parfois « projection canonique de l'intégrale de $F$ »\index{projection canonique (d'une intégrale)}\footnote{Les vérifications de fonctorialité sont immédiates ; c'est du reste un résultat formel provenant du fait que la construction de Grothendieck est un cas particulier de structure comma.}.

\end{paragr}

\begin{prop}\label{ProjectionIntegralePrefibration}
Pour toute \deux{}catégorie $\mathdeuxcat{A}$ et tout \DeuxFoncteurStrict{}  $F : \mathdeuxcat{A} \to \DeuxCatDeuxCat$, la projection canonique
$$
P_{F} : \DeuxInt{\mathdeuxcat{A}}F \to \mathdeuxcat{A}
$$ 
est une précoopfibration.
\end{prop}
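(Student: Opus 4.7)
The plan is to verify the defining universal property directly by exhibiting, for each 1-cell of $\mathdeuxcat{A}$ with chosen source lift to $\DeuxInt{\mathdeuxcat{A}}F$, an explicit ``cocartesian lifting''. First, by the lemma \ref{FibresCoOp} and the duality formulas of paragraph \ref{DefFoncInduits}, the condition ``$P_F$ is a pr�coopfibration'' unfolds to the following: for every object $a$ of $\mathdeuxcat{A}$, the canonical \DeuxFoncteurStrict{} $J_a : \Fibre{\DeuxInt{\mathdeuxcat{A}}F}{P_F}{a} \to \TrancheLax{\DeuxInt{\mathdeuxcat{A}}F}{P_F}{a}$ must be a pr�adjoint � droite colax, i.e.\ for each object $((a'', x''), p : a'' \to a)$ of the lax slice, the 2-category $\OpTrancheCoLax{\Fibre{\DeuxInt{\mathdeuxcat{A}}F}{P_F}{a}}{J_a}{((a'', x''), p)}$ must op-admit an object admitting a final object. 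Using the strictness of $F$ (in particular $F(1_a) = 1_{F(a)}$ and $F(1_{1_a}) = 1_{1_{F(a)}}$), the fiber $\Fibre{\DeuxInt{\mathdeuxcat{A}}F}{P_F}{a}$ is canonically identified with $F(a)$.

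The candidate I propose is $(F(p)(x''), ((p, 1_{F(p)(x'')}), 1_p))$, where $F(p) : F(a'') \to F(a)$ is the strict transport functor; the second component defines a 1-cell in the lax slice from $((a'', x''), p)$ to $J_a(F(p)(x''))$, so this is indeed an object of the op-tranche-colax. To check that it op-admits a final object, I fix another object $(y, ((f, r), \beta : p \Rightarrow f))$ of this 2-category. Using the composition formulas of paragraph \ref{ParagrapheIntegration}, one computes $J_a(h) \circ ((p, 1_{F(p)(x'')}), 1_p) = ((p, h), 1_p)$ for any $h : F(p)(x'') \to y$ in $F(a)$; unwinding the definitions of 1-cells and 2-cells of the lax slice and of the op-tranche-colax, a 1-cell from the candidate to $(y, ((f, r), \beta))$ reduces to a pair $(h, \varphi)$ with $h : F(p)(x'') \to y$ a 1-cell and $\varphi : h \Rightarrow r \circ (F(\beta))_{x''}$ a 2-cell in $F(a)$.

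It then suffices to take $h_0 = r \circ (F(\beta))_{x''}$ with $\varphi_0 = 1_{h_0}$ as final object: for any other $(h, \varphi)$, a 2-cell in the op-tranche-colax to $(h_0, 1_{h_0})$ is, after analogous unwinding and using that the horizontal composite $J_a(\zeta) \CompDeuxZero ((p, 1_{F(p)(x'')}), 1_p)$ reduces under strictness to the 2-cell $(1_p, \zeta)$ in $\DeuxInt{\mathdeuxcat{A}}F$, the same datum as a 2-cell $\zeta : h \Rightarrow h_0$ of $F(a)$ satisfying $\zeta = \varphi$, so uniqueness is automatic. The main obstacle is not conceptual but organisational: one must juggle simultaneously the four layers of structure (integral, fiber, lax slice, op-tranche-colax of $J_a$), track the direction of the various 2-cells in the definitions of \emph{lax}, \emph{colax} and \emph{op} slices, and verify that the structural 2-cells of $P_F$ and of $F(p)$, being identities, make all the pentagons and triangles collapse. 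Once this bookkeeping is done, every verification reduces to an instance of the interchange law applied to the formulas of paragraph \ref{ParagrapheIntegration}.
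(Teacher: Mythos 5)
Your proposal is correct and is essentially the paper's own proof: it unfolds the condition into "$J_{a}$ is a pr\'eadjoint \`a droite colax", identifies the fiber with $F(a)$, distinguishes the same object $\bigl(F(p)(x''),((p,1_{F(p)(x'')}),1_{p})\bigr)$ of the 2-cat\'egorie $\OpTrancheCoLax{\Fibre{(\DeuxInt{\mathdeuxcat{A}}F)}{P_{F}}{a}}{J_{a}}{((a'',x''),p)}$, and shows that the hom-category from it to an arbitrary object has the final object $\bigl(r\,(F(\beta))_{x''},\,(\beta,1)\bigr)$, uniqueness of the connecting 2-cell being forced exactly as in the paper. The key computations you sketch (the composite $J_{a}(h)\circ((p,1),1_{p})=((p,h),1_{p})$ and the reduction of 1-cells to pairs $(h,\varphi:h\Rightarrow r\,(F(\beta))_{x''})$) coincide with the paper's verifications.
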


\begin{proof}
Décrivons d'abord les objets, \un{}cellules et \deux{}cellules de la \deux{}catégorie $\TrancheLax{\left(\DeuxInt{\mathdeuxcat{A}}F\right)}{P_{F}}{a}$. (On laisse au lecteur le soin d'expliciter les détails si besoin est.)

Les objets en sont les $((a', x'), p : a' \to a)$.

Les \un{}cellules de 
$$
((a', x'), p : a' \to a)
$$ 
vers 
$$
((a'', x''), p' : a'' \to a)
$$ 
sont les 
$$
((f : a' \to a'', r : F(f)(x') \to x''), \sigma : p \Rightarrow p'f).
$$

Les \deux{}cellules de 
$$
((f : a' \to a'', r : F(f)(x') \to x''), \sigma : p \Rightarrow p'f)
$$ 
vers 
$$
((g : a' \to a'', s : F(g)(x') \to x''), \sigma' : p \Rightarrow p'g)
$$ 
sont les 
$$
(\gamma : f \Rightarrow g, \varphi : r \Rightarrow s (F(\gamma))_{x'})
$$ 
tels que 
$$
(p' \CompDeuxZero \gamma) \CompDeuxUn \sigma = \sigma'.
$$

Considérons le \DeuxFoncteurStrict{}
$$
\begin{aligned}
J_{a} : \Fibre{\left(\DeuxInt{\mathdeuxcat{A}}F\right)}{P_{F}}{a} &\to \TrancheLax{\left(\DeuxInt{\mathdeuxcat{A}}F\right)}{P_{F}}{a}
\\
(a, x) &\mapsto ((a, x), 1_{a})
\\
(1_{a}, r) &\mapsto ((1_{a}, r), 1_{1_{a}})
\\
(1_{1_{a}}, \varphi) &\mapsto (1_{1_{a}}, \varphi).
\end{aligned}
$$ 

On va vérifier qu'il s'agit d'un préadjoint à droite colax. Soit $((a', x'), p : a' \to a)$ un objet de $\TrancheLax{\DeuxInt{\mathdeuxcat{A}}F}{P_{F}}{a}$. Ainsi, $a$, $a'$, $x'$ et $p$ sont désormais fixés. Décrivons les objets, \un{}cellules et \deux{}cellules de la \deux{}catégorie 
$$
\OpTrancheCoLax{\Fibre{\left(\DeuxInt{\mathdeuxcat{A}}F\right)}{P_{F}}{a}}{J_{a}}{((a', x'), p)}.
$$
(Comme ci-dessus, on laisse au lecteur le soin d'expliciter les détails en cas de besoin.)

Les objets en sont les 
$$
((a, x), ((q : a' \to a, r : F(q)(x') \to x), \sigma : p \Rightarrow q)).
$$

Les \un{}cellules de 
$$
((a, x), ((q : a' \to a, r : F(q)(x') \to x), \sigma : p \Rightarrow q))
$$ 
vers 
$$
((a, x''), ((q' : a' \to a, r' : F(q')(x') \to x''), \sigma' : p \Rightarrow q'))
$$ 
s'identifient aux 
$$
(s : x \to x'', (\gamma : q \Rightarrow q', \varphi : sr \Rightarrow r' (F(\gamma))_{x'}))
$$ 
tels que 
$$
\gamma \CompDeuxUn \sigma = \sigma'.
$$

Les \deux{}cellules de 
$$
(s : x \to x'', (\gamma : q \Rightarrow q', \varphi : sr \Rightarrow r' (F(\gamma))_{x'}))
$$ 
(vérifiant $\gamma \CompDeuxUn \sigma = \sigma'$) vers 
$$
(t : x \to x'', (\mu : q \Rightarrow q', \psi : tr \Rightarrow r' (F(\mu))_{x'}))
$$ 
(vérifiant $\mu \CompDeuxUn \sigma = \sigma'$) s'identifient aux \deux{}cellules 
$$
\tau : s \Rightarrow t
$$ 
telles que 
$$
\mu = \gamma
$$ 
et 
$$
\psi \CompDeuxUn (\tau \CompDeuxZero r) = \varphi.
$$
(L'égalité $\gamma = \mu$ est donc une condition nécessaire à l'existence d'une telle \deux{}cellule.) 

Dans la \deux{}catégorie 
$$
\OpTrancheCoLax{\Fibre{\left(\DeuxInt{\mathdeuxcat{A}}F\right)}{P_{F}}{a}}{J_{a}}{((a', x'), p)},
$$ 
on distingue l'objet 
$$
((a, F(p)(x')), ((p, 1_{F(p)(x')}), 1_{p})).
$$
Soit
$$
((a, x''), ((q' : a' \to a, r' : F(q')(x') \to x''), \sigma' : p \Rightarrow q'))
$$ 
un objet quelconque de 
$$
\OpTrancheCoLax{\Fibre{\left(\DeuxInt{\mathdeuxcat{A}}F\right)}{P_{F}}{a}}{J_{a}}{((a', x'), p)}.
$$
On distingue la \un{}cellule 
$$
(r' (F(\sigma'))_{x'}, (\sigma', 1_{r' (F(\sigma'))_{x'}}))
$$ 
de 
$
((a, F(p)(x')), ((p, 1_{F(p)(x')}), 1_{p}))
$
vers 
$
((a, x''), ((q', r'), \sigma'))
$. En effet, la condition à vérifier n'est autre que la trivialité $\sigma' \CompDeuxUn 1_{p} = \sigma'$.  

Soit 
$$
(s : F(p)(x') \to x'', (\gamma : p \Rightarrow q', \varphi : s \Rightarrow r' (F(\sigma'))_{x'}))
$$ 
une \un{}cellule quelconque de $((a, F(p)(x')), ((p, 1_{F(p)(x')}), 1_{p}))$ vers $((a, x''), ((q', r'), \sigma'))$. On a donc en particulier $\gamma = \sigma'$. Les \deux{}cellules de $(s, (\gamma, \varphi))$ vers $(r' (F(\sigma'))_{x'}, (\sigma', 1_{r' (F(\sigma'))_{x'}}))$ sont données par les \deux{}cellules $\tau : s \Rightarrow r' (F(\sigma'))_{x'}$ telles que $\gamma = \sigma'$ (égalité déjà vérifiée par hypothèse) et $1_{r' (F(\sigma'))_{x'}} (\tau \CompDeuxZero 1_{F(p)(x')}) = \varphi$, c'est-à-dire $\tau = \varphi$. Il est clair que ces conditions impliquent l'existence et l'unicité d'une telle \deux{}cellule, à savoir $\varphi$. 

Ainsi, la \deux{}catégorie $\OpTrancheCoLax{\Fibre{(\DeuxInt{\mathdeuxcat{A}}F)}{P_{F}}{a}}{J_{a}}{((a', x'), p)}$ op-admet un objet admettant un objet final. Par définition, le résultat suit.
\end{proof}

\begin{rem}\label{ProjectionPrefibrationPreuve}
Ainsi, en vertu de l'exemple \ref{ProjectionExempleIntegrale} et de la proposition \ref{ProjectionIntegralePrefibration}, toute projection est une précoopfibration. Les divers morphismes opposés d'une projection étant des projections, toute projection est donc également une préopfibration, une précofibration et une précoopfibration, ce que nous annoncions déjà dans l'exemple \ref{ProjectionPrefibration}.
\end{rem}

\begin{paragr}
Soient toujours $F : \mathdeuxcat{A} \to \DeuxCatDeuxCat$ un \DeuxFoncteurStrict{} et $a$ un objet de ${\mathdeuxcat{A}}$. On vient de voir que le \DeuxFoncteurStrict{} $J_{a} : \Fibre{\left(\DeuxInt{\mathdeuxcat{A}}F\right)}{P_{F}}{a} \to \TrancheLax{\left(\DeuxInt{\mathdeuxcat{A}}F\right)}{P_{F}}{a}$, dont nous avons rappelé la définition au cours de la démonstration de la proposition \ref{ProjectionIntegralePrefibration}, est un préadjoint à droite colax. En vertu des résultats généraux de la section \ref{SectionPreadjoints}, il existe un \DeuxFoncteurLax{} $K_{a}\index[not]{Ka@$K_{a}$} : \TrancheLax{\left(\DeuxInt{\mathdeuxcat{A}}F\right)}{P_{F}}{a} \to \Fibre{\left(\DeuxInt{\mathdeuxcat{A}}F\right)}{P_{F}}{a}$ ainsi qu'une \DeuxTransformationCoLax{} $1_{\TrancheLax{\left(\DeuxInt{\mathdeuxcat{A}}F\right)}{P_{F}}{a}} \Rightarrow J_{a} K_{a}$. Des calculs quelque peu pénibles mais sans difficulté réelle montrent\footnote{Nous n'utiliserons pas le caractère canonique de $K_{a}$, bien qu'il figure dans l'énoncé de la proposition \ref{ProprietesJaKa}.} que $K_{a}$ est donné par (en conservant des notations consistantes avec celles adoptées précédemment, et donc « évidentes »)
$$
\begin{aligned}
\TrancheLax{\left(\DeuxInt{\mathdeuxcat{A}}F\right)}{P_{F}}{a} &\longrightarrow \Fibre{\left(\DeuxInt{\mathdeuxcat{A}}F\right)}{P_{F}}{a}
\\
((a',x'), p : a' \to a) &\longmapsto (a, F(p)(x'))
\\
((f : a' \to a'', r : F(f)(x') \to x''), \sigma : p \Rightarrow p'f) &\longmapsto (1_{a}, F(p')(r) (F(\sigma))_{x'})
\\
(\gamma, \varphi) &\longmapsto (1_{1_{a}}, F(p')(\varphi) \CompDeuxZero (F(\sigma))_{x'}).
\end{aligned}
$$

Vérifions qu'il s'agit d'un \DeuxFoncteurStrict{}. Pour tout objet $((a',x'), p)$ de $\TrancheLax{\left(\DeuxInt{\mathdeuxcat{A}}F\right)}{P_{F}}{a}$,
$$
\begin{aligned}
K_{a} (1_{((a',x'),p)}) &= K_{a} ((1_{a'}, 1_{x'}), 1_{p})
\\
&= (1_{a}, F(p)(1_{x'}) (F(1_{p}))_{x'})
\\
&= (1_{a}, 1_{F(p)(x')})
\\
&= 1_{(a, F(p)(x'))}
\\
&= 1_{K_{a} ((a',x'), p)}.
\end{aligned}
$$

Pour tout couple de \un{}cellules composables
$$
((f,r), \sigma) : ((a',x'),p) \to ((a'',x''),p')
$$
et
$$
((f',r'), \sigma') : ((a'',x''),p') \to ((a''',x'''),p'')
$$
de $\TrancheLax{\left(\DeuxInt{\mathdeuxcat{A}}F\right)}{P_{F}}{a}$, 
$$
\begin{aligned}
K_{a} (((f',r'), \sigma') ((f,r), \sigma)) &= K_{a} ((f'f, r' F(f')(r)), (\sigma' \CompDeuxZero f) \CompDeuxUn \sigma)
\\
&= (1_{a}, F(p'') (r' F(f')(r)) (F((\sigma' \CompDeuxZero f) \CompDeuxUn \sigma))_{x'})
\\
&= (1_{a}, F(p'')(r') F(p'') F(f') (r) (F(\sigma') \CompDeuxZero F(f))_{x'} (F(\sigma))_{x'})
\\
&= (1_{a}, F(p'')(r') F(p'') F(f') (r) (F(\sigma'))_{F(f)(x')} (F(\sigma))_{x'}).
\end{aligned}
$$ 
D'autre part, 
$$
K_{a} ((f',r'), \sigma') K_{a} ((f,r), \sigma) = (1_{a}, F(p'')(r') (F(\sigma'))_{x''}) (1_{a}, F(p')(r) (F(\sigma))_{x'}).
$$
L'égalité 
$$
K_{a} (((f',r'), \sigma') ((f,r), \sigma)) = K_{a} ((f',r'), \sigma') K_{a} ((f,r), \sigma)
$$
résulte donc du fait que le diagramme
$$
\xymatrix{
F(p') F(f) (x')
\ar[rrr]^{(F(\sigma'))_{F(f)(x')}}
\ar[dd]_{F(p')(r)}
&&&
F(p'') F(f') F(f) (x')
\ar[dd]^{F(p'') F(f')(r)}
\\
\\
F(p') (x'')
\ar[rrr]_{(F(\sigma'))_{x''}}
&&&
F(p'') F(f') (x'')
}
$$
est commutatif par naturalité de $F(\sigma') : F(p') \Rightarrow F(p'') F(f')$. 

Pour toute \un{}cellule
$$
((f,r), \sigma) : ((a',x'), p) \to ((a'',x''), p')
$$
de $\TrancheLax{\left(\DeuxInt{\mathdeuxcat{A}}F\right)}{P_{F}}{a}$, 
$$
\begin{aligned}
K_{a} (1_{((f,r), \sigma)}) &= K_{a} (1_{f}, 1_{r})
\\
&= (1_{1_{a}}, F(p') (1_{r}) \CompDeuxZero (F(\sigma))_{x'})
\\
&= (1_{1_{a}}, 1_{F(p')(r)} \CompDeuxZero (F(\sigma))_{x'})
\\
&= (1_{1_{a}}, 1_{F(p')(r) (F(\sigma))_{x'}})
\\
&= 1_{(1_{a}, F(p')(r) (F(\sigma))_{x'})}
\\
&= 1_{K_{a} ((f,r), \sigma)}.
\end{aligned}
$$

Pour tout triplet de \un{}cellules parallèles $((f,r), \sigma)$, $((g,s), \tau)$ et $((h,t), \mu)$ de $((a',x'), p)$ vers $((a'',x''), p')$, pour tout couple de \deux{}cellules composables
$$
(\gamma, \varphi) : ((f,r), \sigma) \Rightarrow ((g,s), \tau)
$$
et
$$
(\delta, \psi) : ((g,s), \tau) \Rightarrow ((h,t), \mu)
$$
dans $\TrancheLax{\left(\DeuxInt{\mathdeuxcat{A}}F\right)}{P_{F}}{a}$, il suffit de faire un dessin pour vérifier les égalités
$$
\begin{aligned}
K_{a} ((\delta, \psi) \CompDeuxUn (\gamma, \varphi)) &= K_{a} (\delta \CompDeuxUn \gamma, (\psi \CompDeuxZero (F(\gamma))_{x'}) \CompDeuxUn \varphi)
\\
&= (1_{1_{a}}, F(p') ((\psi \CompDeuxZero (F(\gamma))_{x'}) \CompDeuxUn \varphi) \CompDeuxZero (F(\sigma))_{x'})
\\
&= (1_{1_{a}}, ((F(p') (\psi) \CompDeuxZero F(p') ((F(\gamma))_{x'})) F(p')(\varphi)) \CompDeuxZero (F(\sigma))_{x'})
\end{aligned}
$$
et
$$
\begin{aligned}
K_{a} (\delta, \psi) \CompDeuxUn K_{a} (\gamma, \varphi) &= (1_{1_{a}}, F(p') (\psi) \CompDeuxZero (F(\tau))_{x'}) (1_{1_{a}}, F(p') (\varphi) \CompDeuxZero (F(\sigma))_{x'})
\\
&= (1_{1_{a}}, (F(p') (\psi) \CompDeuxZero F(p') ((F(\gamma))_{x'}) (F(\sigma))_{x'}) (F(p')(\varphi) \CompDeuxZero (F(\sigma))_{x'}))
\\
&= (1_{1_{a}}, ((F(p') (\psi) \CompDeuxZero F(p') ((F(\gamma))_{x'})) F(p')(\varphi)) \CompDeuxZero (F(\sigma))_{x'}).
\end{aligned}
$$
Ainsi, 
$$
K_{a} ((\delta, \psi) \CompDeuxUn (\gamma, \varphi)) = K_{a} (\delta, \psi) \CompDeuxUn K_{a} (\gamma, \varphi).
$$

Les formules ci-dessus définissent donc bien un \DeuxFoncteurStrict{} $K_{a}$. On constate que c'est une rétraction de $J_{a}$ : 
$$
K_{a} J_{a} = 1_{\Fibre{\left(\DeuxInt{\mathdeuxcat{A}}F\right)}{P_{F}}{a}}.
$$

De plus, $K_{a}$ \emph{est un préadjoint à gauche lax}. En effet, soit $(a,x)$ un objet de $\Fibre{(\DeuxInt{\mathdeuxcat{A}}F)}{P_{F}}{a}$. Les objets, \un{}cellules et \deux{}cellules de la \deux{}catégorie 
$$
\TrancheLax{\left(\TrancheLax{\left(\DeuxInt{\mathdeuxcat{A}}F\right)}{P_{F}}{a}\right)}{K_{a}}{(a,x)}
$$ 
se décrivent comme suit. Les objets sont les 
$$
(((a',x'), p : a' \to a), (1_{a}, r : F(p)(x') \to x)).
$$
Les \un{}cellules de 
$$
(((a',x'), p : a' \to a), (1_{a}, r : F(p)(x') \to x))
$$ 
vers 
$$
(((a'',x''), p' : a'' \to a), (1_{a}, r' : F(p')(x'') \to x))
$$ 
sont les 
$$
(((f : a' \to a'', s : F(f)(x') \to x''), \sigma : p \Rightarrow p'f), (1_{1_{a}}, \varphi : r \Rightarrow r' F(p')(s) (F(\sigma))_{x'})).
$$
Les \deux{}cellules de 
$$
(((f : a' \to a'', s : F(f)(x') \to x''), \sigma : p \Rightarrow p'f), (1_{1_{a}}, \varphi : r \Rightarrow r' F(p')(s) (F(\sigma))_{x'}))
$$ 
vers 
$$
(((g : a' \to a'', t : F(g)(x') \to x''), \tau : p \Rightarrow p'g), (1_{1_{a}}, \psi : r \Rightarrow r' F(p')(t) (F(\tau))_{x'}))
$$ 
sont les
$$
(\mu : f \Rightarrow g, \nu : s \Rightarrow t (F(\mu))_{x'})
$$
tels que 
$$
(p' \CompDeuxZero \mu) \CompDeuxUn \sigma = \tau
$$
et 
$$
r' \CompDeuxZero F(p')(\nu) \CompDeuxZero (F(\sigma))_{x'} = \psi.
$$
Dans cette \deux{}catégorie, on distingue l'objet 
$$
(((a,x),1_{a}), (1_{a}, 1_{x})).
$$
Soit 
$$
(((a',x'), p : a' \to a), (1_{a}, r : F(p)(x') \to x))
$$ 
un objet quelconque de 
$\TrancheLax{\left(\TrancheLax{\left(\DeuxInt{\mathdeuxcat{A}}F\right)}{P_{F}}{a}\right)}{K_{a}}{(a,x)}$. Alors,  
$$
(((p,r),1_{p}), (1_{1_{a}}, 1_{r}))
$$
définit une \un{}cellule de $(((a',x'), p), (1_{a}, r))$ vers $(((a,x),1_{a}), (1_{a}, 1_{x}))$. Soit 
$$
(((f : a' \to a, s : F(f)(x') \to x), \sigma : p \Rightarrow f), (1_{1_{a}}, \varphi : r \Rightarrow s (F(\sigma))_{x'}))
$$ 
une \un{}cellule quelconque de $(((a',x'), p), (1_{a}, r))$ vers $(((a,x),1_{a}), (1_{a}, 1_{x}))$. Les \deux{}cellules de $$(((p,r),1_{p}), (1_{1_{a}}, 1_{r}))$$ vers $$(((f,s),\sigma),(1_{1_{a}}, \varphi))$$ sont alors les 
$$
(\mu : p \Rightarrow f, \nu : r \Rightarrow s(F(\mu))_{x'})
$$ 
tels que $\mu = \sigma$ et $\nu = \varphi$, ce qui force $(\mu, \nu) = (\sigma, \varphi)$, et l'on vérifie que les égalités requises sont alors satisfaites. Ainsi, la \deux{}catégorie $\TrancheLax{\left(\TrancheLax{\left(\DeuxInt{\mathdeuxcat{A}}F\right)}{P_{F}}{a}\right)}{K_{a}}{(a,x)}$ admet un objet admettant un objet initial. Par définition, $K_{a}$ est donc un préadjoint à gauche lax, ce qu'il fallait vérifier. Résumons.  
\end{paragr}

\begin{prop}\label{ProprietesJaKa}
Pour tout \DeuxFoncteurStrict{} $F : \mathdeuxcat{A} \to \DeuxCatDeuxCat$ et tout objet $a$ de $\mathdeuxcat{A}$, le \DeuxFoncteurStrict{} canonique $J_{a} : \Fibre{\left(\DeuxInt{\mathdeuxcat{A}}F\right)}{P_{F}}{a} \to \TrancheLax{\left(\DeuxInt{\mathdeuxcat{A}}F\right)}{P_{F}}{a}$ (qui est un préadjoint à droite colax) admet une rétraction canonique $K_{a}$ qui est un \DeuxFoncteurStrict{} et un préadjoint à gauche lax. De plus, il existe une \DeuxTransformationCoLax{} canonique $1_{\TrancheLax{\left(\DeuxInt{\mathdeuxcat{A}}F\right)}{P_{F}}{a}} \Rightarrow J_{a} K_{a}$.  
\end{prop} 

On vérifie maintenant une propriété supplémentaire du \DeuxFoncteurStrict{} $J_{a}$ : c'est l'inclusion d'un dual de rétracte par déformation fort dans $\DeuxCatLax{}$. 

\begin{paragr}\label{DefSegment}
On rappelle que, si $M$ est une catégorie admettant un objet final $e_{M}$\index[not]{eM@$e_{M}$}, un \emph{segment}\index{segment} de $M$ est un triplet $(I, \partial_{0}, \partial_{1})$\index[not]{(I01@$(I, \partial_{0}, \partial_{1})$}, où $I$ est un objet de $M$ et $\partial_{0}, \partial_{1} : e_{M} \to I$ sont des morphismes de $M$. 
\end{paragr}

\begin{df}
Soit $(I, \partial_{0}, \partial_{1})$ un segment d'une catégorie $M$ admettant des produits. On dit qu'un morphisme $i : A \to B$ de $M$ est \emph{l'inclusion d'un rétracte par déformation fort}\index{inclusion de rétracte par déformation fort} relativement au segment $(I, \partial_{0}, \partial_{1})$ s'il existe une rétraction $r : B \to A$ de $i$ (c'est-à-dire que $ri = 1_{A}$) et un morphisme (une « homotopie ») $h : I \times B \to B$ tel que $h (\partial_{0}, 1_{B}) = ir$, $h (\partial_{1}, 1_{B}) = 1_{B}$ et $h (1_{I} \times i) = i p_{A}$, $p_{A}$ désignant la projection canonique $I \times A \to A$. 

On dit qu'un morphisme $i : A \to B$ de $X$ est \emph{l'inclusion d'un dual de rétracte par déformation fort}\index{inclusion de dual de rétracte par déformation fort} relativement au segment $(I, \partial_{0}, \partial_{1})$ s'il existe une rétraction $r : B \to A$ de $i$ et un morphisme $h : I \times B \to B$ tel que $h (\partial_{0}, 1_{B}) = 1_{B}$, $h (\partial_{1}, 1_{B}) = ir$ et $h (1_{I} \times i) = i p_{A}$, $p_{A}$ désignant la projection canonique $I \times A \to A$. 
\end{df}

Dans tous les cas que nous examinerons, sauf mention contraire, le segment considéré sera $([1], 0, 1)$. 

\begin{prop}
Soient $\mathdeuxcat{A}$ une \deux{}catégorie, $F : \mathdeuxcat{A} \to \DeuxCatDeuxCat$ un \DeuxFoncteurStrict{} et $a$ un objet de $\mathdeuxcat{A}$. Alors, le \DeuxFoncteurStrict{} canonique
$$
\begin{aligned}
J_{a} : \Fibre{\left(\DeuxInt{\mathdeuxcat{A}}F\right)}{P_{F}}{a} &\to \TrancheLax{\left(\DeuxInt{\mathdeuxcat{A}}F\right)}{P_{F}}{a}
\\
(a,x) &\mapsto ((a,x), 1_{a})
\\
(1_{a}, r) &\mapsto ((1_{a}, r), 1_{1_{a}})
\\
(1_{1_{a}}, \varphi) &\mapsto (1_{1_{a}}, \varphi)
\end{aligned}
$$
est l'inclusion d'un dual de rétracte par déformation fort dans $\DeuxCatLax{}$. 
\end{prop}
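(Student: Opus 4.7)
Le plan consiste � construire le dual de r�tracte par d�formation fort � partir des donn�es fournies par la proposition \ref{ProprietesJaKa}. Cette derni�re donne d�j�, de fa�on canonique, la r�traction $K_{a}$ de $J_{a}$ (\DeuxFoncteurStrict, donc en particulier morphisme de $\DeuxCatLax$) v�rifiant $K_{a} J_{a} = 1$, ainsi qu'une \DeuxTransformationCoLax{} canonique $\sigma : 1_{\TrancheLax{\left(\DeuxInt{\mathdeuxcat{A}}F\right)}{P_{F}}{a}} \Rightarrow J_{a} K_{a}$. La premi�re �tape consiste alors � faire appel au lemme \ref{DeuxTransFoncLax}, qui associe � $\sigma$ un \DeuxFoncteurLax{}
$$
h : [1] \times \TrancheLax{\left(\DeuxInt{\mathdeuxcat{A}}F\right)}{P_{F}}{a} \to \TrancheLax{\left(\DeuxInt{\mathdeuxcat{A}}F\right)}{P_{F}}{a}
$$
dont la restriction � $\{0\} \times \TrancheLax{\left(\DeuxInt{\mathdeuxcat{A}}F\right)}{P_{F}}{a}$ est l'identit� et dont la restriction � $\{1\} \times \TrancheLax{\left(\DeuxInt{\mathdeuxcat{A}}F\right)}{P_{F}}{a}$ co�ncide avec $J_{a} K_{a}$. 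Cela fournit d�j� deux des trois �galit�s demand�es par la d�finition.

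Il reste � v�rifier l'�galit� $h \circ (1_{[1]} \times J_{a}) = J_{a} \circ p$, o� $p$ d�signe la projection $[1] \times \Fibre{\left(\DeuxInt{\mathdeuxcat{A}}F\right)}{P_{F}}{a} \to \Fibre{\left(\DeuxInt{\mathdeuxcat{A}}F\right)}{P_{F}}{a}$. Au vu de la description explicite de $h$ donn�e � la fin de la d�monstration du lemme \ref{DeuxTransFoncLax}, cette �galit� �quivaut � affirmer que les composantes de $\sigma$ en les objets et \un{}cellules de la forme $J_{a}(a,x)$ et $J_{a}(1_{a}, r)$ sont triviales. La deuxi�me �tape, essentielle, consiste donc � revenir � la construction canonique de $\sigma$ effectu�e dans la section \ref{SectionPreadjoints} (\emph{cf.} la proposition \ref{PropPreadjointsParDeux}), qui �num�re pr�cis�ment les composantes sous la forme $\sigma_{((a',x'),p)} = ((p, 1_{F(p)(x')}), 1_{p})$ et $\sigma_{((f,r),\tau)} = (1_{1_{a}}, \text{identit�})$, le tout apparaissant d�j� de fa�on implicite dans la d�monstration de la proposition \ref{ProjectionIntegralePrefibration}. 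En restreignant � la sous-\deux{}cat�gorie image de $J_{a}$ (c'est-�-dire aux objets de la forme $((a,x), 1_{a})$, aux \un{}cellules de la forme $((1_{a}, r), 1_{1_{a}})$ et aux \deux{}cellules de la forme $(1_{1_{a}}, \varphi)$), et en utilisant la stricte fonctorialit� de $F$ (qui donne $F(1_{a}) = 1_{F(a)}$), toutes ces composantes deviennent des identit�s.

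La troisi�me �tape consiste alors � ins�rer cette trivialit� dans les formules explicites de $h$ : pour tout objet $(a,x)$ de $\Fibre{\left(\DeuxInt{\mathdeuxcat{A}}F\right)}{P_{F}}{a}$ et toute \un{}cellule $(\epsilon : 0 \to 1, f)$ avec $f$ dans la fibre, $h(\epsilon, J_{a}(f))$ s'obtient comme la compos�e de la composante de $\sigma$ avec $J_{a} K_{a} J_{a}(f) = J_{a}(f)$ ; la trivialit� des composantes se traduit par $h(\epsilon, J_{a}(f)) = J_{a}(f) = J_{a}(p(\epsilon, f))$, et de m�me pour les \deux{}cellules. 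Ce contr�le explicite montre l'�galit� $h \circ (1_{[1]} \times J_{a}) = J_{a} \circ p$ requise.

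Le principal obstacle est la v�rification effective de la trivialit� des composantes de $\sigma$ sur l'image de $J_{a}$, qui exige de repartir de la construction canonique d'un tel $\sigma$ � partir d'un pr�adjoint � droite colax, construction parfois d�licate � suivre ; cependant, dans le cas particulier de $J_{a}$, les objets finaux exhib�s explicitement au cours de la d�monstration de la proposition \ref{ProjectionIntegralePrefibration} sont tels que les composantes s'identifient de fa�on tautologique � des identit�s, ce qui rend l'argument in fine tr�s court.
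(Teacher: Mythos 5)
Votre proposition est correcte et suit essentiellement la m�me d�marche que la d�monstration du texte : r�traction canonique $K_{a}$, homotopie $h$ fournie par le lemme \ref{DeuxTransFoncLax} appliqu� � l'\DeuxTransformationCoLax{} canonique $1 \Rightarrow J_{a} K_{a}$, puis v�rification de l'�galit� $h (1_{[1]} \times J_{a}) = J_{a} p$ en constatant que les composantes de cette transformation sont des identit�s sur l'image de $J_{a}$ (gr�ce � la strictit� de $F$, soit $F(1_{a}) = 1_{F(a)}$ et $F(1_{1_{a}}) = 1$), ce qui est pr�cis�ment le contenu du calcul explicite men� dans le texte. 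Signalons seulement que la formule que vous donnez pour la composante en une \un{}cellule g�n�rale $((f,r),\tau)$ n'est valable telle quelle que lorsque $\tau = 1_{1_{a}}$, c'est-�-dire sur l'image de $J_{a}$, seul cas que votre argument utilise effectivement.
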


\begin{proof}
On sait déjà que le \DeuxFoncteurStrict{} canonique
$$
\begin{aligned}
K_{a} : \TrancheLax{\left(\DeuxInt{\mathdeuxcat{A}}F\right)}{P_{F}}{a} &\to \Fibre{\left(\DeuxInt{\mathdeuxcat{A}}F\right)}{P_{F}}{a}
\\
((a',x'), p : a' \to a) &\mapsto (a, F(p)(x'))
\\
((f : a' \to a'', r : F(f)(x') \to x''), \sigma : p \Rightarrow p'f) &\mapsto (1_{a}, F(p')(r) (F(\sigma))_{x'})
\\
(\gamma, \varphi) &\mapsto (1_{1_{a}}, F(p')(\varphi) (F(\sigma))_{x'})
\end{aligned}
$$
est une rétraction de $J_{a}$. On sait de plus qu'il existe une \DeuxTransformationCoLax{} canonique
$$
\alpha : 1_{\TrancheLax{\left(\DeuxInt{\mathdeuxcat{A}}F\right)}{P_{F}}{a}} \Rightarrow J_{a} K_{a}
$$ 
définie comme suit. Pour tout objet $((a', x'), p)$ de $\TrancheLax{\left(\DeuxInt{\mathdeuxcat{A}}F\right)}{P_{F}}{a}$, 
$$
J_{a} K_{a} ((a',x'), p) = ((a, F(p)(x')), 1_{a})
$$ 
et
$$
\alpha_{((a', x'), p)} = ((p, 1_{F(p)(x')}), 1_{p}).
$$
Pour toute \un{}cellule 
$$
((f : a' \to a'', r : F(f)(x') \to x''), \sigma : p \Rightarrow p'f)
$$
de $((a', x'), p)$ vers $((a'', x''), p')$ dans $\TrancheLax{\left(\DeuxInt{\mathdeuxcat{A}}F\right)}{P_{F}}{a}$, 
$$
J_{a} K_{a} ((f, r), \sigma) = ((1_{a}, F(p')(r) (F(\sigma))_{x'}), 1_{1_{a}})
$$
et
$$
\alpha_{((f, r), \sigma)} = (\sigma, 1_{F(p')(r) (F(\sigma))_{x'}})
$$
est une \deux{}cellule « descendante » dans le diagramme
$$
\xymatrix{
((a',x'), p)
\ar[rrrrr]^{((p, 1_{F(p)(x')}), 1_{p})}
\ar[dd]_{((f, r), \sigma)}
&&&&&
((a, F(p)(x')), 1_{a}) = J_{a} K_{a} ((a',x'), p)
\ar[dd]^{((1_{a}, F(p')(r) (F(\sigma))_{x'}), 1_{1_{a}}) = J_{a} K_{a} ((f, r), \sigma)}
\\
\\
((a'',x''), p')
\ar[rrrrr]_{((p', 1_{F(p')(x'')}), 1_{p'})}
&&&&&
((a, F(p')(x'')), 1_{a}) = J_{a} K_{a} ((a'',x''), p')
&,
}
$$
c'est-à-dire une \deux{}cellule de 
$$
((1_{a}, F(p')(r) (F(\sigma))_{x'}), 1_{1_{a}}) ((p, 1_{F(p)(x')}), 1_{p}) = ((p, F(p')(r) (F(\sigma))_{x'}), 1_{p})
$$
vers
$$
((p', 1_{F(p')(x'')}), 1_{p'}) ((f, r), \sigma) = ((p'f, F(p')(r)), \sigma)
$$
dans $\TrancheLax{\left(\DeuxInt{\mathdeuxcat{A}}F\right)}{P_{F}}{a}$.

En vertu du lemme \ref{DeuxTransFoncLax}, il existe un \DeuxFoncteurLax{} $h : [1] \times (\TrancheLax{\left(\DeuxInt{\mathdeuxcat{A}}F\right)}{P_{F}}{a}) \to \TrancheLax{\left(\DeuxInt{\mathdeuxcat{A}}F\right)}{P_{F}}{a}$ tel que le diagramme
$$
\xymatrix{
&
[1] \times (\TrancheLax{\left(\DeuxInt{\mathdeuxcat{A}}F\right)}{P_{F}}{a})
\ar[dd]^{h}
\\
\TrancheLax{\left(\DeuxInt{\mathdeuxcat{A}}F\right)}{P_{F}}{a}
\ar[ur]^{(\{ 0 \}, 1)}
\ar[dr]_{1}
&&
\TrancheLax{\left(\DeuxInt{\mathdeuxcat{A}}F\right)}{P_{F}}{a}
\ar[ul]_{(\{1\}, 1)}
\ar[dl]^{J_{a} K_{a}}
\\
&
\TrancheLax{\left(\DeuxInt{\mathdeuxcat{A}}F\right)}{P_{F}}{a}
}
$$
soit commutatif. En vertu des formules générales (voir la démonstration du lemme \ref{DeuxTransFoncLax}), $h$ se décrit comme suit. Sa restriction à $\{ 0 \} \times (\TrancheLax{\left(\DeuxInt{\mathdeuxcat{A}}F\right)}{P_{F}}{a})$ (\emph{resp.} $\{ 1 \} \times (\TrancheLax{\left(\DeuxInt{\mathdeuxcat{A}}F\right)}{P_{F}}{a})$) s'identifie à $1_{\TrancheLax{\left(\DeuxInt{\mathdeuxcat{A}}F\right)}{P_{F}}{a}}$ (\emph{resp.} $J_{a} K_{a}$). Pour toute \un{}cellule 
$$
((f : a' \to a'', r : F(f)(x') \to x''), \sigma : p \Rightarrow p'f)
$$ 
de $((a',x'),p)$ vers $((a'',x''), p')$ dans $\TrancheLax{\left(\DeuxInt{\mathdeuxcat{A}}F\right)}{P_{F}}{a}$,
$$
h(0 \to 1, ((f,r), \sigma)) = ((p'f, F(p')(r)), \sigma).
$$
Soit de plus 
$$
((f' : a'' \to a''', r' : F(f')(x'') \to x'''), \sigma' : p' \Rightarrow p''f')$$ 
une \un{}cellule de $((a'',x''),p')$ vers $((a''',x'''), p'')$. On a
$$
h_{(0 \to 1, ((f',r'),\sigma')),(0 \to 0, ((f,r),\sigma))} = 1_{((p''f'f, F(p'')(r') F(p'') F(f') (r)), (\sigma' \CompDeuxZero f) \CompDeuxUn \sigma)}
$$
et
$$
\begin{aligned}
h_{(1 \to 1, ((f',r'),\sigma')), (0 \to 1, ((f,r), \sigma))} &= (\sigma' \CompDeuxZero f, 1_{F(p'')(r') (F(\sigma'))_{x''} F(p')(r)})
\\
&= (\sigma' \CompDeuxZero f, 1_{F(p'')(r') F(p'')F(f')(r) (F(\sigma'))_{F(f)(x')}})
\end{aligned}.
$$
Soient $((f,r), \sigma)$ et $((g,s), \tau)$ deux \un{}cellules de $((a',x'), p)$ vers $((a'',x''), p')$ et $(\gamma, \varphi)$ une \deux{}cellule de $((f,r), \sigma)$ vers $((g,s), \tau)$. Alors, 
$$
h(1_{0 \to 1}, (\gamma, \varphi)) = (p' \CompDeuxZero \gamma, F(p')(\varphi)).
$$
Les égalités $h (\{0\} , 1_{\TrancheLax{\left(\DeuxInt{\mathdeuxcat{A}}F\right)}{P_{F}}{a}}) = 1_{\TrancheLax{\left(\DeuxInt{\mathdeuxcat{A}}F\right)}{P_{F}}{a}}$ et $h (\{1\} , 1_{\TrancheLax{\left(\DeuxInt{\mathdeuxcat{A}}F\right)}{P_{F}}{a}}) = J_{a} K_{a}$ sont évidemment satisfaites. Vérifions l'égalité $h (1_{[1]} \times J_{a}) = J_{a} p_{\Fibre{\left(\DeuxInt{\mathdeuxcat{A}}F\right)}{P_{F}}{a}}$, avec 
$$
p_{\Fibre{\left(\DeuxInt{\mathdeuxcat{A}}F\right)}{P_{F}}{a}} : [1] \times \Fibre{\left(\DeuxInt{\mathdeuxcat{A}}F\right)}{P_{F}}{a} \to \Fibre{\left(\DeuxInt{\mathdeuxcat{A}}F\right)}{P_{F}}{a}
$$ 
la projection canonique. 

Pour tout objet $(a,x)$ de $\Fibre{\left(\DeuxInt{\mathdeuxcat{A}}F\right)}{P_{F}}{a}$, 
$$
\begin{aligned}
h (1_{[1]} \times J_{a}) (0, (a,x)) &= h(0,((a,x),1_{a}))
\\
&= ((a,x), 1_{a}),
\end{aligned}
$$
$$
\begin{aligned}
J_{a} p_{\Fibre{\left(\DeuxInt{\mathdeuxcat{A}}F\right)}{P_{F}}{a}} (0, (a,x)) &= J_{a} (a,x)
\\
&= ((a,x),1_{a}),
\end{aligned}
$$
$$
\begin{aligned}
h (1_{[1]} \times J_{a}) (1,(a,x)) &= h (1,((a,x),1_{a}))
\\
&= J_{a} K_{a} ((a,x),1_{a})
\\
&= ((a, F(1_{a})(x)),1_{a})
\\
&= ((a,x), 1_{a})
\end{aligned}
$$
et
$$
\begin{aligned}
J_{a} p_{\Fibre{\left(\DeuxInt{\mathdeuxcat{A}}F\right)}{P_{F}}{a}} (1, (a,x)) &= J_{a} (a,x)
\\
&= ((a,x), 1_{a}).
\end{aligned}
$$
Pour toute \un{}cellule $(1_{a}, r)$ de $(a,x)$ vers $(a,x')$ dans $\Fibre{\left(\DeuxInt{\mathdeuxcat{A}}F\right)}{P_{F}}{a}$,  
$$
\begin{aligned}
h (1_{[1]} \times J_{a}) (0 \to 0, (1_{a}, r)) &= h(0 \to 0, ((1_{a},r), 1_{1_{a}}))
\\
&= ((1_{a}, r), 1_{1_{a}}),
\end{aligned}
$$
$$
\begin{aligned}
J_{a} p_{\Fibre{\left(\DeuxInt{\mathdeuxcat{A}}F\right)}{P_{F}}{a}} (0 \to 0, (1_{a},r)) &= J_{a} (1_{a}, r)
\\
&= ((1_{a}, r), 1_{1_{a}}),
\end{aligned}
$$
$$
\begin{aligned}
h (1_{[1]} \times J_{a}) (1 \to 1, (1_{a}, r)) &= h(1 \to 1, ((1_{a}, r), 1_{1_{a}})) 
\\
&= J_{a} K_{a} ((1_{a}, r), 1_{1_{a}})
\\
&= ((1_{a}, F(1_{a})(r) (F(1_{1_{a}}))_{x'}), 1_{1_{a}})
\\
&= ((1_{a}, r), 1_{1_{a}}),
\end{aligned}
$$
$$
\begin{aligned}
J_{a} p_{\Fibre{\left(\DeuxInt{\mathdeuxcat{A}}F\right)}{P_{F}}{a}} (1 \to 1, (1_{a}, r)) &= J_{a} (1_{a}, r)
\\
&= ((1_{a}, r), 1_{1_{a}}),
\end{aligned}
$$
$$
\begin{aligned}
h (1_{[1]} \times J_{a}) (0 \to 1, (1_{a}, r)) &= h(0 \to 1, ((1_{a},r), 1_{1_{a}}))
\\
&= ((1_{a} 1_{a}, F(1_{a})(r)), 1_{1_{a}})
\\
&= ((1_{a}, r), 1_{1_{a}})
\end{aligned}
$$
et
$$
\begin{aligned}
J_{a} p_{\Fibre{\left(\DeuxInt{\mathdeuxcat{A}}F\right)}{P_{F}}{a}} (0 \to 1, (1_{a}, r)) &= J_{a} (1_{a}, r)
\\
&= ((1_{a},r), 1_{1_{a}}).
\end{aligned}
$$
Soient $(1_{a}, r) : (a,x) \to (a,x')$ et $(1_{a}, r') : (a,x') \to (a, x'')$ deux \un{}cellules composables de $\Fibre{\left(\DeuxInt{\mathdeuxcat{A}}F\right)}{P_{F}}{a}$. Alors, comme $1_{\TrancheLax{\left(\DeuxInt{\mathdeuxcat{A}}F\right)}{P_{F}}{a}}$ et $J_{a} K_{a}$ sont des \DeuxFoncteursStricts{}, 
$$
(h (1_{[1]} \times J_{a}))_{(0 \to 0, (1_{a},r')), (0 \to 0, (1_{a}, r))}
$$
et 
$$
(h (1_{[1]} \times J_{a}))_{(1 \to 1, (1_{a},r')), (1 \to 1, (1_{a}, r))}
$$
sont des identités. De plus, 
$$
\begin{aligned}
(h (1_{[1]} \times J_{a}))_{(0 \to 1, (1_{a}, r')), (0 \to 0, (1_{a}, r))} &= h_{(1_{[1]} \times J_{a}) (0 \to 1, (1_{a}, r')), (1_{[1]} \times J_{a}) (0 \to 0, (1_{a}, r))}
\\
&= h_{(0 \to 1, ((1_{a}, r'), 1_{1_{a}})), (0 \to 0, ((1_{a}, r), 1_{1_{a}}))}
\end{aligned}
$$
est une identité (voir les formules ci-dessus) et il en est de même de
$$
\begin{aligned}
(h (1_{[1]} \times J_{a}))_{(1 \to 1, (1_{a}, r')), (0 \to 1, (1_{a}, r))} &= h_{(1_{[1]} \times J_{a}) (1 \to 1, (1_{a}, r')), (1_{[1]} \times J_{a}) (0 \to 1, (1_{a}, r))}
\\
&= h_{(1 \to 1, ((1_{a}, r'), 1_{1_{a}})), (0 \to 1, ((1_{a}, r), 1_{1_{a}}))}.
\end{aligned}
$$
Le \deux{}foncteur $h (1_{[1]} \times J_{a})$ est donc strict. Il ne reste plus qu'à vérifier qu'il coïncide avec $J_{a} p_{\Fibre{\left(\DeuxInt{\mathdeuxcat{A}}F\right)}{P_{F}}{a}}$ sur les \deux{}cellules. Soit donc $(1_{a}, r)$ et $(1_{a}, r')$ deux \un{}cellules de $(a,x)$ vers $(a,x')$ et $(1_{1_{a}}, \varphi)$ une \deux{}cellule de $(1_{a}, r)$ vers $(1_{a}, r')$ dans $\Fibre{\left(\DeuxInt{\mathdeuxcat{A}}F\right)}{P_{F}}{a}$. Alors, 
$$
\begin{aligned}
J_{a} p_{\Fibre{\left(\DeuxInt{\mathdeuxcat{A}}F\right)}{P_{F}}{a}} (1_{0 \to 0}, (1_{1_{a}}, \varphi)) &= J_{a} (1_{1_{a}}, \varphi)
\\
&= (1_{1_{a}}, \varphi),
\end{aligned}
$$
$$
\begin{aligned}
(h (1_{[1]} \times J_{a})) (1_{0 \to 0}, (1_{1_{a}}, \varphi)) &= h(1_{0 \to 0}, (1_{1_{a}}, \varphi))
\\
&= (1_{1_{a}}, \varphi),
\end{aligned}
$$
$$
\begin{aligned}
J_{a} p_{\Fibre{\left(\DeuxInt{\mathdeuxcat{A}}F\right)}{P_{F}}{a}} (1_{1 \to 1}, (1_{1_{a}}, \varphi)) &= J_{a} (1_{1_{a}}, \varphi)
\\
&= (1_{1_{a}}, \varphi),
\end{aligned}
$$
$$
\begin{aligned}
(h (1_{[1]} \times J_{a})) (1_{1 \to 1}, (1_{1_{a}}, \varphi)) &= h(1_{1 \to 1}, (1_{1_{a}}, \varphi)) 
\\
&= J_{a} K_{a} (1_{1_{a}}, \varphi)
\\
&= J_{a} (1_{1_{a}}, \varphi)
\\
&= (1_{1_{a}}, \varphi),
\end{aligned}
$$
$$
\begin{aligned}
J_{a} p_{\Fibre{\left(\DeuxInt{\mathdeuxcat{A}}F\right)}{P_{F}}{a}} (1_{0 \to 1}, (1_{1_{a}}, \varphi)) &= J_{a} (1_{1_{a}}, \varphi)
\\
&= (1_{1_{a}}, \varphi)
\end{aligned}
$$
et
$$
\begin{aligned}
(h (1_{[1]} \times J_{a})) (1_{0 \to 1}, (1_{1_{a}}, \varphi)) &= h(1_{0 \to 1}, (1_{1_{a}}, \varphi))
\\
&= (1_{1_{a}}, \varphi).
\end{aligned}
$$

Cela termine les vérifications de l'égalité 
$$
h (1_{[1]} \times J_{a}) = J_{a} p_{\Fibre{\left(\DeuxInt{\mathdeuxcat{A}}F\right)}{P_{F}}{a}}
$$
et donc la vérification de la proposition.
\end{proof}

\begin{paragr}\label{IntegrationAdjonction}
On vérifie maintenant que l'opération d'intégration fournit un exemple d'adjonction lax-colax. Plus précisément : 
\begin{prop}
En conservant les notations employées ci-dessus, le couple de \DeuxFoncteursStricts{} $(K_{a}, J_{a})$ forme une adjonction lax-colax.
\end{prop}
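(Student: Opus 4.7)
La strat�gie consiste � exhiber les donn�es requises par la d�finition \ref{DefAdjonctionLaxCoLax} puis � v�rifier les dix axiomes ALC 1--ALC 10, en exploitant au maximum deux faits : d'une part, $K_{a}$ et $J_{a}$ sont tous deux stricts ; d'autre part, l'�galit� $K_{a} J_{a} = 1_{\Fibre{\left(\DeuxInt{\mathdeuxcat{A}}F\right)}{P_{F}}{a}}$ est stricte. Ces simplifications transforment la plupart des axiomes en des �galit�s triviales entre identit�s de \deux{}cellules.

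Concr�tement, je poserais d'abord, du ��c�t� co�nit���, $p_{b} = 1_{b}$ pour tout objet $b$ de la fibre et $p_{g} = 1_{g}$ pour toute \un{}cellule $g$ (ce qui fait sens pr�cis�ment parce que $K_{a} J_{a}$ vaut strictement l'identit�). Du ��c�t� unit���, je prendrais $q_{a} = \alpha_{a}$ et $q_{f} = \alpha_{f}$, o� $\alpha : 1 \Rightarrow J_{a} K_{a}$ est l'\DeuxTransformationCoLax{} canonique donn�e par la proposition \ref{ProprietesJaKa}. Enfin, je poserais $\sigma_{a} = 1$ et $\tau_{b} = 1$. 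Pour que ces derni�res d�finitions fassent sens, il faut v�rifier les deux �galit�s pr�alables $K_{a}(\alpha_{a}) = 1_{K_{a}(a)}$ et $\alpha_{J_{a}(b)} = 1_{J_{a}(b)}$ : elles d�coulent imm�diatement des formules explicites pour $\alpha$, $K_{a}$ et $J_{a}$ rappel�es dans les paragraphes pr�c�dents, via le fait que $F$, �tant strict, envoie $1_{a}$ sur $1$ et $1_{1_{a}}$ sur $1$.

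Ensuite, je passerais en revue les axiomes. Les axiomes ALC 1--ALC 3 portant sur $p$ se r�duisent � des �galit�s triviales entre \deux{}cellules identit�s, $p$ �tant identit� et les \deux{}cellules structurales de $K_{a}$ et $J_{a}$ �tant elles aussi toutes triviales. Les axiomes ALC 4--ALC 6 portant sur $q$ sont exactement les axiomes qui expriment que $\alpha$ est une \DeuxTransformationCoLax{} de $1$ vers $J_{a} K_{a}$, et sont donc automatiquement v�rifi�s par le choix $q = \alpha$. Pour les axiomes triangulaires ALC 7--ALC 10, la strictness et les choix identit�s pour $p$, $\sigma$, $\tau$ collapsent chacun en une seule v�rification : ALC 7 se r�duit � $K_{a}(\alpha_{f}) = 1_{K_{a}(f)}$, ALC 8 � $\alpha_{J_{a}(g)} = 1_{J_{a}(g)}$, ALC 9 � une identit� imm�diate et ALC 10 � $\alpha_{\alpha_{a}} = 1_{\alpha_{a}}$.

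Il resterait alors � v�rifier ces quelques �galit�s r�siduelles par substitution dans les formules explicites ; chaque calcul est court et du m�me type que ceux d�j� conduits au paragraphe pr�c�dent pour �tablir $K_{a} J_{a} = 1$ et la description de $\alpha$. Il n'y a pas d'obstacle conceptuel : la principale difficult�, s'il en est une, est de tenir proprement la ��comptabilit��� des couches emboit�es (int�grale, fibre, tranche lax), mais le fait que $K_{a} J_{a}$ soit \emph{strictement} trivial et que $\alpha$ soit d�j� canoniquement une \DeuxTransformationCoLax{} dont les composantes se comportent comme des identit�s apr�s application de $K_{a}$ garantit que chaque axiome non trivial se r�duit � la r�flexivit� d'une \deux{}cellule identit� appropri�e.
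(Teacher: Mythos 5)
Votre proposition est correcte et suit essentiellement la m�me d�marche que la d�monstration du texte : m�mes choix de donn�es ($p$ identit�, $q = \alpha$, $\sigma$ et $\tau$ identit�s), m�me r�duction de ALC 1--3 et ALC 4--6 � la trivialit� et au fait que $\alpha$ est une optransformation, et m�mes �galit�s r�siduelles ($K_{a}(\alpha_{f})$, $\alpha_{J_{a}(g)}$, $\alpha_{\alpha_{a}}$ identit�s) pour ALC 7--10. Rien � redire.
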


\begin{proof}
Référons-nous à la définition \ref{DefAdjonctionLaxCoLax}. On y procède aux substitutions suivantes : $\TrancheLax{\left(\DeuxInt{\mathdeuxcat{A}}F\right)}{P_{F}}{a}$ pour $\mathdeuxcat{A}$, $\Fibre{\left(\DeuxInt{\mathdeuxcat{A}}F\right)}{P_{F}}{a}$ pour $\mathdeuxcat{B}$, $K_{a}$ pour $u$, $J_{a}$ pour $v$, $1_{1_{\Fibre{\left(\DeuxInt{\mathdeuxcat{A}}F\right)}{P_{F}}{a}}}$ pour $p$, $\alpha$ pour $q$. Comme $J_{a}$ et $K_{a}$ sont stricts, les conditions de cohérence ALC 1 à ALC 3 ne stipulent rien d'autre que le fait que $1_{1_{\Fibre{\left(\DeuxInt{\mathdeuxcat{A}}F\right)}{P_{F}}{a}}}$ est une \DeuxTransformationCoLax{} de $K_{a} J_{a} = 1_{\Fibre{\left(\DeuxInt{\mathdeuxcat{A}}F\right)}{P_{F}}{a}}$ vers $1_{\Fibre{\left(\DeuxInt{\mathdeuxcat{A}}F\right)}{P_{F}}{a}}$, ce qui est trivial, et les conditions de cohérence ALC 4 à ALC 6 ne stipulent rien d'autre que le fait que $\alpha$ est une \DeuxTransformationCoLax{} de $1_{\TrancheLax{\left(\DeuxInt{\mathdeuxcat{A}}F\right)}{P_{F}}{a}}$ vers $J_{a} K_{a}$, ce que l'on sait déjà. Il ne reste donc plus qu'à définir $\sigma$ et $\tau$ et vérifier que les conditions de cohérence ALC 7 à ALC 10 de la définition \ref{DefAdjonctionLaxCoLax} sont satisfaites. 

Pour tout objet $((a',x'), p : a' \to a)$ de $\TrancheLax{\left(\DeuxInt{\mathdeuxcat{A}}F\right)}{P_{F}}{a}$, 
$$
\begin{aligned}
K_{a} (\alpha_{((a',x'), p)}) &= K_{a} ((p, 1_{F(p)(x')}), 1_{p})
\\
&= (1_{a}, F(1_{a}) (1_{F(p)(x')}) (F(1_{p}))_{x'})
\\
&= (1_{a}, 1_{F(p)(x')})
\\
&= 1_{(a, F(p)(x'))}
\\
&= 1_{K_{a}((a',x'), p)}.
\end{aligned}
$$
On pose
$$
\begin{aligned}
\sigma_{((a',x'), p)} &= 1_{1_{K_{a}((a',x'), p)}}
\\
&= 1_{1_{(a, F(p)(x'))}}
\\
&= 1_{(1_{a}, 1_{F(p)(x')})}
\\
&= (1_{1_{a}}, 1_{1_{F(p)(x')}}).
\end{aligned}
$$
La condition de cohérence ALC 7 résulte alors des égalités
$$
\begin{aligned}
K_{a} (\alpha_{((f,r), \sigma)}) &= K_{a} (\sigma, 1_{F(p')(r) (F(\sigma))_{x'}})
\\
&= (1_{1_{a}}, F(1_{a}) (1_{F(p')(r) (F(\sigma))_{x'}}) \CompDeuxZero (F(1_{p}))_{x'})
\\
&= (1_{1_{a}}, 1_{F(p')(r) (F(\sigma))_{x'}})
\\
&= 1_{(1_{a}, F(p')(r) (F(\sigma))_{x'})}
\\
&= 1_{K_{a} ((f,r), \sigma)}.
\end{aligned}
$$

Pour tout objet $(a,x)$ de $\Fibre{\left(\DeuxInt{\mathdeuxcat{A}}F\right)}{P_{F}}{a}$, 
$$
\begin{aligned}
\alpha_{J_{a} (a,x)} &= \alpha_{((a,x), 1_{a})}
\\
&= ((1_{a}, 1_{F(1_{a})(x)}), 1_{1_{a}})
\\
&= ((1_{a}, 1_{x}), 1_{1_{a}})
\\
&= 1_{((a,x), 1_{a})}
\\
&= 1_{J_{a} (a,x)}.
\end{aligned}
$$ 

On pose
$$
\begin{aligned}
\tau_{(a,x)} &= 1_{1_{J_{a} (a,x)}}
\\
&=(1_{1_{a}}, 1_{1_{x}}).
\end{aligned}
$$

La condition de cohérence ALC 8 résulte alors des égalités 
$$
\begin{aligned}
\alpha_{J_{a}(1_{a}, r)} &= \alpha_{((1_{a},r), 1_{1_{a}})}
\\
&= (1_{1_{a}}, 1_{F(1_{a})(r) (F(1_{1_{a}}))_{x}})
\\
&= (1_{1_{a}}, 1_{r})
\\
&= 1_{((1_{a},r), 1_{1_{a}})}
\\
&= 1_{J_{a}(1_{a},r)}. 
\end{aligned}
$$

La condition de cohérence ALC 9, elle, est trivialement vérifiée. 

Il ne reste plus qu'à vérifier la condition de cohérence ALC 10, qui résulte des égalités
$$
\begin{aligned}
\alpha_{\alpha_{((a',x'), p)}} &= \alpha_{((p, 1_{F(p)(x')}), 1_{p})}
\\
&= (1_{p}, 1_{F(1_{a}) (1_{F(p)(x')}) (F(1_{p}))_{x'}})
\\
&= (1_{p}, 1_{1_{F(p)(x')}})
\\
&=1_{((p, 1_{F(p)(x')}), 1_{p})}
\\
&= 1_{\alpha_{((a',x'), p)}}
\\
&= 1_{\alpha_{J_{a} K_{a} ((a',x'),p)} \alpha_{((a',x'), p)}}. 
\end{aligned}
$$
\end{proof}
\end{paragr}

\begin{paragr}\label{IntegrationDeuxFonctorielle}
La donnée de deux \DeuxFoncteursStricts{} $u$ et $v$ d'une \deux{}catégorie $\mathdeuxcat{A}$ vers $\DeuxCatDeuxCat$ et d'une \DeuxTransformationStricte{} $\sigma : u \Rightarrow v$ permet de définir un \DeuxFoncteurStrict{}
$$
\begin{aligned}
\DeuxInt{\mathdeuxcat{A}}{\sigma}\index[not]{0zA0Sigma@$\DeuxInt{\mathdeuxcat{A}}{\sigma}$} : \DeuxInt{\mathdeuxcat{A}}{u} &\to \DeuxInt{\mathdeuxcat{A}}{v}
\\
(a, x) &\mapsto (a, \sigma_{a}(x))
\\
(f, r) &\mapsto (f, \sigma_{a'}(r))
\\
(\gamma, \varphi) &\mapsto (\gamma, \sigma_{a'} (\varphi)).
\end{aligned}
$$
Les vérifications ne posent aucune difficulté ; c'est du reste encore un point que la notion de structure comma permet d'approcher de façon plus conceptuelle.

\end{paragr}

\begin{rem}\label{FonctorialiteIntegration}
Ainsi, la construction d'intégration $\DeuxInt{\mathdeuxcat{A}}$ est fonctorielle sur les \DeuxFoncteursStricts{} $\mathdeuxcat{A} \to \DeuxCatDeuxCat$ et les \DeuxTransformationsStrictes{} entre iceux. En fait, elle est même fonctorielle sur les \DeuxFoncteursCoLax{} $\mathdeuxcat{A} \to \DeuxCatDeuxCat$ et les \DeuxTransformationsCoLax{} entre iceux.  
\end{rem}

\begin{rem}
Soient $\mathdeuxcat{A}$ une \deux{}catégorie, $u$ et $v$ des \DeuxFoncteursStricts{} de $\mathdeuxcat{A}$ vers $\DeuxCatDeuxCat$ et $\sigma$ une \DeuxTransformationStricte{} de $u$ vers $v$. Alors, le diagramme de \DeuxFoncteursStricts{}
$$
\xymatrix{
\DeuxInt{\mathdeuxcat{A}}{u}
\ar[rr]^{\DeuxInt{\mathdeuxcat{A}}{\sigma}}
\ar[dr]_{P_{u}}
&&\DeuxInt{\mathdeuxcat{A}}{v}
\ar[dl]^{P_{v}}
\\
&\mathdeuxcat{A}
}
$$
est commutatif, $P_{u}$ et $P_{v}$ désignant les projections canoniques. On a donc notamment, pour tout objet $a$ de $\mathdeuxcat{A}$, un \DeuxFoncteurStrict{} 
$$
\DeuxFoncTrancheLax{\left(\DeuxInt{\mathdeuxcat{A}}{\sigma}\right)}{a} : \TrancheLax{\left(\DeuxInt{\mathdeuxcat{A}}{u}\right)}{P_{u}}{a} \to \TrancheLax{\left(\DeuxInt{\mathdeuxcat{A}}{v}\right)}{P_{v}}{a}.
$$
\end{rem}

\begin{lemme}\label{KikiDeMontparnasse}
Soient $\mathdeuxcat{A}$ une \deux{}catégorie, $u$ et $v$ des \DeuxFoncteursStricts{} de $\mathdeuxcat{A}$ vers $\DeuxCatDeuxCat$, $\sigma$ une \DeuxTransformationStricte{} de $u$ vers $v$ et $a$ un objet de $\mathdeuxcat{A}$. Il existe alors un diagramme commutatif de \DeuxFoncteursStricts{}
$$
\xymatrix{
u(a)
\ar[r]^{\sigma_{a}}
\ar[d]
&v(a)
\ar[d]
\\
\Fibre{(\DeuxInt{\mathdeuxcat{A}}{u})}{P_{u}}{a}
\ar[r]_{(\DeuxInt{\mathdeuxcat{A}}{\sigma})_{a}}
&\Fibre{(\DeuxInt{\mathdeuxcat{A}}{v})}{P_{v}}{a}
}
$$
dont les flèches verticales sont des isomorphismes.
\end{lemme}

\begin{proof}
Considérons les applications définies par
$$
\begin{aligned}
u(a) &\to \Fibre{\left(\DeuxInt{\mathdeuxcat{A}}{u}\right)}{P_{u}}{a}
\\
x &\mapsto (a,x)
\\
r &\mapsto (1_{a}, r)
\\
\varphi &\mapsto (1_{1_{a}}, \varphi)
\end{aligned}
$$
et
$$
\begin{aligned}
\Fibre{\left(\DeuxInt{\mathdeuxcat{A}}{u}\right)}{P_{u}}{a} &\to u(a)
\\
(a,x) &\mapsto x
\\
(1_{a}, r) &\mapsto r
\\
(1_{1_{a}}, \varphi) &\mapsto \varphi
\end{aligned}
$$
respectivement. Vérifions qu'elles définissent des \DeuxFoncteursStricts{}, que l'on notera $F$ et $G$ respectivement. Pour toute \un{}cellule $r$ de $x$ vers $x'$ dans $u(a)$, $(1_{a}, r)$ est bien une \un{}cellule de $(a,x)$ vers $(a, x')$ dans $\Fibre{\left(\DeuxInt{\mathdeuxcat{A}}{u}\right)}{P_{u}}{a}$ et, pour toute \deux{}cellule $\varphi$ de $r$ vers $s$ dans $u(a)$, $(1_{1_{a}}, \varphi)$ est bien une \deux{}cellule de $(1_{a}, r)$ vers $(1_{a}, s)$ dans $\Fibre{\left(\DeuxInt{\mathdeuxcat{A}}{u}\right)}{P_{u}}{a}$. De plus, pour toute \un{}cellule $r$ de $u(a)$, 
$$
\begin{aligned}
F(1_{r}) &= (1_{1_{a}}, 1_{r})
\\ 
&= 1_{(1_{a}, r)} 
\\
&= 1_{F(r)}.
\end{aligned}
$$

Étant donné $r$, $s$ et $t$ trois \un{}cellules de $x$ vers $x'$, $\gamma$ une \deux{}cellule de $r$ vers $s$ et $\delta$ une \deux{}cellule de $s$ vers $t$ dans $u(a)$, 
$$
\begin{aligned}
F(\delta \gamma) &= (1_{1_{a}}, \delta \gamma) 
\\
&= (1_{1_{a}}, (\delta \CompDeuxZero (u (1_{1_{a}}))_{x}) \gamma) 
\\
&= (1_{1_{a}}, \delta) (1_{1_{a}}, \gamma) 
\\
&= F(\delta) F(\gamma).
\end{aligned}
$$

Étant donné deux \un{}cellules $r$ et $r'$ de $u(a)$ telles que la composée $r'r$ fasse sens, 
$$
\begin{aligned}
F(r'r) &= (1_{a}, r'r) 
\\
&= (1_{a}, r' u(1_{a}) (r)) 
\\
&= (1_{a}, r') (1_{a}, r) 
\\
&= F(r') F(r).
\end{aligned}
$$

Étant donné $r$, $r'$, $s$ et $s'$ quatre \un{}cellules de $u(a)$ telles que les composées $r'r$ et $s's$ fassent sens, $\gamma$ une \deux{}cellule de $r$ vers $s$ et $\gamma'$ une \deux{}cellule de $r'$ vers $s'$, 
$$
\begin{aligned}
F(\gamma' \CompDeuxZero \gamma) &= (1_{1_{a}}, \gamma' \CompDeuxZero \gamma) 
\\
&= (1_{1_{a}}, \gamma' \CompDeuxZero u(1_{a})(\gamma)) 
\\
&= (1_{1_{a}}, \gamma') (1_{1_{a}}, \gamma) 
\\
&= F(\gamma') \CompDeuxZero F(\gamma).
\end{aligned}
$$

Pour tout objet $x$ de $u(a)$, 
$$
\begin{aligned}
F(1_{x}) &= (1_{a}, 1_{x}) 
\\
&= 1_{(a,x)} 
\\
&= 1_{F(x)}.
\end{aligned}
$$

Ainsi, $F$ est bien un \DeuxFoncteurStrict{}.

Pour toute \un{}cellule $(1_{a}, r)$ de $(a,x)$ vers $(a,x')$ dans $\Fibre{(\DeuxInt{\mathdeuxcat{A}}{u})}{P_{u}}{a}$, $r$ est une \un{}cellule de $x$ vers $x'$ dans $u(a)$. Pour toute \deux{}cellule $(1_{1_{a}}, \varphi)$ d'une \un{}cellule $(1_{a}, r)$ vers une \un{}cellule $(1_{a}, s)$ dans $\Fibre{(\DeuxInt{\mathdeuxcat{A}}{u})}{P_{u}}{a}$, $\varphi$ est bien une \deux{}cellule de $r$ vers $s$ dans $u(a)$. Pour toute \un{}cellule $(1_{a}, r)$ de $\Fibre{(\DeuxInt{\mathdeuxcat{A}}{u})}{P_{u}}{a}$,
$$
\begin{aligned}
G(1_{(1_{a}, r)}) &= G(1_{1_{a}}, 1_{r}) 
\\
&= 1_{r} 
\\
&= 1_{G(1_{a}, r)}.
\end{aligned}
$$

Étant donné $(1_{a}, r)$, $(1_{a}, s)$ et $(1_{a}, t)$ trois \un{}cellules de $(a,x)$ vers $(a,x')$, $(1_{1_{a}}, \varphi)$ une \deux{}cellule de $(1_{a}, r)$ vers $(1_{a}, s)$ et $(1_{1_{a}}, \psi)$ une \deux{}cellule de $(1_{a}, s)$ vers $(1_{a}, t)$,
$$
\begin{aligned}
G((1_{1_{a}}, \psi) (1_{1_{a}}, \varphi)) &= G(1_{1_{a}}, (\psi \CompDeuxZero (u (1_{1_{a}}))_{x}) \varphi) 
\\
&= G(1_{1_{a}}, \psi \varphi) 
\\
&= \psi \varphi 
\\
&= G(1_{1_{a}}, \psi) G(1_{1_{a}}, \varphi).
\end{aligned}
$$

Soient $(1_{a}, r)$ et $(1_{a}, r')$ deux \un{}cellules de $\Fibre{(\DeuxInt{\mathdeuxcat{A}}{u})}{P_{u}}{a}$ telles que la composée $(1_{a}, r') (1_{a}, r)$ fasse sens. Alors,
$$
\begin{aligned}
G((1_{a}, r') (1_{a}, r)) &= G(1_{a}, r' u(1_{a}) (r)) 
\\
&= G(1_{a}, r'r) 
\\
&= r'r 
\\
&= G(1_{a}, r') G(1_{a}, r).
\end{aligned}
$$

Soient $(1_{a}, r)$ et $(1_{a}, s)$ deux \un{}cellules de $(a,x)$ vers $(a,x')$, $(1_{a}, r')$ et $(1_{a}, s')$ deux \un{}cellules de $(a,x')$ vers $(a,x'')$, $(1_{1_{a}}, \varphi)$ une \deux{}cellule de $(1_{a}, r)$ vers $(1_{a}, s)$ et $(1_{1_{a}}, \varphi')$ une \deux{}cellule de $(1_{a}, r')$ vers $(1_{a}, s')$. Alors,
$$
\begin{aligned}
G((1_{1_{a}}, \varphi') \CompDeuxZero (1_{1_{a}}, \varphi)) &= G(1_{1_{a}}, \varphi' \CompDeuxZero u(1_{a})(\varphi)) = G(1_{1_{a}}, \varphi' \CompDeuxZero \varphi) 
\\
&= \varphi' \CompDeuxZero \varphi 
\\
&= G(1_{1_{a}}, \varphi') G(1_{1_{a}}, \varphi).
\end{aligned}
$$

Pour tout objet $(a,x)$ de $\Fibre{(\DeuxInt{\mathdeuxcat{A}}{u})}{P_{u}}{a}$,
$$
\begin{aligned}
G(1_{(a,x)}) &= G (1_{a}, 1_{x}) 
\\
&= 1_{x} 
\\
&= 1_{G(a,x)}.
\end{aligned}
$$

Ainsi, $G$ est bien un \DeuxFoncteurStrict{}. Il est clair que $F$ et $G$ sont des isomorphismes inverses l'un de l'autre. On définit de façon analogue un couple d'isomorphismes entre $v(a)$ et $\Fibre{(\DeuxInt{\mathdeuxcat{A}}{v})}{P_{v}}{a}$. La commutativité du diagramme obtenu est alors immédiate. 
\end{proof}

\begin{lemme}\label{PierreDac}
Soient $\mathdeuxcat{A}$ une \deux{}catégorie, $u$ et $v$ des \DeuxFoncteursStricts{} de $\mathdeuxcat{A}$ vers $\DeuxCatDeuxCat$ et $\sigma$ une \DeuxTransformationStricte{} de $u$ vers $v$. Alors, pour tout objet $a$ de $\mathdeuxcat{A}$, le diagramme de \DeuxFoncteursStricts{}\footnote{On a commis le léger abus de noter de la même façon les deux flèches verticales.}
$$
\xymatrix{
\Fibre{(\DeuxInt{\mathdeuxcat{A}}{u})}{P_{u}}{a}
\ar[rr]^{(\DeuxInt{\mathdeuxcat{A}}{\sigma})_{a}}
\ar[d]_{J_{a}}
&&\Fibre{(\DeuxInt{\mathdeuxcat{A}}{v})}{P_{v}}{a}
\ar[d]^{J_{a}}
\\
\TrancheLax{(\DeuxInt{\mathdeuxcat{A}}{u})}{P_{u}}{a}
\ar[rr]_{\DeuxFoncTrancheLax{(\DeuxInt{\mathdeuxcat{A}}{\sigma})}{a}}
&&\TrancheLax{(\DeuxInt{\mathdeuxcat{A}}{v})}{P_{v}}{a}
}
$$
est commutatif. 
\end{lemme}

\begin{proof}
Pour tout objet $(a,x)$ de $\Fibre{(\DeuxInt{\mathdeuxcat{A}}{u})}{P_{u}}{a}$, 
$$
\begin{aligned}
\left(\DeuxFoncTrancheLax{\left(\DeuxInt{\mathdeuxcat{A}}{\sigma}\right)}{a}\right) (J_{a} (a,x)) &= \left(\DeuxFoncTrancheLax{\left(\DeuxInt{\mathdeuxcat{A}}{\sigma}\right)}{a}\right) ((a, x), 1_{a})
\\
&= \left(\left(\DeuxInt{\mathdeuxcat{A}}{\sigma}\right) (a,x), 1_{a}\right)
\\
&= ((a, \sigma_{a} (x)), 1_{a})
\\
&= J_{a} (a, \sigma_{a} (x))
\\
&= J_{a} \left(\left(\DeuxInt{\mathdeuxcat{A}}{\sigma}\right)_{a} (a,x)\right).
\end{aligned}
$$

Pour toute \un{}cellule $(1_{a}, r)$ de $\Fibre{(\DeuxInt{\mathdeuxcat{A}}{u})}{P_{u}}{a}$, 
$$
\begin{aligned}
\left(\DeuxFoncTrancheLax{\left(\DeuxInt{\mathdeuxcat{A}}{\sigma}\right)}{a}\right) (J_{a} (1_{a}, r)) &= \left(\DeuxFoncTrancheLax{\left(\DeuxInt{\mathdeuxcat{A}}{\sigma}\right)}{a}\right) ((1_{a}, r), 1_{1_{a}})
\\
&= \left(\left(\DeuxInt{\mathdeuxcat{A}}{\sigma}\right) (1_{a}, r), 1_{1_{a}}\right)
\\
&= ((1_{a}, \sigma_{a} (r)), 1_{1_{a}})
\\
&= J_{a} (1_{a}, \sigma_{a} (r))
\\
&= J_{a} \left(\left(\DeuxInt{\mathdeuxcat{A}}{\sigma}\right)_{a} (1_{a}, r)\right).
\end{aligned}
$$

Pour toute \deux{}cellule $(1_{1_{a}}, \varphi)$ de $\Fibre{(\DeuxInt{\mathdeuxcat{A}}{u})}{P_{u}}{a}$, 
$$
\begin{aligned}
\left(\DeuxFoncTrancheLax{\left(\DeuxInt{\mathdeuxcat{A}}{\sigma}\right)}{a}\right) (J_{a} (1_{1_{a}}, \varphi)) &= \left(\DeuxFoncTrancheLax{\left(\DeuxInt{\mathdeuxcat{A}}{\sigma}\right)}{a}\right) (1_{1_{a}}, \varphi)
\\
&= \left(\DeuxInt{\mathdeuxcat{A}}{\sigma}\right) (1_{1_{a}}, \varphi)
\\
&= (1_{1_{a}}, \sigma_{a} (\varphi))
\\
&= J_{a} (1_{1_{a}}, \sigma_{a} (\varphi))
\\
&= J_{a} \left(\left(\DeuxInt{\mathdeuxcat{A}}{\sigma}\right)_{a} (1_{1_{a}}, \varphi)\right).
\end{aligned}
$$
Le lemme est donc vérifié.
\end{proof}

Au début de cette section, nous avons associé une \deux{}catégorie précoopfibrée sur $\mathdeuxcat{A}$ à tout \DeuxFoncteurStrict{} $\mathdeuxcat{A} \to \DeuxCatDeuxCat$. Nous introduisons maintenant quelques constructions duales. 

\begin{paragr}
Toujours sous l'hypothèse de la donnée d'un \DeuxFoncteurStrict{} $\mathdeuxcat{A} \to \DeuxCatDeuxCat$, on définit une \deux{}catégorie $\DeuxIntUnOp{\mathdeuxcat{A}} F$\index[not]{0zAF1@$\DeuxIntUnOp{\mathdeuxcat{A}} F$} par la formule
$$
\DeuxIntUnOp{\mathdeuxcat{A}} F = \DeuxCatDeuxOp{\left(\DeuxInt{\DeuxCatDeuxOp{\mathdeuxcat{A}}} \DeuxFoncDeuxOp{(?^{coop} \circ F)}\right)}.
$$
Plus concrètement, cette \deux{}catégorie se décrit comme suit. Les objets sont les couples $(a,x)$, avec $a$ un objet de $\mathdeuxcat{A}$ et $x$ un objet de $F(a)$. Les \un{}cellules de $(a,x)$ vers $(a',x')$ sont les couples $(f : a \to a', r : x' \to F(f)(x))$, avec $f$ une \un{}cellule de $\mathdeuxcat{A}$ et $r$ une \un{}cellule de $F(a')$. Les \deux{}cellules de $(f,r)$ vers $(g,s)$ sont les couples $(\gamma : f \Rightarrow g, \varphi : (F(\gamma))_{x} r \Rightarrow s)$, avec $\gamma$ une \deux{}cellule de $\mathdeuxcat{A}$ et $\varphi$ une \deux{}cellule de $F(a')$. Les diverses unités et compositions sont définies de façon « évidente ». La projection canonique $\DeuxInt{\DeuxCatDeuxOp{\mathdeuxcat{A}}} \DeuxFoncDeuxOp{(?^{coop} \circ F)} \to \DeuxCatDeuxOp{\mathdeuxcat{A}}$ étant une précoopfibration, la projection canonique $\DeuxIntUnOp{\mathdeuxcat{A}} F \to \mathdeuxcat{A}$ est une préopfibration, dont la fibre au-dessus de $a \in \Objets{\mathdeuxcat{A}}$ s'identifie à la \deux{}catégorie $\DeuxCatUnOp{(F(a))}$. Cette construction est en fait fonctorielle sur les \DeuxFoncteursLax{} $\mathdeuxcat{A} \to \DeuxCatDeuxCat$ et les \DeuxTransformationsLax{} entre iceux.
\end{paragr}

\begin{paragr}
Toujours sous l'hypothèse de la donnée d'un \DeuxFoncteurStrict{} $\mathdeuxcat{A} \to \DeuxCatDeuxCat$, on définit une \deux{}catégorie $\DeuxIntDeuxOp{\mathdeuxcat{A}} F$\index[not]{0zAF2@$\DeuxIntDeuxOp{\mathdeuxcat{A}} F$} par la formule
$$
\DeuxIntDeuxOp{\mathdeuxcat{A}} F = \DeuxInt{\mathdeuxcat{A}} (?^{co} \circ F).
$$
Plus concrètement, cette \deux{}catégorie se décrit comme suit. Les objets sont les couples $(a,x)$, avec $a$ un objet de $\mathdeuxcat{A}$ et $x$ un  objet de $F(a)$. Les \un{}cellules de $(a,x)$ vers $(a',x')$ sont les couples $(f : a \to a', r : F(f)(x) \to x')$, avec $f$ une \un{}cellule de $\mathdeuxcat{A}$ et $r$ une \un{}cellule de $F(a')$. Les \deux{}cellules de $(f,r)$ vers $(g,s)$ sont les couples $(\gamma : f \Rightarrow g, \varphi : s (F(\gamma))_{x} \Rightarrow r)$, avec $\gamma$ une \deux{}cellule de $\mathdeuxcat{A}$ et $\varphi$ une \deux{}cellule de $F(a')$. Les diverses unités et compositions sont définies de façon « évidente ». La projection canonique $\DeuxIntDeuxOp{\mathdeuxcat{A}} F \to \mathdeuxcat{A}$ est une précoopfibration, dont la fibre au-dessus de $a \in \Objets{\mathdeuxcat{A}}$ s'identifie à la \deux{}catégorie $\DeuxCatDeuxOp{(F(a))}$. Cette construction est en fait fonctorielle sur les \DeuxFoncteursCoLax{} $\mathdeuxcat{A} \to \DeuxCatDeuxCat$ et les \DeuxTransformationsCoLax{} entre iceux.
\end{paragr}

\begin{paragr}
Toujours sous l'hypothèse de la donnée d'un \DeuxFoncteurStrict{} $\mathdeuxcat{A} \to \DeuxCatDeuxCat$, on définit une \deux{}catégorie $\DeuxIntToutOp{\mathdeuxcat{A}} F$\index[not]{0zAF12@$\DeuxIntToutOp{\mathdeuxcat{A}} F$} par la formule
$$
\DeuxIntToutOp{\mathdeuxcat{A}} F = \DeuxCatDeuxOp{\left(\DeuxInt{\DeuxCatDeuxOp{\mathdeuxcat{A}}} \DeuxFoncDeuxOp{(?^{op} \circ F)}\right)}.
$$
Plus concrètement, cette \deux{}catégorie se décrit comme suit. Les objets sont les couples $(a,x)$, avec $a$ un objet de $\mathdeuxcat{A}$ et $x$ un objet de $F(a)$. Les \un{}cellules de $(a,x)$ vers $(a',x')$ sont les couples $(f : a \to a', r : x' \to F(f)(x))$, avec $f$ une \un{}cellule de $\mathdeuxcat{A}$ et $r$ une \un{}cellule de $F(a')$. Les \deux{}cellules de $(f,r)$ vers $(g,s)$ sont les couples $(\gamma : f \Rightarrow g, \varphi : s \Rightarrow (F(\gamma))_{x} r)$, avec $\gamma$ une \deux{}cellule de $\mathdeuxcat{A}$ et $\varphi$ une \deux{}cellule de $F(a')$. Les diverses unités et compositions sont définies de façon « évidente ». La projection canonique $\DeuxInt{\DeuxCatDeuxOp{\mathdeuxcat{A}}} \DeuxFoncDeuxOp{(?^{op} \circ F)} \to \DeuxCatDeuxOp{\mathdeuxcat{A}}$ étant une précoopfibration, la projection canonique $\DeuxIntToutOp{\mathdeuxcat{A}} F \to \mathdeuxcat{A}$ est une préopfibration, dont la fibre au-dessus de $a \in \Objets{\mathdeuxcat{A}}$ s'identifie à la \deux{}catégorie $\DeuxCatToutOp{(F(a))}$. Cette construction est en fait fonctorielle sur les \DeuxFoncteursLax{} $\mathdeuxcat{A} \to \DeuxCatDeuxCat$ et les \DeuxTransformationsLax{} entre iceux.
\end{paragr}

\begin{paragr}\label{DefDeuxIntOp}
Pour tout \DeuxFoncteurStrict{} $F : \DeuxCatUnOp{\mathdeuxcat{A}} \to \DeuxCatDeuxCat$, on définit une \deux{}catégorie $\DeuxIntOp{\mathdeuxcat{A}} F$\index[not]{0zAFOp@$\DeuxIntOp{\mathdeuxcat{A}} F$} par la formule
$$
\DeuxIntOp{\mathdeuxcat{A}} F = \DeuxCatToutOp{\left( \DeuxInt{\DeuxCatToutOp{\mathdeuxcat{A}}} \DeuxFoncDeuxOp{(?^{coop} \circ F)} \right)}.
$$
Plus concrètement, cette \deux{}catégorie se décrit comme suit. Les objets sont les couples $(a,x)$, avec $a$ un objet de $\mathdeuxcat{A}$ et $x$ un objet de $F(a)$. Les \un{}cellules de $(a,x)$ vers $(a',x')$ sont les couples $(f : a \to a', r : x \to F(f)(x'))$, avec $f$ une \un{}cellule de $\mathdeuxcat{A}$ et $r$ une \un{}cellule de $F(a)$. Les \deux{}cellules de $(f,r)$ vers $(g,s)$ sont les couples $(\gamma : f \Rightarrow g, \varphi : (F(\gamma))_{x'} r \Rightarrow s)$, avec $\gamma$ une \deux{}cellule de $\mathdeuxcat{A}$ et $\varphi$ une \deux{}cellule de $F(a)$. Les diverses unités et compositions sont définies de façon « évidente ». La projection canonique $\DeuxInt{\DeuxCatToutOp{\mathdeuxcat{A}}} \DeuxFoncDeuxOp{(?^{coop} \circ F)} \to \DeuxCatToutOp{\mathdeuxcat{A}}$ étant une précoopfibration, la projection canonique $\DeuxIntOp{\mathdeuxcat{A}} F \to \mathdeuxcat{A}$ est une préfibration, dont la fibre au-dessus de $a \in \Objets{\mathdeuxcat{A}}$ s'identifie à la \deux{}catégorie $F(a)$. Cette construction est en fait fonctorielle sur les \DeuxFoncteursLax{} $\mathdeuxcat{A} \to \DeuxCatDeuxCat$ et les \DeuxTransformationsLax{} entre iceux.
\end{paragr}

\begin{paragr}
Toujours sous l'hypothèse de la donnée d'un \DeuxFoncteurStrict{} $F : \DeuxCatUnOp{\mathdeuxcat{A}} \to \DeuxCatDeuxCat$, on définit une \deux{}catégorie $\DeuxIntOpUnOp{\mathdeuxcat{A}} F$\index[not]{0zAFOp1@$\DeuxIntOpUnOp{\mathdeuxcat{A}} F$} par la formule
$$
\DeuxIntOpUnOp{\mathdeuxcat{A}} F = \DeuxCatUnOp{\left(\DeuxInt{\DeuxCatUnOp{\mathdeuxcat{A}}} F\right)}.
$$
Plus concrètement, cette \deux{}catégorie se décrit comme suit. Les objets sont les couples $(a,x)$, avec $a$ un objet de $\mathdeuxcat{A}$ et $x$ un objet de $F(a)$. Les \un{}cellules de $(a,x)$ vers $(a',x')$ sont les couples $(f : a \to a', r : F(f)(x') \to x)$, avec $f$ une \un{}cellule de $\mathdeuxcat{A}$ et $r$ une \un{}cellule de $F(a)$. Les \deux{}cellules de $(f,r)$ vers $(g,s)$ sont les couples $(\gamma : f \Rightarrow g, \varphi : r \Rightarrow s (F(\gamma))_{x'})$, avec $\gamma$ une \deux{}cellule de $\mathdeuxcat{A}$ et $\varphi$ une \deux{}cellule de $F(a)$. Les diverses unités et compositions sont définies de façon « évidente ». La projection canonique $\DeuxInt{\DeuxCatUnOp{\mathdeuxcat{A}}} F \to \DeuxCatUnOp{\mathdeuxcat{A}}$ étant une précoopfibration, la projection canonique $\DeuxIntOpUnOp{\mathdeuxcat{A}} F \to \mathdeuxcat{A}$ est une précofibration, dont la fibre au-dessus de $a \in \Objets{\mathdeuxcat{A}}$ s'identifie à la \deux{}catégorie $\DeuxCatUnOp{(F(a))}$. Cette construction est en fait fonctorielle sur les \DeuxFoncteursCoLax{} $\mathdeuxcat{A} \to \DeuxCatDeuxCat$ et les \DeuxTransformationsCoLax{} entre iceux.
\end{paragr}

\begin{paragr}
Toujours sous l'hypothèse de la donnée d'un \DeuxFoncteurStrict{} $F : \DeuxCatUnOp{\mathdeuxcat{A}} \to \DeuxCatDeuxCat$, on définit une \deux{}catégorie $\DeuxIntOpDeuxOp{\mathdeuxcat{A}} F$\index[not]{0zAFOp2@$\DeuxIntOpDeuxOp{\mathdeuxcat{A}} F$} par la formule
$$
\DeuxIntOpDeuxOp{\mathdeuxcat{A}} F = \DeuxCatToutOp{\left( \DeuxInt{\DeuxCatToutOp{\mathdeuxcat{A}}} \DeuxFoncDeuxOp{(?^{op} \circ F)} \right)}.
$$
Plus concrètement, cette \deux{}catégorie se décrit comme suit. Les objets sont les couples $(a,x)$, avec $a$ un objet de $\mathdeuxcat{A}$ et $x$ un objet de $F(a)$. Les \un{}cellules de $(a,x)$ vers $(a',x')$ sont les couples $(f : a \to a', r : x \to F(f)(x'))$, avec $f$ une \un{}cellule de $\mathdeuxcat{A}$ et $r$ une \un{}cellule de $F(a)$. Les \deux{}cellules de $(f,r)$ vers $(g,s)$ sont les couples $(\gamma : f \Rightarrow g, \varphi : s \Rightarrow (F(\gamma))_{x'} r)$, avec $\gamma$ une \deux{}cellule de $\mathdeuxcat{A}$ et $\varphi$ une \deux{}cellule de $F(a)$. Les diverses unités et compositions sont définies de façon « évidente ». La projection canonique $\DeuxInt{\DeuxCatToutOp{\mathdeuxcat{A}}} \DeuxFoncDeuxOp{(?^{op} \circ F)} \to \DeuxCatToutOp{\mathdeuxcat{A}}$ étant une précoopfibration, la projection canonique $\DeuxIntOpDeuxOp{\mathdeuxcat{A}} F \to \mathdeuxcat{A}$ est une préfibration, dont la fibre au-dessus de $a \in \Objets{\mathdeuxcat{A}}$ s'identifie à la \deux{}catégorie $\DeuxCatDeuxOp{(F(a))}$. Cette construction est en fait fonctorielle sur les \DeuxFoncteursLax{} $\mathdeuxcat{A} \to \DeuxCatDeuxCat$ et les \DeuxTransformationsLax{} entre iceux.
\end{paragr}

\begin{paragr}
Toujours sous l'hypothèse de la donnée d'un \DeuxFoncteurStrict{} $F : \DeuxCatUnOp{\mathdeuxcat{A}} \to \DeuxCatDeuxCat$, on définit une \deux{}catégorie $\DeuxIntOpToutOp{\mathdeuxcat{A}} F$\index[not]{0zAFOp12@$\DeuxIntOpToutOp{\mathdeuxcat{A}} F$} par la formule
$$
\DeuxIntOpToutOp{\mathdeuxcat{A}} F = 
\DeuxCatUnOp{\left(\DeuxInt{\DeuxCatUnOp{\mathdeuxcat{A}}} (?^{co} \circ F)\right)}.
$$
Plus concrètement, cette \deux{}catégorie se décrit comme suit. Les objets sont les couples $(a,x)$, avec $a$ un objet de $\mathdeuxcat{A}$ et $x$ un objet de $F(a)$. Les \un{}cellules de $(a,x)$ vers $(a',x')$ sont les couples $(f : a \to a', r : F(f)(x') \to x)$, avec $f$ une \un{}cellule de $\mathdeuxcat{A}$ et $r$ une \un{}cellule de $F(a)$. Les \deux{}cellules de $(f,r)$ vers $(g,s)$ sont les couples $(\gamma : f \Rightarrow g, \varphi : s (F(\gamma))_{x'} \Rightarrow r)$, avec $\gamma$ une \deux{}cellule de $\mathdeuxcat{A}$ et $\varphi$ une \deux{}cellule de $F(a)$. Les diverses unités et compositions sont définies de façon « évidente ». La projection canonique $\DeuxInt{\DeuxCatUnOp{\mathdeuxcat{A}}} (?^{co} \circ F) \to \DeuxCatUnOp{\mathdeuxcat{A}}$ étant une précoopfibration, la projection canonique $\DeuxIntOpToutOp{\mathdeuxcat{A}} F \to \mathdeuxcat{A}$ est une précofibration, dont la fibre au-dessus de $a \in \Objets{\mathdeuxcat{A}}$ s'identifie à la \deux{}catégorie $\DeuxCatToutOp{(F(a))}$. Cette construction est en fait fonctorielle sur les \DeuxFoncteursCoLax{} $\mathdeuxcat{A} \to \DeuxCatDeuxCat$ et les \DeuxTransformationsCoLax{} entre iceux.
\end{paragr}

\begin{paragr}\label{DefDeuxIntCo}
Pour tout \DeuxFoncteurStrict{} $F : \DeuxCatDeuxOp{\mathdeuxcat{A}} \to \DeuxCatDeuxCat$, on définit une \deux{}catégorie $\DeuxIntCo{\mathdeuxcat{A}} F$\index[not]{0zAFCo@$\DeuxIntCo{\mathdeuxcat{A}} F$} par la formule
$$
\DeuxIntCo{\mathdeuxcat{A}} F = \DeuxCatDeuxOp{\left(\DeuxInt{\DeuxCatDeuxOp{\mathdeuxcat{A}}} \left(\DeuxFoncDeuxOp{?} \circ F\right)\right)}.
$$ 
Plus concrètement, cette \deux{}catégorie se décrit comme suit. Les objets sont les couples $(a,x)$, avec $a$ un objet de $\mathdeuxcat{A}$ et $x$ un objet de $F(a)$. Les \un{}cellules de $(a,x)$ vers $(a', x')$ sont les couples $(f : a \to a', r : F(f)(x) \to x')$, $f$ et $r$ étant des \un{}cellules de $\mathdeuxcat{A}$ et $F(a')$ respectivement. Les \deux{}cellules de $(f,r)$ vers $(g,s)$ sont les couples $(\gamma : f \Rightarrow g, \varphi : r (F(\gamma))_{x} \Rightarrow s)$, $\gamma$ et $\varphi$ étant des \deux{}cellules dans $\mathdeuxcat{A}$ et $F(a')$ respectivement. Les diverses unités et compositions sont définies de façon « évidente ». La projection canonique $\DeuxInt{\DeuxCatDeuxOp{\mathdeuxcat{A}}} (\DeuxFoncDeuxOp{?} \circ F) \to \DeuxCatDeuxOp{\mathdeuxcat{A}}$ étant une précoopfibration, la projection canonique $\DeuxIntCo{\mathdeuxcat{A}} F \to \mathdeuxcat{A}$ est une préopfibration, dont la fibre au-dessus de $a \in \Objets{\mathdeuxcat{A}}$ s'identifie à la \deux{}catégorie $F(a)$. Cette construction est en fait fonctorielle sur les \DeuxFoncteursCoLax{} $\mathdeuxcat{A} \to \DeuxCatDeuxCat$ et les \DeuxTransformationsCoLax{} entre iceux.
\end{paragr}

\begin{paragr}
Toujours sous l'hypothèse de la donnée d'un \DeuxFoncteurStrict{} $F : \DeuxCatDeuxOp{\mathdeuxcat{A}} \to \DeuxCatDeuxCat$, on définit une \deux{}catégorie $\DeuxIntCoUnOp{\mathdeuxcat{A}} F$\index[not]{0zAFCo1@$\DeuxIntCoUnOp{\mathdeuxcat{A}} F$} par la formule
$$
\DeuxIntCoUnOp{\mathdeuxcat{A}} F = 
\DeuxInt{\mathdeuxcat{A}} \DeuxFoncDeuxOp{(?^{op} \circ F)}.
$$
Plus concrètement, cette \deux{}catégorie se décrit comme suit. Les objets sont les couples $(a,x)$, avec $a$ un objet de $\mathdeuxcat{A}$ et $x$ un objet de $F(a)$. Les \un{}cellules de $(a,x)$ vers $(a', x')$ sont les couples $(f : a \to a', r : x' \to F(f)(x))$, $f$ et $r$ étant des \un{}cellules de $\mathdeuxcat{A}$ et $F(a')$ respectivement. Les \deux{}cellules de $(f,r)$ vers $(g,s)$ sont les couples $(\gamma : f \Rightarrow g, \varphi : (F(\gamma))_{x} r \Rightarrow s)$, $\gamma$ et $\varphi$ étant des \deux{}cellules dans $\mathdeuxcat{A}$ et $F(a')$ respectivement. Les diverses unités et compositions sont définies de façon « évidente ». La projection canonique $\DeuxIntCoUnOp{\mathdeuxcat{A}} F \to \mathdeuxcat{A}$ est une précoopfibration, dont la fibre au-dessus de $a \in \Objets{\mathdeuxcat{A}}$ s'identifie à la \deux{}catégorie $\DeuxCatUnOp{(F(a))}$. Cette construction est en fait fonctorielle sur les \DeuxFoncteursLax{} $\mathdeuxcat{A} \to \DeuxCatDeuxCat$ et les \DeuxTransformationsLax{} entre iceux.
\end{paragr}

\begin{paragr}
Toujours sous l'hypothèse de la donnée d'un \DeuxFoncteurStrict{} $F : \DeuxCatDeuxOp{\mathdeuxcat{A}} \to \DeuxCatDeuxCat$, on définit une \deux{}catégorie $\DeuxIntCoDeuxOp{\mathdeuxcat{A}} F$\index[not]{0zAFCo2@$\DeuxIntCoDeuxOp{\mathdeuxcat{A}} F$} par la formule
$$
\DeuxIntCoDeuxOp{\mathdeuxcat{A}} F = \DeuxCatDeuxOp{\left(\DeuxInt{\DeuxCatDeuxOp{\mathdeuxcat{A}}} F\right)}.
$$
Plus concrètement, cette \deux{}catégorie se décrit comme suit. Les objets sont les couples $(a,x)$, avec $a$ un objet de $\mathdeuxcat{A}$ et $x$ un objet de $F(a)$. Les \un{}cellules de $(a,x)$ vers $(a', x')$ sont les couples $(f : a \to a', r : F(f)(x) \to x')$, $f$ et $r$ étant des \un{}cellules de $\mathdeuxcat{A}$ et $F(a')$ respectivement. Les \deux{}cellules de $(f,r)$ vers $(g,s)$ sont les couples $(\gamma : f \Rightarrow g, \varphi : s \Rightarrow r (F(\gamma))_{x})$, $\gamma$ et $\varphi$ étant des \deux{}cellules dans $\mathdeuxcat{A}$ et $F(a')$ respectivement. Les diverses unités et compositions sont définies de façon « évidente ». La projection canonique $\DeuxInt{\DeuxCatDeuxOp{\mathdeuxcat{A}}} F \to \DeuxCatDeuxOp{\mathdeuxcat{A}}$ étant une précoopfibration, la projection canonique $\DeuxIntCoDeuxOp{\mathdeuxcat{A}} F \to \mathdeuxcat{A}$ est une préopfibration, dont la fibre au-dessus de $a \in \Objets{\mathdeuxcat{A}}$ s'identifie à la \deux{}catégorie $\DeuxCatDeuxOp{(F(a))}$. Cette construction est en fait fonctorielle sur les \DeuxFoncteursCoLax{} $\mathdeuxcat{A} \to \DeuxCatDeuxCat$ et les \DeuxTransformationsCoLax{} entre iceux.
\end{paragr}

\begin{paragr}
Toujours sous l'hypothèse de la donnée d'un \DeuxFoncteurStrict{} $F : \DeuxCatDeuxOp{\mathdeuxcat{A}} \to \DeuxCatDeuxCat$, on définit une \deux{}catégorie $\DeuxIntCoToutOp{\mathdeuxcat{A}} F$\index[not]{0zAFCo12@$\DeuxIntCoToutOp{\mathdeuxcat{A}} F$} par la formule
$$
\DeuxIntCoToutOp{\mathdeuxcat{A}} F = 
\DeuxInt{\mathdeuxcat{A}}\DeuxFoncDeuxOp{(?^{coop} \circ F)}.
$$
Plus concrètement, cette \deux{}catégorie se décrit comme suit. Les objets sont les couples $(a,x)$, avec $a$ un  objet de $\mathdeuxcat{A}$ et $x$ un objet de $F(a)$. Les \un{}cellules de $(a,x)$ vers $(a', x')$ sont les couples $(f : a \to a', r : x' \to F(f)(x))$, $f$ et $r$ étant des \un{}cellules de $\mathdeuxcat{A}$ et $F(a')$ respectivement. Les \deux{}cellules de $(f,r)$ vers $(g,s)$ sont les couples $(\gamma : f \Rightarrow g, \varphi : s \Rightarrow (F(\gamma))_{x} r)$, $\gamma$ et $\varphi$ étant des \deux{}cellules dans $\mathdeuxcat{A}$ et $F(a')$ respectivement. Les diverses unités et compositions sont définies de façon « évidente ». La projection canonique $\DeuxIntCoToutOp{\mathdeuxcat{A}} F \to \mathdeuxcat{A}$ est une précoopfibration, dont la fibre au-dessus de $a \in \Objets{\mathdeuxcat{A}}$ s'identifie à la \deux{}catégorie $\DeuxCatToutOp{(F(a))}$. Cette construction est en fait fonctorielle sur les \DeuxFoncteursLax{} $\mathdeuxcat{A} \to \DeuxCatDeuxCat$ et les \DeuxTransformationsLax{} entre iceux.
\end{paragr}

\begin{paragr}
Pour tout \DeuxFoncteurStrict{} $F : \DeuxCatToutOp{\mathdeuxcat{A}} \to \DeuxCatDeuxCat$, on définit une \deux{}catégorie $\DeuxIntCoOp{\mathdeuxcat{A}} F$\index[not]{0zAFCoOp@$\DeuxIntCoOp{\mathdeuxcat{A}} F$} par la formule
$$
\DeuxIntCoOp{\mathdeuxcat{A}} F = \DeuxCatUnOp{\left(\DeuxInt{\DeuxCatUnOp{\mathdeuxcat{A}}} \DeuxFoncDeuxOp{(?^{op} \circ F)} \right)}.
$$
Plus concrètement, cette \deux{}catégorie se décrit comme suit. Les objets sont les couples $(a,x)$, avec $a$ un objet de $\mathdeuxcat{A}$ et $x$ un objet de $F(a)$. Les \un{}cellules de $(a,x)$ vers $(a', x')$ sont les couples $(f : a \to a', r : x \to F(f)(x'))$, $f$ et $r$ étant des \un{}cellules de $\mathdeuxcat{A}$ et $F(a)$ respectivement. Les \deux{}cellules de $(f,r)$ vers $(g,s)$ sont les couples $(\gamma : f \Rightarrow g, \varphi : r \Rightarrow (F(\gamma))_{x'} s)$, $\gamma$ et $\varphi$ étant des \deux{}cellules dans $\mathdeuxcat{A}$ et $F(a)$ respectivement. Les diverses unités et compositions sont définies de façon « évidente ». La projection canonique $\DeuxInt{\DeuxCatUnOp{\mathdeuxcat{A}}} \DeuxFoncDeuxOp{(?^{op} \circ F)} \to \DeuxCatUnOp{\mathdeuxcat{A}}$ étant une précoopfibration, la projection canonique $\DeuxIntCoOp{\mathdeuxcat{A}} F \to \mathdeuxcat{A}$ est une précofibration, dont la fibre au-dessus de $a \in \Objets{\mathdeuxcat{A}}$ s'identifie à la \deux{}catégorie $F(a)$. Cette construction est en fait fonctorielle sur les \DeuxFoncteursLax{} $\mathdeuxcat{A} \to \DeuxCatDeuxCat$ et les \DeuxTransformationsLax{} entre iceux.
\end{paragr}

\begin{paragr}
Toujours sous l'hypothèse de la donnée d'un \DeuxFoncteurStrict{} $F : \DeuxCatToutOp{\mathdeuxcat{A}} \to \DeuxCatDeuxCat$, on définit une \deux{}catégorie $\DeuxIntCoOpUnOp{\mathdeuxcat{A}} F$\index[not]{0zAFCoOp1@$\DeuxIntCoOpUnOp{\mathdeuxcat{A}} F$} par la formule
$$
\DeuxIntCoOpUnOp{\mathdeuxcat{A}} F = 
\DeuxCatToutOp{\left(\DeuxInt{\DeuxCatToutOp{\mathdeuxcat{A}}} (?^{co} \circ F)\right)}.
$$
Plus concrètement, cette \deux{}catégorie se décrit comme suit. Les objets sont les couples $(a,x)$, avec $a$ un objet de $\mathdeuxcat{A}$ et $x$ un objet de $F(a)$. Les \un{}cellules de $(a,x)$ vers $(a', x')$ sont les couples $(f : a \to a', r : F(f)(x') \to x)$, $f$ et $r$ étant des \un{}cellules de $\mathdeuxcat{A}$ et $F(a)$ respectivement. Les \deux{}cellules de $(f,r)$ vers $(g,s)$ sont les couples $(\gamma : f \Rightarrow g, \varphi : r (F(\gamma))_{x'} \Rightarrow s)$, $\gamma$ et $\varphi$ étant des \deux{}cellules dans $\mathdeuxcat{A}$ et $F(a)$ respectivement. Les diverses unités et compositions sont définies de façon « évidente ». La projection canonique $\DeuxInt{\DeuxCatToutOp{\mathdeuxcat{A}}} (?^{co} \circ F) \to \DeuxCatToutOp{\mathdeuxcat{A}}$ étant une précoopfibration, la projection canonique $\DeuxIntCoOpUnOp{\mathdeuxcat{A}} F \to \mathdeuxcat{A}$ est une préfibration, dont la fibre au-dessus de $a \in \Objets{\mathdeuxcat{A}}$ s'identifie à la \deux{}catégorie $\DeuxCatUnOp{(F(a))}$. Cette construction est en fait fonctorielle sur les \DeuxFoncteursCoLax{} $\mathdeuxcat{A} \to \DeuxCatDeuxCat$ et les \DeuxTransformationsCoLax{} entre iceux.
\end{paragr}

\begin{paragr}
Toujours sous l'hypothèse de la donnée d'un \DeuxFoncteurStrict{} $F : \DeuxCatToutOp{\mathdeuxcat{A}} \to \DeuxCatDeuxCat$, on définit une \deux{}catégorie $\DeuxIntCoOpDeuxOp{\mathdeuxcat{A}} F$\index[not]{0zAFCoOp2@$\DeuxIntCoOpDeuxOp{\mathdeuxcat{A}} F$} par la formule
$$
\DeuxIntCoOpDeuxOp{\mathdeuxcat{A}} F = \DeuxCatUnOp{\left(\DeuxInt{\DeuxCatUnOp{\mathdeuxcat{A}}} \DeuxFoncDeuxOp{(?^{coop} \circ F)} \right)}.
$$
Plus concrètement, cette \deux{}catégorie se décrit comme suit. Les objets sont les couples $(a,x)$, avec $a$ un objet de $\mathdeuxcat{A}$ et $x$ un objet de $F(a)$. Les \un{}cellules de $(a,x)$ vers $(a', x')$ sont les couples $(f : a \to a', r : x \to F(f)(x'))$, $f$ et $r$ étant des \un{}cellules de $\mathdeuxcat{A}$ et $F(a)$ respectivement. Les \deux{}cellules de $(f,r)$ vers $(g,s)$ sont les couples $(\gamma : f \Rightarrow g, \varphi : (F(\gamma))_{x'} s \Rightarrow r)$, $\gamma$ et $\varphi$ étant des \deux{}cellules dans $\mathdeuxcat{A}$ et $F(a)$ respectivement. Les diverses unités et compositions sont définies de façon « évidente ». La projection canonique $\DeuxInt{\DeuxCatUnOp{\mathdeuxcat{A}}} \DeuxFoncDeuxOp{(?^{coop} \circ F)} \to \DeuxCatUnOp{\mathdeuxcat{A}}$ étant une précoopfibration, la projection canonique $\DeuxIntCoOpDeuxOp{\mathdeuxcat{A}} F \to \mathdeuxcat{A}$ est une précofibration, dont la fibre au-dessus de \mbox{$a \in \Objets{\mathdeuxcat{A}}$} s'identifie à la \deux{}catégorie $\DeuxCatDeuxOp{(F(a))}$. Cette construction est en fait fonctorielle sur les \DeuxFoncteursLax{} $\mathdeuxcat{A} \to \DeuxCatDeuxCat$ et les \DeuxTransformationsLax{} entre iceux.
\end{paragr}

\begin{paragr}
Toujours sous l'hypothèse de la donnée d'un \DeuxFoncteurStrict{} $F : \DeuxCatToutOp{\mathdeuxcat{A}} \to \DeuxCatDeuxCat$, on définit une \deux{}catégorie $\DeuxIntCoOpToutOp{\mathdeuxcat{A}} F$\index[not]{0zAFCoOp12@$\DeuxIntCoOpToutOp{\mathdeuxcat{A}} F$} par la formule
$$
\DeuxIntCoOpToutOp{\mathdeuxcat{A}} F = 
\DeuxCatToutOp{\left(\DeuxInt{\DeuxCatToutOp{\mathdeuxcat{A}}} F\right)}.
$$
Plus concrètement, cette \deux{}catégorie se décrit comme suit. Les objets sont les couples $(a,x)$, avec $a$ un objet de $\mathdeuxcat{A}$ et $x$ un objet de $F(\mathdeuxcat{A})$. Les \un{}cellules de $(a,x)$ vers $(a', x')$ sont les couples $(f : a \to a', r : F(f)(x') \to x)$, $f$ et $r$ étant des \un{}cellules de $\mathdeuxcat{A}$ et $F(a)$ respectivement. Les \deux{}cellules de $(f,r)$ vers $(g,s)$ sont les couples $(\gamma : f \Rightarrow g, \varphi : s \Rightarrow r (F(\gamma))_{x'})$, $\gamma$ et $\varphi$ étant des \deux{}cellules dans $\mathdeuxcat{A}$ et $F(a)$ respectivement. Les diverses unités et compositions sont définies de façon « évidente ».  La projection canonique $\DeuxInt{\DeuxCatToutOp{\mathdeuxcat{A}}} F \to \DeuxCatToutOp{\mathdeuxcat{A}}$ étant une précoopfibration, la projection canonique $\DeuxIntCoOpToutOp{\mathdeuxcat{A}} F \to \mathdeuxcat{A}$ est une préfibration, dont la fibre au-dessus de $a \in (\Objets{\mathdeuxcat{A}})$ s'identifie à la \deux{}catégorie $\DeuxCatToutOp{(F(a))}$. Cette construction est en fait fonctorielle sur les \DeuxFoncteursCoLax{} $\mathdeuxcat{A} \to \DeuxCatDeuxCat$ et les \DeuxTransformationsCoLax{} entre iceux.
\end{paragr}

%
%

\begin{rem}\label{RemarqueMutatis}
La proposition \ref{ProprietesJaKa} se transpose bien sûr \emph{mutatis mutandis} aux variantes introduites ci-dessus de la construction d'intégration. Nous explicitons ci-dessous le résultat pour trois d'entre elles.
\end{rem}

\begin{prop}\label{ProprietesJaKaOp}
Pour tout \DeuxFoncteurStrict{} $F : \DeuxCatUnOp{\mathdeuxcat{A}} \to \DeuxCatDeuxCat$ et tout objet $a$ de $\mathdeuxcat{A}$, le \DeuxFoncteurStrict{} canonique $J_{a} : \Fibre{\left(\DeuxIntOp{\mathdeuxcat{A}}F\right)}{P_{F}}{a} \to \OpTrancheCoLax{\left(\DeuxIntOp{\mathdeuxcat{A}}F\right)}{P_{F}}{a}$ (qui est un préadjoint à gauche lax) admet une rétraction canonique $K_{a}$ qui est un \DeuxFoncteurStrict{} et un préadjoint à droite colax. De plus, il existe une \DeuxTransformationCoLax{} canonique $J_{a} K_{a} \Rightarrow 1_{\OpTrancheCoLax{\left(\DeuxIntOp{\mathdeuxcat{A}}F\right)}{P_{F}}{a}}$.  
\end{prop} 

\begin{prop}\label{ProprietesJaKaCo}
Pour tout \DeuxFoncteurStrict{} $F : \DeuxCatDeuxOp{\mathdeuxcat{A}} \to \DeuxCatDeuxCat$ et tout objet $a$ de $\mathdeuxcat{A}$, le \DeuxFoncteurStrict{} canonique $J_{a} : \Fibre{\left(\DeuxIntCo{\mathdeuxcat{A}}F\right)}{P_{F}}{a} \to \TrancheCoLax{\left(\DeuxIntCo{\mathdeuxcat{A}}F\right)}{P_{F}}{a}$ (qui est un préadjoint à droite lax) admet une rétraction canonique $K_{a}$ qui est un \DeuxFoncteurStrict{} et un préadjoint à gauche colax. De plus, il existe une \DeuxTransformationLax{} canonique $1_{\TrancheCoLax{\left(\DeuxIntCo{\mathdeuxcat{A}}F\right)}{P_{F}}{a}} \Rightarrow J_{a} K_{a}$.  
\end{prop} 

\begin{prop}\label{ProprietesJaKaCoOp}
Pour tout \DeuxFoncteurStrict{} $F : \DeuxCatToutOp{\mathdeuxcat{A}} \to \DeuxCatDeuxCat$ et tout objet $a$ de $\mathdeuxcat{A}$, le \DeuxFoncteurStrict{} canonique $J_{a} : \Fibre{\left(\DeuxIntCoOp{\mathdeuxcat{A}}F\right)}{P_{F}}{a} \to \OpTrancheLax{\left(\DeuxIntCoOp{\mathdeuxcat{A}}F\right)}{P_{F}}{a}$ (qui est un préadjoint à gauche colax) admet une rétraction canonique $K_{a}$ qui est un \DeuxFoncteurStrict{} et un préadjoint à droite lax. De plus, il existe une \DeuxTransformationLax{} canonique $J_{a} K_{a} \Rightarrow 1_{\OpTrancheLax{\left(\DeuxIntCoOp{\mathdeuxcat{A}}F\right)}{P_{F}}{a}} $.  
\end{prop} 

\begin{paragr}
Rappelons que les considérations de la section \ref{SectionMorphismesTranches} nous ont permis d'associer, à tout \DeuxFoncteurStrict{}\footnote{Et même à tout \DeuxFoncteurLax{}, mais le cas général ne nous intéressera pas ici.} $w : \mathdeuxcat{A} \to \mathdeuxcat{C}$, un \DeuxFoncteurStrict{} $\DeuxFoncteurTranche{w} : \mathdeuxcat{C} \to \DeuxCatDeuxCat$ dont la définition s'y trouve détaillée. En vertu de la proposition \ref{ProjectionIntegralePrefibration}, la projection canonique $P_{\mathdeuxcat{C}} : \DeuxInt{\mathdeuxcat{C}} \DeuxFoncteurTranche{w} \to \mathdeuxcat{C}$ est une précoopfibration. Explicitons la structure de $\DeuxInt{\mathdeuxcat{C}} \DeuxFoncteurTranche{w}$. Au cours des descriptions suivantes, on omet quelques détails que le lecteur rétablira de lui-même s'il le souhaite. 

Les objets de $\DeuxInt{\mathdeuxcat{C}} \DeuxFoncteurTranche{w}$ sont les 
$$
(c, (a, p : w(a) \to c)).
$$

Les \un{}cellules de $(c, (a, p))$ vers $(c', (a', p'))$ sont les 
$$
(k : c \to c', (f : a \to a', \gamma : kp \Rightarrow p' w(f))).
$$

Les \deux{}cellules de 
$$
(k : c \to c', (f : a \to a', \gamma : kp \Rightarrow p' w(f)))
$$ 
vers 
$$
(l : c \to c', (g : a \to a', \delta : lp \Rightarrow p' w(g)))
$$ 
sont les 
$$
(\epsilon : k \Rightarrow l, \varphi : f \Rightarrow g)
$$ 
tels que  
$$
(p' \CompDeuxZero w(\varphi)) \CompDeuxUn \gamma = \delta \CompDeuxUn (\epsilon \CompDeuxZero p).
$$

L'identité de l'objet $(c, (a, p))$ est donnée par 
$$
1_{(c, (a, p))} = (1_{c}, (1_{a}, 1_{p})).
$$

Étant donné deux \un{}cellules $(k, (f, \gamma))$ et $(k', (f', \gamma'))$ telles que la composée 
$$
(k', (f', \gamma')) (k, (f, \gamma))
$$ 
fasse sens, cette dernière est donnée par la formule 
$$
(k', (f', \gamma')) (k, (f, \gamma)) = (k'k, f'f, (\gamma' \CompDeuxZero w(f)) (k' \CompDeuxZero \gamma)).
$$

L'identité de la \un{}cellule $(k, (f, \gamma))$ est donnée par 
$$
1_{(k, (f, \gamma))} = (1_{k}, 1_{f}).
$$

Étant donné deux \deux{}cellules $(\epsilon, \varphi)$ et $(\lambda, \psi)$ telles que la composée $(\lambda, \psi) (\epsilon, \varphi)$ fasse sens, cette dernière est donnée par la formule
$$
(\lambda, \psi) (\epsilon, \varphi) = (\lambda \epsilon, \psi \varphi).
$$

Étant donné deux \deux{}cellules $(\epsilon, \varphi)$ et $(\epsilon', \varphi')$ telles que la composée $(\epsilon', \varphi') \CompDeuxZero (\epsilon, \varphi)$ fasse sens, cette dernière est donnée par la formule
$$
(\epsilon', \varphi') \CompDeuxZero (\epsilon, \varphi) = (\epsilon' \CompDeuxZero \epsilon, \varphi' \CompDeuxZero \varphi).
$$

Cela termine la description de la \deux{}catégorie $\DeuxInt{\mathdeuxcat{C}} \DeuxFoncteurTranche{w}$. On sait déjà qu'elle est précoopfibrée sur $\mathdeuxcat{C}$. On va voir qu'elle est également préfibrée sur $\mathdeuxcat{A}$. 
\end{paragr}

\begin{paragr}\label{QPrefibration}
La donnée du \DeuxFoncteurStrict{} $w : \mathdeuxcat{A} \to \mathdeuxcat{C}$ permet de définir un \DeuxFoncteurStrict{} 
$$
\begin{aligned}
\DeuxCatUnOp{\mathdeuxcat{A}} &\to \DeuxCatDeuxCat
\\
a &\mapsto \OpTrancheCoLax{\mathdeuxcat{C}}{}{w(a)}.
\end{aligned}
$$
Il existe un isomorphisme canonique\footnote{On détaillera un isomorphisme du même genre au cours de la démonstration du théorème \ref{DeuxLocFondHuitDef}. On peut démontrer des isomorphismes plus généraux en utilisant les structures commas.}
$$
\DeuxInt{\mathdeuxcat{C}} \DeuxFoncteurTranche{w} = \DeuxInt{\mathdeuxcat{C}} \TrancheLax{\mathdeuxcat{A}}{w}{c} \simeq \DeuxIntOp{\mathdeuxcat{A}} (a \mapsto \OpTrancheCoLax{\mathdeuxcat{C}}{}{w(a)}). 
$$

La projection 
$$
\begin{aligned}
Q_{\mathdeuxcat{A}}\index[not]{QA@$Q_{\mathdeuxcat{A}}$} : \DeuxInt{\mathdeuxcat{C}}\DeuxFoncteurTranche{w} &\to \mathdeuxcat{A}
\\
(c, (a, p)) &\mapsto a
\\
(k, (f, \gamma)) &\mapsto f
\\
(\epsilon, \varphi) &\mapsto \varphi
\end{aligned}
$$
s'identifie à la projection canonique $\DeuxIntOp{\mathdeuxcat{A}} (a \mapsto \OpTrancheCoLax{\mathdeuxcat{C}}{}{w(a)}) \to \mathdeuxcat{A}$ ; c'est donc une préfibration, dont la fibre au-dessus de $a \in (\Objets{\mathdeuxcat{A}})$ s'identifie à la \deux{}catégorie $\OpTrancheCoLax{\mathdeuxcat{C}}{}{w(a)}$. 
\end{paragr}

\begin{lemme}\label{Desproges}
Soient
$$
\xymatrix{
\mathdeuxcat{A} 
\ar[rr]^{u}
\ar[dr]_{w}
&&\mathdeuxcat{B}
\ar[dl]^{v}
\dtwocell<\omit>{<7.3>\sigma}
\\
&\mathdeuxcat{C}
&{}
}
$$
un diagramme de \DeuxFoncteursStricts{} commutatif à l'\DeuxTransformationCoLax{} $\sigma : vu \Rightarrow w$ près seulement. Alors, le diagramme de \DeuxFoncteursStricts{}\footnote{Voir le paragraphe \ref{IntegrationTransformation} pour la définition de $\DeuxFoncteurTranche{\sigma}$.}
$$
\xymatrix{
\DeuxInt{\mathdeuxcat{C}}\DeuxFoncteurTranche{w}
\ar[rr]^{\DeuxInt{\mathdeuxcat{C}}\DeuxFoncteurTranche{\sigma}}
\ar[d]_{Q_{\mathdeuxcat{A}}}
&&\DeuxInt{\mathdeuxcat{C}}\DeuxFoncteurTranche{v}
\ar[d]^{Q_{\mathdeuxcat{B}}}
\\
\mathdeuxcat{A}
\ar[rr]_{u}
&&\mathdeuxcat{B}
}
$$
(dans lequel les flèches verticales désignent les projections) est commutatif. 
\end{lemme}

\begin{proof}
Les vérifications, immédiates, ne présentent aucune difficulté : 
$$
\begin{aligned}
u(Q_{\mathdeuxcat{A}} (c, (a,p))) &= u(a)
\\
&= Q_{\mathdeuxcat{B}} (c, (u(a), p \sigma_{a}))
\\
&= Q_{\mathdeuxcat{B}} \left(\DeuxInt{\mathdeuxcat{C}}\DeuxFoncteurTranche{\sigma} (c, (a,p))\right),
\\
u(Q_{\mathdeuxcat{A}} (k, (f, \gamma))) &= u(f)
\\
&= Q_{\mathdeuxcat{B}} (k, u(f), (p' \CompDeuxZero \sigma_{f}) (\gamma \CompDeuxZero \sigma_{a}))
\\
&= Q_{\mathdeuxcat{B}} \left(\DeuxInt{\mathdeuxcat{C}}\DeuxFoncteurTranche{\sigma} (k, (f, \gamma))\right),
\\
u(Q_{\mathdeuxcat{A}} (\epsilon, \varphi)) &= u(\varphi)
\\
&= Q_{\mathdeuxcat{B}} (\epsilon, u(\varphi))
\\
&= Q_{\mathdeuxcat{B}} \left(\DeuxInt{\mathdeuxcat{C}}\DeuxFoncteurTranche{\sigma} (\epsilon, \varphi)\right).
\end{aligned}
$$
\end{proof}

\section{Cylindres tordus}\label{SectionCylindres}

\begin{paragr}\label{DefS1}
Pour toute \deux{}catégorie $\mathdeuxcat{A}$, les applications
$$
a \mapsto \OpTrancheLax{\mathdeuxcat{A}}{}{a}
$$
et
$$
a \mapsto \DeuxCatUnOp{(\TrancheLax{\mathdeuxcat{A}}{}{a})}
$$
définissent des \DeuxFoncteursStricts{} de $\DeuxCatToutOp{\mathdeuxcat{A}}$ vers $\DeuxCatDeuxCat$ et de $\DeuxCatDeuxOp{\mathdeuxcat{A}}$ vers $\DeuxCatDeuxCat$ respectivement\footnote{Cela résulte par dualité du fait que $a \mapsto \TrancheLax{\mathdeuxcat{A}}{}{a}$ définit un \DeuxFoncteurStrict{} de $\mathdeuxcat{A}$ vers $\DeuxCatDeuxCat$, correspondant au cas particulier $u = 1_{\mathdeuxcat{A}}$ dans le paragraphe \ref{DefFoncteursTranches}.}. 

Les formules générales permettent d'expliciter la structure de la \deux{}catégorie $\DeuxIntCo{\DeuxCatUnOp{\mathdeuxcat{A}}} (\OpTrancheLax{\mathdeuxcat{A}}{}{a})$ comme suit. 

Ses objets sont les
$$
(b, (a, k : b \to a)),
$$
$b$ et $a$ étant des objets de $\mathdeuxcat{A}$ et $k$ une \un{}cellule de $\mathdeuxcat{A}$.

Les \un{}cellules de $(b, (a,k))$ vers $(b', (a',k'))$ sont les
$$
(f : b' \to b, (g : a \to a', \alpha : k' \Rightarrow gkf)),
$$
$f$ et $g$ étant des \un{}cellules de $\mathdeuxcat{A}$ et $\alpha$ une \deux{}cellule de $\mathdeuxcat{A}$. 

Les \deux{}cellules de $(f, (g, \alpha)) : (b, (a,k)) \to (b', (a', k'))$ vers $(f', (g', \alpha')) : (b, (a,k)) \to (b', (a', k'))$ sont les
$$
(\varphi : f \Rightarrow f', \gamma : g \Rightarrow g')
$$
vérifiant 
$$
(\gamma \CompDeuxZero k \CompDeuxZero \varphi) \CompDeuxUn \alpha = \alpha',
$$
$\varphi$ et $\gamma$ étant des \deux{}cellules de $\mathdeuxcat{A}$. 

Les diverses unités et compositions sont définies de façon « évidente ». 

Les formules générales permettent d'expliciter la structure de la \deux{}catégorie $\DeuxIntCo{\mathdeuxcat{A}} \DeuxCatUnOp{(\TrancheLax{\mathdeuxcat{A}}{}{a})}$ comme suit. 

Ses objets sont les
$$
(a, (b, k : b \to a)),
$$
$a$ et $b$ étant des objets de $\mathdeuxcat{A}$ et $k$ une \un{}cellule de $\mathdeuxcat{A}$. 

Les \un{}cellules de $(a, (b,k))$ vers $(a', (b',k'))$ sont les
$$
(g : a \to a', (f : b' \to b, \alpha : k' \Rightarrow gkf)),
$$
$g$ et $f$ étant des \un{}cellules de $\mathdeuxcat{A}$ et $\alpha$ une \deux{}cellule de $\mathdeuxcat{A}$. 

Les \deux{}cellules de $(g, (f, \alpha)) : (a, (b,k)) \to (a', (b', k'))$ vers $(g', (f', \alpha')) : (a, (b,k)) \to (a', (b', k'))$ sont les 
$$
(\gamma : g \Rightarrow g', \varphi : f \Rightarrow f')
$$
vérifiant 
$$
(\gamma \CompDeuxZero k \CompDeuxZero \varphi) \CompDeuxUn \alpha = \alpha',
$$
$\varphi$ et $\gamma$ étant des \deux{}cellules de $\mathdeuxcat{A}$. 

Les diverses unités et compositions sont définies de façon « évidente ». 

On note $S_{1}(\mathdeuxcat{A})$\index[not]{S1A@$S_{1}(\mathdeuxcat{A})$} la \deux{}catégorie définie comme suit. Ses objets sont les \un{}cellules $k : b \to a$ de $\mathdeuxcat{A}$. Les \un{}cellules de $k : b \to a$ vers $k' : b' \to a'$ sont les $(f : b' \to b, g : a \to a', \alpha : k' \Rightarrow gkf)$ avec $f$ et $g$ des \un{}cellules de $\mathdeuxcat{A}$ et $\alpha$ une \deux{}cellule de $\mathdeuxcat{A}$. Les \deux{}cellules de $(f, g, \alpha) : k \to k'$ vers $(f', g', \alpha')$ sont données par les couples de \deux{}cellules $(\varphi : f \Rightarrow f', \gamma : g \Rightarrow g')$ de $\mathdeuxcat{A}$ telles que $(\gamma \CompDeuxZero k \CompDeuxZero \varphi) \CompDeuxUn \alpha = \alpha'$. Les diverses unités et compositions sont définies de façon « évidente ». 

Il existe alors des isomorphismes canoniques
$$
\begin{aligned}
S_{1}(\mathdeuxcat{A}) &\to \DeuxIntCo{\DeuxCatUnOp{\mathdeuxcat{A}}} (\OpTrancheLax{\mathdeuxcat{A}}{}{a})
\\
(k : b \to a) &\mapsto (b, (a,k))
\\
(f,g,\alpha) &\mapsto (f,(g,\alpha))
\\
(\varphi, \gamma) &\mapsto (\varphi, \gamma)
\end{aligned}
$$
et
$$
\begin{aligned}
 \DeuxIntCo{\DeuxCatUnOp{\mathdeuxcat{A}}} (\OpTrancheLax{\mathdeuxcat{A}}{}{a}) &\to S_{1}(\mathdeuxcat{A}) 
\\
(b, (a,k)) &\mapsto k 
\\
(f,(g,\alpha)) &\mapsto (f,g,\alpha) 
\\
(\varphi, \gamma) &\mapsto (\varphi, \gamma)
\end{aligned}
$$
inverses l'un de l'autre ainsi que des isomorphismes canoniques
$$
\begin{aligned}
S_{1}(\mathdeuxcat{A}) &\to \DeuxIntCo{\mathdeuxcat{A}} \DeuxCatUnOp{(\TrancheLax{\mathdeuxcat{A}}{}{a})}
\\
(k : b \to a) &\mapsto (a, (b,k))
\\
(f,g,\alpha) &\mapsto (g,(f,\alpha))
\\
(\varphi, \gamma) &\mapsto (\gamma, \varphi)
\end{aligned}
$$
et 
$$
\begin{aligned}
 \DeuxIntCo{\mathdeuxcat{A}} \DeuxCatUnOp{(\TrancheLax{\mathdeuxcat{A}}{}{a})} &\to S_{1}(\mathdeuxcat{A})
\\
(a, (b,k)) &\mapsto k
\\
(g,(f,\alpha)) &\mapsto (f,g,\alpha)
\\
(\gamma, \varphi) &\mapsto (\varphi, \gamma)
\end{aligned}
$$
inverses l'un de l'autre. 

On note $s_{1}^{\mathdeuxcat{A}}$\index[not]{s1A@$s_{1}^{\mathdeuxcat{A}}$} (\emph{resp.} $t_{1}^{\mathdeuxcat{A}}$\index[not]{t1A@$t_{1}^{\mathdeuxcat{A}}$}) la projection canonique $S_{1}(\mathdeuxcat{A}) \to \DeuxCatUnOp{\mathdeuxcat{A}}$ (\emph{resp.} $S_{1}(\mathdeuxcat{A}) \to \mathdeuxcat{A}$). En vertu des isomorphismes ci-dessus et du paragraphe \ref{DefDeuxIntCo}, $s_{1}^{\mathdeuxcat{A}}$ et $t_{1}^{\mathdeuxcat{A}}$ sont des préopfibrations. 

Pour tout \DeuxFoncteurStrict{} $u : \mathdeuxcat{A} \to \mathdeuxcat{B}$, on définit un \DeuxFoncteurStrict{}
$$
\begin{aligned}
S_{1}(u)\index[not]{S1u@$S_{1}(u)$} : S_{1}(\mathdeuxcat{A}) &\to S_{1}(\mathdeuxcat{B}{})
\\
k &\mapsto u(k)
\\
(f,g,\alpha) &\mapsto (u(f), u(g), u(\alpha))
\\
(\varphi, \gamma) &\mapsto (u(\varphi), u(\gamma)).
\end{aligned}
$$

Les vérifications de fonctorialité s'effectuent sans aucune difficulté comme suit. 

Pour tout objet $k : b \to a$ de $S_{1}(\mathdeuxcat{A})$, 
$$
\begin{aligned}
S_{1}(u) (1_{k}) &= S_{1}(u) (1_{b}, 1_{a}, 1_{k})
\\
&= (u(1_{b}), u(1_{a}), u(1_{k}))
\\
&= (1_{u(b)}, 1_{u(a)}, 1_{u(k)})
\\
&= 1_{u(k)}
\\
&= 1_{S_{1}(u)(k)}.
\end{aligned}
$$

Pour tout couple de \un{}cellules $(f,g,\alpha)$ et $(f',g',\alpha')$ de $S_{1}(\mathdeuxcat{A})$ telles que la composée $(f',g',\alpha') (f,g,\alpha)$ fasse sens, 
$$
\begin{aligned}
S_{1}(u) ((f',g',\alpha') (f,g,\alpha)) &= S_{1}(u) (ff', g'g, (g' \CompDeuxZero \alpha \CompDeuxZero f') \CompDeuxUn \alpha')
\\
&= (u(ff'), u(g'g), u((g' \CompDeuxZero \alpha \CompDeuxZero f') \CompDeuxUn \alpha'))
\\
&= (u(f) u(f'), u(g')u(g), (u(g') \CompDeuxZero u(\alpha) \CompDeuxZero u(f')) \CompDeuxUn u(\alpha'))
\\
&= (u(f'), u(g'), u(\alpha')) (u(f), u(g), u(\alpha)) 
\\
&= S_{1}(u) (f', g', \alpha') S_{1}(u) (f, g, \alpha).
\end{aligned}
$$

Pour toute \un{}cellule $(f, g, \alpha)$ de $S_{1}(\mathdeuxcat{A})$, 
$$
\begin{aligned}
S_{1}(u) (1_{(f, g, \alpha)}) &= S_{1}(u) (1_{f}, 1_{g})
\\
&= (u(1_{f}), u(1_{g}))
\\
&= (1_{u(f)}, 1_{u(g)})
\\
&= 1_{(u(f), u(g), u(\alpha))}
\\
&= 1_{S_{1}(u) (f, g, \alpha)}.
\end{aligned}
$$

Pour tout couple de \deux{}cellules $(\varphi', \gamma')$ et $(\varphi, \gamma)$ de $S_{1}(\mathdeuxcat{A})$ telles que la composée $(\varphi', \gamma') \CompDeuxUn (\varphi, \gamma)$ fasse sens, 
$$
\begin{aligned}
S_{1}(u) ((\varphi', \gamma') \CompDeuxUn (\varphi, \gamma)) &= S_{1}(u) (\varphi' \CompDeuxUn \varphi, \gamma' \CompDeuxUn \gamma)
\\
&= (u (\varphi' \CompDeuxUn \varphi), u(\gamma' \CompDeuxUn \gamma))
\\
&= (u(\varphi') \CompDeuxUn u(\varphi), u(\gamma') \CompDeuxUn u(\gamma)) 
\\
&= (u(\varphi'), u(\gamma')) \CompDeuxUn (u(\varphi), u(\gamma))
\\
&= S_{1}(u) (\varphi', \gamma') \CompDeuxUn S_{1}(u) (\varphi, \gamma).
\end{aligned}
$$

Pour tout couple de \deux{}cellules $(\varphi', \gamma')$ et $(\varphi, \gamma)$ de $S_{1}(\mathdeuxcat{A})$ telles que la composée $(\varphi', \gamma') \CompDeuxZero (\varphi, \gamma)$ fasse sens, 
$$
\begin{aligned}
S_{1}(u) ((\varphi', \gamma') \CompDeuxZero (\varphi, \gamma)) &= S_{1}(u) (\varphi \CompDeuxZero \varphi', \gamma' \CompDeuxZero \gamma)
\\
&= (u(\varphi \CompDeuxZero \varphi'), u(\gamma' \CompDeuxZero \gamma))
\\
&= (u(\varphi) \CompDeuxZero u(\varphi'), u(\gamma') \CompDeuxZero u(\gamma))
\\
&= (u(\varphi'), u(\gamma')) \CompDeuxZero (u(\varphi), u(\gamma))
\\
&= (S_{1}(u) (\varphi', \gamma')) \CompDeuxZero (S_{1}(u) (\varphi, \gamma)).
\end{aligned}
$$

On a donc un diagramme commutatif de \DeuxFoncteursStricts{} :
$$
\xymatrix{
\DeuxCatUnOp{\mathdeuxcat{A}}
\ar[d]_{\DeuxFoncUnOp{u}}
&S_{1}(\mathdeuxcat{A})
\ar[l]_{s_{1}^{\mathdeuxcat{A}}}
\ar[d]^{S_{1}(u)}
\ar[r]^{t_{1}^{\mathdeuxcat{A}}}
&\mathdeuxcat{A}
\ar[d]^{u}
\\
\DeuxCatUnOp{\mathdeuxcat{B}}
&S_{1}(\mathdeuxcat{B}{})
\ar[l]^{s_{1}^{\mathdeuxcat{B}{}}}
\ar[r]_{t_{1}^{\mathdeuxcat{B}{}}}
&\mathdeuxcat{B}{}
&.
}
$$
\end{paragr}

\begin{paragr}\label{DefS2}
Pour toute \deux{}catégorie $\mathdeuxcat{A}$, les applications
$$
a \mapsto \DeuxCatUnOp{(\OpTrancheLax{\mathdeuxcat{A}}{}{a})}
$$
et
$$
a \mapsto \DeuxCatDeuxOp{(\TrancheCoLax{\mathdeuxcat{A}}{}{a})}
$$
définissent des \DeuxFoncteursStricts{} de $\DeuxCatUnOp{\mathdeuxcat{A}}$ vers $\DeuxCatDeuxCat$ et de $\DeuxCatDeuxOp{\mathdeuxcat{A}}$ vers $\DeuxCatDeuxCat$ respectivement\footnote{Cela résulte aussi, par dualité, du fait que $a \mapsto \TrancheLax{\mathdeuxcat{A}}{}{a}$ définit un \DeuxFoncteurStrict{} de $\mathdeuxcat{A}$ vers $\DeuxCatDeuxCat$.}. 


Les formules générales permettent d'expliciter la structure de la \deux{}catégorie $\DeuxCatUnOp{\left(\DeuxIntCo{\DeuxCatToutOp{\mathdeuxcat{A}}} 
\DeuxCatUnOp{(\OpTrancheLax{\mathdeuxcat{A}}{}{a})}\right)}$ comme suit. 

Ses objets sont les
$$
(b, (a, k : b \to a)),
$$
$b$ et $a$ étant des objets de $\mathdeuxcat{A}$ et $k$ une \un{}cellule de $\mathdeuxcat{A}$.

Les \un{}cellules de $(b, (a,k))$ vers $(b', (a',k'))$ sont les
$$
(f : b \to b', (g : a \to a', \alpha : k'f \Rightarrow gk)),
$$
$f$ et $g$ étant des \un{}cellules de $\mathdeuxcat{A}$ et $\alpha$ une \deux{}cellule de $\mathdeuxcat{A}$. 

Les \deux{}cellules de $(f, (g, \alpha)) : (b,(a,k)) \to (b',(a',k'))$ vers $(f', (g', \alpha')) : (b,(a,k)) \to (b',(a',k'))$ sont les
$$
(\varphi : f' \Rightarrow f, \gamma : g \Rightarrow g')
$$
vérifiant 
$$
(\gamma \CompDeuxZero k) \CompDeuxUn \alpha \CompDeuxUn (k' \CompDeuxZero \varphi)= \alpha',
$$
$\varphi$ et $\gamma$ étant des \deux{}cellules de $\mathdeuxcat{A}$. 

Les diverses unités et compositions sont définies de façon « évidente ». 

Les formules générales permettent d'expliciter la structure de la \deux{}catégorie $\DeuxIntCo{\mathdeuxcat{A}} \DeuxCatDeuxOp{(\TrancheCoLax{\mathdeuxcat{A}}{}{a})}$ comme suit. 

Ses objets sont les
$$
(a, (b, k : b \to a)),
$$
$a$ et $b$ étant des objets de $\mathdeuxcat{A}$ et $k$ une \un{}cellule de $\mathdeuxcat{A}$.

Les \un{}cellules de $(a, (b,k))$ vers $(a', (b',k'))$ sont les
$$
(g : a \to a', (f : b \to b', \alpha : k'f \Rightarrow gk)),
$$
$g$ et $f$ étant des \un{}cellules de $\mathdeuxcat{A}$ et $\alpha$ une \deux{}cellule de $\mathdeuxcat{A}$. 

Les \deux{}cellules de $(g, (f, \alpha)) : (a,(b,k)) \to (a',(b',k'))$ vers $(g', (f', \alpha')) : (a,(b,k)) \to (a',(b',k'))$ sont les
$$
(\gamma : g \Rightarrow g', \varphi : f' \Rightarrow f)
$$
vérifiant 
$$
(\gamma \CompDeuxZero k) \CompDeuxUn \alpha \CompDeuxUn (k' \CompDeuxZero \varphi)= \alpha',
$$
$\gamma$ et $\varphi$ étant des \deux{}cellules de $\mathdeuxcat{A}$. 

Les diverses unités et compositions sont définies de façon « évidente ». 

On note $S_{2}(\mathdeuxcat{A})$\index[not]{S2A@$S_{2}(\mathdeuxcat{A})$} la \deux{}catégorie définie comme suit. Ses objets sont les \un{}cellules $k : b \to a$ de $\mathdeuxcat{A}$. Les \un{}cellules de $k : b \to a$ vers $k' : b' \to a'$ sont les $(f : b \to b', g : a \to a', \alpha : k' f \Rightarrow g k)$ avec $f$ et $g$ des \un{}cellules de $\mathdeuxcat{A}$ et $\alpha$ une \deux{}cellule de $\mathdeuxcat{A}$. Les \deux{}cellules de $(f, g, \alpha)$ vers $(f', g', \alpha')$ sont données par les couples de \deux{}cellules $(\varphi : f' \Rightarrow f, \gamma : g \Rightarrow g')$ de $\mathdeuxcat{A}$ telles que $(\gamma \CompDeuxZero k) \CompDeuxUn \alpha \CompDeuxUn (k' \CompDeuxZero \varphi) = \alpha'$. Les diverses unités et compositions sont définies de façon « évidente ». 

Il existe alors des isomorphismes canoniques
$$
\begin{aligned}
S_{2}(\mathdeuxcat{A}) &\to \DeuxCatUnOp{\left(\DeuxIntCo{\DeuxCatToutOp{\mathdeuxcat{A}}} 
\DeuxCatUnOp{(\OpTrancheLax{\mathdeuxcat{A}}{}{a})}\right)}
\\
(k : b \to a) &\mapsto (b, (a,k))
\\
(f,g,\alpha) &\mapsto (f,(g,\alpha))
\\
(\varphi, \gamma) &\mapsto (\varphi, \gamma)
\end{aligned}
$$
et
$$
\begin{aligned}
\DeuxCatUnOp{\left(\DeuxIntCo{\DeuxCatToutOp{\mathdeuxcat{A}}} 
\DeuxCatUnOp{(\OpTrancheLax{\mathdeuxcat{A}}{}{a})}\right)} &\to S_{2}(\mathdeuxcat{A}) 
\\
(b, (a,k)) &\mapsto k 
\\
(f,(g,\alpha)) &\mapsto (f,g,\alpha) 
\\
(\varphi, \gamma) &\mapsto (\varphi, \gamma)
\end{aligned}
$$
inverses l'un de l'autre ainsi que des isomorphismes canoniques
$$
\begin{aligned}
S_{2}(\mathdeuxcat{A}) &\to \DeuxIntCo{\mathdeuxcat{A}} \DeuxCatDeuxOp{(\TrancheCoLax{\mathdeuxcat{A}}{}{a})}
\\
(k : b \to a) &\mapsto (a, (b,k))
\\
(f,g,\alpha) &\mapsto (g,(f,\alpha))
\\
(\varphi, \gamma) &\mapsto (\gamma, \varphi)
\end{aligned}
$$
et 
$$
\begin{aligned}
\DeuxIntCo{\mathdeuxcat{A}} \DeuxCatDeuxOp{(\TrancheCoLax{\mathdeuxcat{A}}{}{a})} &\to S_{2}(\mathdeuxcat{A})
\\
(a, (b,k)) &\mapsto k
\\
(g,(f,\alpha)) &\mapsto (f,g,\alpha)
\\
(\gamma, \varphi) &\mapsto (\varphi, \gamma)
\end{aligned}
$$
inverses l'un de l'autre. 

On note $s_{2}^{\mathdeuxcat{A}}$\index[not]{s2A@$s_{2}^{\mathdeuxcat{A}}$} (\emph{resp.} $t_{2}^{\mathdeuxcat{A}}$\index[not]{t2A@$t_{2}^{\mathdeuxcat{A}}$}) la projection canonique $S_{2}(\mathdeuxcat{A}) \to \DeuxCatDeuxOp{\mathdeuxcat{A}}$ (\emph{resp.} $S_{2}(\mathdeuxcat{A}) \to \mathdeuxcat{A}$). En vertu des isomorphismes ci-dessus et du paragraphe \ref{DefDeuxIntCo}, la projection canonique 
$$
\DeuxIntCo{\DeuxCatToutOp{\mathdeuxcat{A}}} 
\DeuxCatUnOp{(\OpTrancheLax{\mathdeuxcat{A}}{}{a})} \to \DeuxCatToutOp{\mathdeuxcat{A}},
$$ 
qui s'identifie à $\DeuxFoncUnOp{(s_{2}^{\mathdeuxcat{A}})}$, est une préopfibration, donc $s_{2}^{\mathdeuxcat{A}}$ est une préfibration. En vertu du même paragraphe \ref{DefDeuxIntCo}, $t_{2}^{\mathdeuxcat{A}}$ est une préopfibration. 

Pour tout \DeuxFoncteurStrict{} $u : \mathdeuxcat{A} \to \mathdeuxcat{B}$, l'on définit un \DeuxFoncteurStrict{}
$$
\begin{aligned}
S_{2}(u)\index[not]{S2u@$S_{2}(u)$} : S_{2}(\mathdeuxcat{A}) &\to S_{2}(\mathdeuxcat{B}{})
\\
k &\mapsto u(k)
\\
(f,g,\alpha) &\mapsto (u(f), u(g), u(\alpha))
\\
(\varphi, \gamma) &\mapsto (u(\varphi), u(\gamma)).
\end{aligned}
$$

Les vérifications de fonctorialité s'effectuent sans aucune difficulté comme suit. 

Pour tout objet $k : b \to a$ de $S_{2}(\mathdeuxcat{A})$, 
$$
\begin{aligned}
S_{2}(u) (1_{k}) &= S_{2}(u) (1_{b}, 1_{a}, 1_{k})
\\
&= (u(1_{b}), u(1_{a}), u(1_{k}))
\\
&= (1_{u(b)}, 1_{u(a)}, 1_{u(k)})
\\
&= 1_{u(k)}
\\
&= 1_{S_{2}(u)(k)}.
\end{aligned}
$$

Pour tout couple de \un{}cellules $(f,g,\alpha)$ et $(f',g',\alpha')$ de $S_{2}(\mathdeuxcat{A})$ telles que la composée $(f',g',\alpha') (f,g,\alpha)$ fasse sens, 
$$
\begin{aligned}
S_{2}(u) ((f',g',\alpha') (f,g,\alpha)) &= S_{2}(u) (f'f, (g' \CompDeuxZero \alpha \CompDeuxZero f') \CompDeuxUn \alpha', g'g)
\\
&= (u(f'f), u((g' \CompDeuxZero \alpha \CompDeuxZero f') \CompDeuxUn \alpha'), u(g'g))
\\
&= (u(f') u(f), (u(g') \CompDeuxZero u(\alpha) \CompDeuxZero u(f')) \CompDeuxUn u(\alpha'), u(g')u(g))
\\
&= (u(f'), u(g'), u(\alpha')) (u(f), u(g), u(\alpha)) 
\\
&= S_{2}(u) (f', g', \alpha') S_{2}(u) (f, g, \alpha).
\end{aligned}
$$

Pour toute \un{}cellule $(f, g, \alpha)$ de $S_{2}(\mathdeuxcat{A})$, 
$$
\begin{aligned}
S_{2}(u) (1_{(f, g, \alpha)}) &= S_{2}(u) (1_{f}, 1_{g})
\\
&= (u(1_{f}), u(1_{g}))
\\
&= (1_{u(f)}, 1_{u(g)})
\\
&= 1_{(u(f), u(g), u(\alpha))}
\\
&= 1_{S_{2}(u) (f, g, \alpha)}.
\end{aligned}
$$

Pour tout couple de \deux{}cellules $(\varphi', \gamma')$ et $(\varphi, \gamma)$ de $S_{2}(\mathdeuxcat{A})$ telles que la composée $(\varphi', \gamma') \CompDeuxUn (\varphi, \gamma)$ fasse sens, 
$$
\begin{aligned}
S_{2}(u) ((\varphi', \gamma') \CompDeuxUn (\varphi, \gamma)) &= S_{2}(u) (\varphi \CompDeuxUn \varphi', \gamma' \CompDeuxUn \gamma)
\\
&= (u (\varphi \CompDeuxUn \varphi'), u(\gamma' \CompDeuxUn \gamma))
\\
&= (u(\varphi) \CompDeuxUn u(\varphi'), u(\gamma') \CompDeuxUn u(\gamma)) 
\\
&= (u(\varphi'), u(\gamma')) \CompDeuxUn (u(\varphi), u(\gamma))
\\
&= S_{2}(u) (\varphi', \gamma') \CompDeuxUn S_{2}(u) (\varphi, \gamma).
\end{aligned}
$$

Pour tout couple de \deux{}cellules $(\varphi', \gamma')$ et $(\varphi, \gamma)$ de $S_{2}(\mathdeuxcat{A})$ telles que la composée $(\varphi', \gamma') \CompDeuxZero (\varphi, \gamma)$ fasse sens, 
$$
\begin{aligned}
S_{2}(u) ((\varphi', \gamma') \CompDeuxZero (\varphi, \gamma)) &= S_{2}(u) (\varphi' \CompDeuxZero \varphi, \gamma' \CompDeuxZero \gamma)
\\
&= (u(\varphi' \CompDeuxZero \varphi), u(\gamma' \CompDeuxZero \gamma))
\\
&= (u(\varphi') \CompDeuxZero u(\varphi), u(\gamma') \CompDeuxZero u(\gamma))
\\
&= (u(\varphi'), u(\gamma')) \CompDeuxZero (u(\varphi), u(\gamma))
\\
&= S_{2}(u) (\varphi', \gamma') \CompDeuxZero S_{2}(u) (\varphi, \gamma).
\end{aligned}
$$

On a donc un diagramme commutatif de \DeuxFoncteursStricts{} : 
$$
\xymatrix{
\DeuxCatDeuxOp{\mathdeuxcat{A}}
\ar[d]_{\DeuxFoncDeuxOp{u}}
&S_{2}(\mathdeuxcat{A})
\ar[l]_{s_{2}^{\mathdeuxcat{A}}}
\ar[d]^{S_{2}(u)}
\ar[r]^{t_{2}^{\mathdeuxcat{A}}}
&\mathdeuxcat{A}
\ar[d]^{u}
\\
\DeuxCatDeuxOp{\mathdeuxcat{B}}
&S_{2}(\mathdeuxcat{B}{})
\ar[l]^{s_{2}^{\mathdeuxcat{B}{}}}
\ar[r]_{t_{2}^{\mathdeuxcat{B}{}}}
&\mathdeuxcat{B}{}
&.
}
$$
\end{paragr}

\section{L'adjonction de Bénabou}\label{SectionBenabou}\index{adjonction de Bénabou}

\begin{paragr}\label{IntroAdjonctionBenabou}
L'objet de cette section est de décrire un adjoint à gauche de l'inclusion $\DeuxCat \hookrightarrow \DeuxCatLax$. En particulier, la construction que nous explicitons fournit, pour toute petite \deux{}catégorie $\mathdeuxcat{A}$, une petite \deux{}catégorie $\TildeLax{\mathdeuxcat{A}}$ et un \DeuxFoncteurLax{} $\LaxCanonique{\mathdeuxcat{A}} : \mathdeuxcat{A} \to \TildeLax{\mathdeuxcat{A}}$ tels que, pour tout morphisme $u : \mathdeuxcat{A} \to \mathdeuxcat{B}$ de $\DeuxCatLax$, il existe un unique morphisme $\BarreLax{u} : \TildeLax{\mathdeuxcat{A}} \to \mathdeuxcat{B}$ de $\DeuxCat$ tel que $\BarreLax{u} \LaxCanonique{\mathdeuxcat{A}} = u$. Cette construction, due à Bénabou dans un cadre bicatégorique plus général, reste malheureusement peu connue malgré son importance. Elle semble avoir fait l'objet de communications orales de Bénabou, mais ce dernier n'a rien publié à ce sujet, ses notes de travail se trouvant du reste aux dernières nouvelles dans des cartons. Dans ces conditions, la première trace écrite à laquelle il est possible de renvoyer le lecteur est le travail de Gray, plus précisément \cite[I, 4. 23]{Gray}. La seconde description explicite parvenue jusqu'à nous se trouve dans les notes \cite{NotesDelHoyo} de del Hoyo, lequel affirme rectifier une inexactitude de Gray. Del Hoyo mentionne la possibilité d'une approche plus conceptuelle de la question grâce aux résultats de \cite{BKP}. Nous devons à Steve Lack les précisions suivantes sur ce dernier point. L'article \cite{BKP} montre que, pour une \deux{}monade vérifiant certaines conditions, les morphismes lax d'algèbres $A \to B$ sont en bijection avec les morphismes stricts d'algèbres $A^{\dagger} \to B$ pour un certain $A^{\dagger}$. L'article \cite{Power} explique comment les \deux{}catégories dont est donné l'ensemble des objets sont les algèbres d'une \deux{}monade. Les morphismes lax pour de telles algèbres sont les \deux{}foncteurs lax agissant comme l'identité sur l'ensemble des objets. Cette \deux{}monade vérifie les conditions d'application de \cite[théorème 3.13]{BKP} et il est possible de montrer que l'algèbre universelle $A^{\dagger}$ satisfait la propriété universelle relativement aux morphismes lax arbitraires. Une autre approche, ne nécessitant pas de jamais supposer fixé l'ensemble des objets, consiste à faire usage de la \deux{}monade de \cite[section 4]{LackPaoli}, une autre référence possible étant \cite[section 6.2]{LackIcons}. Quelle que soit l'approche privilégiée, soulignons la nécessité, dans le cadre de notre travail, d'une description explicite de cet objet universel. 
\end{paragr}

\begin{paragr}
On rappelle que la catégorie des simplexes $\Delta$\index[not]{0Delta@$\Delta$} a pour objets les ensembles $[m] = \{ 0, \dots, m \}$ ordonnés par l'ordre sur les entiers pour $m \geq 0$ (ce qui permet de les considérer comme des petites catégories) et pour morphismes les applications croissantes. On notera $\Delta_{m}$\index[not]{0Deltam@$\Delta_{m}$} l'image de $[m]$ par le foncteur de Yoneda. 
\end{paragr}

\begin{paragr}\label{PreliminairesTilde1}
On notera $\Intervalles$\index[not]{0DeltaIntervalles@$\Intervalles$} la \emph{catégorie des intervalles}\index{catégorie des intervalles}, c'est-à-dire la sous-catégorie de $\Delta$ ayant mêmes objets que $\Delta$ et dont les morphismes sont les applications croissantes respectant les extrémités ; on appellera ces applications les \emph{morphismes d'intervalles}. Autrement dit, pour deux entiers positifs $m$ et $n$, une application $\varphi : [m] \to [n]$ est un morphisme d'intervalles si et seulement si elle est croissante et vérifie $\varphi(0) = 0$ et $\varphi(m) = n$. 
\end{paragr}

\begin{paragr}
Pour tout entier $m \geq 0$, toute \deux{}catégorie $\mathdeuxcat{A}$ et tout \DeuxFoncteurLax{} $x : [m] \to \mathdeuxcat{A}$, on notera $x_{i}$\index[not]{xi@$x_{i}$} l'image de $0 \leq i \leq m$ par $x$, $x_{j,i}$\index[not]{xji@$x_{j,i}$} l'image du morphisme $i \to j$ de $[m]$ par $x$ et $x_{k,j,i}$\index[not]{xkji@$x_{k,j,i}$} la \deux{}cellule structurale $x_{j \to k, i \to j} : x_{k,j} x_{j,i} \Rightarrow x_{k,i}$ pour $0 \leq i \leq j \leq k \leq m$. Afin d'éviter toute confusion avec l'objet $x_{i}$, on notera $(x)_{i}$\index[not]{xistructurale@$(x)_{i}$} la \deux{}cellule structurale d'unité de $x$ associée à l'objet $i$ de $[m]$. On a donc $(x)_{i} : 1_{x_{i}} \Rightarrow x_{i,i}$. Sous ces mêmes données, pour toute \DeuxTransformationLax{} ou \DeuxTransformationCoLax{} $\sigma : x \Rightarrow y$ de $x$ vers un \DeuxFoncteurLax{} $y : [m] \to \mathdeuxcat{A}$, on notera $\sigma_{j,i}$\index[not]{0sigmaji@$\sigma_{j,i}$} la composante de $\sigma$ en le morphisme $i \to j$ de $[m]$. C'est donc une \deux{}cellule de $\sigma_{j} x_{j,i}$ vers $y_{j,i} \sigma_{i}$ si $\sigma$ est une \DeuxTransformationLax{} et de $y_{j,i} \sigma_{i}$ vers $\sigma_{j} x_{j,i}$ si $\sigma$ est une \DeuxTransformationCoLax{}. On notera parfois $\sigma_{i}$ la \deux{}cellule $\sigma_{i,i-1}$.    
\end{paragr}

\begin{df}\label{DefDeuxCellGeneral}
Soient $u$ un \DeuxFoncteurLax{} de source $\mathdeuxcat{A}$, $n \geq 0$ un entier et $x : [n] \to \mathdeuxcat{A}$ un \DeuxFoncteurStrict{}. On définit comme suit une \deux{}cellule $u_{x}$\index[not]{ux@$u_{x}$}, que l'on notera parfois par abus $u_{x_{n,n-1}, \dots, x_{1,0}}$, ce qui ne sous-entend pas $n \geq 1$. Si $n = 0$, $u_{x} = u_{x_{0}} : 1_{u(x_{0})} \Rightarrow u(1_{x_{0}})$. Si $n = 1$, $u_{x} = 1_{u(x_{1,0})}$. Si $n = 2$, $u_{x} = u_{x_{2,1}, x_{1,0}}$. Si $n \geq 3$, 
$$
u_{x} = u_{x_{n,n-1}, \dots, x_{1,0}} = u_{x_{n,n-1}, x_{n-1, n-2} \dots x_{1,0}} (u(x_{n,n-1}) \CompDeuxZero u_{x_{n-1,n-2}, \dots, x_{1,0}}).
$$

Si cela n'entraîne pas d'ambiguïté, cette \deux{}cellule pourra se trouver notée $u_{x_{0} \to \dots \to x_{n}}$\index[not]{ux0blablaxn@$u_{x_{0} \to \dots \to x_{n}}$}, ce qui ne sous-entend pas non plus $n \geq 1$.
\end{df}

\begin{rem}\label{RemarqueCocycleGeneral}
Les \deux{}cellules introduites dans la définition \ref{DefDeuxCellGeneral} correspondent à « une façon d'aller de $u(x_{n,n-1}) \dots u(x_{1,0})$ vers $u(x_{n,n-1} \dots x_{1,0})$ »\footnote{Pour $n=0$, cela fait encore sens, comme nous l'a fait remarquer Dimitri Ara, une composition vide étant une identité.} en privilégiant un certain « bon parenthésage » de la première expression. La \emph{condition de cocycle généralisée}\index{condition de cocycle généralisée}, constamment utilisée dans la littérature relative aux \deux{}catégories, sans qu'aucun énoncé n'y figure généralement, encore moins une dé\-monstra\-tion, affirme que ces parenthésages sont tous équivalents : les \deux{}cellules associées sont égales. Pour un énoncé précis ainsi qu'une démonstration — omettant toutefois certains détails de calculs particulièrement pénibles — d'un résultat de cohérence très général, on renvoie à la thèse de Bénabou \cite[proposition 5.1.4 p. I-47 et théorème 5.2.4 p. I-49]{TheseBenabou}. 
\end{rem}

\begin{rem}\label{NaturaliteStructuraleGeneralisee}
Soient $\mathdeuxcat{A}$ et $\mathdeuxcat{B}$ des \deux{}catégories, $u : \mathdeuxcat{A} \to \mathdeuxcat{B}$ un \DeuxFoncteurLax{}, $m \geq 1$ un entier, $x$ et $y$ des \DeuxFoncteursStricts{} de $[m]$ vers $\mathdeuxcat{A}$ et, pour tout entier $1 \leq i \leq m$, soit $\alpha_{i} : x_{i,i-1} \Rightarrow y_{i,i-1}$ une \deux{}cellule de $\mathdeuxcat{A}$. Alors, l'égalité 
$$
u(\alpha_{m} \CompDeuxZero \dots \CompDeuxZero \alpha_{1}) \CompDeuxUn u_{x} = u_{y} \CompDeuxUn (u(\alpha_{m}) \CompDeuxZero \dots \CompDeuxZero u(\alpha_{1}))
$$
s'établit par récurrence sur $m$, le cas $m=2$ faisant partie de l'axiomatique des \DeuxFoncteursLax{}.
\end{rem}

\begin{rem}\label{NaturalitePlusGenerale}
Soient $\mathdeuxcat{A}$ et $\mathdeuxcat{B}$ des \deux{}catégories, $u : \mathdeuxcat{A} \to \mathdeuxcat{B}$ un \DeuxFoncteurLax{}, $m \geq 0$ et $n \geq 1$ deux entiers, $x : [m] \to \mathdeuxcat{A}$ et $y : [n] \to \mathdeuxcat{A}$ des \DeuxFoncteursStricts{}, $\varphi : [n] \to [m]$ un morphisme d'intervalles tel que $x_{\varphi(i)} = y_{i}$ pour tout entier $0 \leq i \leq n$ et, pour tout entier $1 \leq i \leq n$, soit une \deux{}cellule $\alpha_{i} : x_{\varphi(i), \varphi(i-1)} \Rightarrow y_{i, i-1}$. Alors, l'égalité
$$
u(\alpha_{n} \CompDeuxZero \dots \CompDeuxZero \alpha_{1}) \CompDeuxUn u_{x} = u_{y} \CompDeuxUn (u(\alpha_{n}) \CompDeuxZero \dots \CompDeuxZero u(\alpha_{1})) \CompDeuxUn (u_{x_{\varphi(n-1)} \to \dots \to x_{m}} \CompDeuxZero \dots \CompDeuxZero u_{x_{0} \to \dots \to x_{\varphi(1)}})
$$
s'établit en utilisant la condition de cocycle généralisée et la remarque \ref{NaturaliteStructuraleGeneralisee}, qu'elle généralise. On laisse au lecteur le soin d'examiner ce qui se passe pour $n=0$ (ce qui force $m=0$).
\end{rem}

\begin{paragr}
Pour toute \deux{}catégorie $\mathdeuxcat{A}$ et tout couple d'objets $a$ et $a'$ de $\mathdeuxcat{A}$, on note $\Chaines{a'}{a}{[m]}$ la catégorie dont les objets sont les \DeuxFoncteursStricts{} $s : [m] \to \mathdeuxcat{A}$ tels que $s(0) = a$ et $s(m) = a'$ et dont les morphismes sont les \DeuxTransformationsLax{} relatives aux objets entre tels \DeuxFoncteursStricts{}. Tout morphisme d'intervalles $\varphi : [n] \to [m]$ induit un foncteur
$$
\begin{aligned}
\Chaines{a'}{a}{\varphi} : \Chaines{a'}{a}{[m]} &\to \Chaines{a'}{a}{[n]}
\\
(s : [m] \to \mathdeuxcat{A}) &\mapsto (s \varphi : [n] \to \mathdeuxcat{A})
\\
(\alpha : s \Rightarrow t) &\mapsto (\alpha \CompDeuxZero \varphi : s \varphi \Rightarrow t \varphi).
\end{aligned}
$$
La \DeuxTransformationLax{} relative aux objets $\alpha \CompDeuxZero \varphi$ est définie par
$(\alpha \CompDeuxZero \varphi)_{i} = 1_{s(\varphi(i))} = 1_{t(\varphi(i))}$ pour tout objet $i$ de $[n]$ et $(\alpha \CompDeuxZero \varphi)_{r} = \alpha_{\varphi(r)}$ pour toute \un{}cellule $r$ de $[n]$. Cela permet de définir un foncteur
$$
\begin{aligned}
\FoncteurChaines{a'}{a}\index[not]{Fa'a@$\FoncteurChaines{a'}{a}$} : \Intervalles^{op} &\to \Cat
\\
[m] &\mapsto \Chaines{a'}{a}{[m]}
\\
\varphi &\mapsto \Chaines{a'}{a}{\varphi}.
\end{aligned}
$$
\end{paragr}

\begin{paragr}\label{PreliminairesTilde2}
Soient $A_{1}$, $A_{2}$ et $A$ des catégories, $F_{1} : A_{1} \to \Cat$, $F_{2} : A_{2} \to \Cat$, $F : A \to \Cat$ et $\otimes : A_{1} \times A_{2} \to A$ des foncteurs et $\gamma : \times \circ (F_{1} \times F_{2}) \Rightarrow F \circ \otimes$ un morphisme de foncteurs. Ainsi, pour tout couple d'objets $(a_{1}, a_{2})$ de $A_{1} \times A_{2}$, on a le foncteur 
$$
\gamma_{(a_{1}, a_{2})} : F_{1}(a_{1}) \times F_{2}(a_{2}) \to F(a_{1} \otimes a_{2}).
$$
Ces données permettent de définir un foncteur
$$
\DeuxInt{A_{1} \times A_{2}}  \gamma : \DeuxInt{A_{1} \times A_{2}} \times \circ (F_{1} \times F_{2}) \to \DeuxInt{A_{1} \times A_{2}} F \circ \otimes.
$$
On laisse au lecteur le soin d'expliciter l'isomorphisme canonique évident
$$
\DeuxInt{A_{1}} F_{1} \times \DeuxInt{A_{2}} F_{2} \simeq \DeuxInt{A_{1} \times A_{2}} \times \circ (F_{1} \times F_{2}).
$$
On déduit de ce qui précède un foncteur
$$
\begin{aligned}
\DeuxInt{A_{1}} F_{1} \times \DeuxInt{A_{2}} F_{2} &\to \DeuxInt{A_{1} \times A_{2}} F \circ \otimes
\\
((a_{1}, x_{1}), (a_{2}, x_{2})) &\mapsto ((a_{1}, a_{2}), \gamma_{(a_{1}, a_{2})}(x_{1}, x_{2}))
\\
((f_{1} : a_{1} \to a'_{1}, r_{1}), (f_{2} : a_{2} \to a'_{2}, r_{2})) &\mapsto ((f_{1}, f_{2}), \gamma_{(a'_{1}, a'_{2})}(r_{1}, r_{2})).
\end{aligned}
$$

On a de plus un foncteur
$$
\begin{aligned}
\DeuxInt{A_{1} \times A_{2}} F \circ \otimes &\to \DeuxInt{A}F
\\
((a_{1}, a_{2}), x) &\mapsto ((a_{1} \otimes a_{2}), x)
\\
((f_{1}, f_{2}), r) &\mapsto ((f_{1} \otimes f_{2}), r)
\end{aligned}
$$
qui n'est autre — nous le signalons mais n'utiliserons pas ce fait — que la flèche horizontale du haut du carré cartésien
$$
\xymatrix{
\DeuxInt{A_{1} \times A_{2}} F \circ \otimes
\ar[r]
\ar[d]
&
\DeuxInt{A}F
\ar[d]
\\
A_{1} \times A_{2}
\ar[r]_{\otimes}
&
A
}
$$
dans lequel les flèches verticales désignent les projections canoniques (voir \cite[lemme 3.2.20]{THG}).

On déduit de ce qui précède un foncteur
$$
\begin{aligned}
\DeuxInt{A_{1}} F_{1} \times \DeuxInt{A_{2}} F_{2} &\to \DeuxInt{A}F
\\
((a_{1}, x_{1}), (a_{2}, x_{2})) &\mapsto (a_{1} \otimes a_{2}, \gamma_{(a_{1}, a_{2})} (x_{1}, x_{2}))
\\
((f_{1} : a_{1} \to a'_{1}, r_{1}), (f_{2} : a_{2} \to a'_{2}, r_{2})) &\mapsto (f_{1} \otimes f_{2}, \gamma_{(a'_{1}, a'_{2})} (r_{1}, r_{2})).
\end{aligned}
$$
\end{paragr}

\begin{paragr}
On observe que la catégorie des intervalles $\Intervalles$ est munie d'une structure de catégorie monoïdale stricte en définissant le produit tensoriel des objets par
$$
[n] \otimes [m] = [n + m]
$$
et celui des morphismes $\psi : [n] \to [n']$ et $\varphi : [m] \to [m']$ par
$$
\begin{aligned}
\psi \otimes \varphi : [n + m] &\to [n' + m']
\\
0 \leq k \leq m &\mapsto \varphi(k)
\\
m \leq k \leq n+m &\mapsto \psi(k-m) + m'.
\end{aligned}
$$
La catégorie $\UnCatOp{\Intervalles}$ se trouve donc aussi munie d'une structure de catégorie monoïdale stricte. Soient $a$, $a'$ et $a''$ trois objets d'une \deux{}catégorie $\mathdeuxcat{A}$. On applique les considérations du paragraphe \ref{PreliminairesTilde2} au cas $A_{1} = A_{2} = A = \UnCatOp{\Intervalles}$, $F_{1} = \FoncteurChaines{a''}{a'}$, $F_{2} = \FoncteurChaines{a'}{a}$, $F = \FoncteurChaines{a''}{a}$, la composante du morphisme de foncteurs $\gamma$ en $([n],[m])$ n'étant autre (en omettant quelques détails) que
$$
\begin{aligned}
\gamma^{a'',a',a}_{[n],[m]} : \Chaines{a''}{a'}{[n]} \times \Chaines{a'}{a}{[m]} &\to \Chaines{a''}{a}{[n+m]}
\\
(t,s) &\mapsto (0 \leq i \leq m \mapsto s(i), m \leq i \leq m+n \mapsto t(i-m)).
\end{aligned}
$$
 
Pour tout couple d'objets $(a,a')$ d'une petite \deux{}catégorie $\mathdeuxcat{A}$, on définit une catégorie $\CatHom{\TildeLax{\mathdeuxcat{A}}}{a}{a'}$\index[not]{HomATildeaa'@$\CatHom{\TildeLax{\mathdeuxcat{A}}}{a}{a'}$} par
$$
\CatHom{\TildeLax{\mathdeuxcat{A}}}{a}{a'} = \DeuxInt{\UnCatOp{\Intervalles}}\FoncteurChaines{a'}{a}.
$$
(C'est donc une catégorie opfibrée sur $\UnCatOp{\Intervalles}$.)  
 
Ce qui précède nous permet donc d'obtenir un foncteur de composition
$$
\begin{aligned}
\CatHom{\TildeLax{\mathdeuxcat{A}}}{a'}{a''} \times \CatHom{\TildeLax{\mathdeuxcat{A}}}{a}{a'} &\to \CatHom{\TildeLax{\mathdeuxcat{A}}}{a}{a''}
\\
(([m'], x'), ([m], x)) &\mapsto ([m' + m], (x', x))
\\
((\varphi', \alpha'), (\varphi, \alpha)) &\mapsto (\varphi' \otimes \varphi, (\alpha', \alpha))
\end{aligned}
$$
en notant désormais, pour simplifier, $(x',x)$ le \DeuxFoncteurStrict{} correspondant à la « chaîne formée de $x$ suivi de $x'$ » et de même pour $(\alpha', \alpha)$. 

La contrainte d'associativité résulte de l'associativité du produit tensoriel considéré sur la catégorie $\Intervalles$ et de la commutativité, pour toutes les valeurs possibles des paramètres y figurant, du diagramme
$$
\xymatrix{
\Chaines{a'''}{a''}{[p]} \times \Chaines{a''}{a'}{[n]} \times \Chaines{a'}{a}{[m]}
\ar[rrrr]^{\gamma^{a''',a'',a'}_{([p], [n])} \times 1_{\Chaines{a'}{a}{[m]}}}
\ar[ddd]_{1_{\Chaines{a'''}{a''}{[p]}} \times \gamma^{a'',a',a}_{([n], [m])}}
&&&&
\Chaines{a'''}{a'}{[p] \otimes [n]} \times \Chaines{a'}{a}{[m]}
\ar[dd]^{\gamma^{a''',a',a}_{([p] \otimes [n], [m])}}
\\
\\
&&&&
\Chaines{a'''}{a}{([p] \otimes [n]) \otimes [m]}
\ar@{=}[d]
\\
\Chaines{a'''}{a''}{[p]} \times \Chaines{a''}{a}{[n] \otimes [m]}
\ar[rrrr]_{\gamma^{a''',a'',a}_{([p], [n] \otimes [m])}}
&&&&
\Chaines{a'''}{a}{[p] \otimes ([n] \otimes [m])}
&.
}
$$

De plus, pour tout objet $a$ de $\mathdeuxcat{A}$, l'objet $([0], a : [0] \to \mathdeuxcat{A})$ de $\DeuxInt{\UnCatOp{\Intervalles}} \FoncteurChaines{a}{a} = \CatHom{\TildeLax{\mathdeuxcat{A}}}{a}{a}$ définit un foncteur d'unité
$$
\UnCatPonct \to \CatHom{\TildeLax{\mathdeuxcat{A}}}{a}{a}.
$$
Les contraintes d'unité résultent du fait que $[0]$ est un objet unité de la catégorie monoïdale $\Intervalles$ et de la commutativité, pour toutes les valeurs possibles des paramètres y figurant, des diagrammes
$$
\xymatrix{
\Chaines{a'}{a}{[m]} \times \UnCatPonct
\ar@{=}[rr]
\ar[dd]_{1_{\Chaines{a'}{a}{[m]}} \times ([0], a)}
&&
\Chaines{a'}{a}{[m]} 
\ar@{=}[dd]
\\
\\
\Chaines{a'}{a}{[m]} \times \Chaines{a}{a}{[0]} 
\ar[rr]_{\gamma^{a',a,a}_{([m], [0])}}
&&
\Chaines{a'}{a}{[m] \otimes [0]}
}
$$
et
$$
\xymatrix{
\UnCatPonct \times \Chaines{a'}{a}{[m]}
\ar@{=}[rr]
\ar[dd]_{([0],a') \times 1_{\Chaines{a'}{a}{[m]}}}
&&
\Chaines{a'}{a}{[m]} 
\ar@{=}[dd]
\\
\\
\Chaines{a'}{a'}{[0]} \times \Chaines{a'}{a}{[m]}  
\ar[rr]_{\gamma^{a',a',a}_{([0], [m])}}
&&
\Chaines{a'}{a}{[0] \otimes [m]}
}
$$
dans lesquels les flèches doubles désignent les isomorphismes canoniques. 
\end{paragr}

En vertu de ce qui précède, la définition \ref{DefTildeLax} fait sens. 

\begin{df}\label{DefTildeLax}
Soit $\mathdeuxcat{A}$ une petite \deux{}catégorie. On définit comme suit une petite \deux{}catégorie $\TildeLax{\mathdeuxcat{A}}$\index[not]{AZzz@$\TildeLax{\mathdeuxcat{A}}$}, que l'on appellera parfois la \emph{\deux{}catégorie tilde lax\index{tilde lax (d'une petite \deux{}catégorie)} de $\mathdeuxcat{A}$}, ou plus simplement le \emph{tilde lax de $\mathdeuxcat{A}$}. 
\begin{itemize}
\item
Les objets de $\TildeLax{\mathdeuxcat{A}}$ sont les objets de $\mathdeuxcat{A}$. 
\item
Pour tout couple d'objets $a$ et $a'$ de $\mathdeuxcat{A}$, 
$$
\CatHom{\TildeLax{\mathdeuxcat{A}}}{a}{a'} = \DeuxInt{\UnCatOp{\Intervalles}}\FoncteurChaines{a'}{a}.
$$
\item
Pour tout triplet d'objets $a$, $a'$ et $a''$ de $\mathdeuxcat{A}$, le foncteur de composition
$$
\CatHom{\TildeLax{\mathdeuxcat{A}}}{a'}{a''} \times \CatHom{\TildeLax{\mathdeuxcat{A}}}{a}{a'} \to \CatHom{\TildeLax{\mathdeuxcat{A}}}{a}{a''}.
$$
est défini comme ci-dessus.
\item
Pout tout objet $a$ de $\mathdeuxcat{A}$, le foncteur d'unité 
$$
\UnCatPonct \to \CatHom{\TildeLax{\mathdeuxcat{A}}}{a}{a}
$$
est défini comme ci-dessus par l'objet $([0], a : [0] \to \mathdeuxcat{A})$ de $\CatHom{\TildeLax{\mathdeuxcat{A}}}{a}{a}$.  
\end{itemize} 
\end{df}

\begin{paragr}
Ainsi, les \un{}cellules de $\TildeLax{\mathdeuxcat{A}}$ sont de la forme $([m], x)$, avec $m \geq 0$ un entier et $x : [m] \to \mathdeuxcat{A}$ un \DeuxFoncteurStrict{}. La source (\emph{resp.} le but) de la \un{}cellule $([m], x)$ est $x_{0}$ (\emph{resp.} $x_{m}$). On notera parfois les \un{}cellules de $\TildeLax{\mathdeuxcat{A}}$ sous la forme plus explicite $([m], x_{1,0}, \dots, x_{m,m-1})$, ce qui ne sous-entend pas $m \geq 1$. Les \deux{}cellules de $([m], x)$ vers $([n], y)$ sont de la forme $(\varphi, \alpha)$, avec $\varphi : [n] \to [m]$ un morphisme d'intervalles vérifiant $x_{\varphi(i)} = y_{i}$ pour tout entier $0 \leq i \leq n$ (on doit donc avoir $x_{0} = y_{0}$ et $x_{m} = y_{n}$) et $\alpha : x \varphi \Rightarrow y$ une \DeuxTransformationLax{} relative aux objets. On notera parfois $\alpha$ sous la forme $(\alpha_{1}, \dots, \alpha_{n})$, étant entendu que la \deux{}cellule $\alpha_{i} : (x \varphi)_{i,i-1} \Rightarrow y_{i,i-1}$ est la composante de $\alpha$ en la \un{}cellule $i-1 \to i$ de $[n]$, autrement notée $\alpha_{i,i-1}$. On notera donc parfois la \deux{}cellule $(\varphi, \alpha)$ sous la forme $(\varphi, \alpha_{1}, \dots, \alpha_{n})$, ce qui ne sous-entend pas $n \geq 1$.   
\end{paragr}

\begin{rem}
On rappelle avoir défini le foncteur $\TronqueBete{} : \DeuxCat \to \Cat$ dans la section \ref{SectionAdjonctionsCatDeuxCat}. Pour toute petite \deux{}catégorie $\mathdeuxcat{A}$, la catégorie $\TronqueBete{\TildeLax{\mathdeuxcat{A}}}$ n'est rien d'autre que la catégorie libre engendrée par le graphe sous-jacent à la catégorie $\TronqueBete{\mathdeuxcat{A}}$.
\end{rem}

\begin{df}\label{DefTildeLaxFoncteur}
Pour tout \DeuxFoncteurLax{} $u : \mathdeuxcat{A} \to \mathdeuxcat{B}$, on définit un \DeuxFoncteurStrict{}, le \emph{tilde lax de $u$}\index{tilde lax (d'un \DeuxFoncteurLax{})}, par
$$
\begin{aligned}
\TildeLax{u}\index[not]{uZzz@$\TildeLax{u}$} : \TildeLax{\mathdeuxcat{A}} &\to \TildeLax{\mathdeuxcat{B}} 
\\
a &\mapsto u(a)
\\
([0],a) &\mapsto ([0], u(a))
\\
(([m], x), m \geq 1) &\mapsto ([m], u(x_{1,0}), \dots, u(x_{m,m-1}))
\\
((\varphi, \alpha) : ([m], x) \to ([n], y)) &\mapsto (\varphi, (u(\alpha_{i}) u_{x_{\varphi(i-1)} \to \dots \to x_{\varphi(i)}})_{1 \leq i \leq n}).
\end{aligned}
$$
\end{df}

\begin{paragr}
Donnons tout de même quelques détails assurant de la correction de la définition \ref{DefTildeLaxFoncteur} ; autrement dit, que l'on y définit bien un \DeuxFoncteurStrict{}. 

Pour tout objet $a$ de $\TildeLax{\mathdeuxcat{A}}$,
$$
\begin{aligned}
\TildeLax{u} (1_{a}) &= \TildeLax{u} ([0], a)
\\
&= ([0], u(a))
\\
&= 1_{u(a)}
\\
&= 1_{\TildeLax{u}(a)}.
\end{aligned}
$$

Soient $([m], x : [m] \to \mathdeuxcat{A})$ et $([m'], x' : [m'] \to \mathdeuxcat{A})$ deux \un{}cellules de $\TildeLax{\mathdeuxcat{A}}$ telles que la composée $([m'], x') ([m], x)$ fasse sens. Alors, 
$$
\begin{aligned}
\TildeLax{u} (([m'], x') ([m], x)) &= \TildeLax{u} ([m'+m], (x', x)) 
\\
&= ([m' + m], u(x_{1,0}), \dots, u(x_{m,m-1}), u(x'_{1,0}), \dots, u(x'_{m',m'-1}))
\\
&= ([m'], u(x'_{1,0}), \dots, u(x'_{m',m'-1})) ([m], u(x_{1,0}), \dots, u(x_{m,m-1}))
\\
&= \TildeLax{u}([m'],x') \TildeLax{u}([m],x),
\end{aligned}
$$
en notant toujours $(x',x)$ le \DeuxFoncteurStrict{} correspondant à la « chaîne formée de $x$ suivi de $x'$ ». 

Soient $(\varphi, \alpha)$ et $(\varphi', \alpha')$ des \deux{}cellules de $\TildeLax{\mathdeuxcat{A}}$ telles que la composée $(\varphi', \alpha') \CompDeuxZero (\varphi, \alpha)$ fasse sens. L'égalité
$$
\TildeLax{u} ((\varphi', \alpha') \CompDeuxZero (\varphi, \alpha)) = \TildeLax{u} (\varphi', \alpha') \CompDeuxZero \TildeLax{u} (\varphi, \alpha)
$$
est alors évidente. 

Soit $([m],x)$ une \un{}cellule de $\TildeLax{\mathdeuxcat{A}}$. Alors, 
$$
\begin{aligned}
\TildeLax{u} (1_{([m],x)}) &= \TildeLax{u} (1_{[m]}, (1_{x_{1,0}}, \dots, 1_{x_{m,m-1}}))
\\
&= (1_{[m]}, (u(1_{x_{1,0}}), \dots, u(1_{x_{m,m-1}})))
\\
&= (1_{[m]}, (1_{u(x_{1,0})}, \dots, 1_{u(x_{m,m-1})}))
\\
&= 1_{([m], u(x_{1,0}), \dots, u(x_{m,m-1}))}
\\
&= 1_{\TildeLax{u} ([m], x)}.
\end{aligned}
$$

Enfin, soient 
$$
(\varphi, \alpha) : ([m], x) \Rightarrow ([n], y)
$$
et 
$$
(\psi, \beta) : ([n], y) \Rightarrow ([p], z) 
$$
deux \deux{}cellules de $\TildeLax{\mathdeuxcat{A}}$. Alors, l'égalité
$$
\TildeLax{u} ((\psi, \beta) \CompDeuxUn (\varphi, \alpha)) = \TildeLax{u} (\psi, \beta) \TildeLax{u} (\varphi, \alpha)
$$
résulte de la remarque \ref{NaturalitePlusGenerale}. 
\end{paragr}

\begin{lemme}\label{TildeLaxFoncteur}
L'application
$$
\begin{aligned}
\DeuxCatLax{} &\to \DeuxCat
\\
\mathdeuxcat{A} &\mapsto \TildeLax{\mathdeuxcat{A}}
\\
u &\mapsto \TildeLax{u}
\end{aligned}
$$
définit un foncteur de $\DeuxCatLax$ vers $\DeuxCat$.
\end{lemme}

\begin{proof}
Il s'agit de vérifier que, pour toute petite \deux{}catégorie $\mathdeuxcat{A}$, on a $\TildeLax{1_{\mathdeuxcat{A}}} = 1_{\TildeLax{\mathdeuxcat{A}}}$ et que, pour tout couple de \DeuxFoncteursLax{} $u$ et $v$ tels que la composée $vu$ fasse sens, on a $\TildeLax{vu} = \TildeLax{v} \TildeLax{u}$. Soient donc $a$, $([m], x)$ et $(\varphi : [n] \to [m], \alpha_{1}, \dots, \alpha_{n})$ un objet, une \un{}cellule et une \deux{}cellule de $([m],x)$ vers $([n],y)$ de $\TildeLax{\mathdeuxcat{A}}$ respectivement. Alors,
$$
\begin{aligned}
\TildeLax{1_{\mathdeuxcat{A}}} (a) &= 1_{\mathdeuxcat{A}}(a) 
\\
&=a,
\end{aligned}
$$
$$
\begin{aligned}
\TildeLax{1_{\mathdeuxcat{A}}} ([m], x_{1,0}, \dots, x_{m,m-1}) &= ([m], 1_{\mathdeuxcat{A}}(x_{1,0}), \dots, 1_{\mathdeuxcat{A}}(x_{m,m-1}))
\\
&= ([m], x_{1,0}, \dots, x_{m,m-1})
\end{aligned}
$$
et
$$
\begin{aligned}
\TildeLax{1_{\mathdeuxcat{A}}} (\varphi, \alpha_{1}, \dots, \alpha_{n}) &= (\varphi, 1_{\mathdeuxcat{A}} (\alpha_{1}), \dots, 1_{\mathdeuxcat{A}} (\alpha_{n}))
\\
&= (\varphi, \alpha_{1}, \dots, \alpha_{n}).
\end{aligned}
$$
L'égalité  $\TildeLax{1_{\mathdeuxcat{A}}} = 1_{\TildeLax{\mathdeuxcat{A}}}$ est donc bien vérifiée. En gardant les notations ci-dessus, supposons que la source de $u$ soit $\mathdeuxcat{A}$. Alors, 
$$
\begin{aligned}
\TildeLax{vu} (a) &= vu (a) 
\\
&= v(u(a))
\\
&= v(\TildeLax{u}(a))
\\
&= \TildeLax{v}(\TildeLax{u}(a))
\\
&= \TildeLax{v} \TildeLax{u} (a)
\end{aligned}
$$
et
$$
\begin{aligned}
\TildeLax{vu} ([m], x_{1,0}, \dots, x_{m,m-1}) &= ([m], vu (x_{1,0}), \dots, vu (x_{m,m-1}))
\\
&= ([m], v (u (x_{1,0})), \dots, v (u (x_{m,m-1})))
\\
&= \TildeLax{v} ([m], u(x_{1,0}), \dots, u(x_{m,m-1}))
\\
&= \TildeLax{v} (\TildeLax{u} ([m], x_{1,0}, \dots, x_{m,m-1}))
\\
&= \TildeLax{v} \TildeLax{u} ([m], x_{1,0}, \dots, x_{m,m-1}).
\end{aligned}
$$
De plus, 
$$
\begin{aligned}
\TildeLax{vu} (\varphi, \alpha_{1}, \dots, \alpha_{n}) &= (\varphi, vu (\alpha_{1}) \CompDeuxUn (vu)_{x_{0} \to \dots \to x_{\varphi(1)}}, \dots, vu (\alpha_{n}) \CompDeuxUn (vu)_{x_{\varphi(n-1)} \to \dots \to x_{m}})
\end{aligned}
$$
et
$$
\begin{aligned}
\TildeLax{v} \TildeLax{u} (\varphi, \alpha_{1}, \dots, \alpha_{n}) &= \TildeLax{v} (\varphi, u(\alpha_{1}) \CompDeuxUn u_{x_{0} \to \dots \to x_{\varphi(1)}}, \dots, u(\alpha_{n}) \CompDeuxUn u_{x_{\varphi(n-1)} \to \dots \to x_{m}})
\\
&= (\varphi, v(u(\alpha_{1}) \CompDeuxUn u_{x_{0} \to \dots \to x_{\varphi(1)}}) \CompDeuxUn v_{u(x_{0}) \to \dots \to u(x_{\varphi(1)})}, \dots)
\\
&= (\varphi, v(u(\alpha_{1})) \CompDeuxUn v(u_{x_{0} \to \dots \to x_{\varphi(1)}}) \CompDeuxUn v_{u(x_{0}) \to \dots \to u(x_{\varphi(1)})}, \dots).
\end{aligned}
$$
Le fait que $\TildeLax{vu}$ et $\TildeLax{v} \TildeLax{u}$ coïncident sur les \deux{}cellules résulte donc de l'égalité
$$
(vu)_{x_{\varphi(i-1)} \to \dots \to x_{\varphi(i)}} = v(u_{x_{\varphi(i-1)} \to \dots \to x_{\varphi(i)}}) \CompDeuxUn v_{u(x_{\varphi(i-1)}) \to \dots \to u(x_{\varphi(i)})}
$$
pour tout $1 \leq i \leq n$. 
\end{proof}

\begin{df}
On appellera \emph{foncteur de strictification de Bénabou}\index{foncteur de strictification de Bénabou} le foncteur apparaissant dans l'énoncé du lemme \ref{TildeLaxFoncteur}. On le notera désormais 
$$
\begin{aligned}
B\index[not]{B@$B$} : \DeuxCatLax{} &\to \DeuxCat
\\
\mathdeuxcat{A} &\mapsto \TildeLax{\mathdeuxcat{A}}
\\
u &\mapsto \TildeLax{u}.
\end{aligned}
$$
\end{df}

\begin{paragr}
Pour toute \un{}cellule $f$ de $\mathdeuxcat{A}$, on notera $([1], f)$\index[not]{(1[]f)@$([1], f)$} la \un{}cellule de $\TildeLax{\mathdeuxcat{A}}$ définie par $(0 \to 1) \mapsto f$. Ainsi, on note de la même façon les \un{}cellules de $\mathdeuxcat{A}$ et les \DeuxFoncteursStricts{} de $[1]$ vers $\mathdeuxcat{A}$ qui leur sont canoniquement associés. 
\end{paragr}

\begin{df}\label{DefUniteLax}
Soit $\mathdeuxcat{A}$ une petite \deux{}catégorie. On appellera \emph{\DeuxFoncteurLax{} structural associé à $\mathdeuxcat{A}$} le \DeuxFoncteurLax
$$
\LaxCanonique{\mathdeuxcat{A}}\index[not]{0EtaA@$\LaxCanonique{\mathdeuxcat{A}}$} : \mathdeuxcat{A} \to \TildeLax{\mathdeuxcat{A}}
$$
défini comme suit.

Pour tout objet $a$ de $\mathdeuxcat{A}$, 
$$
\LaxCanonique{\mathdeuxcat{A}}(a) = a.
$$
En particulier, $1_{\LaxCanonique{\mathdeuxcat{A}}(a)} = ([0], a)$.

Pour toute \un{}cellule $f$ de $\mathdeuxcat{A}$,
$$
\LaxCanonique{\mathdeuxcat{A}}(f) = ([1], f).
$$
En particulier, $\LaxCanonique{\mathdeuxcat{A}}(1_{a}) = ([1], 1_{a})$.

Pour toute \deux{}cellule $\alpha$ de $\mathdeuxcat{A}$, 
$$
\LaxCanonique{\mathdeuxcat{A}} (\alpha) = (1_{[1]}, \alpha).
$$
 
Pour tout objet $a$ de $\mathdeuxcat{A}$, la \deux{}cellule structurale 
$$
{(\LaxCanonique{\mathdeuxcat{A}})}_{a} : 1_{\LaxCanonique{\mathdeuxcat{A}}(a)}  \Rightarrow \LaxCanonique{\mathdeuxcat{A}}(1_{a})
$$
est définie par
$$
{(\LaxCanonique{\mathdeuxcat{A}})}_{a} = ([1] \to [0], 1_{1_{a}}).
$$

Pour tout couple de \un{}cellules $f$ et $f'$ de $\mathdeuxcat{A}$ telles que la composée $f'f$ fasse sens, la \deux{}cellule structurale
$$
(\LaxCanonique{\mathdeuxcat{A}})_{f',f} : \LaxCanonique{\mathdeuxcat{A}}(f') \LaxCanonique{\mathdeuxcat{A}}(f) \Rightarrow \LaxCanonique{\mathdeuxcat{A}}(f'f)
$$
est définie par 
$$
(\LaxCanonique{\mathdeuxcat{A}})_{f',f}=([1] \to [2], 1_{f'f}),
$$
le morphisme d'ensembles ordonnés $[1] \to [2]$ présent dans cette expression étant défini de façon unique par la condition qu'il s'agit d'un morphisme d'intervalles. 
\end{df}

\begin{paragr}\label{VerificationLaxCanonique}
Vérifions que cela définit bien un \DeuxFoncteurLax{}. Il s'agit de s'assurer des conditions de cohérence suivantes. 

Pour tout triplet $h$, $g$ et $f$ de \un{}cellules de $\mathdeuxcat{A}$ telles que la composée $hgf$ fasse sens, on doit avoir commutativité du diagramme 
$$
\xymatrix{
\LaxCanonique{\mathdeuxcat{A}}(h) \LaxCanonique{\mathdeuxcat{A}}(g) \LaxCanonique{\mathdeuxcat{A}}(f)
\ar@{=>}[rr]^{{(\LaxCanonique{\mathdeuxcat{A}}})_{h,g} \CompDeuxZero \LaxCanonique{\mathdeuxcat{A}}(f)}
\ar@{=>}[d]_{\LaxCanonique{\mathdeuxcat{A}}(h) \CompDeuxZero {(\LaxCanonique{\mathdeuxcat{A}})}_{g,f}}
&&\LaxCanonique{\mathdeuxcat{A}}(hg) \LaxCanonique{\mathdeuxcat{A}}(f)
\ar@{=>}[d]^{{(\LaxCanonique{\mathdeuxcat{A}})}_{hg,f}}
\\
\LaxCanonique{\mathdeuxcat{A}}(h)\LaxCanonique{\mathdeuxcat{A}}(gf)
\ar@{=>}[rr]_{{(\LaxCanonique{\mathdeuxcat{A}})}_{h,gf}}
&&
\LaxCanonique{\mathdeuxcat{A}}(hgf)
&.
}
$$
On vérifie que les deux chemins menant de $\LaxCanonique{\mathdeuxcat{A}}(h) \LaxCanonique{\mathdeuxcat{A}}(g) \LaxCanonique{\mathdeuxcat{A}}(f)$ à $\LaxCanonique{\mathdeuxcat{A}}(hgf)$ correspondent à la même \deux{}cellule $([1] \to [3], 1_{hgf})$ dans $\TildeLax{\mathdeuxcat{A}}$.

Pour toute \un{}cellule $f : a \to a'$ de $\mathdeuxcat{A}$, on doit avoir commutativité des diagrammes
$$
\xymatrix{
\LaxCanonique{\mathdeuxcat{A}}(f)=\LaxCanonique{\mathdeuxcat{A}}(f)1_{\LaxCanonique{\mathdeuxcat{A}}(a)}
\ar@{=>}[rr]^{\LaxCanonique{\mathdeuxcat{A}}(f) \CompDeuxZero {(\LaxCanonique{\mathdeuxcat{A}})}_{a}}
\ar@{=}[drr]
&& \LaxCanonique{\mathdeuxcat{A}}(f) \LaxCanonique{\mathdeuxcat{A}}(1_{a})
\ar@{=>}[d]^{{(\LaxCanonique{\mathdeuxcat{A}})}_{f,1_{a}}}
\\
&& \LaxCanonique{\mathdeuxcat{A}}(f1_{a})=\LaxCanonique{\mathdeuxcat{A}}(f)
}
$$
et
$$
\xymatrix{
\LaxCanonique{\mathdeuxcat{A}}(f)=1_{\LaxCanonique{\mathdeuxcat{A}}(a')}\LaxCanonique{\mathdeuxcat{A}}(f)
\ar@{=>}[rr]^{{(\LaxCanonique{\mathdeuxcat{A}})}_{a'} \CompDeuxZero \LaxCanonique{\mathdeuxcat{A}}(f)}
\ar@{=}[drr]
&& \LaxCanonique{\mathdeuxcat{A}}(1_{a'}) \LaxCanonique{\mathdeuxcat{A}}(f)
\ar@{=>}[d]^{{(\LaxCanonique{\mathdeuxcat{A}})}_{1_{a'},f}}
\\
&& \LaxCanonique{\mathdeuxcat{A}}(1_{a'}f)=\LaxCanonique{\mathdeuxcat{A}}(f)
}
$$
respectivement. La définition de $\TildeLax{\mathdeuxcat{A}}$ implique bien ces égalités.

Pour tout couple de \deux{}cellules $\alpha : f \Rightarrow f'$ et $\beta : g \Rightarrow g'$ de $\mathdeuxcat{A}$ telles que la composée $\beta \CompDeuxZero \alpha$ fasse sens, on doit avoir commutativité du diagramme
$$
\xymatrix{
\LaxCanonique{\mathdeuxcat{A}}(g) \LaxCanonique{\mathdeuxcat{A}}(f)
\ar@{=>}[rr]^{{(\LaxCanonique{\mathdeuxcat{A}})}_{g,f}}
\ar@{=>}[d]_{\LaxCanonique{\mathdeuxcat{A}}(\beta) \CompDeuxZero \LaxCanonique{\mathdeuxcat{A}}(\alpha)}
&& 
\LaxCanonique{\mathdeuxcat{A}}(gf)
\ar@{=>}[d]^{\LaxCanonique{\mathdeuxcat{A}}(\beta \CompDeuxZero \alpha)}
\\
\LaxCanonique{\mathdeuxcat{A}}(g') \LaxCanonique{\mathdeuxcat{A}}(f')
\ar@{=>}[rr]_{{(\LaxCanonique{\mathdeuxcat{A}})}_{g'f'}}
&& \LaxCanonique{\mathdeuxcat{A}}(g'f')
&,
}
$$
commutativité qui découle également de la définition de $\TildeLax{\mathdeuxcat{A}}$, les deux chemins menant de $\LaxCanonique{\mathdeuxcat{A}}(g) \LaxCanonique{\mathdeuxcat{A}}(f)$ à $\LaxCanonique{\mathdeuxcat{A}}(g'f')$ dans le diagramme correspondant à la même \deux{}cellule $([1] \to [2], \beta \CompDeuxZero \alpha)$.

La flèche $\LaxCanonique{\mathdeuxcat{A}} : \mathdeuxcat{A} \to \TildeLax{\mathdeuxcat{A}}$ est donc bien un \DeuxFoncteurLax{}.
\end{paragr}

\begin{lemme}\label{UniteNaturelle}
Pour tout morphisme $u : \mathdeuxcat{A} \to \mathdeuxcat{B}$ de $\DeuxCatLax{}$, le diagramme
$$
\xymatrix{
\TildeLax{\mathdeuxcat{A}}
\ar[r]^{\TildeLax{u}}
&\TildeLax{\mathdeuxcat{B}}
\\
\mathdeuxcat{A}
\ar[u]^{\LaxCanonique{\mathdeuxcat{A}}}
\ar[r]_{u}
&\mathdeuxcat{B}
\ar[u]_{\LaxCanonique{\mathdeuxcat{B}}}
}
$$
est commutatif.
\end{lemme}

\begin{proof}
Pour tout objet $a$ de $\mathdeuxcat{A}$, 
$$
\begin{aligned}
\TildeLax{u} (\LaxCanonique{\mathdeuxcat{A}} (a)) &= \TildeLax{u} (a)
\\
&= u(a)
\\
&= \LaxCanonique{\mathdeuxcat{B}} (u(a)).
\end{aligned}
$$

Pour toute \un{}cellule $f$ de $\mathdeuxcat{A}$, 
$$
\begin{aligned}
\TildeLax{u} (\LaxCanonique{\mathdeuxcat{A}} (f)) &= \TildeLax{u} ([1], f)
\\
&= ([1], u(f))
\\
&=   \LaxCanonique{\mathdeuxcat{B}} (u(f)).
\end{aligned}
$$

Pour toute \deux{}cellule $\alpha$ de $\mathdeuxcat{A}$,
$$
\begin{aligned}
\TildeLax{u} (\LaxCanonique{\mathdeuxcat{A}} (\alpha)) &= \TildeLax{u} (1_{[1]}, \alpha)
\\
&= (1_{[1]}, u(\alpha))
\\
&= \LaxCanonique{\mathdeuxcat{B}} (u (\alpha)). 
\end{aligned}
$$

Pour tout objet $a$ de $\mathdeuxcat{A}$,  
$$
\begin{aligned}
\TransNatUnit{(\TildeLax{u} \LaxCanonique{\mathdeuxcat{A}})}{a} &= \TildeLax{u} ( \TransNatUnit{(\LaxCanonique{\mathdeuxcat{A}})}{a} ) \CompDeuxUn \TransNatUnit{\TildeLax{u}}{\LaxCanonique{\mathdeuxcat{A}}(a)}
\\ 
&= \TildeLax{u} ( \TransNatUnit{(\LaxCanonique{\mathdeuxcat{A}})}{a} ) \CompDeuxUn 1_{\TildeLax{u} \LaxCanonique{\mathdeuxcat{A}} (a)}
\\
&= \TildeLax{u} ( \TransNatUnit{(\LaxCanonique{\mathdeuxcat{A}})}{a} )
\\
&= \TildeLax{u} ([1] \to [0], 1_{1_{a}})
\\
&= ([1] \to [0], \TransNatUnit{u}{a})
\end{aligned}
$$
et
$$
\begin{aligned}
\TransNatUnit{(\LaxCanonique{\mathdeuxcat{B}} u)}{a} &= \LaxCanonique{\mathdeuxcat{B}} (\TransNatUnit{u}{a}) \CompDeuxUn \TransNatUnit{(\LaxCanonique{\mathdeuxcat{B}})}{u(a)}
\\
&= (1_{[1]}, \TransNatUnit{u}{a}) \CompDeuxUn ([1] \to [0], 1_{1_{u(a)}})
\\
&= ([1] \to [0], \TransNatUnit{u}{a}).
\end{aligned}
$$

Pour tout couple de \un{}cellules $f$ et $f'$ de $\mathdeuxcat{A}$ telles que la composée $f'f$ fasse sens, 
$$
\begin{aligned}
(\TildeLax{u} \LaxCanonique{\mathdeuxcat{A}})_{f',f} &= \TildeLax{u} ((\LaxCanonique{\mathdeuxcat{A}})_{f',f}) \CompDeuxUn \TildeLax{u}_{\LaxCanonique{\mathdeuxcat{A}} (f'), \LaxCanonique{\mathdeuxcat{A}} (f)}
\\
&= \TildeLax{u} ((\LaxCanonique{\mathdeuxcat{A}})_{f',f}) \CompDeuxUn 1_{\TildeLax{u} (\LaxCanonique{\mathdeuxcat{A}} (f')) \TildeLax{u} (\LaxCanonique{\mathdeuxcat{A}} (f))}
\\
&= \TildeLax{u} ((\LaxCanonique{\mathdeuxcat{A}})_{f',f})
\\
&= \TildeLax{u} ([1] \to [2], 1_{f'f})
\\
&= ([1] \to [2], u_{f',f})
\end{aligned}
$$
et
$$
\begin{aligned}
(\LaxCanonique{\mathdeuxcat{B}} u)_{f',f} &= \LaxCanonique{\mathdeuxcat{B}} (u_{f',f}) \CompDeuxUn (\LaxCanonique{\mathdeuxcat{B}})_{u(f'), u(f)}
\\
&= (1_{[1]}, u_{f',f}) \CompDeuxUn ([1] \to [2], 1_{u(f')u(f)})
\\
&= ([1] \to [2], u_{f',f}).
\end{aligned}
$$
\end{proof}

\begin{df}\label{DefCounite}
Soit $\mathdeuxcat{A}$ une petite \deux{}catégorie. On définit un \DeuxFoncteurStrict{} $\StrictCanonique{\mathdeuxcat{A}}\index[not]{0EpsilonA@$\StrictCanonique{\mathdeuxcat{A}}$} : \TildeLax{\mathdeuxcat{A}} \to \mathdeuxcat{A}$, que l'on appellera parfois \deux{}\emph{foncteur strict structural associé à} $\mathdeuxcat{A}$, par
$$
\begin{aligned}
\StrictCanonique{\mathdeuxcat{A}} : \TildeLax{\mathdeuxcat{A}} &\to \mathdeuxcat{A}
\\
a &\mapsto a
\\
([m], x) &\mapsto x_{m,m-1} \dots x_{1,0}
\\
(\varphi, \alpha) &\mapsto \alpha_{n} \CompDeuxZero \dots \CompDeuxZero \alpha_{1}.
\end{aligned}
$$
\end{df}

\begin{paragr}
Vérifions que cela définit un \DeuxFoncteurStrict{}. 

Pour toute \un{}cellule $([m], x)$ de $\TildeLax{\mathdeuxcat{A}}$, 
$$
\begin{aligned}
\StrictCanonique{\mathdeuxcat{A}} (1_{([m], x)}) &= \StrictCanonique{\mathdeuxcat{A}} (1_{[m]}, (1_{x_{1,0}}, \dots, 1_{x_{m, m-1}}))
\\
&= 1_{x_{m, m-1}} \CompDeuxZero \dots \CompDeuxZero 1_{x_{1, 0}}
\\
&= 1_{x_{m,m-1} \dots x_{1,0}}
\\
&= 1_{\StrictCanonique{\mathdeuxcat{A}} ([m], x)}.
\end{aligned}
$$

Pour tout couple de \deux{}cellules $(\varphi, \alpha) : ([m],x) \to ([n],y)$ et $(\psi, \beta) : ([n],y) \to ([p],z)$ de $\TildeLax{\mathdeuxcat{A}}$ telles que la composée $(\psi, \beta) \CompDeuxUn (\varphi, \alpha)$ fasse sens, 
$$
\begin{aligned}
\StrictCanonique{\mathdeuxcat{A}} ((\psi, \beta) \CompDeuxUn (\varphi, \alpha)) &= \StrictCanonique{\mathdeuxcat{A}} (\varphi \psi, (\beta_{p} \CompDeuxUn (\alpha_{\psi(p)} \CompDeuxZero \dots \CompDeuxZero \alpha_{\psi(p-1) + 1})), \dots, (\beta_{1} \CompDeuxUn (\alpha_{\psi(1)} \CompDeuxZero \dots \CompDeuxZero \alpha_{1})))
\\
&= (\beta_{p} (\alpha_{n} \CompDeuxZero \dots \CompDeuxZero \alpha_{\psi(p-1)+1})) \CompDeuxZero \dots \CompDeuxZero (\beta_{1} (\alpha_{\psi(1)} \CompDeuxZero \dots \CompDeuxZero \alpha_{1}))
\\
&= (\beta_{p} \CompDeuxZero \dots \CompDeuxZero \beta_{1}) \CompDeuxUn (\alpha_{n} \CompDeuxZero \dots \CompDeuxZero \alpha_{1})
\\
&= \StrictCanonique{\mathdeuxcat{A}} (\psi, \beta) \StrictCanonique{\mathdeuxcat{A}} (\varphi, \alpha).
\end{aligned}
$$

Soient $([m], x)$ et $([m'], x')$ deux \un{}cellules de $\TildeLax{\mathdeuxcat{A}}$ telles que la composée $([m'], x') ([m], x)$ fasse sens. Alors, 
$$
\begin{aligned}
\StrictCanonique{\mathdeuxcat{A}} (([m'], x') ([m], x)) &= \StrictCanonique{\mathdeuxcat{A}} ([m'+m], (x',x))
\\
&= x'_{m', m'-1} \dots x'_{1,0} x_{m, m-1} \dots x_{1,0}
\\
&= \StrictCanonique{\mathdeuxcat{A}} ([m'], x') \StrictCanonique{\mathdeuxcat{A}} ([m], x).
\end{aligned}
$$ 

Soient $(\varphi, \alpha) = (\varphi, \alpha_{1}, \dots, \alpha_{n})$ et $(\varphi', \alpha') = (\varphi', \alpha'_{1}, \dots, \alpha'_{n'})$ deux \deux{}cellules de $\TildeLax{\mathdeuxcat{A}}$ telles que la composée 
$$
(\varphi', \alpha') \CompDeuxZero (\varphi, \alpha)
$$
fasse sens. Alors, 
$$
\begin{aligned}
\StrictCanonique{\mathdeuxcat{A}} (\varphi', \alpha') \CompDeuxZero \StrictCanonique{\mathdeuxcat{A}} (\varphi, \alpha) &= \alpha'_{n'} \CompDeuxZero \dots \alpha'_{1} \CompDeuxZero \alpha_{n} \CompDeuxZero \dots \CompDeuxZero \alpha_{1}
\\
&= \StrictCanonique{\mathdeuxcat{A}} ((\varphi', \alpha') \CompDeuxZero (\varphi, \alpha)).
\end{aligned}
$$
\end{paragr}

\begin{lemme}\label{CouniteNaturelle}
Pour tout morphisme $u : \mathdeuxcat{A} \to \mathdeuxcat{B}$ de $\DeuxCat$, le diagramme
$$
\xymatrix{
\TildeLax{\mathdeuxcat{A}}
\ar[r]^{\TildeLax{u}}
\ar[d]_{\StrictCanonique{\mathdeuxcat{A}}}
&\TildeLax{\mathdeuxcat{B}}
\ar[d]^{\StrictCanonique{\mathdeuxcat{B}}}
\\
\mathdeuxcat{A}
\ar[r]_{u}
&\mathdeuxcat{B}
}
$$
est commutatif.
\end{lemme}

\begin{proof}
Toutes les flèches apparaissant dans ce diagramme étant des \DeuxFoncteursStricts{}, il suffit de vérifier que $u \StrictCanonique{\mathdeuxcat{A}}$ et $\StrictCanonique{\mathdeuxcat{B}} \TildeLax{u}$ coïncident sur les objets, \un{}cellules et \deux{}cellules. 

Pour tout objet $a$ de $\TildeLax{\mathdeuxcat{A}}$, 
$$
\begin{aligned}
u \StrictCanonique{\mathdeuxcat{A}} (a) &= u(a) 
\\
&= \StrictCanonique{\mathdeuxcat{B}} (u(a)) 
\\
&= \StrictCanonique{\mathdeuxcat{B}} \TildeLax{u} (a).
\end{aligned}
$$

Pour toute \un{}cellule $([m], x)$ de $\TildeLax{\mathdeuxcat{A}}$, 
$$
\begin{aligned}
u \StrictCanonique{\mathdeuxcat{A}} ([m], x) &= u(x_{m,m-1} \dots x_{1,0}) 
\\
&= u(x_{m,m-1}) \dots u(x_{1,0})
\\
&= \StrictCanonique{\mathdeuxcat{B}} ([m], u(x_{1,0}), \dots, u(x_{m,m-1}))
\\
&= \StrictCanonique{\mathdeuxcat{B}} \TildeLax{u} ([m], x_{1,0}, \dots, x_{m,m-1}). 
\end{aligned}
$$

Pour toute \deux{}cellule $(\varphi : [n] \to [m], \alpha_{1}, \dots, \alpha_{n})$ de $\TildeLax{\mathdeuxcat{A}}$,
$$
\begin{aligned}
u \StrictCanonique{\mathdeuxcat{A}} (\varphi : [n] \to [m], \alpha_{1}, \dots, \alpha_{n}) &= u(\alpha_{n} \circ \dots \circ \alpha_{1})
\\
&= u(\alpha_{n}) \circ \dots \circ u(\alpha_{1})
\\
&= \StrictCanonique{\mathdeuxcat{B}} (\varphi, u(\alpha_{1}), \dots, u(\alpha_{n}))
\\
&= \StrictCanonique{\mathdeuxcat{B}} \TildeLax{u} (\varphi, \alpha_{1}, \dots, \alpha_{n}).
\end{aligned}
$$  
\end{proof}

\begin{paragr}
Notons $I\index[not]{I@$I$} : \DeuxCat \to \DeuxCatLax$ l'inclusion canonique. Les lemmes \ref{UniteNaturelle} et \ref{CouniteNaturelle} permettent d'affirmer que l'on a défini des transformations naturelles 
$$
\TransLaxCanonique\index[not]{0Eta@$\TransLaxCanonique$} : 1_{\DeuxCatLax} \Rightarrow IB
$$
et
$$
\TransStrictCanonique\index[not]{0Epsilon@$\TransStrictCanonique$} : BI \Rightarrow 1_{\DeuxCat}.
$$
\end{paragr}

\begin{theo}\label{BIAdjonction}
Le foncteur $B : \DeuxCatLax \to \DeuxCat$ est un adjoint à gauche de l'inclusion $I : \DeuxCat \to \DeuxCatLax$, les transformations naturelles $\TransLaxCanonique$ et $\TransStrictCanonique$ constituant respectivement l'unité et la coünité de l'adjonction $(B,I)$. 
\end{theo}

\begin{proof}
Il suffit de vérifier les identités triangulaires, qui stipulent ici que les diagrammes
$$
\xymatrix{
I
\ar @{=>} [rr]^{\TransLaxCanonique{}I}
\ar @{=} [drr]
&& IBI
\ar @{=>} [d]^{I \TransStrictCanonique}
&\rm{et}
&B
\ar @{=>} [rr]^{B \TransLaxCanonique}
\ar @{=} [drr]
&& BIB
\ar @{=>} [d]^{\TransStrictCanonique B}
\\
&& I
&&&&B
}
$$
sont commutatifs. 

Pour toute petite \deux{}catégorie $\mathdeuxcat{A}$, 
$$
\begin{aligned}
((I \TransStrictCanonique) (\TransLaxCanonique{} I))_{\mathdeuxcat{A}} &= (I \TransStrictCanonique)_{\mathdeuxcat{A}} (\TransLaxCanonique{} I)_{\mathdeuxcat{A}}
\\
&=I(\StrictCanonique{\mathdeuxcat{A}}) \LaxCanonique{I(\mathdeuxcat{A})} 
\\
&= \StrictCanonique{\mathdeuxcat{A}} \LaxCanonique{\mathdeuxcat{A}}
\\
&= 1_{\mathdeuxcat{A}}
\\
&= 1_{I(\mathdeuxcat{A})}
\\
&= (1_{I})_{\mathdeuxcat{A}},
\end{aligned}
$$
ce qui montre la première identité triangulaire. De plus, les égalités
$$
\begin{aligned}
((\TransStrictCanonique{} B)(B \TransLaxCanonique{}))_{\mathdeuxcat{A}} &= (\TransStrictCanonique{}B)_{\mathdeuxcat{A}} (B \TransLaxCanonique{})_{\mathdeuxcat{A}}
\\
&= \StrictCanonique{B(\mathdeuxcat{A})} B(\LaxCanonique{\mathdeuxcat{A}})
\\
&= \StrictCanonique{\TildeLax{\mathdeuxcat{A}}} \TildeLax{\LaxCanonique{\mathdeuxcat{A}}}
\end{aligned}
$$
montrent que la vérification de la seconde identité triangulaire se ramène dans le cas présent à celle de l'égalité
$$
\StrictCanonique{\TildeLax{\mathdeuxcat{A}}} \TildeLax{\LaxCanonique{\mathdeuxcat{A}}} = 1_{\TildeLax{\mathdeuxcat{A}}}.
$$
Les deux termes de cette égalité étant des \DeuxFoncteursStricts{}, il suffit de vérifier qu'ils coïncident sur les objets, \un{}cellules et \deux{}cellules de $\TildeLax{\mathdeuxcat{A}}$. 
Pour tout objet $a$ de $\TildeLax{\mathdeuxcat{A}}$, 
$$
\begin{aligned}
\StrictCanonique{\TildeLax{\mathdeuxcat{A}}} \TildeLax{\LaxCanonique{\mathdeuxcat{A}}} (a) &= \StrictCanonique{\TildeLax{\mathdeuxcat{A}}} (\LaxCanonique{\mathdeuxcat{A}} (a))
\\
&= \StrictCanonique{\TildeLax{\mathdeuxcat{A}}} (a)
\\
&= a.
\end{aligned}
$$
Pour toute \un{}cellule $([m], x)$ de $\TildeLax{\mathdeuxcat{A}}$, 
$$
\begin{aligned}
\StrictCanonique{\TildeLax{\mathdeuxcat{A}}} \TildeLax{\LaxCanonique{\mathdeuxcat{A}}} ([m], x) &= \StrictCanonique{\TildeLax{\mathdeuxcat{A}}} ([m], ([1], x_{1,0}), \dots, ([1], x_{m,m-1}))
\\
&= ([1], x_{m,m-1}) \dots ([1], x_{1,0})
\\
&= ([m], x).
\end{aligned}
$$
Pour toute \deux{}cellule $(\varphi : [n] \to [m], \alpha)$ de $\TildeLax{\mathdeuxcat{A}}$,  
$$
\begin{aligned} 
\StrictCanonique{\TildeLax{\mathdeuxcat{A}}} \TildeLax{\LaxCanonique{\mathdeuxcat{A}}} (\varphi, \alpha) &= \StrictCanonique{\TildeLax{\mathdeuxcat{A}}} (\varphi, ([1] \to [\varphi(1)], \alpha_{1}), \dots, ([1] \to [\varphi(n)], \alpha_{n}))
\\
&= ([1] \to [\varphi(n)], \alpha_{n}) \circ \dots \circ ([1] \to [\varphi(1)], \alpha_{1})
\\
&= (\varphi, \alpha).
\end{aligned} 
$$
\end{proof}

\begin{rem}
Aucun des foncteurs $I : \DeuxCat \hookrightarrow \DeuxCatLax$ et $B : \DeuxCatLax \to \DeuxCat$ n'est plein. En revanche, ils sont tous deux fidèles. Pour $I$, c'est tautologique. Vérifions-le pour $B$. Soient $u$ et $v$ deux \DeuxFoncteursLax{} de $\mathdeuxcat{A}$ vers $\mathdeuxcat{B}$ tels que $\TildeLax{u} = \TildeLax{v}$. 

Pour tout objet $a$ de $\mathdeuxcat{A}$, $\TildeLax{u} (a) = \TildeLax{v} (a)$, c'est-à-dire $u(a) = v(a)$. 

Pour toute \un{}cellule $f$ de $\mathdeuxcat{A}$, $\TildeLax{u} ([1], f) = \TildeLax{v} ([1], f)$, c'est-à-dire $([1], u(f)) = ([1], v(f))$, donc $u(f) = v(f)$. 

Pour toute \deux{}cellule $\alpha$ de $\mathdeuxcat{A}$, $\TildeLax{u} (1_{[1]}, \alpha) = \TildeLax{v} (1_{[1]}, \alpha)$, c'est-à-dire $(1_{[1]}, u(\alpha)) = (1_{[1]}, v(\alpha))$, d'où $u(\alpha) = v(\alpha)$. 

Pour tout couple de \un{}cellules $f$ et $f'$ de $\mathdeuxcat{A}$ telles que la composée $f'f$ fasse sens, le couple $([1] \to [2], 1_{f'f})$ définit une \un{}cellule de $\TildeLax{\mathdeuxcat{A}}$, de source $([2], (f',f))$ et de but $([1], f'f)$, l'objet $([2], (f',f))$ étant défini par $(f',f) (0 \to 1) = f$ et $(f',f) (1 \to 2) = f'$. Alors, $\TildeLax{u} ([1] \to [2], 1_{f'f}) = \TildeLax{v} ([1] \to [2], 1_{f'f})$, c'est-à-dire $([1] \to [2], u_{f',f}) = ([1] \to [2], v_{f',f})$, d'où $u_{f',f} = v_{f',f}$. 

Pour tout objet $a$ de $\mathdeuxcat{A}$, le couple $([1] \to [0], 1_{1_{a}})$ définit une \un{}cellule de $([0], a)$ vers $([1], 1_{a})$ dans $\TildeLax{\mathdeuxcat{A}}$. Alors, $\TildeLax{u} ([1] \to [0], 1_{1_{a}}) = \TildeLax{v} ([1] \to [0], 1_{1_{a}})$, c'est-à-dire $([1] \to [0], u_{a}) = ([1] \to [0], v_{a})$, d'où $u_{a} = v_{a}$. 

Par conséquent, $u = v$. 

Les foncteurs $I : \DeuxCat \hookrightarrow \DeuxCatLax$ et $B : \DeuxCatLax \to \DeuxCat$ sont donc tous deux fidèles. Ainsi, les composantes des transformations naturelles $\TransLaxCanonique{}$ et $\TransStrictCanonique{}$ sont des monomorphismes de $\DeuxCatLax{}$ et des épimorphismes de $\DeuxCat{}$ respectivement (voir par exemple \cite[p. 90, théorème 1]{CWM}). 
\end{rem}

\begin{prop}\label{BijHomBI}
Si $\mathdeuxcat{A}$ et $\mathdeuxcat{B}$ sont deux petites \deux{}catégories, il existe une bijection 
$$
\begin{aligned}
 \EnsHom{\DeuxCatLax}{\mathdeuxcat{A}}{\mathdeuxcat{B}} &\simeq \EnsHom{\DeuxCat}{\TildeLax{\mathdeuxcat{A}}}{\mathdeuxcat{B}} 
\\
u &\mapsto \StrictCanonique{\mathdeuxcat{B}} \TildeLax{u}
\end{aligned}
$$
naturelle en $\mathdeuxcat{A}$ et $\mathdeuxcat{B}$. 
\end{prop}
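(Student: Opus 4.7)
Mon plan est essentiellement de constater que cette proposition n'est autre qu'une reformulation du théorème \ref{BIAdjonction}, exprimée au niveau des ensembles de morphismes. En effet, d'après ce théorème, le foncteur $B : \DeuxCatLax \to \DeuxCat$ est adjoint à gauche à l'inclusion $I : \DeuxCat \hookrightarrow \DeuxCatLax$, l'unité et la counité étant respectivement $\TransLaxCanonique$ et $\TransStrictCanonique$. La théorie générale des adjonctions (voir par exemple \cite[IV. 1. Théorème 2]{CWM}) fournit alors, pour tous objets $\mathdeuxcat{A}$ et $\mathdeuxcat{B}$ de $\DeuxCatLax$ et $\DeuxCat$ respectivement, une bijection
$$
\EnsHom{\DeuxCat}{B(\mathdeuxcat{A})}{\mathdeuxcat{B}} \simeq \EnsHom{\DeuxCatLax}{\mathdeuxcat{A}}{I(\mathdeuxcat{B})}
$$
dont l'inverse envoie $u : \mathdeuxcat{A} \to I(\mathdeuxcat{B})$ sur $\TransStrictCanonique{\mathdeuxcat{B}} \CompDeuxUn B(u)$, c'est-à-dire précisément $\StrictCanonique{\mathdeuxcat{B}} \TildeLax{u}$, et qui est naturelle en $\mathdeuxcat{A}$ et $\mathdeuxcat{B}$. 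Comme $I(\mathdeuxcat{B}) = \mathdeuxcat{B}$ et $B(\mathdeuxcat{A}) = \TildeLax{\mathdeuxcat{A}}$, cela donne directement l'énoncé.

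Pour une démonstration plus autosuffisante, je construirais explicitement une bijection réciproque à l'application de l'énoncé. Étant donné un \DeuxFoncteurStrict{} $v : \TildeLax{\mathdeuxcat{A}} \to \mathdeuxcat{B}$, le \DeuxFoncteurLax{} associé sera simplement $v \LaxCanonique{\mathdeuxcat{A}} : \mathdeuxcat{A} \to \mathdeuxcat{B}$. Il faut alors vérifier que ces deux applications sont bien inverses l'une de l'autre. Dans un sens, pour $u \in \EnsHom{\DeuxCatLax}{\mathdeuxcat{A}}{\mathdeuxcat{B}}$, on a $\StrictCanonique{\mathdeuxcat{B}} \TildeLax{u} \LaxCanonique{\mathdeuxcat{A}} = \StrictCanonique{\mathdeuxcat{B}} \LaxCanonique{\mathdeuxcat{B}} u = u$, la première égalité résultant du lemme \ref{UniteNaturelle} et la seconde de la première identité triangulaire établie lors de la démonstration du théorème \ref{BIAdjonction}. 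Dans l'autre sens, pour $v \in \EnsHom{\DeuxCat}{\TildeLax{\mathdeuxcat{A}}}{\mathdeuxcat{B}}$, on a $\StrictCanonique{\mathdeuxcat{B}} \TildeLax{v \LaxCanonique{\mathdeuxcat{A}}} = \StrictCanonique{\mathdeuxcat{B}} \TildeLax{v} \TildeLax{\LaxCanonique{\mathdeuxcat{A}}} = v \StrictCanonique{\TildeLax{\mathdeuxcat{A}}} \TildeLax{\LaxCanonique{\mathdeuxcat{A}}} = v$, en utilisant la fonctorialité de $B$ (lemme \ref{TildeLaxFoncteur}), le lemme \ref{CouniteNaturelle} et la seconde identité triangulaire établie dans le théorème \ref{BIAdjonction}.

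La naturalité en $\mathdeuxcat{A}$ se déduit de la naturalité de $\LaxCanonique{\mathdeuxcat{A}}$ en $\mathdeuxcat{A}$ (lemme \ref{UniteNaturelle}), et celle en $\mathdeuxcat{B}$ de la fonctorialité de la composition. Aucune étape ne présente de véritable difficulté : tout le travail conceptuel est déjà contenu dans l'établissement de l'adjonction $(B, I)$, dont la présente proposition ne fait qu'expliciter la traduction au niveau des ensembles de morphismes.
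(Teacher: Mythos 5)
Votre preuve est correcte et suit essentiellement la même voie que le texte : la proposition y est démontrée en une ligne comme reformulation du théorème \ref{BIAdjonction} via la théorie classique des adjonctions (même référence à \cite[p.~83, théorème 2]{CWM}). La vérification explicite que vous ajoutez (identités triangulaires, lemmes \ref{UniteNaturelle}, \ref{CouniteNaturelle} et \ref{TildeLaxFoncteur}) est exacte mais ne constitue qu'un déploiement de ce même argument.
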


\begin{proof}
C'est une reformulation du théorème \ref{BIAdjonction}, en vertu de la théorie classique des adjonctions (voir par exemple \cite[p. 83, théorème 2]{CWM}).
\end{proof}

\begin{lemme}\label{LemmeDimitri}
Pour tout morphisme $u : \mathdeuxcat{A} \to \mathdeuxcat{B}$ de $\DeuxCatLax$, le diagramme
$$
\xymatrix{
\TildeLax{\mathdeuxcat{A}}
\ar[r]^{\TildeLax{u}}
\ar[d]_{\StrictCanonique{\mathdeuxcat{A}}}
&\TildeLax{\mathdeuxcat{B}}
\ar[d]^{\StrictCanonique{\mathdeuxcat{B}}}
\\
\mathdeuxcat{A}
\ar[r]_{u}
&\mathdeuxcat{B}
}
$$
est commutatif à une \DeuxTransformationLax{} relative aux objets $\StrictCanonique{\mathdeuxcat{B}} \TildeLax{u} \Rightarrow u \StrictCanonique{\mathdeuxcat{A}}$ près. 
\end{lemme}

\begin{proof}
Pour tout objet $a$ de $\TildeLax{\mathdeuxcat{A}}$, on pose $\sigma_{a} = 1_{u(a)}$ et, pour tout morphisme 
$([m], x)$ de $\TildeLax{\mathdeuxcat{A}}$, on pose $\sigma_{([m], x)} = u_{x}$. Cela définit une \DeuxTransformationLax{} 
$\sigma : \StrictCanonique{\mathdeuxcat{B}} \TildeLax{u} \Rightarrow u \StrictCanonique{\mathdeuxcat{A}}$. 
\end{proof}

\begin{df}\label{DefBarreLax}
Étant donné un \DeuxFoncteurLax{} $u : \mathdeuxcat{A} \to \mathdeuxcat{B}$, on définit son \emph{\deux{}foncteur barre lax}\index{barre lax d'un \DeuxFoncteurLax{}} $\BarreLax{u}\index[not]{uYyy@$\BarreLax{u}$} : \TildeLax{\mathdeuxcat{A}} \to \mathdeuxcat{B}$ par la formule
$$
\BarreLax{u} = \StrictCanonique{\mathdeuxcat{B}} \TildeLax{u}.
$$
Autrement dit, $\BarreLax{u}$ est le \DeuxFoncteurStrict{} associé à $u$ par la bijection naturelle figurant dans l'énoncé de la proposition \ref{BijHomBI}.
\end{df}

\begin{paragr}\label{DefBarreLaxFoncteur}
De façon plus explicite, les formules sont
$$
\begin{aligned}
\BarreLax{u} : \TildeLax{\mathdeuxcat{A}} &\to \mathdeuxcat{B}
\\
a &\mapsto u(a)
\\
([0], a) &\mapsto 1_{u(a)}
\\
(([m], x), m \geq 1) &\mapsto u(x_{m,m-1}) \dots u(x_{1,0})
\\
((\varphi, \alpha) : ([m],x) \to ([n],y)) &\mapsto (u(\alpha_{n}) \CompDeuxZero \dots \CompDeuxZero u(\alpha_{1})) 
\CompDeuxUn (u_{x_{\varphi(n-1)} \to \dots \to x_{m}} \CompDeuxZero \dots \CompDeuxZero u_{x_{0} \to \dots \to x_{\varphi(1)}}).
\end{aligned}
$$
\end{paragr}

\begin{rem}
On utilisera souvent la caractérisation suivante de $\BarreLax{u}$ : c'est l'unique \DeuxFoncteurStrict{} de $\TildeLax{\mathdeuxcat{A}}$ vers $\mathdeuxcat{B}$ rendant le diagramme
$$
\xymatrix{
\TildeLax{\mathdeuxcat{A}}
\ar[rd]
\\
\mathdeuxcat{A}
\ar[u]^{\LaxCanonique{\mathdeuxcat{A}}}
\ar[r]_{u}
&\mathdeuxcat{B}
}
$$
commutatif.
\end{rem}

\begin{paragr}\label{DeuxTransInduite}
Soient $u$ et $v$ deux \DeuxFoncteursLax{} de $\mathdeuxcat{A}$ vers $\mathdeuxcat{B}$ et $\sigma$ une \DeuxTransformationLax{} ou une \DeuxTransformationCoLax{} de $u$ vers $v$. 

Pour tout objet $a$ de $\mathdeuxcat{A}$, posons $\BarreLax{\sigma}_{a} = \sigma_{a}$. 

Pour toute \un{}cellule $([m], x)$ de $a$ vers $a'$ dans $\TildeLax{\mathdeuxcat{A}}$, on définit une \deux{}cellule $\sigma_{([m], x)}$ comme suit. Si $m = 0$, posons $\BarreLax{\sigma}_{([m], x)} = \BarreLax{\sigma}_{([0], a)} = 1_{\sigma_{a}}$. Si $m = 1$, posons $\BarreLax{\sigma}_{([m], x)} = \BarreLax{\sigma}_{([1], x_{1,0})} = \sigma_{x_{1, 0}}$. Si $m \geq 2$, posons
$$
\BarreLax{\sigma}_{([m], x)} = ((v(x_{m, m-1}) \dots v(x_{2, 1})) \CompDeuxZero \sigma_{x_{1, 0}}) \CompDeuxUn (\BarreLax{\sigma}_{([m-1], (x_{m, m-1}, \dots, x_{2, 1}))} \CompDeuxZero u(x_{1, 0}))
$$
si $\sigma$ est une \DeuxTransformationLax{}, et 
$$
\BarreLax{\sigma}_{([m], (x))} = (\sigma_{x_{m, m-1}} \CompDeuxZero (u(x_{m-1, m-2}) \dots u(x_{1,0}))) \CompDeuxZero (v(x_{m, m-1}) \CompDeuxZero \BarreLax{\sigma}_{([m-1], (x_{m-1, m-2}, \dots, x_{1, 0}))})
$$
si $\sigma$ est une \DeuxTransformationCoLax{}.
\end{paragr}

\begin{lemme}\label{BarreLaxDeuxTrans}
Étant donné deux \DeuxFoncteursLax{} parallèles $u$ et $v$ et une \DeuxTransformationLax{} (\emph{resp.} \DeuxTransformationCoLax{}) $\sigma : u \Rightarrow v$, le procédé décrit dans le paragraphe \ref{DeuxTransInduite} définit une \DeuxTransformationLax{} (\emph{resp.} \DeuxTransformationCoLax{}) $\BarreLax{\sigma}\index[not]{0sigmaYyy@$\BarreLax{\sigma}$} : \BarreLax{u} \Rightarrow \BarreLax{v}$.
\end{lemme}

\begin{proof}
Nous nous limitons à quelques indications. Restreignons-nous au cas d'une \DeuxTransformationCoLax{}. Des trois conditions de cohérence à vérifier, à savoir celle « d'unité », celle « de composition des \un{}cellules » et celle « de compatibilité aux \deux{}cellules », seule cette dernière ne devrait pas sembler résulter immédiatement des conditions de cohérence faisant partie des diverses hypothèses. Ces dernières, assaisonnées d'un argument de récurrence, permettent de se ramener au cas d'une \deux{}cellule de $\TildeLax{\mathdeuxcat{A}}$ dont la première composante est l'unique application d'intervalles de $[1]$ vers $[2]$. Il s'agit donc de vérifier l'assertion suivante : étant donné $x_{0}$, $x_{1}$ et $x_{2}$ trois objets de $\mathdeuxcat{A}$, $x_{1,0}$, $x_{2,1}$ et $y_{2,0}$ trois \un{}cellules de $x_{0}$ vers $x_{1}$, de $x_{1}$ vers $x_{2}$ et de $x_{0}$ vers $x_{2}$ respectivement et $\alpha$ une \deux{}cellule de $x_{2,1} x_{1,0}$ vers $y_{2,0}$, le diagramme formé par « l'extérieur » du diagramme
$$
\xymatrix{
v(x_{2,1}) v(x_{1,0}) \sigma_{x_{0}}
\ar@{=>}[rr]^{v(x_{2,1}) \CompDeuxZero \sigma_{x_{1,0}}}
\ar@{=>}[dd]_{v_{x_{2,1}, x_{1,0}} \CompDeuxZero \sigma_{x_{0}}}
&&v(x_{2,1}) \sigma_{x_{1}} u(x_{1,0})
\ar@{=>}[rr]^{\sigma_{x_{2,1}} \CompDeuxZero u(x_{1,0})}
&&\sigma_{x_{2}} u(x_{2,1}) u(x_{1,0})
\ar@{=>}[dd]^{\sigma_{x_{2}} \CompDeuxZero u_{x_{2,1}, x_{1,0}}}
\\
\\
v(x_{2,1} x_{1,0}) \sigma_{x_{0}}
\ar@{=>}[rrrr]^{\sigma_{x_{2,1} x_{1,0}}}
\ar@{=>}[dd]_{v(\alpha) \CompDeuxZero \sigma_{x_{0}}}
&&&&\sigma_{x_{2}} u(x_{2,1} x_{1,0})
\ar@{=>}[dd]^{\sigma_{x_{2}} \CompDeuxZero u(\alpha)}
\\
\\
v(y_{2,0}) \sigma_{x_{0}}
\ar@{=>}[rrrr]_{\sigma_{y_{2,0}}}
&&&&\sigma_{x_{2}} u(y_{2,0})   
}
$$
est commutatif. Or, en vertu de la « naturalité de $\sigma$ par rapport à la composition des \un{}cellules », la partie supérieure de ce diagramme est commutative. La partie inférieure l'est également, en vertu de la « naturalité de $\sigma$ par rapport aux \deux{}cellules ». Le diagramme « extérieur » est donc bien commutatif. 
\end{proof}

\begin{rem}
Réciproquement, la donnée d'une \DeuxTransformationLax{} (\emph{resp.} d'une \DeuxTransformationCoLax{}) de $\BarreLax{u}$ vers $\BarreLax{v}$ permet de définir une \DeuxTransformationLax{} (\emph{resp.} une \DeuxTransformationCoLax{}) de $u$ vers $v$. 
\end{rem}

\begin{rem}
En revanche, et contrairement à ce que l'on pourrait croire, il ne semble pas possible de définir de façon générale une \DeuxTransformationLax{} (\emph{resp.} une \DeuxTransformationCoLax{}) de $\TildeLax{u}$ vers $\TildeLax{v}$ étant donné une \DeuxTransformationLax{} (\emph{resp.} une \DeuxTransformationCoLax{}) de $u$ vers $v$. Le lecteur est invité à s'en convaincre. (La première difficulté réside dans l'absence d'application d'intervalles « canonique » de $[m+1]$ vers $[m+1]$ qui ne soit pas une identité, pour $m \geq 2$.)
\end{rem}

\begin{rem}
Le lemme \ref{BarreLaxDeuxTrans} nous permet de démontrer de façon paresseuse le lemme \ref{DeuxTransFoncLax} en remarquant qu'il suffit de vérifier les conditions de cohérence portant sur le \DeuxFoncteurLax{} $h$ construit au cours de la preuve dans le cas où $u$ et $v$ sont des \DeuxFoncteursStricts{}. Dans le cas général de \DeuxFoncteursLax{} quelconques, on se ramène au cas particulier de \DeuxFoncteursStricts{} par l'argument suivant. Une \DeuxTransformationLax{} (\emph{resp.} \DeuxTransformationCoLax{}) $u \Rightarrow v$ induisant une \DeuxTransformationLax{} (\emph{resp.} \DeuxTransformationCoLax{}) $\BarreLax{u} \Rightarrow \BarreLax{v}$, il existe, en vertu de ce qui précède, un \DeuxFoncteurLax{}
$$
h : [1] \times \TildeLax{\mathdeuxcat{A}} \to \mathdeuxcat{B}
$$
tel que le diagramme
$$
\xymatrix{
&[1] \times \TildeLax{\mathdeuxcat{A}}
\ar[dd]^{h}
\\
\TildeLax{\mathdeuxcat{A}}
\ar[ur]^{0 \times 1_{\TildeLax{\mathdeuxcat{A}}}}
\ar[dr]_{\BarreLax{u}}
&&\TildeLax{\mathdeuxcat{A}}
\ar[ul]_{1 \times 1_{\TildeLax{\mathdeuxcat{A}}}}
\ar[dl]^{\BarreLax{v}}
\\
&\mathdeuxcat{B}
}
$$
soit commutatif. Il en résulte que le diagramme 
$$
\xymatrix{
&&[1] \times \mathdeuxcat{A}
\ar[dd]^{h (1_{[1]} \times \LaxCanonique{\mathdeuxcat{A}})}
\\
\mathdeuxcat{A}
\ar[urr]^{0 \times 1_{\mathdeuxcat{A}}}
\ar[drr]_{u}
&&&&\mathdeuxcat{A}
\ar[ull]_{1 \times 1_{\mathdeuxcat{A}}}
\ar[dll]^{v}
\\
&&\mathdeuxcat{B}
}
$$
est commutatif, ce qui termine la démonstration.
\end{rem}

On mentionne maintenant brièvement les analogues pour les \DeuxFoncteursCoLax{} des constructions précédentes. 

\begin{df}
Soit $\mathdeuxcat{A}$ une petite \deux{}catégorie. On notera $\TildeColax{\mathdeuxcat{A}}$\index[not]{AZzzColax@$\TildeColax{\mathdeuxcat{A}}$} la \deux{}catégorie définie par
$$
\TildeColax{\mathdeuxcat{A}} = \DeuxCatDeuxOp{(\TildeLax{\DeuxCatDeuxOp{\mathdeuxcat{A}}})}.
$$

On notera $\ColaxCanonique{\mathdeuxcat{A}}$\index[not]{0EtaCA@$\ColaxCanonique{\mathdeuxcat{A}}$} le \DeuxFoncteurCoLax{} défini par
$$
\ColaxCanonique{\mathdeuxcat{A}} = \DeuxFoncDeuxOp{(\LaxCanonique{\DeuxCatDeuxOp{\mathdeuxcat{A}}})} : \mathdeuxcat{A} \to \TildeColax{\mathdeuxcat{A}}.
$$

On notera $\StrictColaxCanonique{\mathdeuxcat{A}}$\index[not]{0EpsilonCA@$\StrictColaxCanonique{\mathdeuxcat{A}}$} le \DeuxFoncteurStrict{} défini par 
$$
\StrictColaxCanonique{\mathdeuxcat{A}} = \DeuxFoncDeuxOp{(\StrictCanonique{\DeuxCatDeuxOp{\mathdeuxcat{A}}})} : \TildeColax{\mathdeuxcat{A}} \to\mathdeuxcat{A} .
$$

Pour tout \DeuxFoncteurCoLax{} $u : \mathdeuxcat{A} \to \mathdeuxcat{B}$, on notera $\TildeColax{u}$\index[not]{uZzzColax@$\TildeColax{u}$} le \DeuxFoncteurStrict{} défini par
$$
\TildeColax{u} = \DeuxFoncDeuxOp{(\TildeLax{\DeuxFoncDeuxOp{u}})} : \TildeColax{\mathdeuxcat{A}} \to \TildeColax{\mathdeuxcat{B}}
$$
et l'on notera $\BarreColax{u}$\index[not]{uYyyColax@$\BarreColax{u}$} le \DeuxFoncteurStrict{} défini par
$$
\BarreColax{u} = \DeuxFoncDeuxOp{(\BarreLax{\DeuxFoncDeuxOp{u}})} : \TildeColax{\mathdeuxcat{A}} \to \mathdeuxcat{B}.
$$
On a donc notamment, par définition, l'identité
$$
\StrictColaxCanonique{\mathdeuxcat{B}} \TildeColax{u} = \BarreColax{u}.
$$
\end{df} 

\begin{lemme}
Pour tout \DeuxFoncteurCoLax{} $u : \mathdeuxcat{A} \to \mathdeuxcat{B}$, le diagramme de \DeuxFoncteursCoLax{}
$$
\xymatrix{
\TildeColax{\mathdeuxcat{A}}
\ar[r]^{\TildeColax{u}}
&\TildeColax{\mathdeuxcat{B}}
\\
\mathdeuxcat{A}
\ar[u]^{\ColaxCanonique{\mathdeuxcat{A}}}
\ar[r]_{u}
&\mathdeuxcat{B}
\ar[u]_{\ColaxCanonique{\mathdeuxcat{B}}}
}
$$
est commutatif.
\end{lemme}

\begin{proof}
C'est l'énoncé dual du lemme \ref{UniteNaturelle}.
\end{proof}

\begin{lemme}
Pour tout \DeuxFoncteurStrict{} $u : \mathdeuxcat{A} \to \mathdeuxcat{B}$, le diagramme de \DeuxFoncteursStricts{}
$$
\xymatrix{
\TildeColax{\mathdeuxcat{A}}
\ar[r]^{\TildeColax{u}}
\ar[d]_{\StrictColaxCanonique{\mathdeuxcat{A}}}
&\TildeColax{\mathdeuxcat{B}}
\ar[d]^{\StrictColaxCanonique{\mathdeuxcat{B}}}
\\
\mathdeuxcat{A}
\ar[r]_{u}
&\mathdeuxcat{B}
}
$$
est commutatif.
\end{lemme}

\begin{proof}
C'est l'énoncé dual du lemme \ref{CouniteNaturelle}.
\end{proof}

\begin{rem}
Pour tout \DeuxFoncteurCoLax{} $u : \mathdeuxcat{A} \to \mathdeuxcat{B}$, $\BarreColax{u}$ est l'unique \DeuxFoncteurStrict{} de $\TildeColax{\mathdeuxcat{A}}$ vers $\mathdeuxcat{B}$ rendant le diagramme
$$
\xymatrix{
\TildeColax{\mathdeuxcat{A}}
\ar[dr]
\\
\mathdeuxcat{A}
\ar[u]^{\ColaxCanonique{\mathdeuxcat{A}}}
\ar[r]_{u}
&\mathdeuxcat{B}
}
$$
commutatif. 
\end{rem} 

\chapter{De $\Cat$ à $\DeuxCat$ en passant par $\widehat{\Delta}$}

\section{Localisateurs fondamentaux de $\Cat$}\label{SectionUnLocFond}

\begin{df}\label{DefWTop}
On dit qu'une application continue $f : X \to Y$ entre espaces topologiques est une \emph{équivalence faible topologique}\index{equivalence faible topologique@équivalence faible topologique}, ou plus simplement une \emph{équivalence faible}, si elle induit une bijection au niveau des $\pi_{0}$ et des isomorphismes entre les groupes d'homotopie pour tout choix de point base. Plus précisément, 
$$
\pi_{0}(f) : \pi_{0}(X) \to \pi_{0}(Y)
$$
est une bijection et, pour tout point $x$ de $X$ et tout entier $n \geq 1$,
$$
\pi_{n}(f,x) : \pi_{n}(X,x) \to \pi_{n}(Y,f(x))
$$
est un isomorphisme de groupes.
\end{df}

\begin{paragr}
On rappelle qu'un \emph{ensemble simplicial} est un foncteur de $\DeuxCatUnOp{\Delta}$ vers la catégorie $Ens$\index[not]{Ens@$Ens$} des ensembles. Suivant l'usage, on notera $\EnsSimp$\index[not]{0DeltaChapeau@$\EnsSimp$} la catégorie des ensembles simpliciaux, un morphisme d'ensembles simpliciaux n'étant rien d'autre qu'une transformation naturelle, c'est-à-dire un morphisme de foncteurs. On rappelle l'existence du foncteur \emph{réalisation géométrique} $\EnsSimp \to \Top$, obtenu par extension de Kan, $\Top$\index[not]{Top@$\Top$} désignant la catégorie des espaces topologiques et des applications continues entre iceux. De façon similaire, on appelle \emph{ensembles bisimpliciaux}\index{ensemble bisimplicial} les foncteurs de la catégorie $\DeuxCatUnOp{(\Delta \times \Delta)}$ vers $Ens$, et l'on note $\widehat{\Delta \times \Delta}$\index[not]{0DeltaChapeauDelta@$\widehat{\Delta \times \Delta}$} la catégorie des ensembles bisimpliciaux, dont les morphismes sont les morphismes de tels foncteurs. Pour tout ensemble simplicial $X$, on notera $X_{m}$ l'ensemble $X([m])$ des \emph{$m$\nobreakdash-simplexes de $X$} et, pour tout ensemble bisimplicial $X$, on notera $X_{m,n}$ l'ensemble $X([m],[n])$ des \emph{$(m,n)$\nobreakdash-simplexes de $X$}.
\end{paragr}

\begin{df}\label{DefWEnsSimp}
On dit qu'un morphisme d'ensembles simpliciaux est une \emph{équivalence faible simpliciale}\index{equivalence faible simpliciale@équivalence faible simpliciale}, ou plus simplement une \emph{équivalence faible}, si son image par le foncteur de réalisation géométrique est une équivalence faible topologique. On notera $\EquiQuillen$\index[not]{W0Infini@$\EquiQuillen$} la classe des équivalences faibles simpliciales. 
\end{df}


\begin{lemme}\label{WQuillenStableSomme}
Une petite somme d'équivalences faibles simpliciales est une équivalence faible simpliciale.
\end{lemme}

\begin{proof}
C'est immédiat.   
\end{proof}

\begin{paragr}\label{FoncteurDiagonal}
Le foncteur diagonal 
$$
\begin{aligned}
\delta_{\Delta}\index[not]{0deltaDelta@$\delta_{\Delta}$} : \Delta &\to \Delta \times \Delta
\\
[n] &\mapsto ([n],[n])
\end{aligned}
$$
induit un foncteur
$$
\begin{aligned}
\delta_{\Delta}^{*}\index[not]{0deltaDeltaEtoile@$\delta_{\Delta}^{*}$} : \widehat{\Delta \times \Delta} &\to \EnsSimp
\\
X &\mapsto ([n] \mapsto X_{n,n}).
\end{aligned}
$$
\end{paragr}

La proposition \ref{LemmeBisimplicialDelta} stipule qu'un morphisme d'ensembles bisimpliciaux qui est une é\-qui\-va\-lence faible simpliciale « sur les colonnes » ou « sur les lignes » en est une « sur la diagonale ». 
  
\begin{prop}\label{LemmeBisimplicialDelta}
Soit $f : X \to Y$ un morphisme d'ensembles bisimpliciaux  tel que, pour tout entier $n \geq 0$, le morphisme d'ensembles simpliciaux $f_{n,\bullet} : X_{n,\bullet} \to Y_{n,\bullet}$ (\emph{resp.} $f_{\bullet,n} : X_{\bullet,n} \to Y_{\bullet,n}$) soit une équivalence faible. Alors, $\delta_{\Delta}^{*}(f)$ est une équivalence faible.  
\end{prop}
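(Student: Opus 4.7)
\medskip

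\textbf{Plan de d\'emonstration.} Par sym\'etrie entre les deux directions simpliciales (un ensemble bisimplicial peut se voir indiff\'eremment comme un foncteur $\UnCatOp{(\Delta \times \Delta)} \to Ens$ ou son transpos\'e, et cette op\'eration commute \`a la diagonale), il suffit de traiter le cas o\`u $f_{n,\bullet}$ est une \'equivalence faible pour tout $n \geq 0$. L'id\'ee est d'utiliser le fait classique que la r\'ealisation g\'eom\'etrique d'un ensemble bisimplicial peut se calculer de deux fa\c cons naturellement hom\'eomorphes~: d'une part comme la r\'ealisation de sa diagonale, d'autre part comme la r\'ealisation du \emph{espace simplicial} $[n] \mapsto |X_{n,\bullet}|$ obtenu en r\'ealisant colonne par colonne. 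Plus pr\'ecis\'ement, on dispose d'un hom\'eomorphisme canonique
$$
|\delta_{\Delta}^{*} X| \;\cong\; \int^{[n] \in \Delta} |\Delta_{n}| \times |X_{n,\bullet}|,
$$
naturel en $X$, le membre de droite \'etant la r\'ealisation du foncteur $\Delta \to \Top$ donn\'e par $[n] \mapsto |X_{n,\bullet}|$.

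\medskip

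\textbf{Premi\`ere \'etape.} Je commencerais par \'etablir ou rappeler cet hom\'eomorphisme canonique, qui provient formellement du fait que $|{-}| : \EnsSimp \to \Top$ est un adjoint \`a gauche et pr\'eserve donc les colimites, conjugu\'e au fait que la r\'ealisation d'un produit $\Delta_{m} \times \Delta_{n}$ d'ensembles simpliciaux standards est hom\'eomorphe au produit topologique $|\Delta_{m}| \times |\Delta_{n}|$ (lemme d'Eilenberg-Zilber). En \'ecrivant $X = \mathrm{colim}_{([m],[n]) \to X} \Delta_{m} \times \Delta_{n}$ et en comparant les deux expressions comme colimites (co\'egaliseur / \emph{coend}), on obtient l'identification voulue, dans laquelle la structure de $X$ comme objet simplicial en ensembles simpliciaux $[n] \mapsto X_{n,\bullet}$ intervient explicitement.

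\medskip

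\textbf{Deuxi\`eme \'etape.} Par hypoth\`ese, pour tout $n \geq 0$, l'application continue $|f_{n,\bullet}| : |X_{n,\bullet}| \to |Y_{n,\bullet}|$ est une \'equivalence faible topologique. Il s'agit alors d'invoquer le \emph{lemme de r\'ealisation} (d\^u \`a Segal, voir aussi May, Goerss-Jardine)~: une transformation naturelle entre deux espaces simpliciaux \emph{bons} (c'est-\`a-dire dont toutes les d\'eg\'en\'erescences sont des cofibrations ferm\'ees, condition automatique ici puisque la r\'ealisation d'un monomorphisme d'ensembles simpliciaux est une cofibration de CW-complexes) qui est une \'equivalence faible degr\'e par degr\'e induit une \'equivalence faible sur les r\'ealisations. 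Appliqu\'e au morphisme $[n] \mapsto |f_{n,\bullet}|$, ce lemme fournit une \'equivalence faible
$$
\int^{[n]} |\Delta_{n}| \times |X_{n,\bullet}| \;\longrightarrow\; \int^{[n]} |\Delta_{n}| \times |Y_{n,\bullet}|,
$$
que l'hom\'eomorphisme de la premi\`ere \'etape identifie \`a $|\delta_{\Delta}^{*}(f)|$. On en conclut que $\delta_{\Delta}^{*}(f)$ appartient \`a $\EquiQuillen$.

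\medskip

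\textbf{Obstacle principal.} Le point d\'elicat r\'eside enti\`erement dans le lemme de r\'ealisation pour les espaces simpliciaux~: sa d\'emonstration passe par une filtration squelettique et exige un argument soigneux de recollement (par exemple via le lemme cube, ou via le fait que les \'equivalences faibles sont stables par sommes amalgam\'ees le long de cofibrations et par colimites s\'equentielles de cofibrations). Une strat\'egie alternative, plus autonome mais plus calculatoire, consisterait \`a faire cette induction squelettique directement dans la cat\'egorie $\widehat{\Delta \times \Delta}$~: filtrer $X$ par les sous-ensembles bisimpliciaux $F_{n}X$ engendr\'es par les bisimplexes $X_{p,q}$ avec $p \leq n$, observer que chaque inclusion $F_{n-1}X \hookrightarrow F_{n}X$ s'ins\`ere dans un carr\'e cocart\'esien faisant intervenir $\partial \Delta_{n} \times X_{n,\bullet}^{\mathrm{nd}}$ et $\Delta_{n} \times X_{n,\bullet}^{\mathrm{nd}}$, et conclure par r\'ecurrence sur $n$ puis passage \`a la colimite en utilisant le lemme \ref{WQuillenStableSomme} et la stabilit\'e de $\EquiQuillen$ par colimites filtrantes de monomorphismes.
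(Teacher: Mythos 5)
Votre d\'emonstration est correcte, mais elle suit une voie diff\'erente de celle du texte~: la preuve donn\'ee ici se r\'eduit en fait \`a un renvoi aux r\'ef\'erences classiques (Quillen, Bousfield--Kan, Goerss--Jardine), et les d\'emonstrations cit\'ees reposent pr\'ecis\'ement sur la filtration squelettique dans l'une des directions simpliciales et le lemme de recollement, c'est-\`a-dire sur la strat\'egie que vous ne pr\'esentez que comme alternative dans votre dernier paragraphe. Votre route principale --- identifier $|\delta_{\Delta}^{*}X|$ \`a la r\'ealisation de l'espace simplicial $[n] \mapsto |X_{n,\bullet}|$, puis invoquer le lemme de r\'ealisation de Segal/May pour les espaces simpliciaux \og bons \fg{} --- est \'egalement valable~: l'hom\'eomorphisme de votre premi\`ere \'etape est un calcul de coend standard (utilisant $|\Delta_{m} \times \Delta_{n}| \cong |\Delta_{m}| \times |\Delta_{n}|$, sans danger ici puisque $|\Delta_{n}|$ est compact), et la condition de \og bont\'e \fg{} est bien automatique, les d\'eg\'en\'erescences \'etant des monomorphismes (scind\'es) d'ensembles simpliciaux dont la r\'ealisation est une inclusion de sous-CW-complexe, donc une cofibration ferm\'ee. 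La comparaison est la suivante~: votre approche d\'el\`egue tout le travail d\'elicat au lemme de r\'ealisation topologique, lui-m\^eme d\'emontr\'e par une filtration squelettique analogue, au prix d'un passage par $\Top$; l'approche des r\'ef\'erences cit\'ees reste interne aux ensembles (bi)simpliciaux et n'utilise que la stabilit\'e des \'equivalences faibles par recollement le long de cofibrations et par colimites filtrantes de monomorphismes, ce qui est plus proche de l'esprit combinatoire du chapitre. Dans les deux cas, le c\oe ur non trivial est le m\^eme argument de filtration et de recollement; aucune des deux voies ne fournit de raccourci v\'eritable, et votre plan, une fois le lemme de r\'ealisation admis ou red\'emontr\'e, constitue une preuve compl\`ete.
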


\begin{proof}
Pour une démonstration de ce résultat folklorique mais non-trivial, le lecteur pourra consulter \cite[p. 94-95]{QuillenK}, \cite[chapitre XII, paragraphe 4.3]{BK} ou \cite[proposition 1.7]{GoerssJardine}. 
\end{proof}

\begin{rem}
On pourra également se reporter à \cite[lemme 3.5]{Illusie}, correspondant au cas particulier que nous utiliserons de la proposition \ref{LemmeBisimplicialDelta}. 
\end{rem}

\begin{df}
On dit qu'un foncteur entre petites catégories est une \emph{équivalence faible catégorique}\index{equivalence faible catégorique@équivalence faible catégorique}, ou plus simplement une \emph{équivalence faible}, si son image par le foncteur nerf\index{nerf} usuel, que l'on notera $\UnNerf$\index[not]{N1@$\UnNerf$}, est une équivalence faible simpliciale. On note $\UnLocFondMin$\index[not]{WInfini1@$\UnLocFondMin$} la classe des équivalences faibles de $\Cat$. Soit, en formule :
$$
\UnLocFondMin = \UnNerf^{-1} (\EquiQuillen).
$$
\end{df}

\begin{df}
On dira qu'une petite catégorie $A$ est \emph{asphérique} si le foncteur canonique $A \to \UnCatPonct$ est une équivalence faible. 
\end{df}

\begin{rem}
Nous suivons la terminologie de Grothendieck. 
\end{rem}

\begin{lemme}\label{ObjetFinalAspherique}
Une petite catégorie admettant un objet initial ou un objet final est asphérique. 
\end{lemme}

\begin{proof}
C'est \cite[p. 84, corollaire 2]{QuillenK}.
\end{proof}

La version « absolue » — c'est-à-dire sans base, correspondant au cas $v = 1_{\mathdeuxcat{B}}$ — du théorème \ref{UnLocFondMinThA} constitue le Théorème A de Quillen. La démonstration de la version relative que nous en donnons s'inspire directement de celle de Quillen. Nous suivons en cela Cisinski. Historiquement, l'origine de cette version relative remonte apparemment à Grothendieck, qui en donne une esquisse de preuve « toposique » dans \cite{Poursuite} (voir aussi \cite[p. 11]{THG}). 

\begin{theo}[Quillen]\label{UnLocFondMinThA}
Soit
$$
\xymatrix{
A 
\ar[rr]^{u}
\ar[dr]_{w}
&&
B
\ar[dl]^{v}
\\
&
C
}
$$
un triangle commutatif dans $\Cat$. Si, pour tout objet $c$ de $C$, le foncteur $u/c\index[not]{uc@$u/c$} : A/c \to B/c$, induit par $u$, est une équivalence faible, alors $u$ est une équivalence faible.
\end{theo}

\begin{proof}
Définissons deux ensembles bisimpliciaux $S_{w}$ et $S_{v}$ par les formules
$$
(S_{w})_{m,n} = \{ (a_{0} \to \dots \to a_{m}, w(a_{m}) \to c_{0} \to \dots \to c_{n}), a_{i} \in \Objets{A}, c_{j} \in \Objets{C}, 0 \leq i \leq m, 0 \leq j \leq n \}
$$
et
$$
(S_{v})_{m,n} = \{ (b_{0} \to \dots \to b_{m}, v(b_{m}) \to c_{0} \to \dots \to c_{n}), b_{i} \in \Objets{B}, c_{j} \in \Objets{C}, 0 \leq i \leq m, 0 \leq j \leq n \}
$$
pour tout couple d'entiers positifs $m$ et $n$. 
Les opérateurs faces et dégénérescences se définissent de façon « évidente ».  

De plus, on considère les ensembles simpliciaux $\UnNerf{A}$ et $\UnNerf{B}$ comme des ensembles bisimpliciaux constants sur les colonnes. Autrement dit, pour tout couple d'entiers positifs $m$ et $n$, on pose $(\UnNerf{A})_{m,n} = (\UnNerf{A})_{m}$ et $(\UnNerf{B})_{m,n} = (\UnNerf{B})_{m}$. On note de même $\UnNerf{(u)} : \UnNerf{A} \to \UnNerf{B}$ le morphisme d'ensembles bisimpliciaux induit par $\UnNerf{(u)}$. On construit alors un diagramme commutatif de morphismes d'ensembles bisimpliciaux
$$
\xymatrix{
\UnNerf{A}
\ar[r]^{\UnNerf{(u)}}
&
\UnNerf{B}
\\
S_{w}
\ar[u]^{\varphi_{w}}
\ar[r]_{U}
&S_{v}
\ar[u]_{\varphi_{v}}
}
$$
en posant 
$$
(\varphi_{w})_{m,n} (a_{0} \to \dots \to a_{m}, w(a_{m}) \to c_{0} \to \dots \to c_{n}) = a_{0} \to \dots \to a_{m}
$$ 
et 
$$
U_{m,n} (a_{0} \to \dots \to a_{m}, w(a_{m}) \to c_{0} \to \dots \to c_{n}) = (u(a_{0}) \to \dots \to u(a_{m}), w(a_{m}) \to c_{0} \to \dots \to c_{n}),
$$
la définition de $\varphi_{v}$ étant analogue à celle de $\varphi_{w}$.

Par construction, $\delta_{\Delta}^{*} (\UnNerf{(u)}) = \UnNerf{(u)}$ (à gauche du symbole d'égalité, $\UnNerf{(u)}$ désigne un morphisme d'ensembles bisimpliciaux, à droite c'est un morphisme d'ensembles simpliciaux ; on ne signalera plus cet abus). Pour démontrer le résultat souhaité, il suffit donc de vérifier que $\delta_{\Delta}^{*} (U)$, $\delta_{\Delta}^{*} (\varphi_{w})$ et $\delta_{\Delta}^{*} (\varphi_{v})$ sont des équivalences faibles simpliciales. Un argument de « $2$ sur $3$ » permettra de conclure. 

En vertu de la proposition \ref{LemmeBisimplicialDelta}, pour montrer que $\delta_{\Delta}^{*} (U)$ est une équivalence faible simpliciale, il suffit de montrer que, pour tout entier $n \geq 0$, le morphisme d'ensembles simpliciaux $U_{\bullet, n} : (S_{w})_{\bullet, n} \to (S_{v})_{\bullet, n}$ en est une. On remarque que la source et le but de ce morphisme s'identifient à
$$
\coprod_{c_{0} \to \dots \to c_{n} \in (\UnNerf{C})_{n}} \UnNerf (A/c_{0})
$$
et
$$
\coprod_{c_{0} \to \dots \to c_{n} \in (\UnNerf{C})_{n}} \UnNerf (B/c_{0})
$$
respectivement, et $U_{\bullet, n}$ s'identifie à 
$$
\coprod_{c_{0} \to \dots \to c_{n} \in (\UnNerf{C})_{n}} \UnNerf{(u/c_{0})}.
$$
En vertu des hypothèses, chaque terme de cette somme est une équivalence faible. Il s'ensuit que $U_{\bullet, n}$ est une équivalence faible (en vertu du lemme \ref{WQuillenStableSomme}). Il en est donc de même de $\delta_{\Delta}^{*} (U)$.

Les raisonnements permettant de montrer que $\delta_{\Delta}^{*} (\varphi_{w})$ et $\delta_{\Delta}^{*} (\varphi_{v})$ sont des équivalences faibles sont évidemment analogues l'un à l'autre. Considérons le cas de $\delta_{\Delta}^{*} (\varphi_{w})$. Pour montrer que c'est une équivalence faible, il suffit, en vertu de la proposition \ref{LemmeBisimplicialDelta}, de montrer que, pour tout entier $m \geq 0$, le morphisme simplicial $(\varphi_{w})_{m, \bullet} : (S_{w})_{m, \bullet} \to (\UnNerf{A})_{m, \bullet}$ est une équivalence faible. On remarque que la source et le but de ce morphisme s'identifient à 
$$
\coprod_{a_{0} \to \dots \to a_{m} \in (\UnNerf{A})_{m}} \UnNerf{(w(a_{m})\backslash C})
$$
et
$$
\coprod_{a_{0} \to \dots \to a_{m} \in (\UnNerf{A})_{m}} \ast\index[not]{*@$\ast$}
$$
respectivement, $\ast$ désignant un ensemble simplicial final. Le morphisme simplicial $(\varphi_{w})_{m, \bullet}$ s'identifie ainsi à
$$
\coprod_{a_{0} \to \dots \to a_{m} \in (\UnNerf{A})_{m}} \UnNerf{(w(a_{m})\backslash C} \to \UnCatPonct).
$$
Pour tout objet $a_{m}$ de $A$, la catégorie $w(a_{m})\backslash C$ admet un objet initial, donc est asphérique (lemme \ref{ObjetFinalAspherique}). L'expression ci-dessus est donc une somme d'équivalences faibles, donc une équi\-valence faible. Ainsi, $(\varphi_{w})_{m, \bullet}$ est une équivalence faible. Il en est donc de même de $\delta_{\Delta}^{*} (\varphi_{w})$. Comme annoncé, le résultat s'ensuit. 
\end{proof}

Pour toute \deux{}catégorie $\mathdeuxcat{A}$, nous noterons $\UnCell{\mathdeuxcat{A}}$\index[not]{Fl1A@$\UnCell{\mathdeuxcat{A}}$} la classe des \un{}cellules de $\mathdeuxcat{A}$. Nous utiliserons la même notation pour la classe des morphismes d'une catégorie, que l'on considérera comme une \deux{}catégorie dont toutes les \deux{}cellules sont des identités. 

\begin{df}\label{DefSaturationFaible}
Soit $C$ une petite catégorie. Une partie $S \subset \UnCell{C}$ est dite \emph{faiblement saturée}\index{faiblement saturée} si elle vérifie les conditions suivantes.
\begin{itemize}
\item[FS1] Les identités des objets de $C$ sont dans $S$.
\item[FS2] Si deux des trois flèches d'un triangle commutatif de morphismes de $C$ sont dans $S$, alors la troisième l'est aussi.
\item[FS3] Si $i : X \to Y$ et $r : Y \to X$ sont des morphismes de $C$ vérifiant $ri = 1_{X}$ et si $ir$ est dans $S$, alors il en est de même de $r$ (et donc aussi de $i$ en vertu de ce qui précède). 
\end{itemize}
\end{df}

Une classe de flèches faiblement saturée d'une catégorie $C$ contient donc en particulier les isomorphismes de $C$.

\begin{df}[Grothendieck]\label{DefUnLocFond}
On appelle \emph{\ClasseUnLocFond{}}\index{localisateur fondamental de $\Cat$} une partie $\UnLocFond{W}$ de $\UnCell{\Cat}$ vérifiant les conditions suivantes.
\begin{itemize}
\item[LA] La partie $\UnLocFond{W}$ de $\UnCell{\Cat}$ est faiblement saturée.
\item[LB] Si $A$ est une petite catégorie admettant un objet final, alors le morphisme canonique $A \to \UnCatPonct$ est dans $\UnLocFond{W} $.
\item[LC] Si
$$
\xymatrix{
A 
\ar[rr]^{u}
\ar[dr]_{w}
&&
B
\ar[dl]^{v}
\\
&
C
}
$$
désigne un triangle commutatif de $\Cat$ et si, pour tout objet $c$ de $C$, le foncteur 
$$
u/c : A/c \to B/c,
$$ 
induit par $u$, est dans $\UnLocFond{W}$, alors $u$ est dans $\UnLocFond{W}$. 
\end{itemize}
\end{df}

\begin{rem}
À l'ajout de l'expression « de $\Cat$ » près (cet ajout se justifiant par le fait que nous introduirons plus loin l'analogue de ce concept pour la catégorie $\DeuxCat$), nous adoptons la terminologie de \cite{THG}, qui n'est pas celle de \cite{Poursuite}, Grothendieck choisissant d'appeler une telle classe un \emph{localisateur fondamental fort}. Il distingue en effet les deux cas d'une classe vérifiant, outre les conditions LA et LB, le cas absolu du Théorème A de Quillen (« localisateur fondamental » pour Grothendieck) ou le cas relatif (« localisateur fondamental fort »). Signalons toutefois que cela reste une question ouverte de savoir s'il existe des classes de morphismes de $\Cat$ vérifiant la première condition mais pas la seconde (sous l'hypothèse qu'elles vérifient les conditions LA et LB). 
\end{rem}

\begin{exemple}\label{UnLocFondMinUnLocFond}
Il est classique que la classe $\UnLocFondMin$ vérifie la condition LA. En vertu du lemme \ref{ObjetFinalAspherique}, elle vérifie la condition LB. Le théorème \ref{UnLocFondMinThA} affirme qu'elle vérifie la condition LC. C'est donc un \ClasseUnLocFond{}. C'en est bien sûr le paradigme justifiant historiquement l'intérêt pour cette notion, comme l'explique l'introduction de \cite{THG}. 
\end{exemple}

\begin{lemme}\label{EquivalenceViaPoint}
Soit $A$ une catégorie admettant un objet final $*$. Pour tout objet $a$ de $A$, on notera $p_{a}$ l'unique morphisme de $a$ vers $*$ dans $A$. Soit $S \subset \UnCell{A}$ vérifiant les propriétés suivantes.
\begin{itemize}
\item[(i)] Si $u \in S$ et $vu \in S$, alors $v \in S$.
\item[(ii)] Si $vu = 1_{*}$ et $uv \in S$, alors $v \in S$ (et donc $u \in S$ en vertu de la propriété précédente).
\end{itemize}
Alors, pour tout diagramme commutatif 
$$
\xymatrix{
a
\ar[rr]^{p_{a}}
\ar[dr]_{s}
&&{*}
\ar[dl]^{q}
\\
&a'
}
$$
dans $A$ tel que $s \in S$, $q$ et $p_{a'}$ sont aussi dans $S$. Si, de plus, $S$ est stable par composition, alors $p_{a}$ est aussi dans $S$. 
\end{lemme}

\begin{proof}
De $s \in S$ et $qp_{a'}s = qp_{a} = s \in S$, on déduit $qp_{a'} \in S$. Comme de plus $p_{a'}q = 1_{*}$, on a bien la première assertion. La seconde se déduit de l'égalité $p_{a} = p_{a'} s$.
\end{proof}

\emph{On suppose maintenant fixé un localisateur fondamental $\UnLocFond{W}$ de $\Cat$, dont on appellera les éléments les \emph{$\UnLocFond{W}$\nobreakdash-équivalences}, ou les \emph{équivalences faibles}\index{equivalence faible (foncteur, pour un localisateur fondamental de $\Cat$)@équivalence faible (pour un localisateur fondamental de $\Cat$)}.} 

\begin{paragr}
Nous nous conformerons aux notations du lemme \ref{EquivalenceViaPoint} en notant $p_{A}\index[not]{pA@$p_{A}$}$, pour toute petite catégorie $A$, le foncteur canonique de $A$ vers $\DeuxCatPonct$.
\end{paragr}

\begin{df}
On dit qu'une petite catégorie $A$ est \emph{$\UnLocFond{W}$\nobreakdash-asphérique}, ou plus simplement \emph{asphérique}\index{asphérique (petite catégorie, pour un localisateur fondamental de $\Cat$)}, si le foncteur canonique $p_{A} : A \to e$ est une équivalence faible.  
\end{df}

\begin{rem}
La condition LB de la définition \ref{DefUnLocFond} stipule donc qu'une petite catégorie admettant un objet final est asphérique. 
\end{rem}

\begin{paragr}
Rappelons qu'un endofoncteur $u : A \to A$ est dit \emph{constant}\index{constant (endofoncteur)} s'il existe un foncteur \mbox{$a : e \to A$} tel que $u = ap_{A}$. Autrement dit, $u$ se factorise par la catégorie ponctuelle. 
\end{paragr}

\begin{lemme}\label{EndoConstantW}
Une petite catégorie admettant un endofoncteur constant qui est une équivalence faible est asphérique. 
\end{lemme}

\begin{proof}
C'est une conséquence immédiate de la condition LA de la définition \ref{DefUnLocFond}. (C'est également un cas particulier du lemme \ref{EquivalenceViaPoint}.)
\end{proof}

\begin{df}
On dit qu'un morphisme $u : A \to B$ de $\Cat$ est \emph{$\UnLocFond{W}$\nobreakdash-asphérique}, ou plus simplement \emph{asphérique}\index{asphérique (foncteur, pour un localisateur fondamental de $\Cat$)}, si, pour tout objet $b$ de $B$, la catégorie $A/b$ est asphérique. 
\end{df}

\begin{rem}
En conservant les mêmes notations, la catégorie $B/b$ est asphérique pour tout objet $b$ de $B$ (elle admet un objet final). En vertu de la saturation faible de $\UnLocFond{W}$, si $u$ est asphérique, le morphisme $u/b : A/b \to B/b$ est donc une équivalence faible pour tout objet $b$ de $B$. Il résulte alors de la condition LC que $u$ est une équivalence faible. 
\end{rem}

\begin{lemme}\label{ProduitAspherique}
Le produit de deux petites catégories asphériques est asphérique. 
\end{lemme}

\begin{proof}
Soient $A$ et $B$ deux petites catégories asphériques. Pour montrer que la catégorie $A \times B$ est asphérique, il suffit de montrer que la projection canonique $A \times B \to B$ est une équivalence faible. Montrons que c'est un foncteur asphérique. Pour tout objet $b$ de $B$, $(A \times B) / b$ s'identifie à $A \times B/b$. Il suffit donc de montrer que la projection canonique $A \times B/b \to A$ est une équivalence faible. Montrons que c'est un foncteur asphérique. Pour tout objet $a$ de $A$, $(A \times B/b) / a$ s'identifie à $A/a \times B/b$, qui admet un objet final, donc est asphérique, ce qui permet de conclure. 
\end{proof}

\begin{lemme}\label{Mistinguett}
Soient $I$ et $A$ deux petites catégories. Si $I$ est asphérique, alors la projection canonique $I \times A \to A$ est asphérique (donc en particulier une équivalence faible). 
\end{lemme}

\begin{proof}
Pour tout objet $a$ de $A$, la catégorie $(I \times A) / a$ s'identifie à $I \times A/a$. Comme $A/a$ admet un objet final, elle est asphérique. Le produit $I \times (A/a)$ est donc asphérique en vertu du lemme \ref{ProduitAspherique}.
\end{proof}

\begin{lemme}\label{UnTransHomotopie}
Soient $A$ et $B$ deux petites catégories et $u$ et $v$ deux foncteurs de $A$ vers $B$. Supposons qu'il existe un morphisme de foncteurs $\sigma : u \Rightarrow v$. Alors, $u$ est une équivalence faible si et seulement si $v$ en est une. 
\end{lemme}

\begin{proof}
Il existe un foncteur $h : [1] \times A \to B$ rendant le diagramme
$$
\xymatrix{
&
[1] \times A
\ar[dd]^{h}
\\
A
\ar[ur]^{0 \times 1_{A}}
\ar[dr]_{u}
&&
A
\ar[ul]_{1 \times 1_{A}}
\ar[dl]^{v}
\\
&
B
}
$$ 
commutatif. Comme la catégorie $[1]$ est asphérique (elle admet un objet final), la projection canonique $[1] \times A \to A$ est une équivalence faible en vertu du lemme \ref{Mistinguett}. Comme c'est une rétraction commune aux deux flèches obliques montantes figurant dans ce diagramme, ces deux flèches sont des équivalences faibles. La conclusion découle de deux arguments consécutifs de « $2$ sur $3$ ».
\end{proof}

\begin{paragr}
En considérant les ensembles comme des catégories dont tous les morphismes sont des identités, on peut voir tout préfaisceau d'ensembles sur une petite catégorie $A$ comme un préfaisceau en petites catégories sur $A$. On note $i_{A}$\index[not]{iA@$i_{A}$} la restriction du foncteur $\DeuxIntOp{A}$ à la catégorie $\widehat{A}$ des préfaisceaux d'ensembles sur $A$. Pour tout préfaisceau d'ensembles $X$ sur $A$, on pourra noter $A / X$ la catégorie $i_{A}X$, dont les objets sont les couples $(a \in \Objets{A}, x \in \Objets{X(a)})$.
En particulier, pour toute petite catégorie $A$, la catégorie $\Delta / \UnNerf{A}$ se décrit comme suit. Ses objets sont les couples $([m], x)$, avec $m \geq 0$ un entier et $x : [m] \to A$ un $m$\nobreakdash-simplexe du nerf de $A$. Les morphismes de $([m], x)$ vers $([n], y)$ sont les applications croissantes $\varphi : [m] \to [n]$ telles que le diagramme
$$
\xymatrix{
[m]
\ar[rr]^{\varphi}
\ar[dr]_{x}
&&
[n]
\ar[dl]^{y}
\\
&
A
}
$$ 
soit commutatif. 
\end{paragr}

\begin{lemme}\label{DecalageObjetFinal}
Pour toute petite catégorie $A$ admettant un objet final, la catégorie $\Delta / \UnNerf{A}$ est asphérique. 
\end{lemme}

\begin{proof}
C'est \cite[lemme 2.2.2]{LFM}, dont nous reprenons la démonstration. Soit $z$ un objet final de $A$. Pour tout $n$\nobreakdash-simplexe ($n \geq 0$) de $\UnNerf{A}$, on définit un $n+1$\nobreakdash-simplexe $Dx$ de $\UnNerf{A}$ par $(Dx)(i) = x(i)$ pour $i \leq n$ et $(Dx)(n+1) = z$. On définit un endofoncteur 
$$
D : \Delta / \UnNerf{A} \to \Delta / \UnNerf{A}
$$
comme suit. Pour tout objet $([m],x)$ de $\Delta / \UnNerf{A}$, 
$
D([m], x) = ([m+1], Dx)
$
et, pour tout morphisme $\varphi : ([m], x) \to ([n], y)$ de $\Delta / \UnNerf{A}$, $D(\varphi)(i) = \varphi(i)$ si $0 \leq i \leq m$ et $D(\varphi)(m+1) = n+1$. Notons $Z : \Delta / \UnNerf{A} \to \Delta / \UnNerf{A}$ l'endofoncteur constant de valeur $([0], z)$. Les inclusions
$$
\begin{aligned}
{}[m] &\to [m+1]
\\
i &\mapsto i 
\end{aligned}
$$
et
$$
\begin{aligned}
{}[0] &\to [m+1]
\\
0 &\mapsto m+1
\end{aligned}
$$
induisent des morphismes de foncteurs $1_{\Delta / \UnNerf{A}} \Rightarrow D$ et $Z \Rightarrow D$. En vertu de deux applications consécutives du lemme \ref{UnTransHomotopie}, $Z$ est une équivalence faible. On conclut grâce au lemme \ref{EndoConstantW}. 
\end{proof}

\begin{paragr}\label{DefUnSup}
Pour toute petite catégorie $A$, on définit un foncteur
$$
\begin{aligned}
\SupUn_{A}\index[not]{Sup1@$\SupUn_{A}$} : \Delta / \UnNerf{A} &\to A
\\
([m], x) &\mapsto x_{m}
\\
\varphi : ([m], x) \to ([n], y) &\mapsto y_{n, \varphi(m)}.
\end{aligned}
$$
L'assignation $A \mapsto \SupUn_{A}$ définit une transformation naturelle de $i_{\Delta} \UnNerf$ vers $1_{\Cat}$. En d'autres termes, pour tout morphisme $u : A \to B$ de $\Cat$, le diagramme
$$
\xymatrix{
\Delta / \UnNerf{A}
\ar[rr]^{\Delta/\UnNerf(u)}
\ar[d]_{\SupUn_{A}}
&&\Delta / \UnNerf{B}
\ar[d]^{\SupUn_{B}}
\\
A
\ar[rr]_{u}
&&B
}
$$
est commutatif.
\end{paragr}

\begin{prop}\label{UnSupAspherique}
Pour toute petite catégorie $A$, le foncteur $\SupUnObjet{A} : \Delta / \UnNerf{A} \to A$ est asphérique (donc en particulier une équivalence faible).
\end{prop}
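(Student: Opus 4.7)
The plan is to invoke Lemma \ref{DecalageObjetFinal} after a natural identification of slice categories, which is the standard Grothendieck/Cisinski strategy.

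First I would fix an object $a$ of $A$, since by definition asphéricité of $\SupUn_A$ means that $(\Delta/\UnNerf A)/a$ is asphérique for every such $a$. The central observation is that there is a canonical isomorphism
$$
(\Delta/\UnNerf A)/a \;\simeq\; \Delta/\UnNerf(A/a).
$$
An object of the left-hand side is a triple $(([m],x),\,p : x_m \to a)$, whereas an object of the right-hand side is a pair $([m],\tilde x)$ with $\tilde x : [m] \to A/a$, i.e.\ a sequence $(x_i, p_i : x_i \to a)_{0\le i\le m}$ of objects of $A/a$ joined by morphisms $x_{i+1,i}$ over $a$. The bijection sends $(([m],x),p)$ to the lift obtained by taking $p_i = p \circ x_{m,i}$; conversely, given $([m],\tilde x)$ one forgets all $p_i$ except $p_m$. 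A straightforward check shows that morphisms match on both sides: a map $\varphi : ([m],x) \to ([n],y)$ in $\Delta/\UnNerf A$ satisfies $q \circ \SupUn_A(\varphi) = p$ (i.e.\ $q \circ y_{n,\varphi(m)} = p$) exactly when the corresponding lift $[m] \to A/a$ factors through the lift $[n] \to A/a$ via $\varphi$.

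Given this identification, since the category $A/a$ admits a final object (namely $(a, 1_a)$), Lemma \ref{DecalageObjetFinal} implies that $\Delta/\UnNerf(A/a)$ is asphérique. Transporting via the isomorphism above, $(\Delta/\UnNerf A)/a$ is asphérique for every object $a$ of $A$, so $\SupUn_A$ is an foncteur asphérique. The parenthetical assertion that it is in particular a $\UnLocFond{W}$-équivalence then follows from the remark made after the definition of morphisme asphérique (each $\SupUn_A/a = (\Delta/\UnNerf A)/a \to A/a$ becomes a weak equivalence by faible saturation, and one concludes by condition LC).

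The only real content is the slice-identification step; there is no genuine obstacle, just a bookkeeping check that the forgetful/lifting assignments are mutually inverse and compatible with morphisms. I would write that verification out explicitly, as the correspondence between ``a datum $[m]\to A$ together with a morphism from its last vertex to $a$'' and ``a datum $[m]\to A/a$'' is exactly the kind of adjunction/pullback one expects from the fact that $A/a$ is the comma category, and it is essentially a tautology once unfolded. The structural naturality noted in paragraph \ref{DefUnSup} ensures the identification is compatible with the functor $\SupUn_A$ itself.
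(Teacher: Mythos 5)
Votre démonstration est correcte et suit exactement la même stratégie que celle du texte (reprise de \cite[proposition 2.2.3]{LFM}) : identification canonique de $(\Delta/\UnNerf A)/a$ avec $\Delta/\UnNerf(A/a)$, puis application du lemme \ref{DecalageObjetFinal} puisque $A/a$ admet un objet final, la conclusion sur l'équivalence faible découlant de la remarque qui suit la définition de foncteur asphérique. La vérification de la correspondance entre objets et morphismes que vous esquissez est bien celle attendue et ne présente aucune lacune.
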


\begin{proof}
C'est \cite[proposition 2.2.3]{LFM}, attribuée à Grothendieck. Nous reprenons la dé\-mons\-tra\-tion figurant dans \cite{LFM}. On vérifie que, pour tout objet $a$ de $A$, les catégories $\Delta / \UnNerf{(A/a)}$ et $(\Delta / \UnNerf{}A)/a$ sont canoniquement isomorphes. La catégorie $A/a$ admettant un objet final, le lemme \ref{DecalageObjetFinal} permet de conclure. 
\end{proof}

\begin{prop}\label{Sade}
On a l'égalité
$$
\UnLocFond{W} = \UnNerf^{-1} ({i_{\Delta}}^{-1} (W)).
$$
\end{prop}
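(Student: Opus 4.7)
The plan is to deduce this equality directly from the naturality diagram of $\SupUn$ (paragraph \ref{DefUnSup}) together with the asphericity result of Proposition \ref{UnSupAspherique}, invoking only the weak saturation axiom LA (specifically FS2) of the fundamental localizer $\UnLocFond{W}$.

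Concretely, for any functor $u : A \to B$ in $\Cat$, the naturality of $\SupUn$ yields the commutative square
$$
\xymatrix{
\Delta/\UnNerf A \ar[r]^{i_{\Delta}\UnNerf(u)} \ar[d]_{\SupUn_{A}} & \Delta/\UnNerf B \ar[d]^{\SupUn_{B}} \\
A \ar[r]_{u} & B
}
$$
so that $u \circ \SupUn_{A} = \SupUn_{B} \circ i_{\Delta}\UnNerf(u)$. By Proposition \ref{UnSupAspherique}, both $\SupUn_{A}$ and $\SupUn_{B}$ lie in $\UnLocFond{W}$ (every $\UnLocFond{W}$-aspheric morphism is a $\UnLocFond{W}$-equivalence). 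Now I apply FS2 to each of the two triangles obtained by splitting this square along its common diagonal. First suppose $u \in \UnLocFond{W}$: then FS2 applied to the triangle factoring $u \circ \SupUn_{A}$ shows $u \SupUn_{A} \in \UnLocFond{W}$, hence $\SupUn_{B} \circ i_{\Delta}\UnNerf(u) \in \UnLocFond{W}$, and a second application of FS2 (with $\SupUn_{B} \in \UnLocFond{W}$) gives $i_{\Delta}\UnNerf(u) \in \UnLocFond{W}$. Conversely, if $i_{\Delta}\UnNerf(u) \in \UnLocFond{W}$, composing with $\SupUn_{B}$ via FS2 yields $u \SupUn_{A} \in \UnLocFond{W}$, and FS2 once more with $\SupUn_{A} \in \UnLocFond{W}$ gives $u \in \UnLocFond{W}$.

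This establishes the double inclusion $\UnLocFond{W} \subset \UnNerf^{-1}(i_{\Delta}^{-1}(\UnLocFond{W}))$ and $\UnNerf^{-1}(i_{\Delta}^{-1}(\UnLocFond{W})) \subset \UnLocFond{W}$, yielding the claimed equality. There is no real obstacle here: all the work is already done in Proposition \ref{UnSupAspherique}, and the present statement is simply a formal two-out-of-three argument packaging that asphericity result together with the naturality of the canonical comparison $\SupUn$. The only point to note is that ``closure under composition'' of $\UnLocFond{W}$ is not a separate axiom but follows immediately from FS2 applied to a triangle whose third edge is the composite, which is the only subtlety in writing the argument cleanly.
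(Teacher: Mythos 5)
Your argument is correct and is exactly the paper's proof: the paper disposes of this statement in one line as an immediate consequence of Proposition \ref{UnSupAspherique} by a ``$2$ sur $3$'' argument, which is precisely the double application of FS2 to the naturality square of $\SupUn$ that you spell out. Nothing differs except the level of detail.
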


\begin{proof}
C'est une conséquence immédiate de la proposition \ref{UnSupAspherique}, par un argument de « $2$ sur $3$ ». 
\end{proof}

\begin{paragr}
On ne suppose plus fixé de \ClasseUnLocFond{}. La notion de \ClasseUnLocFond{} est stable par intersection. On définit le \emph{localisateur fondamental minimal de $\Cat$}\index{localisateur fondamental minimal de $\Cat$} comme l'intersection de tous les localisateurs fondamentaux. Le théorème \ref{CisinskiGrothendieck} a été conjecturé par Grothendieck et démontré par Cisinski. 
\end{paragr}

\begin{theo}[Cisinski]\label{CisinskiGrothendieck}
Le localisateur fondamental minimal de $\Cat$ est $\UnLocFondMin$.
\end{theo}

\begin{proof}
C'est \cite[théorème 2.2.11]{LFM}.
\end{proof}

La proposition \ref{MinouDrouet} et le théorème \ref{EquiEnsSimpCat} sont attribués à Quillen par Illusie dans \cite{Illusie}. On pourra consulter \cite[volume 2, chapitre 6, section 3]{Illusie}, et plus précisément \cite[volume 2, chapitre 6, section 3, théorème 3.3]{Illusie} et \cite[volume 2, chapitre 6, section 3, corollaire 3.3.1]{Illusie} pour les résultats que nous reprenons.  

\begin{prop}[Quillen]\label{MinouDrouet}
Il existe un diagramme
$$
\UnNerf i_{\Delta} \Longleftarrow K \Longrightarrow 1_{\EnsSimp}
$$
dont les flèches sont des morphismes d'endofoncteurs de $\EnsSimp$ qui sont des équivalences faibles simpliciales argument par argument.
\end{prop}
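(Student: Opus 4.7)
The plan is to realise $K$ as the diagonal $\delta_{\Delta}^{*}$ of a bisimplicial endofunctor $B$ of $\EnsSimp$ that simultaneously resolves $X$ and $\UnNerf(\Delta/X)$, modelled on the Bousfield--Kan homotopy colimit of the tautological diagram $\Delta/X \to \EnsSimp$, $([n],x) \mapsto \Delta_{n}$, whose ordinary colimit is $X$. The point of this is that the two ``faces'' of the homotopy colimit naturally compare it to the nerve of the indexing category and to the value of the colimit, which is exactly what the proposition requires.

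Concretely, define, functorially in $X \in \EnsSimp$, the bisimplicial set
\[
B(X)_{p,q} \;=\; \coprod_{([n_{0}],x_{0}) \to \cdots \to ([n_{p}],x_{p})} (\Delta_{n_{0}})_{q},
\]
the coproduct running over the $p$-simplices of $\UnNerf(\Delta/X)$. Simplicial operators in the $q$-direction act on the factor $\Delta_{n_{0}}$; those in the $p$-direction act on the indexing chain, with the face $d_{0}$ moreover transporting $\sigma : [q] \to [n_{0}]$ along the first arrow $\alpha_{0} : [n_{0}] \to [n_{1}]$ of the chain. Set $K = \delta_{\Delta}^{*} B$. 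Viewing $\UnNerf(\Delta/X)$ (resp.\ $X$) as a bisimplicial set constant in the $q$-direction (resp.\ $p$-direction), one obtains two canonical bisimplicial maps: the projection $B(X) \to \UnNerf(\Delta/X)$ forgetting the $(\Delta_{n_{0}})_{q}$-factor, and the evaluation $B(X) \to X$ sending $(\text{chain}, \sigma)$ to $x_{0} \sigma \in X_{q}$ (well defined under $d_{0}$ in the $p$-direction thanks to the identity $x_{1} \alpha_{0} = x_{0}$ in $\Delta/X$, which exactly compensates for the $q$-direction transport). Applying $\delta_{\Delta}^{*}$ yields the required natural transformations $K(X) \to \UnNerf i_{\Delta}(X)$ and $K(X) \to X$.

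To check that both transformations are simplicial weak equivalences argument by argument, I apply Proposition~\ref{LemmeBisimplicialDelta} to each of the two bisimplicial maps above, viewed as maps into constant bisimplicial sets. For $B(X) \to \UnNerf(\Delta/X)$: fixing $p$, the row $B(X)_{p,\bullet}$ is a disjoint union $\coprod_{\text{chain}} \Delta_{n_{0}}$ of contractible standard simplices indexed by the set $\UnNerf(\Delta/X)_{p}$, so the row map to this discrete simplicial set is a weak equivalence. For $B(X) \to X$: fixing $q$ and $y \in X_{q}$, the fibre of $B(X)_{\bullet,q} \to X_{q}$ over $y$ identifies with the set of $p$-chains $([q],y) \to ([n_{0}],x_{0}) \to \cdots \to ([n_{p}],x_{p})$ in $\Delta/X$, i.e.\ with $\UnNerf(([q],y) \backslash (\Delta/X))_{p}$; since this comma category admits $1_{([q],y)}$ as an initial object, its nerve is contractible, and the column map is a weak equivalence. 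In both cases Proposition~\ref{LemmeBisimplicialDelta} concludes.

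The main obstacle is the tedious but routine bookkeeping: one must verify that $B$ is a genuine bisimplicial endofunctor of $\EnsSimp$, and that the two projections are bisimplicial and natural in $X$. The only non-trivial point is that the $d_{0}$ face in the $p$-direction requires a transport in the $q$-direction along $\alpha_{0}$, and this transport must be tracked consistently to ensure both that the simplicial identities hold and that the two projection maps are compatible with it. Once this is in hand, no further ideas are needed beyond the two contractibility computations above and Proposition~\ref{LemmeBisimplicialDelta}.
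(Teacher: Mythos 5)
Your construction is the paper's proof up to transposing the two simplicial directions: your $B(X)_{p,q}$ is exactly the bisimplicial set $SX$ of the paper (chains $\Delta_q \to \Delta_{n_0} \to \cdots \to \Delta_{n_p} \to X$), your two projections are the same two maps, and the two contractibility checks (standard simplices in one direction, nerves of comma categories with an initial object in the other) followed by Proposition \ref{LemmeBisimplicialDelta} coincide with the paper's argument. So the proposal is correct and takes essentially the same route.
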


\begin{proof}
À tout ensemble simplicial $X$, on associe un ensemble bisimplicial $SX$\index[not]{SX@$SX$} défini par
$$
(SX)_{m,n} = \{ \Delta_{m} \to \Delta_{r_{0}} \to \dots \to \Delta_{r_{n}} \to X, (r_{0}, \dots, r_{n}) \in \mathbb{N}^{n+1} \}
$$
pour tout couple d'entiers positifs $m$ et $n$. Les faces et dégénérescences « verticales » (correspondant à la seconde composante) sont définies par la composition et l'insertion d'identités respectivement. Les faces et dégénérescences « horizontales » (correspondant à la première composante) sont définies par précomposition avec l'image par le foncteur de Yoneda des morphismes faces $[m-1] \to [m]$ et dégénérescences $[m+1] \to [m]$ correspondants dans $\Delta$.  

On notera $X$ et $\UnNerf i_{\Delta} X$ les ensembles bisimpliciaux (constants sur les colonnes et les lignes respectivement) définis par $X_{m,n} = X_{m}$ et $(\UnNerf i_{\Delta} X)_{m,n} = (\UnNerf i_{\Delta} X)_{n}$ respectivement, pour tout couple d'entiers positifs $m$ et $n$. Considérons les morphismes d'ensembles bisimpliciaux
$$
\begin{aligned}
SX &\to \UnNerf i_{\Delta} X
\\
(\Delta_{m} \to \Delta_{r_{0}} \to \dots \to \Delta_{r_{n}} \to X) &\mapsto (\Delta_{r_{0}} \to \dots \to \Delta_{r_{n}} \to X) 
\end{aligned}
$$
et
$$
\begin{aligned}
SX &\to X
\\
(\Delta_{m} \to \Delta_{r_{0}} \to \dots \to \Delta_{r_{n}} \to X) &\mapsto (\Delta_{m} \to X).
\end{aligned}
$$
Posons $KX = \delta_{\Delta}^{*}SX$. Cela nous fournit un diagramme
$$
\xymatrix{
\UnNerf i_{\Delta} X
&
KX
\ar[l]
\ar[r]
&
X
}
$$
dans $\EnsSimp$ dont les deux flèches sont naturelles en $X$ ; il reste à vérifier que ce sont des équivalences faibles et, pour cela, on fait de nouveau appel à la proposition \ref{LemmeBisimplicialDelta}. 

Montrons d'abord que le morphisme d'ensembles simpliciaux
$$
(SX)_{m, \bullet} \to X_{m,\bullet}
$$
est une équivalence faible pour tout entier $m \geq 0$. Il s'identifie à 
\begin{equation}
\coprod_{\varphi : \Delta_{m} \to X} \UnNerf{((\Delta_{m} \backslash \Delta / X)_{\varphi}} \to \UnCatPonct),
\end{equation}
la catégorie $(\Delta_{m} \backslash \Delta / X)_{\varphi}$ ayant pour objets les diagrammes
$$
\xymatrix{
\Delta_{m}
\ar[r]^{\pi}
&
\Delta_{r}
\ar[r]^{\rho}
&
X
}
$$
tels que $\rho \pi = \varphi$ et pour morphismes de 
$$
\xymatrix{
\Delta_{m}
\ar[r]^{\pi}
&
\Delta_{r}
\ar[r]^{\rho}
&
X
}
$$
vers 
$$
\xymatrix{
\Delta_{m}
\ar[r]^{\chi}
&
\Delta_{s}
\ar[r]^{\psi}
&
X
}
$$
les morphismes $\mu : \Delta_{r} \to \Delta_{s}$ de $\Delta$ tels que le diagramme
$$
\xymatrix{
&
\Delta_{r}
\ar[dr]^{\rho}
\ar[dd]^{\mu}
\\
\Delta_{m}
\ar[ur]^{\pi}
\ar[dr]_{\chi}
&&
X
\\
&
\Delta_{s}
\ar[ur]_{\psi}
}
$$
soit commutatif. Cette catégorie (qui ne possède pas d'objet final en général) admet un objet initial défini par le diagramme
$$
\xymatrix{
\Delta_{m}
\ar[r]^{1_{\Delta_{m}}}
&
\Delta_{m}
\ar[r]^{\varphi}
&
X.
}
$$
Par conséquent, le foncteur $(\Delta_{m} \backslash \Delta / X)_{\varphi} \to \UnCatPonct$ est dans $\UnLocFondMin$. Le morphisme d'ensembles simpliciaux $(SX)_{m, \bullet} \to X_{m,\bullet}$ s'identifie donc à une somme d'équivalences faibles ; c'est donc une équivalence faible. En vertu de la proposition \ref{LemmeBisimplicialDelta}, le morphisme d'ensembles simpliciaux 
$$
KX \to \UnNerf i_{\Delta} X
$$ 
est donc une équivalence faible. 

Il suffit maintenant de vérifier que le morphisme d'ensembles simpliciaux
$$
\xymatrix{
(SX)_{\bullet,n} 
\ar[r]
&
(\UnNerf i_{\Delta} X)_{\bullet,n}
}
$$
est une équivalence faible pour tout entier $n \geq 0$. Ce morphisme s'identifie à 
$$
\coprod_{\Delta_{r_{0}} \to \dots \to \Delta_{r_{n}} \to X} (\Delta_{r_{0}} \to \ast).
$$
Pour tout entier $r_{0} \geq 0$, la flèche $\Delta_{r_{0}} \to \ast$ est une équivalence faible. Le morphisme
$$
\xymatrix{
(SX)_{\bullet,n} 
\ar[r]
&
(\UnNerf i_{\Delta} X)_{\bullet,n}
}
$$
s'identifie donc à une somme d'équivalences faibles ; c'est donc une équivalence faible, ce qui termine la démonstration.
\end{proof}

\begin{paragr}
Pour toute catégorie $C$ et toute classe $W_{C}$ de morphismes de $C$, on notera $\Localisation{C}{W_{C}}$\index[not]{WCC@$\Localisation{C}{W_{C}}$} la catégorie localisée de $C$ relativement à $W_{C}$.
\end{paragr}

\begin{theo}[Quillen]\label{EquiEnsSimpCat}
On a l'égalité 
$$
\EquiQuillen = {i_{\Delta}}^{-1}(\UnLocFondMin).
$$
De plus, les foncteurs nerf $\UnNerf : \Cat \to \EnsSimp$ et $i_{\Delta} : \EnsSimp \to \Cat$ induisent des équivalences de catégories quasi-inverses l'une de l'autre
$$
\overline{\UnNerf} : \Localisation{\Cat}{\UnLocFondMin} \to \Localisation{\EnsSimp}{\EquiQuillen}
$$
et
$$
\overline{i_{\Delta}} : \Localisation{\EnsSimp}{\EquiQuillen} \to \Localisation{\Cat}{\UnLocFondMin}.
$$
\end{theo}

\begin{proof}
La première affirmation est une conséquence immédiate de la proposition \ref{MinouDrouet}, par deux arguments de « $2$ sur $3$ » appliqués au diagramme
$$
\xymatrix{
\UnNerf i_{\Delta} X
\ar[d]_{\UnNerf i_{\Delta} (f)}
&&
KX
\ar[ll]_{\sim}
\ar[rr]^{\sim}
\ar[d]^{K(f)}
&&
X
\ar[d]^{f}
\\
\UnNerf i_{\Delta} Y
&&
KY
\ar[ll]^{\sim}
\ar[rr]_{\sim}
&&
Y
}
$$
pour un morphisme arbitraire $f : X \to Y$ d'ensembles simpliciaux. Cela montre au passage que le foncteur $K$ respecte les équivalences faibles. L'existence de foncteurs induits $\overline{\UnNerf} : \Localisation{\Cat}{\UnLocFondMin} \to \Localisation{\EnsSimp}{\EquiQuillen}$, $\overline{i_{\Delta}} : \Localisation{\EnsSimp}{\EquiQuillen} \to \Localisation{\Cat}{\UnLocFondMin}$ et $\overline{K} : \Localisation{\EnsSimp}{\EquiQuillen} \to \Localisation{\EnsSimp}{\EquiQuillen}$ est immédiate. Le zigzag de la proposition \ref{MinouDrouet} fournit un zigzag 
$$
\xymatrix{
\overline{\UnNerf} \overline{i_{\Delta}}
&
\overline{K}
\ar[l]_{\sim}
\ar[r]^{\sim}
&
1_{\Localisation{\EnsSimp}{\EquiQuillen}} 
}
$$
d'isomorphismes naturels, donc un isomorphisme entre $\overline{\UnNerf} \overline{i_{\Delta}}$ et $1_{\Localisation{\EnsSimp}{\EquiQuillen}}$. Il reste à vérifier l'existence d'un isomorphisme naturel entre $1_{\Localisation{\Cat}{\UnLocFondMin}}$ et $\overline{i_{\Delta}} \overline{\UnNerf}$. Pour cela, il suffit de vérifier qu'il existe un morphisme de foncteurs $i_{\Delta} \UnNerf \Rightarrow 1_{\Cat}$ $\UnLocFondMin$\nobreakdash-asphérique argument par argument, c'est-à-dire un foncteur $\UnLocFondMin$\nobreakdash-asphérique $\Delta/\UnNerf{A} \to A$ naturel en $A \in \Objets{\Cat}$. C'est précisément un tel foncteur que fournit la proposition \ref{UnSupAspherique}. 
\end{proof}

\begin{paragr}
Soient $I$ et $J$ deux petites catégories et $F(\bullet, \bullet) : I \times J \to Cat$ un foncteur. On peut considérer les foncteurs 
$$
\begin{aligned}
J &\to \Cat
\\
j &\mapsto \DeuxInt{I}F(\bullet, j)
\end{aligned}
$$
et
$$
\begin{aligned}
I &\to \Cat
\\
i &\mapsto \DeuxInt{J}F(i, \bullet).
\end{aligned}
$$
\end{paragr}

\begin{lemme}\label{Fubini}
Soient $I$ et $J$ deux petites catégories et $F : I \times J \to Cat$ un foncteur. Il existe des isomorphismes canoniques
$$
\DeuxInt{I \times J} F(\bullet, \bullet) \simeq \DeuxInt{I} \left(i \mapsto \DeuxInt{J} F(i, \bullet)\right) \simeq \DeuxInt{J} \left(j \mapsto \DeuxInt{I} F(\bullet, j)\right).
$$
\end{lemme}

\begin{proof}
C'est immédiat.
\end{proof}

\emph{On suppose fixé un localisateur fondamental $\UnLocFond{W}$ de $\Cat$.} 

\begin{lemme}\label{UnAdjointsW}
Un morphisme de $\Cat$ admettant un adjoint à droite (\emph{resp.} à gauche) est une équivalence faible. 
\end{lemme}

\begin{proof}
Soit $u : A \to B$ un morphisme de $\Cat$ admettant un adjoint à droite $v$. De la bijection
$$
Hom_{B}(u(a),b) \simeq Hom_{A}(a,v(b))
$$
fonctorielle en $a \in \Objets{A}$ et $b \in \Objets{B}$, on déduit un isomorphisme de catégories $A/b \simeq A/v(b)$ pour tout objet $b$ de $B$. Comme la catégorie $A/v(b)$ admet un objet final, elle est asphérique. Il en va donc de même de $A/b$. Le foncteur $u$ est donc asphérique, donc une équivalence faible. Le cas d'un morphisme de $\Cat$ admettant un adjoint à droite s'en déduit (du fait de l'existence d'une transformation naturelle $1_{A} \Rightarrow vu$, par exemple), ou se traite de façon analogue. 
\end{proof}

\begin{rem}
Le lemme \ref{UnAdjointsW} n'est autre que \cite[proposition 1.1.9.]{THG}, dont nous reproduisons la démonstration. Comme nous renvoyons souvent le lecteur à \cite{LFM}, précisons que c'est également \cite[corollaire 1.1.10.]{LFM}, mais que cela ne résulte pas de ce qui précède directement ce corollaire dans \cite{LFM}.
\end{rem}

\begin{prop}\label{IntegrationWParArguments}
Soient $A$ une petite catégorie, $F$ et $G$ deux foncteurs de $A$ vers $\Cat$ et \mbox{$\sigma : F \Rightarrow G$} une transformation naturelle. Supposons que, pour tout objet $a$ de $A$, \mbox{$\sigma_{a} : F(a) \to G(a)$} soit une équivalence faible. Alors $\DeuxInt{A}\sigma : \DeuxInt{A}F \to \DeuxInt{A}G$ est une équivalence faible. 
\end{prop}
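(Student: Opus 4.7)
The plan is to apply the axiom LC of fundamental localizers to the commutative triangle
$$
\xymatrix{
\DeuxInt{A}F
\ar[rr]^{\DeuxInt{A}\sigma}
\ar[dr]_{P_{F}}
&&
\DeuxInt{A}G
\ar[dl]^{P_{G}}
\\
&A
&
}
$$
where $P_{F}$ and $P_{G}$ denote the canonical projections (paragraph \ref{AZERTYUIOP}). Thus the problem is reduced to showing, for every object $a$ of $A$, that the induced functor
$(\DeuxInt{A}\sigma)/a : (\DeuxInt{A}F)/a \to (\DeuxInt{A}G)/a$ belongs to $\UnLocFond{W}$.

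The main step is to produce, for each $a$, a commutative square
$$
\xymatrix{
(\DeuxInt{A}F)/a
\ar[rr]^{(\DeuxInt{A}\sigma)/a}
\ar[d]_{q_{F}}
&&
(\DeuxInt{A}G)/a
\ar[d]^{q_{G}}
\\
F(a)
\ar[rr]_{\sigma_{a}}
&&
G(a)
}
$$
in which the vertical arrows are $\UnLocFond{W}$-equivalences. I would define $q_{F}$ by $((a',x),\, f : a' \to a) \mapsto F(f)(x)$, morphisms being sent by the obvious formula; similarly for $q_{G}$. Commutativity of the square is precisely the naturality of $\sigma$, since $\sigma_{a}(F(f)(x)) = G(f)(\sigma_{a'}(x))$.

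The key verification is that $q_{F}$ is a $\UnLocFond{W}$-equivalence (and likewise $q_{G}$, by the same argument). For this I would exhibit the section $s_{F} : F(a) \to (\DeuxInt{A}F)/a$, $x \mapsto ((a,x), 1_{a})$; then $q_{F} s_{F} = 1_{F(a)}$, so it is enough (by the weak saturation axiom FS3) to show that $s_{F} q_{F}$ is a $\UnLocFond{W}$-equivalence. But $s_{F} q_{F}$ sends $((a',x),f)$ to $((a, F(f)(x)), 1_{a})$, and the formula $\theta_{((a',x),f)} = (f, 1_{F(f)(x)})$ defines a natural transformation $1_{(\DeuxInt{A}F)/a} \Rightarrow s_{F} q_{F}$; Lemma \ref{UnTransHomotopie} applied to this natural transformation (whose source is the identity, which is in $\UnLocFond{W}$) yields the conclusion.

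Once the square above is produced with vertical arrows in $\UnLocFond{W}$ and $\sigma_{a}$ in $\UnLocFond{W}$ by hypothesis, two successive applications of the $2$-out-of-$3$ property in FS2 give that $(\DeuxInt{A}\sigma)/a$ is a $\UnLocFond{W}$-equivalence, and LC concludes the proof. I do not expect any serious obstacle: the only delicate point is to keep track of the naturality of $\sigma$ when verifying commutativity of the square, and to correctly identify the natural transformation $\theta$ — both are routine given the explicit description of $\DeuxInt{A}F$ recalled in paragraph \ref{RappelsIntegration}.
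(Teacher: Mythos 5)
Your proposal is correct and follows essentially the same route as the paper: the same triangle over $A$ handled by axiom LC, and, for each object $a$, the same comparison square between the slices $(\DeuxInt{A}F)/a$, $(\DeuxInt{A}G)/a$ and the fibres $F(a)$, $G(a)$, concluded by two applications of two-out-of-three. The only difference is cosmetic: where the paper notes that the fibre inclusions $F(a) \to (\DeuxInt{A}F)/a$ admit adjoints (the projections being preopfibrations) and invokes the lemma \ref{UnAdjointsW}, you work with the adjoint functors $q_{F}$ themselves and reprove by hand that they are weak equivalences, via the retraction $q_{F}s_{F}=1$, the natural transformation $1 \Rightarrow s_{F}q_{F}$ and the lemma \ref{UnTransHomotopie}.
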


\begin{proof}
Considérons le diagramme commutatif
$$
\xymatrix{
\DeuxInt{A}F
\ar[rr]^{\DeuxInt{A}\sigma}
\ar[dr]_{P_{F}}
&&
\DeuxInt{A}G
\ar[dl]^{P_{G}}
\\
&
A
&,
}
$$
dans lequel $P_{F}$ et $P_{G}$ désignent les projections canoniques. Pour tout objet $a$ de $A$, on a un diagramme commutatif
$$
\xymatrix{
F(a)
\ar[rr]^{\sigma_{a}}
\ar[d]
&&
G(a)
\ar[d]
\\
(\DeuxInt{A}F)/a
\ar[rr]_{(\DeuxInt{A}\sigma)/a}
&&
(\DeuxInt{A}G)/a
}
$$
dont les flèches verticales admettent un adjoint à droite (les projections $\DeuxInt{A}F \to A$ et $\DeuxInt{A}G \to A$ étant des préopfibrations dont les fibres au-dessus de $a$ s'identifient à $F(a)$ et $G(a)$ respectivement) et sont donc des équivalences faibles en vertu du lemme \ref{UnAdjointsW}. En vertu des hypothèses, un argument de « $2$ sur $3$ » permet d'affirmer que $(\DeuxInt{A}\sigma)/a$ est une équivalence faible, d'où le résultat. 
\end{proof}

\begin{df}
On dit qu'un morphisme de préfaisceaux sur une petite catégorie $A$ est une \emph{équivalence faible}\index{equivalence faible de préfaisceaux@équivalence faible de préfaisceaux} (de préfaisceaux) si son image par le foncteur $i_{A}$ est une équivalence faible (de $\Cat$). On notera $W_{\widehat{A}}$\index[not]{WA@$W_{\widehat{A}}$} la classe des équivalences faibles de préfaisceaux sur $A$. 
\end{df}

\begin{lemme}\label{Queneau}
Soient $A$ et $B$ deux petites catégories et $f(\bullet, \bullet) : X(\bullet, \bullet) \to Y(\bullet, \bullet)$ un morphisme de ${\widehat{A \times B}}$. 

Supposons que, pour tout objet $a$ de $A$, le morphisme $f(a,\bullet) : X(a,\bullet) \to Y(a,\bullet)$ soit dans $\UnLocFond{W}_{\widehat{B}}$. Alors, $f(\bullet, \bullet)$ est dans $\UnLocFond{W}_{\widehat{A \times B}}$. 

Supposons que, pour tout objet $b$ de $B$, le morphisme $f(\bullet,b) : X(\bullet,b) \to Y(\bullet,b)$ soit dans $\UnLocFond{W}_{\widehat{A}}$. Alors, $f(\bullet, \bullet)$ est dans $\UnLocFond{W}_{\widehat{A \times B}}$. 
\end{lemme}

\begin{proof}
Il suffit bien entendu de démontrer la première assertion. Supposons que, pour tout objet $a$ de $A$, le morphisme $f(a, \bullet) : X(a,\bullet) \to Y(a,\bullet)$ soit dans $\UnLocFond{W}_{\widehat{B}}$, c'est-à-dire que le foncteur $i_{B} (f(a, \bullet))$ soit dans $\UnLocFond{W}$. Il s'agit de montrer que $f(\bullet, \bullet)$ est dans $\UnLocFond{W}_{\widehat{A \times B}}$, c'est-à-dire que le morphisme $i_{A \times B}(f(\bullet, \bullet))$ est dans $\UnLocFond{W}$. Un énoncé dual du lemme \ref{Fubini} permet d'identifier
$
i_{A \times B} (f(\bullet, \bullet)) 
$
à
$
i_{A} (i_{B} (f(a,\bullet)))
$. 
L'assignation 
$
a \mapsto i_{B} (f(a,\bullet)) 
$
définissant un morphisme de foncteurs de
$$
\begin{aligned}
A &\to \Cat
\\
a &\mapsto i_{B}X(a,\bullet)
\end{aligned}
$$
vers
$$
\begin{aligned}
A &\to \Cat
\\
a &\mapsto i_{B}Y(a,\bullet),
\end{aligned}
$$
la proposition \ref{IntegrationWParArguments} permet de conclure. 
\end{proof}

\begin{df}\label{DefTotalementAspherique}
On dit qu'une petite catégorie $A$ est \emph{totalement $\UnLocFond{W}$\nobreakdash-asphérique}, ou plus simplement \emph{totalement asphérique}\index{totalement asphérique (petite catégorie)}, si elle est asphérique et si le foncteur diagonal 
$$
\begin{aligned}
A &\to A \times A
\\
a &\mapsto (a,a)
\end{aligned}
$$ 
est asphérique. 
\end{df}

\begin{rem}
Pour qu'une petite catégorie $A$ soit totalement asphérique, il suffit qu'elle soit non vide et que le foncteur diagonal $A \to A \times A$ soit asphérique. Pour une démonstration, ainsi que des conditions équivalentes de totale asphéricité, on renvoie à \cite[proposition 1.6.1]{THG}.
\end{rem}

\begin{prop}\label{CasParticulier1.2.9.THG}
Soient $A$ et $B$ deux catégories asphériques et $u : A \to B$ un morphisme de $\Cat$. Les deux conditions suivantes sont équivalentes : 
\begin{itemize}
\item[(i)]
Le foncteur $u$ est asphérique.
\item[(ii)] 
Pour toute équivalence faible $\varphi$ de $\widehat{B}$, $u^{*}(\varphi)$ est une équivalence faible de $\widehat{A}$. 
\end{itemize}
\end{prop}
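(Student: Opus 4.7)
\medskip

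\textbf{Plan de d\'emonstration.} La strat\'egie est de comparer, pour tout pr\'efaisceau $X$ sur $B$, les cat\'egories $i_A(u^*X)$ et $i_B(X)$ au moyen d'un foncteur de projection canonique, puis d'utiliser des repr\'esentables pour la direction r\'eciproque. Pour tout pr\'efaisceau $X$ sur $B$, on dispose du foncteur
$$
p_X : i_A(u^*X) \to i_B(X), \qquad (a, x) \mapsto (u(a), x),
$$
o\`u $x \in (u^*X)(a) = X(u(a))$ est identifi\'e \`a un \'el\'ement de $X(u(a))$ dans $i_B(X)$. Ce foncteur est naturel en $X$ : pour tout morphisme $\varphi : X \to Y$ de $\widehat{B}$, le diagramme form\'e des $p_X$, $p_Y$, $i_A(u^*\varphi)$ et $i_B(\varphi)$ est commutatif. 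La d\'emonstration se ram\`ene donc \`a deux arguments de $2$ sur $3$ une fois \'etablies les asph\'ericit\'es ad\'equates.

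Pour l'implication (i) $\Rightarrow$ (ii), le point cl\'e consiste \`a montrer que $p_X$ est un foncteur $\UnLocFond{W}$\nobreakdash-asph\'erique pour tout pr\'efaisceau $X$ sur $B$, donc en particulier une $\UnLocFond{W}$\nobreakdash-\'equivalence. On v\'erifie pour cela que, pour tout objet $(b,y)$ de $i_B(X)$, la cat\'egorie $i_A(u^*X)/(b,y)$ est canoniquement isomorphe \`a $A/b$, l'isomorphisme associant \`a un objet $(a, x, g : u(a) \to b)$ (pour lequel $X(g)(y) = x$ est automatiquement d\'etermin\'e) le couple $(a, g)$ de $A/b$. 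Par hypoth\`ese (i), $A/b$ est asph\'erique, donc $p_X$ l'est \'egalement. En appliquant cela \`a $X$ et \`a $Y$ et en utilisant la naturalit\'e, on obtient un carr\'e commutatif dont les fl\`eches horizontales $p_X$ et $p_Y$ sont des \'equivalences faibles, et dont la fl\`eche verticale droite $i_B(\varphi)$ l'est par hypoth\`ese. Le lemme de $2$ sur $3$ (cons\'equence de la saturation faible) permet de conclure que $i_A(u^*\varphi)$ est une \'equivalence faible, c'est-\`a-dire que $u^*(\varphi) \in \UnLocFond{W}_{\widehat{A}}$.

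Pour l'implication r\'eciproque (ii) $\Rightarrow$ (i), on fixe un objet $b$ de $B$ et l'on consid\`ere le repr\'esentable $h_b$ ainsi que l'unique morphisme $h_b \to \ast$ vers le pr\'efaisceau final. Comme $i_B(h_b) = B/b$ admet un objet final, la condition LB assure que $B/b \to e$ est dans $\UnLocFond{W}$ ; l'asph\'ericit\'e de $B$ et la saturation faible permettent d'en d\'eduire que le foncteur canonique $B/b \to B$ est dans $\UnLocFond{W}$, donc que $h_b \to \ast$ est une \'equivalence faible de pr\'efaisceaux sur $B$. L'hypoth\`ese (ii) fournit alors que $u^*(h_b) \to u^*(\ast) = \ast_{\widehat{A}}$ est une \'equivalence faible de pr\'efaisceaux sur $A$ ; or, $i_A(u^*h_b)$ s'identifie pr\'ecis\'ement \`a la cat\'egorie comma $A/b$, tandis que $i_A(\ast_{\widehat{A}}) = A$. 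On obtient donc que $A/b \to A$ est dans $\UnLocFond{W}$, et l'asph\'ericit\'e de $A$ combin\'ee \`a un dernier argument de $2$ sur $3$ permet de conclure que $A/b$ est asph\'erique.

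L'obstacle principal \`a surveiller est la v\'erification soigneuse de l'identification $i_A(u^*X)/(b,y) \cong A/b$ dans la premi\`ere direction : il faut s'assurer que la contravariance de $X$ est correctement g\'er\'ee et que la donn\'ee de $(a, g : u(a) \to b)$ d\'etermine bien $x = X(g)(y)$ de mani\`ere \`a ce que la correspondance soit fonctorielle. Les deux arguments de $2$ sur $3$ dans la direction r\'eciproque reposent de fa\c con essentielle sur l'hypoth\`ese d'asph\'ericit\'e simultan\'ee de $A$ et $B$, ce qui justifie leur pr\'esence dans l'\'enonc\'e.
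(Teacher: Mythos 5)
Votre démonstration est correcte. Notez toutefois que le texte ne démontre pas cet énoncé : il se contente de renvoyer à \cite[proposition 1.2.9]{THG}, dont l'énoncé est plus général (il porte sur les foncteurs asphériques sans supposer $A$ et $B$ asphériques et comporte d'autres conditions équivalentes). Votre argument reconstruit essentiellement la preuve standard sous-jacente : pour (i) $\Rightarrow$ (ii), l'identification $i_A(u^*X)/(b,y) \cong A/b$ est exacte — la condition $x = X(u(f))(x')$ sur les morphismes est automatique dès que $g'u(f) = g$ et $x' = X(g')(y)$, ce qui rend la correspondance pleinement fidèle — de sorte que $p_X$ est asphérique dès que $u$ l'est, et les deux applications de la saturation faible (composition puis « $2$ sur $3$ ») concluent ; pour (ii) $\Rightarrow$ (i), l'usage des représentables, de l'identification $i_A(u^*h_b) \cong A/b$ et de l'asphéricité de $A$ et $B$ est exactement le bon. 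Il vaut la peine de souligner que votre première implication n'utilise pas l'asphéricité de $A$ ni de $B$ (seule l'asphéricité de $u$ intervient) ; ces hypothèses ne servent que dans la réciproque, ce qui est cohérent avec la forme générale de l'énoncé de \cite{THG}. Votre preuve a l'avantage de rendre le texte autonome sur ce point, là où le papier délègue entièrement la vérification à la référence.
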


\begin{proof}
C'est une partie de l'énoncé de \cite[proposition 1.2.9]{THG}. 
\end{proof}

\begin{prop}\label{LemmeBisimplicial}
Soient $A$ une petite catégorie totalement asphérique et $f$ un morphisme de $\widehat{A \times A}$. Si, pour tout objet $a$ de $A$, le morphisme $f(a,\bullet)$ est dans $\UnLocFond{W}_{\widehat{A}}$, alors $\delta_{A}^{*}(f)$ est dans $\UnLocFond{W}_{\widehat{A}}$.
\end{prop}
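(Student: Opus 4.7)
Le plan est de combiner deux r�sultats d�j� obtenus. D'une part, le lemme \ref{Queneau} permet de ramener une hypoth�se ��en ligne par ligne�� (ou ��en colonne par colonne��) � une hypoth�se globale sur un morphisme de pr�faisceaux sur un produit. D'autre part, la proposition \ref{CasParticulier1.2.9.THG} caract�rise les foncteurs asph�riques entre cat�gories asph�riques via la pr�servation des �quivalences faibles de pr�faisceaux par changement de base. Le foncteur diagonal $\delta_{A} : A \to A \times A$ offre pr�cis�ment le pont n�cessaire entre ces deux observations, en vertu de l'hypoth�se de totale asph�ricit�.

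Plus pr�cis�ment, voici les �tapes que je suivrais. D'abord, en appliquant le lemme \ref{Queneau} (premi�re assertion, avec $B = A$), l'hypoth�se que $f(a, \bullet) \in \UnLocFond{W}_{\widehat{A}}$ pour tout objet $a$ de $A$ entra�ne imm�diatement que $f \in \UnLocFond{W}_{\widehat{A \times A}}$. Ensuite, je v�rifierais que les cat�gories $A$ et $A \times A$ sont toutes deux asph�riques : la premi�re parce qu'elle est totalement asph�rique par hypoth�se, la seconde comme produit de deux cat�gories asph�riques en vertu du lemme \ref{ProduitAspherique}. Par d�finition de la totale asph�ricit� (d�finition \ref{DefTotalementAspherique}), le foncteur diagonal $\delta_{A} : A \to A \times A$ est asph�rique.

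La proposition \ref{CasParticulier1.2.9.THG}, appliqu�e � ce foncteur $\delta_{A}$ entre cat�gories asph�riques, assure alors que le foncteur image inverse $\delta_{A}^{*} : \widehat{A \times A} \to \widehat{A}$ envoie les �quivalences faibles de pr�faisceaux sur des �quivalences faibles de pr�faisceaux. En appliquant cela au morphisme $f \in \UnLocFond{W}_{\widehat{A \times A}}$ obtenu � la premi�re �tape, il vient $\delta_{A}^{*}(f) \in \UnLocFond{W}_{\widehat{A}}$, ce qui est la conclusion souhait�e.

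Il n'y a donc pas v�ritablement d'obstacle � attendre : l'essentiel du travail a d�j� �t� accompli dans l'�tablissement du lemme \ref{Queneau} (qui repose sur la proposition \ref{IntegrationWParArguments}) et dans la caract�risation \ref{CasParticulier1.2.9.THG} des foncteurs asph�riques. Le seul point de vigilance est de s'assurer que la d�finition de $\delta_{A}^{*}$ que l'on utilise est bien compatible, via $i_{A}$ et $i_{A \times A}$, avec le foncteur diagonal au niveau des cat�gories, ce qui r�sulte directement de la fonctorialit� de la construction $i_{(-)}$ et de la compatibilit� entre la construction de Grothendieck et les produits ; c'est une v�rification de routine que l'on pourra faire en pr�ambule sans soulever de difficult�. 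L'�nonc� appara�t ainsi comme une g�n�ralisation naturelle, dans le cadre d'un localisateur fondamental arbitraire et d'une cat�gorie totalement asph�rique quelconque, de la proposition \ref{LemmeBisimplicialDelta} rappel�e pour le cas classique de $\Delta$.
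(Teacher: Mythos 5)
Votre preuve est correcte et suit essentiellement la même démarche que celle du texte : le lemme \ref{Queneau} donne $f \in \UnLocFond{W}_{\widehat{A \times A}}$, puis l'asphéricité du foncteur diagonal $\delta_{A}$ (définition de la totale asphéricité) permet de conclure par la proposition \ref{CasParticulier1.2.9.THG}. Vos vérifications supplémentaires (asphéricité de $A \times A$ via le lemme \ref{ProduitAspherique}, compatibilité de $\delta_{A}^{*}$ avec $i_{A}$) ne font qu'expliciter des points que le texte laisse implicites.
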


\begin{proof}
En vertu des hypothèses et du lemme \ref{Queneau}, $f$ est dans $\UnLocFond{W}_{\widehat{A \times A}}$. Le foncteur diagonal $\delta_{A} : A \to A \times A$ étant asphérique, la conclusion résulte de la proposition \ref{CasParticulier1.2.9.THG}.
\end{proof} 

\begin{prop}\label{DeltaTotalementAspherique}
La catégorie des simplexes $\Delta$ est totalement asphérique. 
\end{prop}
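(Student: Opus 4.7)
The plan is to verify the two clauses of Definition \ref{DefTotalementAspherique} (or equivalently, by the remark following it, that $\Delta$ is non-empty and that its diagonal is aspherical), reducing the non-trivial point to Proposition \ref{UnSupAspherique}.

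First I would observe that the object $[0]$ is a final object of $\Delta$: for every $[m]$, the only order-preserving map $[m] \to [0]$ is the constant one. By axiom LB of Definition \ref{DefUnLocFond}, the morphism $\Delta \to e$ is therefore in $\UnLocFond{W}$, so $\Delta$ is aspherical. In particular $\Delta$ is non-empty.

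The heart of the argument is the asphericity of the diagonal $\delta_{\Delta} : \Delta \to \Delta \times \Delta$, for which I need to show that $\Delta/([m],[n])$ is aspherical for every object $([m],[n])$ of $\Delta \times \Delta$. The key observation is the canonical identification
$$
\Delta/([m],[n]) \;\simeq\; \Delta/\UnNerf([m] \times [n]),
$$
where on the right $[m] \times [n]$ denotes the product in $\Cat$. Indeed, an object on the left is a triple $([k], f : [k] \to [m], g : [k] \to [n])$ with $f$, $g$ in $\Delta$, and the universal property of the product in $\Cat$ identifies such a pair of order-preserving maps with a single functor $[k] \to [m] \times [n]$, i.e.~a $k$-simplex of $\UnNerf([m] \times [n])$; the morphisms match up by the same token. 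Under this identification the forgetful functor $\Delta/([m],[n]) \to \Delta$ corresponds to the tautological projection of $\Delta/\UnNerf([m]\times[n])$.

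Now Proposition \ref{UnSupAspherique} provides an aspherical (hence weakly equivalent) functor
$$
\SupUn_{[m]\times[n]} : \Delta/\UnNerf([m]\times[n]) \longrightarrow [m]\times[n].
$$
Since $[m]$ and $[n]$ each admit a final object, they are aspherical by LB; Lemma \ref{ProduitAspherique} then shows that $[m] \times [n]$ is aspherical. By the weak saturation of $\UnLocFond{W}$ (property FS2 applied to the triangle $\Delta/\UnNerf([m]\times[n]) \to [m]\times[n] \to e$), the category $\Delta/\UnNerf([m]\times[n]) \simeq \Delta/([m],[n])$ is aspherical. This holds for every $([m],[n])$, so $\delta_{\Delta}$ is an aspherical functor and $\Delta$ is totally aspherical.

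The only step requiring care is the identification $\Delta/([m],[n]) \simeq \Delta/\UnNerf([m]\times[n])$; once unpacked via the universal property of the product in $\Cat$, this is routine, and I would expect no genuine obstacle in the argument.
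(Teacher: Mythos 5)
Your argument is correct, and it is essentially the argument of the reference \cite[proposition 1.6.13]{THG} to which the paper defers without reproducing a proof: the key point in both is the identification $\Delta/([m],[n]) \simeq \Delta/\UnNerf([m]\times[n])$ followed by the asphericity of this comma category. Note only that Lemme \ref{DecalageObjetFinal} gives the asphericity of $\Delta/\UnNerf([m]\times[n])$ in one step, since $(m,n)$ is a final object of $[m]\times[n]$, so your detour through Proposition \ref{UnSupAspherique}, Lemme \ref{ProduitAspherique} and weak saturation, while perfectly valid, is not needed.
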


\begin{proof}
Voir la démonstration de \cite[proposition 1.6.13]{THG}.
\end{proof}

\begin{rem}
Insistons sur le fait que les propositions \ref{LemmeBisimplicial} et \ref{DeltaTotalementAspherique} \emph{ne fournissent pas} une démonstration de la proposition \ref{LemmeBisimplicialDelta}. En effet, pour démontrer que l'image d'un morphisme simplicial par $\UnNerf$ est une équivalence faible si et seulement si son image par $i_{\Delta}$ en est une (voir le théorème \ref{EquiEnsSimpCat}), nous avons utilisé la proposition \ref{MinouDrouet}, dont la démonstration repose sur la proposition \ref{LemmeBisimplicialDelta}.
\end{rem}

On conclut cette section par un rappel du « lemme d'homotopie » de \cite{THG}, que nous utiliserons plus loin. Cela nous permet d'introduire quelques notions utiles, dont nous recopions presque mot pour mot les définitions figurant dans \cite{THG}. 

\begin{paragr}
Soient $M$ une petite catégorie et $\UnLocFond{W}$ une partie de $\UnCell{M}$. On dit qu'un morphisme $u : A \to B$ de $M$ est \emph{universellement dans\index{universellement dans} $\UnLocFond{W}$} si les conditions suivantes sont vérifiées.  
\begin{itemize}
\item[$(i)$] Le morphisme $u$ est \emph{quarrable}\index{quarrable} ; autrement dit, pour tout morphisme $B' \to B$, le produit fibré $B' \times_{B} A$ est représentable dans $M$.
\item[$(ii)$] Pour tout carré cartésien
$$
\xymatrix{
A' 
\ar[r]
\ar[d]_{u'}
&
A
\ar[d]^{u}
\\
B'
\ar[r]
&
B
}
$$
dans $M$, le morphisme $u'$ est dans $\UnLocFond{W}$. 
\end{itemize}
\end{paragr}

\begin{paragr}\label{DefHomotopie}
On suppose dans la suite que $M$ admet des produits finis. Soient $\mathbb{I} = (I, \partial_{0}, \partial_{1})$ un segment de $M$ et $f, g : X \to Y$ deux morphismes de $M$. On dit que $f$ est \emph{$\mathbb{I}$\nobreakdash-homotope de façon élémentaire\index{homotope de façon élémentaire (relativement à un segment)} à $g$}, ou encore \emph{élémentairement $\mathbb{I}$\nobreakdash-homotope à $g$}, voire encore \emph{élémentairement homotope\index{elémentairement homotope@élémentairement homotope} à $g$} ou qu'il existe une \emph{homotopie élémentaire\index{homotopie élémentaire (relativement à un segment)} de $u$ vers $v$} en l'absence d'ambiguïté, s'il existe un morphisme $h : I \times X \to Y$ tel que $f = h (\partial_{0}, 1_{X})$ et $g = h (\partial_{1}, 1_{X})$, autrement dit tel que le diagramme
$$
\xymatrix{
&
I \times X
\ar[dd]^{h}
\\
X
\ar[ur]^{\partial_{0} \times 1_{X}}
\ar[dr]_{f}
&&
X
\ar[ul]_{\partial_{1} \times 1_{X}}
\ar[dl]^{g}
\\
&
Y
}
$$
soit commutatif, et on dit que $h$ est une \emph{$\mathbb{I}$\nobreakdash-homotopie}\index{homotopie (relativement à un segment)} (ou, par abus de langage, une $I$\nobreakdash-homotopie) de $f$ vers $g$. 
\end{paragr}

\begin{rem}
Le lemme \ref{DeuxTransFoncLax} stipule donc que, s'il existe une \DeuxTransformationLax{} ou une \DeuxTransformationCoLax{} d'un \DeuxFoncteurLax{} $u$ vers un \DeuxFoncteurLax{} $v$, alors $u$ est élémentairement $([1], \{0\}, \{1\})$\nobreakdash-homotope à $v$ dans $\DeuxCatLax$, catégorie admettant des produits qui sont les mêmes que ceux de la catégorie $\DeuxCat$. 
\end{rem}

\begin{paragr}\label{DefHomotopieBis}
Soit $\mathcal{I}$ un ensemble de segments de $M$. On appelle relation de $\mathcal{I}$\nobreakdash-homotopie\index{homotopie (relativement à un ensemble de segments)} la relation d'équivalence engendrée par la relation « il existe un segment $\mathbb{I}$ appartenant à $\mathcal{I}$ tel que $f$ soit $\mathbb{I}$\nobreakdash-homotope à $g$ de façon élémentaire ». On dit que deux morphismes sont \emph{$\mathcal{I}$\nobreakdash-homotopes} (ou \emph{$\mathbb{I}$\nobreakdash-homotopes} si $\mathcal{I} = \{ \mathbb{I} \}$) s'ils sont en relation par la relation de $\mathcal{I}$\nobreakdash-homotopie. Si $\mathcal{I} = \{ (I, \partial_{0}, \partial_{1}) \}$, on dira parfois, par abus de langage, qu'ils sont $I$\nobreakdash-homotopes. 
\end{paragr}

\begin{paragr}\label{CompatibiliteHomotopie}
On vérifie immédiatement que la relation de $\mathcal{I}$\nobreakdash-homotopie est compatible à la composition et au produit de morphismes. 
\begin{itemize}
\item[$(a)$]
Si $f_{0}, f_{1} : X \to Y$ et $g_{0}, g_{1} : Y \to Z$ sont des morphismes de $M$ tels que $f_{0}$ soit $\mathcal{I}$\nobreakdash-homotope à $f_{1}$ et $g_{0}$ soit $\mathcal{I}$\nobreakdash-homotope à $g_{1}$, alors $g_{0} f_{0}$ est $\mathcal{I}$\nobreakdash-homotope à $g_{1} f_{1}$. 
\item[$(b)$]
Si $f_{0}, f_{1} : X \to Y$ et $f'_{0}, f'_{1} : X' \to Y'$ sont des morphismes de $M$ tels que $f_{0}$ soit $\mathcal{I}$\nobreakdash-homotope à $f_{1}$ et $f'_{0}$ soit $\mathcal{I}$\nobreakdash-homotope à $f'_{1}$, alors $f_{0} \times f'_{0}$ est $\mathcal{I}$\nobreakdash-homotope à $f_{1} \times f'_{1}$. 
\end{itemize}
\end{paragr}

\begin{paragr}
On dit qu'un objet $X$ de $M$ est $\mathcal{I}$\nobreakdash-contractile\index{contractile} (ou $\mathbb{I}$\nobreakdash-contractile si $\mathcal{I} = \{ \mathbb{I} \}$) si l'identité de $X$ est $\mathcal{I}$\nobreakdash-homotope à un endomorphisme \emph{constant} de $X$ (c'est-à-dire se factorisant par l'objet final $e_{M}$ de $M$). Si $\mathcal{I} = \{ (I, \partial_{0}, \partial_{1}) \}$, on parlera parfois, par abus de langage, d'objet $I$\nobreakdash-contractile. 
\end{paragr}

\begin{lemme}[Lemme d'homotopie]\label{LemmeHomotopieTHG}
Soient $M$ une catégorie admettant des produits finis, $\UnLocFond{W}$ une partie faiblement saturée de $\UnCell{M}$ et $\mathcal{I}$ un ensemble de segments de $M$ tel que, pour tout segment $\mathbb{I} = \{ (I, \partial_{0}, \partial_{1}) \}$ appartenant à $\mathcal{I}$, le morphisme canonique $p_{I} : I \to e_{M}$, où $e_{M}$ désigne l'objet final de $M$, soit universellement dans $\UnLocFond{W}$ (autrement dit, pour tout objet $Z$ de $M$, la projection canonique $I \times Z \to Z$ est dans $\UnLocFond{W}$). Si $f, g : X \to Y$ sont deux morphismes $\mathcal{I}$\nobreakdash-homotopes de $M$, alors :
\begin{itemize}
\item[(a)] $f$ est dans $\UnLocFond{W}$ si et seulement si $g$ l'est ;
\item[(b)] $\gamma(f) = \gamma(g)$, où $\gamma : M \to \Localisation{M}{\UnLocFond{W}}$ désigne le foncteur canonique de localisation ;
\item[(c)] si $f$ est un isomorphisme et $g$ un morphisme constant (c'est-à-dire qu'il se factorise par l'objet final $e_{M}$), alors les morphismes canoniques $X \to e_{M}$ et $Y \to e_{M}$ sont universellement dans $\UnLocFond{W}$. 
\end{itemize}
\end{lemme}

\begin{proof}
C'est \cite[lemme 1.4.6]{THG}. Pour démontrer $(a)$ et $(b)$, on peut supposer qu'il existe un segment $\mathbb{I} = (I, \partial_{0}, \partial_{1})$ appartenant à $\mathcal{I}$ et une $\mathbb{I}$\nobreakdash-homotopie $h : I \times X \to Y$ de $f$ vers $g$, de sorte que le diagramme
$$
\xymatrix{
&
I \times X
\ar[dd]^{h}
\\
X
\ar[ur]^{\partial_{0} \times 1_{X}}
\ar[dr]_{f}
&&
X
\ar[ul]_{\partial_{1} \times 1_{X}}
\ar[dl]^{g}
\\
&
Y
}
$$
soit commutatif. En vertu des hypothèses, la projection canonique $p_{2} : I \times X \to X$ est dans $\UnLocFond{W}$. Comme c'est une rétraction commune à $\partial_{0} \times 1_{X}$ et $\partial_{1} \times 1_{X}$, ces deux morphismes sont dans $\UnLocFond{W}$. On en déduit l'assertion $(a)$ par deux arguments consécutifs de « $2$ sur $3$ ». 

De plus, des égalités
$$
p_{2} (\partial_{0} \times 1_{X}) = 1_{X} = p_{2} (\partial_{1} \times 1_{X})
$$
et du fait que $\gamma(p_{2})$ est un isomorphisme (puisque $p_{2}$ est dans $\UnLocFond{W}$ par hypothèse) on déduit, par fonctorialité de $\gamma$,
$$
\gamma(\partial_{0} \times 1_{X}) = (\gamma(p_{2}))^{-1} = \gamma(\partial_{1} \times 1_{X}).
$$
L'assertion $(b)$ en résulte immédiatement. 

Montrons l'assertion $(c)$, dont la preuve semble un peu rapide dans \cite{THG}. Par hypothèse, $g = sp_{X}$, où $p_{X} : X \to e_{M}$ désigne le morphisme canonique et $s : e_{M} \to Y$ est un morphisme de $M$. Il s'agit de montrer que, pour tout objet $T$ de $M$, la projection canonique $X \times T \to T$, qui s'identifie à $p_{X} \times 1_{T}$, est dans $\UnLocFond{W}$. La relation d'équivalence « être homotopes » étant compatible au produit (paragraphe \ref{CompatibiliteHomotopie}), $f \times 1_{T}$ est homotope à $g \times 1_{T} = (s \times 1_{T}) (p_{X} \times 1_{T})$. Comme $f$ est un isomorphisme, il en est de même de $f \times 1_{T}$, qui est donc dans $\UnLocFond{W}$. En vertu de l'assertion $(a)$, le morphisme $(s \times 1_{T}) (p_{X} \times 1_{T})$ est donc dans $\UnLocFond{W}$. Comme $f^{-1}$ est un isomorphisme, $(f^{-1} \times 1_{T})$ est un isomorphisme, donc est dans $\UnLocFond{W}$. De ce qui précède, on déduit que $(f^{-1} \times 1_{T}) (s \times 1_{T}) (p_{X} \times 1_{T})$ est dans $\UnLocFond{W}$. Comme $(p_{X} \times 1_{T}) (f^{-1} \times 1_{T}) (s \times 1_{T}) = 1_{e_{M} \times T}$ est dans $\UnLocFond{W}$, $p_{X} \times 1_{T}$ est dans $\UnLocFond{W}$ en vertu de la saturation faible de $\UnLocFond{W}$, ce qui montre que le morphisme $p_{X} : X \to e_{M}$ est universellement dans $\UnLocFond{W}$. Considérons le diagramme commutatif
$$
\xymatrix{
X \times T 
\ar[rr]^{f \times 1_{T}}
\ar[dr]_{p_{X} \times 1_{T}}
&&
Y \times T
\ar[dl]^{p_{Y} \times 1_{T}}
\\
&
e_{M} \times T
&.
}
$$
On a vu que $f \times 1_{T}$ et $p_{X} \times 1_{T}$ étaient dans $\UnLocFond{W}$. Un argument de « $2$ sur $3$ » permet de conclure que le morphisme $p_{Y} \times 1_{T}$ l'est aussi. Autrement dit, le morphisme canonique $p_{Y} : Y \to e_{M}$ est universellement dans $\UnLocFond{W}$. 
\end{proof}

\begin{prop}\label{1.4.8.THG}
Soient $M$ une catégorie admettant des produits finis, $W$ une partie faiblement saturée de $\UnCell{M}$ et $\mathcal{I}$ un ensemble de segments de $M$ tel que, pour tout segment $\mathbb{I} = (I, \partial_{0}, \partial_{1})$ appartenant à $\mathcal{I}$, le morphisme canonique $p_{I} : I \to e_{M}$ soit universellement dans $W$, $e_{M}$ désignant l'objet final de $M$. Si $X$ est un objet $\mathcal{I}$\nobreakdash-contractile de $M$, alors le morphisme canonique $X \to e_{M}$ est universellement dans $W$. 
\end{prop}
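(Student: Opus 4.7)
The plan is to deduce this proposition as an almost immediate corollary of part $(c)$ of the homotopy lemma (Lemma \ref{LemmeHomotopieTHG}). First I would unpack the definition of $\mathcal{I}$\nobreakdash-contractility: by the definition given just before the statement, $X$ being $\mathcal{I}$\nobreakdash-contractile means that $1_X$ is $\mathcal{I}$\nobreakdash-homotope to some constant endomorphism $g : X \to X$, that is, to a morphism of the form $g = s\, p_X$ for some $s : e_M \to X$, where $p_X : X \to e_M$ is the canonical morphism.

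Next I would apply Lemma \ref{LemmeHomotopieTHG}$(c)$ to the pair of parallel morphisms $f = 1_X : X \to X$ and $g = s\, p_X : X \to X$. The hypotheses of that lemma are exactly those of the present proposition: $M$ admits finite products, $W$ is weakly saturated, and each segment in $\mathcal{I}$ has its canonical morphism $p_I : I \to e_M$ universally in $W$. By construction, $f$ and $g$ are $\mathcal{I}$\nobreakdash-homotopes, $f$ is an isomorphism, and $g$ is a constant morphism. The conclusion of part $(c)$ therefore applies, giving that both canonical morphisms from source and target to $e_M$ are universally in $W$. Since here $Y = X$, this yields precisely that $p_X : X \to e_M$ is universally in $W$.

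There is no genuine obstacle: the whole content of the proposition has already been packaged into part $(c)$ of the homotopy lemma, whose proof was the nontrivial step (reducing the universal assertion to a combination of the ``$2$ out of $3$'' behaviour of $W$, the weak saturation, and the stability of the homotopy relation under products, as carried out in the proof of Lemma \ref{LemmeHomotopieTHG}). The only mild point worth verifying in passing is that the application of $(c)$ is legitimate when $1_X$ and $g$ are connected by a chain of elementary homotopies rather than a single one, which is immediate since the proof of $(c)$ invokes $(a)$ through the $\mathcal{I}$\nobreakdash-homotopy relation itself and not through a specific elementary homotopy.
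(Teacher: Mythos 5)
Votre argument est correct et co�ncide exactement avec la d�monstration du texte, qui d�duit la proposition comme cons�quence imm�diate de l'assertion $(c)$ du lemme \ref{LemmeHomotopieTHG} appliqu�e � $1_{X}$ et � l'endomorphisme constant auquel il est $\mathcal{I}$\nobreakdash-homotope. Votre remarque finale sur la validit� de $(c)$ pour une cha�ne d'homotopies �l�mentaires est �galement juste et ne demande rien de plus.
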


\begin{proof}
C'est une conséquence immédiate de l'assertion $(c)$ du lemme \ref{LemmeHomotopieTHG}.
\end{proof}

\section{Nerfs simpliciaux}\label{SectionNerfs}

\begin{paragr}\label{IntroNerfs}
On définit maintenant divers foncteurs associant à une \deux{}catégorie — que l'on supposera petite — un objet simplicial de $\Cat$. Certains d'entre eux sont à valeurs dans la catégorie des ensembles simpliciaux. On verra qu'ils sont tous reliés par des morphismes induisant des équivalences d'homotopie. Autrement dit, ils sont tous homotopiquement équivalents et, dans l'étude homotopique des \deux{}catégories que nous entreprenons, on pourra indifféremment considérer n'importe lequel de ces nerfs suivant les situations et notre humeur du moment. La référence principale pour cette section est l'article \cite{CCG}, dont nous ne faisons essentiellement que reprendre les résultats. Si l'on ne se limite pas aux travaux de nature homotopique, les définitions de nerfs en dimension supérieure remontent beaucoup plus loin. Sans nullement prétendre à l'exhaustivité, rendue plus délicate encore par le caractère difficilement accessible de certains textes, voici quelques références, en attendant la publication d'une histoire de la théorie des catégories. La définition générale maintenant standard du nerf en dimension quelconque, due à Street, apparaît dans \cite{StreetAOS}, dont l'introduction cite des travaux de Roberts que nous n'avons pu consulter. On peut également se reporter à \cite{StreetHandbook} et, pour les bicatégories, au travail de Duskin \cite{Duskin}. 
\end{paragr}

\begin{df}\label{DefinitionNerfs}
Soient $\mathdeuxcat{A}$ une \deux{}catégorie et $m \geq 0$ un entier. 
\begin{itemize}
\item[$(i)$]
On note 
$$
FonLax([m], \mathdeuxcat{A})\index[not]{FonLaxmA@$FonLax([m], \mathdeuxcat{A})$}
$$
l'ensemble des \DeuxFoncteursLax{} de $[m]$ vers $\mathdeuxcat{A}$ et $\NerfLax{\mathdeuxcat{A}}$ l'ensemble simplicial
$$
\begin{aligned}
\NerfLax{\mathdeuxcat{A}} : \Delta^{op} &\to \Ens
\\
[m] &\mapsto FonLax([m], \mathdeuxcat{A})
\end{aligned}
$$
dont les faces et dégénérescences sont définies de façon « évidente ».

Cela permet de définir un foncteur \emph{nerf lax}\index{nerf lax}
$$
\begin{aligned}
\NerfLax\index[not]{Nl@$\NerfLax$} : \DeuxCatLax &\to \EnsSimp
\\
\mathdeuxcat{A} &\mapsto \NerfLax \mathdeuxcat{A}
\\
u &\mapsto \NerfLax(u).
\end{aligned}
$$


\item[$(ii)$]
On note 
$$
FonLaxNor([m], \mathdeuxcat{A})\index[not]{FonLaxNormA@$FonLaxNor([m], \mathdeuxcat{A})$}
$$
l'ensemble des \DeuxFoncteursLax{} normalisés de $[m]$ vers $\mathdeuxcat{A}$ et $\NerfLaxNor{\mathdeuxcat{A}}$ l'ensemble simplicial 
$$
\begin{aligned}
\NerfLaxNor{\mathdeuxcat{A}} : \Delta^{op} &\to \Ens
\\
[m] &\mapsto FonLaxNor([m], \mathdeuxcat{A})
\end{aligned}
$$
dont les faces et dégénérescences sont définies de façon « évidente ». 

Cela permet de définir un foncteur \emph{nerf lax normalisé}\index{nerf lax normalisé}
$$
\begin{aligned}
\NerfLaxNor\index[not]{Nln@$\NerfLaxNor$} : \DeuxCat &\to \EnsSimp
\\
\mathdeuxcat{A} &\mapsto \NerfLaxNor \mathdeuxcat{A}
\\
u &\mapsto \NerfLaxNor(u).
\end{aligned}
$$


\item[$(iii)$]
On note 
$$
\underline{FonLax}([m], \mathdeuxcat{A})\index[not]{FonLaxmASouligne@$\underline{FonLax}([m], \mathdeuxcat{A})$}
$$
la catégorie dont les objets sont les \DeuxFoncteursLax{} de $[m]$ vers $\mathdeuxcat{A}$ et dont les morphismes sont les \DeuxTransformationsLax{} relatives aux objets entre tels \DeuxFoncteursLax{} et $\NerfCatLax{\mathdeuxcat{A}}$ l'objet simplicial de $\Cat$ 
$$
\begin{aligned}
\NerfCatLax{\mathdeuxcat{A}} : \Delta^{op} &\to \Cat
\\
[m] &\mapsto \underline{FonLax}([m], \mathdeuxcat{A})
\end{aligned}
$$
dont les faces et dégénérescences sont définies de façon « évidente ». 

Cela permet de définir un foncteur \emph{nerf lax catégorique}\index{nerf lax catégorique}
$$
\begin{aligned}
\NerfCatLax\index[not]{NlSouligne@$\NerfCatLax$} : \DeuxCat &\to \CatHom{\CAT}{\Delta^{op}}{\Cat} 
\\
\mathdeuxcat{A} &\mapsto \NerfCatLax \mathdeuxcat{A}
\\
u &\mapsto \NerfCatLax(u)
\end{aligned}
$$
en notant $\CAT$\index[not]{CZAT@$\CAT$} la catégorie des catégories (non nécessairement petites).

\item[$(iv)$]
On note 
$$
\underline{FonLaxNor}([m], \mathdeuxcat{A})\index[not]{FonLaxNormA@$\underline{FonLaxNor}([m], \mathdeuxcat{A})$}
$$
la catégorie dont les objets sont les \DeuxFoncteursLax{} normalisés de $[m]$ vers $\mathdeuxcat{A}$ et dont les morphismes sont les \DeuxTransformationsLax{} relatives aux objets entre tels \DeuxFoncteursLax{} normalisés et $\NerfCatLaxNor{\mathdeuxcat{A}}$ l'objet simplicial de $\Cat$ 
$$
\begin{aligned}
\NerfCatLaxNor{\mathdeuxcat{A}} : \Delta^{op} &\to \Cat
\\
[m] &\mapsto \underline{FonLaxNor}([m], \mathdeuxcat{A})
\end{aligned}
$$
dont les faces et dégénérescences sont définies de façon « évidente ». 

Cela permet de définir un foncteur \emph{nerf lax normalisé catégorique}\index{nerf lax normalisé catégorique}
$$
\begin{aligned}
\NerfCatLaxNor\index[not]{NlnSouligne@$\NerfCatLaxNor$} : \DeuxCat &\to \CatHom{CAT}{\Delta^{op}}{\Cat} 
\\
\mathdeuxcat{A} &\mapsto \NerfCatLaxNor \mathdeuxcat{A}
\\
u &\mapsto \NerfCatLaxNor(u).
\end{aligned}
$$

\item[$(v)$]
On note 
$$
\underline{Fon}([m], \mathdeuxcat{A})\index[not]{FonmASouligne@$\underline{Fon}([m], \mathdeuxcat{A})$}
$$
la catégorie dont les objets sont les \DeuxFoncteursStricts{} de $[m]$ vers $\mathdeuxcat{A}$ et dont les morphismes sont les \DeuxTransformationsLax{} relatives aux objets entre tels \DeuxFoncteursStricts{}. Autrement dit, $\underline{Fon}([m], \mathdeuxcat{A})$ est la catégorie 
$$
\coprod_{\substack{(a_{0}, \dots, a_{m}) \in (\Objets{\mathdeuxcat{A}})^{m+1}}} \CatHom{\mathdeuxcat{A}}{a_{0}}{a_{1}} \times \dots \times \CatHom{\mathdeuxcat{A}}{a_{m-1}}{a_{m}}.
$$
On note 
$\NerfHom{\mathdeuxcat{A}}$ l'objet simplicial de $\Cat$ 
$$
\begin{aligned}
\NerfHom{\mathdeuxcat{A}} : \Delta^{op} &\to \Cat
\\
[m] &\mapsto \underline{Fon}([m], \mathdeuxcat{A})
\end{aligned}
$$
dont les faces et dégénérescences sont définies de façon « évidente ». 

Cela permet de définir un foncteur \emph{nerf strict}\index{nerf strict}
$$
\begin{aligned}
\NerfHom\index[not]{NSouligne@$\NerfHom$} : \DeuxCat &\to \CatHom{CAT}{\Delta^{op}}{\Cat} 
\\
\mathdeuxcat{A} &\mapsto \NerfHom \mathdeuxcat{A}
\\
u &\mapsto \NerfHom(u).
\end{aligned}
$$

\end{itemize}
\end{df}

\begin{rem}\label{NerfLaxNorFonctoriel}
Nous avons considéré le domaine de définition des nerfs $\NerfLaxNor$, $\NerfCatLax$ et $\NerfCatLaxNor$ comme étant $\DeuxCat$. Nous ne les considérerons pas, sauf mention contraire, comme des foncteurs de domaine la catégorie dont les objets sont les \deux{}catégories et dont les morphismes sont les \DeuxFoncteursLax{} normalisés, ce qui serait possible. 
\end{rem}

\begin{rem}\label{NerfHomPasFonctorielSurLax}
Insistons sur le fait que l'on considérera généralement le nerf lax $\NerfLax$ comme un foncteur de $\DeuxCatLax$ vers $\EnsSimp$, et pas simplement de $\DeuxCat$ vers $\EnsSimp$. En revanche, aucun des quatre autres nerfs ci-dessus ne définit de foncteur de source $\DeuxCatLax$. En particulier, le nerf $\NerfHom$ n'est pas fonctoriel sur les \DeuxFoncteursLax{}. Pour un contre-exemple, on pourra considérer l'inclusion
$
{}[1] \to [2],
0 \mapsto 0,
1 \mapsto 2
$.
Si le nerf $\NerfHom$ était fonctoriel sur les \DeuxFoncteursLax{}, on aurait, pour tout \DeuxFoncteurLax{} $u : \mathdeuxcat{A} \to \mathdeuxcat{B}$, un diagramme commutatif
$$
\xymatrix{
(\NerfHom\mathdeuxcat{A})_{2}
\ar[r]
\ar[d]_{(\NerfHom(u))_{2}}
&(\NerfHom\mathdeuxcat{A})_{1}
\ar[d]^{(\NerfHom(u))_{1}}
\\
(\NerfHom\mathdeuxcat{B})_{2}
\ar[r]
&(\NerfHom\mathdeuxcat{B})_{1}
}
$$
dans $\Cat$, dans lequel les flèches horizontales désignent les images des inclusions ci-dessus, ce qui n'est manifestement pas le cas. En l'absence d'ambiguïté, on pourra s'autoriser à commettre l'abus de considérer les nerfs $\NerfLax$ et $\NerfLaxNor$ comme des foncteurs à valeurs dans $\CatHom{CAT}{\Delta^{op}}{\Cat}$ à l'aide de l'inclusion 
$$
\EnsSimp =  \CatHom{CAT}{\Delta^{op}}{Ens} \hookrightarrow \CatHom{CAT}{\Delta^{op}}{\Cat}.
$$
\end{rem}

\begin{lemme}\label{InclusionCatAdjointUn}
Pour toute \deux{}catégorie $\mathdeuxcat{A}$ et tout entier $m \geq 0$, l'inclusion naturelle de catégories
$$
\FonStrictCat{[m]}{\mathdeuxcat{A}}     \hookrightarrow    \FonLaxNorCat{[m]}{\mathdeuxcat{A}}
$$ 
admet un adjoint à droite.
\end{lemme}

\begin{proof}
Voir, par exemple, la démonstration de \cite[proposition 8.4.4]{TheseDelHoyo} ou, pour un résultat plus général (car bicatégorique), celle de \cite[théorème 6.4]{CCG}. 
\end{proof}


\begin{prop}\label{InclusionNerfHomNerfCatLaxNorW}
Pour toute \deux{}catégorie $\mathdeuxcat{A}$, l'inclusion naturelle de catégories
$$
i_{hom}^{\underline{l,n}}\mathdeuxcat{A}\index[not]{ilnhomA@$i_{hom}^{\underline{l,n}}\mathdeuxcat{A}$} : \DeuxIntOp{\Delta} \NerfHom{\mathdeuxcat{A}} \hookrightarrow \DeuxIntOp{\Delta} \NerfCatLaxNor{\mathdeuxcat{A}}
$$
est une équivalence faible pour tout \ClasseUnLocFond{}. 
\end{prop}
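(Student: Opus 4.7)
L'id�e est de voir le morphisme $i_{hom}^{\underline{l,n}}\mathdeuxcat{A}$ comme l'image par $\DeuxIntOp{\Delta}$ d'une transformation naturelle $\sigma : \NerfHom{\mathdeuxcat{A}} \Rightarrow \NerfCatLaxNor{\mathdeuxcat{A}}$ de foncteurs $\Delta^{op} \to \Cat$, dont chaque composante est une \emph{inclusion de composantes} d'un adjoint � gauche, donc une $\UnLocFond{W}$\nobreakdash-�quivalence, et d'invoquer la fonctorialit� homotopique de l'int�gration en chaque argument.

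Plus pr�cis�ment, je commencerais par expliciter la transformation naturelle $\sigma$. Pour chaque entier $m \geq 0$, la composante $\sigma_{[m]}$ est tout simplement l'inclusion canonique
$$
\FonStrictCat{[m]}{\mathdeuxcat{A}} \hookrightarrow \FonLaxNorCat{[m]}{\mathdeuxcat{A}}
$$
qui voit un \DeuxFoncteurStrict{} de $[m]$ vers $\mathdeuxcat{A}$ comme un \DeuxFoncteurLax{} normalis�. La naturalit� par rapport aux morphismes simpliciaux est imm�diate, et l'�galit� $i_{hom}^{\underline{l,n}}\mathdeuxcat{A} = \DeuxIntOp{\Delta} \sigma$ r�sulte de la description explicite des objets et morphismes de $\DeuxIntOp{\Delta}F$ dans le paragraphe \ref{DefDeuxIntOp}, une fois observ� que la cat�gorie source $\Delta$ est une $1$\nobreakdash-cat�gorie (donc toutes les \deux{}cellules y sont des identit�s).

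L'�tape-cl� suivante consiste � v�rifier que, pour chaque $[m]$, la composante $\sigma_{[m]}$ est une $\UnLocFond{W}$\nobreakdash-�quivalence pour tout localisateur fondamental $\UnLocFond{W}$ de $\Cat$. Le lemme \ref{InclusionCatAdjointUn} fournit un adjoint � droite � l'inclusion $\sigma_{[m]}$ ; le lemme \ref{UnAdjointsW} assure alors qu'un morphisme de $\Cat$ admettant un adjoint (� gauche ou � droite) est dans $\UnLocFond{W}$. Ainsi $\sigma_{[m]} \in \UnLocFond{W}$ pour tout $m \geq 0$.

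Il reste � conclure que $\DeuxIntOp{\Delta} \sigma$ est une $\UnLocFond{W}$\nobreakdash-�quivalence. C'est la variante duale de la proposition \ref{IntegrationWParArguments} : on dispose d'un triangle commutatif
$$
\xymatrix{
\DeuxIntOp{\Delta}\NerfHom{\mathdeuxcat{A}}
\ar[rr]^{\DeuxIntOp{\Delta}\sigma}
\ar[dr]
&&
\DeuxIntOp{\Delta}\NerfCatLaxNor{\mathdeuxcat{A}}
\ar[dl]
\\
&
\Delta
&
}
$$
dont les fl�ches obliques sont les projections canoniques, et l'on applique l'axiome LC (th�or�me A relatif) apr�s avoir constat� que, pour tout $[m] \in \Objets{\Delta}$, le foncteur induit entre les tranches au-dessus de $[m]$ s'ins�re dans un diagramme commutatif avec $\sigma_{[m]}$ et deux foncteurs verticaux admettant un adjoint (parce que les projections sont des pr�fibrations de fibres $\NerfHom{\mathdeuxcat{A}}([m])$ et $\NerfCatLaxNor{\mathdeuxcat{A}}([m])$ respectivement), ce qui permet de conclure par ��$2$ sur $3$��. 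L'obstacle �ventuel est uniquement de nature formelle : il faut bien distinguer entre $\DeuxInt{}$ et $\DeuxIntOp{}$ et v�rifier que le passage aux oppos�s (qui commute aux $\UnLocFond{W}$\nobreakdash-�quivalences par une cons�quence facile des axiomes) permet d'appliquer la proposition \ref{IntegrationWParArguments} telle quelle ; aucune difficult� conceptuelle nouvelle n'appara�t.
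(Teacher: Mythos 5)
Your argument is essentially the paper's own proof: you write the inclusion as the integral of the natural transformation whose component at $[m]$ is $\underline{Fon}([m],\mathdeuxcat{A}) \hookrightarrow \underline{FonLaxNor}([m],\mathdeuxcat{A})$, observe that each component admits a right adjoint by the lemma \ref{InclusionCatAdjointUn} and hence lies in $\UnLocFond{W}$ by the lemma \ref{UnAdjointsW}, and conclude by the proposition \ref{IntegrationWParArguments}, exactly as the paper does (the paper invokes that proposition directly, without dwelling on the $\DeuxInt{}$ versus $\DeuxIntOp{}$ variance). Your closing paragraph on that variance is the only divergence and is also the shakiest part -- for the contravariant construction the fibre inclusion has its adjoint with respect to coslices rather than slices, and op-invariance of fundamental localizers of $\Cat$ is a standard fact but not an immediate consequence of the axioms -- yet this does not affect the shared core of the argument, which is correct.
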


\begin{proof}
Pour tout entier $m \geq 0$, notons $\sigma_{m}$ l'inclusion naturelle de catégories
$$
\underline{Fon}([m], \mathdeuxcat{A}) \hookrightarrow \underline{FonLaxNor}([m], \mathdeuxcat{A}).
$$
L'assignation $[m] \mapsto \sigma_{m}$ définit une transformation naturelle $\sigma$ de $\NerfHom{\mathdeuxcat{A}} : \DeuxCatUnOp{\Delta} \to \Cat$ vers $\NerfCatLaxNor{\mathdeuxcat{A}} : \DeuxCatUnOp{\Delta} \to \Cat$ dont les composantes admettent chacune un adjoint à droite, en vertu du lemme \ref{InclusionCatAdjointUn}, et sont donc des équivalences faibles en vertu du lemme \ref{UnAdjointsW}. Pour conclure, il suffit donc d'invoquer la proposition \ref{IntegrationWParArguments}.
\end{proof}

\begin{rem}\label{Nana}
Ainsi, pour tout \DeuxFoncteurStrict{} $u : \mathdeuxcat{A} \to \mathdeuxcat{B}$, il existe un diagramme commutatif
$$
\xymatrix{
\DeuxIntOp{\Delta} \NerfHom{\mathdeuxcat{A}}
\ar[rr]^{\DeuxIntOp{\Delta} \NerfHom(u)}
\ar[d]
&&
\DeuxIntOp{\Delta} \NerfHom{\mathdeuxcat{B}}
\ar[d]
\\
\DeuxIntOp{\Delta} \NerfCatLaxNor{\mathdeuxcat{A}}
\ar[rr]_{\DeuxIntOp{\Delta} \NerfCatLaxNor(u)}
&&
\DeuxIntOp{\Delta} \NerfCatLaxNor{\mathdeuxcat{B}}
}
$$
dont les flèches verticales sont des équivalences faibles pour tout \ClasseUnLocFond{}. En particulier, pour tout \ClasseUnLocFond{}, $\DeuxIntOp{\Delta} \NerfHom(u)$ est une équivalence faible si et seulement si $\DeuxIntOp{\Delta} \NerfCatLaxNor(u)$ en est une. 
\end{rem}

\begin{lemme}\label{InclusionCatAdjointDeux}
Pour toute \deux{}catégorie $\mathdeuxcat{A}$ et tout entier $m \geq 0$, l'inclusion naturelle de catégories
$$
\underline{FonLaxNor}([m], \mathdeuxcat{A}) \hookrightarrow \underline{FonLax}([m], \mathdeuxcat{A})
$$
admet un adjoint à droite.
\end{lemme}

\begin{proof}
Voir la démonstration de \cite[théorème 6.3]{CCG} (relatif au cas plus général des bicatégories).
\end{proof}

\begin{prop}\label{InclusionNerfCatLaxNorNerfCatLaxW}
Pour toute \deux{}catégorie $\mathdeuxcat{A}$, l'inclusion naturelle de catégories
$$
\DeuxIntOp{\Delta} \NerfCatLaxNor{\mathdeuxcat{A}} \hookrightarrow \DeuxIntOp{\Delta} \NerfCatLax{\mathdeuxcat{A}}
$$
est une équivalence faible pour tout \ClasseUnLocFond{}.
\end{prop}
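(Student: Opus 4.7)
The proof will proceed in direct parallel with that of Proposition \ref{InclusionNerfHomNerfCatLaxNorW}. For each integer $m \geq 0$, denote by
$$
\tau_{m} : \underline{FonLaxNor}([m], \mathdeuxcat{A}) \hookrightarrow \underline{FonLax}([m], \mathdeuxcat{A})
$$
the natural inclusion of categories. The first observation is that this inclusion is functorial (covariantly in $[m]$ if we think of it as part of the simplicial object, i.e.\ the $\tau_{m}$ assemble into a natural transformation
$$
\tau : \NerfCatLaxNor{\mathdeuxcat{A}} \Longrightarrow \NerfCatLax{\mathdeuxcat{A}}
$$
of functors $\DeuxCatUnOp{\Delta} \to \Cat$), since face and degeneracy maps on both sides are defined by precomposition with morphisms of $\Delta$ and this operation sends normalized lax $2$-functors to normalized lax $2$-functors.

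Next, I would invoke Lemma \ref{InclusionCatAdjointDeux}, which tells us that each component $\tau_{m}$ admits a right adjoint. By Lemma \ref{UnAdjointsW}, a morphism of $\Cat$ that is an adjoint (left or right) belongs to every fundamental localizer of $\Cat$; hence every $\tau_{m}$ is a $\UnLocFond{W}$-equivalence, for any fundamental localizer $\UnLocFond{W}$ of $\Cat$.

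Having a natural transformation between functors $\DeuxCatUnOp{\Delta} \to \Cat$ whose every component is a weak equivalence, one then applies Proposition \ref{IntegrationWParArguments} to the transformation $\tau$, viewed with source $\DeuxCatUnOp{\Delta}$. This yields that the induced morphism
$$
\DeuxIntOp{\Delta} \tau : \DeuxIntOp{\Delta} \NerfCatLaxNor{\mathdeuxcat{A}} \longrightarrow \DeuxIntOp{\Delta} \NerfCatLax{\mathdeuxcat{A}}
$$
is a weak equivalence, which is precisely the inclusion in the statement (up to the canonical identification of the Grothendieck construction on an inclusion of subfunctors with the corresponding full inclusion of Grothendieck constructions).

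There is no real obstacle here, provided Lemma \ref{InclusionCatAdjointDeux} is indeed available as stated; the whole argument is purely formal once that lemma, the behaviour of adjoints under an arbitrary fundamental localizer, and the fibrewise criterion for $\DeuxIntOp{\Delta}$ to preserve weak equivalences have been established. The only minor verification worth checking carefully is the naturality of $\tau$ in $[m] \in \DeuxCatUnOp{\Delta}$, i.e.\ that face and degeneracy operators commute with $\tau_{m}$, but this is immediate from the definitions.
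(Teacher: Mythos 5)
Your proposal is correct and follows essentially the same route as the paper: the componentwise inclusions admit a right adjoint by Lemma \ref{InclusionCatAdjointDeux}, hence are weak equivalences by Lemma \ref{UnAdjointsW}, and Proposition \ref{IntegrationWParArguments} applied to the resulting natural transformation over $\DeuxCatUnOp{\Delta}$ concludes. The paper's own proof is exactly this argument, so there is nothing to add.
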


\begin{proof}
Pour tout entier $m \geq 0$, notons $\sigma_{m}$ l'inclusion naturelle de catégories
$$
\underline{FonLaxNor}([m], \mathdeuxcat{A}) \hookrightarrow \underline{FonLax}([m], \mathdeuxcat{A}).
$$
L'assignation $[m] \mapsto \sigma_{m}$ définit une transformation naturelle $\sigma$ de $\NerfCatLaxNor{\mathdeuxcat{A}} : \DeuxCatUnOp{\Delta} \to \Cat$ vers $\NerfCatLax{\mathdeuxcat{A}} : \DeuxCatUnOp{\Delta} \to \Cat$ dont les composantes admettent chacune un adjoint à droite, en vertu du lemme \ref{InclusionCatAdjointDeux}, et sont donc des équivalences faibles en vertu du lemme \ref{UnAdjointsW}. Pour conclure, il suffit donc d'invoquer la proposition \ref{IntegrationWParArguments}.
\end{proof}

\begin{rem}\label{Nene}
Pour tout \DeuxFoncteurLax{} normalisé $u : \mathdeuxcat{A} \to \mathdeuxcat{B}$, il existe donc un diagramme commutatif
$$
\xymatrix{
\DeuxIntOp{\Delta} \NerfCatLaxNor{\mathdeuxcat{A}}
\ar[rr]^{\DeuxIntOp{\Delta} \NerfCatLaxNor(u)}
\ar[d]
&&
\DeuxIntOp{\Delta} \NerfCatLaxNor{\mathdeuxcat{B}}
\ar[d]
\\
\DeuxIntOp{\Delta} \NerfCatLax{\mathdeuxcat{A}}
\ar[rr]_{\DeuxIntOp{\Delta} \NerfCatLax(u)}
&&
\DeuxIntOp{\Delta} \NerfCatLax{\mathdeuxcat{B}}
}
$$
dont les flèches verticales sont des équivalences faibles pour tout \ClasseUnLocFond{}. (Nous avons souligné dans la remarque \ref{NerfLaxNorFonctoriel} la possibilité d'appliquer le foncteur $\NerfCatLaxNor$ aux \DeuxFoncteursLax{} normalisés.) En particulier, pour tout \ClasseUnLocFond{}, $\DeuxIntOp{\Delta} \NerfCatLaxNor(u)$ est une équivalence faible si et seulement si $\DeuxIntOp{\Delta} \NerfCatLax(u)$ en est une. 
\end{rem}

\begin{prop}\label{InclusionNerfEnsNerfCatW}
Pour toute \deux{}catégorie $\mathdeuxcat{A}$, les inclusions naturelles de catégories
$$
i_{l}^{\underline{l}}\mathdeuxcat{A}\index[not]{illA@$i_{l}^{\underline{l}}\mathdeuxcat{A}$} : \DeuxIntOp{\Delta} \NerfLax{\mathdeuxcat{A}} \hookrightarrow \DeuxIntOp{\Delta} \NerfCatLax{\mathdeuxcat{A}}
$$
et
$$
i_{l,n}^{\underline{l,n}}\mathdeuxcat{A}\index[not]{ilnSouligneA@$i_{l,n}^{\underline{l,n}}\mathdeuxcat{A}$} : \DeuxIntOp{\Delta} \NerfLaxNor{\mathdeuxcat{A}} \hookrightarrow \DeuxIntOp{\Delta} \NerfCatLaxNor{\mathdeuxcat{A}}
$$
sont des équivalences faibles pour tout \ClasseUnLocFond{}. 
\end{prop}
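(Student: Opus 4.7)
Nous proposons l'approche suivante. L'�nonc� comprend deux assertions parall�les ; il suffit par sym�trie de traiter celle portant sur $\NerfLax$, la preuve pour $\NerfLaxNor$ �tant enti�rement analogue en rempla�ant partout les \DeuxFoncteursLax{} par des \DeuxFoncteursLax{} normalis�s et les \DeuxTransformationsLax{} entre iceux par les \DeuxTransformationsLax{} entre iceux. Pr�cisons d'embl�e que la tentation d'appliquer directement la proposition \ref{IntegrationWParArguments} �choue : le morphisme terme � terme
$$
\FonLax([m],\mathdeuxcat{A}) \hookrightarrow \FonLaxCat{[m]}{\mathdeuxcat{A}}
$$
n'est pas une $\UnLocFond{W}$\nobreakdash-�quivalence en g�n�ral, puisque le membre de gauche, en tant que cat�gorie discr�te, a typiquement strictement plus de composantes connexes que le membre de droite, une \DeuxTransformationLax{} relative aux objets non-triviale reliant deux \DeuxFoncteursLax{} distincts.

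La strat�gie est d�s lors de proc�der par un argument bisimplicial. On consid�re le foncteur $Y : \DeuxCatUnOp{(\Delta \times \Delta)} \to \Cat$ d�fini par $Y_{m,p} = N_p \FonLaxCat{[m]}{\mathdeuxcat{A}}$ (nerf � hauteur $p$ de la cat�gorie des \DeuxFoncteursLax{} et \DeuxTransformationsLax{} relatives aux objets) et son sous-foncteur $X$ avec $X_{m,p} = \FonLax([m],\mathdeuxcat{A})$ (constant en $p$). Un r�sultat de type Thomason, analogue � celui reliant $\UnNerf (i_{\Delta} K)$ � $K$ pour un ensemble simplicial $K$, identifie � $\UnLocFondMin$\nobreakdash-�quivalence pr�s le nerf $\UnNerf(\DeuxIntOp{\Delta}\NerfCatLax \mathdeuxcat{A})$ � $\delta_{\Delta}^*(\UnNerf Y)$, et $\UnNerf(\DeuxIntOp{\Delta}\NerfLax\mathdeuxcat{A})$ � $\delta_{\Delta}^*(\UnNerf X) = \NerfLax \mathdeuxcat{A}$. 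Par le th�or�me de Cisinski \ref{CisinskiGrothendieck}, il suffit de v�rifier que l'inclusion est une �quivalence faible pour le localisateur fondamental minimal $\UnLocFondMin$, ce qui nous ram�ne, en vertu de la proposition \ref{LemmeBisimplicialDelta}, � v�rifier que pour \emph{l'une} des deux directions simpliciales l'inclusion est une �quivalence faible.

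Pour la direction ``transformations'' (c'est-�-dire � $p$ fix�), il faut montrer que l'inclusion naturelle de $\FonLax(-,\mathdeuxcat{A})$ (constant en $p$) dans $N_p\FonLaxCat{-}{\mathdeuxcat{A}}$ est une �quivalence faible d'ensembles simpliciaux pour chaque $p \geq 0$. Le cas $p=0$ est l'identit�. Pour $p \geq 1$, on dispose d'une r�traction naturelle ``source'' $r$ envoyant une cha�ne $u_0 \Rightarrow u_1 \Rightarrow \cdots \Rightarrow u_p$ sur la cha�ne constante $u_0 = u_0 = \cdots = u_0$ ; il s'agit de construire une homotopie simpliciale reliant $ir$ � l'identit�. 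La m�thode naturelle utilise les structures de coh�rence des \DeuxTransformationsLax{} relatives aux objets pour interpoler le long du \un{}simplexe entre la cha�ne originale et sa cha�ne constante d'origine, par exemple en pla�ant au ``temps'' $k$ la cha�ne $(u_0 \Rightarrow \cdots \Rightarrow u_0 \Rightarrow u_0 \Rightarrow \cdots \Rightarrow u_p)$ avec concat�nation appropri�e et recours aux identit�s de $u_0$.

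L'obstacle principal est la construction pr�cise de cette homotopie simpliciale compatible avec les op�rateurs faces et d�g�n�rescences dans la direction $m$, ce qui n�cessite un traitement soigneux des coh�rences axiomatiques des \DeuxTransformationsLax{} relatives aux objets (notamment des conditions de cocycle pour la composition des \un{}cellules de $[m]$). Une voie alternative, vraisemblablement plus robuste, consiste � tirer parti de l'adjonction de B�nabou (th�or�me \ref{BIAdjonction}) pour identifier les \DeuxFoncteursLax{} $[m] \to \mathdeuxcat{A}$ et les \DeuxTransformationsLax{} relatives aux objets � des \DeuxFoncteursStricts{} issus de $\TildeLax{[m]}$ munis d'un type sp�cifique de transformation, ram�nant ainsi le calcul d'homotopie � une situation purement strict-cat�gorique o� les outils classiques de type ``lemme d'homotopie'' \ref{LemmeHomotopieTHG} s'appliquent.
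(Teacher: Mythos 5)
Le texte ne d\'emontre pas cette proposition : il renvoie \`a \cite[th\'eor\`eme 6.2]{CCG} et \`a \cite[th\'eor\`eme 8.4.9]{TheseDelHoyo}. Votre esquisse est donc une reconstruction de l'argument de la litt\'erature, et son ossature est de la bonne forme : le constat que la proposition \ref{IntegrationWParArguments} ne s'applique pas terme \`a terme est exact, et la r\'eduction \`a une seule direction simpliciale, celle o\`u $p$ est fix\'e, est bien le geste standard. Deux r\'eserves sur cette ossature. D'une part, l'identification de $\UnNerf(\DeuxIntOp{\Delta} \NerfCatLax{\mathdeuxcat{A}})$ \`a la diagonale du bisimplicial des nerfs niveau par niveau est le th\'eor\`eme de la colimite homotopique de Thomason, qui ne figure pas dans le texte (la proposition \ref{MinouDrouet} ne traite que le cas des pr\'efaisceaux d'ensembles) ; c'est un ingr\'edient ext\'erieur qu'il faudrait citer. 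D'autre part, le d\'etour par le th\'eor\`eme \ref{CisinskiGrothendieck} et la proposition \ref{LemmeBisimplicialDelta} est \'evitable si l'homotopie dont il est question ci-dessous est construite explicitement : en combinant le lemme \ref{Fubini} (sous forme duale), l'asph\'ericit\'e des $\SupUn_{A}$ (proposition \ref{UnSupAspherique}), la proposition \ref{IntegrationWParArguments} et le lemme \ref{Queneau}, on compare directement $\DeuxIntOp{\Delta}\NerfLax{\mathdeuxcat{A}}$ et $\DeuxIntOp{\Delta}\NerfCatLax{\mathdeuxcat{A}}$ aux int\'egrales sur $\Delta\times\Delta$ des bisimpliciaux correspondants et l'on conclut pour un \ClasseUnLocFond{} arbitraire, sans diagonale ni th\'eor\`eme de minimalit\'e.

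Le point crucial, dans l'un ou l'autre habillage, est l'assertion \`a $p$ fix\'e : l'inclusion des cha\^{\i}nes constantes dans $m\mapsto N_{p}\FonLaxCat{[m]}{\mathdeuxcat{A}}$ est une \'equivalence faible. C'est pr\'ecis\'ement le contenu du th\'eor\`eme cit\'e de Carrasco, Cegarra et Garz\'on, et votre esquisse ne l'\'etablit pas. La r\'etraction $r$ sur $u_{0}$ est bien simpliciale, mais l'homotopie que vous d\'ecrivez, qui remplace au ``temps $k$'' les premiers termes de la cha\^{\i}ne par $u_{0}$, est index\'ee par l'indice $p$ de la cha\^{\i}ne, alors qu'une homotopie simpliciale de cette ligne doit \^etre donn\'ee, en degr\'e $m$, par les applications croissantes $[m]\to[1]$, c'est-\`a-dire par les coupures de $[m]$ : pour chaque coupure, il faut produire une nouvelle cha\^{\i}ne de \DeuxFoncteursLax{} sur $[m]$ obtenue en recollant $u_{0}$ sur le segment initial et $u_{j}$ sur le segment final, ce qui exige de choisir les images des fl\`eches traversant la coupure, les \deux{}cellules structurales des composites qui la traversent (\`a partir des composantes des transformations et des cellules structurales des $u_{j}$), puis de v\'erifier les conditions de cocycle, de transformation et la compatibilit\'e aux faces et d\'eg\'en\'erescences ; rien de cela n'est fait, et c'est l\`a que r\'eside tout le travail. (Le lemme \ref{DeuxTransFoncLax}, qui associe \`a une transformation un \DeuxFoncteurLax{} $[1]\times[m]\to\mathdeuxcat{A}$, fournit le recollement cherch\'e par composition avec le foncteur $[m]\to[1]\times[m]$, $i\mapsto(\alpha(i),i)$, mais il reste \`a construire les transformations de la nouvelle cha\^{\i}ne et \`a contr\^oler la simplicialit\'e.) Enfin, le recours \`a l'adjonction de B\'enabou ne contourne pas la difficult\'e : la strictification ne change rien au fait que l'on compare l'ensemble des objets d'une cat\'egorie \`a son nerf, et $\FonStrictCat{[m]}{\mathdeuxcat{A}}$ n'est pas plus discr\`ete que $\FonLaxCat{[m]}{\mathdeuxcat{A}}$. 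En l'\'etat, l'\'etape centrale manque ; il faut soit la mener \`a bien, soit, comme le texte, renvoyer \`a \cite[th\'eor\`eme 6.2]{CCG}.
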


\begin{proof}
Nous renvoyons le lecteur à la démonstration de \cite[théorème 6.2]{CCG} (on pourra aussi se reporter à la démonstration de \cite[théorème 8.4.9]{TheseDelHoyo}).
\end{proof}

\begin{rem}\label{RemPanPan}
Pour tout \DeuxFoncteurLax{} normalisé $u : \mathdeuxcat{A} \to \mathdeuxcat{B}$, il existe un diagramme commutatif
$$
\xymatrix{
\DeuxIntOp{\Delta} \NerfLax{\mathdeuxcat{A}}
\ar[rr]
\ar[d]_{\DeuxIntOp{\Delta} \NerfLax{(u)}}
&&
\DeuxIntOp{\Delta} \NerfCatLax{\mathdeuxcat{A}}
\ar[d]^{\DeuxIntOp{\Delta} \NerfCatLax{(u)}}
\\
\DeuxIntOp{\Delta} \NerfLax{\mathdeuxcat{B}}
\ar[rr]
&&
\DeuxIntOp{\Delta} \NerfCatLax{\mathdeuxcat{B}}
}
$$
dont les flèches horizontales sont des équivalences faibles pour tout \ClasseUnLocFond{}. En particulier, pour tout \ClasseUnLocFond{}, $\DeuxIntOp{\Delta} \NerfLax{(u)}$ est une équivalence faible si et seulement si $\DeuxIntOp{\Delta} \NerfCatLax{(u)}$ en est une.
\end{rem}

\begin{rem}\label{RemPanPanPan}
Pour tout \DeuxFoncteurLax{} normalisé $u : \mathdeuxcat{A} \to \mathdeuxcat{B}$, il existe un diagramme commutatif
$$
\xymatrix{
\DeuxIntOp{\Delta} \NerfLaxNor{\mathdeuxcat{A}}
\ar[rr]
\ar[d]_{\DeuxIntOp{\Delta} \NerfLaxNor{(u)}}
&&
\DeuxIntOp{\Delta} \NerfCatLaxNor{\mathdeuxcat{A}}
\ar[d]^{\DeuxIntOp{\Delta} \NerfCatLaxNor{(u)}}
\\
\DeuxIntOp{\Delta} \NerfLaxNor{\mathdeuxcat{B}}
\ar[rr]
&&
\DeuxIntOp{\Delta} \NerfCatLaxNor{\mathdeuxcat{B}}
}
$$
dont les flèches horizontales sont des équivalences faibles pour tout \ClasseUnLocFond{}. En particulier, pour tout \ClasseUnLocFond{}, $\DeuxIntOp{\Delta} \NerfLaxNor{(u)}$ est une équivalence faible si et seulement si $\DeuxIntOp{\Delta} \NerfCatLaxNor{(u)}$ en est une.
\end{rem}

\begin{prop}\label{InclusionNerfLaxNorNerfLaxW}
Pour toute \deux{}catégorie $\mathdeuxcat{A}$ et tout entier $m \geq 0$, l'inclusion naturelle de catégories
$$
i_{l,n}^{l}\mathdeuxcat{A}\index[not]{ilnlA@$i_{l,n}^{l}\mathdeuxcat{A}$} : \DeuxIntOp{\Delta} \NerfLaxNor{\mathdeuxcat{A}} \hookrightarrow \DeuxIntOp{\Delta} \NerfLax{\mathdeuxcat{A}}
$$
est une équivalence faible pour tout \ClasseUnLocFond{}.
\end{prop}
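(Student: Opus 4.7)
The plan is to deduce this proposition from the three preceding propositions by a simple two-out-of-three argument in the weakly saturated class $\UnLocFond{W}$. The inclusions $\NerfLaxNor\mathdeuxcat{A} \hookrightarrow \NerfCatLaxNor\mathdeuxcat{A}$, $\NerfLax\mathdeuxcat{A} \hookrightarrow \NerfCatLax\mathdeuxcat{A}$, $\NerfLaxNor\mathdeuxcat{A} \hookrightarrow \NerfLax\mathdeuxcat{A}$ and $\NerfCatLaxNor\mathdeuxcat{A} \hookrightarrow \NerfCatLax\mathdeuxcat{A}$ (the last two viewed objectwise in the simplicial direction) fit together into an obviously commutative square of simplicial objects in $\Cat$, and hence, after applying $\DeuxIntOp{\Delta}$, into a commutative square in $\Cat$.

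First I would write down the commutative square
$$
\xymatrix{
\DeuxIntOp{\Delta}\NerfLaxNor{\mathdeuxcat{A}}
\ar[rr]^{i_{l,n}^{l}\mathdeuxcat{A}}
\ar[d]_{i_{l,n}^{\underline{l,n}}\mathdeuxcat{A}}
&&
\DeuxIntOp{\Delta}\NerfLax{\mathdeuxcat{A}}
\ar[d]^{i_{l}^{\underline{l}}\mathdeuxcat{A}}
\\
\DeuxIntOp{\Delta}\NerfCatLaxNor{\mathdeuxcat{A}}
\ar[rr]
&&
\DeuxIntOp{\Delta}\NerfCatLax{\mathdeuxcat{A}}
}
$$
in $\Cat$, obtained by applying $\DeuxIntOp{\Delta}$ to the square of obvious inclusions between the four simplicial-objects-in-$\Cat$ introduced in Definition \ref{DefinitionNerfs}. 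The commutativity at the level of simplicial objects reduces, degreewise, to the tautology that a normalized lax 2-functor $[m]\to\mathdeuxcat{A}$ viewed as a lax one and then included as an object of $\underline{FonLax}([m],\mathdeuxcat{A})$ coincides with its image in $\underline{FonLaxNor}([m],\mathdeuxcat{A})$ included into $\underline{FonLax}([m],\mathdeuxcat{A})$.

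Next I would invoke the results already established: by Proposition \ref{InclusionNerfCatLaxNorNerfCatLaxW} the bottom horizontal arrow is in $\UnLocFond{W}$, and by Proposition \ref{InclusionNerfEnsNerfCatW} both vertical arrows $i_{l,n}^{\underline{l,n}}\mathdeuxcat{A}$ and $i_{l}^{\underline{l}}\mathdeuxcat{A}$ are in $\UnLocFond{W}$. Applying axiom FS2 of the weak saturation of $\UnLocFond{W}$ (i.e.\ two-out-of-three, in the decomposition of the square into two commutative triangles sharing the diagonal $i_{l}^{\underline{l}}\mathdeuxcat{A} \circ i_{l,n}^{l}\mathdeuxcat{A}$), one concludes that $i_{l,n}^{l}\mathdeuxcat{A}$ is itself a $\UnLocFond{W}$-equivalence.

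There is no real obstacle here: the entire content of the statement has been packed into the three preceding propositions, and the only thing that remains is the bookkeeping ensuring that the four morphisms genuinely fit into a commutative square, which is immediate from the definitions. The argument is valid for every fundamental localizer of $\Cat$ precisely because the only property of $\UnLocFond{W}$ used is weak saturation.
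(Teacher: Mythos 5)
Your proposal is correct and is essentially the paper's own argument: the paper deduces the statement immediately from Propositions \ref{InclusionNerfCatLaxNorNerfCatLaxW} and \ref{InclusionNerfEnsNerfCatW} by the same ``2 sur 3'' reasoning, which you have merely spelled out via the commutative square and the weak saturation axiom FS2.
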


\begin{proof}
C'est une conséquence immédiate des propositions \ref{InclusionNerfCatLaxNorNerfCatLaxW} et \ref{InclusionNerfEnsNerfCatW}, par un argument « de $2$ sur $3$ ».
\end{proof}


\begin{rem}\label{Nini}
Pour tout \DeuxFoncteurLax{} normalisé $u : \mathdeuxcat{A} \to \mathdeuxcat{B}$, il existe donc un diagramme commutatif 
$$
\xymatrix{
\Delta / \NerfLaxNor \mathdeuxcat{A}
\ar[rr]^{\Delta / \NerfLaxNor (u)}
\ar[d]
&& 
\Delta / \NerfLaxNor \mathdeuxcat{B}
\ar[d]
\\
\Delta / \NerfLax \mathdeuxcat{A}
\ar[rr]_{\Delta / \NerfLax (u)}
&& 
\Delta / \NerfLax \mathdeuxcat{B}
}
$$
dont les flèches verticales sont des équivalences faibles pour tout \ClasseUnLocFond{}. En particulier, pour tout \ClasseUnLocFond{}, pour tout \DeuxFoncteurLax{} normalisé $u$, le foncteur $\Delta / \NerfLax{(u)}$ est une équivalence faible si et seulement si le foncteur $\Delta / \NerfLaxNor{(u)}$ en est une (ce que l'on savait en fait déjà). 
\end{rem}

\section{Morphismes extrémistes}\label{SectionSup}

\begin{paragr}
Soit $\mathdeuxcat{A}$ une petite \deux{}catégorie. On définit un \DeuxFoncteurLax{}
$$
\SupLaxObjet{\mathdeuxcat{A}}\index[not]{SuplA@$\SupLaxObjet{\mathdeuxcat{A}}$} : \Delta / \NerfLax{\mathdeuxcat{A}} \to \mathdeuxcat{A}
$$
comme suit. Pour tout objet $([m], x)$ de $\Delta / \NerfLax{\mathdeuxcat{A}}$, 
$$
\SupLaxObjet{\mathdeuxcat{A}}([m], x) = x_{m}.
$$

Pour tout morphisme simplicial $\varphi : [m] \to [n]$ définissant un morphisme de $([m], x)$ vers $([n], y)$ dans $\Delta /\NerfLax{\mathdeuxcat{A}}$, 
$$
\SupLaxObjet{\mathdeuxcat{A}} (\varphi : ([m], x) \to ([n], y)) = y_{n,\varphi(m)}.
$$

Pour tout objet $([m], x)$ de $\Delta / \NerfLax{\mathdeuxcat{A}}$, 
$$
\DeuxCellStructId{\SupLaxObjet{\mathdeuxcat{A}}}{([m], x)} = \DeuxCellStructId{(x)}{m} : 1_{x_{m}} \Rightarrow x(1_{m}).
$$
(On rappelle noter $(x)_{m}$ la \deux{}cellule structurale d'unité du \DeuxFoncteurLax{} $x$ associée à l'objet $[m]$ et ne pas la noter $x_{m}$ afin d'éviter toute confusion avec l'objet $x_{m}$ de $\mathdeuxcat{A}$. C'est une \deux{}cellule de $\mathdeuxcat{A}$.)

Pour tout couple de morphismes composables $\varphi : ([m], x) \to ([n], y)$ et $\psi : ([n], y) \to ([p], z)$ de $\Delta / \NerfLax{\mathdeuxcat{A}}$, 
$$
\DeuxCellStructComp{\SupLaxObjet{\mathdeuxcat{A}}}{\psi}{\varphi} = z_{p, \psi(n), \psi\varphi(m)}.
$$

Vérifions que cela définit bien un \DeuxFoncteurLax. La condition de cocycle s'énonce ainsi : si $([m], x)$, $([n], y)$, $([p], z)$ et $([q], t)$ sont des objets de $\Delta / \NerfLax{\mathdeuxcat{A}}$, $\varphi$ un morphisme de $([m], x)$ vers $([n], y)$, $\psi$ un morphisme de $([n], y)$ vers $([p], z)$ et $\xi$ un morphisme de $([p], z)$ vers $([q], t)$, alors le diagramme
$$
\xymatrix{
\SupLaxObjet{\mathdeuxcat{A}} (\xi) \SupLaxObjet{\mathdeuxcat{A}} (\psi) \SupLaxObjet{\mathdeuxcat{A}} (\varphi)
\ar@{=>}[rrrrr]^{{\SupLaxObjet{\mathdeuxcat{A}}}_{\xi, \psi} \CompDeuxZero \SupLaxObjet{\mathdeuxcat{A}} (\varphi)}
\ar@{=>}[dd]_{{\SupLaxObjet{\mathdeuxcat{A}} (\xi) \CompDeuxZero {\SupLaxObjet{\mathdeuxcat{A}}}_{\psi, \varphi}}}
&&&&&
\SupLaxObjet{\mathdeuxcat{A}} (\xi \psi) \SupLaxObjet{\mathdeuxcat{A}} (\varphi)
\ar@{=>}[dd]^{{\SupLaxObjet{\mathdeuxcat{A}}}_{\xi \psi, \varphi}}
\\
\\
\SupLaxObjet{\mathdeuxcat{A}} (\xi) \SupLaxObjet{\mathdeuxcat{A}} (\psi \varphi)
\ar@{=>}[rrrrr]_{{\SupLaxObjet{\mathdeuxcat{A}}}_{\xi, \psi \varphi}}
&&&&&
\SupLaxObjet{\mathdeuxcat{A}} (\xi \psi \varphi)
}
$$
est commutatif. Ce diagramme est par définition
$$
\xymatrix{
t_{q, \xi(p)} z_{p, \psi(n)} y_{n, \varphi (m)}
\ar@{=>}[rrrrr]^{t_{q, \xi(p), \xi \psi (n)} \CompDeuxZero y_{n, \varphi(m)}}
\ar@{=>}[dd]_{t_{q, \xi (p)} \CompDeuxZero z_{p, \psi(n), \psi \varphi (m)}}
&&&&&
t_{q, \xi \psi (n)} y_{n, \varphi(m)}
\ar@{=>}[dd]^{t_{q, \xi \psi (n), \xi \psi \varphi (m)}}
\\
\\
t_{q, \xi(p)} z_{p, \psi \varphi (m)}
\ar@{=>}[rrrrr]_{t_{q, \xi(p), \xi \psi \varphi (m)}}
&&&&&
t_{q, \xi \psi \varphi (m)}
&,
}
$$
ce qui se récrit
$$
\xymatrix{
t_{q, \xi(p)} t_{\xi (p), \xi \psi (n)} t_{\xi \psi (n), \xi \psi \varphi (m)}
\ar@{=>}[rrrrr]^{t_{q, \xi(p), \xi \psi (n)} \CompDeuxZero t_{\xi \psi (n), \xi \psi \varphi (m)}}
\ar@{=>}[dd]_{t_{q, \xi (p)} \CompDeuxZero t_{\xi(p), \xi \psi (n), \xi \psi \varphi (m)}}
&&&&&
t_{q, \xi \psi (n)} t_{\xi \psi (n), \xi \psi \varphi (m)}
\ar@{=>}[dd]^{t_{q, \xi \psi (n), \xi \psi \varphi (m)}}
\\
\\
t_{q, \xi(p)} t_{\xi(p), \xi \psi \varphi (m)}
\ar@{=>}[rrrrr]_{t_{q, \xi(p), \xi \psi \varphi (m)}}
&&&&&
t_{q, \xi \psi \varphi (m)}
&.
}
$$
Il résulte de la condition de cocycle, vérifiée par le \DeuxFoncteurLax{} $t$, que ce diagramme est bien commutatif. 

Considérons un morphisme $\varphi$ de $([m], x)$ vers $([n], y)$, $1_{[m]}$ vu comme morphisme de $([m],x)$ vers $([m],x)$ (dont c'est l'identité) et $1_{[n]}$ vu comme morphisme de $([n],y)$ vers $([n],y)$ (dont c'est l'identité). Vérifions l'égalité
$$
{\SupLaxObjet{\mathdeuxcat{A}}}_{\varphi, 1_{[m]}} (\SupLaxObjet{\mathdeuxcat{A}} (\varphi) \CompDeuxZero {\SupLaxObjet{\mathdeuxcat{A}}}_{([m], x)}) = 1_{\SupLaxObjet{\mathdeuxcat{A}} (\varphi)}.
$$
En vertu de l'hypothèse $x = y \varphi$, si l'on note $(y)_{\varphi(m)}$ la \deux{}cellule structurale d'unité $1_{y_{\varphi(m)}} \Rightarrow y(1_{\varphi(m)})$ de $y$, cette égalité devient
$$
y_{n, \varphi(m), \varphi(m)} (y_{n, \varphi(m)} \CompDeuxZero (y)_{\varphi(m)}) = 1_{y_{n, \varphi(m)}}.
$$
Cette dernière égalité résulte du fait que $y$ est un \DeuxFoncteurLax{}.

Vérifions l'égalité
$$
{\SupLaxObjet{\mathdeuxcat{A}}}_{(1_{[n]}, \varphi)} ({\SupLaxObjet{\mathdeuxcat{A}}}_{([n], y)} \CompDeuxZero \SupLaxObjet{\mathdeuxcat{A}} (\varphi)) = 1_{\SupLaxObjet{\mathdeuxcat{A}} (\varphi)}.
$$
En notant $(y)_{n}$ la \deux{}cellule structurale $1_{y_{n}} \Rightarrow y(1_{n})$ de $y$, cette égalité s'écrit
$$
y_{n, n, \varphi(m)} ((y)_{n} \CompDeuxZero y_{n, \varphi(m)}) = 1_{y_{n, \varphi(m)}}.
$$
Cette dernière égalité résulte du fait que $y$ est un \DeuxFoncteurLax{}. 

Les autres conditions de cohérence portant sur $\SupLaxObjet{\mathdeuxcat{A}}$ sont automatiquement vérifiées du fait que $\Delta / \NerfLax{\mathdeuxcat{A}}$ est une catégorie. Nous avons donc vérifié que $\SupLaxObjet{\mathdeuxcat{A}}$ est un \DeuxFoncteurLax{}.
\end{paragr}

\begin{paragr}
Soit $\mathdeuxcat{A}$ une petite \deux{}catégorie. On définit un \DeuxFoncteurLax{} normalisé
$$
\SupLaxNorObjet{\mathdeuxcat{A}}\index[not]{SuplnA@$\SupLaxNorObjet{\mathdeuxcat{A}}$} : \Delta / \NerfLaxNor{\mathdeuxcat{A}} \to \mathdeuxcat{A}
$$
par la condition de commutativité du diagramme
$$
\xymatrix{
\DeuxIntOp{\Delta}\NerfLax{\mathdeuxcat{A}}
\ar[rd]_{\SupLaxObjet{\mathdeuxcat{A}}}
&&
\DeuxIntOp{\Delta}\NerfLaxNor{\mathdeuxcat{A}}
\ar[ll]_{i_{l,n}^{l}\mathdeuxcat{A}}
\ar[ld]^{\SupLaxNorObjet{\mathdeuxcat{A}}}
\\
&
\mathdeuxcat{A}
&.
}
$$

Plus explicitement, pour tout objet $([m], x)$ de $\Delta / \NerfLaxNor{\mathdeuxcat{A}}$,  
$$
\SupLaxNorObjet{\mathdeuxcat{A}}([m], x) = x_{m}.
$$

Pour tout morphisme $\varphi : ([m], x) \to ([n], y)$ de $\Delta /\NerfLaxNor{\mathdeuxcat{A}}$, 
$$
\SupLaxNorObjet{\mathdeuxcat{A}} (\varphi) = y_{n,\varphi(m)}.
$$

Pour tout objet $([m], x)$ de $\Delta / \NerfLaxNor{\mathdeuxcat{A}}$, 
$$
\DeuxCellStructId{\SupLaxNorObjet{\mathdeuxcat{A}}}{([m], x)} = 1_{1_{x_{m}}}.
$$

Pour tout couple de morphismes composables $\varphi : ([m], x) \to ([n], y)$ et $\psi : ([n], y) \to ([p], z)$ de $\Delta / \NerfLaxNor{\mathdeuxcat{A}}$, 
$$
\DeuxCellStructComp{\SupLaxNorObjet{\mathdeuxcat{A}}}{\psi}{\varphi} = z_{p, \psi(n), \psi(\varphi(m))}.
$$
\end{paragr}

\begin{paragr}
Soit $\mathdeuxcat{A}$ une petite \deux{}catégorie. On définit un \DeuxFoncteurLax{} normalisé
$$
\SupCatLaxNorObjet{\mathdeuxcat{A}}\index[not]{SuplnSouligneA@$\SupCatLaxNorObjet{\mathdeuxcat{A}}$} : \DeuxIntOp{\Delta} \NerfCatLaxNor{\mathdeuxcat{A}} \to \mathdeuxcat{A}
$$
comme suit. 

Pour tout objet $([m], x)$ de $\DeuxIntOp{\Delta} \NerfCatLaxNor{\mathdeuxcat{A}}$, 
$$
\SupCatLaxNorObjet{\mathdeuxcat{A}}([m], x) = x_{m}.
$$

Pour tout morphisme $(\varphi, \alpha) : ([m], x) \to ([n], y)$ de $\DeuxIntOp{\Delta} \NerfCatLaxNor{\mathdeuxcat{A}}$, 
$$
\SupCatLaxNorObjet{\mathdeuxcat{A}} (\varphi, \alpha) = y_{n,\varphi(m)}.
$$

Pour tout objet $([m], x)$ de $\DeuxIntOp{\Delta} \NerfCatLaxNor{\mathdeuxcat{A}}$
$$
\DeuxCellStructId{\SupCatLaxNorObjet{\mathdeuxcat{A}}}{([m],x)} = \DeuxCellStructId{(x)}{m} = 1_{1_{x_{m}}}. 
$$

Pour tout couple de morphismes composables $(\varphi, \alpha) : ([m], x) \to ([n], y)$ et $(\psi, \beta) : ([n], y) \to ([p], z)$ de $\DeuxIntOp{\Delta} \NerfCatLaxNor{\mathdeuxcat{A}}$, 
$$
\DeuxCellStructComp{\SupCatLaxNorObjet{\mathdeuxcat{A}}}{(\psi, \beta)}{(\varphi, \alpha)} = z_{p, \psi(n), \psi\varphi(m)} \CompDeuxUn (z_{p, \psi(n)} \CompDeuxZero \beta_{n, \varphi(m)}).
$$

Vérifions que cela définit bien un \DeuxFoncteurLax{}. 

Soit 
$$
\xymatrix{
([m], x)
\ar[rr]^{(\varphi, \alpha)}
&&
([n], y)
\ar[rr]^{(\psi, \beta)}
&&
([p], z)
\ar[rr]^{(\xi, \gamma)}
&&
([q], t)
}
$$
dans $\DeuxIntOp{\Delta} \NerfCatLaxNor{\mathdeuxcat{A}}$. La condition de cocycle résulte des égalités suivantes : 
$$
\begin{aligned}
&t_{q, \xi \psi (n), \xi \psi \varphi (m)} \CompDeuxUn (t_{q, \xi \psi (n)} \CompDeuxZero (\gamma_{\psi (n), \psi \varphi (m)} \CompDeuxUn \beta_{n, \varphi(m)})) \CompDeuxUn (t_{q, \xi (p), \xi \psi (n)} \CompDeuxZero y_{n, \varphi (m)}) \CompDeuxUn (t_{q, \xi (p)} \CompDeuxZero \gamma_{p, \psi (n)} \CompDeuxZero y_{n, \varphi (m)}) 
\\
&= t_{q, \xi \psi (n), \xi \psi \varphi (m)} \CompDeuxUn (t_{q, \xi (p), \xi \psi (n)} \CompDeuxZero t_{\xi \psi (n), \xi \psi \varphi (m)}) \CompDeuxUn (t_{q, \xi (p)} \CompDeuxZero \gamma_{p, \psi (n)} \CompDeuxZero \gamma_{\psi (n), \psi \varphi (m)}) \CompDeuxUn (t_{q, \xi (p)} z_{p, \psi (n)} \CompDeuxZero \beta_{n, \varphi (m)})
\\
&= t_{q, \xi (p), \xi \psi \varphi (m)} \CompDeuxUn (t_{q, \xi (p)} \CompDeuxZero t_{\xi (p), \xi \psi (n), \xi \psi \varphi (m)}) \CompDeuxUn (t_{q, \xi (p)} \CompDeuxZero \gamma_{p, \psi (n)} \CompDeuxZero \gamma_{\psi (n), \psi \varphi (m)}) \CompDeuxUn (t_{q, \xi (p)} z_{p, \psi (n)} \CompDeuxZero \beta_{n, \varphi (m)})
\\
&= t_{q, \xi (p), \xi \psi \varphi (m)} \CompDeuxUn (t_{q, \xi (p)} \CompDeuxZero \gamma_{p, \psi \varphi (m)}) \CompDeuxUn (t_{q, \xi (p)} \CompDeuxZero z_{p, \psi (n), \psi \varphi (m)}) \CompDeuxUn (t_{q, \xi (p)} z_{p, \psi (n)} \CompDeuxZero \beta_{n, \varphi (m)}).
\end{aligned}
$$

Les autres conditions de cohérence sont automatiquement vérifiées puisque $\DeuxIntOp{\Delta} \NerfCatLaxNor{\mathdeuxcat{A}}$ est une catégorie et que les \DeuxFoncteursLax{} considérés sont normalisés (voir la remarque \ref{RemDeuxTransFoncNor}). On a donc bien défini un \DeuxFoncteurLax{} normalisé
$$
\SupCatLaxNorObjet{\mathdeuxcat{A}} : \DeuxIntOp{\Delta} \NerfCatLaxNor{\mathdeuxcat{A}} \to \mathdeuxcat{A}.
$$
\end{paragr}

\begin{paragr}
Soit $\mathdeuxcat{A}$ une petite \deux{}catégorie. On définit un \DeuxFoncteurLax{} normalisé
$$
\SupHomObjet{\mathdeuxcat{A}}\index[not]{SupSouligneA@$\SupHomObjet{\mathdeuxcat{A}}$} : \DeuxIntOp{\Delta} \NerfHom{\mathdeuxcat{A}} \to \mathdeuxcat{A}
$$
par la condition de commutativité du diagramme
$$
\xymatrix{
\DeuxIntOp{\Delta}\NerfCatLaxNor{\mathdeuxcat{A}}
\ar[dr]_{\SupCatLaxNorObjet{\mathdeuxcat{A}}}
&&
\DeuxIntOp{\Delta}\NerfHom{\mathdeuxcat{A}}
\ar[ll]_{i_{hom}^{\underline{l,n}}\mathdeuxcat{A}}
\ar[ld]^{\SupHomObjet{\mathdeuxcat{A}}}
\\
&
\mathdeuxcat{A}
&.
}
$$

Plus explicitement, pour tout objet $([m], x)$ de $\DeuxIntOp{\Delta} \NerfHom{\mathdeuxcat{A}}$, 
$$
\SupHomObjet{\mathdeuxcat{A}} ([m], x) = x_{m}.
$$

Pour tout morphisme $(\varphi, \alpha) : ([m], x) \to ([n], y))$ de $\DeuxIntOp{\Delta} \NerfHom{\mathdeuxcat{A}}$, 
$$
\SupHomObjet{\mathdeuxcat{A}} (\varphi, \alpha) = y_{n, \varphi(m)}.
$$

Pour tout objet $([m], x)$ de $\DeuxIntOp{\Delta} \NerfHom{\mathdeuxcat{A}}$, 
$$
\DeuxCellStructId{\SupHomObjet{\mathdeuxcat{A}}}{([m], x)} = 1_{1_{x_{m}}}. 
$$

Pour tout couple de morphismes composables $(\varphi, \alpha) : ([m], x) \to ([n], y)$ et $(\psi, \beta) : ([n], y) \to ([p], z)$ de $\DeuxIntOp{\Delta} \NerfHom{\mathdeuxcat{A}}$, 
$$
\DeuxCellStructComp{\SupHomObjet{\mathdeuxcat{A}}}{(\psi, \beta)}{(\varphi, \alpha)} = z_{p, \psi(n)} \CompDeuxZero \beta_{n, \varphi(m)}.
$$
\end{paragr}

\begin{lemme}\label{DiagrammeSups}
Pour toute petite \deux{}catégorie $\mathdeuxcat{A}$, le diagramme
$$
\xymatrix{
\DeuxIntOp{\Delta}\NerfLax{\mathdeuxcat{A}}
\ar@/_1.5pc/[rrrdd]_{\SupLaxObjet{\mathdeuxcat{A}}}
&&
\DeuxIntOp{\Delta}\NerfLaxNor{\mathdeuxcat{A}}
\ar[ll]_{i_{l,n}^{l}\mathdeuxcat{A}}
\ar@/_0.5pc/[rdd]_{\SupLaxNorObjet{\mathdeuxcat{A}}}
\ar[rr]^{i_{l,n}^{\underline{l,n}}\mathdeuxcat{A}}
&&
\DeuxIntOp{\Delta}\NerfCatLaxNor{\mathdeuxcat{A}}
\ar@/^0.5pc/[ldd]^{\SupCatLaxNorObjet{\mathdeuxcat{A}}}
&&
\DeuxIntOp{\Delta}\NerfHom{\mathdeuxcat{A}}
\ar[ll]_{i_{hom}^{\underline{l,n}}\mathdeuxcat{A}}
\ar@/^1.5pc/[llldd]^{\SupHomObjet{\mathdeuxcat{A}}}
\\
\\
&&&
\mathdeuxcat{A}
}
$$
est commutatif.
\end{lemme}

\begin{proof}
En vertu des définitions, il suffit de vérifier l'égalité 
$
\SupLaxNorObjet{\mathdeuxcat{A}} = \SupCatLaxNorObjet{\mathdeuxcat{A}} \phantom{a} i_{l,n}^{\underline{l,n}}\mathdeuxcat{A}
$, ce qui ne pose aucune difficulté. 
\end{proof}

%

\section{De $1$ à $2$}\label{SectionUnDeux}
À toute classe $\UnLocFond{S}$ de morphismes de $\Cat$, on peut associer une classe $\NerfLaxNor^{-1} (i_{\Delta}^{-1} (\UnLocFond{S}))$ de morphismes de $\DeuxCat$. On étudie dans cette section les premières propriétés des classes de \DeuxFoncteursStricts{} obtenues par un tel procédé à partir d'un \ClasseUnLocFond{}, avant d'en entreprendre une étude plus axiomatique dans le chapitre suivant. 
\\

\emph{On suppose fixé un localisateur fondamental $\UnLocFond{W}$ de $\Cat$.}  

\begin{lemme}\label{Blabla}
Pour tout \DeuxFoncteurStrict{} $u$, les conditions suivantes sont équivalentes :
\begin{itemize}
\item[(i)] 
Le foncteur $\Delta / \NerfLaxNor{(u)} = i_{\Delta} (\NerfLaxNor{(u)}) = \DeuxIntOp{\Delta} \NerfLaxNor{(u)}$ est une équivalence faible.
\item[(ii)]
Le foncteur $\Delta / \NerfLax{(u)} = i_{\Delta} (\NerfLax{(u)}) = \DeuxIntOp{\Delta} \NerfLax{(u)}$ est une équivalence faible.
\item[(iii)]
Le foncteur $\DeuxIntOp{\Delta} \NerfCatLaxNor{(u)}$ est une équivalence faible.
\item[(iv)]
Le foncteur $\DeuxIntOp{\Delta} \NerfCatLax{(u)}$ est une équivalence faible.
\item[(v)] 
Le foncteur $\DeuxIntOp{\Delta} \NerfHom{(u)}$ est une équivalence faible. 
\end{itemize}
\end{lemme}

\begin{proof}
Il suffit de mettre bout à bout les remarques \ref{Nana}, \ref{RemPanPan}, \ref{RemPanPanPan} et \ref{Nini}.
\end{proof}

\begin{df}\label{DefDeuxW}
On notera $\DeuxLocFond{W}$ la classe des \DeuxFoncteursStricts{} vérifiant les cinq conditions équivalentes de l'énoncé du lemme \ref{Blabla}. On appellera les éléments de $\DeuxLocFond{W}$ des $\DeuxLocFond{W}$\nobreakdash-\emph{équivalences faibles}, ou plus simplement des \emph{équivalences faibles}\index{equivalence faible (\DeuxFoncteurStrict{}, pour un localisateur fondamental de $\Cat$)@équivalence faible (\DeuxFoncteurStrict{}, pour un localisateur fondamental de $\Cat$)}. 
\end{df}

\begin{df}
On notera $\DeuxLocFondLaxInduit{W}$\index[not]{Wlax@$\DeuxLocFondLaxInduit{W}$} la classe des \DeuxFoncteursLax{} dont l'image par le foncteur $i_{\Delta} \NerfLax$ est dans $\UnLocFond{W}$. On appellera les éléments de $\DeuxLocFondLaxInduit{W}$ des $\DeuxLocFond{W}$\nobreakdash-\emph{équivalences faibles lax}, ou plus simplement des \emph{équivalences faibles lax}, voire encore plus simplement des \emph{équivalences faibles}.  
\end{df}

\begin{rem}
En vertu du lemme \ref{Blabla}, un \DeuxFoncteurStrict{} est dans $\DeuxLocFond{W}$ si et seulement s'il est dans $\DeuxLocFondLaxInduit{W}$. 
\end{rem} 

\begin{lemme}\label{DeuxLocFondSat}
Les classes $\DeuxLocFond{W}$ et $\DeuxLocFondLaxInduit{W}$ sont faiblement saturées.
\end{lemme}

\begin{proof}
Cela résulte immédiatement, par fonctorialité, de la saturation faible de la classe $\UnLocFond{W}$.
\end{proof}

\begin{lemme}\label{Zizi}
Pour toute petite \deux{}catégorie $\mathdeuxcat{A}$, la projection canonique $[1] \times \mathdeuxcat{A} \to \mathdeuxcat{A}$ est une équivalence faible.
\end{lemme}

\begin{proof}
La commutativité du nerf lax $\NerfLax$ et du produit permet d'identifier le morphisme d'ensembles simpliciaux 
$$
\NerfLax{([1] \times \mathdeuxcat{A})} \to \NerfLax{\mathdeuxcat{A}}
$$
à la projection simpliciale
$$
\Delta_{1} \times \NerfLax{\mathdeuxcat{A}} \to \NerfLax{\mathdeuxcat{A}},
$$
dont l'image par le foncteur $i_{\Delta}$ est bien dans $\UnLocFond{W}$ en vertu de \cite[lemme 2.2.6]{LFM}.
\end{proof}

\begin{df}
Deux morphismes de $\DeuxCatLax$ seront dits \emph{homotopes} s'ils sont $([1],0,1)$\nobreakdash-ho\-mo\-topes\footnote{Voir le paragraphe \ref{DefHomotopieBis} pour la définition.}. 
\end{df}

\begin{lemme}\label{HomotopieW}
Si deux morphismes de $\DeuxCatLax$ sont homotopes, alors l'un est une équivalence faible si et seulement si l'autre en est une.
\end{lemme}

\begin{proof}
En vertu du lemme \ref{Zizi}, c'est une conséquence du lemme \ref{LemmeHomotopieTHG}. 
\end{proof}

\begin{df}
On dira qu'une petite \deux{}catégorie $\mathdeuxcat{A}$ est $\DeuxLocFond{W}$\nobreakdash-\emph{asphérique}, ou plus simplement \emph{asphérique}\index{asphérique (petite \deux{}catégorie, pour un localisateur fondamental de $\Cat$)}, si le morphisme canonique $\mathdeuxcat{A} \to \DeuxCatPonct$ est une équivalence faible.
\end{df}

\begin{exemple}[Bourbaki]\label{PointAsph}
La \deux{}catégorie ponctuelle $\DeuxCatPonct$ est asphérique.
\end{exemple}

\begin{lemme}\label{Rutebeuf}
Une petite \deux{}catégorie $\mathdeuxcat{A}$ est $\DeuxLocFond{W}$\nobreakdash-asphérique si et seulement si la catégorie $\Delta/\NerfLaxNor\mathdeuxcat{A}$ est $\UnLocFond{W}$\nobreakdash-asphérique (c'est-à-dire si et seulement si le foncteur $\Delta/\NerfLaxNor\mathdeuxcat{A} \to \UnCatPonct$ est dans $\UnLocFond{W}$). 
\end{lemme}

\begin{proof}
C'est immédiat, la catégorie $\Delta$ étant $\UnLocFond{W}$\nobreakdash-asphérique (elle admet un objet final) et la classe $\UnLocFond{W}$ vérifiant la propriété de $2$ sur $3$. 
\end{proof}

\begin{lemme}\label{OFAspherique1}
Une petite \deux{}catégorie admettant un objet admettant un objet final (\emph{resp.} admettant un objet admettant un objet initial, \emph{resp.} op-admettant un objet admettant un objet final, \emph{resp.} op-admettant un objet admettant un objet initial) est $\DeuxLocFond{W}$\nobreakdash-asphérique.
\end{lemme}

\begin{proof}
Soit $\mathdeuxcat{A}$ une petite \deux{}catégorie admettant un objet $z$ tel que, pour tout objet $a$ de $\mathdeuxcat{A}$, la catégorie $\CatHom{\mathdeuxcat{A}}{a}{z}$ admette un objet final. Pour montrer le résultat désiré, il suffit, en vertu du lemme \ref{Rutebeuf}, de vérifier que la catégorie $\Delta/\NerfLaxNor \mathdeuxcat{A}$ est asphérique. 

Pour tout objet $a$ de $\mathdeuxcat{A}$, on notera $p_{a} : a \to z$ l'objet final de $\CatHom{\mathdeuxcat{A}}{a}{z}$ et, pour toute \un{}cellule $f : a \to z$, on notera $\varphi_{f} : f \Rightarrow p_{a}$ l'unique \deux{}cellule de $f$ vers $p_{a}$ dans $\mathdeuxcat{A}$. Pour tout \DeuxFoncteurLax{} normalisé $x : [m] \to \mathdeuxcat{A}$, on définit un \DeuxFoncteurLax{} normalisé $D(x) : [m+1] \to \mathdeuxcat{A}$ comme suit. Pour tout objet $i$ de $[m]$, $D(x)_{i} = x_{i}$ ; de plus, $D(x)_{m+1} = z$. Pour tout couple d'entiers $0 \leq i \leq j \leq m$, $D(x)_{j,i} = x_{j,i}$ ; de plus, $D(x)_{m+1, i} = p_{x_{i}}$ pour tout objet $i$ de $[m]$. Pour tout triplet d'entiers $0 \leq i \leq j \leq k \leq m$, $D(x)_{k,j,i} = x_{k,j,i}$ ; de plus, $D(x)_{m+1, j, i} = \varphi_{p_{x_{j}} x_{j,i}}$ pour tout couple d'entiers $0 \leq i \leq j \leq m$. Pour tout morphisme simplicial $\psi : [m] \to [n]$, on définit un morphisme simplicial $D(\psi) : [m+1] \to [n+1]$ par $D(\psi)(i) = \psi(i)$ si $i \leq m$ et $D(\psi)(m+1) = n+1$. Cela permet de définir un endofoncteur
$$
\begin{aligned}
D : \Delta/\NerfLaxNor{\mathdeuxcat{A}} &\to \Delta/\NerfLaxNor{\mathdeuxcat{A}} 
\\
([m], x : [m] \to \mathdeuxcat{A}) &\mapsto ([m+1], D(x) : [m+1] \to \mathdeuxcat{A})
\\
\psi &\mapsto D(\psi).
\end{aligned}
$$
Considérons en outre l'endofoncteur constant
$$
\begin{aligned}
Z : \Delta/\NerfLaxNor{\mathdeuxcat{A}} &\to \Delta/\NerfLaxNor{\mathdeuxcat{A}} 
\\
([m], x : [m] \to \mathdeuxcat{A}) &\mapsto ([0], z)
\\
\psi &\mapsto 1_{[0]}.
\end{aligned}
$$ 
Pour tout objet $([m], x)$ de $\Delta/\NerfLaxNor{\mathdeuxcat{A}}$, posons 
$$
\begin{aligned}
\iota_{([m], x)} : [m] &\to [m+1]
\\
i &\mapsto i
\end{aligned}
$$
et 
$$
\begin{aligned}
\omega_{([m], x)} : [0] &\to [m+1]
\\
0 &\mapsto m+1.
\end{aligned}
$$
Cela définit des morphismes de foncteurs $\iota : 1_{\Delta/\NerfLaxNor{\mathdeuxcat{A}}} \Rightarrow D$ et $\omega : Z \Rightarrow D$. Il en résulte que $Z$ est une équivalence faible. Comme c'est un endofoncteur constant de $\Delta/\NerfLaxNor{\mathdeuxcat{A}}$, cette catégorie est asphérique (en vertu du lemme \ref{EndoConstantW}). Les trois autres cas s'en déduisent par dualité ou se démontrent de façon analogue.
\end{proof}

\begin{rem}\label{OFContractile}
Voici une autre démonstration du lemme \ref{OFAspherique1}. En conservant les notations utilisées ci-dessus, l'on définit un \deux{}endofoncteur constant $Z$ de $\mathdeuxcat{A}$ par 
$$
\begin{aligned}
Z : \mathdeuxcat{A} &\to \mathdeuxcat{A}
\\
a &\mapsto z
\\
f &\mapsto 1_{z}
\\
\alpha &\mapsto 1_{1_{z}}.
\end{aligned}
$$
On pose alors $\sigma_{a} = p_{a}$ pour tout objet $a$ de $\mathdeuxcat{A}$ et $\sigma_{f} = \varphi_{p_{a'} f}$ pour toute \un{}cellule $f : a \to a'$ de $\mathdeuxcat{A}$. Cela définit une \DeuxTransformationLax{} $\sigma : 1_{\mathdeuxcat{A}} \Rightarrow Z$. En vertu du lemme \ref{DeuxTransFoncLax}, il existe une homotopie élémentaire de $1_{\mathdeuxcat{A}}$ vers $Z$. Le lemme \ref{HomotopieW} permet donc d'affirmer que $Z$ est une équivalence faible. On conclut grâce au lemme \ref{EquivalenceViaPoint}. 
\end{rem}

\begin{paragr}\label{TrancheSimplexe}
Suivant \cite{Cegarra}, nous définissons maintenant la notion de \deux{}catégorie tranche au-dessus d'un simplexe\index{tranche au-dessus d'un simplexe (2-catégorie)} (les définitions duales permettant de remplacer « au-dessus » par « au-dessous »), généralisant celle de \deux{}catégorie au-dessus (ou, dualement, au-dessous) d'un objet, correspondant au cas des $0$\nobreakdash-simplexes. Nous n'en présentons pas la version la plus générale. Les calculs ne sont pas détaillés, puisque l'on se borne à reprendre les résultats de \cite{Cegarra}. Soit donc $u : \mathdeuxcat{A} \to \mathdeuxcat{B}$ un morphisme de $\DeuxCat$ et ${b}$ un $m$\nobreakdash-simplexe de $\NerfLaxNor{\mathdeuxcat{B}}$. On définit une \deux{}catégorie $\TrancheCoLax{\mathdeuxcat{A}}{u}{{b}}$\index[not]{AcuB@$\TrancheCoLax{\mathdeuxcat{A}}{u}{{b}}$ ($b$ simplexe)} comme suit. Ses objets sont les couples $(a, {x})$ avec $a$ un objet de $\mathdeuxcat{A}$ et ${x}$ un $m+1$\nobreakdash-simplexe de $\NerfLaxNor{\mathdeuxcat{B}}$ tel que ${x}_{0} = u(a)$ et $d_{0} {x} = {b}$. (L'expression $d_{0} {x}$ désigne la « zéroième » face de ${x}$.) Une \un{}cellule de $(a, {x})$ vers $(a', {x'})$ est un couple $(f, {y})$ avec $f : a \to a'$ une \un{}cellule de $\mathdeuxcat{A}$ et ${y}$ un $m+2$\nobreakdash-simplexe de $\mathdeuxcat{B}$ tel que $y_{1,0} = u(f)$, $d_{0} {y} = {x'}$ et $d_{1} {y} = {x}$. Étant donné les mêmes objets $(a, {x})$ et $(a', {x'})$ ainsi que deux \un{}cellules $(f, {y})$ et $(f', {y'})$ du premier vers le second, les \deux{}cellules de la première vers la seconde sont les \deux{}cellules $\alpha : f \Rightarrow f'$ dans $\mathdeuxcat{A}$ telles que, pour tout $0 \leq i \leq m$, 
$$
y'_{i+2, 1, 0} (y_{i+2, 1} \CompDeuxZero u(\alpha)) = y_{i+2, 1, 0}.
$$
Les diverses unités et compositions sont définies de façon « évidente ». 

On construit maintenant un couple de \DeuxFoncteursStricts{} qui sont des équivalences faibles entre $\TrancheCoLax{\mathdeuxcat{A}}{u}{{b}}$ et $\TrancheCoLax{\mathdeuxcat{A}}{u}{b_{0}}$, suivant toujours en cela \cite{Cegarra}. Notons $R$ le \DeuxFoncteurStrict{} défini par 
$$
\begin{aligned}
\TrancheCoLax{\mathdeuxcat{A}}{u}{{b}} &\to \TrancheCoLax{\mathdeuxcat{A}}{u}{b_{0}}
\\
(a, {x}) &\mapsto (a, x_{1,0})
\\
(f, {y}) &\mapsto (f, y_{2,1,0})
\\
\gamma &\mapsto \gamma.
\end{aligned}
$$
Il admet une section $I : \TrancheCoLax{\mathdeuxcat{A}}{u}{b_{0}} \to \TrancheCoLax{\mathdeuxcat{A}}{u}{{b}}$ définie comme suit. À tout objet $(a, p)$ de $\TrancheCoLax{\mathdeuxcat{A}}{u}{b_{0}}$, $I$ associe le couple $(a, {x})$ défini par $d_{0}{x} = {b}$, ${x}_{0} = u(a)$, ${x}_{1,0} = p$, ${x}_{i+1, 0} = {b}_{i, 0} p$ et ${x}_{j+1, i+1, 0} = {b}_{j, i, 0} p$. À toute \un{}cellule $(f, \alpha)$ de $(a, p)$ vers $(a', p')$, $I$ associe le couple $(f, {y})$ défini par ${y}_{i+2, 1, 0} = {b}_{i, 0} \CompDeuxZero \alpha$. Enfin, pour toute \deux{}cellule $\gamma$ de $\TrancheCoLax{\mathdeuxcat{A}}{u}{b_{0}}$, on pose $I(\gamma) = \gamma$. L'égalité $RI = 1_{\TrancheCoLax{\mathdeuxcat{A}}{u}{b_{0}}}$ est alors évidente. De plus, on construit une \DeuxTransformationStricte{} $\sigma : 1_{\TrancheCoLax{\mathdeuxcat{A}}{u}{{b}}} \Rightarrow IR$ en posant $\sigma_{(a, x)} = (1_{a}, \tilde{x})$ avec $\tilde{x}_{i+2, 1, 0} = x_{i+1, 1, 0}$. En vertu des lemmes \ref{DeuxTransFoncLax} et \ref{HomotopieW}, $IR$ est donc une équivalence faible. On en déduit que $I$ et $R$ sont des équivalences faibles.
\end{paragr}

\begin{paragr}\label{Zigouigoui}
Soient maintenant 
$$
\xymatrix{
\mathdeuxcat{A} 
\ar[rr]^{u}
\ar[dr]_{w}
&&\mathdeuxcat{B}
\ar[dl]^{v}
\\
&\mathdeuxcat{C}
}
$$
un triangle commutatif de \DeuxFoncteursStricts{} et ${c}$ un simplexe de $\NerfLaxNor{\mathdeuxcat{C}}$. Ces données permettent de définir un \DeuxFoncteurStrict{}
$$
\DeuxFoncTrancheCoLax{u}{{c}}\index[not]{ucB@$\DeuxFoncTrancheCoLax{u}{{c}}$ ($c$ simplexe)} : \TrancheCoLax{\mathdeuxcat{A}}{w}{{c}} \to \TrancheCoLax{\mathdeuxcat{B}}{v}{{c}}
$$
de façon « évidente ».
On vérifie alors la commutativité du diagramme
$$
\xymatrix{
\TrancheCoLax{\mathdeuxcat{A}}{w}{{c}}
\ar[r]^{\DeuxFoncTrancheCoLax{u}{{c}}}
\ar[d]
&\TrancheCoLax{\mathdeuxcat{B}}{v}{{c}}
\ar[d]
\\
\TrancheCoLax{\mathdeuxcat{A}}{w}{c_{0}}
\ar[r]_{\DeuxFoncTrancheCoLax{u}{c_{0}}}
&\TrancheCoLax{\mathdeuxcat{B}}{v}{c_{0}}
&,
}
$$
dans lequel les flèches verticales désignent les \DeuxFoncteursStricts{} canoniques précédemment notés $R$ et sont donc en particulier des équivalences faibles. On en déduit le lemme \ref{WSurSimplexe}. 
\end{paragr}

\begin{lemme}\label{WSurSimplexe}
En conservant les notations ci-dessus, $\DeuxFoncTrancheCoLax{u}{{c}}$ est une équivalence faible si et seulement si $\DeuxFoncTrancheCoLax{u}{c_{0}}$ en est une. 
\end{lemme}

\begin{paragr}
Pour un $m+n+1$-simplexe $c$ de $\NerfLaxNor{\mathdeuxcat{C}}$, nous désignerons ci-dessous par $d_{m+1}^{n+1}({c})$ la « $(m+1)^{e}$ face de $c$ itérée $n+1$ fois », qu'il faudrait en toute rigueur noter par exemple $d_{m+1}^{m+1} \dots d_{m+1}^{m+n+1} (c)$. C'est donc un $m$-simplexe de $\NerfLaxNor{\mathdeuxcat{C}}$. 

En conservant les données du paragraphe \ref{Zigouigoui}, on définit un ensemble bisimplicial $S_{w}$ par la formule
$$
(S_{w})_{m,n} = \{ ({a} \in (\NerfLaxNor{\mathdeuxcat{A}})_{m}, {c} \in (\NerfLaxNor{\mathdeuxcat{C}})_{m+n+1}), d_{m+1}^{n+1}({c}) = (\NerfLaxNor{(w)}) ({a})  \}.
$$
Autrement dit, on considère les couples $({a}, {c})$ tels que « l'image de ${a}$ par $w$ soit le début de ${c}$ ». On définit de même un ensemble bisimplicial $S_{v}$ par la formule
$$
(S_{v})_{m,n} = \{ ({b} \in (\NerfLaxNor{\mathdeuxcat{B}})_{m}, {c} \in (\NerfLaxNor{\mathdeuxcat{C}})_{m+n+1}), d_{m+1}^{n+1}({c}) = (\NerfLaxNor{(v)}) ({b})  \}.
$$
De plus, on considère $\NerfLaxNor{\mathdeuxcat{A}}$ et $\NerfLaxNor{\mathdeuxcat{B}}$ comme des ensembles bisimpliciaux constants sur les colonnes. Autrement dit, pour tout couple d'entiers positifs $m$ et $n$, on pose $(\NerfLaxNor{\mathdeuxcat{A}})_{m,n} = (\NerfLaxNor{\mathdeuxcat{A}})_{m}$ et $(\NerfLaxNor{\mathdeuxcat{B}})_{m,n} = (\NerfLaxNor{\mathdeuxcat{B}})_{m}$. On note de même $\NerfLaxNor{(u)} : \NerfLaxNor{\mathdeuxcat{A}} \to \NerfLaxNor{\mathdeuxcat{B}}$ le morphisme d'ensembles bisimpliciaux induit par $\NerfLaxNor{(u)}$. On construit alors un diagramme commutatif de morphismes d'ensembles bisimpliciaux
$$
\xymatrix{
\NerfLaxNor{\mathdeuxcat{A}}
\ar[rr]^{\NerfLaxNor{(u)}}
&&
\NerfLaxNor{\mathdeuxcat{B}}
\\
S_{w}
\ar[u]^{\varphi_{w}}
\ar[rr]_{U}
&&S_{v}
\ar[u]_{\varphi_{v}}
}
$$
comme suit. Pour tout couple d'entiers positifs $m$ et $n$, pour tout objet $({a}, {c})$ de $(S_{w})_{m,n}$, on pose $(\varphi_{w})_{m,n} ({a}, {c}) = {a}$ et $U_{m,n} ({a}, {c}) = ((\NerfLaxNor{(u)}) ({a}), {c})$. La définition de $\varphi_{v}$ est analogue à celle de $\varphi_{w}$.


Plaçons-nous désormais sous l'hypothèse que, pour tout \emph{objet} $c$ de $\mathdeuxcat{C}$, le \DeuxFoncteurStrict{} $\DeuxFoncTrancheCoLax{u}{c}$ est une équivalence faible. On se propose de montrer qu'il en est de même de $u$. 

Par construction, $\delta_{\Delta}^{*} (\NerfLaxNor{(u)}) = \NerfLaxNor{(u)}$. Pour démontrer que $u$ est une équivalence faible, il suffit donc de vérifier que $\delta_{\Delta}^{*} (U)$, $\delta_{\Delta}^{*} (\varphi_{w})$ et $\delta_{\Delta}^{*} (\varphi_{v})$ sont des équivalences faibles simpliciales. 

En vertu des propositions \ref{LemmeBisimplicial} et \ref{DeltaTotalementAspherique}, pour montrer que $\delta_{\Delta}^{*} (U)$ est une équivalence faible simpliciale, il suffit de montrer que, pour tout entier $n \geq 0$, le morphisme d'ensembles simpliciaux $U_{\bullet, n} : (S_{w})_{\bullet, n} \to (S_{v})_{\bullet, n}$ en est une. On remarque que la source et le but de ce morphisme s'identifient à
$$
\coprod_{{c} \in (\NerfLaxNor{\mathdeuxcat{C}})_{n}} \NerfLaxNor (\TrancheCoLax{\mathdeuxcat{A}}{w}{{c}})
$$
et
$$
\coprod_{{c} \in (\NerfLaxNor{\mathdeuxcat{C}})_{n}} \NerfLaxNor (\TrancheCoLax{\mathdeuxcat{B}}{v}{{c}})
$$
respectivement et $U_{\bullet, n}$ s'identifie à 
$$
\coprod_{{c} \in (\NerfLaxNor{\mathdeuxcat{C}})_{n}} \NerfLaxNor (\DeuxFoncTrancheCoLax{u}{{c}}).
$$
En vertu des hypothèses et du lemme \ref{WSurSimplexe}, chaque terme de cette somme est une équivalence faible. Il s'ensuit que $U_{\bullet, n}$ est une équivalence faible. Il en est donc de même de $\delta_{\Delta}^{*} (U)$.

Les raisonnements permettant de montrer que $\delta_{\Delta}^{*} (\varphi_{w})$ et $\delta_{\Delta}^{*} (\varphi_{v})$ sont des équivalences faibles sont évidemment analogues l'un à l'autre. Considérons le cas de $\delta_{\Delta}^{*} (\varphi_{w})$. Pour montrer que c'est une équivalence faible, il suffit, en vertu des propositions \ref{LemmeBisimplicial} et \ref{DeltaTotalementAspherique}, de montrer que, pour tout entier $m \geq 0$, le morphisme simplicial $(\varphi_{w})_{m, \bullet} : (S_{w})_{m, \bullet} \to (\NerfLaxNor{\mathdeuxcat{A}})_{m, \bullet}$ est une équivalence faible. On remarque que la source et le but de ce morphisme s'identifient à 
$$
\coprod_{{a} \in (\NerfLaxNor{\mathdeuxcat{A}})_{m}} \NerfLaxNor (\OpTrancheCoLax{\mathdeuxcat{C}}{}{(\NerfLaxNor(w)(a))})
$$
et
$$
\coprod_{{a} \in (\NerfLaxNor{\mathdeuxcat{A}})_{m}} \ast
$$
respectivement, $\ast$ désignant un ensemble simplicial final. Le morphisme simplicial $(\varphi_{w})_{m, \bullet}$ s'identifie ainsi à
$$
\coprod_{{a} \in (\NerfLaxNor{\mathdeuxcat{A}})_{m}} \NerfLaxNor (\OpTrancheCoLax{\mathdeuxcat{C}}{}{(\NerfLaxNor(w)(a))} \to \DeuxCatPonct).
$$
Comme la \deux{}catégorie $\OpTrancheCoLax{\mathdeuxcat{C}}{}{(\NerfLaxNor(w)(a))}$ est asphérique en vertu du lemme \ref{WSurSimplexe}, de l'exemple \ref{ExemplesOF} et du lemme \ref{OFAspherique1}, l'expression ci-dessus est une somme d'équivalences faibles, donc une équivalence faible. Ainsi, $(\varphi_{w})_{m, \bullet}$ est une équivalence faible. Il en est donc de même de $\delta_{\Delta}^{*} (\varphi_{w})$. Comme annoncé, il s'ensuit que $\NerfLaxNor{(u)}$ est une équivalence faible. On a donc démontré le théorème \ref{ThABC}, dont la version absolue constitue l'un des résultats principaux de \cite{BC}.
\end{paragr}

\begin{theo}\label{ThABC}
Soit
$$
\xymatrix{
\mathdeuxcat{A} 
\ar[rr]^{u}
\ar[dr]_{w}
&&\mathdeuxcat{B}
\ar[dl]^{v}
\\
&\mathdeuxcat{C}
}
$$
un triangle commutatif de \deux{}foncteurs stricts. Supposons que, pour tout objet $c$ de $\mathdeuxcat{C}$, le \DeuxFoncteurStrict{}
$$
\DeuxFoncTrancheCoLax{u}{c} : \TrancheCoLax{\mathdeuxcat{A}}{w}{c} \to \TrancheCoLax{\mathdeuxcat{B}}{v}{c}
$$
soit une équivalence faible. Alors, $u$ est une équivalence faible.
\end{theo}

\begin{rem}
Il est bien entendu possible de remplacer le \DeuxFoncteurStrict{} $\DeuxFoncTrancheCoLax{u}{c}$ par $\DeuxFoncTrancheLax{u}{c}$, $\DeuxFoncOpTrancheCoLax{u}{c}$ ou $\DeuxFoncOpTrancheLax{u}{c}$ dans l'énoncé du théorème \ref{ThABC}. On démontrera ces variantes plus loin, dans un cadre plus conceptuel.
\end{rem}

\chapter{Localisateurs fondamentaux de $\DeuxCat$}

\section{Premières définitions et propriétés}\label{SectionDeuxLocFond}

\begin{df}\label{DefDeuxLocFond}
Un \emph{localisateur fondamental de $\DeuxCat$}\index{localisateur fondamental de $\DeuxCat$} est une partie $\DeuxLocFond{W} \subset \UnCell{\DeuxCat}$ vérifiant les propriétés suivantes.
\begin{itemize}
\item[LF1] La classe $\DeuxLocFond{W}$ est faiblement saturée.

\item[LF2] Si une petite \deux{}catégorie $\mathdeuxcat{A}$ admet un objet admettant un objet final, alors le morphisme canonique $\mathdeuxcat{A} \to e$ est dans $\DeuxLocFond{W}$.

\item[LF3] Si 
$$
\xymatrix{
\mathdeuxcat{A} 
\ar[rr]^{u}
\ar[dr]_{w}
&&\mathdeuxcat{B}
\ar[dl]^{v}
\\
&\mathdeuxcat{C}
}
$$
désigne un triangle commutatif de \DeuxFoncteursStricts{} et si, pour tout objet $c$ de $\mathcal{C}$, le \DeuxFoncteurStrict{}
$$
\DeuxFoncTrancheCoLax{u}{c} : \TrancheCoLax{\mathdeuxcat{A}}{w}{c} \to \TrancheCoLax{\mathdeuxcat{B}}{v}{c} 
$$
est dans $\DeuxLocFond{W}$, alors $u$ est dans $\DeuxLocFond{W}$.
\end{itemize}
\end{df}

\begin{exemple}\label{Levet}
Si $\UnLocFond{W}$ est un \ClasseUnLocFond{}, posons 
$$
\begin{aligned}
\DeuxLocFond{W} &= \NerfLaxNor^{-1} (i_{\Delta}^{-1} (W))
\\
&= \NerfLax^{-1} (i_{\Delta}^{-1} (W)) \cap \UnCell{\DeuxCat},
\end{aligned}
$$ 
la seconde égalité résultant de l'équivalence des conditions $(i)$ et $(ii)$ du lemme \ref{Blabla}. Cette classe est un \ClasseDeuxLocFond{} en vertu du lemme \ref{DeuxLocFondSat}, du lemme \ref{OFAspherique1} et du théorème \ref{ThABC}. On montrera plus loin que tous les \ClassesDeuxLocFond{} s'obtiennent ainsi. 
\end{exemple}

On commettra sans plus le signaler l'abus anodin de considérer $\Cat$ comme une sous-catégorie (pleine) de $\DeuxCat$.

\begin{rem}\label{Proust}
Une petite catégorie admet un objet final si et seulement si, vue comme \deux{}catégorie, elle admet un objet admettant un objet final. De plus, si le triangle commutatif de la condition LF3 est un triangle de $\Cat$, le morphisme $\DeuxFoncTrancheCoLax{u}{c}$ n'est rien d'autre que $u/c$. Par conséquent, si $\DeuxLocFond{W}$ est un \ClasseDeuxLocFond{}, alors $\DeuxLocFond{W} \cap \UnCell{\Cat}$ est un \ClasseUnLocFond{}. On montrera plus loin que tous les \ClassesUnLocFond{} s'obtiennent ainsi. 
\end{rem}

\emph{On suppose fixé un localisateur fondamental $\DeuxLocFond{W}$ de $\DeuxCat$.}  

\begin{df}
On appellera les éléments de $\DeuxLocFond{W}$ des $\DeuxLocFond{W}\emph{-équivalences}$, \deux{}\emph{équivalences faibles} ou, plus simplement, si cela n'introduit aucune ambiguïté, \emph{équivalences faibles}\index{equivalence faible (\DeuxFoncteurStrict{}, pour un localisateur fondamental de $\DeuxCat$)@équivalence faible (\DeuxFoncteurStrict{}, pour un localisateur fondamental de $\DeuxCat$)}. On dira qu'une petite \deux{}catégorie $\mathdeuxcat{A}$ est \emph{$\DeuxLocFond{W}$-asphérique}, ou plus simplement \emph{asphérique}\index{asphérique (petite \deux{}catégorie, pour un localisateur fondamental de $\DeuxCat$)} en l'absence d'ambiguïté, si le morphisme $\mathdeuxcat{A} \to e$ est dans $\DeuxLocFond{W}$. 
\end{df}

\begin{rem}\label{SiEntreAsphAlorsAsph}
En vertu de la condition LF1, tout \DeuxFoncteurStrict{} entre \deux{}catégories asphériques est une équivalence faible.
\end{rem}

\begin{exemple}\label{TrancheCoLaxAspherique}
Pour toute petite \deux{}catégorie $\mathdeuxcat{A}$ et tout objet $a$ de $\mathdeuxcat{A}$, la \deux{}catégorie $\TrancheCoLax{\mathdeuxcat{A}}{}{a}$ est asphérique, en vertu de l'axiome LF2, puisque elle admet un objet admettant un objet final (exemple \ref{ExemplesOF}). En vertu de l'axiome LF3, en y posant $v = 1_{\mathdeuxcat{B}}$, on en déduit qu'étant donné un \DeuxFoncteurStrict{} $u : \mathdeuxcat{A} \to \mathdeuxcat{B}$, si, pour tout objet $b$ de $\mathdeuxcat{B}$, la \deux{}catégorie $\TrancheCoLax{\mathdeuxcat{A}}{u}{b}$ est asphérique, alors $u$ est une équivalence faible. De ce qui précède, on déduit qu'un préadjoint à gauche colax est une équivalence faible.
\end{exemple}

\begin{lemme}\label{LemmeDeGeorges}
Soient $\mathdeuxcat{A}$ une petite \deux{}catégorie, $z$ un objet de
$\mathdeuxcat{A}$ et $q : \DeuxCatPonct \to \mathdeuxcat{A}$ le
morphisme de $\DeuxCat$ défini par l'objet $z$. Alors, pour
tout objet $a$ de $\mathdeuxcat{A}$, $\TrancheCoLax{\DeuxCatPonct}{q}{a}$
est la \deux{}catégorie associée à la catégorie $\DeuxCatUnOp{(\CatHom{\mathdeuxcat{A}}{z}{a})}$.
\end{lemme}

\begin{proof}
C'est immédiat.
\end{proof}

\begin{lemme}\label{OFAspherique2}
Une petite \deux{}catégorie op-admettant un objet admettant un objet initial est asphé\-rique. 
\end{lemme}

\begin{proof}
Soient $\mathdeuxcat{A}$ une \deux{}catégorie op-admettant un objet admettant un objet initial et $z$ un objet de $\mathdeuxcat{A}$ tel que, pour tout objet $a$ de $\mathdeuxcat{A}$, la catégorie $\CatHom{\mathdeuxcat{A}}{z}{a}$ admette un objet initial. Considérons les \DeuxFoncteursStricts{} $p_{\mathdeuxcat{A}} : \mathdeuxcat{A} \to \DeuxCatPonct$ et $q_{\mathdeuxcat{A}} : \DeuxCatPonct \to \mathdeuxcat{A}$, le second étant défini par $q_\mathdeuxcat{A}(*)= z$. En vertu du lemme \ref{LemmeDeGeorges}, pour tout objet $a$ de $\mathdeuxcat{A}$, la \deux{}catégorie $\TrancheCoLax{\DeuxCatPonct}{q_\mathdeuxcat{A}}{a}$ admet un objet admettant un objet final (elle admet également un objet admettant un objet initial). On en déduit que la \deux{}catégorie $\TrancheCoLax{\DeuxCatPonct}{q_\mathdeuxcat{A}}{a}$ est asphérique pour tout objet $a$ de $\mathdeuxcat{A}$. Par conséquent, $q_{\mathdeuxcat{A}}$ une équivalence faible. Comme $p_{\mathdeuxcat{A}} q_{\mathdeuxcat{A}} = 1_{\DeuxCatPonct}$, il résulte de la saturation faible de $\DeuxLocFond{W}$ que $p_{\mathdeuxcat{A}}$ est une équivalence faible. Par définition, la \deux{}catégorie $\mathdeuxcat{A}$ est donc asphérique.
\end{proof}

\begin{corollaire}\label{OpTrancheLaxAspherique}
Pour toute petite \deux{}catégorie $\mathdeuxcat{A}$ et tout objet $a$ de $\mathdeuxcat{A}$, la \deux{}catégorie $\OpTrancheLax{\mathdeuxcat{A}}{}{a}$ est asphérique. 
\end{corollaire} 

\begin{proof}
La \deux{}catégorie $\OpTrancheLax{\mathdeuxcat{A}}{}{a}$ op-admettant un objet admettant un objet initial (voir l'exemple \ref{ExemplesOF}), cela résulte du lemme \ref{OFAspherique2}.
\end{proof}

\begin{lemme}\label{OFAspherique2Bis}
Une petite \deux{}catégorie op-admettant un objet admettant un objet final est asphé\-rique. 
\end{lemme}

\begin{proof}
Soit $\mathdeuxcat{A}$ une \deux{}catégorie op-admettant un objet admettant un objet final et soit $z$ un objet de $\mathdeuxcat{A}$ tel que, pour tout objet $a$ de $\mathdeuxcat{A}$, la catégorie $\CatHom{\mathdeuxcat{A}}{z}{a}$ admette un objet final. Considérons les \DeuxFoncteursStricts{} $p_{\mathdeuxcat{A}} : \mathdeuxcat{A} \to \DeuxCatPonct$ et $q_{\mathdeuxcat{A}} : \DeuxCatPonct \to \mathdeuxcat{A}$, le second étant défini par $q_\mathdeuxcat{A}(*)= z$. En vertu du lemme \ref{LemmeDeGeorges}, pour tout objet $a$ de $\mathdeuxcat{A}$, la \deux{}catégorie $\TrancheCoLax{\DeuxCatPonct}{q_\mathdeuxcat{A}}{a}$ op-admet un objet admettant un objet initial (elle op-admet également un objet admettant un objet final). On en déduit, en vertu du lemme \ref{OFAspherique2}, que la \deux{}catégorie $\TrancheCoLax{\DeuxCatPonct}{q_\mathdeuxcat{A}}{a}$ est asphérique pour tout objet $a$ de $\mathdeuxcat{A}$. Par conséquent, $q_{\mathdeuxcat{A}}$ est une équivalence faible. La saturation faible de $\DeuxLocFond{W}$ permet d'en conclure que la \deux{}catégorie $\mathdeuxcat{A}$ est asphérique. 
\end{proof}

\begin{corollaire}\label{OpTrancheCoLaxAspherique}
Pour toute petite \deux{}catégorie $\mathdeuxcat{A}$ et tout objet $a$ de $\mathdeuxcat{A}$, la \deux{}catégorie $\OpTrancheCoLax{\mathdeuxcat{A}}{}{a}$ est asphérique. 
\end{corollaire} 

\begin{proof}
La \deux{}catégorie $\OpTrancheCoLax{\mathdeuxcat{A}}{}{a}$ op-admettant un objet admettant un objet final (voir l'exemple \ref{ExemplesOF}), cela résulte du lemme \ref{OFAspherique2Bis}.
\end{proof}

\begin{prop}\label{PrefibrationIntegraleW}
Soient $\mathdeuxcat{A}$ une \deux{}catégorie et $F : \DeuxCatDeuxOp{\mathdeuxcat{A}} \to \DeuxCatDeuxCat$ un \DeuxFoncteurStrict{} tel que, pour tout objet $a$ de $\mathdeuxcat{A}$, la \deux{}catégorie $F(a)$ soit asphérique. Alors, la projection canonique
$$
P_{F} : \DeuxIntCo{\mathdeuxcat{A}} F \to \mathdeuxcat{A}
$$
est une équivalence faible. 
\end{prop}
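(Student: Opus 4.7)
Le plan consiste � appliquer l'axiome LF3 au triangle commutatif
$$
\xymatrix{
\DeuxIntCo{\mathdeuxcat{A}} F
\ar[rr]^{P_{F}}
\ar[dr]_{P_{F}}
&&
\mathdeuxcat{A}
\ar[dl]^{1_{\mathdeuxcat{A}}}
\\
&\mathdeuxcat{A}
}
$$
Il suffira donc de v�rifier que, pour tout objet $a$ de $\mathdeuxcat{A}$, le \DeuxFoncteurStrict{} induit
$$
\DeuxFoncTrancheCoLax{P_{F}}{a} : \TrancheCoLax{\left(\DeuxIntCo{\mathdeuxcat{A}}F\right)}{P_{F}}{a} \to \TrancheCoLax{\mathdeuxcat{A}}{}{a}
$$
est une \deux{}�quivalence faible.

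Pour cela, je fixe un objet $a$ de $\mathdeuxcat{A}$ et j'observe que la \deux{}cat�gorie but $\TrancheCoLax{\mathdeuxcat{A}}{}{a}$ est asph�rique en vertu de l'exemple \ref{TrancheCoLaxAspherique}. Il suffit donc, gr�ce � la remarque \ref{SiEntreAsphAlorsAsph}, de montrer que la source $\TrancheCoLax{\left(\DeuxIntCo{\mathdeuxcat{A}}F\right)}{P_{F}}{a}$ est elle aussi asph�rique. C'est ici que j'invoque la proposition \ref{ProprietesJaKaCo}, qui fournit un \DeuxFoncteurStrict{} canonique
$$
K_{a} : \TrancheCoLax{\left(\DeuxIntCo{\mathdeuxcat{A}}F\right)}{P_{F}}{a} \to \Fibre{\left(\DeuxIntCo{\mathdeuxcat{A}}F\right)}{P_{F}}{a},
$$
qui est de plus un pr�adjoint � gauche colax. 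Or, d'apr�s le paragraphe \ref{DefDeuxIntCo}, la fibre $\Fibre{\left(\DeuxIntCo{\mathdeuxcat{A}}F\right)}{P_{F}}{a}$ s'identifie canoniquement � $F(a)$, qui est asph�rique par hypoth�se.

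Il ne reste qu'� assembler ces ingr�dients. L'exemple \ref{TrancheCoLaxAspherique} enseigne qu'un pr�adjoint � gauche colax est une \deux{}�quivalence faible ; en particulier, $K_{a}$ en est une. En composant avec le morphisme canonique $F(a) \to e$, qui est dans $\DeuxLocFond{W}$ par hypoth�se d'asph�ricit� de $F(a)$, et en utilisant la saturation faible (axiome LF1), on obtient que le morphisme canonique $\TrancheCoLax{\left(\DeuxIntCo{\mathdeuxcat{A}}F\right)}{P_{F}}{a} \to e$ est dans $\DeuxLocFond{W}$, autrement dit que $\TrancheCoLax{\left(\DeuxIntCo{\mathdeuxcat{A}}F\right)}{P_{F}}{a}$ est asph�rique. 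En d�finitive, $\DeuxFoncTrancheCoLax{P_{F}}{a}$ est un \DeuxFoncteurStrict{} entre \deux{}cat�gories asph�riques, donc une \deux{}�quivalence faible, et l'axiome LF3 permet de conclure.

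Je ne vois pas de v�ritable obstacle dans cette d�monstration : l'ensemble du travail conceptuel a �t� accompli dans la section \ref{SectionIntegration} (propri�t�s homotopiques de la construction d'int�gration, en particulier l'existence de la r�traction $K_{a}$ qui est un pr�adjoint � gauche colax) et dans la section \ref{SectionDeuxLocFond} (asph�ricit� des tranches colax et cons�quences de l'axiomatique des localisateurs fondamentaux de $\DeuxCat$). Le point le plus d�licat, d�j� trait� en amont, �tait la construction canonique de $K_{a}$ avec sa propri�t� de pr�adjonction � gauche colax ; ici, on se contente d'en r�colter les fruits.
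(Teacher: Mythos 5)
Votre d�monstration est correcte et suit essentiellement le m�me chemin que celle du texte : identification de la fibre avec $F(a)$, utilisation de la r�traction $K_{a}$ de la proposition \ref{ProprietesJaKaCo} (pr�adjoint � gauche colax, donc �quivalence faible) pour �tablir l'asph�ricit� de $\TrancheCoLax{\left(\DeuxIntCo{\mathdeuxcat{A}}F\right)}{P_{F}}{a}$, puis conclusion via le cas $v = 1_{\mathdeuxcat{A}}$ de LF3. La seule diff�rence est cosm�tique : le texte invoque directement l'exemple \ref{TrancheCoLaxAspherique} l� o� vous explicitez l'argument de saturation faible et la remarque \ref{SiEntreAsphAlorsAsph}.
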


\begin{proof}
La fibre $\Fibre{(\DeuxIntCo{\mathdeuxcat{A}} F)}{P_{F}}{a}$ de $P_{F}$ au-dessus de $a$ s'identifiant à $F(a)$, elle est asphérique en vertu des hypothèses. En vertu de la proposition \ref{ProprietesJaKaCo}, il existe un \DeuxFoncteurStrict{} 
$$
K_{a} : \TrancheCoLax{\left(\DeuxIntCo{\mathdeuxcat{A}} F\right)}{P_{F}}{a} \to \Fibre{\left(\DeuxIntCo{\mathdeuxcat{A}} F\right)}{P_{F}}{a}
$$  
qui est un préadjoint à gauche colax, donc une équivalence faible (exemple \ref{TrancheCoLaxAspherique}). Par conséquent, la \deux{}catégorie $\TrancheCoLax{(\DeuxIntCo{\mathdeuxcat{A}} F)}{P_{F}}{a}$ est asphérique. Il en résulte que $P_{F}$ est une équivalence faible (voir l'exemple \ref{TrancheCoLaxAspherique}).
\end{proof}

\begin{paragr}
Soit $u : \mathdeuxcat{A} \to \mathdeuxcat{B}$ un \DeuxFoncteurStrict{}. On rappelle avoir construit, dans le paragraphe \ref{DefS1}, un diagramme commutatif 
$$
\xymatrix{
\DeuxCatUnOp{\mathdeuxcat{A}}
\ar[d]_{\DeuxFoncUnOp{u}}
&S_{1}(\mathdeuxcat{A})
\ar[l]_{s_{1}^{\mathdeuxcat{A}}}
\ar[d]^{S_{1}(u)}
\ar[r]^{t_{1}^{\mathdeuxcat{A}}}
&\mathdeuxcat{A}
\ar[d]^{u}
\\
\DeuxCatUnOp{\mathdeuxcat{B}}
&S_{1}(\mathdeuxcat{B}{})
\ar[l]^{s_{1}^{\mathdeuxcat{B}{}}}
\ar[r]_{t_{1}^{\mathdeuxcat{B}{}}}
&\mathdeuxcat{B}{}
}
$$
avec 
$$
S_{1}(\mathdeuxcat{A}) = 
\DeuxIntCo{\DeuxCatUnOp{\mathdeuxcat{A}}} (\OpTrancheLax{\mathdeuxcat{A}}{}{a}) =
\DeuxIntCo{\mathdeuxcat{A}} \DeuxCatUnOp{(\TrancheLax{\mathdeuxcat{A}}{}{a})} 
$$
et
$$
S_{1}(\mathdeuxcat{B}) = 
\DeuxIntCo{\DeuxCatUnOp{\mathdeuxcat{B}}} (\OpTrancheLax{\mathdeuxcat{B}}{}{b}) =
\DeuxIntCo{\mathdeuxcat{B}} \DeuxCatUnOp{(\TrancheLax{\mathdeuxcat{B}}{}{b})}.
$$
(Les symboles d'égalité désignent des isomorphismes tout ce qu'il y a de canoniques.) Les flèches horizontales désignent les projections canoniques. Les \deux{}catégories $\OpTrancheLax{\mathdeuxcat{A}}{}{a}$ et $\OpTrancheLax{\mathdeuxcat{B}}{}{b}$ sont asphériques en vertu du corollaire \ref{OpTrancheLaxAspherique}. Les \deux{}catégories $\DeuxCatUnOp{\left(\TrancheLax{\mathdeuxcat{A}}{}{a}\right)}$ et $\DeuxCatUnOp{\left(\TrancheLax{\mathdeuxcat{B}}{}{b}\right)}$ sont asphériques en vertu de l'exemple \ref{ExemplesOF} et du lemme \ref{OFAspherique2}. Ainsi, en vertu de la proposition \ref{PrefibrationIntegraleW}, les quatre flèches horizontales du diagramme ci-dessus sont des équivalences faibles. 
\end{paragr}

\begin{prop}\label{DeuxFoncUnOpW}
Un \DeuxFoncteurStrict{} $u$ est une équivalence faible si et seulement si $\DeuxFoncUnOp{u}$ est une équivalence faible.
\end{prop}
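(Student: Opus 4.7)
The proof should be essentially immediate from the material developed in the paragraph preceding the statement. The plan is to invoke the commutative diagram
$$
\xymatrix{
\DeuxCatUnOp{\mathdeuxcat{A}}
\ar[d]_{\DeuxFoncUnOp{u}}
&S_{1}(\mathdeuxcat{A})
\ar[l]_{s_{1}^{\mathdeuxcat{A}}}
\ar[d]^{S_{1}(u)}
\ar[r]^{t_{1}^{\mathdeuxcat{A}}}
&\mathdeuxcat{A}
\ar[d]^{u}
\\
\DeuxCatUnOp{\mathdeuxcat{B}}
&S_{1}(\mathdeuxcat{B})
\ar[l]^{s_{1}^{\mathdeuxcat{B}}}
\ar[r]_{t_{1}^{\mathdeuxcat{B}}}
&\mathdeuxcat{B}
}
$$
constructed in paragraph \ref{DefS1} and to use the fact, established just before the statement, that the four horizontal arrows $s_{1}^{\mathdeuxcat{A}}$, $t_{1}^{\mathdeuxcat{A}}$, $s_{1}^{\mathdeuxcat{B}}$, $t_{1}^{\mathdeuxcat{B}}$ are all \deux{}�quivalences faibles. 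This last point rests on the identification of each $S_{1}$ with an integral of a \DeuxFoncteurStrict{} whose values are the \deux{}cat�gories $\OpTrancheLax{\mathdeuxcat{A}}{}{a}$, $\DeuxCatUnOp{(\TrancheLax{\mathdeuxcat{A}}{}{a})}$ and their analogues for $\mathdeuxcat{B}$, all of which are \DeuxLocFond{W}\nobreakdash-asph�riques by the corollary \ref{OpTrancheLaxAspherique} together with the lemme \ref{OFAspherique2} applied to the observation from the exemple \ref{ExemplesOF} that $\TrancheLax{\mathdeuxcat{A}}{}{a}$ admits an object admitting an initial object (so its \un{}oppos�e op-admits one). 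The proposition \ref{PrefibrationIntegraleW} then delivers the asph�ricit� of the projections.

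Given these preliminaries, the argument is a double application of the deux-sur-trois axiom LF1. First I would look at the right square: since $t_{1}^{\mathdeuxcat{A}}$ and $t_{1}^{\mathdeuxcat{B}}$ are dans $\DeuxLocFond{W}$, the weak saturation of $\DeuxLocFond{W}$ implies that $u$ is dans $\DeuxLocFond{W}$ if and only if $S_{1}(u)$ is dans $\DeuxLocFond{W}$. Second, I would look at the left square: since $s_{1}^{\mathdeuxcat{A}}$ and $s_{1}^{\mathdeuxcat{B}}$ are dans $\DeuxLocFond{W}$, weak saturation gives that $\DeuxFoncUnOp{u}$ is dans $\DeuxLocFond{W}$ if and only if $S_{1}(u)$ is. Chaining these two equivalences yields the claim.

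There is no real obstacle here since the hard work � verifying that the horizontal arrows of the diagram are \deux{}�quivalences faibles � has already been completed just above the statement via the integration formalism of the section \ref{SectionCylindres}. The only point requiring a moment of care is to make sure that the asph�ricit� of each fibre $F(a)$ of the relevant integral is available in the four cases needed; this is the r�le of the pair (corollaire \ref{OpTrancheLaxAspherique}, lemme \ref{OFAspherique2}), which together handle both the ��lax�� and the ��\un{}oppos� of lax�� variants. Once that is in hand, the proposition is a corollary of weak saturation and nothing more.
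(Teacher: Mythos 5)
Your proposal is correct and follows essentially the same route as the paper: the paper's proof consists precisely of the two consecutive ``2 sur 3'' arguments applied to the diagram of the paragraph \ref{DefS1}, the horizontal arrows having been shown to be weak equivalences via the corollaire \ref{OpTrancheLaxAspherique}, l'exemple \ref{ExemplesOF} with le lemme \ref{OFAspherique2}, and la proposition \ref{PrefibrationIntegraleW}. Your care in deriving the asphericity of $\DeuxCatUnOp{(\TrancheLax{\mathdeuxcat{A}}{}{a})}$ through the ``op-admet'' formulation (rather than the not-yet-available corollaire \ref{OFAspherique3}) matches the paper and avoids any circularity.
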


\begin{proof}
Cela résulte de ce qui précède par deux arguments de « 2 sur 3 » consécutifs. 
\end{proof}

\begin{corollaire}\label{DeuxCatUnOpAsph}
Une petite \deux{}catégorie $\mathdeuxcat{A}$ est asphérique si et seulement si $\DeuxCatUnOp{\mathdeuxcat{A}}$ est asphérique.
\end{corollaire}

\begin{proof}
Il suffit d'appliquer la proposition \ref{DeuxFoncUnOpW} au morphisme canonique $\mathdeuxcat{A} \to \DeuxCatPonct$. 
\end{proof}

\begin{corollaire}\label{OFAspherique3}
Une petite \deux{}catégorie admettant un objet admettant un objet initial est asphérique.
\end{corollaire}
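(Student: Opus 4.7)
The strategy is to reduce to Lemma \ref{OFAspherique2} by passing to the \un{}opposite. The hypothesis ``$\mathdeuxcat{A}$ admits an object admitting an initial object'' means there exists $z$ in $\mathdeuxcat{A}$ such that the category $\CatHom{\mathdeuxcat{A}}{a}{z}$ admits an initial object for every object $a$ of $\mathdeuxcat{A}$. Rewriting this via the identification $\CatHom{\DeuxCatUnOp{\mathdeuxcat{A}}}{z}{a}=\CatHom{\mathdeuxcat{A}}{a}{z}$, the same datum expresses precisely that $\mathdeuxcat{A}=\DeuxCatUnOp{(\DeuxCatUnOp{\mathdeuxcat{A}})}$ admits an object admitting an initial object when viewed as the \un{}opposite of $\DeuxCatUnOp{\mathdeuxcat{A}}$. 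In other words, the \deux{}cat�gorie $\DeuxCatUnOp{\mathdeuxcat{A}}$ op-admits an object admitting an initial object in the sense of paragraph~\ref{DefOpAdmet}.

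Having made this observation, the plan is in two short steps. First, apply Lemma \ref{OFAspherique2} to $\DeuxCatUnOp{\mathdeuxcat{A}}$ to conclude that $\DeuxCatUnOp{\mathdeuxcat{A}}$ is $\DeuxLocFond{W}$\nobreakdash-asph�rique. Second, invoke Corollary \ref{DeuxCatUnOpAsph}, which states that a petite \deux{}cat�gorie is asph�rique if and only if its \un{}oppos�e is, to transfer asphericity back to $\mathdeuxcat{A}$. This yields the corollary.

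No serious obstacle is expected, since all the conceptual work has been done: the invariance of asphericity under the \un{}opposite operation (Corollary \ref{DeuxCatUnOpAsph}) rests on the nontrivial Proposition \ref{DeuxFoncUnOpW}, itself obtained from the cylinder construction $S_{1}$ and the integration machinery of section \ref{SectionIntegration}, and Lemma \ref{OFAspherique2} has already been established by a direct argument using $\TrancheCoLax{\DeuxCatPonct}{q_\mathdeuxcat{A}}{a}$ and the axiom LF2. The only minor point worth being explicit about is the unwinding of the various definitions of ``op-admet'' versus ``admet'' under \un{}oppos�, to make clear that the hypothesis of the corollary is precisely what is needed to feed Lemma \ref{OFAspherique2} after passage to $\DeuxCatUnOp{\mathdeuxcat{A}}$.
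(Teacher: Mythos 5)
Your argument is correct and is exactly the paper's proof: the hypothesis is, by definition of ``op-admet'', precisely that $\DeuxCatUnOp{\mathdeuxcat{A}}$ op-admet un objet admettant un objet initial, so le lemme \ref{OFAspherique2} donne l'asph�ricit� de $\DeuxCatUnOp{\mathdeuxcat{A}}$, et le corollaire \ref{DeuxCatUnOpAsph} la transf�re � $\mathdeuxcat{A}$. Nothing to add.
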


\begin{proof}
Si une petite \deux{}catégorie $\mathdeuxcat{A}$ admet un objet admettant un objet initial, alors $\DeuxCatUnOp{\mathdeuxcat{A}}$ op-admet un objet admettant un objet initial, donc est asphérique (lemme \ref{OFAspherique2}). On conclut grâce au corollaire \ref{DeuxCatUnOpAsph}.
\end{proof}

\begin{corollaire}\label{TrancheLaxAspherique}
Pour toute petite \deux{}catégorie $\mathdeuxcat{A}$ et tout objet $a$ de $\mathdeuxcat{A}$, la \deux{}catégorie $\TrancheLax{\mathdeuxcat{A}}{}{a}$ est asphérique. 
\end{corollaire}

\begin{proof}
En vertu de l'exemple \ref{ExemplesOF}, cela résulte du corollaire \ref{OFAspherique3}.
\end{proof}

\begin{paragr}
Soit $u : \mathdeuxcat{A} \to \mathdeuxcat{B}$ un \DeuxFoncteurStrict{}. On rappelle avoir construit, dans le paragraphe \ref{DefS2}, un diagramme commutatif 
$$
\xymatrix{
\DeuxCatDeuxOp{\mathdeuxcat{A}}
\ar[d]_{\DeuxFoncDeuxOp{u}}
&S_{2}(\mathdeuxcat{A})
\ar[l]_{s_{2}^{\mathdeuxcat{A}}}
\ar[d]^{S_{2}(u)}
\ar[r]^{t_{2}^{\mathdeuxcat{A}}}
&\mathdeuxcat{A}
\ar[d]^{u}
\\
\DeuxCatDeuxOp{\mathdeuxcat{B}}
&S_{2}(\mathdeuxcat{B}{})
\ar[l]^{s_{2}^{\mathdeuxcat{B}{}}}
\ar[r]_{t_{2}^{\mathdeuxcat{B}{}}}
&\mathdeuxcat{B}{}
}
$$
avec 
$$
S_{2}(\mathdeuxcat{A}) =
\DeuxCatUnOp{\left(\DeuxIntCo{\DeuxCatToutOp{\mathdeuxcat{A}}} 
\DeuxCatUnOp{(\OpTrancheLax{\mathdeuxcat{A}}{}{a})}\right)}
=
\DeuxIntCo{\mathdeuxcat{A}} \DeuxCatDeuxOp{(\TrancheCoLax{\mathdeuxcat{A}}{}{a})}  
$$
et
$$
S_{2}(\mathdeuxcat{B}) =
\DeuxCatUnOp{\left(\DeuxIntCo{\DeuxCatToutOp{\mathdeuxcat{B}}} 
\DeuxCatUnOp{(\OpTrancheLax{\mathdeuxcat{B}}{}{b})}\right)}
=
\DeuxIntCo{\mathdeuxcat{B}} \DeuxCatDeuxOp{(\TrancheCoLax{\mathdeuxcat{B}}{}{b})}.
$$
(Les symboles d'égalité désignent des isomorphismes tout ce qu'il y a de canoniques.) Les flèches horizontales désignent les projections canoniques. En vertu de la proposition \ref{PrefibrationIntegraleW} et du corollaire \ref{OFAspherique3}, $t_{2}^{\mathdeuxcat{A}}$ et $t_{2}^{\mathdeuxcat{B}}$ sont des équivalences faibles. Toujours en vertu de la proposition \ref{PrefibrationIntegraleW} et du corollaire \ref{OFAspherique3}, les projections canoniques
$$
\DeuxIntCo{\DeuxCatToutOp{\mathdeuxcat{A}}} 
\DeuxCatUnOp{(\OpTrancheLax{\mathdeuxcat{A}}{}{a})} \to \DeuxCatToutOp{\mathdeuxcat{A}}
$$ 
et 
$$
\DeuxIntCo{\DeuxCatToutOp{\mathdeuxcat{B}}} 
\DeuxCatUnOp{(\OpTrancheLax{\mathdeuxcat{B}}{}{b})} \to \DeuxCatToutOp{\mathdeuxcat{B}}
$$ 
sont des équivalences faibles. Comme elles s'identifient à $\DeuxFoncUnOp{\left(s_{2}^{\mathdeuxcat{A}}\right)}$ et $\DeuxFoncUnOp{\left(s_{2}^{\mathdeuxcat{B}}\right)}$, $s_{2}^{\mathdeuxcat{A}}$ et $s_{2}^{\mathdeuxcat{B}}$ sont des équivalences faibles en vertu de la proposition \ref{DeuxFoncUnOpW}. \end{paragr}

\begin{prop}\label{DeuxFoncDeuxOpW}
Un \DeuxFoncteurStrict{} $u$ est une équivalence faible si et seulement si $\DeuxFoncDeuxOp{u}$ en est une.
\end{prop}

\begin{proof}
Cela résulte de ce qui précède par deux arguments de « 2 sur 3 » consécutifs. 
\end{proof}

\begin{corollaire}\label{DeuxCatDeuxOpAsph}
Une petite \deux{}catégorie $\mathdeuxcat{A}$ est asphérique si et seulement si $\DeuxCatDeuxOp{\mathdeuxcat{A}}$ est asphérique.
\end{corollaire}

\begin{proof}
Il suffit d'appliquer la proposition \ref{DeuxFoncDeuxOpW} au morphisme canonique $\mathdeuxcat{A} \to \DeuxCatPonct$.
\end{proof}

\begin{prop}\label{DeuxFoncToutOpW}
Un \DeuxFoncteurStrict{} $u$ est une équivalence faible si et seulement si $\DeuxFoncToutOp{u}$ en est une.
\end{prop}

\begin{proof}
Cela résulte des propositions \ref{DeuxFoncUnOpW} et \ref{DeuxFoncDeuxOpW}.
\end{proof}

\begin{corollaire}\label{DeuxCatToutOpAsph}
Une \deux{}catégorie $\mathdeuxcat{A}$ est asphérique si et seulement si $\DeuxCatToutOp{\mathdeuxcat{A}}$ est asphérique.
\end{corollaire}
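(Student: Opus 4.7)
Le plan est tr�s simple, car cet �nonc� s'obtient par le m�me sch�ma argumentatif que les corollaires \ref{DeuxCatUnOpAsph} et \ref{DeuxCatDeuxOpAsph} qui pr�c�dent imm�diatement. Je proc�derais en appliquant la proposition \ref{DeuxFoncToutOpW} au morphisme canonique $p_{\mathdeuxcat{A}} : \mathdeuxcat{A} \to \DeuxCatPonct$. Il suffit d'observer que, par d�finition, $\DeuxFoncToutOp{p_{\mathdeuxcat{A}}}$ n'est autre que le morphisme canonique $\DeuxCatToutOp{\mathdeuxcat{A}} \to \DeuxCatToutOp{\DeuxCatPonct} = \DeuxCatPonct$, l'oppos�e de la \deux{}cat�gorie ponctuelle �tant elle-m�me. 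Par d�finition de l'asph�ricit�, $\mathdeuxcat{A}$ est asph�rique si et seulement si $p_{\mathdeuxcat{A}}$ est une �quivalence faible, et $\DeuxCatToutOp{\mathdeuxcat{A}}$ est asph�rique si et seulement si $\DeuxFoncToutOp{p_{\mathdeuxcat{A}}}$ en est une. La proposition \ref{DeuxFoncToutOpW} conclut imm�diatement.

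Une variante tout aussi �l�mentaire consiste � enchainer les corollaires \ref{DeuxCatUnOpAsph} et \ref{DeuxCatDeuxOpAsph} en utilisant l'�galit� $\DeuxCatToutOp{\mathdeuxcat{A}} = \DeuxCatUnOp{(\DeuxCatDeuxOp{\mathdeuxcat{A}})}$ (rappel�e dans la d�finition \ref{dfunetdeuxop}) : $\mathdeuxcat{A}$ est asph�rique si et seulement si $\DeuxCatDeuxOp{\mathdeuxcat{A}}$ l'est (corollaire \ref{DeuxCatDeuxOpAsph}), si et seulement si $\DeuxCatUnOp{(\DeuxCatDeuxOp{\mathdeuxcat{A}})} = \DeuxCatToutOp{\mathdeuxcat{A}}$ l'est (corollaire \ref{DeuxCatUnOpAsph}).

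Il n'y a pas d'obstacle s�rieux ici ; l'int�gralit� du travail non trivial a d�j� �t� accomplie dans l'�tablissement de la proposition \ref{DeuxFoncToutOpW} (qui repose elle-m�me sur les propositions \ref{DeuxFoncUnOpW} et \ref{DeuxFoncDeuxOpW}, obtenues � partir des cylindres tordus $S_{1}$ et $S_{2}$ de la section \ref{SectionCylindres} et des r�sultats sur les int�grales de \deux{}foncteurs). Le seul ��point de vigilance�� consiste � bien se souvenir que l'oppos�e totale de la \deux{}cat�gorie ponctuelle est la \deux{}cat�gorie ponctuelle, ce qui est imm�diat puisque $\DeuxCatPonct$ n'a qu'une seule \un{}cellule et qu'une seule \deux{}cellule, chacune �tant une identit�.
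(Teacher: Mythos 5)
Votre preuve est correcte et co�ncide avec celle du texte : on applique la proposition \ref{DeuxFoncToutOpW} au morphisme canonique $\mathdeuxcat{A} \to \DeuxCatPonct$, l'oppos�e totale de la \deux{}cat�gorie ponctuelle �tant elle-m�me. La variante par encha�nement des corollaires \ref{DeuxCatUnOpAsph} et \ref{DeuxCatDeuxOpAsph} est tout aussi valable mais n'apporte rien de plus.
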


\begin{proof}
Il suffit d'appliquer la proposition \ref{DeuxFoncToutOpW} au morphisme canonique $\mathdeuxcat{A} \to \DeuxCatPonct$.
\end{proof}

\begin{prop}\label{TheoremeAOpTrancheCoLax}
Soit 
$$
\xymatrix{
\mathdeuxcat{A} 
\ar[rr]^{u}
\ar[dr]_{w}
&&\mathdeuxcat{B}
\ar[dl]^{v}
\\
&\mathdeuxcat{C}
}
$$
un diagramme commutatif dans $\DeuxCat$. Supposons que, pour tout objet $c$ de $\mathdeuxcat{C}$, le morphisme
$$
\DeuxFoncOpTrancheCoLax{u}{c} : \OpTrancheCoLax{\mathdeuxcat{A}}{w}{c} \to \OpTrancheCoLax{\mathdeuxcat{B}}{v}{c} 
$$
soit une équivalence faible. Alors, $u$ est une équivalence faible.
\end{prop}

\begin{proof}
En vertu des hypothèses et par définition, le \DeuxFoncteurStrict{} $\DeuxFoncUnOp{(\DeuxFoncTrancheCoLax{\DeuxFoncUnOp{u}}{c})}$ est une équivalence faible pour tout objet $c$ de $\mathdeuxcat{C}$. Par conséquent, en vertu de la proposition \ref{DeuxFoncUnOpW}, le \DeuxFoncteurStrict{} $\DeuxFoncTrancheCoLax{\DeuxFoncUnOp{u}}{c}$ est une équivalence faible pour tout objet $c$ de $\mathdeuxcat{C}$. En vertu de la condition LF$3$, le \DeuxFoncteurStrict{} $\DeuxFoncUnOp{u}$ est donc une équivalence faible. Une nouvelle invocation de la proposition \ref{DeuxFoncUnOpW} permet de conclure.
\end{proof} 

\begin{prop}\label{TheoremeATrancheLax}
Soit 
$$
\xymatrix{
\mathdeuxcat{A} 
\ar[rr]^{u}
\ar[dr]_{w}
&&\mathdeuxcat{B}
\ar[dl]^{v}
\\
&\mathdeuxcat{C}
}
$$
un diagramme commutatif dans $\DeuxCat$. Supposons que, pour tout objet $c$ de $\mathdeuxcat{C}$, le morphisme
$$
\DeuxFoncTrancheLax{u}{c} : \TrancheLax{\mathdeuxcat{A}}{w}{c} \to \TrancheLax{\mathdeuxcat{B}}{v}{c} 
$$
soit une équivalence faible. Alors, $u$ est une équivalence faible.
\end{prop}

\begin{proof}
En vertu des hypothèses et par définition, le \DeuxFoncteurStrict{} $\DeuxFoncDeuxOp{(\DeuxFoncTrancheCoLax{\DeuxFoncDeuxOp{u}}{c})}$ est une équivalence faible pour tout objet $c$ de $\mathdeuxcat{C}$. Par conséquent, en vertu de la proposition \ref{DeuxFoncDeuxOpW}, le \DeuxFoncteurStrict{} $\DeuxFoncTrancheCoLax{\DeuxFoncDeuxOp{u}}{c}$ est une équivalence faible pour tout objet $c$ de $\mathdeuxcat{C}$. En vertu de la condition LF$3$, le \DeuxFoncteurStrict{} $\DeuxFoncDeuxOp{u}$ est donc une équivalence faible. Une nouvelle invocation de la proposition \ref{DeuxFoncDeuxOpW} permet de conclure.
\end{proof} 

\begin{prop}\label{TheoremeAOpTrancheLax}
Soit 
$$
\xymatrix{
\mathdeuxcat{A} 
\ar[rr]^{u}
\ar[dr]_{w}
&&\mathdeuxcat{B}
\ar[dl]^{v}
\\
&\mathdeuxcat{C}
}
$$
un diagramme commutatif dans $\DeuxCat$. Supposons que, pour tout objet $c$ de $\mathdeuxcat{C}$, le morphisme
$$
\DeuxFoncOpTrancheLax{u}{c} : \OpTrancheLax{\mathdeuxcat{A}}{w}{c} \to \OpTrancheLax{\mathdeuxcat{B}}{v}{c} 
$$
soit une équivalence faible. Alors, $u$ est une équivalence faible.
\end{prop}

\begin{proof}
En vertu des hypothèses et par définition, le \DeuxFoncteurStrict{} $\DeuxFoncToutOp{(\DeuxFoncTrancheCoLax{\DeuxFoncToutOp{u}}{c})}$ est une équivalence faible pour tout objet $c$ de $\mathdeuxcat{C}$. Par conséquent, en vertu de la proposition \ref{DeuxFoncToutOpW}, le \DeuxFoncteurStrict{} $\DeuxFoncTrancheCoLax{\DeuxFoncToutOp{u}}{c}$ est une équivalence faible pour tout objet $c$ de $\mathdeuxcat{C}$. En vertu de la condition LF3, le \DeuxFoncteurStrict{} $\DeuxFoncToutOp{u}$ est donc une équivalence faible. Une nouvelle invocation de la proposition \ref{DeuxFoncToutOpW} permet de conclure.
\end{proof} 

\begin{df}\label{Zonzon}
Étant donné un diagramme commutatif 
$$
\xymatrix{
\mathdeuxcat{A} 
\ar[rr]^{u}
\ar[dr]_{w}
&&\mathdeuxcat{B}
\ar[dl]^{v}
\\
&\mathdeuxcat{C}
}
$$
de \DeuxFoncteursStricts{}, on dira que $u$ est \emph{lax-asphérique au-dessus de\index{lax-asphérique au-dessus de} $\mathdeuxcat{C}$} (\emph{resp.} \emph{lax-opasphérique au-dessus de\index{lax-opasphérique au-dessus de} $\mathdeuxcat{C}$}, \emph{resp.} \emph{colax-asphérique au-dessus de\index{colax-asphérique au-dessus de} $\mathdeuxcat{C}$}, \emph{resp.} \emph{colax-opasphérique au-dessus de\index{colax-opasphérique au-dessus de} $\mathdeuxcat{C}$}) si, pour tout objet $c$ de $\mathdeuxcat{C}$, le \DeuxFoncteurStrict{} $\DeuxFoncTrancheLax{u}{c}$ (\emph{resp.} $\DeuxFoncOpTrancheLax{u}{c}$, \emph{resp.} $\DeuxFoncTrancheCoLax{u}{}{c}$, \emph{resp.} $\DeuxFoncOpTrancheCoLax{u}{c}$) est une équivalence faible. Si $v = 1_{\mathdeuxcat{B}}$, on dira simplement que $u$ est \emph{lax-asphérique}\index{lax-asphérique} (\emph{resp.} \emph{lax-opasphérique}\index{lax-opasphérique}, \emph{resp.} \emph{colax-asphérique}\index{colax-asphérique}, \emph{resp.} \emph{colax-opasphérique}\index{colax-opasphérique}).
\end{df}

\begin{rem}\label{Zozo}
En vertu des résultats ci-dessus, sous les données de la définition \ref{Zonzon}, le \DeuxFoncteurStrict{} $u$ est une équivalence faible pour peu qu'il soit lax-asphérique, lax-opasphérique, colax-asphérique ou colax-opasphérique au-dessus de $\mathdeuxcat{C}$. En faisant $v = 1_{\mathdeuxcat{B}}$, on obtient le cas particulier suivant : pour tout \DeuxFoncteurStrict{} $u : \mathdeuxcat{A} \to \mathdeuxcat{B}$, si, pour tout objet $b$ de $\mathdeuxcat{B}$, la \deux{}catégorie $\TrancheLax{\mathdeuxcat{A}}{u}{b}$ (\emph{resp.} $\OpTrancheLax{\mathdeuxcat{A}}{u}{b}$, \emph{resp.} $\TrancheCoLax{\mathdeuxcat{A}}{u}{b}$, \emph{resp.} $\OpTrancheCoLax{\mathdeuxcat{A}}{u}{b}$) est asphérique, alors $u$ est une équivalence faible (voir aussi l'exemple \ref{TrancheCoLaxAspherique}).
\end{rem}

\begin{corollaire}\label{PreadjointsW}
Un préadjoint à gauche lax (\emph{resp.} un préadjoint à droite lax, \emph{resp.} un préadjoint à droite colax) est une équivalence faible.
\end{corollaire}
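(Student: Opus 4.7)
The plan is to reduce each of the three cases to the corresponding asphericity criterion from Remark \ref{Zozo}, and then to invoke the appropriate lemma from Lemmas \ref{OFAspherique1}, \ref{OFAspherique2}, \ref{OFAspherique2Bis}, and Corollary \ref{OFAspherique3} to conclude that the relevant slice \deux{}cat�gorie is asph�rique. The statement mirrors the argument already carried out for pr�adjoints � gauche colax in Exemple \ref{TrancheCoLaxAspherique}, and the three remaining cases are simply the three duals.

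First I would treat the case of a pr�adjoint � gauche lax $u : \mathdeuxcat{A} \to \mathdeuxcat{B}$. By definition, for every objet $b$ of $\mathdeuxcat{B}$, the \deux{}cat�gorie $\TrancheLax{\mathdeuxcat{A}}{u}{b}$ admits an objet admitting an objet initial. Corollaire \ref{OFAspherique3} then gives that $\TrancheLax{\mathdeuxcat{A}}{u}{b}$ is $\DeuxLocFond{W}$\nobreakdash-asph�rique. Applying Proposition \ref{TheoremeATrancheLax} to the trivial triangle (taking $v = 1_{\mathdeuxcat{B}}$ and $w = u$), one concludes that $u$ is a $\DeuxLocFond{W}$\nobreakdash-�quivalence.

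For a pr�adjoint � droite colax $u : \mathdeuxcat{A} \to \mathdeuxcat{B}$, the same scheme applies, but with $\OpTrancheCoLax{\mathdeuxcat{A}}{u}{b}$ in place of $\TrancheLax{\mathdeuxcat{A}}{u}{b}$. By definition this \deux{}cat�gorie op\nobreakdash-admits an objet admitting an objet final; Lemme \ref{OFAspherique2Bis} then gives its asph�ricit�, and Proposition \ref{TheoremeAOpTrancheCoLax} (again with $v = 1_{\mathdeuxcat{B}}$) yields the conclusion. For a pr�adjoint � droite lax $u$, one substitutes $\OpTrancheLax{\mathdeuxcat{A}}{u}{b}$, which op\nobreakdash-admits an objet admitting an objet initial; asph�ricit� follows from Lemme \ref{OFAspherique2}, and the verdict from Proposition \ref{TheoremeAOpTrancheLax}.

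There is essentially no obstacle here: the work has already been done in establishing Propositions \ref{TheoremeAOpTrancheCoLax}, \ref{TheoremeATrancheLax}, \ref{TheoremeAOpTrancheLax} (the three dual versions of axiom LF3) together with the full asph�ricit� package for petites \deux{}cat�gories admitting or op\nobreakdash-admitting an objet admitting an objet final or initial. The only thing to be careful about is matching up the correct duality in each of the three cases, that is, checking that the flavour of tranche entering the definition of each sort of pr�adjoint is exactly the one appearing in the corresponding relative Th�or�me A variant; this is immediate from D�finition \ref{DefMAdjoints} and the statements of the three propositions cited.
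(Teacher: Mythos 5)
Votre argument est correct et co�ncide essentiellement avec celui du texte : la d�monstration du corollaire y consiste � invoquer la remarque \ref{Zozo}, laquelle n'est autre que la combinaison, avec $v = 1_{\mathdeuxcat{B}}$, des propositions \ref{TheoremeATrancheLax}, \ref{TheoremeAOpTrancheLax} et \ref{TheoremeAOpTrancheCoLax} avec l'asph�ricit� des tranches fournie par le corollaire \ref{OFAspherique3} et les lemmes \ref{OFAspherique2} et \ref{OFAspherique2Bis}, exactement comme vous le faites cas par cas.
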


\begin{proof}
C'est une conséquence de la remarque \ref{Zozo}. 
\end{proof}

\begin{prop}\label{PrefibrationFibresAspheriquesW}
Une préfibration à fibres asphériques est une équivalence faible.
\end{prop}

\begin{proof}
Soit $u : \mathdeuxcat{A} \to \mathdeuxcat{B}$ une préfibration à fibres asphériques. Par définition, pour tout objet $b$ de $\mathdeuxcat{B}$, le \DeuxFoncteurStrict{} canonique $J_{b} : \Fibre{\mathdeuxcat{A}}{u}{b} \to \OpTrancheCoLax{\mathdeuxcat{A}}{u}{b}$ est un préadjoint à gauche lax. En vertu du corollaire \ref{PreadjointsW}, c'est donc une équivalence faible. Par conséquent, $\OpTrancheCoLax{\mathdeuxcat{A}}{u}{b}$ est asphérique. On conclut en invoquant la remarque \ref{Zozo}. 
\end{proof}

\begin{prop}\label{PreopfibrationFibresAspheriquesW}
Une préopfibration à fibres asphériques est une équivalences faible.
\end{prop}

\begin{proof}
Soit $u : \mathdeuxcat{A} \to \mathdeuxcat{B}$ une préopfibration à fibres asphériques. Alors $\DeuxFoncUnOp{u} : \DeuxCatUnOp{\mathdeuxcat{A}} \to \DeuxCatUnOp{\mathdeuxcat{B}}$ est une préfibration à fibres asphériques en vertu des définitions, du lemme \ref{FibresCoOp} et du corollaire \ref{DeuxCatUnOpAsph}. On conclut grâce aux propositions \ref{PrefibrationFibresAspheriquesW} et \ref{DeuxFoncUnOpW}.
\end{proof}

\begin{prop}\label{PrecofibrationFibresAspheriquesW}
Une précofibration à fibres asphériques est une équivalences faible.
\end{prop}

\begin{proof}
Soit $u : \mathdeuxcat{A} \to \mathdeuxcat{B}$ une précofibration à fibres asphériques. Alors $\DeuxFoncDeuxOp{u} : \DeuxCatDeuxOp{\mathdeuxcat{A}} \to \DeuxCatDeuxOp{\mathdeuxcat{B}}$ est une préfibration à fibres asphériques en vertu des définitions, du lemme \ref{FibresCoOp} et du corollaire \ref{DeuxCatDeuxOpAsph}. On conclut grâce aux propositions \ref{PrefibrationFibresAspheriquesW} et \ref{DeuxFoncDeuxOpW}.
\end{proof}

\begin{prop}\label{PrecoopfibrationFibresAspheriquesW}
Une précoopfibration à fibres asphériques est une équivalences faible.
\end{prop}

\begin{proof}
Soit $u : \mathdeuxcat{A} \to \mathdeuxcat{B}$ une précoopfibration à fibres asphériques. Alors $\DeuxFoncToutOp{u} : \DeuxCatToutOp{\mathdeuxcat{A}} \to \DeuxCatToutOp{\mathdeuxcat{B}}$ est une préfibration à fibres asphériques en vertu des définitions, du lemme \ref{FibresCoOp} et du corollaire \ref{DeuxCatToutOpAsph}. On conclut grâce aux propositions \ref{PrefibrationFibresAspheriquesW} et \ref{DeuxFoncToutOpW}. 
\end{proof}

\begin{prop}\label{2.1.10.THG}
Soit
$$
\xymatrix{
\mathdeuxcat{A} 
\ar[rr]^{u}
\ar[dr]_{w}
&&\mathdeuxcat{B}
\ar[dl]^{v}
\\
&\mathdeuxcat{C}
}
$$
un diagramme commutatif dans $\DeuxCat$. On suppose que, pour tout objet $c$ de $\mathdeuxcat{C}$, le morphisme $u_{c} : \Fibre{\mathdeuxcat{A}}{w}{c} \to \Fibre{\mathdeuxcat{B}}{v}{c}$, induit entre les fibres de $w$ et $v$ au-dessus de $c$, est une équivalence faible. 
\begin{itemize}
\item[(a)]
Si $v$ et $w$ sont des préfibrations, alors $u$ est colax-opasphérique au-dessus de $\mathdeuxcat{C}$. 
\item[(b)]
Si $v$ et $w$ sont des préopfibrations, alors $u$ est colax-asphérique au-dessus de $\mathdeuxcat{C}$. 
\item[(c)]
Si $v$ et $w$ sont des précofibrations, alors $u$ est lax-opasphérique au-dessus de $\mathdeuxcat{C}$.
\item[(d)]
Si $v$ et $w$ sont des précoopfibrations, alors $u$ est lax-asphérique au-dessus de $\mathdeuxcat{C}$. 
\end{itemize}
En particulier, dans n'importe lequel de ces quatre cas, $u$ est une équivalence faible.
\end{prop}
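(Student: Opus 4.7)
Mon plan consiste � �tablir d'abord (a) par un argument direct, puis � d�duire les trois autres assertions par dualit�.

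Pour (a), je suppose $v$ et $w$ pr�fibrations et me donne un objet $c$ de $\mathdeuxcat{C}$. L'objectif est de montrer que le \DeuxFoncteurStrict{} $\DeuxFoncOpTrancheCoLax{u}{c} : \OpTrancheCoLax{\mathdeuxcat{A}}{w}{c} \to \OpTrancheCoLax{\mathdeuxcat{B}}{v}{c}$ est une �quivalence faible. Les pr�fibrations $v$ et $w$ fournissent, par d�finition, des \DeuxFoncteursStricts{} canoniques $J_{c}^{\mathdeuxcat{A}} : \Fibre{\mathdeuxcat{A}}{w}{c} \to \OpTrancheCoLax{\mathdeuxcat{A}}{w}{c}$ et $J_{c}^{\mathdeuxcat{B}} : \Fibre{\mathdeuxcat{B}}{v}{c} \to \OpTrancheCoLax{\mathdeuxcat{B}}{v}{c}$, tous deux pr�adjoints � gauche lax et donc, en vertu du corollaire \ref{PreadjointsW}, des �quivalences faibles. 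L'id�e cl� est de v�rifier que le diagramme
$$
\xymatrix{
\Fibre{\mathdeuxcat{A}}{w}{c}
\ar[rr]^{J_{c}^{\mathdeuxcat{A}}}
\ar[dd]_{u_{c}}
&&
\OpTrancheCoLax{\mathdeuxcat{A}}{w}{c}
\ar[dd]^{\DeuxFoncOpTrancheCoLax{u}{c}}
\\
\\
\Fibre{\mathdeuxcat{B}}{v}{c}
\ar[rr]_{J_{c}^{\mathdeuxcat{B}}}
&&
\OpTrancheCoLax{\mathdeuxcat{B}}{v}{c}
}
$$
est commutatif de fa�on stricte. Cela se fait par un calcul imm�diat sur les objets, \un{}cellules et \deux{}cellules : �tant donn� un objet $a$ de $\Fibre{\mathdeuxcat{A}}{w}{c}$ (donc avec $w(a) = c$, et par la commutativit� du triangle $v(u(a)) = c$), les deux parcours envoient $a$ sur $(u(a), 1_{c})$, et la v�rification est analogue pour les cellules sup�rieures. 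Comme $J_{c}^{\mathdeuxcat{A}}$, $J_{c}^{\mathdeuxcat{B}}$ et $u_{c}$ (par hypoth�se) sont des �quivalences faibles, la saturation faible de $\DeuxLocFond{W}$ (axiome LF1) entra�ne que $\DeuxFoncOpTrancheCoLax{u}{c}$ l'est aussi. Par d�finition, $u$ est colax-opasph�rique au-dessus de $\mathdeuxcat{C}$.

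Pour (b), je remarque que si $v$ et $w$ sont des pr�opfibrations, alors $\DeuxFoncUnOp{v}$ et $\DeuxFoncUnOp{w}$ sont des pr�fibrations, et le triangle $\DeuxFoncUnOp{v} \DeuxFoncUnOp{u} = \DeuxFoncUnOp{w}$ v�rifie les hypoth�ses de (a) au-dessus de $\DeuxCatUnOp{\mathdeuxcat{C}}$ : en effet, en vertu du lemme \ref{FibresCoOp}, la fibre de $\DeuxFoncUnOp{w}$ au-dessus de $c$ s'identifie � $\DeuxCatUnOp{(\Fibre{\mathdeuxcat{A}}{w}{c})}$, le morphisme induit entre fibres est $\DeuxFoncUnOp{(u_{c})}$, et la proposition \ref{DeuxFoncUnOpW} assure qu'il est dans $\DeuxLocFond{W}$ puisque $u_{c}$ l'est. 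Par (a) appliqu� � $\DeuxFoncUnOp{u}$, le \DeuxFoncteurStrict{} $\DeuxFoncOpTrancheCoLax{\DeuxFoncUnOp{u}}{c}$ est une �quivalence faible pour tout $c$. Or, les relations de dualit� rappel�es au paragraphe \ref{DefFoncInduits} donnent $\DeuxFoncOpTrancheCoLax{\DeuxFoncUnOp{u}}{c} = \DeuxFoncUnOp{(\DeuxFoncTrancheCoLax{u}{c})}$, si bien qu'en r�appliquant la proposition \ref{DeuxFoncUnOpW}, $\DeuxFoncTrancheCoLax{u}{c}$ est une �quivalence faible, ce qui montre que $u$ est colax-asph�rique au-dessus de $\mathdeuxcat{C}$.

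Les cas (c) et (d) se traitent de la m�me mani�re, en appliquant (a) respectivement � $\DeuxFoncDeuxOp{u}$ au-dessus de $\DeuxCatDeuxOp{\mathdeuxcat{C}}$ (en utilisant la proposition \ref{DeuxFoncDeuxOpW} et l'identit� $\DeuxFoncOpTrancheCoLax{\DeuxFoncDeuxOp{u}}{c} = \DeuxFoncDeuxOp{(\DeuxFoncOpTrancheLax{u}{c})}$) et � $\DeuxFoncToutOp{u}$ au-dessus de $\DeuxCatToutOp{\mathdeuxcat{C}}$ (en utilisant la proposition \ref{DeuxFoncToutOpW} et l'identit� $\DeuxFoncOpTrancheCoLax{\DeuxFoncToutOp{u}}{c} = \DeuxFoncToutOp{(\DeuxFoncTrancheLax{u}{c})}$). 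L'assertion finale selon laquelle $u$ est dans $\DeuxLocFond{W}$ dans chacun des quatre cas d�coule alors imm�diatement de la remarque \ref{Zozo}. Le principal obstacle que j'anticipe est la v�rification soigneuse des identit�s duales issues du paragraphe \ref{DefFoncInduits} et le fait de s'assurer que la fibre en $c$ de $\DeuxFoncUnOp{u}$ (\emph{resp.} $\DeuxFoncDeuxOp{u}$, \emph{resp.} $\DeuxFoncToutOp{u}$) s'identifie bien � $\DeuxFoncUnOp{(u_{c})}$ (\emph{resp.} $\DeuxFoncDeuxOp{(u_{c})}$, \emph{resp.} $\DeuxFoncToutOp{(u_{c})}$) de sorte que l'hypoth�se se propage correctement par dualit� ; une fois ces identifications faites via le lemme \ref{FibresCoOp}, tout le reste s'encha�ne automatiquement.
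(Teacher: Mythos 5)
Votre preuve est correcte et suit essentiellement la m\^eme strat\'egie que celle du texte : pour le cas (a), le carr\'e commutatif reliant $u_{c}$ et $\DeuxFoncOpTrancheCoLax{u}{c}$ par les morphismes canoniques $J_{c}$, qui sont des pr\'eadjoints \`a gauche lax donc des \'equivalences faibles (corollaire \ref{PreadjointsW}), puis la saturation faible ; les trois autres cas par dualit\'e. Vous explicitez simplement les identifications duales (lemme \ref{FibresCoOp}, paragraphe \ref{DefFoncInduits}, propositions \ref{DeuxFoncUnOpW}, \ref{DeuxFoncDeuxOpW} et \ref{DeuxFoncToutOpW}) que le texte laisse au lecteur, et ces v\'erifications sont exactes.
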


\begin{proof}
Plaçons-nous dans le premier cas, les trois autres s'en déduisant par dualité. Pour tout objet $c$ de $\mathdeuxcat{C}$, on a un carré commutatif
$$
\xymatrix{
\Fibre{\mathdeuxcat{A}}{w}{c}
\ar[r]^{u_{c}}
\ar[d]_{J_{c}}
&
\Fibre{\mathdeuxcat{B}}{v}{c}
\ar[d]^{J_{c}}
\\
\OpTrancheCoLax{\mathdeuxcat{A}}{w}{c}
\ar[r]_{\DeuxFoncOpTrancheCoLax{u}{c}}
&
\OpTrancheCoLax{\mathdeuxcat{B}}{v}{c}
}
$$
dont les flèches verticales sont des préadjoints à gauche lax, donc des équivalences faibles (corollaire \ref{PreadjointsW}). Comme, par hypothèse, $u_{c}$ est une équivalence faible, il en est de même de $\DeuxFoncOpTrancheCoLax{u}{c}$.
\end{proof}

\emph{On ne suppose plus fixé de \ClasseDeuxLocFond{}.}

\begin{theo}\label{DeuxLocFondHuitDef}
Soit $\DeuxLocFond{W}$ une partie de $\UnCell{\DeuxCat}$. Les conditions suivantes sont équivalentes :
\begin{itemize}
\item[(i)] 
La classe $\DeuxLocFond{W}$ est un \ClasseDeuxLocFond{}.
\item[(ii)]
Les conditions suivantes sont vérifiées.  
\begin{itemize}
\item[LF1$'$] 
La partie $\DeuxLocFond{W}$ de $\UnCell{\DeuxCat}$ est faiblement saturée. 
\item[LF2$'$] 
Si une petite \deux{}catégorie $\mathdeuxcat{A}$ admet un objet admettant un objet initial, alors le morphisme canonique $\mathdeuxcat{A} \to \DeuxCatPonct$ est dans $\DeuxLocFond{W}$. 
\item[LF3$'$] Si 
$$
\xymatrix{
\mathdeuxcat{A} 
\ar[rr]^{u}
\ar[dr]_{w}
&&\mathdeuxcat{B}
\ar[dl]^{v}
\\
&\mathdeuxcat{C}
}
$$
désigne un triangle commutatif dans $\DeuxCat$ et si, pour tout objet $c$ de $\mathdeuxcat{C}$, le \deux{}foncteur strict 
$$
\DeuxFoncTrancheLax{u}{c} : \TrancheLax{\mathdeuxcat{A}}{w}{c} \to \TrancheLax{\mathdeuxcat{B}}{v}{c} 
$$
est dans $\DeuxLocFond{W}$, alors $u$ est dans $\DeuxLocFond{W}$.
\end{itemize}
\item[(iii)]
Les conditions suivantes sont vérifiées.
\begin{itemize}
\item[LF1$''$] 
La partie $\DeuxLocFond{W}$ de $\UnCell{\DeuxCat}$ est faiblement saturée. 
\item[LF2$''$] 
Si une petite \deux{}catégorie $\mathdeuxcat{A}$ op-admet un objet admettant un objet final, alors le morphisme canonique $\mathdeuxcat{A} \to \DeuxCatPonct$ est dans $\DeuxLocFond{W}$. 
\item[LF3$''$] Si 
$$
\xymatrix{
\mathdeuxcat{A} 
\ar[rr]^{u}
\ar[dr]_{w}
&&\mathdeuxcat{B}
\ar[dl]^{v}
\\
&\mathdeuxcat{C}
}
$$
désigne un triangle commutatif dans $\DeuxCat$ et si, pour tout objet $c$ de $\mathdeuxcat{C}$, le \deux{}foncteur strict
$$
\DeuxFoncOpTrancheCoLax{u}{c} : \OpTrancheCoLax{\mathdeuxcat{A}}{w}{c} \to \OpTrancheCoLax{\mathdeuxcat{B}}{v}{c} 
$$
est dans $\DeuxLocFond{W}$, alors $u$ est dans $\DeuxLocFond{W}$.
\end{itemize}
\item[(iv)]
Les conditions suivantes sont vérifiées.
\begin{itemize}
\item[LF1$'''$] 
La partie $\DeuxLocFond{W}$ de $\UnCell{\DeuxCat}$ est faiblement saturée. 
\item[LF2$'''$] 
Si une petite \deux{}catégorie $\mathdeuxcat{A}$ op-admet un objet admettant un objet initial, alors le morphisme canonique $\mathdeuxcat{A} \to \DeuxCatPonct$ est dans $\DeuxLocFond{W}$. 
\item[LF3$'''$] Si 
$$
\xymatrix{
\mathdeuxcat{A} 
\ar[rr]^{u}
\ar[dr]_{w}
&&\mathdeuxcat{B}
\ar[dl]^{v}
\\
&\mathdeuxcat{C}
}
$$
désigne un triangle commutatif dans $\DeuxCat$ et si, pour tout objet $c$ de $\mathdeuxcat{C}$, le \deux{}foncteur strict
$$
\DeuxFoncOpTrancheLax{u}{c} : \OpTrancheLax{\mathdeuxcat{A}}{w}{c} \to \OpTrancheLax{\mathdeuxcat{B}}{v}{c} 
$$
est dans $\DeuxLocFond{W}$, alors $u$ est dans $\DeuxLocFond{W}$.
\end{itemize}
\item[(v)]
Les conditions suivantes sont vérifiées.
\begin{itemize}
\item[LF$\alpha$] 
La partie $\DeuxLocFond{W}$ de $\UnCell{\DeuxCat}$ est faiblement saturée. 
\item[LF$\beta$] 
Le morphisme canonique $[1] \to \DeuxCatPonct$ est dans $\DeuxLocFond{W}$. 
\item[LF$\gamma$] Si 
$$
\xymatrix{
\mathdeuxcat{A} 
\ar[rr]^{u}
\ar[dr]_{p}
&&\mathdeuxcat{B}
\ar[dl]^{q}
\\
&\mathdeuxcat{C}
}
$$
désigne un triangle commutatif dans $\DeuxCat$, si $p$ et $q$ sont des précoopfibrations et si, pour tout objet $c$ de $\mathdeuxcat{C}$, le \DeuxFoncteurStrict{} induit entre les fibres
$$
\Fibre{u}{}{c} : \Fibre{\mathdeuxcat{A}}{p}{c} \to \Fibre{\mathdeuxcat{B}}{q}{c} 
$$
est dans $\DeuxLocFond{W}$, alors $u$ est dans $\DeuxLocFond{W}$.
\end{itemize}
\item[(vi)]
Les conditions suivantes sont vérifiées.
\begin{itemize}
\item[LF$\alpha'$] 
La partie $\DeuxLocFond{W}$ de $\UnCell{\DeuxCat}$ est faiblement saturée. 
\item[LF$\beta'$] 
Le morphisme canonique $[1] \to \DeuxCatPonct$ est dans $\DeuxLocFond{W}$. 
\item[LF$\gamma'$] Si 
$$
\xymatrix{
\mathdeuxcat{A} 
\ar[rr]^{u}
\ar[dr]_{p}
&&\mathdeuxcat{B}
\ar[dl]^{q}
\\
&\mathdeuxcat{C}
}
$$
désigne un triangle commutatif dans $\DeuxCat$, si $p$ et $q$ sont des précofibrations et si, pour tout objet $c$ de $\mathdeuxcat{C}$, le \DeuxFoncteurStrict{} induit entre les fibres
$$
\Fibre{u}{}{c} : \Fibre{\mathdeuxcat{A}}{p}{c} \to \Fibre{\mathdeuxcat{B}}{q}{c} 
$$
est dans $\DeuxLocFond{W}$, alors $u$ est dans $\DeuxLocFond{W}$.
\end{itemize}
\item[(vii)]
Les conditions suivantes sont vérifiées.
\begin{itemize}
\item[LF$\alpha''$] 
La partie $\DeuxLocFond{W}$ de $\UnCell{\DeuxCat}$ est faiblement saturée. 
\item[LF$\beta''$] 
Le morphisme canonique $[1] \to \DeuxCatPonct$ est dans $\DeuxLocFond{W}$. 
\item[LF$\gamma''$] Si 
$$
\xymatrix{
\mathdeuxcat{A} 
\ar[rr]^{u}
\ar[dr]_{p}
&&\mathdeuxcat{B}
\ar[dl]^{q}
\\
&\mathdeuxcat{C}
}
$$
désigne un triangle commutatif dans $\DeuxCat$, si $p$ et $q$ sont des préopfibrations et si, pour tout objet $c$ de $\mathdeuxcat{C}$, le \DeuxFoncteurStrict{} induit entre les fibres
$$
\Fibre{u}{}{c} : \Fibre{\mathdeuxcat{A}}{p}{c} \to \Fibre{\mathdeuxcat{B}}{q}{c} 
$$
est dans $\DeuxLocFond{W}$, alors $u$ est dans $\DeuxLocFond{W}$.
\end{itemize}
\item[(viii)]
Les conditions suivantes sont vérifiées.
\begin{itemize}
\item[LF$\alpha'''$] 
La partie $\DeuxLocFond{W}$ de $\UnCell{\DeuxCat}$ est faiblement saturée. 
\item[LF$\beta'''$] 
Le morphisme canonique $[1] \to \DeuxCatPonct$ est dans $\DeuxLocFond{W}$. 
\item[LF$\gamma'''$] Si 
$$
\xymatrix{
\mathdeuxcat{A} 
\ar[rr]^{u}
\ar[dr]_{p}
&&\mathdeuxcat{B}
\ar[dl]^{q}
\\
&\mathdeuxcat{C}
}
$$
désigne un triangle commutatif dans $\DeuxCat$, si $p$ et $q$ sont des préfibrations et si, pour tout objet $c$ de $\mathdeuxcat{C}$, le \DeuxFoncteurStrict{} induit entre les fibres
$$
\Fibre{u}{}{c} : \Fibre{\mathdeuxcat{A}}{p}{c} \to \Fibre{\mathdeuxcat{B}}{q}{c} 
$$
est dans $\DeuxLocFond{W}$, alors $u$ est dans $\DeuxLocFond{W}$.
\end{itemize}
\end{itemize}
\end{theo}

\begin{proof}
L'implication $(i) \Rightarrow (v)$ résulte de la proposition \ref{2.1.10.THG} et de l'asphéricité de la catégorie $[1]$. 

Montrons l'implication $(v) \Rightarrow (iii)$. 

Soit $\mathdeuxcat{A}$ une petite \deux{}catégorie. Les flèches du triangle commutatif
$$
\xymatrix{
[1] \times \mathdeuxcat{A} 
\ar[rr]^{pr_{2}}
\ar[dr]_{pr_{2}}
&&\mathdeuxcat{A}
\ar[dl]^{1_{\mathdeuxcat{A}}}
\\
&\mathdeuxcat{A}
}
$$
sont des précoopfibrations\footnote{Voir la remarque \ref{ProjectionPrefibrationPreuve}.} et, pour tout objet $a$ de $\mathdeuxcat{A}$, le \DeuxFoncteurStrict{} induit entre les fibres au-dessus de $a$ s'identifie au morphisme canonique $[1] \to \DeuxCatPonct$, qui est dans $\DeuxLocFond{W}$ par la condition LF$\beta$. Ainsi, pour toute petite \deux{}catégorie $\mathdeuxcat{A}$, la projection canonique $[1] \times \mathdeuxcat{A} \to \mathdeuxcat{A}$ est dans $\DeuxLocFond{W}$ par la condition LF$\gamma$. 

Supposons que $\mathdeuxcat{A}$ op-admette un objet admettant un objet final. En vertu d'un argument similaire à celui que nous avons présenté dans la remarque \ref{OFContractile}, $\mathdeuxcat{A}$ admet un endomorphisme constant homotope à $1_{\mathdeuxcat{A}}$ ; elle est donc contractile. En vertu de ce qui précède et de la proposition \ref{1.4.8.THG}, le morphisme canonique $\mathdeuxcat{A} \to \DeuxCatPonct$ est dans $\DeuxLocFond{W}$. La condition LF$2''$ est donc vérifiée.

Pour tout \DeuxFoncteurStrict{} $u : \mathdeuxcat{A} \to \mathdeuxcat{B}$, on peut considérer les \DeuxFoncteursStricts{}
$$
\begin{aligned}
\DeuxCatUnOp{\mathdeuxcat{B}} &\to \DeuxCatDeuxCat
\\
b &\mapsto \OpTrancheCoLax{\mathdeuxcat{A}}{u}{b}
\end{aligned}
$$
et 
$$
\begin{aligned}
\mathdeuxcat{A} &\to \DeuxCatDeuxCat
\\
a &\mapsto \DeuxCatUnOp{\left(\TrancheCoLax{\mathdeuxcat{B}}{}{u(a)}\right)}.
\end{aligned}
$$

Les formules générales permettent d'expliciter la structure de la \deux{}catégorie $\DeuxInt{\DeuxCatUnOp{\mathdeuxcat{B}}} (\OpTrancheCoLax{\mathdeuxcat{A}}{u}{b})$ comme suit. 

Ses objets sont les
$$
(b, (a, k : b \to u(a))),
$$
$b$ étant un objet de $\mathdeuxcat{B}$, $a$ un objet de $\mathdeuxcat{A}$ et $k$ une \un{}cellule de $\mathdeuxcat{B}$.

Les \un{}cellules de $(b, (a,k))$ vers $(b', (a',k'))$ sont les
$$
(f : b' \to b, (g : a \to a', \alpha : u(g)kf \Rightarrow k')),
$$
$f$ étant une \un{}cellule de $\mathdeuxcat{B}$, $g$ une \un{}cellule de $\mathdeuxcat{A}$ et $\alpha$ une \deux{}cellule de $\mathdeuxcat{B}$. 

Les \deux{}cellules de $(f, (g, \alpha))$ vers $(f', (g', \alpha'))$ sont les
$$
(\varphi : f \Rightarrow f', \gamma : g \Rightarrow g')
$$
vérifiant 
$$
\alpha' \CompDeuxUn (u(\gamma) \CompDeuxZero k \CompDeuxZero \varphi) = \alpha,
$$
$\varphi$ étant une \deux{}cellule de $\mathdeuxcat{B}$ et $\gamma$ une \deux{}cellule de $\mathdeuxcat{A}$. 

Les diverses unités et compositions sont définies de façon « évidente ». 

Les formules générales permettent d'expliciter la structure de la \deux{}catégorie $\DeuxInt{\mathdeuxcat{A}} \DeuxCatUnOp{(\TrancheCoLax{\mathdeuxcat{B}}{}{u(a)})}$ comme suit. 

Ses objets sont les
$$
(a, (b, k : b \to u(a))),
$$
$a$ étant un objet de $\mathdeuxcat{A}$, $b$ un objet de $\mathdeuxcat{B}$ et $k$ une \un{}cellule de $\mathdeuxcat{B}$.

Les \un{}cellules de $(a, (b,k))$ vers $(a', (b',k'))$ sont les
$$
(g : a \to a', (f : b' \to b, \alpha : u(g)kf \Rightarrow k')),
$$
$g$ étant une \un{}cellule de $\mathdeuxcat{A}$, $f$ une \un{}cellule de $\mathdeuxcat{B}$ et $\alpha$ une \deux{}cellule de $\mathdeuxcat{B}$. 

Les \deux{}cellules de $(g, (f, \alpha))$ vers $(g', (f', \alpha'))$ sont les
$$
(\gamma : g \Rightarrow g', \varphi : f \Rightarrow f')
$$
vérifiant 
$$
\alpha' \CompDeuxUn (u(\gamma) \CompDeuxZero k \CompDeuxZero \varphi) = \alpha,
$$
$\gamma$ étant une \deux{}cellule de $\mathdeuxcat{A}$ et $\varphi$ une \deux{}cellule de $\mathdeuxcat{B}$. 

Les diverses unités et compositions sont définies de façon « évidente ». 

Considérons la \deux{}catégorie $S(u)$\index[not]{Su@$S(u)$} définie comme suit. Ses objets sont les triplets $(b, a , k : b \to u(a))$ avec $b$ un objet de $\mathdeuxcat{B}$, $a$ un objet de $\mathdeuxcat{A}$ et $k$ une \un{}cellule de $\mathdeuxcat{B}$. Les \un{}cellules de $(b,a,k)$ vers $(b',a',k')$ sont les triplets $(f : b' \to b, g : a \to a', \alpha : u(g) k f \Rightarrow k')$, $f$ étant une \un{}cellule de $\mathdeuxcat{B}$, $g$ une \un{}cellule de $\mathdeuxcat{A}$ et $\alpha$ une \deux{}cellule de $\mathdeuxcat{B}$. Les \deux{}cellules de $(f,g,\alpha)$ vers $(f',g',\alpha')$ sont les couples $(\varphi : f \Rightarrow f', \gamma : g \Rightarrow g')$ avec $\varphi$ une \deux{}cellule de $\mathdeuxcat{B}$ et $\gamma$ une \deux{}cellule de $\mathdeuxcat{A}$ telles que $\alpha' \CompDeuxUn (u(\gamma) \CompDeuxZero k \CompDeuxZero \varphi) = \alpha$. 

Il existe alors des isomorphismes canoniques
$$
\begin{aligned}
S(u) &\to \DeuxInt{\DeuxCatUnOp{\mathdeuxcat{B}}} \OpTrancheCoLax{\mathdeuxcat{A}}{u}{b}
\\
(b, a, k) &\mapsto (b, (a,k))
\\
(f,g,\alpha) &\mapsto (f,(g,\alpha))
\\
(\varphi, \gamma) &\mapsto (\varphi, \gamma)
\end{aligned}
$$
et
$$
\begin{aligned}
\DeuxInt{\DeuxCatUnOp{\mathdeuxcat{B}}} \OpTrancheCoLax{\mathdeuxcat{A}}{u}{b} &\to S(u) 
\\
(b, (a,k)) &\mapsto (b, a, k) 
\\
(f,(g,\alpha)) &\mapsto (f,g,\alpha) 
\\
(\varphi, \gamma) &\mapsto (\varphi, \gamma)
\end{aligned}
$$
inverses l'un de l'autre, ainsi que des isomorphismes canoniques 
$$
\begin{aligned}
S(u) &\to \DeuxInt{\mathdeuxcat{A}} \DeuxCatUnOp{\left(\TrancheCoLax{\mathdeuxcat{B}}{}{u(a)}\right)}
\\
(b, a, k) &\mapsto (a, (b,k))
\\
(f,g,\alpha) &\mapsto (g,(f,\alpha))
\\
(\varphi, \gamma) &\mapsto (\gamma, \varphi)
\end{aligned}
$$
et 
$$
\begin{aligned}
\DeuxInt{\mathdeuxcat{A}} \DeuxCatUnOp{\left(\TrancheCoLax{\mathdeuxcat{B}}{}{u(a)}\right)} &\to S(u)
\\
(a, (b,k)) &\mapsto (b, a, k)
\\
(g,(f,\alpha)) &\mapsto (f,g,\alpha)
\\
(\gamma, \varphi) &\mapsto (\varphi, \gamma)
\end{aligned}
$$
inverses l'un de l'autre. 

Les projections canoniques
$$
\begin{aligned}
s_{u}\index[not]{su@$s_{u}$} : S(u) &\to \DeuxCatUnOp{\mathdeuxcat{B}}
\\
(b,a,k) &\mapsto b
\\
(f,g,\alpha) &\mapsto f
\\
(\varphi, \gamma) &\mapsto \varphi
\end{aligned}
$$
et
$$
\begin{aligned}
t_{u}\index[not]{tu@$t_{u}$} : S(u) &\to \mathdeuxcat{A}
\\
(b,a,k) &\mapsto a
\\
(f,g,\alpha) &\mapsto g
\\
(\varphi, \gamma) &\mapsto \gamma
\end{aligned}
$$
sont donc des précoopfibrations. De plus, pour tout objet $a$ de $\mathdeuxcat{A}$, la \deux{}catégorie $\DeuxCatUnOp{\left(\TrancheCoLax{\mathdeuxcat{B}}{}{u(a)}\right)}$ op-admet un objet admettant un objet final. En vertu de ce qui précède, le morphisme canonique $\DeuxCatUnOp{\left(\TrancheCoLax{\mathdeuxcat{B}}{}{u(a)}\right)} \to \DeuxCatPonct$ est donc dans $\DeuxLocFond{W}$. Or, c'est à ce morphisme canonique que s'identifie le \DeuxFoncteurStrict{} induit entre les fibres au-dessus de $a$ par le diagramme
$$
\xymatrix{
S(u)
\ar[rr]^{t_{u}}
\ar[dr]_{t_{u}}
&&\mathdeuxcat{A}
\ar[dl]^{1_{\mathdeuxcat{A}}}
\\
&\mathdeuxcat{A}
&.
}
$$
Comme $t_{u}$ et $1_{\mathdeuxcat{A}}$ sont des précoopfibrations, $t_{u}$ est dans $\DeuxLocFond{W}$ en vertu de la condition $LF\gamma$. 

Soit 
$$
\xymatrix{
\mathdeuxcat{A}
\ar[r]^{u}
\ar[d]_{v}
&
\mathdeuxcat{B}
\ar[d]^{w}
\\
\mathdeuxcat{A'}
\ar[r]_{u'}
&
\mathdeuxcat{B'}
}
$$
un carré commutatif dans $\DeuxCat$. On définit un \DeuxFoncteurStrict{}
$$
\begin{aligned}
S(v,w)\index[not]{Svw@$S(v,w)$} : S(u) &\to S(u')
\\
(b,a,k) &\mapsto (w(b), v(a), w(k))
\\
(f, g, \alpha) &\mapsto (w(f), v(g), w(\alpha))
\\
(\varphi, \gamma) &\mapsto (w(\varphi), v(\gamma)).
\end{aligned}
$$
Cela fournit un diagramme commutatif
$$
\xymatrix{
\DeuxCatUnOp{\mathdeuxcat{B}}
\ar[d]_{\DeuxFoncUnOp{w}}
&
S(u)
\ar[l]_{s_{u}}
\ar[r]^{t_{u}}
\ar[d]^{S(v,w)}
&
\mathdeuxcat{A}
\ar[d]^{v}
\\
\DeuxCatUnOp{\mathdeuxcat{B'}}
&
S(u')
\ar[l]^{s_{u'}}
\ar[r]_{t_{u'}}
&
\mathdeuxcat{A'}
}
$$
dans lequel $t_{u}$ et $t_{u'}$ sont dans $\DeuxLocFond{W}$ en vertu de ce qui précède. 

Soit
$$
\xymatrix{
\mathdeuxcat{A}
\ar[rr]^{u}
\ar[dr]_{w}
&&\mathdeuxcat{B}
\ar[dl]^{v}
\\
&\mathdeuxcat{C}
}
$$
un diagramme commutatif dans $\DeuxCat$ tel que, pour tout objet $c$ de $\mathdeuxcat{C}$, le \DeuxFoncteurStrict{} \mbox{$\DeuxFoncOpTrancheCoLax{u}{c} : \OpTrancheCoLax{\mathdeuxcat{A}}{w}{c} \to \OpTrancheCoLax{\mathdeuxcat{B}}{v}{c}$} soit dans $\DeuxLocFond{W}$. En vertu de ce qui précède, cela fournit un diagramme commutatif 
$$
\xymatrix{
\DeuxCatUnOp{\mathdeuxcat{C}}
\ar[d]_{1_{\DeuxCatUnOp{\mathdeuxcat{C}}}}
&
S(w)
\ar[l]_{s_{w}}
\ar[r]^{t_{w}}
\ar[d]^{S(u,1_{\mathdeuxcat{C}})}
&
\mathdeuxcat{A}
\ar[d]^{u}
\\
\DeuxCatUnOp{\mathdeuxcat{C}}
&
S(v)
\ar[l]^{s_{v}}
\ar[r]_{t_{v}}
&
\mathdeuxcat{B}
&.
}
$$
En particulier, on a un diagramme commutatif
$$
\xymatrix{
S(w)
\ar[rr]^{S(u, 1_{\mathdeuxcat{C}})}
\ar[dr]_{s_{w}}
&&
S(v)
\ar[dl]^{s_{v}}
\\
&
\DeuxCatUnOp{\mathdeuxcat{C}}
}
$$
dans lequel $s_{w}$ et $s_{v}$ sont des précoopfibrations. Pour tout objet $c$ de $\mathdeuxcat{C}$, le \DeuxFoncteurStrict{} induit par ce diagramme entre les fibres au-dessus de $c$ s'identifie à $\DeuxFoncOpTrancheCoLax{u}{c}$, qui est dans $\DeuxLocFond{W}$ par hypothèse. Par conséquent, $S(u, 1_{\mathdeuxcat{C}})$ est dans $\DeuxLocFond{W}$ en vertu de la condition LF$\gamma$. Comme $t_{w}$ et $t_{v}$ sont dans $\DeuxLocFond{W}$, qui est faiblement saturée, $u$ est dans $\DeuxLocFond{W}$, ce qui termine la démonstration de la condition LF$3''$ et donc celle de l'implication $(v) \Rightarrow (iii)$.

Montrons l'implication $(iii) \Rightarrow (i)$. Notons $\DeuxFoncUnOp{\DeuxLocFond{W}}$\index[not]{Wop@$\DeuxFoncUnOp{\DeuxLocFond{W}}$} la partie de $\UnCell{\DeuxCat}$ définie par
$$
\DeuxFoncUnOp{\DeuxLocFond{W}} = \{ u \in \UnCell{\DeuxCat} \vert \DeuxFoncUnOp{u} \in \DeuxLocFond{W} \}.
$$
Vérifions que la classe $\DeuxFoncUnOp{\DeuxLocFond{W}}$ constitue un \ClasseDeuxLocFond{}. La saturation faible de $\DeuxFoncUnOp{\DeuxLocFond{W}}$ est immédiate. Soit $\mathdeuxcat{A}$ une petite \deux{}catégorie admettant un objet admettant un objet final. Alors, $\DeuxCatUnOp{\mathdeuxcat{A}}$ op-admet un objet admettant un objet final, donc la flèche $\DeuxCatUnOp{\mathdeuxcat{A}} \to \DeuxCatPonct$ est dans $\DeuxLocFond{W}$, c'est-à-dire que la flèche $\mathdeuxcat{A} \to \DeuxCatPonct$ est dans $\DeuxFoncUnOp{\DeuxLocFond{W}}$. Soit 
$$
\xymatrix{
\mathdeuxcat{A}
\ar[rr]^{u}
\ar[dr]_{w}
&&\mathdeuxcat{B}
\ar[dl]^{v}
\\
&\mathdeuxcat{C}
}
$$
un diagramme commutatif dans $\DeuxCat$ tel que, pour tout objet $c$ de $\mathdeuxcat{C}$, le \DeuxFoncteurStrict{} $\DeuxFoncTrancheCoLax{u}{c}$ soit dans $\DeuxFoncUnOp{\DeuxLocFond{W}}$, c'est-à-dire tel que le \DeuxFoncteurStrict{} $\DeuxFoncUnOp{(\DeuxFoncTrancheCoLax{u}{c})}$ soit dans $\DeuxLocFond{W}$, c'est-à-dire tel que le \DeuxFoncteurStrict{} $\DeuxFoncUnOp{(\DeuxFoncTrancheCoLax{\DeuxFoncUnOp{(\DeuxFoncUnOp{u})}}{c})}$ soit dans $\DeuxLocFond{W}$. Pour tout objet $c$ de $\mathdeuxcat{C}$, le \DeuxFoncteurStrict{} $\DeuxFoncOpTrancheCoLax{\DeuxFoncUnOp{u}}{c}$ est donc dans $\DeuxLocFond{W}$. La condition LF3$'$ implique que $\DeuxFoncUnOp{u}$ est dans $\DeuxLocFond{W}$, c'est-à-dire que $u$ est dans $\DeuxFoncUnOp{\DeuxLocFond{W}}$, qui est donc bien un \ClasseDeuxLocFond{}. Par conséquent, en vertu de la proposition \ref{DeuxFoncUnOpW}, $\DeuxFoncUnOp{\DeuxLocFond{W}} = \DeuxLocFond{W}$. La classe $\DeuxLocFond{W}$ est donc un localisateur fondamental de $\DeuxCat$, ce qui démontre l'implication $(iii) \Rightarrow (i)$. 

Les autres implications se démontrent de façon analogue ou se déduisent de ce qui précède par des arguments de dualité.   

\end{proof}

\begin{rem}
Le lecteur à la page n'aura pas manqué de remarquer que le théorème \ref{DeuxLocFondHuitDef} s'inspire de \cite[théorème 2.1.11]{THG}. Pour en obtenir un exact analogue, toutefois, il aurait fallu disposer de notions de fibration et de morphisme cartésien dans $\DeuxCat$. Le développement actuel de la théorie des fibrations dans $\DeuxCat$ ne permet malheureusement pas, comme nous l'avons déjà remarqué (remarque \ref{RemarquesPrefibration}), de considérer ces notions comme définitivement dégagées. Lorsqu'elles seront fermement établies, il devrait être facile de renforcer le théorème \ref{DeuxLocFondHuitDef} ci-dessus pour en faire un analogue \deux{}catégorique plus précis de \cite[théorème 2.1.11]{THG}.  
\end{rem}

\begin{paragr}\label{IntegrationCartesien}
Soient $\mathdeuxcat{A}$ et $\mathdeuxcat{B}$ des \deux{}catégories et $u : \mathdeuxcat{A} \to \mathdeuxcat{B}$ et $\mathdeuxcat{B} \to \DeuxCatDeuxCat$ des \DeuxFoncteursStricts{}. Ces données permettent de définir un \DeuxFoncteurStrict{}
$$
\begin{aligned}
U : \DeuxInt{\mathdeuxcat{A}} Fu &\to \DeuxInt{\mathdeuxcat{B}} F
\\
(a,x) &\mapsto (u(a), x)
\\
(f,r) &\mapsto (u(f), r)
\\
(\gamma, \varphi) &\mapsto (u(\gamma), \varphi).
\end{aligned}
$$
Il résulte d'une simple vérification que le carré 
$$
\xymatrix{
\DeuxInt{\mathdeuxcat{A}} Fu
\ar[r]^{U}
\ar[d]_{P_{Fu}}
&
\DeuxInt{\mathdeuxcat{B}} F
\ar[d]^{P_{F}}
\\
\mathdeuxcat{A}
\ar[r]_{u}
&
\mathdeuxcat{B}
}
$$
est cartésien, les flèches verticales désignant les projections canoniques. 
\end{paragr}

\begin{lemme}\label{CarreTrancheCartesien}
Pour tout \DeuxFoncteurStrict{} $u : \mathdeuxcat{A} \to \mathdeuxcat{B}$, pour tout objet $b$ de $\mathdeuxcat{B}$, les carrés
$$
\xymatrix{
\TrancheLax{\mathdeuxcat{A}}{u}{b}
\ar[r]
\ar[d]_{\DeuxFoncTrancheLax{u}{b}}
&
\mathdeuxcat{A}
\ar[d]^{u}
&
\OpTrancheLax{\mathdeuxcat{A}}{u}{b}
\ar[r]
\ar[d]_{\DeuxFoncOpTrancheLax{u}{b}}
&
\mathdeuxcat{A}
\ar[d]^{u}
&
\TrancheCoLax{\mathdeuxcat{A}}{u}{b}
\ar[r]
\ar[d]_{\DeuxFoncTrancheCoLax{u}{b}}
&
\mathdeuxcat{A}
\ar[d]^{u}
&
\OpTrancheCoLax{\mathdeuxcat{A}}{u}{b}
\ar[r]
\ar[d]_{\DeuxFoncOpTrancheCoLax{u}{b}}
&
\mathdeuxcat{A}
\ar[d]^{u}
\\
\TrancheLax{\mathdeuxcat{B}}{}{b}
\ar[r]
&
\mathdeuxcat{B}
&
\OpTrancheLax{\mathdeuxcat{B}}{}{b}
\ar[r]
&
\mathdeuxcat{B}
&
\TrancheCoLax{\mathdeuxcat{B}}{}{b}
\ar[r]
&
\mathdeuxcat{B}
&
\OpTrancheCoLax{\mathdeuxcat{B}}{}{b}
\ar[r]
&
\mathdeuxcat{B}
&,
}
$$
dans lesquels les flèches horizontales désignent les projections canoniques, sont cartésiens dans $\DeuxCat$.
\end{lemme}

\begin{proof}
Les quatre assertions, duales l'une de l'autre, sont conséquences du caractère cartésien du diagramme figurant dans le paragraphe \ref{IntegrationCartesien}, puisque les diverses \deux{}catégories tranches et \deux{}foncteurs tranches ne sont que des cas particuliers d'intégrales et de \deux{}foncteurs induits suivant le procédé décrit dans le paragraphe \ref{IntegrationCartesien}. 
\end{proof}

\emph{On suppose fixé un localisateur fondamental $\DeuxLocFond{W}$ de $\DeuxCat$.}

\begin{prop}\label{UniverselColaxAspherique}
Un morphisme de $\DeuxCat$ universellement dans $\DeuxLocFond{W}$ est lax-asphérique, lax-opasphérique, colax-asphérique et colax-opasphérique. 
\end{prop}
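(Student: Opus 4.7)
Le plan consiste � se ramener, pour chacune des quatre assertions, � une application directe de la d�finition de \emph{universellement dans} $\DeuxLocFond{W}$ gr�ce au lemme \ref{CarreTrancheCartesien}. Ce dernier fournit, pour tout morphisme $u : \mathdeuxcat{A} \to \mathdeuxcat{B}$ de $\DeuxCat$ et tout objet $b$ de $\mathdeuxcat{B}$, quatre carr�s cart�siens dans lesquels la fl�che verticale droite est $u$ et la fl�che verticale gauche est respectivement $\DeuxFoncTrancheLax{u}{b}$, $\DeuxFoncOpTrancheLax{u}{b}$, $\DeuxFoncTrancheCoLax{u}{b}$ et $\DeuxFoncOpTrancheCoLax{u}{b}$, la fl�che horizontale basse �tant � chaque fois la projection canonique de la \deux{}cat�gorie tranche correspondante de $\mathdeuxcat{B}$ vers $\mathdeuxcat{B}$. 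En particulier, les \deux{}foncteurs stricts tranches qui nous int�ressent sont bien, dans les quatre cas, des changements de base de $u$.

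Soit donc $u : \mathdeuxcat{A} \to \mathdeuxcat{B}$ un morphisme de $\DeuxCat$ universellement dans $\DeuxLocFond{W}$. Par d�finition, pour tout carr� cart�sien dont la fl�che verticale droite est $u$, la fl�che verticale gauche appartient � $\DeuxLocFond{W}$. En appliquant cette propri�t� aux quatre carr�s cart�siens fournis par le lemme \ref{CarreTrancheCartesien}, on obtient imm�diatement que, pour tout objet $b$ de $\mathdeuxcat{B}$, les quatre \DeuxFoncteursStricts{} $\DeuxFoncTrancheLax{u}{b}$, $\DeuxFoncOpTrancheLax{u}{b}$, $\DeuxFoncTrancheCoLax{u}{b}$ et $\DeuxFoncOpTrancheCoLax{u}{b}$ sont dans $\DeuxLocFond{W}$. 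Au vu de la d�finition \ref{Zonzon} appliqu�e dans le cas particulier o� $\mathdeuxcat{C} = \mathdeuxcat{B}$ et $v = 1_{\mathdeuxcat{B}}$, cela �quivaut exactement aux quatre assertions � d�montrer.

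Aucune difficult� n'est � pr�voir. Le r�sultat est en d�finitive tautologique une fois le lemme \ref{CarreTrancheCartesien} �tabli, lemme qui encode toute l'information g�om�trique pertinente sous la forme agr�able de quatre carr�s cart�siens. La clause de quarrabilit� figurant dans la d�finition de \emph{universellement dans} $\DeuxLocFond{W}$ ne pose aucun probl�me, puisque le lemme \ref{CarreTrancheCartesien} garantit lui-m�me l'existence, dans $\DeuxCat$, des produits fibr�s que nous utilisons.
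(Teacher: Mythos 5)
Votre preuve est correcte et suit essentiellement la m�me voie que celle du texte : la d�monstration originale se r�duit pr�cis�ment � invoquer le lemme \ref{CarreTrancheCartesien} et � appliquer la d�finition de ��universellement dans $\DeuxLocFond{W}$�� aux quatre carr�s cart�siens, ce que vous explicitez. Rien � redire.
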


\begin{proof}
C'est une conséquence immédiate du lemme \ref{CarreTrancheCartesien}.
\end{proof}

\begin{prop}\label{1.1.4.THG}
Soit $\mathdeuxcat{A}$ une petite \deux{}catégorie. Les conditions suivantes sont équivalentes.
\begin{itemize}
\item[(a)] Le morphisme canonique $\mathdeuxcat{A} \to e$ est une équivalence faible (autrement dit, $\mathdeuxcat{A}$ est asphé\-rique).
\item[(b)] Le morphisme canonique $\mathdeuxcat{A} \to e$ est lax-asphérique.
\item[(b$'$)] Le morphisme canonique $\mathdeuxcat{A} \to e$ est lax-opasphérique.
\item[(b$''$)] Le morphisme canonique $\mathdeuxcat{A} \to e$ est colax-asphérique.
\item[(b$'''$)] Le morphisme canonique $\mathdeuxcat{A} \to e$ est colax-opasphérique.
\item[(c)] Le morphisme canonique $\mathdeuxcat{A} \to e$ est universellement dans $\DeuxLocFond{W}$.
\end{itemize}
\end{prop}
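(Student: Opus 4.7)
The plan is to reduce the six conditions to two essentially trivial verifications, using the machinery already developed in the chapter.

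First I would handle the equivalence of (a) with each of (b), (b$'$), (b$''$), (b$'''$) by a direct inspection of what the various slice constructions become when the target 2-category is $\DeuxCatPonct$. Under the morphism $u:\mathdeuxcat{A}\to\DeuxCatPonct$ there is only one object $*$ to consider; moreover the data $(a,p:u(a)\to *)$, $(f,\alpha)$ etc. defining objects, $1$-cells and $2$-cells of $\TrancheLax{\mathdeuxcat{A}}{u}{*}$ collapse (since $p$, $\alpha$ and all higher structure in $\DeuxCatPonct$ are forced to be identities), so one obtains a canonical isomorphism $\TrancheLax{\mathdeuxcat{A}}{u}{*}\simeq\mathdeuxcat{A}$, and similarly $\TrancheLax{\DeuxCatPonct}{}{*}\simeq\DeuxCatPonct$, under which the induced $\DeuxFoncTrancheLax{u}{*}$ becomes simply $u$. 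The exact same collapse occurs for $\OpTrancheLax{}{u}{*}$, $\TrancheCoLax{}{u}{*}$ and $\OpTrancheCoLax{}{u}{*}$, so each of (b)--(b$'''$) tautologically reduces to the assertion that $u$ itself is in $\DeuxLocFond{W}$, that is to (a).

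The implication (c)$\Rightarrow$(a) is immediate: the pullback of $\mathdeuxcat{A}\to\DeuxCatPonct$ along $1_{\DeuxCatPonct}$ is $\mathdeuxcat{A}\to\DeuxCatPonct$ itself, which must therefore lie in $\DeuxLocFond{W}$.

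For (a)$\Rightarrow$(c), given an arbitrary small 2-category $\mathdeuxcat{B}$ I would consider the pullback
\[
\xymatrix{
\mathdeuxcat{A}\times\mathdeuxcat{B}\ar[r]\ar[d]_{pr_{2}}&\mathdeuxcat{A}\ar[d]\\
\mathdeuxcat{B}\ar[r]&\DeuxCatPonct
}
\]
and show that the projection $pr_{2}:\mathdeuxcat{A}\times\mathdeuxcat{B}\to\mathdeuxcat{B}$ is in $\DeuxLocFond{W}$. By the remark \ref{ProjectionPrefibrationPreuve} this projection is simultaneously a pr\'efibration, pr\'eopfibration, pr\'ecofibration and pr\'ecoopfibration, and its fibre above any object $b$ of $\mathdeuxcat{B}$ identifies canonically with $\mathdeuxcat{A}$, hence is asph\'erique by (a). Applying any one of the four fibre-asphericity propositions (for instance \ref{PrecoopfibrationFibresAspheriquesW}) gives immediately that $pr_{2}\in\DeuxLocFond{W}$, which is exactly (c).

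There is no real obstacle in this proof: the whole argument rests on recognising that slices over the terminal 2-category are trivial, and on invoking the pre-fibration-with-aspherical-fibres machinery already developed. The only mild subtlety worth double-checking is that the four variants of slice all genuinely collapse to $\mathdeuxcat{A}$ with the induced 2-functor becoming $u$; this is ensured by the vacuity of higher data in $\DeuxCatPonct$.
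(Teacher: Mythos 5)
Your proof is correct, and it is legitimately non-circular given the order of results in the chapter; but it takes a different route from the paper's in both non-trivial directions. For the equivalence of (a) with (b)--(b$'''$) you argue, as the paper does (it simply declares it ``imm\'ediate''), that all four slices over $\DeuxCatPonct$ collapse to $\mathdeuxcat{A}$ with the induced morphism becoming $p_{\mathdeuxcat{A}}$ itself. For the return from (c), you pull back along the identity of $\DeuxCatPonct$, which is shorter than the paper's choice: the paper proves (c) $\Rightarrow$ (b) by invoking the proposition \ref{UniverselColaxAspherique}, itself resting on the cartesianness of the slice squares (lemme \ref{CarreTrancheCartesien}); both close the cycle. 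The real divergence is in (a) $\Rightarrow$ (c). The paper argues directly from the axioms: it shows that each slice $\TrancheCoLax{(\mathdeuxcat{A} \times \mathdeuxcat{B})}{p_{2}}{b} \simeq \mathdeuxcat{A} \times (\TrancheCoLax{\mathdeuxcat{B}}{}{b})$ is asph\'erique by applying LF3 a second time to the projection onto $\mathdeuxcat{A}$, whose slices $(\TrancheCoLax{\mathdeuxcat{A}}{}{a}) \times (\TrancheCoLax{\mathdeuxcat{B}}{}{b})$ admit an object admitting a final object (LF2); so it uses only LF2, LF3 and explicit product-slice isomorphisms. You instead recognise $p_{2}$ as a pr\'ecoopfibration (remarque \ref{ProjectionPrefibrationPreuve}) with fibres isomorphic to $\mathdeuxcat{A}$ and invoke la proposition \ref{PrecoopfibrationFibresAspheriquesW}; this is a one-line conclusion, but it leans on the heavier chain already established (pr\'eadjoints, dualit\'es, variantes du Th\'eor\`eme A), all of which indeed precede the statement, so there is no circularity. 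The one thing to keep in mind with your route is that le corollaire \ref{ProduitAspheriques} (produit d'asph\'eriques) is deduced \emph{from} this proposition in the paper and must not be used here --- and you correctly avoid it. The only cosmetic omission is the quarrability clause of ``universellement dans $\DeuxLocFond{W}$'', which is trivially satisfied since every pullback over $\DeuxCatPonct$ is a product.
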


\begin{proof}
L'équivalence de $(a)$, $(b)$, $(b')$, $(b'')$ et $(b''')$ est immédiate. L'implication \mbox{$(c) \Rightarrow (b)$} résulte (tout comme ses trois variantes duales) de la proposition \ref{UniverselColaxAspherique}. Montrons que $(a)$ implique $(c)$. Il s'agit de montrer que, pour toute petite \deux{}catégorie $\mathdeuxcat{B}{}$, la projection canonique \mbox{$p_{2} : \mathdeuxcat{A} \times \mathdeuxcat{B}{} \to \mathdeuxcat{B}{}$} est une équivalence faible. Il suffit pour cela de montrer que, pour tout objet $b$ de $\mathdeuxcat{B}{}$, la \deux{}catégorie $\TrancheCoLax{(\mathdeuxcat{A} \times \mathdeuxcat{B})}{p_{2}}{b}$ est asphérique. Or, on a un isomorphisme canonique de \deux{}catégories $\TrancheCoLax{(\mathdeuxcat{A} \times \mathdeuxcat{B})}{p_{2}}{b} \simeq \mathdeuxcat{A} \times (\TrancheCoLax{\mathdeuxcat{B}}{}{b})$ et, comme $\mathdeuxcat{A}$ est asphérique par hypothèse, il suffit de montrer que la première projection $q_{1} : \mathdeuxcat{A} \times (\TrancheCoLax{\mathdeuxcat{B}}{}{b}) \to \mathdeuxcat{A}$ est une équivalence faible. Il suffit pour cela de montrer que, pour tout objet $a$ de $\mathdeuxcat{A}$, la \deux{}catégorie $\TrancheCoLax{(\mathdeuxcat{A} \times (\TrancheCoLax{\mathdeuxcat{B}}{}{b}))}{q_{1}}{a}$ est asphérique. Comme on a un isomorphisme canonique $\TrancheCoLax{(\mathdeuxcat{A} \times (\TrancheCoLax{\mathdeuxcat{B}}{}{b}))}{q_{1}}{a}  \simeq (\TrancheCoLax{\mathdeuxcat{A}}{}{a})  \times (\TrancheCoLax{\mathdeuxcat{B}}{}{b})$, cela résulte de la condition LF2, puisque la \deux{}catégorie $(\TrancheCoLax{\mathdeuxcat{A}}{}{a})  \times (\TrancheCoLax{\mathdeuxcat{B}}{}{b})$ admet un objet admettant un objet final. 
\end{proof}

\begin{corollaire}\label{ProduitAspheriques}
Un produit fini de petites \deux{}catégories asphériques est as\-phé\-rique.
\end{corollaire}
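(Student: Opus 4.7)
La strat�gie est de tout ramener � l'�quivalence fournie par la proposition \ref{1.1.4.THG} entre asph�ricit� et asph�ricit� universelle pour les morphismes canoniques vers $\DeuxCatPonct$. On proc�de par r�currence sur le nombre $n$ de facteurs. Les cas $n = 0$ (le produit vide est $\DeuxCatPonct$, asph�rique en vertu de l'exemple \ref{PointAsph}) et $n = 1$ sont imm�diats, et le pas d'induction se ram�ne �videmment au cas $n = 2$ via l'isomorphisme canonique $\mathdeuxcat{A}_{1} \times \cdots \times \mathdeuxcat{A}_{n} \simeq \mathdeuxcat{A}_{1} \times (\mathdeuxcat{A}_{2} \times \cdots \times \mathdeuxcat{A}_{n})$.

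Pour le cas $n = 2$, soient $\mathdeuxcat{A}$ et $\mathdeuxcat{B}$ deux petites \deux{}cat�gories asph�riques. En vertu de l'implication $(a) \Rightarrow (c)$ de la proposition \ref{1.1.4.THG}, le morphisme canonique $\mathdeuxcat{A} \to \DeuxCatPonct$ est universellement dans $\DeuxLocFond{W}$. En particulier, pour la \deux{}cat�gorie $\mathdeuxcat{B}$, le carr� cart�sien
$$
\xymatrix{
\mathdeuxcat{A} \times \mathdeuxcat{B}
\ar[r]
\ar[d]_{p_{2}}
&
\mathdeuxcat{A}
\ar[d]
\\
\mathdeuxcat{B}
\ar[r]
&
\DeuxCatPonct
}
$$
montre que la projection canonique $p_{2} : \mathdeuxcat{A} \times \mathdeuxcat{B} \to \mathdeuxcat{B}$ est dans $\DeuxLocFond{W}$.

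Il ne reste qu'� composer : le morphisme canonique $\mathdeuxcat{A} \times \mathdeuxcat{B} \to \DeuxCatPonct$ se factorise en $\mathdeuxcat{A} \times \mathdeuxcat{B} \xrightarrow{p_{2}} \mathdeuxcat{B} \to \DeuxCatPonct$. Comme $p_{2}$ est dans $\DeuxLocFond{W}$ et que $\mathdeuxcat{B} \to \DeuxCatPonct$ l'est aussi par hypoth�se d'asph�ricit� de $\mathdeuxcat{B}$, la saturation faible (propri�t� FS2 de la condition LF1) entra�ne que le morphisme compos� $\mathdeuxcat{A} \times \mathdeuxcat{B} \to \DeuxCatPonct$ est dans $\DeuxLocFond{W}$. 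Autrement dit, $\mathdeuxcat{A} \times \mathdeuxcat{B}$ est asph�rique. Il n'y a pas d'obstacle majeur dans cette d�monstration, l'essentiel du travail �tant d�j� contenu dans la proposition \ref{1.1.4.THG} que l'on vient d'�tablir.
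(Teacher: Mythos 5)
Votre démonstration est correcte et suit essentiellement la même voie que celle du texte : on invoque la proposition \ref{1.1.4.THG} pour obtenir que la projection $\mathdeuxcat{A} \times \mathdeuxcat{B} \to \mathdeuxcat{B}$ est une équivalence faible, puis on conclut par saturation faible (argument de « $2$ sur $3$ ») appliquée au triangle $\mathdeuxcat{A} \times \mathdeuxcat{B} \to \mathdeuxcat{B} \to \DeuxCatPonct$. L'explicitation de la récurrence sur le nombre de facteurs et du cas du produit vide est un simple complément de rédaction, le texte se ramenant implicitement au même cas binaire.
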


\begin{proof}
Soient $\mathdeuxcat{A}$ et $\mathdeuxcat{B}{}$ des petites \deux{}catégories asphériques. En vertu de la proposition \ref{1.1.4.THG}, la projection canonique $\mathdeuxcat{A} \times \mathdeuxcat{B}{} \to \mathdeuxcat{B}{}$ est une équivalence faible. Par conséquent, en vertu d'un argument de $2$ sur $3$, le morphisme canonique $\mathdeuxcat{A} \times \mathdeuxcat{B}{} \to e$ est une équivalence faible, ce qui prouve le résultat. 
\end{proof}

\begin{corollaire}\label{ProduitMorphismesColaxAspheriques}
Un produit fini de \DeuxFoncteursStricts{} lax-asphériques (\emph{resp.} lax-op\-asphé\-riques, \emph{resp.} colax-asphériques, \emph{resp.} colax-opasphériques) est lax-asphé\-rique (\emph{resp.} lax-op\-asphé\-rique, \emph{resp.} colax-asphérique, \emph{resp.} co\-lax-\-op\-asphé\-rique).
\end{corollaire}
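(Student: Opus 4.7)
The plan is to treat the lax-asph�rique case explicitly and deduce the other three by the duality arguments established earlier in the chapter (Propositions \ref{DeuxFoncUnOpW}, \ref{DeuxFoncDeuxOpW}, \ref{DeuxFoncToutOpW}, together with the identification of the various tranches via $?^{op}$, $?^{co}$, $?^{coop}$). An easy induction on the number of factors reduces matters to the case of a product of two \DeuxFoncteursStricts{} $u_{1} : \mathdeuxcat{A}_{1} \to \mathdeuxcat{B}_{1}$ and $u_{2} : \mathdeuxcat{A}_{2} \to \mathdeuxcat{B}_{2}$, both assumed lax-asph�riques.

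First, I would establish, by a routine direct comparison of objects, \un{}cellules and \deux{}cellules (following the explicit description of the lax tranches in Section \ref{SectionDeuxCategoriesTranches}), canonical isomorphismes
\[
\TrancheLax{(\mathdeuxcat{A}_{1} \times \mathdeuxcat{A}_{2})}{u_{1} \times u_{2}}{(b_{1}, b_{2})} \simeq \TrancheLax{\mathdeuxcat{A}_{1}}{u_{1}}{b_{1}} \times \TrancheLax{\mathdeuxcat{A}_{2}}{u_{2}}{b_{2}}
\]
et
\[
\TrancheLax{(\mathdeuxcat{B}_{1} \times \mathdeuxcat{B}_{2})}{}{(b_{1}, b_{2})} \simeq \TrancheLax{\mathdeuxcat{B}_{1}}{}{b_{1}} \times \TrancheLax{\mathdeuxcat{B}_{2}}{}{b_{2}}
\]
pour tout $(b_{1}, b_{2}) \in \Objets{\mathdeuxcat{B}_{1} \times \mathdeuxcat{B}_{2}}$, modulo lesquels le \DeuxFoncteurStrict{} $\DeuxFoncTrancheLax{(u_{1} \times u_{2})}{(b_{1}, b_{2})}$ s'identifie au produit $\DeuxFoncTrancheLax{u_{1}}{b_{1}} \times \DeuxFoncTrancheLax{u_{2}}{b_{2}}$.

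Ensuite, j'observerais que, pour chaque $i = 1, 2$, la \deux{}cat�gorie $\TrancheLax{\mathdeuxcat{B}_{i}}{}{b_{i}}$ est asph�rique (corollaire \ref{TrancheLaxAspherique}) et que le \DeuxFoncteurStrict{} $\DeuxFoncTrancheLax{u_{i}}{b_{i}}$ est, par hypoth�se de lax-asph�ricit�, une �quivalence faible ; par un argument de $2$ sur $3$ (dans le triangle form� avec le morphisme canonique vers $\DeuxCatPonct$), il s'ensuit que $\TrancheLax{\mathdeuxcat{A}_{i}}{u_{i}}{b_{i}}$ est aussi asph�rique. Gr�ce au corollaire \ref{ProduitAspheriques}, la source $\TrancheLax{\mathdeuxcat{A}_{1}}{u_{1}}{b_{1}} \times \TrancheLax{\mathdeuxcat{A}_{2}}{u_{2}}{b_{2}}$ et le but $\TrancheLax{\mathdeuxcat{B}_{1}}{}{b_{1}} \times \TrancheLax{\mathdeuxcat{B}_{2}}{}{b_{2}}$ du \DeuxFoncteurStrict{} consid�r� sont donc tous deux asph�riques. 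En vertu de la remarque \ref{SiEntreAsphAlorsAsph}, tout \DeuxFoncteurStrict{} entre \deux{}cat�gories asph�riques �tant une �quivalence faible, on en d�duit que $\DeuxFoncTrancheLax{(u_{1} \times u_{2})}{(b_{1}, b_{2})}$ est une �quivalence faible, ce qui ach�ve de montrer que $u_{1} \times u_{2}$ est lax-asph�rique.

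Les trois cas restants s'obtiennent alors par dualit�. Par exemple, pour le cas colax-asph�rique, on observe que $u$ est colax-asph�rique si et seulement si $\DeuxFoncDeuxOp{u}$ est lax-asph�rique (en utilisant les identifications $\DeuxFoncDeuxOp{(\DeuxFoncTrancheLax{u}{c})} = \DeuxFoncTrancheCoLax{\DeuxFoncDeuxOp{u}}{c}$ du paragraphe \ref{DefFoncInduits} combin�es � la proposition \ref{DeuxFoncDeuxOpW}), et l'on applique le cas lax-asph�rique d�j� trait� au produit $\DeuxFoncDeuxOp{u_{1}} \times \DeuxFoncDeuxOp{u_{2}} = \DeuxFoncDeuxOp{(u_{1} \times u_{2})}$. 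Les cas lax-opasph�rique et colax-opasph�rique se traitent de m�me en utilisant respectivement les dualit�s $?^{op}$ et $?^{coop}$. Le seul point demandant une attention l�g�rement plus soutenue � mais sans difficult� r�elle � est la v�rification explicite des isomorphismes canoniques de tranches au d�but ; une fois celle-ci acquise, la conclusion suit par une application combin�e presque imm�diate des r�sultats ant�rieurs.
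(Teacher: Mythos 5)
Votre démonstration est correcte et suit essentiellement la même démarche que celle du texte : réduction par dualité au cas lax-asphérique, identification canonique de la tranche du produit avec le produit des tranches, puis conclusion via l'asphéricité des tranches (corollaires \ref{TrancheLaxAspherique} et \ref{ProduitAspheriques}) et la saturation faible. Vous explicitez simplement quelques étapes (l'argument de « $2$ sur $3$ », l'identification du morphisme induit, le détail des dualités) que le texte laisse implicites.
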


\begin{proof}
Les quatre cas s'obtiennent évidemment les uns des autres par un argument de dualité. Il suffit de montrer que, si $u : \mathdeuxcat{A} \to \mathdeuxcat{B}{}$ et $u' : \mathcal{A'} \to \mathcal{B'}$ sont des \DeuxFoncteursStricts{} lax-asphé\-riques, alors leur produit $u \times u' : \mathdeuxcat{A} \times \mathcal{A'} \to \mathdeuxcat{B}{} \times \mathcal{B'}$ est lax-asphérique. Pour cela, on remarque que, pour tout objet $(b,b')$ de $\mathdeuxcat{B}{} \times \mathcal{B'}$, la \deux{}catégorie $\TrancheLax{(\mathdeuxcat{A} \times \mathcal{A'})}{u \times u'}{(b,b')}$ est canoniquement isomorphe à $\TrancheLax{\mathdeuxcat{A}}{u}{b} \times \TrancheLax{\mathdeuxcat{A'}}{u'}{b'}$ et, par hypothèse, $\TrancheLax{\mathdeuxcat{A}}{u}{b}$ et $\TrancheLax{\mathdeuxcat{A'}}{u'}{b'}$ sont asphériques. L'assertion résulte donc du corollaire \ref{ProduitAspheriques}.
\end{proof}

\begin{prop}\label{Proposition2.1.3THG}
Un produit fini d'équivalences faibles est une équivalence faible.
\end{prop}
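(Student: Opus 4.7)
The plan is to proceed by induction on the number of factors, so it suffices to treat the case of two weak equivalences $u : \mathdeuxcat{A} \to \mathdeuxcat{B}$ and $u' : \mathcal{A}' \to \mathcal{B}'$ and show that $u \times u' : \mathdeuxcat{A} \times \mathcal{A}' \to \mathdeuxcat{B} \times \mathcal{B}'$ is a weak equivalence. To exploit the machinery already developed, I would factor this morphism as the composite
$$
u \times u' = (u \times 1_{\mathcal{B}'}) \circ (1_{\mathdeuxcat{A}} \times u').
$$
By the weak saturation of $\DeuxLocFond{W}$ (axiom LF1), it is then enough to prove that each of the two factors is a weak equivalence.

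For the factor $1_{\mathdeuxcat{A}} \times u' : \mathdeuxcat{A} \times \mathcal{A}' \to \mathdeuxcat{A} \times \mathcal{B}'$, I would consider the commutative triangle
$$
\xymatrix{
\mathdeuxcat{A} \times \mathcal{A}'
\ar[rr]^{1_{\mathdeuxcat{A}} \times u'}
\ar[dr]_{pr_{1}}
&& \mathdeuxcat{A} \times \mathcal{B}'
\ar[dl]^{pr_{1}}
\\
& \mathdeuxcat{A}
}
$$
whose two oblique arrows are the first projections. By Remark \ref{ProjectionPrefibrationPreuve}, projections are in particular pr�coopfibrations, and the fibers of these projections over an arbitrary object $a$ of $\mathdeuxcat{A}$ identify canonically with $\mathcal{A}'$ and $\mathcal{B}'$ respectively, with the induced morphism between fibers being $u'$, which is a weak equivalence by hypothesis. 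Applying Proposition \ref{2.1.10.THG}(d), we conclude that $1_{\mathdeuxcat{A}} \times u'$ is lax-asph�rique au-dessus de $\mathdeuxcat{A}$ and hence a weak equivalence.

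A completely symmetric argument, using the second projections
$$
\xymatrix{
\mathdeuxcat{A} \times \mathcal{B}'
\ar[rr]^{u \times 1_{\mathcal{B}'}}
\ar[dr]_{pr_{2}}
&& \mathdeuxcat{B} \times \mathcal{B}'
\ar[dl]^{pr_{2}}
\\
& \mathcal{B}'
}
$$
(again pr�coopfibrations, with induced morphism on fibers over $b' \in \mathcal{B}'$ equal to $u$), shows that $u \times 1_{\mathcal{B}'}$ is a weak equivalence. The composition then gives the result. There is no real obstacle here: the whole argument is formal once one has Proposition \ref{2.1.10.THG} and the observation that projections are pr�coopfibrations; the only mild point worth underlining is the decomposition trick allowing each factor to be handled separately, so that in each triangle only one of the two components of the product actually varies and the fibers are well understood.
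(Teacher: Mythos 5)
Your proof is correct, and it reaches the conclusion by a route that differs from the paper's in the key step. The reduction to two factors and the decomposition $u\times u'=(u\times 1_{\mathcal{B}'})\circ(1_{\mathdeuxcat{A}}\times u')$ are exactly the paper's first move (the paper writes $u_{1}\times u_{2}=(1_{\mathdeuxcat{B}_{1}}\times u_{2})(u_{1}\times 1_{\mathdeuxcat{A}_{2}})$ and reduces to showing that $u\times 1_{\mathdeuxcat{C}}$ is a weak equivalence for any small \deux{}cat\'egorie $\mathdeuxcat{C}$). Where you diverge is in how each factor is handled: the paper works with the triangle over the fixed factor and applies axiom LF3 directly, identifying the slice $\TrancheCoLax{(\mathdeuxcat{A}\times\mathdeuxcat{C})}{p}{c}$ with $\mathdeuxcat{A}\times(\TrancheCoLax{\mathdeuxcat{C}}{}{c})$, and then uses the asphericity of $\TrancheCoLax{\mathdeuxcat{C}}{}{c}$ together with proposition \ref{1.1.4.THG} and two applications of two-out-of-three to see that the induced morphism between slices is a weak equivalence; you instead invoke proposition \ref{2.1.10.THG}~(d), observing that the projections are pr\'ecoopfibrations (remarque \ref{ProjectionPrefibrationPreuve}) and that the fibres over an object of the base are canonically $\mathcal{A}'$ and $\mathcal{B}'$ (resp.\ $\mathdeuxcat{A}$ and $\mathdeuxcat{B}$), with induced morphism $u'$ (resp.\ $u$). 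This is legitimate, since proposition \ref{2.1.10.THG} precedes the statement in the text; it buys you simpler objects to compare (the bare fibres rather than the slices $\mathdeuxcat{A}\times(\TrancheCoLax{\mathdeuxcat{C}}{}{c})$) at the cost of routing the argument through the fibration criterion, which is itself proved by comparing fibres with slices via the canonical pre-adjoints $J_{c}$; so the two proofs are close cousins, the paper's being the bare-hands slice computation and yours the packaged version. The only small point worth making explicit is that the identification of the fibre with $\mathcal{A}'$ is an isomorphism of \deux{}cat\'egories, so the morphism induced between fibres is a weak equivalence because it coincides with $u'$ up to composition with isomorphisms, which lie in $\DeuxLocFond{W}$ by weak saturation; with that said, the argument is complete.
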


\begin{proof}
La démonstration est analogue à celle de \cite[proposition 2.1.3]{THG}, que nous reprenons mot pour mot. Il suffit de montrer que le produit de deux équivalences faibles $u_{1} : \mathdeuxcat{A}_{1} \to \mathdeuxcat{B}_{1}$ et $u_{2} : \mathdeuxcat{A}_{2} \to \mathdeuxcat{B}_{2}$ est une équivalence faible. Comme $u_{1} \times u_{2} = (1_{\mathdeuxcat{B}_{1}} \times u_{2}) (u_{1} \times 1_{\mathdeuxcat{A}_{2}})$ ,
il suffit de montrer que, pour toute équivalence faible $u : \mathdeuxcat{A} \to \mathdeuxcat{B}$ et toute petite \deux{}catégorie $\mathdeuxcat{C}$, le morphisme $u \times 1_{\mathdeuxcat{C}}$ est une équivalence faible. On a un triangle commutatif
$$
\xymatrix{
\mathdeuxcat{A} \times \mathdeuxcat{C}
\ar[rr]^{u \times 1_{\mathdeuxcat{C}}}
\ar[dr]_{p}
&&
\mathdeuxcat{B} \times \mathdeuxcat{C}
\ar[dl]^{q}
\\
&
\mathdeuxcat{C}
}
$$
dans lequel $p$ et $q$ désignent les projections canoniques. En vertu de la condition LF3, il suffit de montrer que, pour tout objet $c$ de $\mathdeuxcat{C}$, le morphisme
$$
\xymatrix{
\TrancheCoLax{(\mathdeuxcat{A} \times \mathdeuxcat{C})}{p}{c} \simeq \mathdeuxcat{A} \times (\TrancheCoLax{\mathdeuxcat{C}}{}{c}) 
\ar[rrrr]^{\DeuxFoncTrancheCoLax{(u \times 1_{\mathdeuxcat{C}})}{}{c} \phantom{a} \simeq \phantom{a} u \times (1_{\TrancheCoLax{\mathdeuxcat{C}}{}{c}})}
&&&&
\TrancheCoLax{\mathdeuxcat{B} \times (\mathdeuxcat{C}}{}{c}) \simeq \TrancheCoLax{(\mathdeuxcat{B} \times \mathdeuxcat{C})}{q}{c}
}
$$
est une équivalence faible. Or, on a un carré commutatif
$$
\xymatrix{
\mathdeuxcat{A} \times (\TrancheCoLax{\mathdeuxcat{C}}{}{c})
\ar[rr]^{u \times (1_{\TrancheCoLax{\mathdeuxcat{C}}{}{c}})}
\ar[d]
&&
\mathdeuxcat{B} \times (\TrancheCoLax{\mathdeuxcat{C}}{}{c})
\ar[d]
\\
\mathdeuxcat{A}
\ar[rr]_{u}
&&
\mathdeuxcat{B}
}
$$
dans lequel les flèches verticales désignent les projections canoniques, qui sont des équivalences faibles en vertu de la proposition \ref{1.1.4.THG} puisque $\TrancheCoLax{\mathdeuxcat{C}}{}{c}$ est asphérique. Comme $u$ est une équivalence faible, il en est de même de $u \times (1_{\TrancheCoLax{\mathdeuxcat{C}}{}{c}})$, ce qui achève la démonstration.
\end{proof}

\begin{prop}\label{Proposition2.1.4THG}
Une petite somme arbitraire d'équivalences faibles est une équivalence faible.
\end{prop}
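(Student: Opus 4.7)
Le plan consiste � reproduire la strat�gie classique en dimension 1 (qui est celle de \cite[Proposition 2.1.4]{THG}), en r�duisant l'assertion � une application directe de la condition LF3 avec comme base un petit ensemble d'indices vu comme \deux{}cat�gorie discr�te. Plus pr�cis�ment, soit $(u_i : \mathdeuxcat{A}_i \to \mathdeuxcat{B}_i)_{i \in I}$ une petite famille d'�quivalences faibles ; on pose $\mathdeuxcat{A} = \coprod_{i \in I} \mathdeuxcat{A}_i$, $\mathdeuxcat{B} = \coprod_{i \in I} \mathdeuxcat{B}_i$ et $u = \coprod_{i \in I} u_i$. On consid�re alors le triangle commutatif
$$
\xymatrix{
\mathdeuxcat{A}
\ar[rr]^{u}
\ar[dr]_{w}
&&
\mathdeuxcat{B}
\ar[dl]^{v}
\\
&
I
}
$$
dans $\DeuxCat$, o� l'on identifie $I$ � la \deux{}cat�gorie discr�te ayant $I$ pour ensemble d'objets (seules \un{}cellules et \deux{}cellules : les identit�s), et o� $w$ et $v$ d�signent les projections canoniques envoyant chaque objet sur son indice.

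Il s'agira ensuite de d�crire explicitement, pour chaque $i \in I$, la \deux{}cat�gorie $\TrancheCoLax{\mathdeuxcat{A}}{w}{i}$ et le \DeuxFoncteurStrict{} induit $\DeuxFoncTrancheCoLax{u}{i}$. Par d�finition, un objet de $\TrancheCoLax{\mathdeuxcat{A}}{w}{i}$ est un couple $(a, p : w(a) \to i)$ form� d'un objet $a$ de $\mathdeuxcat{A}$ et d'une \un{}cellule $p$ de $I$. La \deux{}cat�gorie $I$ �tant discr�te, on a n�cessairement $p = 1_i$ et $w(a) = i$, ce qui force $a \in \Objets{\mathdeuxcat{A}_i}$. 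Une analyse analogue des \un{}cellules et \deux{}cellules (toujours en utilisant le caract�re discret de $I$) fournit un isomorphisme canonique $\TrancheCoLax{\mathdeuxcat{A}}{w}{i} \simeq \mathdeuxcat{A}_i$ dans $\DeuxCat$, et de m�me $\TrancheCoLax{\mathdeuxcat{B}}{v}{i} \simeq \mathdeuxcat{B}_i$. Modulo ces deux isomorphismes, le \DeuxFoncteurStrict{} $\DeuxFoncTrancheCoLax{u}{i}$ s'identifie � $u_i$, qui est une $\DeuxLocFond{W}$\nobreakdash-�quivalence par hypoth�se.

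La condition LF3 de la d�finition \ref{DefDeuxLocFond} permet alors de conclure imm�diatement que $u$ est dans $\DeuxLocFond{W}$, ce qui �tablit le r�sultat. Il n'y a ici aucune difficult� conceptuelle : toute la force du r�sultat r�side dans l'axiomatique d�j� mise en place, et la seule v�rification non-tautologique � mener � bien � l'identification des \deux{}cat�gories tranches ci-dessus � est purement m�canique gr�ce au choix d'une base discr�te. On remarquera que la m�me d�monstration fonctionnerait en utilisant n'importe laquelle des trois variantes duales de LF3 �tablies au th�or�me \ref{DeuxLocFondHuitDef}, ou encore en invoquant la proposition \ref{2.1.10.THG} (les projections vers $I$ �tant, du fait du caract�re discret de $I$, des pr�fibrations, pr�opfibrations, pr�cofibrations et pr�coopfibrations � la fois, � fibres respectivement $\mathdeuxcat{A}_i$ et $\mathdeuxcat{B}_i$).
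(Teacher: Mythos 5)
Votre d�monstration est correcte et suit essentiellement la m�me voie que celle du texte : somme des morphismes au-dessus de l'ensemble d'indices $I$ vu comme \deux{}cat�gorie discr�te, identification de $\TrancheCoLax{\mathdeuxcat{A}}{w}{i}$ � $\mathdeuxcat{A}_i$ (et de m�me pour $\mathdeuxcat{B}$), puis application de la condition LF3. Le texte se contente d'affirmer cette identification l� o� vous la justifiez explicitement, mais l'argument est identique.
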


\begin{proof}
La démonstration est analogue à celle de \cite[proposition 2.1.4]{THG}, que nous reprenons mot pour mot. Soit $u_{i} : \mathdeuxcat{A}_{i} \to \mathdeuxcat{B}_{i}$, $i \in I$, une petite famille d'équivalences faibles. Notons aussi $I$ la catégorie discrète correspondant à l’ensemble $I$. On a un triangle commutatif
$$
\xymatrix{
\mathdeuxcat{A} = \coprod \mathdeuxcat{A}_{i}
\ar[rr]^{u = \coprod u_{i}}
\ar[dr]_{\pi_{\mathdeuxcat{A}}}
&&
\mathdeuxcat{B} = \coprod \mathdeuxcat{B}_{i}
\ar[dl]^{\pi_{\mathdeuxcat{B}}}
\\
&
I
}
$$
et l'on remarque que, pour tout objet $i$ de $I$, le morphisme $\DeuxFoncTrancheCoLax{u}{i} : \TrancheCoLax{\mathdeuxcat{A}}{\pi_{\mathdeuxcat{A}}}{i} \to \TrancheCoLax{\mathdeuxcat{B}}{\pi_{\mathdeuxcat{B}}}{i}$ s’identifie à $u_{i} : \mathdeuxcat{A}_{i} \to \mathdeuxcat{B}_{i}$. En vertu de la condition LF3, $u$ est donc une équivalence faible, ce qui démontre la proposition.
\end{proof}

\begin{lemme}\label{Lemme1.3.6.THG}
Soient $M$ une catégorie, $W$ une classe de flèches de $M$ et $\gamma_{M} : M \to \Localisation{M}{W}$ le foncteur de localisation. Si $e_{M}$ est un objet final de $M$, alors son image par le foncteur $\gamma_{M}$ est un objet final de $\Localisation{M}{W}$.
\end{lemme}

\begin{proof}
C'est \cite[lemme 1.3.6]{THG}, dont nous donnons une démonstration. 

Par commodité, l'on notera de la même façon les objets de $M$ et leur image par le foncteur $\gamma_{M}$. Soit $x$ un objet quelconque de $M$. Notons $p_{x}$ l'unique morphisme de $x$ vers $e_{M}$ dans $M$. Considérons un morphisme quelconque de $x$ vers $e_{M}$ dans $\Localisation{M}{W}$, c'est-à-dire une classe d'équivalence de chaînes constituées de morphismes de $M$ et d'inverses formels d'éléments de $W$ modulo la relation d'équivalence bien connue, que nous noterons $\sim$. Soit $f$ un représentant de cette classe d'équivalence.  

S'il est de longueur $1$, soit c'est $p_{x}$, auquel cas il n'y a rien à faire, soit c'est un inverse formel $w^{-1} : x \to e_{M}$ d'un élément $w : e_{M} \to x$ de $W$. Dans ce cas, comme $p_{x} w = 1_{e_{M}}$, on a les équivalences $w^{-1} \sim 1_{e_{M}} w^{-1} \sim p_{x} w w^{-1} \sim p_{x}$. 

Supposons qu'il soit de longueur $2$. En vertu de ce qui précède, on peut se ramener aux deux seuls cas suivants. S'il s'écrit 
$$
\xymatrix{
x
\ar[r]^{g}
&
x'
\ar[r]^{p_{x'}}
&
e_{M}
&,
}
$$
où $p_{x'}$ est l'unique morphisme de $x'$ vers $e_{M}$ dans $M$ et $g$ est un morphisme de $M$, alors $p_{x'} g = p_{x}$, donc $f \sim p_{x}$. S'il s'écrit 
$$
\xymatrix{
x
\ar[r]^{w^{-1}}
&
x'
\ar[r]^{p_{x'}}
&
e_{M}
&,
}
$$
avec $w : x' \to x$ un élément de $W$, alors $p_{x} w = p_{x'}$, donc $p_{x'} w^{-1} \sim p_{x} w w^{-1} \sim p_{x}$, donc $f \sim p_{x}$. 

Le résultat s'en déduit par récurrence. 
\end{proof}

\begin{prop}\label{Proposition2.1.8.THG}
Soient $M$ une catégorie admettant des produits (\emph{resp.} des sommes) binaires et $W$ une partie de $\UnCell{M}$ contenant les identités et stable par produit (\emph{resp.} par somme) de deux flèches. Alors, la catégorie $\Localisation{M}{W}$ admet des produits (\emph{resp.} des sommes) binaires et le foncteur de localisation $M \to \Localisation{M}{W}$ commute à ces produits (\emph{resp.} ces sommes). 
\end{prop}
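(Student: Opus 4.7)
The plan is to reduce everything to the case of products (the case of sums being dual, obtained by passing to $M^{op}$) and then to exhibit the binary product in $\Localisation{M}{W}$ as being induced by the one in $M$ via a descent of the adjunction $\Delta \dashv {\times}$, where $\Delta : M \to M \times M$ is the diagonal and ${\times} : M \times M \to M$ is the product functor.

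First I would verify the two functoriality statements: the diagonal $\Delta$ trivially carries $W$ into $W \times W := \{(f,g) \mid f, g \in W\}$, and the product functor ${\times}$ carries $W \times W$ into $W$ by the hypothesis on $W$. Next, I would recall the standard fact that the natural functor
$$
\Localisation{M \times M}{W \times W} \longrightarrow \Localisation{M}{W} \times \Localisation{M}{W}
$$
is an isomorphism of categories. One direction is induced by the projections; the inverse sends a pair of zigzags to their component-wise product, after having brought them to a common length by insertion of identities (which lie in $W$ by hypothesis). From this identification, the functors $\Delta$ and ${\times}$ induce functors $\overline{\Delta} : \Localisation{M}{W} \to \Localisation{M}{W} \times \Localisation{M}{W}$ and $\overline{\times} : \Localisation{M}{W} \times \Localisation{M}{W} \to \Localisation{M}{W}$, and by construction $\overline{\times}(\gamma(X), \gamma(Y)) = \gamma(X \times Y)$ on objects.

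Finally, I would show that the adjunction $\Delta \dashv {\times}$ descends to an adjunction $\overline{\Delta} \dashv \overline{\times}$. The unit $\eta_X : X \to X \times X$ and counit $\varepsilon_{(X,Y)} = (p_X, p_Y) : (X \times Y, X \times Y) \to (X, Y)$, being natural transformations in $M$ and $M \times M$ respectively, descend via the localization functors $\gamma$ and $\gamma \times \gamma$ to natural transformations between the corresponding induced functors, and the triangular identities, being equalities of morphisms in $M$ and $M \times M$, are preserved by $\gamma$ and $\gamma \times \gamma$. This yields $\overline{\Delta} \dashv \overline{\times}$, and since the diagonal $\overline{\Delta}$ coincides with the usual diagonal of $\Localisation{M}{W}$ (by the universal property applied to the two projections), this exactly expresses that $\Localisation{M}{W}$ admits binary products given by $\overline{\times}$, and that $\gamma$ commutes with them.

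The one step demanding genuine care is the identification $\Localisation{M \times M}{W \times W} \simeq \Localisation{M}{W} \times \Localisation{M}{W}$: without any assumption of calculus of fractions, this rests on a direct manipulation of zigzags, using crucially that $W$ contains the identities so that component-wise lengthening of zigzags stays within $W \times W$. The descent of the adjunction itself is then essentially formal, as both the unit/counit and the triangular identities are images under a functor of data already valid in $M$.
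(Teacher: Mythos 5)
Your argument is correct, and it is worth noting that the paper itself gives no proof of this statement: it simply refers to \cite[proposition 2.1.8]{THG}, so your text supplies an actual argument where the paper defers to the reference. The route you take --- descending the adjunction $\Delta \dashv {\times}$ along the localization functors --- is sound: the unit, the counit and the triangular identities are all data and equations in $M$ and $M \times M$, so they pass to the localized categories once one knows that natural transformations between $W$-preserving functors descend (naturality against the formally inverted arrows $\gamma(w)^{-1}$ is automatic, since it follows from the naturality square for $w$ by invertibility of all four sides), and the identification of $\overline{\Delta}$ with the diagonal of $\Localisation{M}{W}$ by uniqueness in the universal property is exactly right. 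You correctly isolate the only delicate point, namely the isomorphism $\Localisation{M \times M}{W \times W} \simeq \Localisation{M}{W} \times \Localisation{M}{W}$, and your padding-by-identities argument on zigzags does work, though checking compatibility with the equivalence relation on zigzags is tedious. A cleaner way to establish that lemma, avoiding any manipulation of zigzag classes, is to localize in two steps: a functor $M \times M \to C$ inverting $W \times \{\mathrm{identit\acute{e}s}\}$ corresponds, by exponential transposition, to a functor $M \to \CatHom{\CAT}{M}{C}$ sending $W$ to (natural) isomorphisms, so that $\Localisation{M}{W} \times M$ is the localization of $M \times M$ at $W \times \{\mathrm{identit\acute{e}s}\}$; repeating on the second factor and observing that, because $W$ contains the identities, inverting $W \times \{\mathrm{identit\acute{e}s}\}$ and $\{\mathrm{identit\acute{e}s}\} \times W$ amounts to inverting $W \times W$ (as $(f,g) = (f,1)(1,g)$), one gets the desired isomorphism by uniqueness. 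With either justification of that lemma, your proof is complete, and the dualization to binary sums via $\UnCatOp{M}$ is immediate.
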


\begin{proof}
C'est \cite[proposition 2.1.8]{THG}.
\end{proof}

\begin{prop}
La catégorie $\Localisation{\DeuxCat}{\DeuxLocFond{W}}$ admet des produits finis et des sommes finies et le foncteur de localisation $\DeuxCat \to \Localisation{\DeuxCat}{\DeuxLocFond{W}}$ y commute.
\end{prop}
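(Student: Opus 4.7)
The plan is to reduce the statement to a direct application of the general categorical results already established, namely Proposition~\ref{Proposition2.1.8.THG} together with the stability properties of $\DeuxLocFond{W}$ under binary products and binary sums.

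First, recall that $\DeuxCat$ admits all small products and all small sums (in particular, binary ones, and also a terminal object $\DeuxCatPonct$ and an initial object, namely the empty \deux{}catégorie). For binary products and binary sums in the localization, I would invoke Proposition~\ref{Proposition2.1.8.THG}: it suffices to check that $\DeuxLocFond{W}$ contains the identities (immediate from the weak saturation axiom LF1) and that $\DeuxLocFond{W}$ is stable under binary product, respectively binary sum, of morphisms. Stability under finite product is precisely Proposition~\ref{Proposition2.1.3THG}, while stability under (small, hence in particular binary) sums is Proposition~\ref{Proposition2.1.4THG}. Thus Proposition~\ref{Proposition2.1.8.THG} applies in both forms and yields binary products and binary sums in $\Localisation{\DeuxCat}{\DeuxLocFond{W}}$, together with the fact that the localization functor $\gamma : \DeuxCat \to \Localisation{\DeuxCat}{\DeuxLocFond{W}}$ preserves them.

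It then remains to handle the empty case, i.e.\ to check that the localization admits a terminal and an initial object, and that $\gamma$ sends the terminal (resp.\ initial) object of $\DeuxCat$ to one. For the terminal object, Lemma~\ref{Lemme1.3.6.THG} applied to $M = \DeuxCat$ and $W = \DeuxLocFond{W}$ shows directly that $\gamma(\DeuxCatPonct)$ is a terminal object of $\Localisation{\DeuxCat}{\DeuxLocFond{W}}$. For the initial object, I would simply apply the dual of Lemma~\ref{Lemme1.3.6.THG} (whose proof is formally identical, reversing all arrows) to the empty \deux{}catégorie, which is initial in $\DeuxCat$.

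Combining the two steps gives that $\Localisation{\DeuxCat}{\DeuxLocFond{W}}$ has all finite products and all finite sums, and that $\gamma$ commutes with them. There is no real obstacle: all the non-trivial homotopical content (namely, that products and sums of weak equivalences are weak equivalences) has been absorbed into Propositions~\ref{Proposition2.1.3THG} and \ref{Proposition2.1.4THG}, and the passage from binary to finite is entirely formal via the terminal/initial object lemma. The only mild subtlety worth mentioning explicitly is the dualization of Lemma~\ref{Lemme1.3.6.THG} for the initial case, but this requires no hypothesis beyond what is already available.
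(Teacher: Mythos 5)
Votre démonstration est correcte et suit exactement la même voie que celle du texte : application de la proposition \ref{Proposition2.1.8.THG} (avec les propositions \ref{Proposition2.1.3THG} et \ref{Proposition2.1.4THG} pour la stabilité par produits et sommes), puis du lemme \ref{Lemme1.3.6.THG} et de son dual pour les objets final et initial. Rien à redire.
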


\begin{proof}
Cela résulte des propositions \ref{Proposition2.1.8.THG}, \ref{Proposition2.1.3THG}, \ref{Proposition2.1.4THG}, du lemme \ref{Lemme1.3.6.THG} et de son dual. 
\end{proof}

On termine cette section en montrant la stabilité par composition des morphismes asphériques de $\DeuxCat$. 

\begin{lemme}\label{LemmeHomotopieStricte}
Soient $u, v : \mathdeuxcat{A} \to \mathdeuxcat{B}$ deux morphismes parallèles de $\DeuxCat$. S'il existe une \DeuxTransformationStricte{} de $u$ vers $v$, alors $u$ est une équivalence faible si et seulement si $v$ en est une.
\end{lemme}

\begin{proof}
En vertu du lemme \ref{DeuxTransFoncLax}, il existe un diagramme commutatif
$$
\xymatrix{
&[1] \times \mathdeuxcat{A}
\ar[dd]^{h}
\\
\mathdeuxcat{A}
\ar[ur]^{0 \times 1_{\mathdeuxcat{A}}}
\ar[dr]_{u}
&&\mathdeuxcat{A}
\ar[ul]_{1 \times 1_{\mathdeuxcat{A}}}
\ar[dl]^{v}
\\
&\mathdeuxcat{B}
}
$$ 
dans $\DeuxCatLax$. L'examen des formules définissant $h$ (voir la démonstration du lemme \ref{DeuxTransFoncLax}) permet de vérifier qu'il s'agit, dans ce cas particulier, d'un \DeuxFoncteurStrict{}. Les flèches du diagramme ci-dessus sont donc toutes des morphismes de $\DeuxCat$. Le résultat s'en déduit par un argument de 2 sur 3.\end{proof}

\begin{rem}
Le lemme \ref{LemmeHomotopieLax} généralisera le lemme \ref{LemmeHomotopieStricte}.
\end{rem}

\begin{lemme}\label{1.1.7.livre}
Pour tout triangle commutatif
$$
\xymatrix{
\mathdeuxcat{A}
\ar[rr]^{u}
\ar[dr]_{w}
&&
\mathdeuxcat{B}
\ar[dl]^{v}
\\
&
\mathdeuxcat{C}
}
$$
de \DeuxFoncteursStricts{}, $u$ est lax-asphérique si et seulement si, pour tout objet $c$ de $\mathdeuxcat{C}$, le \DeuxFoncteurStrict{} $\DeuxFoncTrancheLax{u}{c} : \TrancheLax{\mathdeuxcat{A}}{w}{c} \to \TrancheLax{\mathdeuxcat{B}}{v}{c}$ est lax-asphérique. 
\end{lemme}

\begin{proof}
Soit $(b, p : v(b) \to c)$ un objet de $\TrancheLax{\mathdeuxcat{B}}{v}{c}$. On va exhiber un couple de \DeuxFoncteursStricts{} 
$$
\xymatrix{
\TrancheLax{(\TrancheLax{\mathdeuxcat{A}}{w}{c})}{\DeuxFoncTrancheLax{u}{c}}{(b,p)} 
\ar @/^1.5pc/ [rrr]^{F} 
&&&
\TrancheLax{\mathdeuxcat{A}}{u}{b}  
\ar @/^1.5pc/ [lll]^{G}}
$$
tels que 
$$
FG = 1_{\TrancheLax{\mathdeuxcat{A}}{u}{b}}
$$
et une \DeuxTransformationStricte{} 
$$
\sigma : 1_{\TrancheLax{(\TrancheLax{\mathdeuxcat{A}}{w}{c})}{\DeuxFoncTrancheLax{u}{c}}{(b,p)}} \Rightarrow GF.
$$
Il en résultera que $F$ et $G$ sont des équivalences faibles, donc en particulier que la \deux{}catégorie $\TrancheLax{(\TrancheLax{\mathdeuxcat{A}}{w}{c})}{\DeuxFoncTrancheLax{u}{c}}{(b,p)}$ est asphérique si et seulement si la \deux{}catégorie $\TrancheLax{\mathdeuxcat{A}}{u}{b}$ l'est, d'où le résultat annoncé. 

Explicitons partiellement la structure de la \deux{}catégorie 
$
\TrancheLax{(\TrancheLax{\mathdeuxcat{A}}{w}{c})}{\DeuxFoncTrancheLax{u}{c}}{(b,p)}
$.
\begin{itemize}
\item
Ses objets sont les quadruplets (que l'on écrira sous la forme d'un couple de couples) $((a,q),(r,\delta))$, où $a$ est un objet de $\mathdeuxcat{A}$, $q : w(a) \to c$ est une \un{}cellule de $\mathdeuxcat{C}$, $r : u(a) \to b$ est une \un{}cellule de $\mathdeuxcat{B}$ et $\delta : q \Rightarrow pv(r)$ est une \deux{}cellule de $\mathdeuxcat{C}$. 
\item 
Les \un{}cellules de $((a,q),(r,\delta))$ vers $((a',q'),(r',\delta'))$ dans $\TrancheLax{(\TrancheLax{\mathdeuxcat{A}}{w}{c})}{\DeuxFoncTrancheLax{u}{c}}{(b,p)}$ sont les triplets $((f,\alpha), \epsilon)$, où $f : a \to a'$ est une \un{}cellule de $\mathdeuxcat{A}$, $\alpha : q \Rightarrow q'w(f)$ est une \deux{}cellule de $\mathdeuxcat{C}$ et $\epsilon : r \Rightarrow r'u(f)$ est une \deux{}cellule de $\mathdeuxcat{B}$, ce triplet satisfaisant l'égalité
$$
(p \CompDeuxZero v(\epsilon)) \delta = (\delta' \CompDeuxZero w(f)) \alpha.
$$
\item 
Les \deux{}cellules de $((f, \alpha), \epsilon) : ((a,q),(r,\delta)) \to ((a',q'),(r',\delta'))$ vers $((g, \beta), \eta) : ((a,q),(r,\delta)) \to ((a',q'),(r',\delta'))$ dans $\TrancheLax{(\TrancheLax{\mathdeuxcat{A}}{w}{c})}{\DeuxFoncTrancheLax{u}{c}}{(b,p)}$ sont les \deux{}cellules $\varphi : f \Rightarrow g$ dans $\mathdeuxcat{A}$ telles que soient vérifiées les égalités
$$
(q' \CompDeuxZero w(\varphi)) \alpha = \beta
$$
et
$$
(r' \CompDeuxZero u(\varphi)) \epsilon = \eta.
$$
\item
L'identité de l'objet $((a,q), (r,\delta))$ est donnée par $((1_{a}, 1_{q},), 1_{r})$. 
\item
La composition des \un{}cellules est donnée par la formule générale
$$
((f', \alpha'), \epsilon') ((f, \alpha), \epsilon) = ((f'f, (\alpha' \CompDeuxZero w(f)) \CompDeuxUn \alpha), (\epsilon' \CompDeuxZero u(f)) \CompDeuxUn \epsilon).
$$
\end{itemize}

Définissons un \DeuxFoncteurStrict{} 
$$
\begin{aligned}
F : \TrancheLax{(\TrancheLax{\mathdeuxcat{A}}{w}{c})}{\DeuxFoncTrancheLax{u}{c}}{(b,p)} &\longrightarrow\TrancheLax{\mathdeuxcat{A}}{u}{b}
\\
((a,q),(r,\delta)) &\mapsto (a,r)
\\
((f, \alpha), \epsilon) &\mapsto (f, \epsilon)
\\
\varphi &\mapsto \varphi.
\end{aligned}
$$

Cela définit bien un \DeuxFoncteurStrict{} en vertu des vérifications suivantes.
\begin{itemize}
\item
Pour tout objet $((a, q), (r, \delta))$ de $\TrancheLax{(\TrancheLax{\mathdeuxcat{A}}{w}{c})}{\DeuxFoncTrancheLax{u}{c}}{(b,p)}$, 
$$
\begin{aligned}
F(1_{((a, q), (r, \delta))}) &= (1_{a}, 1_{r})
\\
&= 1_{(a, r)}
\\
&= 1_{F((a, q), (r, \delta))}.
\end{aligned}
$$
\item
Pour tout couple de \un{}cellules $((f, \alpha), \epsilon)$ et $((f', \alpha'), \epsilon')$ de $\TrancheLax{(\TrancheLax{\mathdeuxcat{A}}{w}{c})}{\DeuxFoncTrancheLax{u}{c}}{(b,p)}$ telles que la composée $((f', \alpha'), \epsilon') ((f, \alpha), \epsilon)$ fasse sens, 
$$
\begin{aligned}
F (((f', \alpha'), \epsilon') ((f, \alpha), \epsilon)) &= (f'f, (\epsilon' \CompDeuxZero u(f)) \epsilon)
\\
&= (f', \epsilon') (f, \epsilon)
\\
&= F((f', \alpha'), \epsilon') F((f, \alpha), \epsilon).
\end{aligned}
$$
\item
Pour tout couple de \deux{}cellules $\varphi$ et $\varphi'$ de $\TrancheLax{(\TrancheLax{\mathdeuxcat{A}}{w}{c})}{\DeuxFoncTrancheLax{u}{c}}{(b,p)}$ telles que la composée $\varphi' \CompDeuxZero \varphi$ fasse sens, l'égalité
$$
F (\varphi' \CompDeuxZero \varphi) = F(\varphi') \CompDeuxZero F(\varphi)
$$
est évidente. 
\item
Pour tout couple de \deux{}cellules $\varphi$ et $\psi$ telles que la composée $\psi \CompDeuxUn \varphi$ fasse sens, l'égalité
$$
F (\psi \CompDeuxUn \varphi) = F(\psi) \CompDeuxUn F(\varphi)
$$
est évidente. 
\item
Pour toute \un{}cellule $((f, \alpha), \epsilon)$, l'égalité
$$
F(1_{((f, \alpha), \epsilon)}) = 1_{F((f, \alpha), \epsilon)}
$$
est évidente. 
\end{itemize}

Définissons maintenant un \DeuxFoncteurStrict{} 
$$
\begin{aligned}
G : \TrancheLax{\mathdeuxcat{A}}{u}{b} &\to \TrancheLax{(\TrancheLax{\mathdeuxcat{A}}{w}{c})}{\DeuxFoncTrancheLax{u}{c}}{(b,p)}
\\
(a,r) &\mapsto ((a, pv(r)), (r, 1_{pv(r)}))
\\
(f, \epsilon) &\mapsto ((f, p \CompDeuxZero v(\epsilon)), \epsilon)
\\
\varphi &\mapsto \varphi.
\end{aligned}
$$

Cela définit bien un \DeuxFoncteurStrict{} en vertu des vérifications suivantes. 

\begin{itemize}
\item
Il est immédiat que $G$ envoie bien toute \deux{}cellule de $\TrancheLax{\mathdeuxcat{A}}{u}{b}$ sur une \deux{}cellule de $\TrancheLax{(\TrancheLax{\mathdeuxcat{A}}{w}{c})}{\DeuxFoncTrancheLax{u}{c}}{(b,p)}$ de source et de but l'image par $G$ de la source et du but, respectivement, de la \deux{}cellule de départ. 
\item
Pour tout objet $(a,r)$ de $\TrancheLax{\mathdeuxcat{A}}{u}{b}$, 
$$
\begin{aligned}
G(1_{(a,r)}) &= G(1_{a}, 1_{r})
\\
&= ((1_{a}, 1_{pv(r)}), 1_{r})
\\
&= 1_{((a, pv(r)), (r, 1_{pv(r)}))}
\\
&= 1_{G(a,r)}.
\end{aligned}
$$
\item
Pour tout couple de \un{}cellules $(f, \epsilon) : (a,r) \to (a', r')$ et $(f', \epsilon') : (a', r') \to (a'', r'')$,
$$
\begin{aligned}
G((f', \epsilon') (f, \epsilon)) &= G(f'f, (\epsilon' \CompDeuxZero u(f)) \CompDeuxUn \epsilon)
\\
&= ((f'f, p \CompDeuxZero v((\epsilon' \CompDeuxZero u(f)) \epsilon)), (\epsilon' \CompDeuxZero u(f)) \CompDeuxUn \epsilon)
\\
&= ((f'f, (p \CompDeuxZero v(\epsilon') \CompDeuxZero v(u(f))) \CompDeuxUn (p \CompDeuxZero v(\epsilon))), (\epsilon' \CompDeuxZero u(f)) \CompDeuxUn \epsilon)
\\
&= ((f'f, (p \CompDeuxZero v(\epsilon') \CompDeuxZero w(f)) \CompDeuxUn (p \CompDeuxZero v(\epsilon))), (\epsilon' \CompDeuxZero u(f)) \CompDeuxUn \epsilon)
\\
&= ((f', p \CompDeuxZero v(\epsilon')), \epsilon') ((f, p \CompDeuxZero v(\epsilon)), \epsilon)
\\
&= G(f',\epsilon') G(f,\epsilon).
\end{aligned}
$$ 
\item
Pour tout couple de \deux{}cellules $\varphi$ et $\varphi'$ telles que la composée $\varphi' \CompDeuxZero \varphi$ fasse sens, l'égalité
$$
G (\varphi' \CompDeuxZero \varphi) = G(\varphi') \CompDeuxZero G(\varphi)
$$
est évidente. 
\item
Pour tout couple de \deux{}cellules $\varphi$ et $\psi$ telles que la composée $\psi \CompDeuxUn \varphi$ fasse sens, l'égalité
$$
G (\psi \CompDeuxUn \varphi) = G (\psi) \CompDeuxUn G (\varphi)
$$
est évidente. 
\item
Pour toute \un{}cellule $(f, \epsilon)$, l'égalité
$$
G(1_{(f, \epsilon)}) = 1_{G(f, \epsilon)}
$$
est évidente. 
\end{itemize}

L'identité 
$$
FG = 1_{\TrancheLax{\mathdeuxcat{A}}{u}{b}}
$$
est immédiate.

Pour tout objet $((a,q), (r,\delta))$ de $\TrancheLax{(\TrancheLax{\mathdeuxcat{A}}{w}{c})}{\DeuxFoncTrancheLax{u}{c}}{(b,p)}$, 
$$
GF((a,q),(r,\delta)) = ((a, pv(r)), (r, 1_{pv(r)})).
$$
Le triplet $((1_{a}, \delta), 1_{r})$ définit une \un{}cellule
$$
\sigma_{((a,q), (r,\delta))} : ((a,q), (r,\delta)) \to GF((a,q), (r,\delta)).
$$

Soit $((f,\alpha), \epsilon)$ une \un{}cellule de $((a,q), (r,\delta))$ vers $((a',q'), (r',\delta'))$ dans $\TrancheLax{(\TrancheLax{\mathdeuxcat{A}}{w}{c})}{\DeuxFoncTrancheLax{u}{c}}{(b,p)}$. On a donc en particulier l'égalité 
$$
(p \CompDeuxZero v(\epsilon)) \CompDeuxUn \delta = (\delta' \CompDeuxZero w(f)) \CompDeuxUn \alpha.
$$
De plus, 
$$
GF ((f, \alpha), \epsilon) = ((f, p \CompDeuxZero v(\epsilon)), \epsilon).
$$
Considérons le diagramme
$$
\xymatrix{
((a,q), (r,\delta))
\ar[rrr]^{((f,\alpha), \epsilon)}
\ar[dd]_{((1_{a}, \delta), 1_{r})}
&&&((a',q'), (r',\delta'))
\ar[dd]^{((1_{a'}, \delta'), 1_{r'})}
\\
\\
((a, pv(r)), (r, 1_{pv(r)}))
\ar[rrr]_{((f, p \CompDeuxZero v(\epsilon)), \epsilon)}
&&&
((a', pv(r')), (r', 1_{pv(r')}))
&.
}
$$
Il est commutatif en vertu de l'égalité déjà soulignée $(p \CompDeuxZero v(\epsilon)) \CompDeuxUn \delta = (\delta' \CompDeuxZero w(f)) \CompDeuxUn \alpha$ et des égalités
$$
\begin{aligned}
((1_{a'}, \delta'), 1_{r'}) ((f,\alpha), \epsilon) &= ((f, (\delta' \CompDeuxZero w(f)) \CompDeuxUn \alpha), \epsilon)
\end{aligned}
$$
et
$$
\begin{aligned}
((f, p \CompDeuxZero v(\epsilon)), \epsilon) ((1_{a}, \delta), 1_{r}) &= ((f, (p \CompDeuxZero v(\epsilon) \CompDeuxZero w(1_{a})) \CompDeuxUn \delta), (\epsilon \CompDeuxZero u(1_{a})) \CompDeuxUn 1_{r})
\\
&= ((f, (p \CompDeuxZero v(\epsilon)) \CompDeuxUn \delta), \epsilon).
\end{aligned}
$$

De plus, pour toute \deux{}cellule $\varphi$ de $((f, \alpha), \epsilon)$ vers $((g, \beta), \eta)$ dans $\TrancheLax{(\TrancheLax{\mathdeuxcat{A}}{w}{c})}{\DeuxFoncTrancheLax{u}{c}}{(b,p)}$, l'égalité $1_{1_{a'}} \CompDeuxZero \varphi = \varphi \CompDeuxZero 1_{1_{a}}$ se récrit
$\sigma_{((a',q'), (r',\delta'))} \CompDeuxZero \varphi = GF(\varphi) \CompDeuxZero \sigma_{((a, q), (r, \delta))}$. On a donc bien défini une \DeuxTransformationStricte{} 
$$
\sigma : 1_{\TrancheLax{(\TrancheLax{\mathdeuxcat{A}}{w}{c})}{\DeuxFoncTrancheLax{u}{c}}{(b,p)}} \Rightarrow GF.
$$

En vertu du lemme \ref{LemmeHomotopieStricte}, comme annoncé, $F$ et $G$ sont donc des équivalences faibles. En particulier, la \deux{}catégorie $\TrancheLax{(\TrancheLax{\mathdeuxcat{A}}{w}{c})}{\DeuxFoncTrancheLax{u}{c}}{(b,p)}$ est asphérique si et seulement si la \deux{}catégorie $\TrancheLax{\mathdeuxcat{A}}{u}{b}$ l'est. Ainsi, $\DeuxFoncTrancheLax{u}{c}$ est lax-asphérique si et seulement si $u$ l'est. 
\end{proof}

\begin{rem}
On peut en outre vérifier les « identités triangulaires »
$
F \CompDeuxZero \sigma = 1_{F}
$
et
$
\sigma \CompDeuxZero G = 1_{G}
$
dans $\DeuxCatDeuxCat$.
\end{rem}

\begin{rem}
L'énoncé du lemme \ref{1.1.7.livre} semble rester valide en n'imposant pas à $u$ d'être strict\footnote{Voir la définition \ref{DefLaxAspherique}.}, mais nous n'avons pas vérifié suffisamment les détails pour énoncer ici le résultat le plus général. 
\end{rem}

\begin{rem}
Le lemme \ref{1.1.7.livre} admet bien entendu trois variantes duales. Plus précisément, sous les mêmes hypothèses, $u$ est lax-opasphérique (\emph{resp.} co\-lax-asphérique, \emph{resp.} co\-lax-op\-asphé\-rique) si et seulement si, pour tout objet $c$ de $\mathdeuxcat{C}$, le \DeuxFoncteurStrict{} $\DeuxFoncOpTrancheLax{u}{c} : \OpTrancheLax{\mathdeuxcat{A}}{w}{c} \to \OpTrancheLax{\mathdeuxcat{B}}{v}{c}$ (\emph{resp.} $\DeuxFoncTrancheCoLax{u}{c} : \TrancheCoLax{\mathdeuxcat{A}}{w}{c} \to \TrancheCoLax{\mathdeuxcat{B}}{v}{c}$, \emph{resp.} $\DeuxFoncOpTrancheCoLax{u}{c} : \OpTrancheCoLax{\mathdeuxcat{A}}{w}{c} \to \OpTrancheCoLax{\mathdeuxcat{B}}{v}{c}$) est lax-opasphérique (\emph{resp.} co\-lax-asphérique, \emph{resp.} co\-lax-op\-asphérique). Donnons par exemple la démonstration du premier cas. Le triangle de \DeuxFoncteursStricts{}
$$
\xymatrix{
\DeuxCatUnOp{\mathdeuxcat{A}}
\ar[rr]^{\DeuxFoncUnOp{u}}
\ar[dr]_{\DeuxFoncUnOp{w}}
&&
\DeuxFoncUnOp{\mathdeuxcat{B}}
\ar[dl]^{\DeuxFoncUnOp{v}}
\\
&
\DeuxFoncUnOp{\mathdeuxcat{C}}
}
$$
étant commutatif, le lemme \ref{1.1.7.livre} assure que $\DeuxFoncUnOp{u}$ est lax-asphérique si et seulement si $\DeuxFoncTrancheLax{\DeuxFoncUnOp{u}}{c}$ est lax-asphérique pour tout objet $c$ de $\mathdeuxcat{C}$. Ainsi, $u$ est lax-opasphérique si et seulement si $\DeuxFoncUnOp{(\DeuxFoncTrancheLax{\DeuxFoncUnOp{u}}{c})}$ l'est pour tout objet $c$ de $\mathdeuxcat{C}$, c'est-à-dire si et seulement si $\DeuxFoncOpTrancheLax{u}{c}$ l'est pour tout objet $c$ de $\mathdeuxcat{C}$. 
\end{rem}

\begin{prop}\label{1.1.8.livre}
Soit $u : \mathdeuxcat{A} \to \mathdeuxcat{B}$ et $v : \mathdeuxcat{B} \to \mathdeuxcat{C}$ un couple de \DeuxFoncteursStricts{} composables. Si $u$ est lax-asphérique (\emph{resp.} lax-op\-asphé\-rique, \emph{resp.} co\-lax-asphé\-rique, \emph{resp.} co\-lax-op\-asphé\-rique), alors $vu$ est lax-asphérique (\emph{resp.} lax-op\-asphé\-rique, \emph{resp.} co\-lax-asphé\-rique, \emph{resp.} co\-lax-op\-asphé\-rique) si et seulement si $v$ l'est.
\end{prop}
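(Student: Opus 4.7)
The plan is to reduce the proposition to a direct application of Lemma \ref{1.1.7.livre} combined with weak saturation.

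Consider the commutative triangle
$$
\xymatrix{
\mathdeuxcat{A} \ar[rr]^{u} \ar[dr]_{vu} && \mathdeuxcat{B} \ar[dl]^{v} \\ & \mathdeuxcat{C}
}
$$
By hypothesis $u$ is lax-asph\'erique, so Lemma \ref{1.1.7.livre} (applied with $w = vu$) yields that for every object $c$ of $\mathdeuxcat{C}$ the induced \DeuxFoncteurStrict{} $\DeuxFoncTrancheLax{u}{c} : \TrancheLax{\mathdeuxcat{A}}{vu}{c} \to \TrancheLax{\mathdeuxcat{B}}{v}{c}$ is itself lax-asph\'erique, and in particular a $\DeuxLocFond{W}$-\'equivalence faible (remarque \ref{Zozo}).

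Now fix $c$ and consider the commutative triangle
$$
\xymatrix{
\TrancheLax{\mathdeuxcat{A}}{vu}{c} \ar[rr]^{\DeuxFoncTrancheLax{u}{c}} \ar[dr] && \TrancheLax{\mathdeuxcat{B}}{v}{c} \ar[dl] \\ & \DeuxCatPonct
}
$$
whose oblique arrows are the canonical morphisms towards the terminal \deux{}cat\'egorie. Since $\DeuxFoncTrancheLax{u}{c}$ is a weak equivalence, the weak saturation axiom LF1 implies that $\TrancheLax{\mathdeuxcat{A}}{vu}{c} \to \DeuxCatPonct$ lies in $\DeuxLocFond{W}$ if and only if $\TrancheLax{\mathdeuxcat{B}}{v}{c} \to \DeuxCatPonct$ does; in other words $\TrancheLax{\mathdeuxcat{A}}{vu}{c}$ is asph\'erique if and only if $\TrancheLax{\mathdeuxcat{B}}{v}{c}$ is. Quantifying over $c \in \Objets{\mathdeuxcat{C}}$ gives the desired equivalence: $vu$ is lax-asph\'erique if and only if $v$ is.

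The three dual statements (lax-opasph\'erique, colax-asph\'erique, colax-opasph\'erique) follow by applying the dual versions of Lemma \ref{1.1.7.livre} mentioned in the remark that immediately follows its proof, together with the same weak-saturation argument. No real obstacle is anticipated here: the substance of the argument is entirely contained in Lemma \ref{1.1.7.livre}, which already encapsulates the non-trivial construction of the retract pair $(F,G)$ and of the \DeuxTransformationStricte{} witnessing that $\TrancheLax{(\TrancheLax{\mathdeuxcat{A}}{w}{c})}{\DeuxFoncTrancheLax{u}{c}}{(b,p)}$ and $\TrancheLax{\mathdeuxcat{A}}{u}{b}$ have the same asph\'ericit\'e. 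Once that lemma is in hand, the present proposition is a pure two-out-of-three exercise.
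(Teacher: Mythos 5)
Votre preuve est correcte et suit essentiellement la m�me d�marche que celle du texte : application du lemme \ref{1.1.7.livre} au triangle de sommet $\mathdeuxcat{C}$ avec $w = vu$, puis un argument de ��$2$ sur $3$�� (que vous explicitez via le triangle au-dessus de $\DeuxCatPonct$, alors que le texte le laisse implicite), et enfin les trois variantes par dualit�.
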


\begin{proof} 
Supposons $u$ lax-asphérique. En vertu du lemme \ref{1.1.7.livre}, pour tout objet $c$ de $\mathdeuxcat{C}$, $\DeuxFoncTrancheLax{u}{c} : \TrancheLax{\mathdeuxcat{A}}{vu}{c} \to \TrancheLax{\mathdeuxcat{B}}{v}{c}$ est lax-asphérique, donc en particulier une équivalence faible. Par conséquent, $\TrancheLax{\mathdeuxcat{B}}{v}{c}$ est asphérique si et seulement si $\TrancheLax{\mathdeuxcat{A}}{vu}{c}$ est asphérique. Le résultat s'ensuit. Les trois autres assertions s'en déduisent facilement par un argument de dualité. Par exemple, si $u$ est lax-opasphérique, alors $\DeuxFoncUnOp{u}$ est lax-asphérique. En vertu de ce que l'on vient de voir, $\DeuxFoncUnOp{v} \DeuxFoncUnOp{u}$ est lax-asphérique si et seulement si $\DeuxFoncUnOp{v}$ l'est, donc $\DeuxFoncUnOp{(vu)}$ est lax-asphérique si et seulement si $\DeuxFoncUnOp{v}$ l'est, donc $vu$ est lax-opasphérique si et seulement si $v$ l'est.
\end{proof} 

\section{Homotopie des morphismes lax}\label{SectionHomotopieMorphismesLax}

\begin{prop}\label{LemmeCouniteColaxAspherique}
Pour toute petite \deux{}catégorie $\mathdeuxcat{A}$, pour tout objet $a$ de $\mathdeuxcat{A}$, la \deux{}catégorie $\TrancheCoLax{\TildeLax{\mathdeuxcat{A}}}{\StrictCanonique{\mathdeuxcat{A}}}{a}$ admet un objet admettant un objet final.
\end{prop}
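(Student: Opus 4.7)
The plan is to exhibit the explicit candidate $z = (a, 1_a)$, where $1_a$ is the identity $1$-cell of $\mathdeuxcat{A}$ on $a$, and to show that for every object $(b, p : b \to a)$ of $\TrancheCoLax{\TildeLax{\mathdeuxcat{A}}}{\StrictCanonique{\mathdeuxcat{A}}}{a}$, the $1$-cell
$$
\Phi_0 = \bigl(\, ([1], p),\ 1_p \,\bigr) : (b, p) \longrightarrow (a, 1_a)
$$
is a final object of the hom-category $\CatHom{\TrancheCoLax{\TildeLax{\mathdeuxcat{A}}}{\StrictCanonique{\mathdeuxcat{A}}}{a}}{(b,p)}{(a, 1_a)}$. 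Here $([1], p)$ denotes the $1$-cell of $\TildeLax{\mathdeuxcat{A}}$ associated with the \DeuxFoncteurStrict{} $[1] \to \mathdeuxcat{A}$ sending $(0 \to 1)$ to $p$, so that $\StrictCanonique{\mathdeuxcat{A}}(([1],p)) = p$ and the requirement that the second component be a \deux{}cellule from $1_a \cdot \StrictCanonique{\mathdeuxcat{A}}(([1],p)) = p$ to $p$ is satisfied by $1_p$.

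First I would unwind the definitions of $\TildeLax{\mathdeuxcat{A}}$, $\StrictCanonique{\mathdeuxcat{A}}$, and $\TrancheCoLax{\bullet}{\bullet}{\bullet}$ in this specific situation. A general $1$-cell $\Phi = (([n], r), \alpha)$ from $(b, p)$ to $(a, 1_a)$ in the slice consists of a strict $2$-functor $r : [n] \to \mathdeuxcat{A}$ with $r_0 = b$, $r_n = a$, together with a \deux{}cellule $\alpha : r_{n, n-1} \dots r_{1,0} \Rightarrow p$ of $\mathdeuxcat{A}$ (using $\StrictCanonique{\mathdeuxcat{A}}(([n], r)) = r_{n, n-1} \dots r_{1,0}$ and the strictness of composition with $1_a$). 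A \deux{}cellule $\Phi \Rightarrow \Phi_0$ in the slice is, by definition, a \deux{}cellule $\beta : ([n], r) \Rightarrow ([1], p)$ of $\TildeLax{\mathdeuxcat{A}}$ satisfying $1_p \CompDeuxUn (1_a \CompDeuxZero \StrictCanonique{\mathdeuxcat{A}}(\beta)) = \alpha$, i.e.\ simply $\StrictCanonique{\mathdeuxcat{A}}(\beta) = \alpha$.

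Now a \deux{}cellule of $\TildeLax{\mathdeuxcat{A}}$ from $([n], r)$ to $([1], p)$ is a pair $(\varphi, \gamma)$ where $\varphi : [1] \to [n]$ is a morphism d'intervalles and $\gamma$ is a \DeuxTransformationLax{} relative to the objects from $r \varphi$ to $p$. The key observation is that there is exactly one interval morphism $\varphi : [1] \to [n]$, namely the one with $\varphi(0) = 0$ and $\varphi(1) = n$; with this $\varphi$, giving $\gamma$ amounts to giving a single \deux{}cellule $\gamma_1 : r_{n,0} \Rightarrow p$ of $\mathdeuxcat{A}$, and $\StrictCanonique{\mathdeuxcat{A}}(\varphi, \gamma) = \gamma_1$. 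The compatibility in the slice therefore forces $\gamma_1 = \alpha$, proving both existence and uniqueness of a \deux{}cellule $\Phi \Rightarrow \Phi_0$.

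The main step is thus the combinatorial observation that the hom-category of \un{}cellules of $\TildeLax{\mathdeuxcat{A}}$ from any $([n], r)$ to the length\nobreakdash-$1$ chain $([1], p)$ is controlled by a single component indexed by the unique interval morphism $[1] \to [n]$. No serious obstacle is expected; the only care needed is to keep track of the direction of interval morphisms in the definition of \deux{}cellules of $\TildeLax{\mathdeuxcat{A}}$ (they go the opposite way from \un{}cellules), to use strictness of composition with identities in $\mathdeuxcat{A}$, and to verify that the trivial compatibility $1_p \CompDeuxUn (1_a \CompDeuxZero \alpha) = \alpha$ in the slice holds by the unit axioms in a strict $2$-category. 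Putting these pieces together gives that $(a, 1_a)$ is an object of $\TrancheCoLax{\TildeLax{\mathdeuxcat{A}}}{\StrictCanonique{\mathdeuxcat{A}}}{a}$ admitting a final object, as required.
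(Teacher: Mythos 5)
Your proposal is correct and is essentially the paper's own proof: the same candidate object $(a,1_{a})$, the same final object $(([1],p),1_{p})$ in each hom-category, and the same key point that a \deux{}cellule of $\TildeLax{\mathdeuxcat{A}}$ into the length-$1$ chain $([1],p)$ is determined by the unique interval morphism $[1]\to[n]$ together with a single component, which the slice condition forces to equal $\alpha$. The only cosmetic difference is that you make the uniqueness of the interval morphism explicit where the paper leaves it implicit.
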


\begin{proof}
Décrivons partiellement la structure de cette \deux{}catégorie\footnote{Conformément à nos habitudes, nous n'explicitons pas tout.} :
\begin{itemize}
\item Les objets de $\TrancheCoLax{\TildeLax{\mathdeuxcat{A}}}{\StrictCanonique{\mathdeuxcat{A}}}{a}$ sont les couples 
$$
(a', p : a' \to a) 
$$
où $a'$ est un objet de $\mathdeuxcat{A}$ et $p$ est une 1-cellule de $\mathdeuxcat{A}$.
\item Si $(a',p)$ et $(a'',q)$ sont deux objets de $\TrancheCoLax{\TildeLax{\mathdeuxcat{A}}}{\StrictCanonique{\mathdeuxcat{A}}}{a}$, les \un{}cellules de $(a',p)$ vers $(a'',q)$ dans $\TrancheCoLax{\TildeLax{\mathdeuxcat{A}}}{\StrictCanonique{\mathdeuxcat{A}}}{a}$ sont les triplets
$$
([m], x, \alpha : q x_{m,m-1} \dots x_{1,0} \Rightarrow p)
$$
tels que $([m], x)$ soit une \un{}cellule de $a'$ vers $a''$ dans $\TildeLax{\mathdeuxcat{A}}$ (en particulier, $x_{0} = a'$ et $x_{m} = a''$).
\item Si $(a',p)$ et $(a'',q)$ sont deux objets de $\TrancheCoLax{\TildeLax{\mathdeuxcat{A}}}{\StrictCanonique{\mathdeuxcat{A}}}{a}$, et $([m], x, \alpha)$ et $([n], y, \beta)$ deux \un{}cellules de $(a',p)$ vers $(a'',q)$ dans $\TrancheCoLax{\TildeLax{\mathdeuxcat{A}}}{\StrictCanonique{\mathdeuxcat{A}}}{a}$, les \deux{}cellules de $([m], x, \alpha)$ vers $([n], y, \beta)$ dans $\TrancheCoLax{\TildeLax{\mathdeuxcat{A}}}{\StrictCanonique{\mathdeuxcat{A}}}{a}$ sont les \deux{}cellules $(\varphi : [n] \to [m], \delta : x \varphi \Rightarrow y)$ de $([m],x)$ vers $([n],y)$ dans $\TildeLax{\mathdeuxcat{A}}$ ($\varphi$ est donc un morphisme d'intervalles) telles que 
$$
\beta (1_{q} \CompDeuxZero \delta_{n,n-1} \CompDeuxZero \dots \CompDeuxZero \delta_{1,0}) = \alpha.
$$
\end{itemize}

La \deux{}catégorie $\TrancheCoLax{\TildeLax{\mathdeuxcat{A}}}{\StrictCanonique{\mathdeuxcat{A}}}{a}$ possède l'objet $(a, 1_{a})$. Si $(a', p : a' \to a)$ est un objet arbitraire de $\TrancheCoLax{\TildeLax{\mathdeuxcat{A}}}{\StrictCanonique{\mathdeuxcat{A}}}{a}$, $([1], p, 1_{p})$
définit un objet  de
$\CatHom{\TrancheCoLax{\TildeLax{\mathdeuxcat{A}}}{\StrictCanonique{\mathdeuxcat{A}}}{a}}{(a',p)} {(a, 1_{a})}$. Soit $([m], x, \alpha)$ un objet de $\CatHom{\TrancheCoLax{\TildeLax{\mathdeuxcat{A}}}{\StrictCanonique{\mathdeuxcat{A}}}{a}}{(a',p)} {(a, 1_{a})}$. Les morphismes de $([m], x, \alpha)$ vers $([1], p, 1_{p})$ dans la catégorie $\CatHom{\TrancheCoLax{\TildeLax{\mathdeuxcat{A}}}{\StrictCanonique{\mathdeuxcat{A}}}{a}}{(a',p)} {(a, 1_{a})}$ s'identifient aux couples
$$
(\varphi : [1] \to [m], \delta : x_{m,m-1} \dots x_{1,0} \Rightarrow p)
$$
avec $\varphi$ un morphisme d'intervalles et $\delta$ une \deux{}cellule de $\mathdeuxcat{A}$ tels que $1_{p} \CompDeuxUn (1_{1_{a}} \CompDeuxZero \delta) = \alpha$, c'est-à-dire $\delta = \alpha$. Cela impose l'existence et l'unicité d'un tel morphisme de $([m], x, \alpha)$ vers $([1], p, 1_{p})$ dans $\CatHom{\TrancheCoLax{\TildeLax{\mathdeuxcat{A}}}{\StrictCanonique{\mathdeuxcat{A}}}{a}}{(a',p)} {(a, 1_{a})}$. La \deux{}catégorie $\TrancheCoLax{\TildeLax{\mathdeuxcat{A}}}{\StrictCanonique{\mathdeuxcat{A}}}{a}$ admet donc un objet admettant un objet final, ce qu'il fallait vérifier. 
\end{proof}

\begin{prop}\label{CouniteColaxAspherique}
Pour toute petite \deux{}catégorie $\mathdeuxcat{A}$, le \DeuxFoncteurStrict{} $\StrictCanonique{\mathdeuxcat{A}} : \TildeLax{\mathdeuxcat{A}} \to \mathdeuxcat{A}$ est colax-asphérique (donc en particulier une équivalence faible).
\end{prop}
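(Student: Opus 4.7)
The plan is to reduce the statement to the previous proposition and a direct application of the axioms of a localisateur fondamental of $\DeuxCat$. By definition, showing that $\StrictCanonique{\mathdeuxcat{A}} : \TildeLax{\mathdeuxcat{A}} \to \mathdeuxcat{A}$ is colax-asph�rique amounts, taking $v = 1_{\mathdeuxcat{A}}$ in the triangle of the definition \ref{Zonzon}, to showing that for every object $a$ of $\mathdeuxcat{A}$, the induced \DeuxFoncteurStrict{}
\[
\DeuxFoncTrancheCoLax{\StrictCanonique{\mathdeuxcat{A}}}{a} : \TrancheCoLax{\TildeLax{\mathdeuxcat{A}}}{\StrictCanonique{\mathdeuxcat{A}}}{a} \to \TrancheCoLax{\mathdeuxcat{A}}{}{a}
\]
is a $\DeuxLocFond{W}$-\'equivalence faible for every \ClasseDeuxLocFond{} $\DeuxLocFond{W}$.

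The first step is to observe that the target \deux{}cat�gorie $\TrancheCoLax{\mathdeuxcat{A}}{}{a}$ is asph�rique: this is precisely the content of the example \ref{TrancheCoLaxAspherique}, which invokes the axiom LF2 together with the exemple \ref{ExemplesOF} stating that $\TrancheCoLax{\mathdeuxcat{A}}{}{a}$ admits an object admitting an objet final (namely $(a,1_a)$). The second step is the non-trivial one, and it is exactly what the preceding proposition \ref{LemmeCouniteColaxAspherique} provides: the source \deux{}cat�gorie $\TrancheCoLax{\TildeLax{\mathdeuxcat{A}}}{\StrictCanonique{\mathdeuxcat{A}}}{a}$ also admits an object admitting an objet final, namely $(a, 1_{a})$ itself, so by LF2 it too is $\DeuxLocFond{W}$-asph�rique.

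Both the source and the target of $\DeuxFoncTrancheCoLax{\StrictCanonique{\mathdeuxcat{A}}}{a}$ being asph�riques, the remarque \ref{SiEntreAsphAlorsAsph} (a direct consequence of weak saturation LF1) implies that this \DeuxFoncteurStrict{} is a $\DeuxLocFond{W}$-\'equivalence faible. This being true for every object $a$ of $\mathdeuxcat{A}$ and every \ClasseDeuxLocFond{} $\DeuxLocFond{W}$, we conclude that $\StrictCanonique{\mathdeuxcat{A}}$ is colax-asph�rique. The parenthetical ``donc en particulier une �quivalence faible'' is then immediate from the remarque \ref{Zozo} (or equivalently from example \ref{TrancheCoLaxAspherique}), which records that colax-aspheric 2-functors are weak equivalences, itself a consequence of axiom LF3.

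There is essentially no obstacle here: all the real work has been carried out in the proposition \ref{LemmeCouniteColaxAspherique}, where the explicit construction of the final object in $\CatHom{\TrancheCoLax{\TildeLax{\mathdeuxcat{A}}}{\StrictCanonique{\mathdeuxcat{A}}}{a}}{(a',p)}{(a,1_a)}$ via the pair $([1],p,1_p)$ crucially uses that in $\TildeLax{\mathdeuxcat{A}}$ every \un{}cellule is a formal composition, so one can always reparametrize a long chain to the single-arrow representative. The present proposition is then a pure combination of that geometric input with the axioms LF1 and LF2.
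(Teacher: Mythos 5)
Votre d�monstration est correcte et suit essentiellement la m�me voie que le texte : la preuve du papier consiste pr�cis�ment � dire que l'�nonc� est une cons�quence directe de la proposition \ref{LemmeCouniteColaxAspherique}, et vous explicitez exactement cette d�duction (asph�ricit� des tranches source par LF2, asph�ricit� des tranches but, saturation faible, puis la remarque \ref{Zozo} pour conclure que tout \DeuxFoncteurStrict{} colax-asph�rique est une �quivalence faible). Rien � redire.
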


\begin{proof}
C'est une conséquence directe de la proposition \ref{LemmeCouniteColaxAspherique}.
\end{proof}

\begin{lemme}\label{BarreTildeW}
Soit $u$ un \deux{}foncteur strict. Les propositions suivantes sont équivalentes. 
\begin{itemize}
\item[(i)]
Le \DeuxFoncteurStrict{} $u$ est une équivalence faible ;
\item[(ii)]
Le \DeuxFoncteurStrict{} $\BarreLax{u}$ est une équivalence faible ; 
\item[(iii)]
Le \DeuxFoncteurStrict{} $\TildeLax{u}$ est une équivalence faible.
\end{itemize}
\end{lemme}

\begin{proof}
En vertu de la définition \ref{DefBarreLax}, c'est une conséquence immédiate du lemme \ref{CouniteNaturelle}, de la saturation faible de $\DeuxLocFond{W}$ et de la proposition \ref{CouniteColaxAspherique}.
\end{proof}

\begin{df}
On dira qu'un \DeuxFoncteurLax{} $u$ est une $\DeuxLocFond{W}$\emph{-équivalence faible lax} ou, plus simplement, une \emph{équivalence faible lax}\index{equivalence faible lax (pour un localisateur fondamental de $\DeuxCat$)@équivalence faible lax (pour un localisateur fondamental de $\DeuxCat$)}, voire, en l'absence d'ambiguïté, une \emph{équivalence faible}, si le \DeuxFoncteurStrict{} $\TildeLax{u}$ est dans $\DeuxLocFond{W}$. On notera $\DeuxLocFondLaxInduit{W}$ la classe des $\DeuxLocFond{W}$\nobreakdash-équivalences faibles lax. Autrement dit, $\DeuxLocFondLaxInduit{W} = \FoncBenabou^{-1} (\DeuxLocFond{W})$. 
\end{df}

\begin{rem}\label{EquiStricteEquiLax}
En vertu du lemme \ref{BarreTildeW}, un \deux{}foncteur strict est une équivalence faible si et seulement si c'est une équivalence faible lax.
\end{rem}

\begin{paragr}
Il paraît naturel d'utiliser la notion de nerf lax pour définir une notion d'équivalence faible lax. Plus précisément, on pourrait se trouver tenté de dire qu'un \DeuxFoncteurLax{} $u$ est une équivalence faible lax si $\Delta/\NerfLax{(u)}$ est dans $\DeuxLocFond{W} \cap \UnCell{\Cat}$. 
Il semble donc que l'on dispose de deux définitions de la classe des équivalences faibles lax, à savoir $\NerfLax^{-1} (i_{\Delta}^{-1} (\DeuxLocFond{W} \cap \UnCell{\Cat}))$ et $\FoncBenabou^{-1}({\DeuxLocFond{W}})$. On verra plus loin qu'elles sont équivalentes. La proposition \ref{EquiDefEquiLax} dissipe les premières inquiétudes à ce sujet.
\end{paragr}

\emph{On ne suppose plus fixé de \ClasseDeuxLocFond{}.} 

\begin{prop}\label{EquiDefEquiLax}
Pour tout localisateur fondamental $\UnLocFond{W}$ de $\Cat$, pour tout \DeuxFoncteurLax{} u, $\Delta/\NerfLax(\TildeLax{u})$ est dans $\UnLocFond{W}$ si et seulement si $\Delta/\NerfLax(u)$ l'est.
\end{prop}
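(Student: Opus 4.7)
The plan is to exploit the naturality of the unit $\LaxCanonique{}$ of the Bénabou adjunction together with the first triangle identity $\StrictCanonique{\mathdeuxcat{A}} \LaxCanonique{\mathdeuxcat{A}} = 1_{\mathdeuxcat{A}}$. By Lemma \ref{UniteNaturelle}, for any lax $2$-functor $u : \mathdeuxcat{A} \to \mathdeuxcat{B}$, the square
$$
\xymatrix{
\TildeLax{\mathdeuxcat{A}} \ar[r]^{\TildeLax{u}} & \TildeLax{\mathdeuxcat{B}} \\
\mathdeuxcat{A} \ar[u]^{\LaxCanonique{\mathdeuxcat{A}}} \ar[r]_{u} & \mathdeuxcat{B} \ar[u]_{\LaxCanonique{\mathdeuxcat{B}}}
}
$$
commutes in $\DeuxCatLax$. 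Applying the functor $i_{\Delta} \NerfLax : \DeuxCatLax \to \Cat$, I obtain a commutative square in $\Cat$. If I can show that the two vertical arrows $\Delta/\NerfLax(\LaxCanonique{\mathdeuxcat{A}})$ and $\Delta/\NerfLax(\LaxCanonique{\mathdeuxcat{B}})$ are always in $\UnLocFond{W}$, then by two successive applications of the $2$-out-of-$3$ property of $\UnLocFond{W}$, I will have that $\Delta/\NerfLax(u) \in \UnLocFond{W}$ if and only if $\Delta/\NerfLax(\TildeLax{u}) \in \UnLocFond{W}$, which is exactly the claim.

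So the proof reduces to showing that, for any small $2$-category $\mathdeuxcat{A}$, the functor $\Delta/\NerfLax(\LaxCanonique{\mathdeuxcat{A}})$ belongs to $\UnLocFond{W}$. To do this, I will use the fact that $\StrictCanonique{\mathdeuxcat{A}} \LaxCanonique{\mathdeuxcat{A}} = 1_{\mathdeuxcat{A}}$ (first triangle identity of the adjunction, verified during the proof of Theorem \ref{BIAdjonction}). Applying $\Delta/\NerfLax$, this gives
$$
\Delta/\NerfLax(\StrictCanonique{\mathdeuxcat{A}}) \circ \Delta/\NerfLax(\LaxCanonique{\mathdeuxcat{A}}) = 1_{\Delta/\NerfLax(\mathdeuxcat{A})},
$$
so $\Delta/\NerfLax(\LaxCanonique{\mathdeuxcat{A}})$ is a section of $\Delta/\NerfLax(\StrictCanonique{\mathdeuxcat{A}})$. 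By the $2$-out-of-$3$ property, it will suffice to show that $\Delta/\NerfLax(\StrictCanonique{\mathdeuxcat{A}}) \in \UnLocFond{W}$.

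Here is where the theory of fundamental localizers of $\DeuxCat$ developed earlier in this chapter comes in: by Example \ref{Levet}, the class
$$
\DeuxLocFond{W} := \NerfLax^{-1}(i_{\Delta}^{-1}(\UnLocFond{W})) \cap \UnCell{\DeuxCat}
$$
is a fundamental localizer of $\DeuxCat$. But $\StrictCanonique{\mathdeuxcat{A}}$ is a strict $2$-functor, and Proposition \ref{CouniteColaxAspherique} shows that it is colax-aspherical (over $\mathdeuxcat{A}$), hence by the observations following the definition of localizer of $\DeuxCat$ (see Remark \ref{Zozo} or equivalently the axiom LF3) a weak equivalence for $\DeuxLocFond{W}$. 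By the very definition of $\DeuxLocFond{W}$, this means $\Delta/\NerfLax(\StrictCanonique{\mathdeuxcat{A}}) \in \UnLocFond{W}$, which completes the argument.

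There is no real obstacle in this proof: it is essentially a formal consequence of the Bénabou adjunction together with the fact, already established in Proposition \ref{CouniteColaxAspherique}, that the counit $\StrictCanonique{\mathdeuxcat{A}}$ is colax-aspherical. The only subtlety worth emphasizing is the passage from the $1$-categorical localizer $\UnLocFond{W}$ to the induced $2$-categorical localizer $\DeuxLocFond{W}$, which allows me to apply the colax-asphericity of $\StrictCanonique{\mathdeuxcat{A}}$ in its natural context.
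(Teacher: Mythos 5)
Your proof is correct and follows essentially the same route as the paper: naturality of $\LaxCanonique{}$, the triangle identity to reduce to the counit, colax-asphericity of $\StrictCanonique{\mathdeuxcat{A}}$ (proposition \ref{CouniteColaxAspherique}) applied to the induced localisateur fondamental de $\DeuxCat$ of the exemple \ref{Levet}, then two-out-of-three. The only cosmetic difference is that the paper passes through $\NerfLaxNor$ and the lemme \ref{Blabla} to transfer to $\NerfLax$, whereas you invoke directly the $\NerfLax$-form of the induced localizer, which amounts to the same thing.
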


\begin{proof}
Soient $\UnLocFond{W}$ un \ClasseUnLocFond{} et $u : \mathdeuxcat{A} \to \mathdeuxcat{B}$ un  \DeuxFoncteurLax{}. Les \DeuxFoncteursStricts{} $\StrictCanonique{\mathdeuxcat{A}}$ et $\StrictCanonique{\mathdeuxcat{B}}$ sont des équivalences faibles pour tout \ClasseDeuxLocFond{} (proposition \ref{CouniteColaxAspherique}). Comme $\NerfLaxNor^{-1} (i_{\Delta}^{-1} (W))$ est un \ClasseDeuxLocFond{}, $\Delta/\NerfLaxNor(\StrictCanonique{\mathdeuxcat{A}})$ et $\Delta/\NerfLaxNor(\StrictCanonique{\mathdeuxcat{B}})$ sont dans $\UnLocFond{W}$. Il en est donc de même de $\Delta/\NerfLax(\StrictCanonique{\mathdeuxcat{A}})$ et $\Delta/\NerfLax(\StrictCanonique{\mathdeuxcat{B}})$ (en vertu de l'implication $(i) \Rightarrow (ii)$ du lemme \ref{Blabla}). La saturation faible de $\UnLocFond{W}$ permet d'en déduire que les sections $\Delta/\NerfLax(\LaxCanonique{\mathdeuxcat{A}})$ et $\Delta/\NerfLax(\LaxCanonique{\mathdeuxcat{B}})$ sont dans $\UnLocFond{W}$. On conclut par un argument de 2 sur 3 après avoir appliqué le foncteur $i_{\Delta} \NerfLax$ au diagramme commutatif
$$
\xymatrix{
\TildeLax{\mathdeuxcat{A}}
\ar[r]^{\TildeLax{u}}
&\TildeLax{\mathdeuxcat{B}}
\\
\mathdeuxcat{A}
\ar[u]^{\LaxCanonique{\mathdeuxcat{A}}}
\ar[r]_{u}
&\mathdeuxcat{B}
\ar[u]_{\LaxCanonique{\mathdeuxcat{B}}}
&.
}
$$
\end{proof}

\emph{On suppose fixé un localisateur fondamental $\DeuxLocFond{W}$ de $\DeuxCat$.}

\begin{lemme}\label{SatFaibleEquiLax}
La classe des équivalences faibles lax est faiblement saturée.
\end{lemme}

\begin{proof}
Cela résulte par fonctorialité de la définition de la notion d'équivalence faible lax et de la saturation faible de $\DeuxLocFond{W}$. 
\end{proof}

\begin{prop}\label{UniteW}
Pour toute petite \deux{}catégorie $\mathdeuxcat{A}$, le \DeuxFoncteurLax{} $\LaxCanonique{\mathdeuxcat{A}} : \mathdeuxcat{A} \to \TildeLax{\mathdeuxcat{A}}$ est une équivalence faible.
\end{prop}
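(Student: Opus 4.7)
The plan is to exploit the triangular identity of the B\'enabou adjunction $(B, I)$ to reduce the statement to Proposition \ref{CouniteColaxAspherique}. By definition of the class $\DeuxLocFondLaxInduit{W}$, showing that $\LaxCanonique{\mathdeuxcat{A}}$ is a lax weak equivalence amounts to showing that the \DeuxFoncteurStrict{} $\TildeLax{\LaxCanonique{\mathdeuxcat{A}}} : \TildeLax{\mathdeuxcat{A}} \to \TildeLax{\TildeLax{\mathdeuxcat{A}}}$ is in $\DeuxLocFond{W}$.

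The key observation is that the second triangular identity, established in the proof of Theorem \ref{BIAdjonction}, reads
$$
\StrictCanonique{\TildeLax{\mathdeuxcat{A}}} \, \TildeLax{\LaxCanonique{\mathdeuxcat{A}}} = 1_{\TildeLax{\mathdeuxcat{A}}}.
$$
Thus the composite of $\TildeLax{\LaxCanonique{\mathdeuxcat{A}}}$ followed by $\StrictCanonique{\TildeLax{\mathdeuxcat{A}}}$ is an identity, hence certainly in $\DeuxLocFond{W}$. On the other hand, Proposition \ref{CouniteColaxAspherique} applied to the petite \deux{}cat�gorie $\TildeLax{\mathdeuxcat{A}}$ guarantees that $\StrictCanonique{\TildeLax{\mathdeuxcat{A}}}$ is itself an �quivalence faible (it is even colax-asph�rique).

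It then suffices to invoke the weak saturation of $\DeuxLocFond{W}$ (axiom LF1, specifically the two-out-of-three property FS2) applied to the triangle whose third edge is $\TildeLax{\LaxCanonique{\mathdeuxcat{A}}}$: two of its sides, namely $\StrictCanonique{\TildeLax{\mathdeuxcat{A}}}$ and the identity $1_{\TildeLax{\mathdeuxcat{A}}}$, lie in $\DeuxLocFond{W}$, hence so does $\TildeLax{\LaxCanonique{\mathdeuxcat{A}}}$. By the very definition of $\DeuxLocFondLaxInduit{W}$, this yields the assertion. There is no real obstacle here, since all the hard work has already been done in proving Proposition \ref{CouniteColaxAspherique}; the present result is a formal consequence of that proposition combined with the fact that $(\TransLaxCanonique, \TransStrictCanonique)$ are the unit and counit of the adjunction $(B, I)$.
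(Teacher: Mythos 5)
Your argument is correct and is essentially the paper's own proof: both reduce the statement to Proposition \ref{CouniteColaxAspherique} together with weak saturation, using that $\LaxCanonique{\mathdeuxcat{A}}$ is a section of a counit which is already known to be a weak equivalence. The only (cosmetic) difference is that the paper invokes the retraction $\StrictCanonique{\mathdeuxcat{A}} \LaxCanonique{\mathdeuxcat{A}} = 1_{\mathdeuxcat{A}}$ directly, with $\StrictCanonique{\mathdeuxcat{A}}$ a weak equivalence, hence a lax weak equivalence by Remark \ref{EquiStricteEquiLax}, and two-out-of-three in $\DeuxLocFondLaxInduit{W}$, whereas you first apply $B$ and use the second triangular identity $\StrictCanonique{\TildeLax{\mathdeuxcat{A}}} \, \TildeLax{\LaxCanonique{\mathdeuxcat{A}}} = 1_{\TildeLax{\mathdeuxcat{A}}}$ with two-out-of-three in $\DeuxLocFond{W}$, which is a formally equivalent variant of the same argument.
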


\begin{proof}
En vertu de la proposition \ref{CouniteColaxAspherique} et de la remarque \ref{EquiStricteEquiLax}, cela résulte de l'égalité $\StrictCanonique{\mathdeuxcat{A}} \LaxCanonique{\mathdeuxcat{A}} = 1_{\mathdeuxcat{A}}$.
\end{proof}

\begin{lemme}\label{EquiLaxBarreLax}
Un \DeuxFoncteurLax{} $u : \mathdeuxcat{A} \to \mathdeuxcat{B}$ est une équivalence faible lax si et seulement si $\BarreLax{u}$ est une équivalence faible.
\end{lemme}

\begin{proof}
Cela résulte de l'égalité $\BarreLax{u} \LaxCanonique{\mathdeuxcat{A}} = u$ et de la proposition \ref{UniteW}. 
\end{proof}


\begin{theo}\label{EqCatLocDeuxCatDeuxCatLax}
L'inclusion 
$$
I : \DeuxCat \hookrightarrow \DeuxCatLax
$$ 
et le foncteur de strictification de Bénabou 
$$
B : \DeuxCatLax \to \DeuxCat
$$ 
induisent des équivalences de catégories 
$$
\Localisation{\DeuxCat}{\DeuxLocFond{W}} \to \Localisation{\DeuxCatLax}{\DeuxLocFondLaxInduit{W}}
$$ 
et 
$$
\Localisation{\DeuxCatLax}{\DeuxLocFondLaxInduit{W}} \to \Localisation{\DeuxCat}{\DeuxLocFond{W}}
$$ 
quasi-inverses l'une de l'autre.
\end{theo}

\begin{proof}
Les foncteurs $I$ et $B$ respectant les équivalences faibles, ils induisent bien des foncteurs entre les catégories localisées de l'énoncé. Le reste est conséquence du fait que les composantes des transformations naturelles $\TransLaxCanonique$ et $\TransStrictCanonique$ sont dans $\DeuxLocFondLaxInduit{W}$ et $\DeuxLocFond{W}$ respectivement.
\end{proof}

\begin{lemme}\label{LemmeHomotopieLax}(Lemme d'homotopie lax) 
Soit $\mathcal{I}$ un ensemble de segments de $\DeuxCatLax$ tel que, pour tout segment $\mathbb{I} = (I, \partial_{0}, \partial_{1})$ appartenant à $\mathcal{I}$, le morphisme canonique $I \to \DeuxCatPonct{}$ soit dans $\DeuxLocFond{W}$ (donc universellement dans $\DeuxLocFond{W}$). Si $F, G : \mathdeuxcat{A} \to \mathdeuxcat{B}{}$ sont deux \DeuxFoncteursLax{} $\mathcal{I}$\nobreakdash-homotopes, alors :
\begin{itemize}
\item[(a)] On a l'égalité $\gamma_{lax}(F) = \gamma_{lax}(G)$, où $\gamma_{lax} : \DeuxCatLax{} \to \Localisation{\DeuxCatLax}{\DeuxLocFondLaxInduit{W}}$ désigne le foncteur canonique de localisation.
\item[(a$'$)] Si $F$ et $G$ sont des \deux{}foncteurs stricts, alors $\gamma(F) = \gamma(G)$, où $\gamma : \DeuxCat \to \Localisation{\DeuxCat}{\DeuxLocFond{W}}$ désigne le foncteur canonique de localisation.
\item[(b)] Le \DeuxFoncteurLax{} $F$ est dans $\DeuxLocFondLaxInduit{W}$ si et seulement si $G$ l'est.
\item[(b$'$)] Si $F$ et $G$ sont des \deux{}foncteurs stricts, alors $F$ est dans $\DeuxLocFond{W}$ si et seulement si $G$ l'est.
\item[(c)] Si $F$ est un isomorphisme et $G$ un morphisme constant, alors les morphismes canoniques $\mathdeuxcat{A} \to \DeuxCatPonct{}$ et $\mathdeuxcat{B} \to \DeuxCatPonct{}$ sont universellement dans $\DeuxLocFond{W}$. En particulier, $\mathdeuxcat{A}$ et $\mathdeuxcat{B}$ sont $\DeuxLocFond{W}$\nobreakdash-asphériques.
\end{itemize}
\end{lemme}

\begin{proof}
Les assertions \emph{(a)} et \emph{(b)} résultent du lemme \ref{LemmeHomotopieTHG}. On en déduit les assertions \emph{(a$'$)} et \emph{(b$'$)} en vertu du théorème \ref{EqCatLocDeuxCatDeuxCatLax} (puisqu'une équivalence de catégories est un foncteur fidèle) et du lemme \ref{BarreTildeW} respectivement. L'assertion $(c)$ résulte également du lemme \ref{LemmeHomotopieTHG}. 
\end{proof}

\begin{lemme}\label{LemmeDualiteTildeUnOp}
Soient $\mathdeuxcat{A}$ une petite \deux{}catégorie et $u$ et $v$ des \DeuxFoncteursStricts{} de source $\DeuxCatUnOp{(\TildeLax{\mathdeuxcat{A}})}$. Alors, $u = v$ si et seulement si $u \DeuxFoncUnOp{(\LaxCanonique{\mathdeuxcat{A}})} = v \DeuxFoncUnOp{(\LaxCanonique{\mathdeuxcat{A}})}$.
\end{lemme}

\begin{proof}
L'égalité $u \DeuxFoncUnOp{(\LaxCanonique{\mathdeuxcat{A}})} = v \DeuxFoncUnOp{(\LaxCanonique{\mathdeuxcat{A}})}$ équivaut à $\DeuxFoncUnOp{u} \LaxCanonique{\mathdeuxcat{A}} = \DeuxFoncUnOp{v} \LaxCanonique{\mathdeuxcat{A}}$, donc à $\DeuxFoncUnOp{u} = \DeuxFoncUnOp{v}$, donc à $u = v$. 
\end{proof}

\begin{lemme}\label{DualiteTildeUnOp}
Pour toute petite \deux{}catégorie $\mathdeuxcat{A}$, il existe un isomorphisme canonique (qui est un \DeuxFoncteurStrict)
$$
j_{\mathdeuxcat{A}} : \DeuxCatUnOp{(\TildeLax{\mathdeuxcat{A}})} \to \TildeLax{(\DeuxCatUnOp{\mathdeuxcat{A}})}
$$
tel que le diagramme
$$
\xymatrix{
\DeuxCatUnOp{(\TildeLax{\mathdeuxcat{A}})}
\ar[rr]^{j_{\mathdeuxcat{A}}}
&&\TildeLax{(\DeuxCatUnOp{\mathdeuxcat{A}})}
\\
&\DeuxCatUnOp{\mathdeuxcat{A}}
\ar[ul]^{\DeuxFoncUnOp{(\LaxCanonique{\mathdeuxcat{A}})}}
\ar[ur]_{\LaxCanonique{(\DeuxCatUnOp{\mathdeuxcat{A}})}}
}
$$
soit commutatif.
\end{lemme}

\begin{proof}
C'est immédiat.  
\end{proof}

\begin{rem}\label{RemStrictCanoniqueUnOp}
On en déduit l'égalité $\StrictCanonique{(\DeuxCatUnOp{\mathdeuxcat{A}})} j_{\mathdeuxcat{A}} = \DeuxFoncUnOp{(\StrictCanonique{\mathdeuxcat{A}})}$. En effet, on a les égalités
$$
\StrictCanonique{(\DeuxCatUnOp{\mathdeuxcat{A}})} j_{\mathdeuxcat{A}} \DeuxFoncUnOp{(\LaxCanonique{\mathdeuxcat{A}})}  = \StrictCanonique{(\DeuxCatUnOp{\mathdeuxcat{A}})} \LaxCanonique{(\DeuxCatUnOp{\mathdeuxcat{A}})} = 1_{\DeuxCatUnOp{\mathdeuxcat{A}}}
$$
et
$$
\DeuxFoncUnOp{(\StrictCanonique{\mathdeuxcat{A}})} \DeuxFoncUnOp{(\LaxCanonique{\mathdeuxcat{A}})} = \DeuxFoncUnOp{(\StrictCanonique{\mathdeuxcat{A}} \LaxCanonique{\mathdeuxcat{A}})} = \DeuxFoncUnOp{(1_{\mathdeuxcat{A}})} = 1_{\DeuxCatUnOp{\mathdeuxcat{A}}}
$$
L'identité annoncée résulte donc du lemme \ref{LemmeDualiteTildeUnOp}.
\end{rem}

\begin{lemme}\label{LemmeDeuxFoncLaxUnOpW}
En conservant les notations du lemme \ref{DualiteTildeUnOp}, pour toute petite \deux{}catégorie $\mathdeuxcat{A}$ et tout \DeuxFoncteurLax{} $u$ de source $\mathdeuxcat{A}$, 
$$
\BarreLax{(\DeuxFoncUnOp{u})} j_{\mathdeuxcat{A}} = \DeuxFoncUnOp{(\BarreLax{u})}
$$
\end{lemme}

\begin{proof}
En vertu du lemme \ref{LemmeDualiteTildeUnOp}, cette égalité équivaut à $\BarreLax{(\DeuxFoncUnOp{u})} j_{\mathdeuxcat{A}} \DeuxFoncUnOp{(\LaxCanonique{\mathdeuxcat{A}})}  = \DeuxFoncUnOp{(\BarreLax{u})} \DeuxFoncUnOp{(\LaxCanonique{\mathdeuxcat{A}})}$, ce qui se récrit $\BarreLax{(\DeuxFoncUnOp{u})} \LaxCanonique{(\DeuxCatUnOp{\mathdeuxcat{A}})} = \DeuxFoncUnOp{(\BarreLax{u} \LaxCanonique{\mathdeuxcat{A}})}$, soit $\BarreLax{(\DeuxFoncUnOp{u})} \LaxCanonique{(\DeuxCatUnOp{\mathdeuxcat{A}})} = \DeuxFoncUnOp{u}$, ce qui est vrai par définition.
\end{proof}

\begin{prop}\label{DeuxFoncLaxUnOpW}
Un \DeuxFoncteurLax{} $u$ est une équivalence faible si et seulement si $\DeuxFoncUnOp{u}$ en est une. 
\end{prop}

\begin{proof}
Notons $\mathdeuxcat{A}$ la source de $u$. En conservant les notations employées ci-dessus, $j_{\mathdeuxcat{A}}$ est un isomorphisme, donc une équivalence faible et donc, en vertu de la saturation faible de $\DeuxLocFond{W}$ et du lemme \ref{LemmeDeuxFoncLaxUnOpW}, $\BarreLax{(\DeuxFoncUnOp{u})}$ est une équivalence faible si et seulement si $\DeuxFoncUnOp{(\BarreLax{u})}$ en est une. Or, par définition, $\BarreLax{(\DeuxFoncUnOp{u})}$ est une équivalence faible si et seulement si $\DeuxFoncUnOp{u}$ en est une et, en vertu de la proposition \ref{DeuxFoncUnOpW}, $\DeuxFoncUnOp{(\BarreLax{u})}$ est une équivalence faible si et seulement si $\BarreLax{u}$ en est une, c'est-à-dire, par définition, si et seulement si $u$ en est une ; d'où le résultat annoncé.
\end{proof}

\begin{df}\label{DefEquiColax}
On dira qu'un \DeuxFoncteurCoLax{} $u : \mathdeuxcat{A} \to \mathdeuxcat{B}$ est une \emph{$\DeuxLocFond{W}$\nobreakdash-équivalence faible colax}\index{equivalence faible colax (pour un localisateur fondamental de $\DeuxCat$)@équivalence faible colax (pour un localisateur fondamental de $\DeuxCat$)}, ou plus simplement une \emph{équivalence faible}, si le \DeuxFoncteurLax{} $\DeuxFoncDeuxOp{u}$ est une équivalence faible lax. 
\end{df}

\begin{lemme}
Un \DeuxFoncteurStrict{} $u$ est une équivalence faible colax si et seulement s'il est dans $\DeuxLocFond{W}$. 
\end{lemme}

\begin{proof}
Par définition, $u$ est une équivalence faible colax si et seulement si le \DeuxFoncteurStrict{} $\DeuxFoncDeuxOp{u}$ est une équivalence faible lax, ce qui est le cas si et seulement si $\DeuxFoncDeuxOp{u}$ est dans $\DeuxLocFond{W}$ (voir la remarque \ref{EquiStricteEquiLax}), ce qui est le cas si et seulement si $u$ est dans $\DeuxLocFond{W}$ (en vertu de la proposition \ref{DeuxFoncDeuxOpW}). 
\end{proof}

\begin{lemme}\label{SatFaibleEquiColax}
La classe des équivalences faibles colax est faiblement saturée.
\end{lemme}

\begin{proof}
C'est immédiat en vertu de la définition \ref{DefEquiColax}, la classe des équivalences faibles lax étant faiblement saturée (voir le lemme \ref{SatFaibleEquiLax}). 
\end{proof} 

\begin{prop}\label{DeuxFoncColaxUnOpW}
Un \DeuxFoncteurCoLax{} $u$ est une équivalence faible si et seulement si $\DeuxFoncUnOp{u}$ en est une.
\end{prop}

\begin{proof}
Par définition, $\DeuxFoncUnOp{u}$ est une équivalence faible colax si et seulement si $\DeuxFoncToutOp{u}$ est une équivalence faible lax, ce qui, en vertu de la proposition \ref{DeuxFoncLaxUnOpW}, est le cas si et seulement si $\DeuxFoncDeuxOp{u}$ est une équivalence faible lax, ce qui est le cas, par définition, si et seulement si $u$ est une équivalence faible colax. 
\end{proof}

\begin{lemme}
Un \DeuxFoncteurCoLax{} $u$ est une équivalence faible si et seulement si $\BarreColax{u}$ est dans $\DeuxLocFond{W}$. 
\end{lemme}

\begin{proof}
Par définition, $u$ est une équivalence faible colax si et seulement si le \DeuxFoncteurLax{} $\DeuxFoncDeuxOp{u}$ est une équivalence faible lax, ce qui est le cas si et seulement si le \DeuxFoncteurStrict{} $\BarreLax{\DeuxFoncDeuxOp{u}}$ est dans $\DeuxLocFond{W}$, ce qui est le cas si et seulement si $\DeuxFoncDeuxOp{(\BarreLax{\DeuxFoncDeuxOp{u}})}$ est dans $\DeuxLocFond{W}$ (en vertu de la proposition \ref{DeuxFoncDeuxOpW}), autrement dit si et seulement si $\BarreColax{u}$ est dans $\DeuxLocFond{W}$. 
\end{proof}

\begin{df}\label{DefLaxAspherique}
Soient 
$$
\xymatrix{
\mathdeuxcat{A} 
\ar[rr]^{u}
\ar[dr]_{w}
&&\mathdeuxcat{B}
\ar[dl]^{v}
\\
&\mathdeuxcat{C}
}
$$
un diagramme de \DeuxFoncteursLax{} (\emph{resp.} de \DeuxFoncteursLax{}, \emph{resp.} de \DeuxFoncteursCoLax{}, \emph{resp.} de \DeuxFoncteursCoLax{}), non nécessairement commutatif, ainsi qu'une \DeuxTransformationCoLax{} (\emph{resp.} une \DeuxTransformationLax{}, \emph{resp.} une \DeuxTransformationLax{}, \emph{resp.} une \DeuxTransformationCoLax{}) $\sigma$ de $vu$ vers $w$ (\emph{resp.} de $w$ vers $vu$, \emph{resp.} de $vu$ vers $w$, \emph{resp.} de $w$ vers $vu$). Nous utilisons les notations de la section \ref{SectionMorphismesInduits}. On dira que $u$ est \emph{lax-asphérique au-dessus de}\index{lax-asphérique au-dessus de, relativement à} $\mathdeuxcat{C}$ \emph{relativement à} $\sigma$ (\emph{resp.} \emph{lax-opasphérique au-dessus de}\index{lax-opasphérique au-dessus de, relativement à} $\mathdeuxcat{C}$ \emph{relativement à} $\sigma$, \emph{resp.} \emph{colax-asphérique au-dessus de}\index{colax-asphérique au-dessus de, relativement à} $\mathdeuxcat{C}$ \emph{relativement à} $\sigma$, \emph{resp.} \emph{colax-opasphérique au-dessus de}\index{colax-opasphérique au-dessus de, relativement à} $\mathdeuxcat{C}$ \emph{relativement à} $\sigma$) si le \DeuxFoncteurLax{} $\DeuxFoncTrancheLaxCoq{u}{\sigma}{c} : \TrancheLax{\mathdeuxcat{A}}{w}{c} \to \TrancheLax{\mathdeuxcat{B}}{v}{c}$ (\emph{resp.} le \DeuxFoncteurLax{} $\DeuxFoncOpTrancheLaxCoq{u}{\sigma}{c} : \OpTrancheLax{\mathdeuxcat{A}}{w}{c} \to \OpTrancheLax{\mathdeuxcat{B}}{v}{c}$, \emph{resp.} le \DeuxFoncteurCoLax{} $\DeuxFoncTrancheCoLaxCoq{u}{\sigma}{c} : \TrancheCoLax{\mathdeuxcat{A}}{w}{c} \to \TrancheCoLax{\mathdeuxcat{B}}{v}{c}$, \emph{resp.} le \DeuxFoncteurCoLax{} $\DeuxFoncOpTrancheCoLaxCoq{u}{\sigma}{c} : \OpTrancheCoLax{\mathdeuxcat{A}}{w}{c} \to \OpTrancheCoLax{\mathdeuxcat{B}}{v}{c}$) est une équivalence faible lax (\emph{resp.} lax, \emph{resp.} colax, \emph{resp.} colax). Conformément à la terminologie déjà introduite, si $\sigma$ est une identité, on omettra les termes « relativement à $\sigma$ » dans les expressions ci-dessus. Si $\sigma$  et $v$ sont des identités, on omettra les termes « au-dessus de $\mathdeuxcat{C}$ ».
\end{df}

\begin{rem}
Un \DeuxFoncteurLax{} (\emph{resp.} \DeuxFoncteurLax{}, \emph{resp.} \DeuxFoncteurCoLax{}, \emph{resp.} \DeuxFoncteurCoLax{}) $u : \mathdeuxcat{A} \to \mathdeuxcat{B}$ est lax-asphérique (\emph{resp.} lax-opasphérique, \emph{resp.} colax-asphérique, \emph{resp.} colax-opasphérique) si et seulement si, pour tout objet $b$ de $\mathdeuxcat{B}$, la \deux{}catégorie $\TrancheLax{\mathdeuxcat{A}}{u}{b}$ (\emph{resp.} $\OpTrancheLax{\mathdeuxcat{A}}{u}{b}$, \emph{resp.} $\TrancheCoLax{\mathdeuxcat{A}}{u}{b}$, \emph{resp.} $\OpTrancheCoLax{\mathdeuxcat{A}}{u}{b}$) est asphérique. 
\end{rem}

\begin{lemme}\label{DualiteTrancheLax}
Soient 
$$
\xymatrix{
\mathdeuxcat{A} 
\ar[rr]^{u}
\ar[dr]_{w}
&{}
&\mathdeuxcat{B}
\ar[dl]^{v}
\\
&\mathdeuxcat{C}
\utwocell<\omit>{\sigma}
}
$$
un diagramme de \DeuxFoncteursLax{} commutatif à la \DeuxTransformationLax{} $\sigma : w \Rightarrow vu$ près seulement et $c$ un objet de $\mathdeuxcat{C}$. Alors, le \DeuxFoncteurLax{} $\DeuxFoncOpTrancheLaxCoq{u}{\sigma}{c}$ est une équivalence faible si et seulement si le \DeuxFoncteurLax{} $\DeuxFoncTrancheLaxCoq{\DeuxFoncUnOp{u}}{\DeuxTransUnOp{\sigma}}{c}$ en est une. 
\end{lemme}

\begin{proof}
Le morphisme $\DeuxFoncOpTrancheLaxCoq{u}{\sigma}{c}$ n'est autre que $\DeuxFoncUnOp{(\DeuxFoncTrancheLaxCoq{(\DeuxFoncUnOp{u})}{\DeuxTransUnOp{\sigma}}{c})}$. En vertu de la proposition \ref{DeuxFoncLaxUnOpW}, c'est une équivalence faible si et seulement si $\DeuxFoncTrancheLaxCoq{\DeuxFoncUnOp{u}}{\DeuxTransUnOp{\sigma}}{c}$ est une équivalence faible. 
\end{proof}

\begin{prop}\label{AAA}
Soit
$$
\xymatrix{
\mathdeuxcat{A} 
\ar[rr]^{u}
\ar[dr]_{w}
&{}
&\mathdeuxcat{B}
\ar[dl]^{v}
\\
&\mathdeuxcat{C}
\utwocell<\omit>{\sigma}
}
$$
un diagramme de \DeuxFoncteursLax{} commutatif à la \DeuxTransformationLax{} $\sigma : w \Rightarrow vu$ près seulement. Alors, $u$ est lax-opasphérique au-dessus de $\mathdeuxcat{C}$ relativement à $\sigma$ si et seulement si $\DeuxFoncUnOp{u}$ est lax-asphérique au-dessus de $\DeuxCatUnOp{\mathdeuxcat{C}}$ relativement à $\DeuxTransUnOp{\sigma}$.
\end{prop}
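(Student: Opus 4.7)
The proposition is essentially a direct consequence of the preceding Lemma \ref{DualiteTrancheLax}, so my plan is simply to unwind the definitions and invoke that lemma pointwise.

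First, I would recall the definitions. By Definition \ref{DefLaxAspherique}, the statement that $u$ is lax-opasph\'erique au-dessus de $\mathdeuxcat{C}$ relativement \`a $\sigma$ means that for every object $c$ of $\mathdeuxcat{C}$, the \DeuxFoncteurLax{}
$$\DeuxFoncOpTrancheLaxCoq{u}{\sigma}{c} : \OpTrancheLax{\mathdeuxcat{A}}{w}{c} \to \OpTrancheLax{\mathdeuxcat{B}}{v}{c}$$
is an \'equivalence faible lax. On the other hand, the diagram
$$
\xymatrix{
\DeuxCatUnOp{\mathdeuxcat{A}}
\ar[rr]^{\DeuxFoncUnOp{u}}
\ar[dr]_{\DeuxFoncUnOp{w}}
&&\DeuxCatUnOp{\mathdeuxcat{B}}
\ar[dl]^{\DeuxFoncUnOp{v}}
\dtwocell<\omit>{<9>\DeuxTransUnOp{\sigma}}
\\
&\DeuxCatUnOp{\mathdeuxcat{C}}
&{}
}
$$
is a diagram of \DeuxFoncteursLax{} commutatif \`a l'\DeuxTransformationCoLax{} $\DeuxTransUnOp{\sigma} : \DeuxFoncUnOp{v} \DeuxFoncUnOp{u} \Rightarrow \DeuxFoncUnOp{w}$ pr\`es (see Remark \ref{DualDeuxTrans}), so the statement that $\DeuxFoncUnOp{u}$ is lax-asph\'erique au-dessus de $\DeuxCatUnOp{\mathdeuxcat{C}}$ relativement \`a $\DeuxTransUnOp{\sigma}$ means that for every object $c$ of $\DeuxCatUnOp{\mathdeuxcat{C}}$ (which has the same objects as $\mathdeuxcat{C}$), the \DeuxFoncteurLax{}
$$\DeuxFoncTrancheLaxCoq{\DeuxFoncUnOp{u}}{\DeuxTransUnOp{\sigma}}{c} : \TrancheLax{\DeuxCatUnOp{\mathdeuxcat{A}}}{\DeuxFoncUnOp{w}}{c} \to \TrancheLax{\DeuxCatUnOp{\mathdeuxcat{B}}}{\DeuxFoncUnOp{v}}{c}$$
is an \'equivalence faible lax.

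Second, I would apply Lemma \ref{DualiteTrancheLax}, which states precisely that for each object $c$ of $\mathdeuxcat{C}$, the \DeuxFoncteurLax{} $\DeuxFoncOpTrancheLaxCoq{u}{\sigma}{c}$ is une \'equivalence faible if and only if $\DeuxFoncTrancheLaxCoq{\DeuxFoncUnOp{u}}{\DeuxTransUnOp{\sigma}}{c}$ en est une. Quantifying over all $c$, the two conditions identified above are equivalent, which is the desired statement.

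There is no real obstacle here: the work has already been done in Lemma \ref{DualiteTrancheLax}, which itself relied on the identification $\DeuxFoncOpTrancheLaxCoq{u}{\sigma}{c} = \DeuxFoncUnOp{(\DeuxFoncTrancheLaxCoq{\DeuxFoncUnOp{u}}{\DeuxTransUnOp{\sigma}}{c})}$ recorded in paragraph \ref{DefDeuxFoncInduitFoncLaxTrans} together with Proposition \ref{DeuxFoncLaxUnOpW}, which asserts the duality invariance of the class of \'equivalences faibles lax. The proposition is therefore purely a repackaging of these two ingredients at the level of the uniform ``au-dessus de $\mathdeuxcat{C}$'' statement.
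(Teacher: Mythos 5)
Votre preuve est correcte et suit exactement la m�me voie que celle du texte : la proposition y est d�montr�e comme cons�quence imm�diate du lemme \ref{DualiteTrancheLax}, appliqu� objet par objet, apr�s d�ploiement des d�finitions de la d�finition \ref{DefLaxAspherique}. Votre explicitation du r�le de l'identification $\DeuxFoncOpTrancheLaxCoq{u}{\sigma}{c} = \DeuxFoncUnOp{(\DeuxFoncTrancheLaxCoq{\DeuxFoncUnOp{u}}{\DeuxTransUnOp{\sigma}}{c})}$ et de la proposition \ref{DeuxFoncLaxUnOpW} ne fait que rendre visibles les ingr�dients d�j� contenus dans ce lemme.
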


\begin{proof}
C'est une conséquence immédiate du lemme \ref{DualiteTrancheLax}.
\end{proof}

\begin{rem}
Le lemme \ref{DualiteTrancheLax} et la proposition \ref{AAA} admettent bien entendu moult variantes duales. Le lecteur pourra s'exercer à les expliciter. 
\end{rem}

\begin{lemme}\label{LemmeIsoInduit}
Soient $j : \mathdeuxcat{A} \to \mathdeuxcat{B}$ un isomorphisme de \deux{}catégories, $u : \mathdeuxcat{B} \to \mathdeuxcat{C}$ un \DeuxFoncteurLax{} (\emph{resp.} un \DeuxFoncteurLax{}, \emph{resp.} un \DeuxFoncteurCoLax{}, \emph{resp.} un \DeuxFoncteurCoLax{}) et $c$ un objet de $\mathdeuxcat{C}$. Alors $\DeuxFoncTrancheLax{j}{c} : \TrancheLax{\mathdeuxcat{A}}{uj}{c} \to \TrancheLax{\mathdeuxcat{B}}{u}{c}$ (\emph{resp.} $\DeuxFoncOpTrancheLax{j}{c} : \OpTrancheLax{\mathdeuxcat{A}}{uj}{c} \to \OpTrancheLax{\mathdeuxcat{B}}{u}{c}$, \emph{resp.} $\DeuxFoncTrancheCoLax{j}{c} : \TrancheCoLax{\mathdeuxcat{A}}{uj}{c} \to \TrancheCoLax{\mathdeuxcat{B}}{u}{c}$, \emph{resp.} $\DeuxFoncOpTrancheCoLax{j}{c} : \OpTrancheCoLax{\mathdeuxcat{A}}{uj}{c} \to \OpTrancheCoLax{\mathdeuxcat{B}}{u}{c}$) est un isomorphisme.
\end{lemme}

\begin{proof}
La vérification est laissée au lecteur. 
\end{proof}

\begin{lemme}\label{LemmeCompoIsoAsph}
Soient $j : \mathdeuxcat{A} \to \mathdeuxcat{B}$ un isomorphisme de \deux{}catégories et $u : \mathdeuxcat{B} \to \mathdeuxcat{C}$ un \DeuxFoncteurLax{} (\emph{resp.} un \DeuxFoncteurLax{}, \emph{resp.} un \DeuxFoncteurCoLax{}, \emph{resp.} un \DeuxFoncteurCoLax{}). Alors, $u$ est lax-asphérique (\emph{resp.} lax-opasphérique, \emph{resp.} colax-asphérique, \emph{resp.} colax-opasphérique) si et seulement si $uj$ l'est.
\end{lemme}

\begin{proof}
C'est une conséquence du lemme \ref{LemmeIsoInduit} et du fait qu'un isomorphisme est une équivalence faible. 
\end{proof}

\begin{prop}\label{CouniteColaxCoaspherique}
Pour toute \deux{}catégorie $\mathdeuxcat{A}$, le \DeuxFoncteurStrict{} $\StrictCanonique{\mathdeuxcat{A}} : \TildeLax{\mathdeuxcat{A}} \to \mathdeuxcat{A}$ est colax-opasphérique.
\end{prop}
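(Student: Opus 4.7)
The plan is to deduce this proposition from Proposition \ref{CouniteColaxAspherique} by a duality argument exploiting the compatibility between $\StrictCanonique{}$ and the $1$\nobreakdash-opposite functor recorded in Remark \ref{RemStrictCanoniqueUnOp}, together with the elementary observation that $\OpTrancheCoLax$ and $\TrancheCoLax$ get interchanged after applying $?^{op}$ on the source.

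First, I would apply Proposition \ref{CouniteColaxAspherique} to $\DeuxCatUnOp{\mathdeuxcat{A}}$ in place of $\mathdeuxcat{A}$: this yields that $\StrictCanonique{(\DeuxCatUnOp{\mathdeuxcat{A}})} : \TildeLax{(\DeuxCatUnOp{\mathdeuxcat{A}})} \to \DeuxCatUnOp{\mathdeuxcat{A}}$ is colax-asph�rique. By Remark \ref{RemStrictCanoniqueUnOp}, we have the equality
$$
\DeuxFoncUnOp{(\StrictCanonique{\mathdeuxcat{A}})} = \StrictCanonique{(\DeuxCatUnOp{\mathdeuxcat{A}})} \circ j_{\mathdeuxcat{A}},
$$
where $j_{\mathdeuxcat{A}} : \DeuxCatUnOp{(\TildeLax{\mathdeuxcat{A}})} \to \TildeLax{(\DeuxCatUnOp{\mathdeuxcat{A}})}$ is the canonical isomorphism of Lemma \ref{DualiteTildeUnOp}. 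Lemma \ref{LemmeCompoIsoAsph} then gives that $\DeuxFoncUnOp{(\StrictCanonique{\mathdeuxcat{A}})}$ is itself colax-asph�rique.

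It remains to translate this back into a statement about $\StrictCanonique{\mathdeuxcat{A}}$. A short unwinding of Definition \ref{dfunetdeuxop} and of the definitions of $\TrancheCoLax$ and $\OpTrancheCoLax$ via $\TrancheLax$ shows that, for every object $a$ of $\mathdeuxcat{A}$, one has a canonical equality
$$
\TrancheCoLax{(\DeuxCatUnOp{(\TildeLax{\mathdeuxcat{A}})})}{\DeuxFoncUnOp{(\StrictCanonique{\mathdeuxcat{A}})}}{a} = \DeuxCatUnOp{(\OpTrancheCoLax{\TildeLax{\mathdeuxcat{A}}}{\StrictCanonique{\mathdeuxcat{A}}}{a})},
$$
both sides reducing to $\DeuxCatDeuxOp{(\TrancheLax{\DeuxCatToutOp{(\TildeLax{\mathdeuxcat{A}})}}{\DeuxFoncToutOp{(\StrictCanonique{\mathdeuxcat{A}})}}{a})}$. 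By the previous step the left-hand side is asph�rique; by Corollary \ref{DeuxCatUnOpAsph}, so is $\OpTrancheCoLax{\TildeLax{\mathdeuxcat{A}}}{\StrictCanonique{\mathdeuxcat{A}}}{a}$, which is exactly the required statement.

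The whole argument is a choreographed duality shuffle and I do not expect any serious obstacle. The only point deserving care is the bookkeeping of which of the three dualities ($?^{op}$, $?^{co}$, $?^{coop}$) intervenes at each step when rewriting the various slice constructions, and in particular the verification of the identity $\TrancheCoLax{(\DeuxCatUnOp{\mathdeuxcat{X}})}{\DeuxFoncUnOp{u}}{b} = \DeuxCatUnOp{(\OpTrancheCoLax{\mathdeuxcat{X}}{u}{b})}$, which rests on the triviality $\DeuxCatUnOp{(\DeuxCatToutOp{\mathdeuxcat{Y}})} = \DeuxCatDeuxOp{\mathdeuxcat{Y}}$.
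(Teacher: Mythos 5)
Your proposal is correct and follows essentially the same route as the paper: apply Proposition \ref{CouniteColaxAspherique} to $\DeuxCatUnOp{\mathdeuxcat{A}}$, transfer colax-asphericity to $\DeuxFoncUnOp{(\StrictCanonique{\mathdeuxcat{A}})}$ via Remark \ref{RemStrictCanoniqueUnOp} and Lemma \ref{LemmeCompoIsoAsph}, then conclude by duality. The only cosmetic difference is that where the paper invokes a dual form of Proposition \ref{AAA} for the final translation, you check the same duality by hand through the slice identity and Corollary \ref{DeuxCatUnOpAsph}, which is a perfectly valid substitute.
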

 
\begin{proof}
On sait déjà (voir la proposition \ref{CouniteColaxAspherique}) que $\StrictCanonique{(\DeuxCatUnOp{\mathdeuxcat{A}})}$ est colax-asphérique. En vertu de l'égalité $\StrictCanonique{(\DeuxCatUnOp{\mathdeuxcat{A}})} j_{\mathdeuxcat{A}} = \DeuxFoncUnOp{(\StrictCanonique{\mathdeuxcat{A}})}$ (voir la remarque \ref{RemStrictCanoniqueUnOp}) et du lemme \ref{LemmeCompoIsoAsph}, on en déduit que $\DeuxFoncUnOp{(\StrictCanonique{\mathdeuxcat{A}})}$ est colax-asphérique, d'où la conclusion en vertu d'un énoncé dual de celui de la proposition \ref{AAA}.
\end{proof}

\begin{rem}
Plus précisément, pour tout objet $a$ de $\mathdeuxcat{A}$, la \deux{}catégorie $\OpTrancheCoLax{\TildeLax{\mathdeuxcat{A}}}{\StrictCanonique{\mathdeuxcat{A}}}{a}$ op-admet un objet admettant un objet final.
\end{rem}

\begin{prop}\label{CouniteTildeColaxLaxAspheriqueLaxCoaspherique}
Pour toute \deux{}catégorie $\mathdeuxcat{A}$, le \DeuxFoncteurStrict{} $\StrictColaxCanonique{\mathdeuxcat{A}} : \TildeColax{\mathdeuxcat{A}} \to \mathdeuxcat{A}$ est lax-asphérique et lax-opasphérique. 
\end{prop}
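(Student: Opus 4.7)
The plan is to deduce this statement from the previously established Propositions \ref{CouniteColaxAspherique} and \ref{CouniteColaxCoaspherique} by a straightforward duality argument, exactly parallel to how Proposition \ref{CouniteColaxCoaspherique} was deduced from Proposition \ref{CouniteColaxAspherique}.

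First I would unwind the relevant definitions. By the definition of $\TildeColax{\mathdeuxcat{A}}$ and $\StrictColaxCanonique{\mathdeuxcat{A}}$, we have
$$\StrictColaxCanonique{\mathdeuxcat{A}} = \DeuxFoncDeuxOp{(\StrictCanonique{\DeuxCatDeuxOp{\mathdeuxcat{A}}})} : \DeuxCatDeuxOp{(\TildeLax{\DeuxCatDeuxOp{\mathdeuxcat{A}}})} \to \mathdeuxcat{A}.$$
Next, I would record the duality principle that, for any strict 2-functor $u$, $u$ is lax-aspherical if and only if $\DeuxFoncDeuxOp{u}$ is colax-aspherical, and similarly $u$ is lax-opaspherical if and only if $\DeuxFoncDeuxOp{u}$ is colax-opaspherical. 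This follows from the explicit formulas of paragraph \ref{DefFoncInduits}, which give $\DeuxFoncTrancheLax{u}{b} = \DeuxFoncDeuxOp{(\DeuxFoncTrancheCoLax{\DeuxFoncDeuxOp{u}}{b})}$ and similarly for the op-slice variants, combined with Proposition \ref{DeuxFoncDeuxOpW} asserting that $v$ is a weak equivalence if and only if $\DeuxFoncDeuxOp{v}$ is (and the analogous statement for lax/colax weak equivalences via the definition \ref{DefEquiColax} together with Proposition \ref{DeuxFoncLaxUnOpW}, though here all 2-functors involved are strict so only the strict version is needed).

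Then the proof is immediate: apply Proposition \ref{CouniteColaxAspherique} to $\DeuxCatDeuxOp{\mathdeuxcat{A}}$ to conclude that the strict 2-functor $\StrictCanonique{\DeuxCatDeuxOp{\mathdeuxcat{A}}} : \TildeLax{\DeuxCatDeuxOp{\mathdeuxcat{A}}} \to \DeuxCatDeuxOp{\mathdeuxcat{A}}$ is colax-aspherical. By the duality principle above, its $(-)^{co}$-image, namely $\StrictColaxCanonique{\mathdeuxcat{A}}$, is lax-aspherical. Applying Proposition \ref{CouniteColaxCoaspherique} to $\DeuxCatDeuxOp{\mathdeuxcat{A}}$ in the same way, and invoking the corresponding dual equivalence, yields that $\StrictColaxCanonique{\mathdeuxcat{A}}$ is lax-opaspherical.

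There is essentially no obstacle here; the only care needed is in keeping track of the dualities, in particular verifying cleanly the equivalences $\DeuxFoncTrancheLax{u}{b} = \DeuxFoncDeuxOp{(\DeuxFoncTrancheCoLax{\DeuxFoncDeuxOp{u}}{b})}$ and $\DeuxFoncOpTrancheLax{u}{b} = \DeuxFoncDeuxOp{(\DeuxFoncOpTrancheCoLax{\DeuxFoncDeuxOp{u}}{b})}$ already recorded in paragraph \ref{DefFoncInduits}, so that a $\TrancheLax{}{}{}$-style slice of $\StrictColaxCanonique{\mathdeuxcat{A}}$ coincides, up to the $\DeuxFoncDeuxOp{(-)}$ operation, with a $\TrancheCoLax{}{}{}$-style slice of $\StrictCanonique{\DeuxCatDeuxOp{\mathdeuxcat{A}}}$, whose asphericity is already known.
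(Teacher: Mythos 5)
Votre preuve est correcte et suit essentiellement la m�me d�marche que celle du texte : on y identifie de m�me $\DeuxFoncDeuxOp{(\StrictColaxCanonique{\mathdeuxcat{A}})}$ � $\StrictCanonique{(\DeuxCatDeuxOp{\mathdeuxcat{A}})}$, on invoque les propositions \ref{CouniteColaxAspherique} et \ref{CouniteColaxCoaspherique} appliqu�es � $\DeuxCatDeuxOp{\mathdeuxcat{A}}$, puis on conclut par dualit� (le texte cite un �nonc� dual de la proposition \ref{AAA}, l� o� vous explicitez la dualit� via les formules du paragraphe \ref{DefFoncInduits} et la proposition \ref{DeuxFoncDeuxOpW}, ce qui revient au m�me). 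Rien � redire.
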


\begin{proof}
Le \DeuxFoncteurStrict{} $\DeuxFoncDeuxOp{(\StrictColaxCanonique{\mathdeuxcat{A}})}$ n'est autre que $\StrictCanonique{(\DeuxCatDeuxOp{\mathdeuxcat{A}})}$, qui est colax-asphérique et colax-opasphérique en vertu des propositions \ref{CouniteColaxAspherique} et \ref{CouniteColaxCoaspherique}. La conclusion découle alors d'un énoncé dual de celui de la proposition \ref{AAA}.
\end{proof}
 
\section{Un Théorème A}\label{SectionTheoremeA}

\begin{prop}\label{StrictInduitAspherique}
Soient $u : \mathdeuxcat{A} \to \mathdeuxcat{B}$ un morphisme de $\DeuxCatLax$, $b$ un objet de $\mathdeuxcat{B}$ et $\DeuxFoncTrancheLaxCoq{\StrictCanonique{\mathdeuxcat{A}}}{\sigma}{b} : \TrancheLax{\TildeLax{\mathdeuxcat{A}}}{\BarreLax{u}}{b} \to  \TrancheLax{\mathdeuxcat{A}}{u}{b}$ le \DeuxFoncteurStrict{} induit par le diagramme\footnote{Voir le lemme \ref{LemmeDimitri} pour la définition de $\sigma$.}
$$
\xymatrix{
\TildeLax{\mathdeuxcat{A}}
\ar[dr]^{\BarreLax{u}}
\ar[d]_{\StrictCanonique{\mathdeuxcat{A}}}
\drtwocell<\omit>{<2>\sigma}
\\
\mathdeuxcat{A}
\ar[r]_{u}
&\mathdeuxcat{B}
&.
}
$$
Alors, pour tout objet $(a, p : u(a) \to b)$ de $\TrancheLax{\mathdeuxcat{A}}{u}{b}$, la \deux{}catégorie
$$
\TrancheCoLax{(\TrancheLax{\TildeLax{\mathdeuxcat{A}}}{\BarreLax{u}}{b})}{\DeuxFoncTrancheLaxCoq{\StrictCanonique{\mathdeuxcat{A}}}{\sigma}{b}}{(a,p)}
$$
admet un objet admettant un objet final.  
\end{prop}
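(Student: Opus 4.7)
\begin{paragr}
La d\'emonstration que nous proposons g\'en\'eralise celle de la proposition \ref{LemmeCouniteColaxAspherique}, qui en constitue essentiellement le cas ��absolu�� correspondant \`a $u = 1_{\mathdeuxcat{A}}$. La premi\`ere \'etape consistera \`a expliciter la structure de la \deux{}cat\'egorie it\'er\'ee
$$
\TrancheCoLax{(\TrancheLax{\TildeLax{\mathdeuxcat{A}}}{\BarreLax{u}}{b})}{\DeuxFoncTrancheLaxCoq{\StrictCanonique{\mathdeuxcat{A}}}{\sigma}{b}}{(a,p)},
$$
que l'on notera, par concision, $\TrancheCoLax{\mathdeuxcat{X}}{F}{(a,p)}$. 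L'observation cruciale sera que $\StrictCanonique{\mathdeuxcat{A}}$ est l'identit\'e sur les objets, que $\BarreLax{u}$ co\"incide avec $u$ sur les objets, et que les composantes sur les objets de $\sigma$ valent $1_{u(a')}$ par le lemme \ref{LemmeDimitri} ; en particulier, $F$ laisse fixes objets et \un{}cellules de ��longueur un��. Les objets de $\TrancheCoLax{\mathdeuxcat{X}}{F}{(a,p)}$ se d\'ecriront ainsi comme des quadruplets $((a', q), (h, \gamma))$ avec $a'$ un objet de $\mathdeuxcat{A}$, $q : u(a') \to b$ et $h : a' \to a$ des \un{}cellules, et $\gamma : q \Rightarrow p \, u(h)$ une \deux{}cellule de $\mathdeuxcat{B}$.
\end{paragr}

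\begin{paragr}
Comme objet distingu\'e, on proposera $\zeta = ((a, p), 1_{(a,p)})$, en rappelant que $1_{(a,p)} = (1_{a}, p \CompDeuxZero u_{a})$ par la d\'efinition \ref{DefinitionTranche} ; ceci est en analogie directe avec le choix $(a, 1_{a})$ op\'er\'e dans la d\'emonstration de la proposition \ref{LemmeCouniteColaxAspherique}. Pour tout objet $y = ((a', q), (h, \gamma))$ de $\TrancheCoLax{\mathdeuxcat{X}}{F}{(a,p)}$, on proposera comme candidat \`a l'objet final de $\CatHom{\TrancheCoLax{\mathdeuxcat{X}}{F}{(a,p)}}{y}{\zeta}$ le couple $(Y_{0}, 1_{h})$, o\`u $Y_{0}$ d\'esigne la \un{}cellule $(([1], h), \gamma)$ de $\mathdeuxcat{X}$ associ\'ee \`a la cha\^ine de longueur un port\'ee par $h$. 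La d\'efinition \ref{DefDeuxCellGeneral} dans le cas $n = 1$ fournira imm\'ediatement $u_{([1], h)} = 1_{u(h)}$, d'o\`u $F(Y_{0}) = (h, \gamma)$ ; l'\'egalit\'e $1_{(a, p)} F(Y_{0}) = (h, \gamma)$ dans $\TrancheLax{\mathdeuxcat{A}}{u}{b}$ d\'ecoulera alors de l'axiome de coh\'erence d'unit\'e du \DeuxFoncteurLax{} $u$, \`a savoir $u_{1_{a}, h} \CompDeuxUn (u_{a} \CompDeuxZero u(h)) = 1_{u(h)}$, l\'egitimant ainsi le choix de la \deux{}cellule identit\'e $1_{h}$ comme seconde composante.
\end{paragr}

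\begin{paragr}
Pour \'etablir la propri\'et\'e universelle, on consid\'erera une \un{}cellule arbitraire $(Y', \Theta') : y \to \zeta$, o\`u $Y' = (([m], x), \alpha)$ avec $x_{0} = a'$, $x_{m} = a$ et $\alpha : q \Rightarrow p \, u(x_{m, m-1}) \cdots u(x_{1, 0})$, et o\`u $\Theta'$ s'identifie \`a une \deux{}cellule $x_{m, m-1} \cdots x_{1, 0} \Rightarrow h$ de $\mathdeuxcat{A}$ soumise, apr\`es simplification par le m\^eme axiome d'unit\'e, \`a la condition de compatibilit\'e $(p \CompDeuxZero u(\Theta')) \CompDeuxUn (p \CompDeuxZero u_{x}) \CompDeuxUn \alpha = \gamma$. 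L'unique morphisme d'intervalles $\varphi_{0} : [1] \to [m]$ \'etant impos\'e, toute \deux{}cellule de $Y'$ vers $Y_{0}$ dans $\mathdeuxcat{X}$ se r\'eduira \`a la donn\'ee d'une \deux{}cellule $\delta : x_{m, m-1} \cdots x_{1, 0} \Rightarrow h$ v\'erifiant la m\^eme condition ; le calcul $F(\varphi_{0}, \delta) = \delta$ issu de la d\'efinition \ref{DefCounite}, combin\'e \`a l'\'equation requise dans $\TrancheCoLax{\mathdeuxcat{X}}{F}{(a,p)}$, imposera alors $\delta = \Theta'$, d'o\`u existence et unicit\'e. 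L'obstacle principal de la preuve r\'esidera dans la gestion m\'ethodique des trois niveaux de tranches imbriqu\'ees pour s'assurer que les compatibilit\'es se simplifient correctement ; il est remarquable que tout le contenu non-trivial de la d\'emonstration se ram\`ene \`a l'application unique de la coh\'erence d'unit\'e du \DeuxFoncteurLax{} $u$.
\end{paragr}
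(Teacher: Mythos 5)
Votre proposition est correcte et suit pour l'essentiel la m\^eme d\'emarche que la d\'emonstration du texte : m\^eme description explicite de la double tranche, m\^eme objet distingu\'e $(a, p, 1_{a}, p \CompDeuxZero \DeuxCellStructId{u}{a})$, m\^eme \un{}cellule candidate $([1], h, \gamma, 1_{h})$ pour l'objet final, et m\^eme r\'eduction, via la contrainte d'unit\'e du \DeuxFoncteurLax{} $u$, \`a l'\'egalit\'e forc\'ee $\delta = \Theta'$ qui donne existence et unicit\'e de la \deux{}cellule. Il ne vous resterait qu'\`a r\'ediger les v\'erifications au pr\'esent plut\^ot qu'au futur.
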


\begin{proof}
Le \DeuxFoncteurStrict{} $\DeuxFoncTrancheLaxCoq{\StrictCanonique{\mathdeuxcat{A}}}{\sigma}{b}$ est défini comme suit :
$$
\begin{aligned}
\TrancheLax{\TildeLax{\mathdeuxcat{A}}}{\BarreLax{u}}{b} &\to \TrancheLax{\mathdeuxcat{A}}{u}{b}
\\
(a,p) &\mapsto (a,p)
\\
(([m], x, \alpha) : (a,p) \to (a',p')) &\mapsto (x_{m,m-1} \dots x_{1,0}, (p' \CompDeuxZero u_{x}) \CompDeuxUn \alpha)
\\
(\varphi, (\gamma_{1}, \dots, \gamma_{n})) &\mapsto \gamma_{n} \CompDeuxZero \dots \CompDeuxZero \gamma_{1}.
\end{aligned}
$$ 
Explicitons partiellement la structure de la \deux{}catégorie $
\TrancheCoLax{(\TrancheLax{\TildeLax{\mathdeuxcat{A}}}{\BarreLax{u}}{b})}{\DeuxFoncTrancheLaxCoq{\StrictCanonique{\mathdeuxcat{A}}}{\sigma}{b}}{(a,p)}
$.
\begin{itemize}
\item Les objets en sont les quadruplets $(a', p', f, \alpha)$, où $a'$ est un objet de $\mathdeuxcat{A}$, $p' : u(a') \to b$ une \un{}cellule de $\mathdeuxcat{B}$, $f : a' \to a$ une \un{}cellule de $\mathdeuxcat{A}$ et $\alpha : p' \Rightarrow p u(f)$ une \deux{}cellule de $\mathdeuxcat{B}$.
\item Les \un{}cellules de $(a',p', f, \alpha)$ vers $(a'', p'', f', \alpha')$ sont données par les $([m], x, \beta, \gamma)$, avec $([m],x)$ une \un{}cellule de $a'$ vers $a''$ dans $\TildeLax{\mathdeuxcat{A}}$, $\beta : p' \Rightarrow p'' u(x_{m,m-1}) \dots u(x_{1,0})$ une \deux{}cellule de $\mathdeuxcat{B}$ et $\gamma : f' x_{m,m-1} \dots x_{1,0} \Rightarrow f$ une \deux{}cellule dans $\mathdeuxcat{A}$, quadruplets satisfaisant l'égalité
$$
(p \CompDeuxZero (u (\gamma) \CompDeuxUn \DeuxCellStructComp{u}{f'}{x})) \CompDeuxUn (\alpha' \CompDeuxZero u(x_{m,m-1} \dots x_{1,0})) \CompDeuxUn (p'' \CompDeuxZero u_{x}) \CompDeuxUn \beta = \alpha.
$$
(On a noté $\DeuxCellStructComp{u}{f'}{x}$ la \deux{}cellule structurale de composition $u_{f', x_{m,m-1} \dots x_{1,0}}$ de $u$, de source $u(f') u (x_{m,m-1} \dots x_{1,0})$ et de but $u(f' x_{m,m-1} \dots x_{1,0})$.) 
\item Si $([m], x, \beta, \gamma)$ et $([n], y, \sigma, \tau)$ sont deux \un{}cellules de $(a',p', f, \alpha)$ vers $(a'', p'', f', \alpha')$, les \deux{}cellules de $([m], x, \beta, \gamma)$ vers $([n], y, \sigma, \tau)$ sont données par les \deux{}cellules $(\varphi, \rho)$ de $([m],x)$ vers $([n],y)$ dans $\TildeLax{\mathdeuxcat{A}}$ satisfaisant 
$$
(p'' \CompDeuxZero (  (u(\rho_{n,n-1}) \CompDeuxZero \dots \CompDeuxZero u(\rho_{1,0})) \CompDeuxUn (u_{x_{\varphi(n-1)} \to \dots \to x_{m}} \CompDeuxZero \dots \CompDeuxZero u_{x_{0} \to \dots \to x_{\varphi(1)}})  )   ) \CompDeuxUn \beta = \sigma
$$
et
$$
\tau \CompDeuxUn (f' \CompDeuxZero \rho_{n} \CompDeuxZero \dots \CompDeuxZero \rho_{1}) = \gamma.
$$
\end{itemize}

Dans cette \deux{}catégorie $\TrancheCoLax{(\TrancheLax{\TildeLax{\mathdeuxcat{A}}}{\BarreLax{u}}{b})}{\DeuxFoncTrancheLaxCoq{\StrictCanonique{\mathdeuxcat{A}}}{\sigma}{b}}{(a,p)}$ se distingue l'objet $(a, p, 1_{a}, p \CompDeuxZero \DeuxCellStructId{u}{a})$. 

Soit $(a'', p'' : u(a'') \to b, f' : a'' \to a, \alpha : p'' \Rightarrow p u(f'))$ un objet quelconque de la \deux{}catégorie $\TrancheCoLax{(\TrancheLax{\TildeLax{\mathdeuxcat{A}}}{\BarreLax{u}}{b})}{\DeuxFoncTrancheLaxCoq{\StrictCanonique{\mathdeuxcat{A}}}{\sigma}{b}}{(a,p)}$. Le quadruplet $([1], f', \alpha, 1_{f'})$ définit alors une \un{}cellule de $(a'', p'', f', \alpha)$ vers $(a, p, 1_{a}, p \CompDeuxZero \DeuxCellStructId{u}{a})$. En effet, la condition de commutativité à vérifier se simplifie ici en l'égalité
$$
(p \CompDeuxZero (\DeuxCellStructComp{u}{1_{a}}{f'}  \CompDeuxUn (\DeuxCellStructId{u}{a} \CompDeuxZero u(f')))) \CompDeuxUn \alpha = \alpha,
$$
qui est vérifiée en vertu de l'égalité
$$
\DeuxCellStructComp{u}{1_{a}}{f'}  \CompDeuxUn (  \DeuxCellStructId{u}{a} \CompDeuxZero u(f')  ) = 1_{u(f')}.
$$

Supposons donné une \un{}cellule 
$$
([m], x, \alpha' : p'' \Rightarrow p u(x_{m,m-1}) \dots u(x_{1,0}), \gamma : x_{m,m-1}  \dots x_{1,0} \Rightarrow f')
$$
de $(a'', p'', f', \alpha)$ vers $(a, p, 1_{a}, p \CompDeuxZero \DeuxCellStructId{u}{a})$ dans 
$
\TrancheCoLax{(\TrancheLax{\TildeLax{\mathdeuxcat{A}}}{\BarreLax{u}}{b})}{\DeuxFoncTrancheLaxCoq{\StrictCanonique{\mathdeuxcat{A}}}{\sigma}{b}}{(a,p)}
$. La définition des \un{}cellules de cette catégorie assure l'égalité
$$
(p \CompDeuxZero u(\gamma)) \CompDeuxUn (p \CompDeuxZero \DeuxCellStructComp{u}{1_{a}}{x}) \CompDeuxUn (p \CompDeuxZero \DeuxCellStructId{u}{a} \CompDeuxZero u(x_{m,m-1} \dots x_{1,0})) \CompDeuxUn (p \CompDeuxZero u_{x}) \CompDeuxUn \alpha' = \alpha,
$$
qui se récrit
$$
(p \CompDeuxZero (u(\gamma) \CompDeuxUn    
\DeuxCellStructComp{u}{1_{a}}{x} \CompDeuxUn    
(\DeuxCellStructId{u}{a} \CompDeuxZero u(x_{m,m-1} \dots x_{1,0}))   \CompDeuxUn  
u_{x})) 
\CompDeuxUn \alpha' = \alpha.
$$
La composée
$$
\DeuxCellStructComp{u}{1_{a}}{x} \CompDeuxUn    
(\DeuxCellStructId{u}{a} \CompDeuxZero u(x_{m,m-1} \dots x_{1,0})) 
$$
est une identité. Ainsi, l'égalité 
$$
(p \CompDeuxZero (u(\gamma) \CompDeuxUn u_{x})) \CompDeuxUn \alpha' = \alpha
$$
fait partie des hypothèses. 

Une \deux{}cellule de $([m], x, \alpha', \gamma)$ vers $([1], f', \alpha, 1_{f'})$ dans 
$
\TrancheCoLax{(\TrancheLax{\TildeLax{\mathdeuxcat{A}}}{\BarreLax{u}}{b})}{\DeuxFoncTrancheLaxCoq{\StrictCanonique{\mathdeuxcat{A}}}{\sigma}{b}}{(a,p)}
$
correspond à un couple $(\varphi, \rho)$, où $\varphi : [1] \to [m]$ est un morphisme d'intervalles et $\rho$ une \deux{}cellule de $x_{m,m-1} \dots x_{1,0}$ vers $f'$ dans $\mathdeuxcat{A}$, telles que
$$
(p \CompDeuxZero (u(\rho) \CompDeuxUn u_{x})) \CompDeuxUn \alpha' = \alpha
$$
et
$$
1_{f'} \CompDeuxUn (1_{a} \CompDeuxZero \rho) = \gamma.
$$
La seconde égalité force $\rho = \gamma$ et, par hypothèse, la première égalité est vérifiée si l'on fait $\rho = \gamma$. Cela termine la démonstration.
\end{proof}

\begin{prop}\label{StrictInduitEquiFaible}
Soient $u : \mathdeuxcat{A} \to \mathdeuxcat{B}$ un \DeuxFoncteurLax{} et $b$ un objet de $\mathdeuxcat{B}$. Alors, le \DeuxFoncteurStrict{}
$\DeuxFoncTrancheLaxCoq{\StrictCanonique{\mathdeuxcat{A}}}{\sigma}{b} : \TrancheLax{\TildeLax{\mathdeuxcat{A}}}{\BarreLax{u}}{b} \to  \TrancheLax{\mathdeuxcat{A}}{u}{b}$ (voir l'énoncé de la proposition \ref{StrictInduitAspherique}) est colax-asphérique (donc en particulier une équivalence faible).
\end{prop}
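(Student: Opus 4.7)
The plan is to deduce this proposition as an essentially immediate corollary of the preceding Proposition \ref{StrictInduitAspherique}, combined with the axiomatic definition of a localisateur fondamental de $\DeuxCat$.

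First, I would unfold the definition of ``colax-asph�rique''. By the terminology fixed in Definition \ref{Zonzon} (in its absolute version, with $v = 1_{\mathdeuxcat{B}}$), the \DeuxFoncteurStrict{} $\DeuxFoncTrancheLaxCoq{\StrictCanonique{\mathdeuxcat{A}}}{\sigma}{b}$ is colax-asph�rique if and only if, for every object $(a,p)$ of its target \deux{}cat�gorie $\TrancheLax{\mathdeuxcat{A}}{u}{b}$, the \deux{}cat�gorie
$$
\TrancheCoLax{(\TrancheLax{\TildeLax{\mathdeuxcat{A}}}{\BarreLax{u}}{b})}{\DeuxFoncTrancheLaxCoq{\StrictCanonique{\mathdeuxcat{A}}}{\sigma}{b}}{(a,p)}
$$
is $\DeuxLocFond{W}$-asph�rique.

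Next I would invoke Proposition \ref{StrictInduitAspherique}, whose conclusion is precisely that each such slice \emph{admet un objet admettant un objet final}. By axiom LF2 of Definition \ref{DefDeuxLocFond}, any petite \deux{}cat�gorie with this property is asph�rique. Applying this to each slice yields the required colax-asph�ricit� of $\DeuxFoncTrancheLaxCoq{\StrictCanonique{\mathdeuxcat{A}}}{\sigma}{b}$.

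Finally, the parenthetical claim that this morphism is in particular a weak equivalence follows from Remark \ref{Zozo} (or equivalently from the dual of the argument encapsulated in the exemple \ref{TrancheCoLaxAspherique} and LF3 with $v = 1$): a colax-asph�rique \DeuxFoncteurStrict{} is a $\DeuxLocFond{W}$-�quivalence. There is no real obstacle here: the entire substance of the argument was absorbed into the preceding Proposition \ref{StrictInduitAspherique}, where an explicit final object was constructed in the iterated slice \deux{}cat�gorie; the present statement is just the translation of that combinatorial fact into the language of weak equivalences via the axioms of a localisateur fondamental.
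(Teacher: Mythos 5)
Your proposal is correct and is exactly the argument the paper intends: its proof of this proposition is literally "c'est une cons\'equence imm\'ediate de la proposition \ref{StrictInduitAspherique}", and you have simply spelled out the routine unfolding (colax-asph\'ericit\'e of the strict \deux{}foncteur via the slices $\TrancheCoLax{(\TrancheLax{\TildeLax{\mathdeuxcat{A}}}{\BarreLax{u}}{b})}{\DeuxFoncTrancheLaxCoq{\StrictCanonique{\mathdeuxcat{A}}}{\sigma}{b}}{(a,p)}$, then LF2, then the fact qu'un morphisme colax-asph\'erique est une \'equivalence faible). No gap; nothing further is needed.
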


\begin{proof}
C'est une conséquence immédiate de la proposition \ref{StrictInduitAspherique}.
\end{proof}


\begin{prop}\label{LaxInduitEquiFaible}
Soient $u : \mathdeuxcat{A} \to \mathdeuxcat{B}$ un \DeuxFoncteurLax{} et $b$ un objet de $\mathdeuxcat{B}$. Alors, le  \DeuxFoncteurLax{}
$\DeuxFoncTrancheLax{\LaxCanonique{\mathdeuxcat{A}}}{b} : \TrancheLax{\mathdeuxcat{A}}{u}{b} \to \TrancheLax{\TildeLax{\mathdeuxcat{A}}}{\BarreLax{u}}{b}$ induit par le diagramme commutatif
$$
\xymatrix{
\TildeLax{A}
\ar[dr]^{\BarreLax{u}}
\\
\mathdeuxcat{A}
\ar[u]^{\LaxCanonique{\mathdeuxcat{A}}}
\ar[r]_{u}
&\mathdeuxcat{B}
}
$$
est une équivalence faible. 
\end{prop}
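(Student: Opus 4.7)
The strategy is to exhibit $\DeuxFoncTrancheLax{\LaxCanonique{\mathdeuxcat{A}}}{b}$ as a section of the strict morphism $\DeuxFoncTrancheLaxCoq{\StrictCanonique{\mathdeuxcat{A}}}{\sigma}{b}$ (with $\sigma$ the canonical transformation of Lemma \ref{LemmeDimitri}), and then combine Proposition \ref{StrictInduitEquiFaible} with a two-out-of-three argument. More precisely, the plan is first to verify the identity
\[
\DeuxFoncTrancheLaxCoq{\StrictCanonique{\mathdeuxcat{A}}}{\sigma}{b} \circ \DeuxFoncTrancheLax{\LaxCanonique{\mathdeuxcat{A}}}{b} = 1_{\TrancheLax{\mathdeuxcat{A}}{u}{b}},
\]
then to invoke Proposition \ref{StrictInduitEquiFaible} to know that $\DeuxFoncTrancheLaxCoq{\StrictCanonique{\mathdeuxcat{A}}}{\sigma}{b}$ is a weak equivalence, and finally to deduce by the weak saturation of $\DeuxLocFondLaxInduit{W}$ (Lemma \ref{SatFaibleEquiLax}, axiom FS2) that $\DeuxFoncTrancheLax{\LaxCanonique{\mathdeuxcat{A}}}{b}$ is a weak equivalence lax.

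The verification of the composition identity proceeds by unraveling the formulas of paragraph \ref{DefDeuxFoncInduitFoncLaxOpTrans} together with the definitions of $\LaxCanonique{\mathdeuxcat{A}}$ and $\StrictCanonique{\mathdeuxcat{A}}$ (Definitions \ref{DefUniteLax} and \ref{DefCounite}). On an object $(a,p)$, the first functor produces $(a,p)$ in $\TrancheLax{\TildeLax{\mathdeuxcat{A}}}{\BarreLax{u}}{b}$, and the second returns $(a,p)$ thanks to $\sigma_a = 1_{u(a)}$. On a $1$-cell $(f,\alpha)\colon (a,p)\to(a',p')$ the first functor produces $(([1],f),\alpha)$, whence the second gives back $(\StrictCanonique{\mathdeuxcat{A}}([1],f),\alpha) = (f,\alpha)$, using the fact that $\sigma_{([1],f)} = u_{([1],f)} = 1_{u(f)}$ by the explicit formula of Definition \ref{DefDeuxCellGeneral} for $m=1$. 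The behaviour on $2$-cells is analogous and directly formal. Finally, one checks that the lax structural $2$-cells of the composition are identities: those of $\DeuxFoncTrancheLax{\LaxCanonique{\mathdeuxcat{A}}}{b}$ are represented, at objects and at composable pairs of $1$-cells, by the morphisms $([1]\to[0], 1_{1_a})$ and $([1]\to[2], 1_{f'f})$ of $\TildeLax{\mathdeuxcat{A}}$, whose images under $\StrictCanonique{\mathdeuxcat{A}}$ are precisely $1_{1_a}$ and $1_{f'f}$; since $\DeuxFoncTrancheLaxCoq{\StrictCanonique{\mathdeuxcat{A}}}{\sigma}{b}$ is strict, this suffices to conclude that the composition is strict with identity structural $2$-cells.

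The main obstacle is the bookkeeping in this verification: one must track simultaneously the cellular data and the non-trivial lax structural $2$-cells of $\DeuxFoncTrancheLax{\LaxCanonique{\mathdeuxcat{A}}}{b}$, while aligning the direction conventions of $\sigma$ from Lemma \ref{LemmeDimitri} with those of paragraph \ref{DefDeuxFoncInduitFoncLaxOpTrans}. Once the composition is shown to be the identity, however, the proof concludes in one line: in the triangle
\[
\xymatrix{
\TrancheLax{\mathdeuxcat{A}}{u}{b}
\ar[rr]^{\DeuxFoncTrancheLax{\LaxCanonique{\mathdeuxcat{A}}}{b}}
\ar@{=}[dr]
&&
\TrancheLax{\TildeLax{\mathdeuxcat{A}}}{\BarreLax{u}}{b}
\ar[dl]^{\DeuxFoncTrancheLaxCoq{\StrictCanonique{\mathdeuxcat{A}}}{\sigma}{b}}
\\
&
\TrancheLax{\mathdeuxcat{A}}{u}{b}
}
\]
two of the three sides are weak equivalences (the identity and $\DeuxFoncTrancheLaxCoq{\StrictCanonique{\mathdeuxcat{A}}}{\sigma}{b}$), so the third must be as well.
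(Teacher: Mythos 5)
Votre preuve est correcte et suit essentiellement le m�me chemin que celle du texte : la d�monstration publi�e observe elle aussi que le \DeuxFoncteurStrict{} $\DeuxFoncTrancheLaxCoq{\StrictCanonique{\mathdeuxcat{A}}}{\sigma}{b}$ est une r�traction de $\DeuxFoncTrancheLax{\LaxCanonique{\mathdeuxcat{A}}}{b}$, puis conclut par la proposition \ref{StrictInduitEquiFaible}, la remarque \ref{EquiStricteEquiLax} et la saturation faible (lemme \ref{SatFaibleEquiLax}). La seule diff�rence est que vous explicitez la v�rification de l'identit� $\DeuxFoncTrancheLaxCoq{\StrictCanonique{\mathdeuxcat{A}}}{\sigma}{b} \circ \DeuxFoncTrancheLax{\LaxCanonique{\mathdeuxcat{A}}}{b} = 1_{\TrancheLax{\mathdeuxcat{A}}{u}{b}}$, que le texte se contente d'affirmer ; cette v�rification est exacte.
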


\begin{proof}
On remarque que le \DeuxFoncteurStrict{} $\DeuxFoncTrancheLaxCoq{\StrictCanonique{\mathdeuxcat{A}}}{\sigma}{b}$ (explicité par exemple au cours de l'énoncé et de la démonstration de la proposition \ref{StrictInduitAspherique}) est une rétraction de $\DeuxFoncTrancheLax{\LaxCanonique{\mathdeuxcat{A}}}{b}$. En vertu de la proposition \ref{StrictInduitEquiFaible}, de la remarque \ref{EquiStricteEquiLax} et du lemme \ref{SatFaibleEquiLax}, on en déduit  le résultat.
\end{proof}

\begin{rem}
Les propositions \ref{StrictInduitAspherique} et \ref{LaxInduitEquiFaible} généralisent les propositions \ref{CouniteColaxAspherique} et \ref{UniteW} sans que leur démonstration en dépende. On aurait donc pu s'abstenir d'énoncer les cas particuliers. Les calculs s'avérant toutefois sensiblement plus pénibles dans le cas général, on a privilégié la solution consistant à détailler les deux démonstrations.
\end{rem}

\begin{rem}
Soit $u : \mathdeuxcat{A} \to \mathdeuxcat{B}$ un \DeuxFoncteurLax{}. En conservant des notations analogues à celles des propositions \ref{StrictInduitAspherique} et \ref{LaxInduitEquiFaible}, on peut considérer le diagramme d'équivalences faibles
$$
\xymatrix{
\TildeLax{\TrancheLax{\mathdeuxcat{A}}{u}{b}}
\ar @/_1pc/ [rr]_{\StrictCanonique{\TrancheLax{\mathdeuxcat{A}}{u}{b}}}
&&\TrancheLax{\mathdeuxcat{A}}{u}{b}
\ar @/_1pc/ [ll]_{\LaxCanonique{\TrancheLax{\mathdeuxcat{A}}{u}{b}}}
\ar @/^1pc/ [rr]^{\DeuxFoncTrancheLax{\LaxCanonique{\mathdeuxcat{A}}}{b}}
&&\TrancheLax{\TildeLax{\mathdeuxcat{A}}}{\BarreLax{u}}{b}
\ar @/^1pc/ [ll]^{\DeuxFoncTrancheLaxCoq{\StrictCanonique{\mathdeuxcat{A}}}{\sigma}{b}}
}.
$$
Cela fournit un couple $(\DeuxFoncTrancheLax{\LaxCanonique{\mathdeuxcat{A}}}{b} \phantom{a} \StrictCanonique{\TrancheLax{\mathdeuxcat{A}}{u}{b}}, \LaxCanonique{\TrancheLax{\mathdeuxcat{A}}{u}{b}} \phantom{a} \DeuxFoncTrancheLaxCoq{\StrictCanonique{\mathdeuxcat{A}}}{\sigma}{b})$ d'équivalences faibles entre les \deux{}catégories $\TildeLax{\TrancheLax{\mathdeuxcat{A}}{u}{b}}$ et $\TrancheLax{\TildeLax{\mathdeuxcat{A}}}{\BarreLax{u}}{b}$. 
\end{rem}

\begin{lemme}\label{BarreLaxUV}
Soient $u$ et $v$ deux \DeuxFoncteursLax{} tels que la composée $vu$ fasse sens. Alors, $\BarreLax{v} \TildeLax{u} = \BarreLax{vu}$.
\end{lemme}

\begin{proof}
Comme $\BarreLax{v} \TildeLax{u}$ est un \DeuxFoncteurStrict{}, cela résulte de la suite d'égalités $\BarreLax{v} \TildeLax{u} \LaxCanonique{\mathdeuxcat{A}} = \BarreLax{v} \LaxCanonique{\mathdeuxcat{B}} u = vu$.
\end{proof}

Ainsi, à tout diagramme commutatif de \DeuxFoncteursLax{}
$$
\xymatrix{
\mathdeuxcat{A} 
\ar[rr]^{u}
\ar[dr]_{w}
&&\mathdeuxcat{B}
\ar[dl]^{v}
\\
&\mathdeuxcat{C}
}
$$
est associé un diagramme commutatif de \DeuxFoncteursStricts{}
$$
\xymatrix{
\TildeLax{\mathdeuxcat{A}}
\ar[rr]^{\TildeLax{u}}
\ar[dr]_{\BarreLax{w}}
&&\TildeLax{\mathdeuxcat{B}}
\ar[dl]^{\BarreLax{v}}
\\
&\mathdeuxcat{C}
}
$$
et donc, pour tout objet $c$ de $\mathdeuxcat{C}$, un \DeuxFoncteurStrict{} 
$$
\DeuxFoncTrancheLax{\TildeLax{u}}{c} : \TrancheLax{\TildeLax{\mathdeuxcat{A}}}{\BarreLax{w}}{c} \to \TrancheLax{\TildeLax{\mathdeuxcat{B}}}{\BarreLax{v}}{c}.
$$

\begin{lemme}\label{EncoreUnCarre}
Soit
$$
\xymatrix{
\mathdeuxcat{A} 
\ar[rr]^{u}
\ar[dr]_{w}
&&\mathdeuxcat{B}
\ar[dl]^{v}
\\
&\mathdeuxcat{C}
}
$$
un diagramme commutatif de \DeuxFoncteursLax{}. Alors, pour tout objet $c$ de $\mathdeuxcat{C}$, le diagramme
$$
\xymatrix{
\TrancheLax{\TildeLax{\mathdeuxcat{A}}}{\BarreLax{w}}{c}
\ar[r]^{\DeuxFoncTrancheLax{\TildeLax{u}}{c}}
&\TrancheLax{\TildeLax{\mathdeuxcat{B}}}{\BarreLax{v}}{c}
\\
\TrancheLax{\mathdeuxcat{A}}{w}{c}
\ar[u]^{\DeuxFoncTrancheLax{\LaxCanonique{\mathdeuxcat{A}}}{c}}
\ar[r]_{\DeuxFoncTrancheLax{u}{c}}
&\TrancheLax{\mathdeuxcat{B}}{v}{c}
\ar[u]_{\DeuxFoncTrancheLax{\LaxCanonique{\mathdeuxcat{B}}}{c}}
}
$$
est commutatif.
\end{lemme}

\begin{proof}
Cela résulte de calculs ne posant aucune difficulté. On détaillera la démonstration d'un cas plus général de ce résultat — et indépendant de la démonstration de celui-ci — plus loin (voir le lemme \ref{CarreCommutatifThALax}). 
\end{proof}

\begin{theo}\label{TheoremeALaxTrancheLax}
Soit
$$
\xymatrix{
\mathdeuxcat{A} 
\ar[rr]^{u}
\ar[dr]_{w}
&&\mathdeuxcat{B}
\ar[dl]^{v}
\\
&\mathdeuxcat{C}
}
$$
un diagramme commutatif de \DeuxFoncteursLax{}. Supposons que, pour tout objet $c$ de $\mathdeuxcat{C}$, le \DeuxFoncteurLax{} $\DeuxFoncTrancheLax{u}{c} : \TrancheLax{\mathdeuxcat{A}}{w}{c} \to \TrancheLax{\mathdeuxcat{B}}{v}{c}$ soit une équivalence faible. Alors, $u$ est une équivalence faible.
\end{theo}

\begin{proof}
Supposant donné un diagramme commutatif de \DeuxFoncteursLax{} tel que celui de l'énoncé, on a vu qu'on pouvait lui associer, pour tout objet $c$ de $\mathdeuxcat{C}$, un \DeuxFoncteurStrict{} $\DeuxFoncTrancheLax{\TildeLax{u}}{c} : \TrancheLax{\TildeLax{\mathdeuxcat{A}}}{\BarreLax{w}}{c} \to \TrancheLax{\TildeLax{\mathdeuxcat{B}}}{\BarreLax{v}}{c}$ induit par le diagramme commutatif de \DeuxFoncteursStricts{}
$$
\xymatrix{
\TildeLax{\mathdeuxcat{A}}
\ar[rr]^{\TildeLax{u}}
\ar[dr]_{\BarreLax{w}}
&&\TildeLax{\mathdeuxcat{B}}
\ar[dl]^{\BarreLax{v}}
\\
&\mathdeuxcat{C}
&.
}
$$
Vérifions que $\DeuxFoncTrancheLax{u}{c}$ est une équivalence faible si et seulement si $\DeuxFoncTrancheLax{\TildeLax{u}}{c}$ en est une. Pour cela, on considère le diagramme commutatif
$$
\xymatrix{
\TrancheLax{\TildeLax{\mathdeuxcat{A}}}{\BarreLax{w}}{c}
\ar[r]^{\DeuxFoncTrancheLax{\TildeLax{u}}{c}}
&\TrancheLax{\TildeLax{\mathdeuxcat{B}}}{\BarreLax{v}}{c}
\\
\TrancheLax{\mathdeuxcat{A}}{w}{c}
\ar[u]^{\DeuxFoncTrancheLax{\LaxCanonique{\mathdeuxcat{A}}}{c}}
\ar[r]_{\DeuxFoncTrancheLax{u}{c}}
&\TrancheLax{\mathdeuxcat{B}}{v}{c}
\ar[u]_{\DeuxFoncTrancheLax{\LaxCanonique{\mathdeuxcat{B}}}{c}}
}
$$
de l'énoncé du lemme \ref{EncoreUnCarre}. En vertu de la proposition  \ref{LaxInduitEquiFaible}, $\DeuxFoncTrancheLax{\LaxCanonique{\mathdeuxcat{A}}}{c}$ et $\DeuxFoncTrancheLax{\LaxCanonique{\mathdeuxcat{B}}}{c}$ sont des équivalences faibles. On conclut en invoquant la remarque \ref{EquiStricteEquiLax} et le lemme \ref{SatFaibleEquiLax}. 
\end{proof}

\begin{rem}
Dans ses notes \cite{NotesDelHoyo}, qu'il nous avait communiquées lorsque nous cherchions à dégager un analogue du Théorème A de Quillen pour les \DeuxFoncteursLax{}, del Hoyo invoque le fait qu'une équivalence de Dwyer-Kan\footnote{Les équivalences de Dwyer-Kan sont définies de façon standard pour ce que l'on appelle maintenant généralement les « catégories simpliciales », c'est-à-dire les catégories enrichies en ensembles simpliciaux. La définition fait sens dans le cas des \deux{}catégories que l'on considère comme des « catégories simpliciales » en remplaçant les catégories de morphismes par leur nerf.} bijective sur les objets est une équivalence faible pour établir une version absolue du Théorème A de Quillen pour les \DeuxFoncteursLax{} dans le cas du localisateur fondamental de $\DeuxCat$ formé des équivalences de Thomason « classiques » (morphismes dont le nerf est une équivalence faible simpliciale). Nous utilisons pour notre part un critère d'asphéricité et donnons le cas relatif. Nous généralisons plus loin ce résultat au cas d'un diagramme commutatif à \deux{}cellule près seulement. 
\end{rem}

\begin{theo}\label{TheoremeALaxOpTrancheLax}
Soit
$$
\xymatrix{
\mathdeuxcat{A} 
\ar[rr]^{u}
\ar[dr]_{w}
&&\mathdeuxcat{B}
\ar[dl]^{v}
\\
&\mathdeuxcat{C}
}
$$
un diagramme commutatif de \DeuxFoncteursLax{}. Supposons que, pour tout objet $c$ de $\mathdeuxcat{C}$, le \DeuxFoncteurLax{} $\DeuxFoncOpTrancheLax{u}{c} : \OpTrancheLax{\mathdeuxcat{A}}{w}{c} \to \OpTrancheLax{\mathdeuxcat{B}}{v}{c}$ soit une équivalence faible. Alors, $u$ est une équivalence faible.
\end{theo}

\begin{proof}
Par hypothèse, le \DeuxFoncteurLax{} $\DeuxFoncUnOp{(\DeuxFoncTrancheLax{\DeuxFoncUnOp{u}}{c})}$ est une équivalence faible pour tout objet $c$ de $\mathdeuxcat{C}$. En vertu de la proposition \ref{DeuxFoncLaxUnOpW}, $\DeuxFoncTrancheLax{\DeuxFoncUnOp{u}}{c}$ est une équivalence faible pour tout objet $c$ de $\mathdeuxcat{C}$. En vertu du théorème \ref{TheoremeALaxTrancheLax}, $\DeuxFoncUnOp{u}$ est une équivalence faible. Une nouvelle invocation de la proposition \ref{DeuxFoncLaxUnOpW} permet de conclure. 
\end{proof}

\section{$\rm{Ho}(\Cat) \simeq \rm{Ho}(\DeuxCat)$}\label{SectionHoCatHoDeuxCat}

\begin{paragr}
L'objet de cette section est d'établir l'équivalence des catégories homotopiques de $\Cat$ et $\DeuxCat$ relativement aux équivalences faibles que nous considérons. Une approche naïve de la question, motivée par la considération des phénomènes connus en dimension $1$, consisterait à tenter de vérifier que, pour toute petite \deux{}catégorie $\mathdeuxcat{A}$, il existe une équivalence faible, pour un certain foncteur nerf $N$, donnée par un foncteur du type
$$
\DeuxIntOp{\Delta} N \mathdeuxcat{A} \to \mathdeuxcat{A},
$$
qui jouerait un rôle analogue à celui du foncteur 
$$
sup_{A} : \DeuxIntOp{\Delta} \UnNerf A \to A
$$
défini pour toute petite catégorie $A$ (voir le paragraphe \ref{DefUnSup} et la proposition \ref{UnSupAspherique}). On a vu dans la section \ref{SectionSup} qu'il était possible de définir fonctoriellement de tels morphismes, mais il s'agit de \DeuxFoncteursLax{}, ce qui nous fait sortir de la catégorie $\DeuxCat$. On pourrait être tenté d'appliquer la variante du Théorème A établie pour les morphismes lax (théorème \ref{TheoremeALaxTrancheLax}) afin de montrer que ces \DeuxFoncteursLax{} sont des équivalences faibles, puis utiliser l'équivalence déjà démontrée entre les catégories homotopiques de $\DeuxCat$ et $\DeuxCatLax$ (voir le théorème \ref{EqCatLocDeuxCatDeuxCatLax}). Bien que cela ne soit pas la façon dont nous procéderons dans cette section, nous verrons plus loin qu'il est bien possible de parvenir à démontrer très simplement le résultat souhaité par un raisonnement de ce genre. La démonstration que nous présentons dans cette section utilise l'existence d'un remplacement fonctoriel de toute petite \deux{}catégorie par une petite catégorie qui lui est reliée par un zigzag de longueur $2$ d'équivalences faibles de $\DeuxCat$. La proposition \ref{ObstructionGeorges} montre qu'il serait illusoire d'espérer mieux. 
\end{paragr}

\begin{paragr}
On notera $\DeuxLocFond{W}_{2}^{2}$ la classe des morphismes de $\DeuxCat$ dont la réalisation topologique du nerf induit une bijection entre les $\pi_{0}$ et des isomorphismes entre les $\pi_{1}$ et $\pi_{2}$ respectifs pour tout choix de point base.
\end{paragr}

\begin{prop}\label{ObstructionGeorges}
Soient $G$ un groupe abélien et $\mathdeuxcat{G}$ la \deux{}catégorie ayant un seul objet, une seule \un{}cellule et dont le monoïde des \deux{}cellules est donné par le monoïde sous-jacent au groupe $\mathdeuxcat{G}$. Supposons qu'\emph{au moins une} des deux propriétés suivantes soit vérifiée.
\begin{itemize}
\item[(i)]
Il existe un élément de $\DeuxLocFond{W}_{2}^{2}$ de source une catégorie et de but $\mathdeuxcat{G}$.
\item[(ii)]
Il existe un élément de $\DeuxLocFond{W}_{2}^{2}$ de source  $\mathdeuxcat{G}$ et de but une catégorie.
\end{itemize}
Alors $G$ est le groupe trivial.
\end{prop}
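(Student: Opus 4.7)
The plan is to exhibit, in both situations, a factorization of $u$ through the terminal \deux{}cat�gorie $\DeuxCatPonct$, using the two adjunctions
\[
\TronqueInt \dashv I \quad \text{and} \quad I \dashv \TronqueBete
\]
between $\Cat$ and $\DeuxCat$ recalled in section \ref{SectionAdjonctionsCatDeuxCat}. First I would compute $\TronqueInt(\mathdeuxcat{G}) = \TronqueBete(\mathdeuxcat{G}) = \DeuxCatPonct$: the first equality because the unique hom-category $\CatHom{\mathdeuxcat{G}}{\ast}{\ast}$ is connected (it has only one object, namely $1_{\ast}$), so its $\pi_{0}$ is a singleton; the second because forgetting \deux{}cellules from $\mathdeuxcat{G}$ leaves a single object and a single \un{}cellule. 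Consequently, in case (i), a \DeuxFoncteurStrict{} $u : A \to \mathdeuxcat{G}$ with $A$ a cat�gorie factors, via the counit of $\TronqueInt \dashv I$, as $A \to \TronqueInt(\mathdeuxcat{G}) = \DeuxCatPonct \to \mathdeuxcat{G}$; dually, in case (ii), a \DeuxFoncteurStrict{} $u : \mathdeuxcat{G} \to B$ factors, via the unit of $I \dashv \TronqueBete$, as $\mathdeuxcat{G} \to \DeuxCatPonct = \TronqueBete(\mathdeuxcat{G}) \to B$.

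Applying the nerve $\NerfLaxNor$ and taking geometric realization turns both factorizations into factorizations through the contractible space $|\NerfLaxNor \DeuxCatPonct| \simeq \ast$. Hence $|\NerfLaxNor(u)|$ is null-homotopic in both cases, and the induced morphism $\pi_{2} |\NerfLaxNor(u)|$ is the zero map. But the assumption $u \in \DeuxLocFond{W}_{2}^{2}$ requires this induced morphism to be an isomorphism on $\pi_{0}, \pi_{1}, \pi_{2}$. The zero map being an isomorphism forces $\pi_{2}(|\NerfLaxNor \mathdeuxcat{G}|)$ to vanish, in both situations.

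It then remains to identify $\pi_{2}(|\NerfLaxNor \mathdeuxcat{G}|)$ with $G$. A direct simplicial analysis yields $(\NerfLaxNor \mathdeuxcat{G})_{0} = (\NerfLaxNor \mathdeuxcat{G})_{1} = \{\ast\}$ (by normalization, since there is only one object and one \un{}cellule) and $(\NerfLaxNor \mathdeuxcat{G})_{2} \cong G$, corresponding to the free choice of the structural \deux{}cellule $x_{2,1,0} \in G$. At dimension $3$, the condition de cocycle for \DeuxFoncteursLax{} becomes an additive equation in $G$, using that in $\mathdeuxcat{G}$ horizontal and vertical compositions of \deux{}cellules coincide by Eckmann--Hilton and give the abelian group law on $G$. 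This presentation is precisely that of the standard bar simplicial model of the Eilenberg--MacLane space $K(G,2)$, so $|\NerfLaxNor \mathdeuxcat{G}| \simeq K(G,2)$ and $\pi_{2}(|\NerfLaxNor \mathdeuxcat{G}|) = G$. Combined with the preceding step, this forces $G = 0$.

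The main obstacle is the last identification $|\NerfLaxNor \mathdeuxcat{G}| \simeq K(G,2)$. While classical (the \deux{}cat�gorie $\mathdeuxcat{G}$ is a double delooping of the abelian group $G$), a self-contained verification via the bar construction in low degrees would be the technical heart of the argument. An alternative, more economical route is to invoke Duskin's theorem (or analogous results in \cite{CCG}) on nerves of bigroupoids with a single object and a single \un{}cellule, directly giving the identification.
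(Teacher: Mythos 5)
Your proposal is correct and is essentially the paper's own argument: in both cases one factors the hypothetical element of $\DeuxLocFond{W}_{2}^{2}$ through $\DeuxCatPonct$ via the truncation functors of section \ref{SectionAdjonctionsCatDeuxCat} (both truncations of $\mathdeuxcat{G}$ being the point) and then concludes from $\pi_{2}(\mathdeuxcat{G}) \cong G$, an identification the paper asserts without proof and which you rightly isolate as the only substantive verification ($|\NerfLaxNor \mathdeuxcat{G}|$ is a $K(G,2)$); the paper merely extracts the conclusion slightly differently, applying the lemme \ref{EquivalenceViaPoint} to place $\DeuxCatPonct \to \mathdeuxcat{G}$ (resp. $\mathdeuxcat{G} \to \DeuxCatPonct$) itself in $\DeuxLocFond{W}_{2}^{2}$ rather than using your null-homotopy reading, an inessential difference. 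One small correction: you cite the two adjunctions the wrong way round --- case (i) rests on the counit of $(\Cat \hookrightarrow \DeuxCat, \TronqueBete)$, i.e.\ the canonical morphism $\TronqueBete{\mathdeuxcat{G}} \to \mathdeuxcat{G}$, and case (ii) on the unit of $(\TronqueInt, \Cat \hookrightarrow \DeuxCat)$, i.e.\ $\mathdeuxcat{G} \to \TronqueInt{\mathdeuxcat{G}}$ --- though since $\TronqueInt{\mathdeuxcat{G}} = \TronqueBete{\mathdeuxcat{G}} = \DeuxCatPonct$ your factorizations through the point remain valid.
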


\begin{proof}
Supposons qu'il existe une petite catégorie $A$ et un morphisme $s : A \to \mathdeuxcat{G}$ qui soit dans $\DeuxLocFond{W}_{2}^{2}$. Le morphisme $s$ se factorise alors par $\TronqueBete{\mathdeuxcat{G}} = \DeuxCatPonct$ (voir la section \ref{SectionAdjonctionsCatDeuxCat}) et il existe ainsi un diagramme commutatif
$$
\xymatrix{
A
\ar[rr]^{}
\ar[dr]_{s}
&&\DeuxCatPonct
\ar[dl]^{}
\\
&\mathdeuxcat{G}
&.
}
$$
En vertu du lemme \ref{EquivalenceViaPoint}, le morphisme $\DeuxCatPonct \to \mathdeuxcat{G}$ est dans $\DeuxLocFond{W}_{2}^{2}$. En particulier, il induit un isomorphisme entre $\pi_{2}(\DeuxCatPonct) = \{*\}$ et $\pi_{2}(\mathdeuxcat{G}) = G$ (on ne note pas le point base puisqu'il est unique), d'où le résultat.

Supposons, de façon analogue, qu'il existe une petite catégorie $A$ et un morphisme $s : \mathdeuxcat{G} \to A$ qui soit dans $\DeuxLocFond{W}_{2}^{2}$. Le morphisme $s$ se factorise alors par $\TronqueInt{\mathdeuxcat{G}} = \DeuxCatPonct$ (voir la section \ref{SectionAdjonctionsCatDeuxCat}) et il existe ainsi un diagramme commutatif
$$
\xymatrix{
\mathdeuxcat{G}
\ar[rr]^{}
\ar[dr]_{s}
&&\DeuxCatPonct
\ar[dl]^{}
\\
&A
&.
}
$$
En vertu du lemme \ref{EquivalenceViaPoint}, le morphisme $\mathdeuxcat{G} \to \DeuxCatPonct$ est dans $\DeuxLocFond{W}_{2}^{2}$. En particulier, il induit un isomorphisme entre $\pi_{2}(\mathdeuxcat{G}) = G$ et $\pi_{2}(\DeuxCatPonct) = \{*\}$ (on ne note pas le point base puisqu'il est unique), d'où le résultat.
\end{proof}

La proposition \ref{SupHomAspherique} provient de \cite{TheseDelHoyo} (résultat repris dans \cite{ArticleDelHoyo}), aux contextes et notations près. On comparera avec la proposition \ref{UnSupAspherique}. On suppose toujours fixé un localisateur fondamental $\DeuxLocFond{W}$ de $\DeuxCat$.

\begin{prop}[Del Hoyo]\label{SupHomAspherique}
Pour toute \deux{}catégorie $\mathdeuxcat{A}$, le \DeuxFoncteurLax{} normalisé
$$
\SupHom_{\mathdeuxcat{A}} : \DeuxIntOp{\Delta} \NerfHom \mathdeuxcat{A} \to \mathdeuxcat{A}
$$
est lax\nobreakdash-asphérique (donc en particulier une équivalence faible). 
\end{prop}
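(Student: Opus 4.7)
The plan is to mimic the one-categorical argument behind Lemma~\ref{DecalageObjetFinal}, lifted to the 2-categorical setting. Fix an object $a$ of $\mathdeuxcat{A}$. By the very definition of lax-asph\'ericit\'e, it suffices to show that the slice 2-category
\[
\mathcal{S}_{a} := \TrancheLax{(\DeuxIntOp{\Delta} \NerfHom \mathdeuxcat{A})}{\SupHom_{\mathdeuxcat{A}}}{a}
\]
is $\DeuxLocFond{W}$-asph\'erique. To this end, I would construct a \emph{d\'ecalage} endofunctor $D : \mathcal{S}_{a} \to \mathcal{S}_{a}$, compare it both to the identity and to a constant endofunctor via homotopies, and then apply the lax homotopy lemma~\ref{LemmeHomotopieLax}.

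The d\'ecalage is defined on an object $(([m],x), p : x_{m} \to a)$ of $\mathcal{S}_{a}$ by setting
\[
D\bigl(([m],x), p\bigr) = \bigl(([m{+}1], \widetilde{x}), 1_{a}\bigr),
\]
where $\widetilde{x} : [m{+}1] \to \mathdeuxcat{A}$ is the strict 2-functor extending $x$ along the inclusion $[m] \hookrightarrow [m{+}1]$ of the first $m{+}1$ objects by $\widetilde{x}_{m+1} = a$ and $\widetilde{x}_{m+1, m} = p$ (all higher 1-cells $\widetilde{x}_{m+1, i}$ being determined by strict composition). One extends $D$ to the 1-cells of $\mathcal{S}_{a}$ (which involve a simplicial map $\varphi$ and a 2-transformation $\alpha$ relative to objects) by adapting $\varphi$ to send the terminal element to the terminal element, and likewise on 2-cells. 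Writing $Z$ for the constant endofunctor at $(([0],a), 1_{a})$ yields a second endofunctor of $\mathcal{S}_{a}$.

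Next I would construct, for each object $(([m], x), p)$ of $\mathcal{S}_{a}$, two canonical 1-cells in $\mathcal{S}_{a}$: the inclusion $[m] \hookrightarrow [m{+}1]$ on the first $m{+}1$ elements induces a 1-cell
\[
\iota_{(([m],x),p)} : \bigl(([m],x), p\bigr) \to D\bigl(([m],x), p\bigr),
\]
whereas the inclusion $[0] \to [m{+}1]$ sending $0 \mapsto m{+}1$ induces a 1-cell
\[
\omega_{(([m],x),p)} : \bigl(([0], a), 1_{a}\bigr) \to D\bigl(([m],x), p\bigr).
\]
The key technical step is to verify that these assignments assemble into lax transformations (or the appropriate dual) $1_{\mathcal{S}_{a}} \Rightarrow D$ and $Z \Rightarrow D$. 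Granted this, Lemma~\ref{DeuxTransFoncLax} produces lax 2-functors $[1] \times \mathcal{S}_{a} \to \mathcal{S}_{a}$ witnessing that $1_{\mathcal{S}_{a}}$ and $Z$ are both $[1]$-homotopes to $D$, hence to each other.

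At this point one invokes the lax homotopy lemma~\ref{LemmeHomotopieLax}. Since the cat\'egorie $[1]$ admet un objet final, le morphisme canonique $[1] \to e$ est universellement dans $\DeuxLocFond{W}$ (proposition~\ref{1.1.4.THG}). L'assertion $(c)$ du lemme~\ref{LemmeHomotopieLax} s'applique alors: comme $1_{\mathcal{S}_{a}}$ est un isomorphisme et $Z$ est un endofoncteur constant qui lui est homotope, le morphisme canonique $\mathcal{S}_{a} \to e$ est universellement dans $\DeuxLocFond{W}$, et en particulier $\mathcal{S}_{a}$ est $\DeuxLocFond{W}$-asph\'erique, ce qui conclut. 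The main obstacle will be the third paragraph: the slice $\mathcal{S}_{a}$ has a rather subtle structure because its morphisms combine a simplicial map $\varphi$ (going ``the other way'') with a 2-transformation $\alpha$ relative to objects, all mediated by the lax structural 2-cells of $\SupHom_{\mathdeuxcat{A}}$; verifying the cocycle, naturality and unitarity axioms for $\iota$ and $\omega$ as lax transformations will demand patient bookkeeping, and one must pay attention to choosing the correct variance (lax vs.\ colax) so that lemma~\ref{DeuxTransFoncLax} applies without mismatch.
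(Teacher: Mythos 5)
Your proposal is correct and follows essentially the same route as the paper's own proof: the same d\'ecalage endofunctor $D$ on the slice $\TrancheLax{(\DeuxIntOp{\Delta}\NerfHom\mathdeuxcat{A})}{\SupHom_{\mathdeuxcat{A}}}{a}$, the same comparison morphisms induced by the inclusions $[m] \hookrightarrow [m+1]$ and $[0] \to [m+1]$, $0 \mapsto m+1$, and the same conclusion via the homotopy lemma. Note only that the obstacle you anticipate largely disappears: since $\DeuxIntOp{\Delta}\NerfHom\mathdeuxcat{A}$ has no non-identity $2$-cells, the slice is an ordinary category, so your $\iota$ and $\omega$ are plain (strict) natural transformations, no lax/colax variance question arises, and the conclusion follows from Lemma \ref{LemmeHomotopieStricte} (or assertion $(c)$ of Lemma \ref{LemmeHomotopieLax}) together with Lemma \ref{EquivalenceViaPoint}, exactly as in the paper.
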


\begin{proof}
Soit $a$ un objet quelconque de $\mathdeuxcat{A}$. La catégorie $\TrancheLax{(\DeuxIntOp{\Delta} \NerfHom \mathdeuxcat{A})}{\SupHom_{\mathdeuxcat{A}}}{a}$ se décrit comme suit. Ses objets sont donnés par les $([m], x, p : x_{m} \to a)$, en conservant des notations déjà utilisées. Les morphismes de $([m], x, p)$ vers $([n], y, q)$ sont donnés par les 
$$
(\varphi : [m] \to [n], \alpha, \gamma : p \Rightarrow q y_{n,n-1} \dots y_{\varphi(m)+1, \varphi(m)}),
$$
dans lesquels $\varphi$ est un morphisme de $\Delta$, $\alpha$ une transformation relative aux objets de $x$ vers $y \varphi$ et $\gamma$ une \deux{}cellule de $\mathdeuxcat{A}$. Cette catégorie admet un endofoncteur constant 
$$
\begin{aligned}
C_{a} : \TrancheLax{\left(\DeuxIntOp{\Delta} \NerfHom \mathdeuxcat{A}\right)}{\SupHom_{\mathdeuxcat{A}}}{a} &\to \TrancheLax{\left(\DeuxIntOp{\Delta} \NerfHom \mathdeuxcat{A}\right)}{\SupHom_{\mathdeuxcat{A}}}{a}
\\
([m], x, p) &\mapsto ([0], a, 1_{a})
\\
(\varphi, \alpha, \gamma) &\mapsto (1_{[0]}, 1_{a}, 1_{1_{a}})
\end{aligned}
$$
(on a noté $1_{a}$ l'unique transformation relative aux objets de $a$ vers $a$, c'est-à-dire du \DeuxFoncteurStrict{} $[0] \to \mathdeuxcat{A}$ de valeur $a$ vers lui-même)
ainsi qu'un endofoncteur
$$
\begin{aligned}
D : \TrancheLax{\left(\DeuxIntOp{\Delta} \NerfHom \mathdeuxcat{A}\right)}{\SupHom_{\mathdeuxcat{A}}}{a} &\to \TrancheLax{\left(\DeuxIntOp{\Delta} \NerfHom \mathdeuxcat{A}\right)}{\SupHom_{\mathdeuxcat{A}}}{a}
\\
([m], x, p) &\mapsto ([m+1], D_{p}(x), 1_{a})
\\
(\varphi, \alpha, \gamma) &\mapsto (D(\varphi), \alpha, \gamma, 1_{1_{a}}).
\end{aligned}
$$
Dans cette expression, $D_{p}(x)$ désigne le \DeuxFoncteurStrict{} de $[m+1]$ vers $\mathdeuxcat{A}$ défini par $(D_{p}(x))_{i,i-1} = x_{i,i-1}$ si $1 \leq i \leq m$ et $(D_{p}(x))_{m+1,m} = p$ et $D(\varphi) : [m+1] \to [n+1]$ est défini par $D(\varphi)(i) = \varphi(i)$ si $0 \leq i \leq m$ et $D(\varphi)(m+1) = n+1$. De plus, on a noté « $\alpha, \gamma$ » la transformation relative aux objets correspondant à $\alpha$ « suivie de » $\gamma$. 

Vérifions la fonctorialité de $D$. Pour tout objet $([m], x, p)$ de la catégorie $\TrancheLax{\left(\DeuxIntOp{\Delta} \NerfHom \mathdeuxcat{A}\right)}{\SupHom_{\mathdeuxcat{A}}}{a}$, 
$$
\begin{aligned}
D(1_{([m], x, p)}) &= D(1_{[m]}, 1_{x}, 1_{p})
\\
&= (1_{[m+1]}, 1_{D_{p}(x)}, 1_{1_{a}})
\end{aligned}
$$
et
$$
\begin{aligned}
1_{D([m], x, p)} &= 1_{([m+1], D_{p}(x), 1_{a})}
\\
&= (1_{[m+1]}, 1_{D_{p}(x)}, 1_{1_{a}}),
\end{aligned}
$$
donc $D(1_{([m], x, p)}) = 1_{D([m], x, p)}$. Soient maintenant $(\varphi, \alpha, \gamma) : ([m], x, p) \to ([n], y, q)$ et $(\psi, \beta, \delta) : ([n], y, q) \to ([r], z, s)$ deux morphismes composables de $\TrancheLax{\left(\DeuxIntOp{\Delta} \NerfHom \mathdeuxcat{A}\right)}{\SupHom_{\mathdeuxcat{A}}}{a}$. Notons $\alpha_{i',i}$ la « composante de $\alpha$ en le morphisme $i \to i'$ » pour $0 \leq i \leq i' \leq m$ ; c'est une \deux{}cellule de $x_{i',i}$ vers $y_{\varphi(i'), \varphi(i)}$ dans $\mathdeuxcat{A}$. De façon analogue, on notera $\beta_{j',j}$ la composante de $\beta$ en le morphisme $j \to j'$ pour $0 \leq j \leq j' \leq n$. Rappelons qu'en vertu des axiomes des transformations (qui se simplifient dans le cas, qui nous occupe ici, de transformations relatives aux objets entre morphismes stricts), les composantes $\alpha_{1, 0}$, \dots, $\alpha_{m, m-1}$ (\emph{resp.} $\beta_{1, 0}$, \dots, $\beta_{n, n-1}$) déterminent entièrement $\alpha$ (\emph{resp. }$\beta$). Pour $0 \leq i \leq m-1$, posons 
$$
\rho_{i+1,i} = (\beta_{\varphi(i), \varphi(i-1)+1}) \CompDeuxUn \alpha_{i}
$$
si $\varphi(i-1) \leq \varphi(i)-1$ et
$$
\rho_{i+1,i} = \alpha_{i}
$$
si $\varphi(i-1) = \varphi(i)$. Cela définit une transformation relative aux objets de $x$ vers $z \psi \varphi$. On pose également
$$
\tau = \beta_{n, \varphi(m)+1}
$$
si $\varphi(m) \leq n-1$
et
$$
\tau = 1_{1_{y_{n}}}
$$
si $\varphi(m) = n$. Alors, 
$$
(\psi, \beta, \delta) (\varphi, \alpha, \gamma) = (\psi \varphi, \rho, (\delta \CompDeuxZero \tau) \CompDeuxUn \gamma).
$$
Par conséquent, 
$$
D((\psi, \beta, \delta) (\varphi, \alpha, \gamma)) = (D(\psi \varphi), \rho, (\delta \CompDeuxZero \tau) \CompDeuxUn \gamma, 1_{1_{a}}).
$$
D'autre part, 
$$
\begin{aligned}
D(\psi, \beta, \delta) D(\varphi, \alpha, \gamma) &= (D(\psi), \beta, \delta, 1_{1_{a}}) (D(\varphi), \alpha, \gamma, 1_{1_{a}})  
\\
&= (D(\psi \varphi), \rho, (\delta \CompDeuxZero \tau) \CompDeuxUn \gamma, 1_{1_{a}}),
\end{aligned}
$$
la seconde égalité se déduisant de la formule générale ci-dessus de la composition des morphismes dans $\TrancheLax{\left(\DeuxIntOp{\Delta} \NerfHom \mathdeuxcat{A}\right)}{\SupHom_{\mathdeuxcat{A}}}{a}$. Cela conclut les vérifications des propriétés de fonctorialité de $D$. 

Les inclusions
$$
\begin{aligned}
{}[m] &\to [m+1]
\\
i &\mapsto i
\end{aligned}
$$
et
$$
\begin{aligned}
{}[0] &\to [m+1]
\\
0 &\mapsto m+1
\end{aligned}
$$
permettent alors de définir des transformations naturelles $\sigma : 1_{\TrancheLax{(\DeuxIntOp{\Delta} \NerfHom \mathdeuxcat{A})}{\SupHom_{\mathdeuxcat{A}}}{a}} \Rightarrow D$ et \mbox{$\mu : C_{a} \Rightarrow D$} respectivement. Vérifions-le. 

Pour tout objet $([m], x, p)$ de $\TrancheLax{\left(\DeuxIntOp{\Delta} \NerfHom \mathdeuxcat{A}\right)}{\SupHom_{\mathdeuxcat{A}}}{a}$, on définit
$$
\sigma_{([m], x, p)} : ([m], x, p) \to ([m+1], D_{p}(x), 1_{a})
$$
par
$$
\sigma_{([m], x, p)} = ([m] \to [m+1], i \mapsto i, 1_{x}, 1_{p}).
$$
Soit $(\varphi, \alpha, \gamma) : ([m], x, p) \to ([n], y, q)$ un morphisme de $\TrancheLax{(\DeuxIntOp{\Delta} \NerfHom \mathdeuxcat{A})}{\SupHom_{\mathdeuxcat{A}}}{a}$. Il s'agit de vérifier la commutativité du diagramme
$$
\xymatrix{
([m], x, p)
\ar[rrr]^{\sigma_{([m], x, p)}}
\ar[d]_{(\varphi, \alpha, \gamma)}
&&&
([m+1], D_{p}(x), 1_{a})
\ar[d]^{(D(\varphi), \alpha, \gamma, 1_{1_{a}})}
\\
([n], y, q)
\ar[rrr]_{\sigma_{([n], y, q)}}
&&&
([n+1], D_{q}(y), 1_{a})
&.
}
$$
Pour calculer les composées, on peut se référer aux formules générales ci-dessus. Elles permettent d'écrire : 
$$
\begin{aligned}
(D(\varphi), \alpha, \gamma, 1_{1_{a}}) \sigma_{([m], x, p)} &= (D(\varphi), \alpha, \gamma, 1_{1_{a}}) ([m] \to [m+1], i \mapsto i, 1_{x}, 1_{p})
\\
&= ([m] \to [n+1], i \mapsto \varphi(i), \alpha, \gamma)
\end{aligned}
$$
et
$$
\begin{aligned}
\sigma_{([n], y, q)} (\varphi, \alpha, \gamma) &= ([n] \to [n+1], j \mapsto j, 1_{y}, 1_{q}) (\varphi, \alpha, \gamma)
\\
&= ([m] \to [n+1], i \mapsto \varphi(i), \alpha, \gamma).
\end{aligned}
$$
Cela démontre la commutativité du diagramme ci-dessus, donc la naturalité de $\sigma$. 

Pour tout objet $([m], x, p)$ de $\TrancheLax{\left(\DeuxIntOp{\Delta} \NerfHom \mathdeuxcat{A}\right)}{\SupHom_{\mathdeuxcat{A}}}{a}$, on définit
$$
\mu_{([m], x, p)} : ([0], a, 1_{a}) \to ([m+1], D_{p}(x), 1_{a})
$$
par
$$
\mu_{([m], x, p)} = ([0] \to [m+1], 0 \mapsto m+1, 1_{a}, 1_{1_{a}}).
$$
Soit $(\varphi, \alpha, \gamma) : ([m], x, p) \to ([n], y, q)$ un morphisme de $\TrancheLax{(\DeuxIntOp{\Delta} \NerfHom \mathdeuxcat{A})}{\SupHom_{\mathdeuxcat{A}}}{a}$. Il s'agit de vérifier la commutativité du diagramme
$$
\xymatrix{
([0], a, 1_{a})
\ar[rrr]^{\mu_{([m], x, p)}}
\ar[d]_{(1_{[0]}, 1_{a}, 1_{1_{a}})}
&&&
([m+1], D_{p}(x), 1_{a})
\ar[d]^{(D(\varphi), \alpha, \gamma, 1_{1_{a}})}
\\
([0], a, 1_{a})
\ar[rrr]_{\mu_{([n], y, q)}}
&&&
([n+1], D_{q}(y), 1_{a})
&.
}
$$
Or, 
$$
\begin{aligned}
(D(\varphi), \alpha, \gamma, 1_{1_{a}}) \mu_{([m], x, p)} &= (D(\varphi), \alpha, \gamma, 1_{1_{a}}) ([0] \to [m+1], 0 \mapsto m+1, 1_{a}, 1_{1_{a}})
\\
&= ([0] \to [n+1], 0 \mapsto n+1, 1_{a}, 1_{1_{a}})
\end{aligned}
$$
et
$$
\begin{aligned}
\mu_{([n], y, q)} (1_{[0]}, 1_{a}, 1_{1_{a}}) &= ([0] \to [n+1], 0 \mapsto n+1, 1_{a}, 1_{1_{a}}) (1_{[0]}, 1_{a}, 1_{1_{a}})
\\
&= ([0] \to [n+1], 0 \mapsto n+1, 1_{a}, 1_{1_{a}}).
\end{aligned}
$$
Cela démontre la commutativité du diagramme ci-dessus, donc la naturalité de $\mu$. 

Il existe donc un diagramme 
$$
\xymatrix{
1_{\TrancheLax{(\DeuxIntOp{\Delta} \NerfHom \mathdeuxcat{A})}{\SupHom_{\mathdeuxcat{A}}}{a}}
\ar@{=>}[rr]^{\sigma}
&&
D
&&
C_{a}
\ar@{=>}[ll]_{\mu}
}
$$
de morphismes d'endofoncteurs de $\TrancheLax{(\DeuxIntOp{\Delta} \NerfHom \mathdeuxcat{A})}{\SupHom_{\mathdeuxcat{A}}}{a}$.

L'endofoncteur constant $C_{a}$ de $\TrancheLax{\left(\DeuxIntOp{\Delta} \NerfHom \mathdeuxcat{A}\right)}{\SupHom_{\mathdeuxcat{A}}}{a}$ est donc une équivalence faible. La catégorie $\TrancheLax{\left(\DeuxIntOp{\Delta} \NerfHom \mathdeuxcat{A}\right)}{\SupHom_{\mathdeuxcat{A}}}{a}$ est donc asphérique, en vertu du lemme \ref{EquivalenceViaPoint}. 
\end{proof}

\begin{theo}\label{EqCatLocCatDeuxCat}
L'inclusion $\Cat \hookrightarrow \DeuxCat$ et le foncteur $\DeuxIntOp{\Delta} \NerfHom : \DeuxCat \to \Cat$ induisent des équivalences de catégories
$$
\Localisation{\Cat}{(\DeuxLocFond{W} \cap \UnCell{\Cat})} \to \Localisation{\DeuxCat}{\DeuxLocFond{W}}
$$
et 
$$
\Localisation{\DeuxCat}{\DeuxLocFond{W}} \to \Localisation{\Cat}{(\DeuxLocFond{W} \cap \UnCell{\Cat})}
$$
quasi-inverses l'une de l'autre. 
\end{theo}

\begin{proof}
L'inclusion $\Cat \hookrightarrow \DeuxCat$ envoyant tautologiquement les éléments de $\DeuxLocFond{W} \cap \UnCell{\Cat}$ sur des éléments de $\DeuxLocFond{W}$, elle induit un foncteur entre les catégories localisées de l'énoncé. De plus, pour tout morphisme $u : A \to B$ de $\Cat$, le foncteur $\DeuxIntOp{\Delta} \NerfHom (u)$ n'est autre que $\Delta/\UnNerf{(u)}$, et l'on sait déjà, en vertu de la proposition \ref{UnSupAspherique}, que les flèches horizontales du diagramme commutatif
$$
\xymatrix{
\Delta/\UnNerf{A}
\ar[d]_{\Delta/\UnNerf{(u)}}
\ar[rr]^{\SupUn_{A}}
&&A
\ar[d]^{u}
\\
\Delta/\UnNerf{B}
\ar[rr]_{\SupUn_{B}}
&&B
}
$$
sont dans $\DeuxLocFond{W} \cap \UnCell{\Cat}$ puisque la classe $\DeuxLocFond{W} \cap \UnCell{\Cat}$ forme un localisateur fondamental de $\Cat$.

Pour établir le résultat, il suffit donc de montrer que le foncteur $\DeuxIntOp{\Delta} \NerfHom$ envoie les éléments de $\DeuxLocFond{W}$ sur des éléments de $\DeuxLocFond{W} \cap \UnCell{\Cat}$ et qu'il existe un isomorphisme de foncteurs entre l'identité de $\Localisation{\DeuxCat}{\DeuxLocFond{W}}$ et l'endofoncteur de cette catégorie induit par la composée du foncteur $\DeuxIntOp{\Delta} \NerfHom : \DeuxCat \to \Cat$ et de l'inclusion $\Cat \hookrightarrow \DeuxCat$. 

Il suffit donc de montrer que, pour tout morphisme $u : \mathdeuxcat{A} \to \mathdeuxcat{B}$ de $\DeuxCat$, le diagramme
$$
\xymatrix{
\DeuxIntOp{\Delta} \NerfHom \mathdeuxcat{A}
\ar[d]_{\DeuxIntOp{\Delta} \NerfHom (u)}
&&\TildeLax{\DeuxIntOp{\Delta} \NerfHom \mathdeuxcat{A}}
\ar[ll]_{\StrictCanonique{\DeuxIntOp{\Delta} \NerfHom \mathdeuxcat{A}}}
\ar[rr]^{\BarreLax{\SupHom_{\mathdeuxcat{A}}}}
\ar[d]^{\TildeLax{\DeuxIntOp{\Delta} \NerfHom (u)}}
&&\mathdeuxcat{A}
\ar[d]^{u}
\\
\DeuxIntOp{\Delta} \NerfHom \mathdeuxcat{B}
&&\TildeLax{\DeuxIntOp{\Delta} \NerfHom \mathdeuxcat{B}}
\ar[ll]^{\StrictCanonique{\DeuxIntOp{\Delta} \NerfHom \mathdeuxcat{B}}}
\ar[rr]_{\BarreLax{\SupHom_{\mathdeuxcat{B}}}}
&&\mathdeuxcat{B}
}
$$
est commutatif et que les flèches horizontales y figurant sont dans $\DeuxLocFond{W}$. Cette seconde assertion résulte directement de la proposition \ref{CouniteColaxAspherique} et de la proposition \ref{SupHomAspherique}. Pour vérifier la première, on remarque d'abord que le rectangle de gauche est commutatif par naturalité de $\TransStrictCanonique$. Pour établir la commutativité du rectangle de droite, il suffit de vérifier l'égalité 
$$
u \phantom{a} \BarreLax{\SupHom_{\mathdeuxcat{A}}} \phantom{a} \LaxCanonique{\DeuxIntOp{\Delta} \NerfHom \mathdeuxcat{A}} = \BarreLax{\SupHom_{\mathdeuxcat{B}}} \phantom{a} \TildeLax{\DeuxIntOp{\Delta} \NerfHom (u)} \phantom{a} \LaxCanonique{\DeuxIntOp{\Delta} \NerfHom \mathdeuxcat{A}}
$$
ce qui se récrit 
$$
u \phantom{a} \SupHom_{\mathdeuxcat{A}} =  \SupHom_{\mathdeuxcat{B}} \phantom{a} \DeuxIntOp{\Delta} \NerfHom (u).
$$
Cette dernière égalité se vérifie directement.
\end{proof}

\section{Localisateurs fondamentaux de $\DeuxCatLax$}\label{SectionDeuxLocFondLax}

\emph{On ne suppose plus fixé de \ClasseDeuxLocFond{}.}

\begin{df}\label{DefDeuxLocFondLax}
On dira qu'une classe $\DeuxLocFondLax{W}$ de morphismes de $\DeuxCatLax$ est un \emph{\ClasseDeuxLocFondLax{}}\index{localisateur fondamental de $\DeuxCatLax$} si les conditions suivantes sont vérifiées.
\begin{itemize}
\item[$\rm{LF1}_{lax}$] La classe $\DeuxLocFondLax{W}$ est faiblement saturée. 

\item[$\rm{LF2}_{lax}$] Si une petite \deux{}catégorie $\mathdeuxcat{A}$ admet un objet admettant un objet initial, alors le morphisme canonique $\mathdeuxcat{A} \to e$ est dans $\DeuxLocFondLax{W}$.

\item[$\rm{LF3}_{lax}$] Si 
$$
\xymatrix{
\mathdeuxcat{A} 
\ar[rr]^{u}
\ar[dr]_{w}
&&\mathdeuxcat{B}{}
\ar[dl]^{v}
\\
&\mathdeuxcat{C}
}
$$
désigne un triangle commutatif dans $\DeuxCatLax$ et si, pour tout objet $c$ de $\mathdeuxcat{C}$, le \DeuxFoncteurLax{} 
$
\DeuxFoncTrancheLax{u}{c} : \TrancheLax{\mathdeuxcat{A}}{w}{c} \to \TrancheLax{\mathdeuxcat{B}}{v}{c} 
$
est dans $\DeuxLocFondLax{W}$, alors $u$ est dans $\DeuxLocFondLax{W}$.
\end{itemize}
\end{df}

\begin{rem}\label{RemDeuxLocFondLaxInterCat}
Pour tout localisateur fondamental $\DeuxLocFondLax{W}$ de $\DeuxCatLax$, $\DeuxLocFondLax{W} \cap \UnCell{\DeuxCat}$ est un \ClasseDeuxLocFond{}, en vertu de l'implication $(ii) \Rightarrow (i)$ du théorème \ref{DeuxLocFondHuitDef}, et $\DeuxLocFondLax{W} \cap \UnCell{\Cat}$ est un \ClasseUnLocFond{}. 
\end{rem}

\begin{prop}\label{DeuxLocFondInduitLax}
Si $\DeuxLocFond{W}$ est un \ClasseDeuxLocFond{}, alors $\DeuxLocFond{W}_{lax}$ est un \ClasseDeuxLocFondLax{}{}.
\end{prop}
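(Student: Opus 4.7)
The plan is to verify each of the three axioms $\rm{LF1}_{lax}$, $\rm{LF2}_{lax}$, $\rm{LF3}_{lax}$ for the class $\DeuxLocFondLaxInduit{W}$, and show that all three essentially reduce to results already established earlier in the chapter. Since by definition $\DeuxLocFondLaxInduit{W} = \FoncBenabou^{-1}(\DeuxLocFond{W})$, the verification will amount to transporting properties of $\DeuxLocFond{W}$ along the strictification functor $\FoncBenabou$ and combining them with the already proven invariance of $\DeuxLocFondLaxInduit{W}$ under the various constructions.

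First, I would verify $\rm{LF1}_{lax}$: the weak saturation of $\DeuxLocFondLaxInduit{W}$ is precisely the content of Lemma~\ref{SatFaibleEquiLax}, which was obtained by pulling back the weak saturation of $\DeuxLocFond{W}$ along the functor $\FoncBenabou$. Then I would verify $\rm{LF2}_{lax}$: if $\mathdeuxcat{A}$ admits an object admitting an initial object, Corollary~\ref{OFAspherique3} ensures that the canonical \DeuxFoncteurStrict{} $\mathdeuxcat{A} \to \DeuxCatPonct$ is in $\DeuxLocFond{W}$; Remark~\ref{EquiStricteEquiLax} then implies that this same morphism, viewed in $\DeuxCatLax$, lies in $\DeuxLocFondLaxInduit{W}$.

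The main point is $\rm{LF3}_{lax}$, and here the hard work has already been done: this is exactly the statement of Theorem~\ref{TheoremeALaxTrancheLax}. Indeed, given a commutative triangle
$$
\xymatrix{
\mathdeuxcat{A} \ar[rr]^{u} \ar[dr]_{w} && \mathdeuxcat{B} \ar[dl]^{v} \\
& \mathdeuxcat{C} &
}
$$
in $\DeuxCatLax$ such that $\DeuxFoncTrancheLax{u}{c}$ is in $\DeuxLocFondLaxInduit{W}$ for every object $c$ of $\mathdeuxcat{C}$ (i.e.\ is a lax weak equivalence in our sense), Theorem~\ref{TheoremeALaxTrancheLax} directly yields that $u$ is a lax weak equivalence, i.e.\ lies in $\DeuxLocFondLaxInduit{W}$.

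I do not expect any real obstacle: the substantive work is concentrated in Theorem~\ref{TheoremeALaxTrancheLax} (whose proof in turn rests on Proposition~\ref{LaxInduitEquiFaible}, Lemma~\ref{EncoreUnCarre}, and the adjunction $(\FoncBenabou, I)$). The only small point worth stating carefully in writing up is that the definition of $\DeuxLocFondLaxInduit{W}$ via $\FoncBenabou^{-1}(\DeuxLocFond{W})$ agrees, on strict 2-functors, with $\DeuxLocFond{W}$ itself (Remark~\ref{EquiStricteEquiLax}), so that the application of Corollary~\ref{OFAspherique3} in the verification of $\rm{LF2}_{lax}$ is legitimate.
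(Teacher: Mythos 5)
Your proposal is correct and follows essentially the same route as the paper: LF1$_{lax}$ from the lemme \ref{SatFaibleEquiLax}, LF2$_{lax}$ from the corollaire \ref{OFAspherique3} combined with the identification of strict and lax weak equivalences on strict \deux{}foncteurs (the paper cites the lemme \ref{BarreTildeW}, of which your remarque \ref{EquiStricteEquiLax} is the direct restatement), and LF3$_{lax}$ from the th�or�me \ref{TheoremeALaxTrancheLax}. Nothing is missing.
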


\begin{proof}
En vertu du lemme \ref{SatFaibleEquiLax}, la classe $\DeuxLocFond{W}_{lax}$ vérifie la condition $\rm{LF1}_{lax}$.

Montrons que $\DeuxLocFond{W}_{lax}$ vérifie $\rm{LF2}_{lax}$. Soit $\mathdeuxcat{A}$ une petite \deux{}catégorie admettant un objet admettant un objet initial. Il s'agit de montrer que le morphisme canonique $\mathdeuxcat{A} \to e$ est dans $\DeuxLocFond{W}_{lax}$. Comme c'est un \DeuxFoncteurStrict{}, il est dans $\DeuxLocFond{W}$ si et seulement s'il est dans $\DeuxLocFond{W}_{lax}$ en vertu du lemme \ref{BarreTildeW}. Comme il est dans $\DeuxLocFond{W}$ en vertu du corollaire \ref{OFAspherique3}, il est dans $\DeuxLocFond{W}_{lax}$.

La propriété $\rm{LF3}_{lax}$ résulte du théorème \ref{TheoremeALaxTrancheLax}.
\end{proof}

\begin{lemme}\label{Baba}
Pour tout localisateur fondamental $\DeuxLocFondLax{W}$ de $\DeuxCatLax$, pour toute petite \deux{}catégorie $\mathdeuxcat{A}$, le \DeuxFoncteurStrict{} $\StrictCanonique{\mathdeuxcat{A}} : \TildeLax{\mathdeuxcat{A}} \to \mathdeuxcat{A}$ et le \DeuxFoncteurLax{} $\LaxCanonique{\mathdeuxcat{A}} : \mathdeuxcat{A} \to \TildeLax{\mathdeuxcat{A}}$ sont dans $\DeuxLocFondLax{W}$.
\end{lemme}

\begin{proof}
Comme $\DeuxLocFondLax{W} \cap \UnCell{\DeuxCat}$ est un \ClasseDeuxLocFond{}, $\StrictCanonique{\mathdeuxcat{A}}$ est dans $\DeuxLocFondLax{W} \cap \UnCell{\DeuxCat}$ en vertu de la proposition \ref{CouniteColaxAspherique}. En particulier, $\StrictCanonique{\mathdeuxcat{A}}$ est dans $\DeuxLocFondLax{W}$. On en déduit que sa section $\LaxCanonique{\mathdeuxcat{A}}$ est dans $\DeuxLocFondLax{W}$ en vertu de la saturation faible de $\DeuxLocFondLax{W}$. 
\end{proof}

\begin{lemme}\label{Bibi}
Soit $\DeuxLocFondLax{W}$ un \ClasseDeuxLocFondLax{}. Un \DeuxFoncteurLax{} $u : \mathdeuxcat{A} \to \mathdeuxcat{B}$ est dans $\DeuxLocFondLax{W}$ si et seulement si $\BarreLax{u}$ l'est et si et seulement si $\TildeLax{u}$ l'est. 
\end{lemme}

\begin{proof}
Cela résulte des égalités $\BarreLax{u} \LaxCanonique{\mathdeuxcat{A}} = u$ et $\LaxCanonique{\mathdeuxcat{B}} u = \TildeLax{u} \LaxCanonique{\mathdeuxcat{A}}$, du lemme \ref{Baba} et de la saturation faible de $\DeuxLocFondLax{W}$. 
\end{proof}

\begin{df}
Pour toute classe $S$ de \DeuxFoncteursStricts{}, on notera $S_{lax}$ la classe des \DeuxFoncteursLax{} dont l'image par le foncteur de strictification de Bénabou est dans $S$. En formule :
$$
S_{lax} = \FoncBenabou^{-1}(S).
$$
\end{df}



\begin{theo}\label{IsoDeuxLocFondDeuxLocFondLax}
Les applications 
$$
\begin{aligned}
\mathcal{P} (\UnCell{\DeuxCat}) &\to \mathcal{P} (\UnCell{\DeuxCatLax})
\\
\DeuxLocFond{W} &\mapsto \DeuxLocFond{W}_{lax}
\end{aligned}
$$
et
$$
\begin{aligned}
\mathcal{P} (\UnCell{\DeuxCatLax}) &\to \mathcal{P} (\UnCell{\DeuxCat})
\\
\DeuxLocFond{W} &\mapsto \DeuxLocFond{W} \cap \UnCell{\DeuxCat}
\end{aligned}
$$
induisent des isomorphismes inverses l'un de l'autre entre la classe des localisateurs fondamentaux de $\DeuxCat$ ordonnée par l'inclusion et la classe des localisateurs fondamentaux de $\DeuxCatLax$ ordonnée par l'inclusion. 
%
\end{theo}

\begin{proof}
Ces applications respectent manifestement la relation d'inclusion. Il s'agit de vérifier que, pour tout localisateur fondamental $\DeuxLocFond{W}$ de $\DeuxCat$, on a l'égalité $\DeuxLocFond{W}_{lax} \cap \UnCell{\DeuxCat} = \DeuxLocFond{W}$ et que, pour tout localisateur fondamental $\DeuxLocFondLax{W}$ de $\DeuxCatLax$, on a l'égalité $\DeuxLocFondLax{W} = (\DeuxLocFondLax{W} \cap \UnCell{\DeuxCat})_{lax}$. 

Soit donc $\DeuxLocFond{W}$ un \ClasseDeuxLocFond{}. L'égalité $\DeuxLocFond{W}_{lax} \cap \UnCell{\DeuxCat} = \DeuxLocFond{W}$ résulte directement de la remarque \ref{EquiStricteEquiLax}. 

Soit maintenant $\DeuxLocFondLax{W}$ un \ClasseDeuxLocFondLax{}. Un \DeuxFoncteurLax{} $u$ est dans $(\DeuxLocFondLax{W} \cap \UnCell{\DeuxCat})_{lax}$ si et seulement si $\TildeLax{u}$ est dans $\DeuxLocFondLax{W} \cap \UnCell{\DeuxCat}$, donc si et seulement si $\TildeLax{u}$ est dans $\DeuxLocFondLax{W}$, donc si et seulement si $u$ est dans $\DeuxLocFondLax{W}$ (lemme \ref{Bibi}). Cela montre l'égalité $\DeuxLocFondLax{W} = (\DeuxLocFondLax{W} \cap \UnCell{\DeuxCat})_{lax}$. 
%
\end{proof}


\begin{lemme}\label{DeuxFoncUnOpWLax}
Soit $\DeuxLocFondLax{W}$ un \ClasseDeuxLocFondLax{}. Un \DeuxFoncteurLax{} $u$ est dans $\DeuxLocFondLax{W}$ si et seulement si le \DeuxFoncteurLax{} $\DeuxFoncUnOp{u}$ l'est. 
\end{lemme}

\begin{proof}
Cela résulte du fait que $\DeuxLocFondLax{W} \cap \UnCell{\DeuxCat}$ est un \ClasseDeuxLocFond{}. Plus précisément, $u$ est dans $\DeuxLocFondLax{W}$ si et seulement s'il est dans $(\DeuxLocFondLax{W} \cap \UnCell{\DeuxCat})_{lax}$, donc si et seulement si $\DeuxFoncUnOp{u}$ est dans $(\DeuxLocFondLax{W} \cap \UnCell{\DeuxCat})_{lax}$ (proposition \ref{DeuxFoncLaxUnOpW}), donc si et seulement si $\DeuxFoncUnOp{u}$ est dans $\DeuxLocFondLax{W}$.
\end{proof}

\begin{theo}
Pour toute partie $\DeuxLocFondLax{W}$ de $\UnCell{\DeuxCatLax}$, les propriétés suivantes sont équivalentes. 
\begin{itemize}
\item[(i)] La classe $\DeuxLocFondLax{W}$ est un \ClasseDeuxLocFondLax.
\item[(ii)] Les propriétés suivantes sont vérifiées.
\begin{itemize}
\item[$\rm{LF1'}_{lax}$] La classe $\DeuxLocFondLax{W}$ est faiblement saturée.

\item[$\rm{LF2'}_{lax}$] Si une petite \deux{}catégorie $\mathdeuxcat{A}$ op-admet un objet admettant un objet initial, alors le morphisme canonique $\mathdeuxcat{A} \to e$ est dans $\DeuxLocFondLax{W}$.

\item[$\rm{LF3'}_{lax}$] Si 
$$
\xymatrix{
\mathdeuxcat{A} 
\ar[rr]^{u}
\ar[dr]_{w}
&&\mathdeuxcat{B}{}
\ar[dl]^{v}
\\
&\mathdeuxcat{C}
}
$$
désigne un triangle commutatif dans $\DeuxCatLax$ et si, pour tout objet $c$ de $\mathdeuxcat{C}$, le \DeuxFoncteurLax{} 
$
\DeuxFoncOpTrancheLax{u}{c} : \OpTrancheLax{\mathdeuxcat{A}}{w}{c} \to \OpTrancheLax{\mathdeuxcat{B}}{v}{c} 
$
est dans $\DeuxLocFondLax{W}$, alors $u$ est dans $\DeuxLocFondLax{W}$.
\end{itemize}
\end{itemize}
\end{theo}

\begin{proof}
Montrons l'implication $(i) \Rightarrow (ii)$. La condition $\rm{LF1'}_{lax}$ est vérifiée par hypothèse, puisque ce n'est autre que la condition $\rm{LF1}_{lax}$. Pour vérifier la condition $\rm{LF2'}_{lax}$, considérons une petite \deux{}catégorie $\mathdeuxcat{A}$ op-admettant un objet admettant un objet initial. La \deux{}catégorie $\DeuxCatUnOp{\mathdeuxcat{A}}$ admettant un objet admettant un objet initial, le morphisme canonique $\DeuxCatUnOp{\mathdeuxcat{A}} \to \DeuxCatPonct$ est dans $\DeuxLocFondLax{W}$. En vertu du lemme \ref{DeuxFoncUnOpWLax}, le morphisme $\mathdeuxcat{A} \to \DeuxCatPonct$ est donc dans $\DeuxLocFondLax{W}$. Plaçons-nous sous les hypothèses de la condition $\rm{LF3'}_{lax}$. Pour tout objet $c$ de $\mathdeuxcat{C}$, le \DeuxFoncteurLax{} $\DeuxFoncUnOp{(\DeuxFoncTrancheLax{\DeuxFoncUnOp{u}}{c})}$ est dans $\DeuxLocFondLax{W}$. En vertu du lemme \ref{DeuxFoncUnOpWLax}, le \DeuxFoncteurLax{} $\DeuxFoncTrancheLax{\DeuxFoncUnOp{u}}{c}$ est donc dans $\DeuxLocFondLax{W}$ pour tout objet $c$ de $\mathdeuxcat{C}$. La condition $\rm{LF3}$ étant vérifiée, $\DeuxFoncUnOp{u}$ est dans $\DeuxLocFondLax{W}$. Une nouvelle invocation du lemme \ref{DeuxFoncUnOpWLax} permet de conclure. 

Pour montrer l'implication $(ii) \Rightarrow (i)$, on commence par remarquer que la classe 
$$
\DeuxFoncUnOp{\DeuxLocFond{W}} = \{ u \in \UnCell{\DeuxCatLax} \vert \DeuxFoncUnOp{u} \in \DeuxLocFondLax{W} \}
$$
est un \ClasseDeuxLocFondLax{}. La conclusion est analogue à celle de l'implication $(iii) \Rightarrow (i)$ du théorème \ref{DeuxLocFondHuitDef} en utilisant le lemme \ref{DeuxFoncUnOpWLax}.
\end{proof}

\section{Correspondances}\label{SectionCorrespondances}
%
%
%
%
%

\begin{lemme}\label{SupLaxNaturel}
Pour tout \DeuxFoncteurLax{} $u : \mathdeuxcat{A} \to \mathdeuxcat{B}$, le diagramme
$$
\xymatrix{
\Delta/\NerfLax\mathdeuxcat{A}
\ar[rr]^{\SupLaxObjet{\mathdeuxcat{A}}}
\ar[d]_{\Delta/\NerfLax{(u)}}
&&\mathdeuxcat{A}
\ar[d]^{u}
\\
\Delta/\NerfLax\mathdeuxcat{B}
\ar[rr]_{\SupLaxObjet{\mathdeuxcat{B}}}
&&\mathdeuxcat{B}
}
$$
est commutatif.
\end{lemme}

\begin{proof}
Pour tout objet $([m], x : [m] \to \mathdeuxcat{A})$ de $\Delta/\NerfLax\mathdeuxcat{A}$ ($x$ est donc un \DeuxFoncteurLax{}),
$$
\begin{aligned}
u(\SupLaxObjet{\mathdeuxcat{A}} ([m], x)) &= u(x_{m}) 
\end{aligned}
$$
et
$$
\begin{aligned}
\SupLaxObjet{\mathdeuxcat{B}} (\Delta / \NerfLax(u) ([m], x)) &= \SupLaxObjet{\mathdeuxcat{B}} ([m], ux)
\\
&= u(x_{m}).
\end{aligned}
$$

Pour tout morphisme $\varphi : [m] \to [n]$ de $([m], x)$ vers $([n], y)$ dans $\Delta / \NerfLax{\mathdeuxcat{A}}$, 
$$
\begin{aligned}
u(\SupLaxObjet{\mathdeuxcat{A}} (\varphi)) &= u(y_{n, \varphi(m)}) 
\end{aligned}
$$
et 
$$
\begin{aligned}
\SupLaxObjet{\mathdeuxcat{B}} (\Delta / \NerfLax(u) (\varphi)) &= (uy)_{n, \varphi(m)}
\\
&= u(y_{n, \varphi(m)}).
\end{aligned}
$$

Pour tout objet $([m], x : [m] \to \mathdeuxcat{A})$ de $\Delta/\NerfLax\mathdeuxcat{A}$,
$$
\begin{aligned}
(u \phantom{a} \SupLaxObjet{\mathdeuxcat{A}})_{([m], x)} &= u((\SupLaxObjet{\mathdeuxcat{A}})_{([m],x)}) u_{\SupLaxObjet{\mathdeuxcat{A}}([m],x)}
\\
&= u((x)_{m}) u_{x_{m}}  
\end{aligned} 
$$
et
$$
\begin{aligned}
(\SupLaxObjet{\mathdeuxcat{B}} \phantom{a} \Delta / \NerfLax(u)_{([m], x)} &= \SupLaxObjet{\mathdeuxcat{B}} (\Delta / \NerfLax(u))_{([m], x)}) (\SupLaxObjet{\mathdeuxcat{B}})_{\Delta / \NerfLax(u) ([m], x)}
\\
&= (\SupLaxObjet{\mathdeuxcat{B}})_{([m], ux)}
\\
&= (ux)_{m}
\\
&= u((x)_{m}) u_{x_{m}}.
\end{aligned}
$$

Soient
$$
\xymatrix{
([m], x) 
\ar[rr]^{\varphi}
&&
([n], y)
\ar[rr]^{\psi}
&&
([p], z)
}
$$
deux \un{}cellules composables dans $\Delta / \NerfLax{\mathdeuxcat{A}}$. Alors, 
$$
\begin{aligned}
(u \phantom{a} \SupLaxObjet{\mathdeuxcat{A}})_{\psi, \varphi} &= u({\SupLaxObjet{\mathdeuxcat{A}}}_{\psi, \varphi}) u_{\SupLaxObjet{\mathdeuxcat{A}}(\psi), \SupLaxObjet{\mathdeuxcat{A}}(\varphi)}
\\
&= u(z_{p, \psi(n), \psi\varphi(m)}) u_{z_{p, \psi(n)}, y_{n, \varphi(m)}}
\\
&= u(z_{p, \psi(n), \psi\varphi(m)}) u_{z_{p, \psi(n)}, z_{\psi(n), \psi\varphi(m)}}
\end{aligned} 
$$
et
$$
\begin{aligned}
(\SupLaxObjet{\mathdeuxcat{B}} \phantom{a} \Delta / \NerfLax(u))_{\psi, \varphi} &= \SupLaxObjet{\mathdeuxcat{B}}(\Delta / \NerfLax(u)_{\psi, \varphi}) (\SupLaxObjet{\mathdeuxcat{B}})_{\Delta / \NerfLax(u)(\psi), \Delta / \NerfLax(u)(\varphi)}
\\
&= (\SupLaxObjet{\mathdeuxcat{B}})_{\Delta / \NerfLax(u)(\psi), \Delta / \NerfLax(u)(\varphi)}
\\ 
&= (uz)_{p, \psi(n), \psi\varphi(m)}
\\
&= u(z_{p, \psi(n), \psi\varphi(m)}) u_{z_{p, \psi(n)}, z_{\psi(n), \psi\varphi(m)}}.
\end{aligned}
$$
Cela termine les vérifications de la commutativité du diagramme de l'énoncé. 
\end{proof}

\begin{prop}\label{SupLaxW}
Pour tout localisateur fondamental $\DeuxLocFondLax{W}$ de $\DeuxCatLax$, pour toute petite \deux{}ca\-té\-go\-rie $\mathdeuxcat{A}$, le \DeuxFoncteurLax{} $\SupLaxObjet{\mathdeuxcat{A}} : \Delta/ \NerfLax\mathdeuxcat{A} \to \mathdeuxcat{A}$ est une équi\-va\-lence faible. 
\end{prop}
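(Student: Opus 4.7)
The plan is to reduce the statement to Proposition~\ref{SupHomAspherique} (which handles $\SupHomObjet{\mathdeuxcat{A}}$) via the commutative diagram of Lemma~\ref{DiagrammeSups}. The key observation is that although $\DeuxLocFondLax{W}$ is a localizer of $\DeuxCatLax$, the restriction $\DeuxLocFondLax{W} \cap \UnCell{\Cat}$ is a fundamental localizer of $\Cat$ (see Remark~\ref{RemDeuxLocFondLaxInterCat}), and by Theorem~\ref{IsoDeuxLocFondDeuxLocFondLax} we have $\DeuxLocFondLax{W} = (\DeuxLocFondLax{W} \cap \UnCell{\DeuxCat})_{lax}$, so that asphericity results for lax 2\nobreakdash-functors established for 2\nobreakdash-localizers of $\DeuxCat$ transfer to our setting.

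First I would invoke Propositions \ref{InclusionNerfHomNerfCatLaxNorW}, \ref{InclusionNerfEnsNerfCatW} and \ref{InclusionNerfLaxNorNerfLaxW} applied to the fundamental localizer $\DeuxLocFondLax{W} \cap \UnCell{\Cat}$ of $\Cat$ to conclude that the canonical inclusions
$$
i_{hom}^{\underline{l,n}}\mathdeuxcat{A}, \quad i_{l,n}^{\underline{l,n}}\mathdeuxcat{A}, \quad i_{l,n}^{l}\mathdeuxcat{A}
$$
all lie in $\DeuxLocFondLax{W} \cap \UnCell{\Cat}$, and in particular in $\DeuxLocFondLax{W}$. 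Next, applying Proposition~\ref{SupHomAspherique} to the 2\nobreakdash-localizer $\DeuxLocFondLax{W} \cap \UnCell{\DeuxCat}$ of $\DeuxCat$, the (normalized) lax 2\nobreakdash-functor $\SupHomObjet{\mathdeuxcat{A}} : \DeuxIntOp{\Delta}\NerfHom{\mathdeuxcat{A}} \to \mathdeuxcat{A}$ is a lax weak equivalence, hence belongs to $(\DeuxLocFondLax{W} \cap \UnCell{\DeuxCat})_{lax} = \DeuxLocFondLax{W}$.

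Finally, the commutative diagram of Lemma~\ref{DiagrammeSups} gives the factorizations
$$
\SupHomObjet{\mathdeuxcat{A}} = \SupCatLaxNorObjet{\mathdeuxcat{A}} \circ i_{hom}^{\underline{l,n}}\mathdeuxcat{A}, \quad \SupLaxNorObjet{\mathdeuxcat{A}} = \SupCatLaxNorObjet{\mathdeuxcat{A}} \circ i_{l,n}^{\underline{l,n}}\mathdeuxcat{A}, \quad \SupLaxNorObjet{\mathdeuxcat{A}} = \SupLaxObjet{\mathdeuxcat{A}} \circ i_{l,n}^{l}\mathdeuxcat{A}.
$$
Three successive applications of the weak saturation of $\DeuxLocFondLax{W}$ (conditioned by $\rm{LF1}_{lax}$) then finish the argument: from the first identity, since $\SupHomObjet{\mathdeuxcat{A}}$ and $i_{hom}^{\underline{l,n}}\mathdeuxcat{A}$ belong to $\DeuxLocFondLax{W}$, so does $\SupCatLaxNorObjet{\mathdeuxcat{A}}$; then the second identity forces $\SupLaxNorObjet{\mathdeuxcat{A}} \in \DeuxLocFondLax{W}$; then the third identity forces $\SupLaxObjet{\mathdeuxcat{A}} \in \DeuxLocFondLax{W}$.

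There is no serious obstacle in this argument, as the genuinely difficult work has already been carried out in the preceding sections, notably in Proposition~\ref{SupHomAspherique} (del Hoyo's explicit construction of a natural transformation to a constant endofunctor of the slice $\TrancheLax{(\DeuxIntOp{\Delta}\NerfHom\mathdeuxcat{A})}{\SupHomObjet{\mathdeuxcat{A}}}{a}$) and in the comparison theorems between the various nerfs in Section~\ref{SectionNerfs}. The only subtlety worth highlighting is the bookkeeping involved in checking that each factor in the diagram above really does land in $\DeuxLocFondLax{W}$: the inclusions are morphisms in $\Cat$, so one must use the restriction $\DeuxLocFondLax{W} \cap \UnCell{\Cat}$, whereas $\SupHomObjet{\mathdeuxcat{A}}$ is genuinely a lax 2\nobreakdash-functor and requires the lax/strict correspondence of Theorem~\ref{IsoDeuxLocFondDeuxLocFondLax}.
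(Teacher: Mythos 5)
Your proposal is correct and follows essentially the same route as the paper's own proof: the paper likewise combines the commutative diagram of Lemma~\ref{DiagrammeSups} with Propositions~\ref{InclusionNerfHomNerfCatLaxNorW}, \ref{InclusionNerfEnsNerfCatW}, \ref{InclusionNerfLaxNorNerfLaxW} and \ref{SupHomAspherique}, and concludes by successive ��$2$ sur $3$�� arguments. The only difference is that you spell out the transfer bookkeeping between $\DeuxLocFondLax{W}$, $\DeuxLocFondLax{W} \cap \UnCell{\Cat}$ et $(\DeuxLocFondLax{W} \cap \UnCell{\DeuxCat})_{lax}$ via le th�or�me~\ref{IsoDeuxLocFondDeuxLocFondLax}, que le texte laisse implicite.
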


\begin{proof}
En vertu des propositions \ref{InclusionNerfHomNerfCatLaxNorW}, \ref{InclusionNerfEnsNerfCatW} et \ref{InclusionNerfLaxNorNerfLaxW}, les flèches horizontales du diagramme commutatif figurant dans l'énoncé du lemme \ref{DiagrammeSups} sont des équivalences faibles. En vertu de la proposition \ref{SupHomAspherique}, $\SupHomObjet{\mathdeuxcat{A}}$ est une équivalence faible. Il en résulte, par des invocations successives de « $2$ sur $3$ », que toutes les flèches du diagramme commutatif figurant dans l'énoncé du lemme \ref{DiagrammeSups} sont des équivalences faibles. C'est donc en particulier le cas de $\SupLaxObjet{\mathdeuxcat{A}}$. 
\end{proof}

\begin{lemme}\label{Gabuzomeu}
Pour tout localisateur fondamental $\DeuxLocFondLax{W}$ de $\DeuxCatLax$, un \DeuxFoncteurLax{} $u$ est une équivalence faible si et seulement si $\Delta / \NerfLax{(u)}$ en est une. 
\end{lemme}

\begin{proof}
C'est une conséquence immédiate du lemme \ref{SupLaxNaturel} et de la proposition \ref{SupLaxW}.
\end{proof}

\begin{lemme}\label{DeuxLocFondLaxInterCat}
Pour tout localisateur fondamental $\DeuxLocFondLax{W}$ de $\DeuxCatLax$,  
$$
\DeuxLocFondLax{W} = {\NerfLax}^{-1} (i_{\Delta}^{-1}(\DeuxLocFondLax{W} \cap \UnCell{\Cat})).
$$
\end{lemme}

\begin{proof}
Un \DeuxFoncteurLax{} $u$ est dans $\DeuxLocFondLax{W}$ si et seulement si le foncteur $\Delta / \NerfLax{(u)}$ l'est (lemme \ref{Gabuzomeu}), donc si et seulement si $\Delta / \NerfLax{(u)}$ est dans $\DeuxLocFondLax{W} \cap \UnCell{\Cat}$, donc si et seulement si $u$ est dans ${\NerfLax}^{-1}i_{\Delta}^{-1}(\DeuxLocFondLax{W} \cap \UnCell{\Cat})$. 
\end{proof}

\begin{lemme}\label{Gaston}
Pour tout localisateur fondamental $\UnLocFond{W}$ de $\Cat$,
$$
{\NerfLax}^{-1} (i_{\Delta}^{-1} (\UnLocFond{W})) = ({\NerfLaxNor}^{-1} (i_{\Delta}^{-1} (\UnLocFond{W})))_{lax}.
$$
\end{lemme}

\begin{proof}
Cela résulte de la suite d'équivalences suivante, pour tout morphisme $u$ de $\DeuxCatLax$.
$$
\begin{aligned}
u \in {\NerfLax}^{-1} (i_{\Delta}^{-1} (\UnLocFond{W})) &\Longleftrightarrow \Delta / \NerfLax{(u)} \in \UnLocFond{W}
\\
&\Longleftrightarrow \Delta / \NerfLax{(\TildeLax{u})} \in \UnLocFond{W} \phantom{bla} \text{(proposition \ref{EquiDefEquiLax})}
\\
&\Longleftrightarrow \Delta / \NerfLaxNor{(\TildeLax{u})} \in \UnLocFond{W} \phantom{bla} \text{(lemme \ref{Blabla})}
\\
&\Longleftrightarrow \TildeLax{u} \in {\NerfLaxNor}^{-1} (i_{\Delta}^{-1} (\UnLocFond{W}))
\\
&\Longleftrightarrow u \in ({\NerfLaxNor}^{-1} (i_{\Delta}^{-1} (\UnLocFond{W})))_{lax}.
\end{aligned}
$$
\end{proof}

\begin{lemme}\label{Leroux}
Pour tout localisateur fondamental $\UnLocFond{W}$ de $\Cat$, la classe
$
{\NerfLax}^{-1} (i_{\Delta}^{-1} (\UnLocFond{W})) 
$
est un \ClasseDeuxLocFondLax{}.
\end{lemme}

\begin{proof}
Cela résulte du lemme \ref{Gaston}, de la remarque \ref{Levet} et de la proposition \ref{DeuxLocFondInduitLax}.
\end{proof}

%

\begin{lemme}\label{Chamfort}
Pour tout localisateur fondamental $\UnLocFond{W}$ de $\Cat$, 
$$
{\NerfLax}^{-1} (i_{\Delta}^{-1} (\UnLocFond{W})) \cap \UnCell{\Cat} =  \UnLocFond{W}.
$$
\end{lemme}

\begin{proof}
C'est une conséquence immédiate de la proposition \ref{Sade} et du fait que la restriction du nerf lax $\NerfLax$ à $\Cat$ coïncide avec le nerf $\UnNerf$. En formule : 
$$
{\NerfLax}^{-1} (i_{\Delta}^{-1} (\UnLocFond{W})) \cap \UnCell{\Cat} = {\UnNerf}^{-1} (i_{\Delta}^{-1} (\UnLocFond{W})) \cap \UnCell{\Cat} = \UnLocFond{W} \cap \UnCell{\Cat} = \UnLocFond{W}.
$$
\end{proof}

\begin{theo}\label{IsoUnLocFondDeuxLocFondLax}
Les applications
$$
\begin{aligned}
\mathcal{P}(\UnCell{\Cat}) &\to \mathcal{P}(\UnCell{\DeuxCatLax})
\\
\UnLocFond{W} &\mapsto {\NerfLax}^{-1} (i_{\Delta}^{-1} (\UnLocFond{W})) 
\end{aligned}
$$
et
$$
\begin{aligned}
\mathcal{P}(\UnCell{\DeuxCatLax}) &\to \mathcal{P}(\UnCell{\Cat})
\\
\DeuxLocFondLax{W} &\mapsto \DeuxLocFondLax{W} \cap \UnCell{\Cat}
\end{aligned}
$$
induisent des isomorphismes inverses l'un de l'autre entre la classe ordonnée par inclusion des \ClassesUnLocFond{} et la classe ordonnée par inclusion des \ClassesDeuxLocFondLax{}. 
%
\end{theo}

\begin{proof}
Ces applications respectant manifestement la relation d'inclusion, il résulte de la remarque \ref{RemDeuxLocFondLaxInterCat} et des lemmes \ref{DeuxLocFondLaxInterCat}, \ref{Leroux} et \ref{Chamfort} qu'il s'agit bien d'isomorphismes. 
\end{proof}

%

\begin{lemme}\label{Allain}
Pour tout localisateur fondamental $\DeuxLocFond{W}$ de $\DeuxCat$, 
$$
\DeuxLocFond{W} \cap \UnCell{\Cat} = \DeuxLocFondLaxInduit{\DeuxLocFond{W}} \cap \UnCell{\Cat}.
$$
\end{lemme}

\begin{proof}
En vertu du théorème \ref{IsoDeuxLocFondDeuxLocFondLax}, $ \DeuxLocFondLaxInduit{\DeuxLocFond{W}} \cap \UnCell{\DeuxCat} = \DeuxLocFond{W}$. Ainsi,
$$
\begin{aligned}
 \DeuxLocFondLaxInduit{\DeuxLocFond{W}} \cap \UnCell{\Cat} &= \DeuxLocFondLaxInduit{\DeuxLocFond{W}} \cap (\UnCell{\DeuxCat} \cap \UnCell{\Cat}) 
 \\
 &= (\DeuxLocFondLaxInduit{\DeuxLocFond{W}} \cap \UnCell{\DeuxCat}) \cap \UnCell{\Cat} 
 \\
 &= \DeuxLocFond{W} \cap \UnCell{\Cat}.
 \end{aligned}
$$
\end{proof}

\begin{theo}\label{IsoUnLocFondDeuxLocFond}
Les applications
$$
\begin{aligned}
\mathcal{P}(\UnCell{\Cat}) &\to \mathcal{P}(\UnCell{\DeuxCat})
\\
\UnLocFond{W} &\mapsto {\NerfLaxNor}^{-1} (i_{\Delta}^{-1} (\UnLocFond{W}))
\\
&\phantom{bla}(= {\NerfLax}^{-1} (i_{\Delta}^{-1} (\UnLocFond{W})) \cap \UnCell{\DeuxCat})
\end{aligned}
$$
et
$$
\begin{aligned}
\mathcal{P}(\UnCell{\DeuxCat}) &\to \mathcal{P}(\UnCell{\Cat})
\\
\DeuxLocFond{W} &\mapsto \DeuxLocFond{W} \cap \UnCell{\Cat}
\end{aligned}
$$
induisent des isomorphismes inverses l'un de l'autre entre la classe ordonnée par inclusion des \ClassesUnLocFond{} et la classe ordonnée par inclusion des \ClassesDeuxLocFond{}. 
%
\end{theo}

\begin{proof}
C'est une conséquence des théorèmes \ref{IsoDeuxLocFondDeuxLocFondLax} et \ref{IsoUnLocFondDeuxLocFondLax} et du lemme \ref{Allain}. 
\end{proof}

La notion de \ClasseDeuxLocFondLax{} est stable par intersection. On définit le \emph{localisateur fondamental minimal de $\DeuxCatLax$}\index{localisateur fondamental minimal de $\DeuxCatLax$} comme l'intersection de tous les localisateurs fondamentaux de $\DeuxCatLax$. 

\begin{theo}\label{TheoDeuxLocFondLaxMin}
Le localisateur fondamental minimal de $\DeuxCatLax$ est la classe 
$$
\DeuxLocFondLaxMin{}\index[not]{WInfini2lax@$\DeuxLocFondLaxMin{}$} = {\NerfLax}^{-1} (\EquiQuillen).
$$
\end{theo}

\begin{proof}
C'est une conséquence des théorèmes \ref{CisinskiGrothendieck} et \ref{IsoUnLocFondDeuxLocFondLax}. 
\end{proof}


La notion de \ClasseDeuxLocFond{} est stable par intersection. On définit le \emph{localisateur fondamental minimal de $\DeuxCat$}\index{localisateur fondamental minimal de $\DeuxCat$} comme l'intersection de tous les localisateurs fondamentaux de $\DeuxCat$.

\begin{theo}\label{TheoDeuxLocFondMin}
Le localisateur fondamental minimal de $\DeuxCat$ est la classe 
$$
\begin{aligned}
\DeuxLocFondMin{}\index[not]{WInfini2@$\DeuxLocFondMin{}$} &= {\NerfLaxNor}^{-1} (\EquiQuillen)
\\
&(={\NerfLax}^{-1} (\EquiQuillen) \cap \UnCell{\DeuxCat}).
\end{aligned}
$$
\end{theo}
 
\begin{proof}
C'est une conséquence des théorèmes \ref{CisinskiGrothendieck} et \ref{IsoUnLocFondDeuxLocFond}. 
\end{proof}

\begin{theo}\label{EqCatLocDeuxCatDeuxCatLaxBis}
Pour tout localisateur fondamental $\DeuxLocFondLax{W}$ de $\DeuxCatLax$, l'inclusion 
$$
I : \DeuxCat \hookrightarrow \DeuxCatLax
$$ 
et le foncteur de strictification de Bénabou 
$$
B : \DeuxCatLax \to \DeuxCat
$$ 
induisent des équivalences de catégories 
$$
\Localisation{\DeuxCat}{(\DeuxLocFondLax{W} \cap \UnCell{\DeuxCat})} \to \Localisation{\DeuxCatLax}{\DeuxLocFondLax{W}}
$$ 
et 
$$
\Localisation{\DeuxCatLax}{\DeuxLocFondLax{W}} \to \Localisation{\DeuxCat}{(\DeuxLocFondLax{W} \cap \UnCell{\DeuxCat})}
$$ 
quasi-inverses l'une de l'autre.
\end{theo}

\begin{proof}
Comme $\DeuxLocFondLax{W} \cap \UnCell{\DeuxCat}$ est un localisateur fondamental de $\DeuxCat$, on sait, en vertu du théorème \ref{EqCatLocDeuxCatDeuxCatLax}, que $I$ et $B$ induisent des équivalences de catégories 
$$
\Localisation{\DeuxCat}{(\DeuxLocFondLax{W} \cap \UnCell{\DeuxCat})} \to \Localisation{\DeuxCatLax}{(\DeuxLocFondLax{W} \cap \UnCell{\DeuxCat})_{lax}}
$$ 
et 
$$
\Localisation{\DeuxCatLax}{(\DeuxLocFondLax{W} \cap \UnCell{\DeuxCat})_{lax}} \to \Localisation{\DeuxCat}{(\DeuxLocFondLax{W} \cap \UnCell{\DeuxCat})}
$$ 
quasi-inverses l'une de l'autre. La conclusion résulte de l'égalité $(\DeuxLocFondLax{W} \cap \UnCell{\DeuxCat})_{lax} = \DeuxLocFondLax{W}$.
\end{proof}
 
\begin{theo}\label{EqCatLocCatDeuxCatLax}
Pour tout localisateur fondamental $\UnLocFond{W}$ de $\Cat$, l'inclusion 
$$
\Cat \hookrightarrow \DeuxCatLax
$$ 
et le foncteur 
$$
i_{\Delta} \NerfLax : \DeuxCatLax \to \Cat
$$ 
induisent des équivalences de catégories 
$$
\Localisation{\Cat}{\UnLocFond{W}} \to \Localisation{\DeuxCatLax}{({\NerfLax}^{-1} (i_{\Delta}^{-1} (\UnLocFond{W})))}
$$ 
et 
$$
\Localisation{\DeuxCatLax}{({\NerfLax}^{-1} (i_{\Delta}^{-1} (\UnLocFond{W})))} \to \Localisation{\Cat}{\UnLocFond{W}}
$$ 
quasi-inverses l'une de l'autre.
\end{theo}

\begin{proof}
Comme ${\NerfLaxNor}^{-1} (i_{\Delta}^{-1} (\UnLocFond{W}))$ est un localisateur fondamental de $\DeuxCat$, on sait déjà, en vertu des théorèmes \ref{EqCatLocDeuxCatDeuxCatLax} et \ref{EqCatLocCatDeuxCat}, que l'inclusion $\Cat \hookrightarrow \DeuxCatLax$ et le foncteur $\DeuxIntOp{\Delta} \NerfHom{} B : \DeuxCatLax \to \Cat$ induisent des équivalences de catégories
$$
\Localisation{\Cat}{(\NerfLaxNor^{-1} (i_{\Delta}^{-1} (\UnLocFond{W})) \cap \UnCell{\Cat})} \to \Localisation{\DeuxCatLax}{(\NerfLaxNor^{-1} (i_{\Delta}^{-1} (\UnLocFond{W})))_{lax}}
$$
et
$$
\Localisation{\DeuxCatLax}{(\NerfLaxNor^{-1} (i_{\Delta}^{-1} (\UnLocFond{W})))_{lax}} \to \Localisation{\Cat}{(\NerfLaxNor^{-1} (i_{\Delta}^{-1} (\UnLocFond{W})) \cap \UnCell{\Cat})}
$$
quasi-inverses l'une de l'autre. On a de plus les égalités 
$$
\NerfLaxNor^{-1} (i_{\Delta}^{-1} (\UnLocFond{W})) \cap \UnCell{\Cat} = \UnLocFond{W}
$$ 
et 
$$
(\NerfLaxNor^{-1} (i_{\Delta}^{-1} (\UnLocFond{W})))_{lax} = \NerfLax^{-1} (i_{\Delta}^{-1} (\UnLocFond{W})).
$$ 
La conclusion résulte donc 
%
du fait que, pour tout morphisme $u : \mathdeuxcat{A} \to \mathdeuxcat{B}$ de $\DeuxCatLax$, pour tout localisateur fondamental $\UnLocFond{W}$ de $\Cat$, il existe un diagramme commutatif\footnote{Dans lequel on a noté de la même façon les morphismes de $\Cat$ et leur image par le foncteur de localisation.} dans $\Localisation{\Cat}{\UnLocFond{W}}$
$$
\xymatrix{
\DeuxIntOp{\Delta} \NerfHom \TildeLax{\mathdeuxcat{A}}
\ar[rr]
\ar[dd]_{\DeuxIntOp{\Delta} \NerfHom (\TildeLax{u})}
&&
\DeuxIntOp{\Delta} \NerfLax{\TildeLax{\mathdeuxcat{A}}} \ar[dd]^{\DeuxIntOp{\Delta} \NerfLax (\TildeLax{u})}
&&
\DeuxIntOp{\Delta} \NerfLax \mathdeuxcat{A} = \Delta / \NerfLax \mathdeuxcat{A}
\ar[ll]_{\DeuxIntOp{\Delta} \NerfLax (\LaxCanonique{\mathdeuxcat{A}})}
\ar[dd]^{\DeuxIntOp{\Delta} \NerfLax (u) = \Delta / \NerfLax (u)}
\\
\\
\DeuxIntOp{\Delta} \NerfHom \TildeLax{\mathdeuxcat{B}}
\ar[rr]
&&
\DeuxIntOp{\Delta} \NerfLax \TildeLax{\mathdeuxcat{B}}
&&
\DeuxIntOp{\Delta} \NerfLax \mathdeuxcat{B} = \Delta / \NerfLax \mathdeuxcat{B}
\ar[ll]^{\DeuxIntOp{\Delta} \NerfLax (\LaxCanonique{\mathdeuxcat{B}})}
}
$$
dont les flèches horizontales sont des isomorphismes.
\end{proof}

\begin{theo}\label{EqCatLocCatDeuxCatLaxBis}
Pour tout localisateur fondamental $\DeuxLocFondLax{W}$ de $\DeuxCatLax$, l'inclusion 
$$
\Cat \hookrightarrow \DeuxCatLax
$$ 
et le foncteur 
$$
i_{\Delta} \NerfLax : \DeuxCatLax \to \Cat
$$ 
induisent des équivalences de catégories 
$$
\Localisation{\Cat}{(\DeuxLocFondLax{W} \cap \UnCell{\Cat})} \to \Localisation{\DeuxCatLax}{\DeuxLocFondLax{W}}
$$ 
et 
$$
\Localisation{\DeuxCatLax}{\DeuxLocFondLax{W}} \to \Localisation{\Cat}{(\DeuxLocFondLax{W} \cap \UnCell{\Cat})}
$$ 
quasi-inverses l'une de l'autre.
\end{theo}

\begin{proof}
Comme la classe $\DeuxLocFondLax{W} \cap \UnCell{\Cat}$ est un localisateur fondamental de $\Cat$, cela résulte du théorème \ref{EqCatLocCatDeuxCatLax} et de l'égalité 
$$
\NerfLax^{-1} (i_{\Delta}^{-1} (\DeuxLocFondLax{W} \cap \UnCell{\Cat})) = \DeuxLocFondLax{W}.
$$
\end{proof} 

\begin{theo}\label{EqCatLocCatDeuxCatBis}
Pour tout localisateur fondamental $\UnLocFond{W}$ de $\Cat$, l'inclusion 
$$
\Cat \hookrightarrow \DeuxCat
$$ 
et le foncteur 
$$
i_{\Delta} \NerfLaxNor : \DeuxCat \to \Cat
$$ 
induisent des équivalences de catégories 
$$
\Localisation{\Cat}{\UnLocFond{W}} \to \Localisation{\DeuxCat}{({\NerfLaxNor}^{-1} (i_{\Delta}^{-1} (\UnLocFond{W})))}
$$ 
et 
$$
\Localisation{\DeuxCat}{({\NerfLaxNor}^{-1} (i_{\Delta}^{-1} (\UnLocFond{W})))} \to \Localisation{\Cat}{\UnLocFond{W}}
$$ 
quasi-inverses l'une de l'autre.
\end{theo}

\begin{proof}
La classe $\NerfLaxNor^{-1} (i_{\Delta}^{-1} (\UnLocFond{W}))$ est un localisateur fondamental de $\DeuxCat$. Par conséquent, en vertu de l'égalité 
$$
\NerfLaxNor^{-1} (i_{\Delta}^{-1} (\UnLocFond{W})) \cap \UnCell{\Cat} = \UnLocFond{W}
$$
et du théorème \ref{EqCatLocCatDeuxCat}, l'inclusion $\Cat \hookrightarrow \DeuxCat$ et le foncteur $\DeuxIntOp{\Delta} \NerfHom : \DeuxCat \to \Cat$ induisent des équivalences de catégories quasi-inverses l'une de l'autre entre $\Localisation{\Cat}{\UnLocFond{W}}$ et $\Localisation{\DeuxCat}{(\NerfLaxNor^{-1} (i_{\Delta}^{-1} (\UnLocFond{W})))}$. La conclusion résulte donc du fait que, pour tout morphisme $u : \mathdeuxcat{A} \to \mathdeuxcat{B}$ de $\DeuxCat$, pour tout localisateur fondamental $\UnLocFond{W}$ de $\Cat$, il existe un diagramme commutatif\footnote{Dans lequel on a noté de la même façon les morphismes de $\Cat$ et leur image par le foncteur de localisation.} dans $\Localisation{\Cat}{\UnLocFond{W}}$
$$
\xymatrix{
\DeuxIntOp{\Delta} \NerfHom \mathdeuxcat{A}
\ar[rr]
\ar[dd]_{\DeuxIntOp{\Delta} \NerfHom (u)}
&&
\DeuxIntOp{\Delta} \NerfLaxNor \mathdeuxcat{A}\ar[dd]^{\DeuxIntOp{\Delta} \NerfLaxNor (u)}
\\
\\
\DeuxIntOp{\Delta} \NerfHom \mathdeuxcat{B}
\ar[rr]
&&
\DeuxIntOp{\Delta} \NerfLaxNor \mathdeuxcat{B}
}
$$
dont les flèches horizontales sont des isomorphismes.
\end{proof}

On termine cette section par quelques énoncés permettant notamment d'assurer que les isomorphismes entre localisateurs fondamentaux de $\Cat$, $\DeuxCat$ et $\DeuxCatLax$ figurant dans l'énoncé des théorèmes \ref{IsoDeuxLocFondDeuxLocFondLax}, \ref{IsoUnLocFondDeuxLocFondLax} et \ref{IsoUnLocFondDeuxLocFond} préservent la propriété d'être engendré par un \emph{ensemble} de morphismes, détail d'importance lorsqu'il s'agit de montrer l'existence de structures de catégorie de modèles sur $\Cat$ ou $\DeuxCat$ dont la classe des équivalences faibles est donnée par un localisateur fondamental. 

\begin{df}
On dira qu'un \ClasseUnLocFond{} (\emph{resp.} un \ClasseDeuxLocFond{}, \emph{resp.} un \ClasseDeuxLocFondLax{}) est \emph{engendré} par une classe $S$ de morphismes de $\Cat$ (\emph{resp.} de $\DeuxCat$, \emph{resp.} de $\DeuxCatLax$) si c'est le plus petit \ClasseUnLocFond{} (\emph{resp.} \ClasseDeuxLocFond{}, \emph{resp.} \ClasseDeuxLocFondLax{}) contenant $S$ ou, autrement dit, l'intersection de tous les \ClassesUnLocFond{} (\emph{resp.} \ClassesDeuxLocFond{}, \emph{resp.} \ClassesDeuxLocFondLax{}) contenant $S$. 
\end{df}

\begin{prop}\label{Tatata}
Si un localisateur fondamental $\DeuxLocFond{W}$ de $\DeuxCat$ est engendré par une classe $S \subset \UnCell{\DeuxCat}$, alors le localisateur fondamental $\DeuxLocFondLaxInduit{W}$ de $\DeuxCatLax$ est également engendré par $S$. 
\end{prop}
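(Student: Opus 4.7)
The plan is to exploit the order-preserving bijection between localisateurs fondamentaux of $\DeuxCat$ and $\DeuxCatLax$ established in Theorem \ref{IsoDeuxLocFondDeuxLocFondLax}. Concretely, write $\DeuxLocFondLax{V}$ for the localisateur fondamental of $\DeuxCatLax$ engendered by $S$, so that by definition $\DeuxLocFondLax{V}$ is the intersection of all localisateurs fondamentaux of $\DeuxCatLax$ containing $S$. The goal is to prove the double inclusion $\DeuxLocFondLax{V} = \DeuxLocFondLaxInduit{W}$.

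For the inclusion $\DeuxLocFondLax{V} \subseteq \DeuxLocFondLaxInduit{W}$, I would check that $\DeuxLocFondLaxInduit{W}$ is a localisateur fondamental of $\DeuxCatLax$ containing $S$. The first point is the content of Proposition \ref{DeuxLocFondInduitLax}, and the second is immediate: by Theorem \ref{IsoDeuxLocFondDeuxLocFondLax} we have $\DeuxLocFond{W} = \DeuxLocFondLaxInduit{W} \cap \UnCell{\DeuxCat}$, so $S \subset \DeuxLocFond{W}$ implies $S \subset \DeuxLocFondLaxInduit{W}$. Minimality of $\DeuxLocFondLax{V}$ gives the desired inclusion.

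For the reverse inclusion $\DeuxLocFondLaxInduit{W} \subseteq \DeuxLocFondLax{V}$, the key step is to apply the bijection of Theorem \ref{IsoDeuxLocFondDeuxLocFondLax} in the other direction. Namely, the class $\DeuxLocFondLax{V} \cap \UnCell{\DeuxCat}$ is a localisateur fondamental of $\DeuxCat$ (by the remark \ref{RemDeuxLocFondLaxInterCat}, or directly by the implication $(ii) \Rightarrow (i)$ of Theorem \ref{DeuxLocFondHuitDef}), and since $S \subset \UnCell{\DeuxCat}$, we have $S \subset \DeuxLocFondLax{V} \cap \UnCell{\DeuxCat}$. By the hypothesis that $\DeuxLocFond{W}$ is the smallest localisateur fondamental of $\DeuxCat$ containing $S$, this yields $\DeuxLocFond{W} \subseteq \DeuxLocFondLax{V} \cap \UnCell{\DeuxCat}$.

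The plan concludes by applying the functor $(\cdot)_{lax} = \FoncBenabou^{-1}$ to this inclusion; since this operation obviously respects inclusions, we obtain $\DeuxLocFondLaxInduit{W} = \DeuxLocFond{W}_{lax} \subseteq (\DeuxLocFondLax{V} \cap \UnCell{\DeuxCat})_{lax} = \DeuxLocFondLax{V}$, the last equality being another consequence of Theorem \ref{IsoDeuxLocFondDeuxLocFondLax}. There is no genuine obstacle here: the proposition is essentially a formal consequence of the Galois correspondence already established, the only point worth noting being that the bijection is order-preserving on both sides, so that it transports the notion of being engendered by a subclass.
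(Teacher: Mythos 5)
Your proof is correct and follows essentially the same route as the paper: both arguments reduce the nontrivial inclusion to the fact that the intersection with $\UnCell{\DeuxCat}$ of any localisateur fondamental de $\DeuxCatLax$ containing $S$ is a localisateur fondamental de $\DeuxCat$ containing $S$, hence contains $\DeuxLocFond{W}$, and then transfer back via the correspondence of Theorem \ref{IsoDeuxLocFondDeuxLocFondLax}. Your presentation via the double inclusion with the generated localisateur $\DeuxLocFondLax{V}$ is only a cosmetic reorganization of the paper's direct minimality argument.
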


\begin{proof}
On a évidemment $S \subset \DeuxLocFondLaxInduit{W}$. Soit $\DeuxLocFond{W}'$ un localisateur fondamental de $\DeuxCatLax$ contenant $S$. En vertu du théorème \ref{IsoDeuxLocFondDeuxLocFondLax}, l'inclusion $\DeuxLocFondLaxInduit{W} \subset \DeuxLocFond{W}'$ équivaut à $\DeuxLocFondLaxInduit{W} \cap \UnCell{\DeuxCat} \subset \DeuxLocFond{W}' \cap \UnCell{\DeuxCat}$, c'est-à-dire, en vertu de ce même théorème, $\DeuxLocFond{W} \subset \DeuxLocFond{W}' \cap \UnCell{\DeuxCat}$. Cette inclusion résulte de l'hypothèse faite sur $\DeuxLocFond{W}$ et du fait que $\DeuxLocFond{W}' \cap \UnCell{\DeuxCat}$ est un \ClasseDeuxLocFond{} contenant $S$. 
\end{proof}

\begin{df}
Pour toute classe $S \subset \UnCell{\DeuxCatLax}$, on pose
$$
\widetilde{S} = B(S).
$$
Les éléments de $\widetilde{S}$ sont donc les images de ceux de $S$ par le foncteur de strictification de Bénabou. 
\end{df}

\begin{lemme}\label{Blablabla}
Soit $\DeuxLocFondLax{W}$ un \ClasseDeuxLocFondLax{}. S'il est engendré par $S \subset \UnCell{\DeuxCatLax}$, alors il est engendré par $\widetilde{S}$. 
\end{lemme}

\begin{proof}
Pour tout $u : \mathdeuxcat{A} \to \mathdeuxcat{B}$ dans $S$, on a le diagramme commutatif
$$
\xymatrix{
\TildeLax{\mathdeuxcat{A}}
\ar[r]^{\TildeLax{u}}
&\TildeLax{\mathdeuxcat{B}}
\\
\mathdeuxcat{A}
\ar[u]^{\LaxCanonique{\mathdeuxcat{A}}}
\ar[r]_{u}
&\mathdeuxcat{B}
\ar[u]_{\LaxCanonique{\mathdeuxcat{B}}}
}
$$
dont les flèches verticales sont dans $\DeuxLocFondLax{W}$. Comme $u$ l'est aussi, c'est également le cas de $\TildeLax{u}$, ce qui montre l'inclusion $\widetilde{S} \subset \DeuxLocFondLax{W}$. Soit maintenant $\DeuxLocFondLax{W}'$ un \ClasseDeuxLocFondLax{} contenant $\widetilde{S}$. La considération du même diagramme, dont les flèches verticales sont dans $\DeuxLocFondLax{W}'$, permet d'affirmer $S \subset \DeuxLocFondLax{W}'$, donc $\DeuxLocFondLax{W} \subset \DeuxLocFondLax{W}'$.
\end{proof}

\begin{prop}\label{Tetete}
Soit $\DeuxLocFondLax{W}$ un \ClasseDeuxLocFondLax{}. S'il est engendré par $S \subset \UnCell{\DeuxCatLax}$, alors le localisateur fondamental $\DeuxLocFondLax{W} \cap \UnCell{\DeuxCat}$ de $\DeuxCat$ est engendré par $\widetilde{S}$. 
\end{prop}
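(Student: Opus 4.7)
The plan is to combine Lemma \ref{Blablabla} (which tells us $\DeuxLocFondLax{W}$ is also engendr� par $\widetilde{S}$) with Proposition \ref{Tatata} and the bijection of Th�or�me \ref{IsoDeuxLocFondDeuxLocFondLax}. First I would observe that, since the image $\widetilde{S} = B(S)$ consists of morphisms of $\DeuxCat$ (the strictification functor $B$ takes values in $\DeuxCat$), the inclusion $\widetilde{S} \subset \DeuxLocFondLax{W} \cap \UnCell{\DeuxCat}$ already holds: $\widetilde{S} \subset \DeuxLocFondLax{W}$ by Lemma \ref{Blablabla}, and $\widetilde{S} \subset \UnCell{\DeuxCat}$ by construction. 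Denote $\DeuxLocFond{W} = \DeuxLocFondLax{W} \cap \UnCell{\DeuxCat}$, which is a localisateur fondamental de $\DeuxCat$ by Remarque \ref{RemDeuxLocFondLaxInterCat}.

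The nontrivial half is the minimality statement: if $\DeuxLocFond{W}'$ is any localisateur fondamental de $\DeuxCat$ contenant $\widetilde{S}$, one must show $\DeuxLocFond{W} \subset \DeuxLocFond{W}'$. Here I would apply Proposition \ref{Tatata} to $\DeuxLocFond{W}'$: since $\DeuxLocFond{W}'$ is engendr� by \emph{some} subset of $\UnCell{\DeuxCat}$ when viewed as generated by itself, one should instead argue directly that $\DeuxLocFondLaxInduit{\DeuxLocFond{W}'} = (\DeuxLocFond{W}')_{lax}$ is a localisateur fondamental de $\DeuxCatLax$ contenant $\widetilde{S}$. By Lemma \ref{Blablabla}, $\DeuxLocFondLax{W}$ is the smallest localisateur fondamental de $\DeuxCatLax$ contenant $\widetilde{S}$, so $\DeuxLocFondLax{W} \subset (\DeuxLocFond{W}')_{lax}$.

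Intersecting with $\UnCell{\DeuxCat}$ and invoking Th�or�me \ref{IsoDeuxLocFondDeuxLocFondLax}, which gives $(\DeuxLocFond{W}')_{lax} \cap \UnCell{\DeuxCat} = \DeuxLocFond{W}'$, we obtain
\[
\DeuxLocFond{W} = \DeuxLocFondLax{W} \cap \UnCell{\DeuxCat} \subset (\DeuxLocFond{W}')_{lax} \cap \UnCell{\DeuxCat} = \DeuxLocFond{W}',
\]
which concludes the argument. The only real content is the passage from an inclusion of classes in $\DeuxCatLax$ to one in $\DeuxCat$, and this is handled cleanly by the correspondance $\DeuxLocFond{W}' \leftrightarrow (\DeuxLocFond{W}')_{lax}$.

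The main (and essentially only) obstacle is not a calculation but rather ensuring that the chain of implications genuinely closes up: the direct inclusion $\widetilde{S} \subset \DeuxLocFond{W}'$ in $\DeuxCat$ must be leveraged into a statement about $\DeuxCatLax$, and then pushed back. The trick is that Proposition \ref{Tatata} guarantees that the image of $\DeuxLocFond{W}'$ under $\DeuxLocFond{W}' \mapsto (\DeuxLocFond{W}')_{lax}$ remains engendr� by the same generators, which is precisely what lets us compare with the minimal $\DeuxLocFondLax{W}$ upstairs. No further manipulation of the tilde construction or of $\widetilde{S}$ itself is required beyond what Lemma \ref{Blablabla} already provides.
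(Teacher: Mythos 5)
Your argument is correct and is essentially the paper's own proof: generation of $\DeuxLocFondLax{W}$ by $\widetilde{S}$ (lemma \ref{Blablabla}), the inclusion $\DeuxLocFond{W}' \subset (\DeuxLocFond{W}')_{lax}$ giving $\widetilde{S} \subset (\DeuxLocFond{W}')_{lax}$, hence $\DeuxLocFondLax{W} \subset (\DeuxLocFond{W}')_{lax}$ by minimality, and then the correspondence of theorem \ref{IsoDeuxLocFondDeuxLocFondLax} to descend to $\DeuxCat$. Note only that proposition \ref{Tatata} plays no real role in your chain of implications, contrary to what your last paragraph suggests: all you actually need is that $(\DeuxLocFond{W}')_{lax}$ is a localisateur fondamental de $\DeuxCatLax$ containing $\widetilde{S}$, which follows from proposition \ref{DeuxLocFondInduitLax} together with $\DeuxLocFond{W}' \subset (\DeuxLocFond{W}')_{lax}$.
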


\begin{proof}
En vertu du lemme \ref{Blablabla}, $\DeuxLocFondLax{W}$ est engendré par $\widetilde{S}$. On a bien sûr $\widetilde{S} \subset \DeuxLocFondLax{W} \cap \UnCell{\DeuxCat}$. Soit $\DeuxLocFond{W}'$ un \ClasseDeuxLocFond{} contenant $\widetilde{S}$. Comme $\DeuxLocFond{W}' \subset \DeuxLocFondLaxInduit{\DeuxLocFond{W}'}$, l'hypothèse implique $\widetilde{S} \subset \DeuxLocFondLaxInduit{\DeuxLocFond{W}'}$, donc $\DeuxLocFond{W} \subset \DeuxLocFondLaxInduit{\DeuxLocFond{W}'}$, c'est-à-dire $(\DeuxLocFondLax{W} \cap \UnCell{\DeuxCat})_{lax} \subset \DeuxLocFondLaxInduit{\DeuxLocFond{W}'}$, donc $\DeuxLocFondLax{W} \cap \UnCell{\DeuxCat} \subset \DeuxLocFond{W}'$, ce qui permet de conclure. 
\end{proof}

\begin{rem}
On se gardera de croire que, si un localisateur fondamental $\DeuxLocFondLax{W}$ de $\DeuxCatLax$ est engendré par une classe de \DeuxFoncteursLax{} $S$, alors le localisateur fondamental $\DeuxLocFondLax{W} \cap \UnCell{\DeuxCat}$ de $\DeuxCat$ est engendré par ${S} \cap \UnCell{\DeuxCat}$. Pour un contre-exemple, on peut considérer $S = \UnCell{\DeuxCatLax} \backslash \UnCell{\DeuxCat}$, c'est-à-dire la classe des morphismes de $\DeuxCatLax$ qui ne sont pas dans $\DeuxCat$.
\end{rem}

\begin{prop}\label{Tititi}
Soit $\UnLocFond{W}$ un \ClasseUnLocFond{}. S'il est engendré par $S \subset \UnCell{\Cat}$, alors le localisateur fondamental $\NerfLax^{-1} (i_{\Delta}^{-1} (\UnLocFond{W}))$ de $\DeuxCatLax$ est également engendré par $S$. 
\end{prop}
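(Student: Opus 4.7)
Le plan consiste � reproduire fid�lement l'argument utilis� dans la preuve de la proposition \ref{Tatata}, en rempla�ant simplement le th�or�me \ref{IsoDeuxLocFondDeuxLocFondLax} par le th�or�me \ref{IsoUnLocFondDeuxLocFondLax}. Posons $\DeuxLocFondLax{W} = \NerfLax^{-1} (i_{\Delta}^{-1} (\UnLocFond{W}))$. Il faut d'abord v�rifier l'inclusion $S \subset \DeuxLocFondLax{W}$, ce qui se ram�ne � l'�galit� $\DeuxLocFondLax{W} \cap \UnCell{\Cat} = \UnLocFond{W}$. Or, la restriction du nerf lax $\NerfLax$ � $\Cat$ co�ncide avec le foncteur nerf usuel, et cette �galit� r�sulte donc de la proposition \ref{Sade} (ou, de fa�on �quivalente, du lemme \ref{Chamfort}). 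Comme $S \subset \UnLocFond{W}$ par hypoth�se, on a bien $S \subset \DeuxLocFondLax{W}$.

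Soit maintenant $\DeuxLocFondLax{W}'$ un \ClasseDeuxLocFondLax{} contenant $S$. En vertu de la remarque \ref{RemDeuxLocFondLaxInterCat}, la classe $\DeuxLocFondLax{W}' \cap \UnCell{\Cat}$ est un \ClasseUnLocFond{}; elle contient $S$, puisque $S \subset \UnCell{\Cat}$ par hypoth�se. L'hypoth�se que $\UnLocFond{W}$ est engendr� par $S$ entra�ne donc l'inclusion $\UnLocFond{W} \subset \DeuxLocFondLax{W}' \cap \UnCell{\Cat}$. L'application $\UnLocFond{X} \mapsto \NerfLax^{-1} (i_{\Delta}^{-1} (\UnLocFond{X}))$ du th�or�me \ref{IsoUnLocFondDeuxLocFondLax} respectant la relation d'inclusion, on en d�duit
$$
\DeuxLocFondLax{W} = \NerfLax^{-1} (i_{\Delta}^{-1} (\UnLocFond{W})) \subset \NerfLax^{-1} (i_{\Delta}^{-1} (\DeuxLocFondLax{W}' \cap \UnCell{\Cat})) = \DeuxLocFondLax{W}',
$$
la derni�re �galit� r�sultant de ce que les deux applications du th�or�me \ref{IsoUnLocFondDeuxLocFondLax} sont inverses l'une de l'autre. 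Cela montre que $\DeuxLocFondLax{W}$ est bien le plus petit localisateur fondamental de $\DeuxCatLax$ contenant $S$.

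Aucune �tape ne pr�sente de v�ritable difficult�; l'argument est enti�rement formel et repose exclusivement sur le caract�re ordonn� de la bijection du th�or�me \ref{IsoUnLocFondDeuxLocFondLax} entre localisateurs fondamentaux de $\Cat$ et localisateurs fondamentaux de $\DeuxCatLax$, qui permet de transporter toute inclusion d'un c�t� en une inclusion de l'autre.
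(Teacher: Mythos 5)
Votre preuve est correcte et suit essentiellement le m�me chemin que celle du texte : inclusion �vidente de $S$ via la proposition \ref{Sade}, puis, pour un localisateur fondamental $\DeuxLocFondLax{W}'$ de $\DeuxCatLax$ contenant $S$, r�duction � l'inclusion $\UnLocFond{W} \subset \DeuxLocFondLax{W}' \cap \UnCell{\Cat}$ au moyen du th�or�me \ref{IsoUnLocFondDeuxLocFondLax}. La seule diff�rence est cosm�tique : vous appliquez la monotonie de l'application $\UnLocFond{X} \mapsto \NerfLax^{-1}(i_{\Delta}^{-1}(\UnLocFond{X}))$ puis l'identit� $\NerfLax^{-1}(i_{\Delta}^{-1}(\DeuxLocFondLax{W}' \cap \UnCell{\Cat})) = \DeuxLocFondLax{W}'$, l� o� le texte invoque directement l'�quivalence des deux inclusions via l'isomorphisme ordonn�.
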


\begin{proof}
On a bien sûr $S \subset \NerfLax^{-1} (i_{\Delta}^{-1} (\UnLocFond{W}))$. Soit de plus $\DeuxLocFondLax{W}$ un \ClasseDeuxLocFondLax{} contenant $S$. L'inclusion $\NerfLax^{-1} (i_{\Delta}^{-1} (\UnLocFond{W})) \subset \DeuxLocFondLax{W}$ équivaut à $\NerfLax^{-1} (i_{\Delta}^{-1} (\UnLocFond{W})) \cap \UnCell{\Cat} \subset \DeuxLocFondLax{W} \cap \UnCell{\Cat}$, c'est-à-dire à $\UnLocFond{W} \subset \DeuxLocFondLax{W} \cap \UnCell{\Cat}$, ce qui résulte du fait que $\DeuxLocFondLax{W} \cap \UnCell{\Cat}$ est un \ClasseUnLocFond{} contenant $S$ et de l'hypothèse faite sur $\UnLocFond{W}$. 
\end{proof}

\begin{lemme}\label{Blebleble}
Soit $\DeuxLocFondLax{W}$ un \ClasseDeuxLocFondLax{}. S'il est engendré par $S \subset \UnCell{\DeuxCatLax}$, alors il est également engendré par $i_{\Delta}(\NerfLax (S))$.
\end{lemme}

\begin{proof}
Pour tout $u : \mathdeuxcat{A} \to \mathdeuxcat{B}$ dans $S$, on a le diagramme commutatif
$$
\xymatrix{
\Delta / \NerfLax \mathdeuxcat{A}
\ar[rr]^{\Delta / \NerfLax (u)}
\ar[d]_{\SupLaxObjet{\mathdeuxcat{A}}}
&&
\Delta / \NerfLax \mathdeuxcat{B}
\ar[d]^{\SupLaxObjet{\mathdeuxcat{B}}}
\\
\mathdeuxcat{A}
\ar[rr]_{u}
&&
\mathdeuxcat{B}
}
$$
dont les flèches verticales sont dans $\DeuxLocFondLax{W}$. C'est donc également le cas de $\Delta / \NerfLax (u)$, ce qui montre $i_{\Delta}(\NerfLax (S)) \subset \DeuxLocFondLax{W}$. Étant donné un localisateur fondamental $\DeuxLocFondLax{W}'$ de $\DeuxCatLax$ contenant $i_{\Delta}(\NerfLax (S))$, la considération du même diagramme permet de conclure $S \subset \DeuxLocFondLax{W}'$, donc $\DeuxLocFondLax{W} \subset \DeuxLocFondLax{W}'$. 
\end{proof}

\begin{prop}\label{Tototo}
Soit $\DeuxLocFondLax{W}$ un \ClasseDeuxLocFondLax{}. S'il est engendré par $S \subset \UnCell{\DeuxCatLax}$, alors le localisateur fondamental $\DeuxLocFondLax{W} \cap \UnCell{\Cat}$ de $\Cat$ est engendré par $i_{\Delta}(\NerfLax (S))$. 
\end{prop}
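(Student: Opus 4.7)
L'id�e directrice est de reprendre le sch�ma de preuve des propositions \ref{Tatata}, \ref{Tetete}, \ref{Tititi} : v�rifier d'une part l'inclusion manifeste du g�n�rateur pr�sum� dans le localisateur fondamental consid�r�, puis r�duire, d'autre part, la minimalit� par passage � la correspondance du th�or�me \ref{IsoUnLocFondDeuxLocFondLax}. L'ingr�dient sp�cifique � cet �nonc� est le lemme \ref{Blebleble}, qui permet de remplacer le g�n�rateur initial $S$ par sa ��projection�� cat�gorique $i_{\Delta}(\NerfLax(S))$.

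Dans un premier temps, le lemme \ref{Blebleble} assure l'inclusion $i_{\Delta}(\NerfLax(S)) \subset \DeuxLocFondLax{W}$ ; comme $i_{\Delta}(\NerfLax(S))$ est tautologiquement form� de morphismes de $\Cat$, on en d�duit $i_{\Delta}(\NerfLax(S)) \subset \DeuxLocFondLax{W} \cap \UnCell{\Cat}$. Comme $\DeuxLocFondLax{W} \cap \UnCell{\Cat}$ est un \ClasseUnLocFond{} (remarque \ref{RemDeuxLocFondLaxInterCat}), il contient le localisateur fondamental de $\Cat$ engendr� par $i_{\Delta}(\NerfLax(S))$.

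Pour la r�ciproque, fixons un localisateur fondamental $\UnLocFond{W}'$ de $\Cat$ contenant $i_{\Delta}(\NerfLax(S))$, et cherchons � d�montrer l'inclusion $\DeuxLocFondLax{W} \cap \UnCell{\Cat} \subset \UnLocFond{W}'$. L'id�e est de faire remonter $\UnLocFond{W}'$ en un \ClasseDeuxLocFondLax{} via la correspondance du th�or�me \ref{IsoUnLocFondDeuxLocFondLax} : on pose $\DeuxLocFondLax{W}' = \NerfLax^{-1}(i_{\Delta}^{-1}(\UnLocFond{W}'))$, qui est un \ClasseDeuxLocFondLax{} v�rifiant $\DeuxLocFondLax{W}' \cap \UnCell{\Cat} = \UnLocFond{W}'$. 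Par construction m�me de $\DeuxLocFondLax{W}'$, dire que $S \subset \DeuxLocFondLax{W}'$ �quivaut � dire que $i_{\Delta}(\NerfLax(S)) \subset \UnLocFond{W}'$, hypoth�se que l'on a faite. Puisque $\DeuxLocFondLax{W}$ est engendr� par $S$, il vient $\DeuxLocFondLax{W} \subset \DeuxLocFondLax{W}'$, d'o�, en intersectant avec $\UnCell{\Cat}$, l'inclusion souhait�e $\DeuxLocFondLax{W} \cap \UnCell{\Cat} \subset \UnLocFond{W}'$.

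La d�monstration ne pr�sente ainsi aucune difficult� majeure au-del� de l'invocation correcte du lemme \ref{Blebleble} et du th�or�me \ref{IsoUnLocFondDeuxLocFondLax} ; le seul point d�licat, mais d�j� trait� dans la litt�rature �voqu�e au d�but du plan, est le contr�le du mouvement de va-et-vient entre $\DeuxCatLax$ et $\Cat$ via le couple $(\NerfLax, i_{\Delta})$, ce qui justifie la pr�f�rence pour le g�n�rateur $i_{\Delta}(\NerfLax(S))$ plut�t que pour $S$ lui-m�me.
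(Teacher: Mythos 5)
Votre preuve est correcte et suit essentiellement la m\^eme route que celle du texte : inclusion $i_{\Delta}(\NerfLax(S)) \subset \DeuxLocFondLax{W}$ via le lemme \ref{Blebleble}, puis minimalit\'e en remontant un localisateur fondamental $\UnLocFond{W}'$ de $\Cat$ en $\NerfLax^{-1}(i_{\Delta}^{-1}(\UnLocFond{W}'))$ gr\^ace au lemme \ref{Leroux} et au th\'eor\`eme \ref{IsoUnLocFondDeuxLocFondLax}. Votre conclusion par intersection directe avec $\UnCell{\Cat}$ (au lieu de la r\'e\'ecriture $\DeuxLocFondLax{W} = \NerfLax^{-1}(i_{\Delta}^{-1}(\DeuxLocFondLax{W} \cap \UnCell{\Cat}))$ suivie de l'injectivit\'e de la correspondance) n'est qu'une variante de r\'edaction du m\^eme argument.
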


\begin{proof}
En vertu du lemme \ref{Blebleble}, $\DeuxLocFondLax{W}$ est engendré par $i_{\Delta}(\NerfLax (S))$, donc en particulier $i_{\Delta}(\NerfLax (S)) \subset \DeuxLocFondLax{W} \cap \UnCell{\Cat}$. Soit $\UnLocFond{W}$ un \ClasseUnLocFond{} contenant $i_{\Delta}(\NerfLax (S))$. On a donc l'inclusion $S \subset \NerfLax^{-1} (i_{\Delta}^{-1} (\UnLocFond{W}))$, donc $\DeuxLocFondLax{W} \subset \NerfLax^{-1} (i_{\Delta}^{-1} (\UnLocFond{W}))$ puisque $\NerfLax^{-1} (i_{\Delta}^{-1} (\UnLocFond{W}))$ est un \ClasseDeuxLocFondLax{} et que $\DeuxLocFondLax{W}$ est le plus petit \ClasseDeuxLocFondLax{} contenant $S$. En vertu du lemme \ref{Leroux}, cela se récrit $\NerfLax^{-1} (i_{\Delta}^{-1} (\DeuxLocFondLax{W} \cap \UnCell{\Cat})) \subset \NerfLax^{-1} (i_{\Delta}^{-1} (\UnLocFond{W}))$, d'où $\DeuxLocFondLax{W} \cap \UnCell{\Cat} \subset \UnLocFond{W}$ en vertu du théorème \ref{IsoUnLocFondDeuxLocFondLax}. 
\end{proof}

\begin{prop}
Soit $\DeuxLocFond{W}$ un \ClasseDeuxLocFond{}. S'il est engendré par $S \subset \UnCell{\DeuxCat}$, alors le localisateur fondamental $\DeuxLocFond{W} \cap \UnCell{\Cat}$ de $\Cat$ est engendré par $i_{\Delta}(\NerfLaxNor (S))$. 
\end{prop}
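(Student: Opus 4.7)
The plan is to adapt the direct strategy used in Proposition \ref{Tototo}, exploiting the isomorphism between localisateurs fondamentaux of $\Cat$ and $\DeuxCat$ established in Theorem \ref{IsoUnLocFondDeuxLocFond}. First I will establish the containment $i_{\Delta}(\NerfLaxNor(S)) \subset \DeuxLocFond{W} \cap \UnCell{\Cat}$. For any $u \in S$, since $S \subset \DeuxLocFond{W}$ we have $u \in \DeuxLocFond{W}$, and Theorem \ref{IsoUnLocFondDeuxLocFond} gives the identity
$$
\DeuxLocFond{W} = \NerfLaxNor^{-1}\bigl(i_{\Delta}^{-1}(\DeuxLocFond{W} \cap \UnCell{\Cat})\bigr),
$$
so $i_{\Delta}\NerfLaxNor(u)$ lies in $\DeuxLocFond{W} \cap \UnCell{\Cat}$, as required.

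The core of the argument is the minimality. Let $\UnLocFond{W}'$ be an arbitrary localisateur fondamental of $\Cat$ containing $i_{\Delta}(\NerfLaxNor(S))$. By Theorem \ref{IsoUnLocFondDeuxLocFond} the class
$$
\NerfLaxNor^{-1}\bigl(i_{\Delta}^{-1}(\UnLocFond{W}')\bigr)
$$
is a localisateur fondamental of $\DeuxCat$. For each $u \in S$, the hypothesis $i_{\Delta}\NerfLaxNor(u) \in \UnLocFond{W}'$ exactly says that $u$ belongs to this class; hence $S \subset \NerfLaxNor^{-1}(i_{\Delta}^{-1}(\UnLocFond{W}'))$. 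Since $\DeuxLocFond{W}$ is by hypothesis the smallest localisateur fondamental of $\DeuxCat$ containing $S$, one obtains $\DeuxLocFond{W} \subset \NerfLaxNor^{-1}(i_{\Delta}^{-1}(\UnLocFond{W}'))$. Intersecting with $\UnCell{\Cat}$ and invoking once more the inverse bijection of Theorem \ref{IsoUnLocFondDeuxLocFond}, this collapses to $\DeuxLocFond{W} \cap \UnCell{\Cat} \subset \UnLocFond{W}'$, yielding the desired minimality.

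No substantial obstacle is anticipated: the proof is an essentially formal corollary of Theorem \ref{IsoUnLocFondDeuxLocFond}, which supplies the bijective correspondence between \ClassesUnLocFond{} and \ClassesDeuxLocFond{} together with its explicit two-sided inverse. An alternative route would consist in combining Propositions \ref{Tatata} and \ref{Tototo} to conclude that $\DeuxLocFondLaxInduit{W} \cap \UnCell{\Cat}$, which equals $\DeuxLocFond{W} \cap \UnCell{\Cat}$ by Lemma \ref{Allain}, is generated by $i_{\Delta}(\NerfLax(S))$, and then to swap $\NerfLax$ for $\NerfLaxNor$ via Remark \ref{Nini} (whose natural maps $\Delta/\NerfLaxNor(\mathdeuxcat{A}) \to \Delta/\NerfLax(\mathdeuxcat{A})$ are weak equivalences for every localisateur fondamental of $\Cat$) and a $2$-sur-$3$ argument. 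The direct route above is however shorter and avoids this intermediate passage through $\DeuxCatLax$.
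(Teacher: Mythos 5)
Votre démonstration est correcte, mais elle suit une voie réellement différente de celle du texte. Le texte déduit l'énoncé en passant par $\DeuxCatLax$ : la proposition \ref{Tatata} (si $\DeuxLocFond{W}$ est engendré par $S$, alors $\DeuxLocFondLaxInduit{W}$ l'est aussi), la proposition \ref{Tototo} (qui donne que $\DeuxLocFondLaxInduit{W} \cap \UnCell{\Cat}$, égal à $\DeuxLocFond{W} \cap \UnCell{\Cat}$ par le lemme \ref{Allain}, est engendré par $i_{\Delta}(\NerfLax(S))$), puis la remarque \ref{Nini} pour remplacer $\NerfLax$ par $\NerfLaxNor$ sur les morphismes stricts — c'est exactement la « voie alternative » que vous mentionnez en conclusion. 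Vous, en revanche, reproduisez directement dans le cadre strict le schéma de la démonstration de la proposition \ref{Tototo}, en n'utilisant que le théorème \ref{IsoUnLocFondDeuxLocFond} : l'identité $\DeuxLocFond{W} = \NerfLaxNor^{-1}(i_{\Delta}^{-1}(\DeuxLocFond{W} \cap \UnCell{\Cat}))$ donne l'inclusion $i_{\Delta}(\NerfLaxNor(S)) \subset \DeuxLocFond{W} \cap \UnCell{\Cat}$, et pour un \ClasseUnLocFond{} $\UnLocFond{W}'$ contenant $i_{\Delta}(\NerfLaxNor(S))$, le fait que $\NerfLaxNor^{-1}(i_{\Delta}^{-1}(\UnLocFond{W}'))$ soit un \ClasseDeuxLocFond{} contenant $S$ (exemple \ref{Levet} ou le théorème lui-même), la minimalité de $\DeuxLocFond{W}$ et l'égalité $\NerfLaxNor^{-1}(i_{\Delta}^{-1}(\UnLocFond{W}')) \cap \UnCell{\Cat} = \UnLocFond{W}'$ fournie par le même théorème donnent $\DeuxLocFond{W} \cap \UnCell{\Cat} \subset \UnLocFond{W}'$. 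Chaque étape est justifiée par des résultats établis antérieurement dans le texte ; votre argument est donc valable, un peu plus court, et il évite le détour par $\DeuxCatLax$ ainsi que la comparaison $\NerfLax$/$\NerfLaxNor$, au prix de s'appuyer de façon essentielle sur le théorème de correspondance \ref{IsoUnLocFondDeuxLocFond}, tandis que la rédaction du texte réutilise telles quelles les propositions déjà démontrées pour le cas lax.
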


\begin{proof}
C'est une conséquence des propositions \ref{Tatata} et \ref{Tototo} et de la remarque \ref{Nini}. 
\end{proof}

\begin{prop}
Soit $\UnLocFond{W}$ un \ClasseUnLocFond{}. S'il est engendré par $S \subset \UnCell{\Cat}$, alors le localisateur fondamental $\NerfLaxNor^{-1} (i_{\Delta}^{-1}(\UnLocFond{W}))$ de $\DeuxCat$ est engendré par $S$. 
\end{prop}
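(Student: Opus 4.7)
The plan is to exploit the fact that the bijection between localisateurs fondamentaux de $\Cat$ and localisateurs fondamentaux de $\DeuxCat$ established in Th�or�me \ref{IsoUnLocFondDeuxLocFond} is order-preserving for inclusion, so the property of being engendr� par $S$ (a purely poset-theoretic property : �tre le plus petit �l�ment contenant $S$) transfers along it.

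First I would observe that $S \subset \NerfLaxNor^{-1}(i_{\Delta}^{-1}(\UnLocFond{W}))$. Indeed, since $\UnLocFond{W}$ est engendr� par $S$, on a en particulier $S \subset \UnLocFond{W}$, et en vertu du Th�or�me \ref{IsoUnLocFondDeuxLocFond} (ou simplement de la proposition \ref{Sade}, restreinte aux morphismes de $\Cat$), $\UnLocFond{W} = \NerfLaxNor^{-1}(i_{\Delta}^{-1}(\UnLocFond{W})) \cap \UnCell{\Cat}$, d'o� $S \subset \NerfLaxNor^{-1}(i_{\Delta}^{-1}(\UnLocFond{W}))$.

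Ensuite, consid�rons un localisateur fondamental $\DeuxLocFond{W}'$ de $\DeuxCat$ contenant $S$. Il s'agit de montrer l'inclusion $\NerfLaxNor^{-1}(i_{\Delta}^{-1}(\UnLocFond{W})) \subset \DeuxLocFond{W}'$. Pour cela, la classe $\DeuxLocFond{W}' \cap \UnCell{\Cat}$ est un \ClasseUnLocFond{} (voir la remarque \ref{Proust}) qui, puisque $S \subset \UnCell{\Cat}$, contient $S$. Par l'hypoth�se que $\UnLocFond{W}$ est le plus petit \ClasseUnLocFond{} contenant $S$, on en d�duit l'inclusion $\UnLocFond{W} \subset \DeuxLocFond{W}' \cap \UnCell{\Cat}$. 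Comme la correspondance $\UnLocFond{V} \mapsto \NerfLaxNor^{-1}(i_{\Delta}^{-1}(\UnLocFond{V}))$ du Th�or�me \ref{IsoUnLocFondDeuxLocFond} respecte manifestement la relation d'inclusion, on obtient $\NerfLaxNor^{-1}(i_{\Delta}^{-1}(\UnLocFond{W})) \subset \NerfLaxNor^{-1}(i_{\Delta}^{-1}(\DeuxLocFond{W}' \cap \UnCell{\Cat}))$. Toujours en vertu du Th�or�me \ref{IsoUnLocFondDeuxLocFond}, appliqu� cette fois dans l'autre sens au localisateur $\DeuxLocFond{W}'$, on a l'�galit� $\NerfLaxNor^{-1}(i_{\Delta}^{-1}(\DeuxLocFond{W}' \cap \UnCell{\Cat})) = \DeuxLocFond{W}'$, d'o� le r�sultat.

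Il n'y a pas d'obstacle dans cette d�monstration : tout le travail non trivial a �t� accompli dans le Th�or�me \ref{IsoUnLocFondDeuxLocFond}, et la pr�sente proposition n'en est qu'un corollaire formel, transposant au niveau de $\DeuxCat$ l'exact analogue de la proposition \ref{Tititi} qui traitait le cas de $\DeuxCatLax$. On pourrait aussi d�duire l'�nonc� en combinant les propositions \ref{Tititi} et \ref{Tetete}, ce qui donnerait une d�monstration alternative passant par $\DeuxCatLax$.
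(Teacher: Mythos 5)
Votre d\'emonstration est correcte, mais elle emprunte une route diff\'erente de celle du texte. La preuve du papier d\'eduit l'\'enonc\'e en composant la proposition \ref{Tititi} (transfert de la g\'en\'eration de $\Cat$ vers $\DeuxCatLax$) et la proposition \ref{Tetete} (transfert de $\DeuxCatLax$ vers $\DeuxCat$, qui fournit a priori la g\'en\'eration par $\widetilde{S} = B(S)$ et non par $S$), puis invoque le lemme \ref{BarreTildeW} pour remplacer $\widetilde{S}$ par $S$ : les \'el\'ements de $S$ \'etant des morphismes stricts, un localisateur fondamental de $\DeuxCat$ contient $u \in S$ si et seulement s'il contient $\TildeLax{u}$. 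Votre argument \'evite compl\`etement le passage par $\DeuxCatLax$ : il n'utilise que la remarque \ref{Proust} (la classe $\DeuxLocFond{W}' \cap \UnCell{\Cat}$ est un \ClasseUnLocFond{}) et l'isomorphisme croissant du th\'eor\`eme \ref{IsoUnLocFondDeuxLocFond}, appliqu\'e dans les deux sens ; c'est exactement le sch\'ema de d\'emonstration que le texte emploie pour les propositions \ref{Tatata} et \ref{Tititi}, transpos\'e directement \`a la correspondance entre $\Cat$ et $\DeuxCat$. Ce que chaque approche apporte : la v\^otre est plus directe et s'\'epargne le d\'etour par $B$, $\widetilde{S}$ et le lemme \ref{BarreTildeW} ; celle du texte r\'eutilise des r\'esultats de transfert d\'ej\`a \'etablis, au prix d'un petit travail suppl\'ementaire sur la classe g\'en\'eratrice. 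La variante que vous signalez en conclusion est bien la route du texte, \`a ceci pr\`es qu'il faut encore le lemme \ref{BarreTildeW} pour repasser de $\widetilde{S}$ \`a $S$.
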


\begin{proof}
C'est une conséquence des propositions \ref{Tetete} et \ref{Tititi} et du lemme \ref{BarreTildeW}. 
\end{proof}

\section{Un Théorème A plus général}\label{SectionTheoremeAGeneral}

\emph{On suppose fixé un localisateur fondamental $\DeuxLocFond{W}$ de $\DeuxCat$.}

\begin{lemme}\label{LeoCampion}
Soient $\mathdeuxcat{A}$ une petite \deux{}catégorie, $u$ et $v$ des \DeuxFoncteursStricts{} de $\mathdeuxcat{A}$ vers $\DeuxCatDeuxCat$ et $\sigma$ une \DeuxTransformationStricte{} de $u$ vers $v$. Supposons que, pour tout objet $a$ de $\mathdeuxcat{A}$, le \DeuxFoncteurStrict{} $\sigma_{a} : u(a) \to v(a)$ soit une équivalence faible. Alors, le \DeuxFoncteurStrict{} $\DeuxInt{\mathdeuxcat{A}} \sigma : \DeuxInt{\mathdeuxcat{A}}u \to \DeuxInt{\mathdeuxcat{A}}v$ est une équivalence faible.
\end{lemme}

\begin{proof}
En vertu des lemmes \ref{KikiDeMontparnasse} et \ref{PierreDac}, de la proposition \ref{ProjectionIntegralePrefibration}, du corollaire \ref{PreadjointsW} et de la saturation faible de $\DeuxLocFond{W}$, les hypothèses impliquent que, pour tout objet $a$ de $\mathdeuxcat{A}$, le \DeuxFoncteurStrict{} $\DeuxFoncTrancheLax{(\DeuxInt{\mathdeuxcat{A}}\sigma)}{a} : \TrancheLax{(\DeuxInt{\mathdeuxcat{A}}u)}{P_{u}}{a} \to \TrancheLax{(\DeuxInt{\mathdeuxcat{A}}v)}{P_{v}}{a}$ est une équivalence faible. La proposition \ref{TheoremeATrancheLax} permet de conclure.  
\end{proof}

\begin{lemme}\label{QEquiFaible}
Pour tout \DeuxFoncteurStrict{} $w : \mathdeuxcat{A} \to \mathdeuxcat{C}$, le \DeuxFoncteurStrict{} canonique \footnote{Défini dans le paragraphe \ref{QPrefibration}.}
$$
Q_{\mathdeuxcat{A}} : \DeuxInt{\mathdeuxcat{C}} \DeuxFoncteurTranche{w} \to \mathdeuxcat{A}
$$
est une équivalence faible.
\end{lemme}

\begin{proof}
En vertu du paragraphe \ref{QPrefibration}, c'est une préfibration dont la fibre au-dessus d'un objet quelconque $a$ de $\mathdeuxcat{A}$ s'identifie à $\OpTrancheCoLax{\mathdeuxcat{C}}{}{w(a)}$, donc est asphérique. Le \DeuxFoncteurStrict{} $Q_{\mathdeuxcat{A}}$ est donc une préfibration à fibres asphériques. La conclusion résulte de la proposition \ref{PrefibrationFibresAspheriquesW}. 
\end{proof}

\begin{theo}\label{ThAStrictCoq}
Soit
$$
\xymatrix{
\mathdeuxcat{A} 
\ar[rr]^{u}
\ar[dr]_{w}
&&\mathdeuxcat{B}
\dtwocell<\omit>{<7.3>\sigma}
\ar[dl]^{v}
\\
&\mathdeuxcat{C}
&{}
}
$$
un diagramme de \DeuxFoncteursStricts{} commutatif à l'\DeuxTransformationCoLax{} $\sigma : vu \Rightarrow w$ près seulement. Si, pour tout objet $c$ de $\mathdeuxcat{C}$, le \DeuxFoncteurStrict{}\footnote{Défini dans la section \ref{SectionMorphismesInduits}.} $\DeuxFoncTrancheLaxCoq{u}{\sigma}{c} : \TrancheLax{\mathdeuxcat{A}}{w}{c} \to \TrancheLax{\mathdeuxcat{B}}{w}{c}$ est une équivalence faible, alors $u$ est une équivalence faible.
\end{theo}

\begin{proof}
Considérons le diagramme commutatif\footnote{Voir le lemme \ref{Desproges}.} de \DeuxFoncteursStricts{}
$$
\xymatrix{
\DeuxInt{\mathdeuxcat{C}} \DeuxFoncteurTranche{w}
\ar[rr]^{\DeuxInt{\mathdeuxcat{C}} \DeuxFoncteurTranche{\sigma}}
\ar[d]_{Q_{\mathdeuxcat{A}}}
&&\DeuxInt{\mathdeuxcat{C}} \DeuxFoncteurTranche{v}
\ar[d]^{Q_{\mathdeuxcat{B}}}
\\
\mathdeuxcat{A}
\ar[rr]_{u}
&&\mathdeuxcat{B}
&.
}
$$
En vertu des hypothèses et du lemme \ref{LeoCampion}, le \DeuxFoncteurStrict{} $\DeuxInt{\mathdeuxcat{C}} \DeuxFoncteurTranche{\sigma}$ est une équivalence faible. Comme $Q_{\mathdeuxcat{A}}$ et $Q_{\mathdeuxcat{B}}$ sont des équivalences faibles en vertu du lemme \ref{QEquiFaible}, la saturation faible de $\DeuxLocFond{W}$ permet de conclure.
\end{proof}

\begin{lemme}\label{CarreCommutatifThALax}
Soient
$$
\xymatrix{
\mathdeuxcat{A} 
\ar[rr]^{u}
\ar[dr]_{w}
&&\mathdeuxcat{B}
\dtwocell<\omit>{<7.3>\sigma}
\ar[dl]^{v}
\\
&\mathdeuxcat{C}
&{}
}
$$
un diagramme de \DeuxFoncteursLax{} commutatif à l'\DeuxTransformationCoLax{} $\sigma : vu \Rightarrow w$ près seulement, $c$ un objet de $\mathdeuxcat{C}$, $\DeuxFoncTrancheLaxCoq{u}{\sigma}{c} : \TrancheLax{\mathdeuxcat{A}}{w}{c} \to \TrancheLax{\mathdeuxcat{B}}{w}{c}$ le \DeuxFoncteurLax{} induit par ces données et $\DeuxFoncTrancheLaxCoq{\TildeLax{u}}{\BarreLax{\sigma}}{c}$ le \DeuxFoncteurStrict{} induit par le diagramme
$$
\xymatrix{
\TildeLax{\mathdeuxcat{A}}
\ar[rr]^{\TildeLax{u}}
\ar[dr]_{\BarreLax{w}}
&&\TildeLax{\mathdeuxcat{B}}
\dtwocell<\omit>{<7.3>\BarreLax{\sigma}}
\ar[dl]^{\BarreLax{v}}
\\
&\mathdeuxcat{C}
&,
}
$$
lequel est commutatif à l'\DeuxTransformationCoLax{} $\BarreLax{\sigma}$ de source $\BarreLax{v} \TildeLax{u} = \BarreLax{vu}$ et de but $\BarreLax{w}$ près seulement. Alors, le diagramme
$$
\xymatrix{
\TrancheLax{\TildeLax{\mathdeuxcat{A}}}{\BarreLax{w}}{c}
\ar[rr]^{\DeuxFoncTrancheLaxCoq{\TildeLax{u}}{\BarreLax{\sigma}}{c}}
&&\TrancheLax{\TildeLax{\mathdeuxcat{B}}}{\BarreLax{v}}{c}
\\
\TrancheLax{\mathdeuxcat{A}}{w}{c}
\ar[u]^{\DeuxFoncTrancheLax{\LaxCanonique{\mathdeuxcat{A}}}{c}}
\ar[rr]_{\DeuxFoncTrancheLaxCoq{u}{\sigma}{c}}
&&\TrancheLax{\mathdeuxcat{B}}{v}{c}
\ar[u]_{\DeuxFoncTrancheLax{\LaxCanonique{\mathdeuxcat{B}}}{c}}
}
$$
est commutatif. 
\end{lemme}

\begin{proof}
Pour tout objet $(a, p : w(a) \to c)$ de $\TrancheLax{\mathdeuxcat{A}}{w}{c}$, 
$$
\begin{aligned}
(\DeuxFoncTrancheLax{\LaxCanonique{\mathdeuxcat{B}}}{c}) ((\DeuxFoncTrancheLaxCoq{u}{\sigma}{c}) (a, p)) &= (\DeuxFoncTrancheLax{\LaxCanonique{\mathdeuxcat{B}}}{c}) (u(a), p \sigma_{a})
\\
&= (u(a), p \sigma_{a})
\end{aligned}
$$
et
$$
\begin{aligned}
(\DeuxFoncTrancheLaxCoq{\TildeLax{u}}{\BarreLax{\sigma}}{c}) ((\DeuxFoncTrancheLax{\LaxCanonique{\mathdeuxcat{A}}}{c}) (a,p)) &= (\DeuxFoncTrancheLaxCoq{\TildeLax{u}}{\BarreLax{\sigma}}{c}) (a,p)
\\
&= (\TildeLax{u} (a), p \BarreLax{\sigma}_{a})
\\
&= (u(a), p \sigma_{a}).
\end{aligned}
$$

Pour toute \un{}cellule $(f : a \to a', \alpha : p \Rightarrow p' u(f))$ de $(a, p)$ vers $(a', p')$ dans $\TrancheLax{\mathdeuxcat{A}}{w}{c}$, 
$$
\begin{aligned}
(\DeuxFoncTrancheLax{\LaxCanonique{\mathdeuxcat{B}}}{c}) ((\DeuxFoncTrancheLaxCoq{u}{\sigma}{c}) (f, \alpha)) &= (\DeuxFoncTrancheLax{\LaxCanonique{\mathdeuxcat{B}}}{c}) (u(f), (p' \CompDeuxZero \sigma_{f}) (\alpha \CompDeuxZero \sigma_{a}))
\\
&= (([1], u(f)),  (p' \CompDeuxZero \sigma_{f}) (\alpha \CompDeuxZero \sigma_{a}))
\end{aligned}
$$
et
$$
\begin{aligned}
(\DeuxFoncTrancheLaxCoq{\TildeLax{u}}{\BarreLax{\sigma}}{c}) ((\DeuxFoncTrancheLax{\LaxCanonique{\mathdeuxcat{A}}}{c}) (f,\alpha)) &= (\DeuxFoncTrancheLaxCoq{\TildeLax{u}}{\BarreLax{\sigma}}{c}) (\LaxCanonique{\mathdeuxcat{A}} (f), \alpha)
\\
&= (\DeuxFoncTrancheLaxCoq{\TildeLax{u}}{\BarreLax{\sigma}}{c}) (([1], f), \alpha)
\\
&= (\TildeLax{u} ([1], f), (p' \CompDeuxZero \BarreLax{\sigma}_{([1], f)}) (\alpha \CompDeuxZero \BarreLax{\sigma}_{a}))
\\
&= (([1], u(f)),  (p' \CompDeuxZero \sigma_{f}) (\alpha \CompDeuxZero \sigma_{a})).
\end{aligned}
$$

Pour toute \deux{}cellule $\beta$ de $(f, \alpha)$ vers $(f', \alpha')$ dans $\TrancheLax{\mathdeuxcat{A}}{w}{c}$, 
$$
\begin{aligned}
(\DeuxFoncTrancheLax{\LaxCanonique{\mathdeuxcat{B}}}{c}) ((\DeuxFoncTrancheLaxCoq{u}{\sigma}{c}) (\beta)) &= (\DeuxFoncTrancheLax{\LaxCanonique{\mathdeuxcat{B}}}{c}) (u(\beta)) 
\\
&= \LaxCanonique{\mathdeuxcat{B}} (u(\beta)) 
\\
&= (1_{[1]}, u(\beta))
\end{aligned}
$$
et
$$
\begin{aligned}
(\DeuxFoncTrancheLaxCoq{\TildeLax{u}}{\BarreLax{\sigma}}{c}) ((\DeuxFoncTrancheLax{\LaxCanonique{\mathdeuxcat{A}}}{c}) (\beta)) &= (\DeuxFoncTrancheLaxCoq{\TildeLax{u}}{\BarreLax{\sigma}}{c}) (\LaxCanonique{\mathdeuxcat{A}} (\beta))
\\
&= (\DeuxFoncTrancheLaxCoq{\TildeLax{u}}{\BarreLax{\sigma}}{c}) (1_{[1]}, \beta)
\\
&= \TildeLax{u} (1_{[1]}, \beta)
\\
&= (1_{[1]}, u(\beta)).
\end{aligned}
$$

Pour tout objet $(a, p : w(a) \to c)$ de $\TrancheLax{\mathdeuxcat{A}}{w}{c}$,
$$
\begin{aligned}
((\DeuxFoncTrancheLax{\LaxCanonique{\mathdeuxcat{B}}}{c}) (\DeuxFoncTrancheLaxCoq{u}{\sigma}{c}))_{(a,p)} &= (\DeuxFoncTrancheLax{\LaxCanonique{\mathdeuxcat{B}}}{c}) ((\DeuxFoncTrancheLaxCoq{u}{\sigma}{c})_{(a,p)}) (\DeuxFoncTrancheLax{\LaxCanonique{\mathdeuxcat{B}}}{c})_{(\DeuxFoncTrancheLaxCoq{u}{\sigma}{c}) (a,p)}
\\
&= (\DeuxFoncTrancheLax{\LaxCanonique{\mathdeuxcat{B}}}{c}) (u_{a}) (\DeuxFoncTrancheLax{\LaxCanonique{\mathdeuxcat{B}}}{c})_{(u(a), p \sigma_{a})}
\\
&= (\DeuxFoncTrancheLax{\LaxCanonique{\mathdeuxcat{B}}}{c}) (u_{a}) (\LaxCanonique{\mathdeuxcat{B}})_{u(a)}
\\
&= (1_{[1]}, u_{a}) ([1] \to [0], 1_{1_{u(a)}})
\\
&= ([1] \to [0], u_{a})
\end{aligned}
$$
et
$$
\begin{aligned}
((\DeuxFoncTrancheLaxCoq{\TildeLax{u}}{\BarreLax{\sigma}}{c}) (\DeuxFoncTrancheLax{\LaxCanonique{\mathdeuxcat{A}}}{c}))_{(a,p)} &= (\DeuxFoncTrancheLaxCoq{\TildeLax{u}}{\BarreLax{\sigma}}{c}) ((\DeuxFoncTrancheLax{\LaxCanonique{\mathdeuxcat{A}}}{c})_{(a,p)})
\\
&= (\DeuxFoncTrancheLaxCoq{\TildeLax{u}}{\BarreLax{\sigma}}{c}) ((\LaxCanonique{\mathdeuxcat{A}})_{a})
\\
&= (\DeuxFoncTrancheLaxCoq{\TildeLax{u}}{\BarreLax{\sigma}}{c}) ([1] \to [0], 1_{1_{a}})
\\
&= \TildeLax{u} ([1] \to [0], 1_{1_{a}})
\\
&= ([1] \to [0], u_{a}).
\end{aligned}
$$

Pour tout couple de \un{}cellules composables $(f, \alpha) : (a, p) \to (a', p')$ et $(f', \alpha') : (a', p') \to (a'', p'')$ dans $\TrancheLax{\mathdeuxcat{A}}{w}{c}$, 
$$
\begin{aligned}
((\DeuxFoncTrancheLax{\LaxCanonique{\mathdeuxcat{B}}}{c}) (\DeuxFoncTrancheLaxCoq{u}{\sigma}{c}))_{(f', \alpha'), (f, \alpha)} &= (\DeuxFoncTrancheLax{\LaxCanonique{\mathdeuxcat{B}}}{c}) ((\DeuxFoncTrancheLaxCoq{u}{\sigma}{c})_{(f', \alpha'), (f, \alpha)}) (\DeuxFoncTrancheLax{\LaxCanonique{\mathdeuxcat{B}}}{c})_{(\DeuxFoncTrancheLaxCoq{u}{\sigma}{c})(f', \alpha'), (\DeuxFoncTrancheLaxCoq{u}{\sigma}{c}) (f, \alpha)}
\\
&= (\DeuxFoncTrancheLax{\LaxCanonique{\mathdeuxcat{B}}}{c}) (u_{f', f}) (\DeuxFoncTrancheLax{\LaxCanonique{\mathdeuxcat{B}}}{c})_{(u(f'), (p'' \CompDeuxZero \sigma_{f'}) (\alpha' \CompDeuxZero \sigma_{a'})), (u(f), (p' \CompDeuxZero \sigma_{f}) (\alpha \CompDeuxZero \sigma_{a}))}
\\
&= (1_{[1]}, u_{f', f}) (\LaxCanonique{\mathdeuxcat{B}})_{u(f'), u(f)}
\\
&= (1_{[1]}, u_{f', f}) ([1] \to [2], 1_{u(f') u(f)})
\\
&= ([1] \to [2], u_{f', f})
\end{aligned}
$$
et
$$
\begin{aligned}
((\DeuxFoncTrancheLaxCoq{\TildeLax{u}}{\BarreLax{\sigma}}{c}) (\DeuxFoncTrancheLax{\LaxCanonique{\mathdeuxcat{A}}}{c}))_{(f', \alpha'), (f, \alpha)} &= (\DeuxFoncTrancheLaxCoq{\TildeLax{u}}{\BarreLax{\sigma}}{c}) ((\DeuxFoncTrancheLax{\LaxCanonique{\mathdeuxcat{A}}}{c})_{(f', \alpha'), (f, \alpha)})
\\
&= (\DeuxFoncTrancheLaxCoq{\TildeLax{u}}{\BarreLax{\sigma}}{c}) ((\LaxCanonique{\mathdeuxcat{A}})_{f', f})
\\
&= (\DeuxFoncTrancheLaxCoq{\TildeLax{u}}{\BarreLax{\sigma}}{c}) ([1] \to [2], 1_{f'f})
\\
&= \TildeLax{u}([1] \to [2], 1_{f'f})
\\
&= ([1] \to [2], u_{f', f}).
\end{aligned}
$$
En vertu de ces calculs et de la transitivité de la relation d'égalité, le lemme \ref{CarreCommutatifThALax} est démontré. 
\end{proof}

\begin{corollaire}\label{TexAvery}
Soient
$$
\xymatrix{
\mathdeuxcat{A} 
\ar[rr]^{u}
\ar[dr]_{w}
&&\mathdeuxcat{B}
\dtwocell<\omit>{<7.3>\sigma}
\ar[dl]^{v}
\\
&\mathdeuxcat{C}
&{}
}
$$
un diagramme de \DeuxFoncteursLax{} commutatif à l'\DeuxTransformationCoLax{} $\sigma : vu \Rightarrow w$ près seulement et $c$ un objet de $\mathdeuxcat{C}$. Alors, le \DeuxFoncteurLax{} $\DeuxFoncTrancheLaxCoq{u}{\sigma}{c} : \TrancheLax{\mathdeuxcat{A}}{w}{c} \to \TrancheLax{\mathdeuxcat{B}}{v}{c}$ est une équivalence faible si et seulement si le \DeuxFoncteurStrict{} $\DeuxFoncTrancheLaxCoq{\TildeLax{u}}{\BarreLax{\sigma}}{c} : \TrancheLax{\TildeLax{\mathdeuxcat{A}}}{\BarreLax{w}}{c} \to \TrancheLax{\TildeLax{\mathdeuxcat{B}}}{\BarreLax{v}}{c}$ en est une. 
\end{corollaire}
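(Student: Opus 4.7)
The plan is to read the conclusion off directly from the commutative square provided by Lemma \ref{CarreCommutatifThALax}. That square has bottom row $\DeuxFoncTrancheLaxCoq{u}{\sigma}{c}$, top row $\DeuxFoncTrancheLaxCoq{\TildeLax{u}}{\BarreLax{\sigma}}{c}$, and two vertical arrows $\DeuxFoncTrancheLax{\LaxCanonique{\mathdeuxcat{A}}}{c}$ and $\DeuxFoncTrancheLax{\LaxCanonique{\mathdeuxcat{B}}}{c}$. Once the two vertical arrows are shown to be weak equivalences, a routine ``2 sur 3'' applied to the square yields the equivalence asserted.

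First I would invoke Proposition \ref{LaxInduitEquiFaible} twice, applied once to the \DeuxFoncteurLax{} $w : \mathdeuxcat{A} \to \mathdeuxcat{C}$ at the object $c$, and once to the \DeuxFoncteurLax{} $v : \mathdeuxcat{B} \to \mathdeuxcat{C}$ at the object $c$. This gives at once that both vertical arrows of the square are des �quivalences faibles lax. Next, using the weak saturation of the class $\DeuxLocFondLaxInduit{W}$ (Lemme \ref{SatFaibleEquiLax}), I would run the standard two-out-of-three argument on the commutative square: the bottom arrow $\DeuxFoncTrancheLaxCoq{u}{\sigma}{c}$ is dans $\DeuxLocFondLaxInduit{W}$ si et seulement si the top arrow $\DeuxFoncTrancheLaxCoq{\TildeLax{u}}{\BarreLax{\sigma}}{c}$ l'est.

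To finish, I would observe that, as noted after the construction in Paragraphe \ref{DefDeuxFoncInduitFoncLaxOpTrans}, the induced morphism $\DeuxFoncTrancheLaxCoq{\TildeLax{u}}{\BarreLax{\sigma}}{c}$ is strict because $\TildeLax{u}$ is strict (even though $\BarreLax{v}$ is only strict and the transformation $\BarreLax{\sigma}$ is a genuine colax transformation). Thus by the Remarque \ref{EquiStricteEquiLax}, for this strict morphism, being une �quivalence faible lax coincides with being dans $\DeuxLocFond{W}$, which gives exactly the formulation in the statement.

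There is no real obstacle here: all the conceptual work has been absorbed into the two ingredients Lemme \ref{CarreCommutatifThALax} and Proposition \ref{LaxInduitEquiFaible}, and the corollary is their formal consequence. The only point to watch is checking that Proposition \ref{LaxInduitEquiFaible} indeed applies to any \DeuxFoncteurLax{} (not just strict ones) and any object of its target, which is the form in which it is stated; since $w$ and $v$ are lax and $c$ is an arbitrary object of their common target $\mathdeuxcat{C}$, this is immediate.
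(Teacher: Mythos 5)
Votre preuve est correcte et suit essentiellement la m�me d�marche que celle du texte : la proposition \ref{LaxInduitEquiFaible} pour montrer que les deux fl�ches verticales du carr� du lemme \ref{CarreCommutatifThALax} sont des �quivalences faibles, puis la saturation faible (argument de ��2 sur 3��) pour conclure. La remarque finale sur le caract�re strict de $\DeuxFoncTrancheLaxCoq{\TildeLax{u}}{\BarreLax{\sigma}}{c}$ et la remarque \ref{EquiStricteEquiLax} n'est qu'une pr�cision de vocabulaire que le texte laisse implicite.
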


\begin{proof}
En vertu de la proposition \ref{LaxInduitEquiFaible}, les \DeuxFoncteursLax{} $\DeuxFoncTrancheLax{\LaxCanonique{\mathdeuxcat{A}}}{c}$ et $\DeuxFoncTrancheLax{\LaxCanonique{\mathdeuxcat{B}}}{c}$ sont des équivalences faibles. Le lemme \ref{CarreCommutatifThALax} et la saturation faible de la classe des équivalences faibles permettent de conclure.
\end{proof}

\begin{theo}\label{TheoremeALaxTrancheLaxCoq}
Soit
$$
\xymatrix{
\mathdeuxcat{A} 
\ar[rr]^{u}
\ar[dr]_{w}
&&\mathdeuxcat{B}
\dtwocell<\omit>{<7.3>\sigma}
\ar[dl]^{v}
\\
&\mathdeuxcat{C}
&{}
}
$$
un diagramme de \DeuxFoncteursLax{} commutatif à l'\DeuxTransformationCoLax{} $\sigma : vu \Rightarrow w$ près seulement. Supposons que, pour tout objet $c$ de $\mathdeuxcat{C}$, le \DeuxFoncteurLax{} 
$$
\DeuxFoncTrancheLaxCoq{u}{\sigma}{c} : \TrancheLax{\mathdeuxcat{A}}{w}{c} \to \TrancheLax{\mathdeuxcat{B}}{v}{c}
$$ 
soit une équivalence faible. Alors, $u$ est une équivalence faible.
\end{theo}

\begin{proof}
En vertu des hypothèses et du corollaire \ref{TexAvery}, pour tout objet $c$ de $\mathdeuxcat{C}$, le \DeuxFoncteurStrict{} $\DeuxFoncTrancheLaxCoq{\TildeLax{u}}{\BarreLax{\sigma}}{c}$ est une équivalence faible. En vertu du théorème \ref{ThAStrictCoq}, $\TildeLax{u}$ est une équivalence faible, donc $u$ est une équivalence faible.
\end{proof}

\begin{theo}\label{TheoremeALaxOpTrancheLaxCoq}
Soit
$$
\xymatrix{
\mathdeuxcat{A} 
\ar[rr]^{u}
\ar[dr]_{w}
&{}
&\mathdeuxcat{B}
\ar[dl]^{v}
\\
&\mathdeuxcat{C}
\utwocell<\omit>{\sigma}
}
$$
un diagramme de \DeuxFoncteursLax{} commutatif à la \DeuxTransformationLax{} $\sigma : w \Rightarrow vu$ près seulement. Supposons que, pour tout objet $c$ de $\mathdeuxcat{C}$, le \DeuxFoncteurLax{} 
$$
\DeuxFoncOpTrancheLaxCoq{u}{\sigma}{c} : \OpTrancheLax{\mathdeuxcat{A}}{w}{c} \to \OpTrancheLax{\mathdeuxcat{B}}{v}{c}
$$ 
soit une équivalence faible. Alors $u$ est une équivalence faible.
\end{theo}

\begin{proof}
En vertu des hypothèses, le \DeuxFoncteurLax{} $\DeuxFoncUnOp{(\DeuxFoncTrancheLaxCoq{\DeuxFoncUnOp{u}}{\DeuxTransUnOp{\sigma}}{c})}$ est une équivalence faible pour tout objet $c$ de $\mathdeuxcat{C}$. Par conséquent, en vertu de la proposition \ref{DeuxFoncLaxUnOpW}, le \DeuxFoncteurLax{} $\DeuxFoncTrancheLaxCoq{\DeuxFoncUnOp{u}}{\DeuxTransUnOp{\sigma}}{c}$ est une équivalence faible pour tout objet $c$ de $\mathdeuxcat{C}$. Le théorème \ref{TheoremeALaxTrancheLaxCoq} permet d'en déduire que $\DeuxFoncUnOp{u}$ est une équivalence faible. Une nouvelle invocation de la proposition \ref{DeuxFoncLaxUnOpW} permet de conclure.
\end{proof}

\begin{theo}\label{TheoremeAColaxTrancheCoLaxCoq}
Soit
$$
\xymatrix{
\mathdeuxcat{A} 
\ar[rr]^{u}
\ar[dr]_{w}
&&\mathdeuxcat{B}
\ar[dl]^{v}
\dtwocell<\omit>{<7.3>\sigma}
\\
&\mathdeuxcat{C}
&{}
}
$$
un diagramme de \DeuxFoncteursCoLax{} commutatif à la \DeuxTransformationLax{} $\sigma : vu \Rightarrow w$ près seulement. Supposons que, pour tout objet $c$ de $\mathdeuxcat{C}$, le \DeuxFoncteurCoLax{} 
$$
\DeuxFoncTrancheCoLaxCoq{u}{\sigma}{c} : \TrancheCoLax{\mathdeuxcat{A}}{w}{c} \to \TrancheCoLax{\mathdeuxcat{B}}{v}{c}
$$ 
soit une équivalence faible. Alors $u$ est une équivalence faible.
\end{theo}

\begin{proof}
En vertu des hypothèses, le \DeuxFoncteurCoLax{} $\DeuxFoncDeuxOp{(\DeuxFoncTrancheLaxCoq{\DeuxFoncDeuxOp{u}}{\DeuxTransDeuxOp{\sigma}}{c})}$ est une équivalence faible pour tout objet $c$ de $\mathdeuxcat{C}$. Par définition et conséquent, le \DeuxFoncteurLax{} $\DeuxFoncTrancheLaxCoq{\DeuxFoncDeuxOp{u}}{\DeuxTransDeuxOp{\sigma}}{c}$ est une équivalence faible pour tout objet $c$ de $\mathdeuxcat{C}$. Le théorème \ref{TheoremeALaxTrancheLaxCoq} permet d'en déduire que le \DeuxFoncteurLax{} $\DeuxFoncDeuxOp{u}$ est une équivalence faible, c'està-dire, par définition, que le \DeuxFoncteurCoLax{} $u$ est une équivalence faible. 
\end{proof}

\begin{theo}\label{TheoremeAColaxOpTrancheCoLaxCoq}
Soit
$$
\xymatrix{
\mathdeuxcat{A} 
\ar[rr]^{u}
\ar[dr]_{w}
&{}
&\mathdeuxcat{B}
\ar[dl]^{v}
\\
&\mathdeuxcat{C}
\utwocell<\omit>{\sigma}
}
$$
un diagramme de \DeuxFoncteursCoLax{} commutatif à l'\DeuxTransformationCoLax{} $\sigma : w \Rightarrow vu$ près seulement. Supposons que, pour tout objet $c$ de $\mathdeuxcat{C}$, le \DeuxFoncteurCoLax{} 
$$
\DeuxFoncOpTrancheCoLaxCoq{u}{\sigma}{c} : \OpTrancheCoLax{\mathdeuxcat{A}}{w}{c} \to \OpTrancheCoLax{\mathdeuxcat{B}}{v}{c}
$$ 
soit une équivalence faible. Alors $u$ est une équivalence faible.
\end{theo}

\begin{proof}
En vertu des hypothèses, le \DeuxFoncteurCoLax{} $\DeuxFoncToutOp{(\DeuxFoncTrancheLaxCoq{\DeuxFoncToutOp{u}}{\DeuxTransToutOp{\sigma}}{c})}$ est une é\-qui\-va\-lence faible pour tout objet $c$ de $\mathdeuxcat{C}$. Par conséquent, le \DeuxFoncteurLax{} $\DeuxFoncTrancheLaxCoq{\DeuxFoncToutOp{u}}{\DeuxTransToutOp{\sigma}}{c}$ est une équivalence faible pour tout objet $c$ de $\mathdeuxcat{C}$. Le \DeuxFoncteurLax{} $\DeuxFoncToutOp{u}$ est donc une équivalence faible, donc le \DeuxFoncteurCoLax{} $u$ est une équivalence faible. 
\end{proof}

\section{Critère local}\label{SectionCritereLocal}

\emph{On ne suppose plus fixé de localisateur fondamental.}

\begin{df}
Soit $\UnLocFond{W}$ un \ClasseUnLocFond{}. On dira qu'un morphisme $u : A \to B$ de $\Cat$ est \emph{$\UnLocFond{W}$-localement constant}, ou plus simplement \emph{localement constant}\index{localement constant (morphisme de $\DeuxCat$)} si, pour tout morphisme $b \to b'$ de $B$, le morphisme $A/b \to A/b'$ de $\Cat$ est une $\UnLocFond{W}$\nobreakdash-équivalence faible. 
\end{df}

\begin{theo}[Cisinski]\label{CaractTheoBCat}
Le localisateur fondamental minimal $\UnLocFondMin$ de $\Cat$ est le seul localisateur fondamental de $\Cat$ vérifiant les propriétés suivantes.
\begin{itemize}
\item[(i)] Pour tout morphisme $u : A \to B$ de $\Cat$, si $u$ est une équivalence faible, alors $\pi_{0}(u) : \pi_{0}A \to \pi_{0}B$ est une bijection.
\item[(ii)] Pour tout morphisme $u : A \to B$ de $\Cat$ localement constant, $u$ est une équivalence faible si et seulement s'il est asphérique. 
\end{itemize}
\end{theo}

\begin{proof}
C'est une conséquence de \cite[corollaire 2.3.3]{LFM}, \cite[proposition 2.3.4]{LFM} et \cite[théorème 2.3.6]{LFM}. 
\end{proof}

\begin{df}\label{DefLaxLocalementConstant}
Soit $\DeuxLocFond{W}$ un \ClasseDeuxLocFond{}. On dira qu'un morphisme $u : \mathdeuxcat{A} \to \mathdeuxcat{B}$ de $\DeuxCat$ est \emph{$\DeuxLocFond{W}$-lax-localement constant}\index{lax-localement constant (morphisme de $\DeuxCat$)}, ou plus simplement \emph{lax-localement constant} si, pour tout morphisme $b \to b'$ de $\mathdeuxcat{B}$, le morphisme $\TrancheLax{\mathdeuxcat{A}}{u}{b} \to \TrancheLax{\mathdeuxcat{A}}{u}{b'}$ de $\DeuxCat$ est une $\DeuxLocFond{W}$\nobreakdash-équivalence faible. 
\end{df}

\begin{paragr}
On rappelle qu'il existe une structure de catégorie de modèles sur $\EnsSimp$ dont les équivalences faibles sont les éléments de $\EquiQuillen$ et dont les cofibrations sont les monomorphismes. Cela permet (même si ce n'est en principe pas indispensable) de donner sens à la notion de carré homotopiquement cartésien dans $\EnsSimp$.
\end{paragr}

\begin{theo}[Cegarra]\label{ThBCegarra}
Soit $u : \mathdeuxcat{A} \to \mathdeuxcat{B}$ un morphisme de $\DeuxCat$ $\DeuxLocFondMin$\nobreakdash-lax-localement constant. Alors, pour tout objet $b$ de $\mathdeuxcat{B}$, le carré canonique
$$
\xymatrix{
\NerfLaxNor (\TrancheLax{\mathdeuxcat{A}}{u}{b})
\ar[r]
\ar[d]
&
\NerfLaxNor (\mathdeuxcat{A})
\ar[d]
\\
\NerfLaxNor (\TrancheLax{\mathdeuxcat{B}}{}{b})
\ar[r]
&
\NerfLaxNor (\mathdeuxcat{B}) 
}
$$
est homotopiquement cartésien. 
\end{theo}

\begin{proof}
C'est un énoncé dual de celui de \cite[théorème 3.2]{Cegarra}. 
\end{proof}

\begin{theo}\label{CaractTheoBDeuxCat}
Le localisateur fondamental minimal $\DeuxLocFondMin$ de $\DeuxCat$ est le seul localisateur fondamental de $\DeuxCat$ vérifiant les propriétés suivantes.
\begin{itemize}
\item[(i)] Pour tout morphisme $u : \mathdeuxcat{A} \to \mathdeuxcat{B}$ de $\DeuxCat$, si $u$ est une équivalence faible, alors $\pi_{0}(u) : \pi_{0}\mathdeuxcat{A} \to \pi_{0}\mathdeuxcat{B}$ est une bijection.
\item[(ii)] Pour tout morphisme $u : \mathdeuxcat{A} \to \mathdeuxcat{B}$ de $\DeuxCat$ lax-localement constant, $u$ est une équivalence faible si et seulement s'il est lax-asphérique. 
\end{itemize}
\end{theo}

\begin{proof}
Le localisateur fondamental $\DeuxLocFondMin$ de $\DeuxCat$ vérifie par définition la condition $(i)$ de l'énoncé. On sait déjà qu'un morphisme $\DeuxLocFondMin$\nobreakdash-lax-asphérique de $\DeuxCat$ est dans $\DeuxLocFondMin$. Réciproquement, si un morphisme $u$ de $\DeuxCat$ est $\DeuxLocFondMin$\nobreakdash-lax-localement constant et que c'est une $\DeuxLocFondMin$\nobreakdash-équivalence faible, alors, en vertu du théorème \ref{ThBCegarra}, il est $\DeuxLocFondMin$\nobreakdash-lax-asphérique. Le localisateur fondamental $\DeuxLocFondMin$ de $\DeuxCat$ vérifie donc la condition $(ii)$. 

Soit $\DeuxLocFond{W}$ un localisateur fondamental de $\DeuxCat$. S'il vérifie les conditions $(i)$ et $(ii)$ de l'énoncé, le localisateur fondamental $\DeuxLocFond{W} \cap \UnCell{\Cat}$ de $\Cat$ vérifie les conditions $(i)$ et $(ii)$ de l'énoncé du théorème \ref{CaractTheoBCat}. En vertu de ce même théorème \ref{CaractTheoBCat}, $\DeuxLocFond{W} \cap \UnCell{\Cat} = \UnLocFondMin$. On en déduit $\DeuxLocFond{W} = \DeuxLocFondMin$ en vertu du théorème \ref{IsoUnLocFondDeuxLocFond}. 
\end{proof}

\begin{rem}
Les versions duales de la définition \ref{DefLaxLocalementConstant} et du théorème \ref{ThBCegarra} permettent bien sûr de démontrer des versions duales du théorème \ref{CaractTheoBDeuxCat} de façon analogue. 
\end{rem}

\chapter*{Conclusion}

\addcontentsline{toc}{chapter}{Conclusion} \markboth{Conclusion}{}

Nous n'avons pas l'ambition de retracer l'histoire de la théorie de l'homotopie des petites catégories. Signalons simplement qu'en 1979, les auteurs de \cite{SSC} parlaient du « programme à long terme de développer une topologie algébrique des petites catégories ». Nous avons mentionné non seulement la construction fondamentale par Thomason d'une structure de catégorie de modèles sur $\Cat$ dont les équivalences faibles sont les équivalences faibles définies classiquement par le foncteur nerf, mais également la démonstration par Cisinski de la conjecture cruciale de Grothendieck de minimalité de cette classe d'équivalences faibles parmi les localisateurs fondamentaux de $\Cat$, ce résultat lui permettant d'en déduire, en utilisant celui de Thomason, pour tout localisateur fondamental accessible de $\Cat$, une structure de catégorie de modèles sur $\Cat$ dont la classe des équivalences faibles est précisément le localisateur fondamental considéré. 

Au début du travail de thèse dont ce texte présente les résultats, l'époque semblait mûre pour aborder la question du développement d'une théorie de l'homotopie des \deux{}catégories suivant le point de vue de Grothendieck : à l'importance croissante des catégories supérieures en théorie de l'homotopie s'ajoutait la publication récente d'une généralisation du Théorème A de Quillen — ingrédient de base de l'axiomatique des localisateurs fondamentaux de $\Cat$ — aux \DeuxFoncteursStricts{} \cite{BC} ainsi que d'une présumée version \deux{}catégorique du résultat de Thomason \cite{WHPT}. Les résultats concernant $\DeuxCat$ n'ont toutefois pas été démontrés dans le même ordre que les résultats analogues pour $\Cat$. En effet, même si l'article \cite{WHPT} contient des éléments intéressants, nous avons déjà mentionné l'incorrection de la démonstration de son assertion principale. L'existence de la structure de catégorie de modèles « à la Thomason » sur $\DeuxCat$ n'était donc pas encore établie lorsque nous avons dégagé la démonstration de notre résultat principal, le théorème \ref{IsoUnLocFondDeuxLocFond}, dont la propriété de minimalité de $\DeuxLocFondMin$ résulte immédiatement, en utilisant bien sûr le résultat de minimalité démontré par Cisinski pour $\UnLocFondMin$. De ce résultat de minimalité de $\DeuxLocFondMin$ découle, comme Ara l'explique dans \cite{Ara}, pour tout localisateur fondamental accessible de $\DeuxCat$, l'existence d'une structure de catégorie de modèles sur $\DeuxCat$ dont la classe des équivalences faibles est précisément le \ClasseDeuxLocFond{} considéré ; mais cela ne vaut qu'à condition d'avoir déjà démontré l'existence de la structure de catégorie de modèles « à la Thomason » pour $\DeuxLocFondMin{}$. Ce dernier résultat n'est venu que dans un second temps par rapport au théorème \ref{IsoUnLocFondDeuxLocFond} : pendant que nous achevions de construire le cadre \deux{}catégorique exposé dans le premier chapitre du présent travail, contexte dans lequel nous présentons la notion de \ClasseDeuxLocFond{}, Ara et Maltsiniotis ont résolu la question ; leur solution, présentée dans \cite{AraMaltsi}, va plus loin, puisqu'elle leur permet d'aborder la question de la structure analogue présumée sur la catégorie $n$\nobreakdash-$\Cat$ des $n$\nobreakdash-catégories strictes et des $n$\nobreakdash-foncteurs stricts. Comme nous l'avons déjà mentionné, c'est le théorème \ref{EqCatLocCatDeuxCat} de notre texte qui permet de montrer que l'adjonction de Quillen entre $\EnsSimp$ et $\DeuxCat$ construite par Ara et Maltsiniotis est en fait une équivalence de Quillen. 

Si la notion de localisateur fondamental se trouve au cœur de la « théorie de l'homotopie de Grothendieck », cette dernière contient d'autres notions. Une fois dégagés les définitions et résultats fondamentaux présentés dans notre travail, base d'une « théorie de l'homotopie de Grothendieck supérieure », on pourrait donc par exemple souhaiter développer une théorie des \emph{\deux{}catégories test}, généralisation de la notion de catégorie test dégagée par Grothendieck. Une autre direction de recherche consisterait à vouloir étudier des notions de \emph{\deux{}foncteurs lisses} et \emph{\deux{}foncteurs propres}, généralisations de celles de foncteurs lisses et foncteurs propres, notions également dues à Grothendieck et généralisant celles de catégorie préfibrée et de catégorie préopfibrée. En fait, la notion de \emph{structure d'asphéricité à droite} proposée par Maltsiniotis dans \cite{SAD}, généralisation de celle de \ClasseUnLocFond{}, permet d'envisager aussi les catégories fibrées. On pourrait donc, éventuellement pour aborder l'étude de ce que doivent être les \deux{}catégories fibrées ou les \deux{}catégories préfibrées — puisqu'il n'est pas certain que la notion de préfibration présentée dans notre travail soit celle qui mérite le mieux ce nom —, définir d'abord une notion de structure d'asphéricité à droite pour $\DeuxCat$, ce qu'il est du reste déjà possible de faire. Un autre projet de recherche envisageable consisterait à construire, délaissant le cas particulier des \deux{}catégories, une théorie des localisateurs fondamentaux de $n$\nobreakdash-$\Cat$, ce qui n'exclut évidemment pas d'envisager le développement des notions générales de $n$\nobreakdash-catégorie test et de $n$\nobreakdash-foncteur propre ou lisse... La démonstration du fait que les $n$\nobreakdash-catégories modélisent les types d'homotopie pourrait d'ailleurs s'inspirer de celle que nous avons dégagée pour $n=2$. Le présent travail indique assez l'importance que les morphismes « lax » joueront sans doute dans cette entreprise. On devrait donc également se pencher sur les analogues supérieurs du foncteur de strictification de Bénabou. 

Plus que la fierté d'avoir participé à ce programme en démontrant par une méthode inattendue un résultat fondamental dans le développement de cette « théorie de l'homotopie de Grothendieck supérieure », théorème dont quelques « sous-produits » de la démonstration vivront d'ailleurs peut-être leur existence propre — tels les généralisations présentées du Théorème A de Quillen ou les développements \deux{}catégoriques du premier chapitre —, et plus que la désillusion somme toute nécessaire et formatrice suscitée par le comportement de certaines personnes et par le fonctionnement actuel de la « communauté mathématique » et de l'Université, c'est encore le plaisir que m'a procuré la recherche mathématique que je veux retenir — et, du fait de la façon dont s'est déroulée cette thèse, il me reste à préciser que j'exprimerai mes remerciements en privé.

\newpage

\addcontentsline{toc}{chapter}{Index}

\printindex

\addcontentsline{toc}{chapter}{Notations}

\printindex[not]

\end{document}